\DeclareMathAlphabet{\mathscrbf}{OMS}{mdugm}{b}{n}
\definecolor{violet}{rgb}{0.0,0.2,0.7}
\definecolor{rouge2}{rgb}{0.8,0.0,0.2}
\renewcommand\subsection{\@startsection{subsection}{2}%
  \z@{.5\linespacing\@plus.7\linespacing}{-.5em}%
  {\normalfont\sffamily}}  
\newcommand{\Pic}{\textup{Pic}}
\newcommand{\Proj}{\textup{Proj}}
\newcommand{\Frobabs}{\textup{F}_{\textup{abs}}}
\newcommand{\Hol}{\textup{Hol}}
\renewcommand{\phi}{\varphi}
\newcommand{\into}{\hookrightarrow}
\newcommand{\map}{\dashrightarrow}
\newcommand{\wt}{\widetilde}
\newcommand{\wb}{\overline}
\renewcommand{\le}{\leqslant}
\renewcommand{\ge}{\geqslant}
\newcommand{\bA}{\textbf{A}}
\newcommand{\bB}{\textup{\textbf{A}}}
\newcommand{\bC}{\textup{\textbf{C}}}
\newcommand{\bD}{\textbf{D}}
\newcommand{\bE}{\textbf{E}}
\newcommand{\bH}{\textbf{H}}
\newcommand{\bK}{\textbf{K}}
\newcommand{\bQ}{\mathbb{Q}}
\newcommand{\bS}{\textbf{S}}
\newcommand{\bT}{\textbf{T}}
\newcommand{\bU}{\textbf{U}}
\newcommand{\bX}{\textbf{X}}
\newcommand{\bZ}{\textbf{Z}}
\renewcommand{\bB}{\textbf{B}}
\newcommand{\bP}{\textbf{P}}
\newcommand{\sA}{\mathscr{A}}
\newcommand{\sB}{\mathscr{B}}
\newcommand{\sE}{\mathscr{E}}
\newcommand{\sG}{\mathscr{G}}
\newcommand{\sH}{\mathscr{H}}
\newcommand{\sI}{\mathscr{I}}
\newcommand{\sL}{\mathscr{L}}
\newcommand{\sM}{\mathscr{M}}
\newcommand{\sN}{\mathscr{N}}
\newcommand{\sO}{\mathscr{O}}
\newcommand{\sbfE}{\mathscrbf{E}}
\newcommand{\sbfG}{\mathscrbf{G}}
\newtheorem{thm}{Theorem}[section]
\newtheorem{lemma}[thm]{Lemma}
\newtheorem{cor}[thm]{Corollary}
\newtheorem{prop}[thm]{Proposition}
\newtheorem*{thm*}{Theorem}
\theoremstyle{definition}
\newtheorem{defn}[thm]{Definition}
\newtheorem{notation}[thm]{Notation}
\newtheorem{defn-thm}[thm]{Definition-Theorem} 
\newtheorem{defn-lemma}[thm]{Definition-Lemma}
\theoremstyle{remark}
\newtheorem{claim}[thm]{Claim}
\newtheorem{fact}[thm]{Fact}
\newtheorem*{not-and-def}{Notation and definitions}
\newtheorem{assumption}[thm]{Assumption} 
\newtheorem{rem}[thm]{Remark}
\newtheorem{exmp}[thm]{Example}
\numberwithin{equation}{section}
\def\factor#1.#2.{\left. \raise 2pt\hbox{$#1$} \right/\hskip -2pt\raise -2pt\hbox{$#2$}}
\begin{document} 

\title[Codimension one foliations with trivial canonical class on singular spaces]{Codimension one foliations with numerically trivial canonical class on singular spaces}

\author{St\'ephane \textsc{Druel}}

\address{St\'ephane Druel: Institut Fourier, UMR 5582 du CNRS, Universit\'e Grenoble Alpes, CS 40700, 38058 Grenoble cedex 9, France} 

\email{stephane.druel@math.cnrs.fr}


\subjclass[2010]{37F75}

\begin{abstract}
In this article, we describe the structure of codimension one foliations with canonical singularities and numerically trivial canonical class on varieties with terminal singularities, extending a result of Loray, Pereira and Touzet to this context.
\end{abstract}

\maketitle
{\small\tableofcontents}

\section{Introduction}

In the last decades, much progress has been made in the classification of complex projective varieties.
The general viewpoint is that complex projective varieties $X$ with mild singularities should be classified according to the behavior of their canonical class $K_X$.
Similar ideas can be applied in the context of foliations on complex projective varieties.
If $\sG$ is a foliation on a  complex projective variety, we define its canonical class to be 
$K_{\sG}=-c_1(\sG)$. In analogy with the case of projective varieties, one expects the numerical properties of $K_{\sG}$
to reflect geometric aspects of $\sG$ (see \cite{brunella}, \cite{mcquillan08}, \cite{spicer}, \cite{cascini_spicer}, \cite{fano_fols}, \cite{codim_1_del_pezzo_fols}, \cite{fano_fols_2}, \cite{codim_1_mukai_fols}, \cite{lpt}, \cite{touzet}).

The Beauville-Bogomolov decomposition theorem asserts that any compact K\"ahler manifold with numerically trivial canonical bundle admits an \'etale cover that decomposes into a product of a torus, and irreducible,
simply-connected Calabi-Yau, and holomorphic symplectic manifolds (see \cite{beauville83}).
In \cite{touzet}, Touzet obtained a foliated version of the Beauville-Bogomolov decomposition theorem
for codimension one regular foliations with numerically trivial canonical bundle on compact K\"ahler manifolds.
The statement below follows from \cite[Th\'eor\`eme 1.2]{touzet} and \cite[Lemma 5.9]{bobo}.

\begin{thm*}[Touzet]
Let $X$ be a complex projective manifold, and let
$\sG$ be a regular codimension one foliation on $X$ with $K_\sG\equiv 0$.
Then one of the following holds.
\begin{enumerate}
\item There exists a $\mathbb{P}^1$-bundle structure $\phi\colon X \to Y$ onto a complex projective manifold $Y$ with $K_Y\equiv 0$, and $\sG$ induces a flat holomorphic connection on $\phi$.
\item There exists an abelian variety $A$ as well as a simply connected projective manifold $Y$ with $K_Y\equiv 0$, and a finite \'etale cover $f: A \times Y \to X$ such that $f^{-1}\sG$ is the pull-back of a codimenion 1 linear foliation on $A$.
\item There exists a smooth complete curve $B$ of genus at least $2$ as well as a complex projective manifold $Y$ with $K_Y\equiv 0$, and a finite \'etale cover $f \colon B \times Y \to X$ such that $f^{-1}\sG$ is
induced by the projection morphism $B \times Y \to B$.
\end{enumerate}
\end{thm*}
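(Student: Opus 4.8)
The plan is to read off the three cases from the transverse geometry of $\sG$, using the numerical hypothesis to constrain that geometry. Since $\sG$ is regular of codimension one, it is defined by a twisted holomorphic $1$-form $\omega \in H^0(X,\Omega^1_X \otimes N_\sG)$ satisfying the integrability condition $\omega \wedge d\omega = 0$, where $N_\sG = T_X/T_\sG$ is the normal line bundle. From the conormal exact sequence $0 \to N_\sG^* \to \Omega^1_X \to \Omega^1_\sG \to 0$ one computes $K_\sG = K_X + N_\sG$, so the hypothesis $K_\sG \equiv 0$ is equivalent to $N_\sG^* \equiv K_X$; in particular the conormal bundle is numerically equivalent to $K_X$. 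I would record this identity first, since it is the numerical dimension of $N_\sG^*$ that ultimately governs which of the three regimes occurs.

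The heart of the argument is Touzet's transverse uniformization \cite[Th\'eor\`eme 1.2]{touzet}: a regular codimension one foliation on a compact K\"ahler manifold with $K_\sG \equiv 0$ carries a transverse structure modeled on exactly one of the three one-dimensional complex geometries---Euclidean ($\mathbb{C}$), hyperbolic ($\mathbb{H}$), or a projective structure on $\mathbb{P}^1$---together with a holonomy representation of $\pi_1(X)$ and a developing map. I expect this to be the main obstacle: showing that the transverse structure falls into one of these three models, and that the developing map is globally controlled, requires Hodge-theoretic input and rigidity for the associated representations, and is precisely the technical core of Touzet's work. I would take this classification as the black box on which everything else rests.

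Granting the transverse structure, I would split into the three cases and globalize each. In the transversely hyperbolic case the developing map descends to a fibration $\phi\colon X \to B$ over a hyperbolic orbifold; passing to a finite \'etale cover makes $B$ a smooth curve of genus at least $2$, identifies $\sG$ with the foliation induced by $\phi$, and forces the fibers $Y$ to satisfy $K_Y \equiv 0$, since $T_\sG = T_{X/B}$ gives $K_\sG = K_{X/B}$ and adjunction yields $K_Y = K_\sG|_Y \equiv 0$---this is case (3). In the transversely Euclidean case $\sG$ is defined by a closed holomorphic $1$-form; feeding the Albanese map into the Beauville--Bogomolov decomposition on a suitable finite \'etale cover splits $X$ as $A \times Y$ with $K_Y \equiv 0$ and realizes $\sG$ as the pull-back of a linear foliation on the abelian factor $A$---this is case (2). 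In the remaining transversely projective case the transverse structure produces a $\mathbb{P}^1$-bundle $\phi\colon X \to Y$ on which $\sG$ is a flat holomorphic connection, whence $T_\sG \cong \phi^* T_Y$ and $K_\sG = \phi^* K_Y$, so $K_\sG \equiv 0$ forces $K_Y \equiv 0$---this is case (1).

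Finally, I would invoke \cite[Lemma 5.9]{bobo}, together with the Beauville--Bogomolov decomposition, to upgrade the decompositions obtained above to the precise product-with-$K_Y \equiv 0$ form stated in the theorem and to check that the cover can be taken finite and \'etale. This last step is essentially formal once the transverse uniformization and the case analysis are in place; the genuine difficulty lies entirely in establishing the transverse model geometry.
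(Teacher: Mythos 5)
Your proposal is correct and takes essentially the same route as the paper: the paper offers no independent argument for this statement, deriving it directly from Touzet's Th\'eor\`eme 1.2 together with \cite[Lemma 5.9]{bobo} (via the Beauville--Bogomolov decomposition), which are exactly the two inputs your case analysis rests on. Your elaboration of the three transverse regimes is a reasonable unpacking of Touzet's theorem rather than a genuinely different proof.
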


Loray, Pereira and Touzet recently described the structure of codimension one foliations with canonical singularities 
(we refer to Section \ref{section:singularities} for this notion) and numerically trivial canonical class on complex projective manifolds in \cite{lpt}. However, with the development of the minimal model program, it became clear that
singularities arise as an inevitable part of higher dimensional life. In this article, we extend their result to the singular setting. Our first main result is the following.

\begin{thm}\label{thm_intro:main}
Let $X$ be a normal complex projective variety with canonical singularities, and let $\sG$ be a codimension one
foliation on $X$ with canonical singularities. Suppose furthermore that $K_\sG\sim_{\mathbb{Q}}0$.
Then one of the following holds.
\begin{enumerate}
\item There exist a smooth complete curve $C$, a complex projective variety $Y$ with canonical singularities and 
$K_Y \sim_\mathbb{Z}0$, as well as a quasi-\'etale cover $f \colon Y \times C \to X$ such that $f^{-1}\sG$ is induced by the projection $Y \times C \to C$.
\item There exist complex projective varieties $Y$ and $Z$ with canonical singularities, as well as a quasi-\'etale cover 
$f \colon Y \times Z \to X$ and a foliation $\sH\cong \sO_Z^{\, \dim Z -1}$ on $Z$ 
such that $f^{-1}\sG$ is the pull-back of $\sH$ via the projection $Y \times Z \to Z$. In addition, we have
$K_Y \sim_\mathbb{Z} 0$, $Z$ is an equivariant compactification of a commutative algebraic group of dimension at least $2$,
and $\sH$ is induced by a codimension one Lie subgroup.
\end{enumerate}
\end{thm}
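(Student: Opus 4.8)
The plan is to follow the strategy of Loray, Pereira and Touzet \cite{lpt}, reducing the statement to the regular classification of Touzet recalled above and then promoting the resulting \'etale covers to quasi-\'etale covers of the singular variety $X$ by means of the decomposition theory for canonical varieties with numerically trivial canonical class. I would work throughout on the open set $X^\circ\subseteq X$ where $X$ is smooth and $\sG$ is regular; since $X$ is normal and $\sG$ has canonical singularities, $X\setminus X^\circ$ has codimension at least two, so reflexive tensor operations and the fundamental group are unaffected by removing it. The role of the hypothesis $K_\sG\sim_{\mathbb{Q}}0$ is that, after passing to the index-one cover (a quasi-\'etale cover along which $K_\sG$ becomes $\mathbb{Z}$-trivial), $\sG$ is the kernel of a closed reflexive $1$-form and hence carries a transverse structure. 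The first technical point is to globalize this structure on $X$, extending the defining form and its monodromy representation $\rho\colon\pi_1(X^\circ)\to\mathrm{PSL}_2(\mathbb{C})$ across $\Sing(\sG)\cup\Sing(X)$; here I would use the extension theorems for reflexive differentials on canonical spaces together with the compactness of $X$ (not of $X^\circ$) to feed Touzet's analytic trichotomy.

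Next I would run the case analysis along Touzet's three cases. I would first isolate the maximal algebraically integrable subfoliation $\sF\subseteq\sG$, whose general leaf $F$ I expect to satisfy $K_F\sim_{\mathbb{Q}}0$ with canonical singularities, and let $\sG$ descend to the leaf space. In the transversely hyperbolic case the developing map yields a fibration of $X^\circ$ onto a hyperbolic orbifold curve; passing to the Stein factorization and the coarse base produces the smooth complete curve $C$ of case~(1), with $f^{-1}\sG$ the pull-back of the foliation by points on $C$. In the transversely Euclidean and transversely projective cases the transverse holonomy is, after a finite quasi-\'etale base change, by elements of a commutative algebraic group --- for the projective case this uses that the base $Y$ of Touzet's $\mathbb{P}^1$-bundle has virtually abelian fundamental group, because $K_Y\equiv 0$. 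Integrating this action and compactifying equivariantly yields the variety $Z$ of case~(2); the descended foliation $\sH$ is the one cut out by the corresponding codimension one Lie subgroup and is $\mathbb{Z}$-trivial, so $\sH\cong\sO_Z^{\,\dim Z-1}$, while the bound $\dim Z\ge 2$ records that a one-dimensional group would instead be an elliptic curve and so belong to case~(1).

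To turn these analytic product structures on a cover of $X^\circ$ into a genuine product after a quasi-\'etale cover of $X$, I would trivialize the monodromy by a finite base change, extend the resulting cover of $X^\circ$ to a quasi-\'etale cover $f$ of $X$ using normality of $X$ and purity of the branch locus, and then apply the Beauville--Bogomolov-type decomposition for projective varieties with canonical singularities and $K\equiv 0$ to the fiber factor. This both genuinely splits the product $Y\times C$ (respectively $Y\times Z$) and, after absorbing the torsion of $K_Y$ into a further quasi-\'etale cover, upgrades $K_Y\sim_{\mathbb{Q}}0$ to $K_Y\sim_{\mathbb{Z}}0$. Composing the monodromy base change with this last cover yields the map $f$ asserted in the statement.

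I expect the main obstacle to be exactly this globalization: showing that the a priori only analytic transverse data on the non-compact $X^\circ$ descends to algebraic, group-theoretic data on $X$, that the associated cover extends to a quasi-\'etale cover of the \emph{projective} variety $X$ rather than of $X^\circ$ alone, and that the leaves of $\sF$ are compact and equidimensional so that their space is a projective variety with canonical singularities. The canonical --- as opposed to merely log canonical --- hypothesis on both $X$ and $\sG$ is what I expect to use decisively at this stage: to force the boundary discrepancy contributions to vanish, to keep the extended transverse form closed and nondegenerate, and to guarantee that the index-one cover and the decomposition stay within the canonical category.
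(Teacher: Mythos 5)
Your proposal has a genuine gap at its very first step, and it propagates through everything that follows. You claim that after passing to the index-one cover, so that $K_\sG\sim_{\mathbb{Z}}0$, the foliation ``is the kernel of a closed reflexive $1$-form and hence carries a transverse structure.'' This is false. Triviality of $\det\sG$ gives a defining $(n-1)$-vector field, or dually a $1$-form with values in $\det\sN_\sG\cong\sO_X(K_\sG-K_X)$; since $c_1(\sN_\sG)\equiv -K_X$, this twist is nontrivial unless $K_X$ itself is numerically trivial, and even then nothing forces the form to be closed. The existence of a \emph{closed} rational $1$-form with values in a flat line bundle is precisely one branch of the hard dichotomy (Proposition \ref{prop:p_closed_or_not}, following Loray--Pereira--Touzet via reduction modulo $p$): either $\sG$ is closed under $p$-th powers for almost all primes $p$, or such a closed twisted form exists. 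In the $p$-closed branch no transverse structure is handed to you at all, and the paper must instead \emph{prove} algebraic integrability --- via an MMP reduction to a flat $\mathbb{P}^1$-bundle over an abelian variety, Andr\'e's specialization theorem, and Bost's arithmetic theorem on the Ekedahl--Shepherd-Barron--Taylor conjecture when $\nu(X)=-\infty$, and via Touzet's pseudo-effective-conormal structure theory plus weak regularity when $\nu(X)=1$ --- before concluding with the global Reeb stability theorem (Theorem \ref{thm_intro:global_reeb_stability}). Your proposal has no mechanism for this branch.

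The second gap is that the ``analytic trichotomy'' you want to feed has hypotheses that are unavailable here. Touzet's classification in \cite{touzet} requires a \emph{regular} foliation on a \emph{compact} K\"ahler manifold: $X^\circ$ is not compact, and $\sG$ is not regular on any projective compactification, so compactness of $X$ cannot be borrowed the way you suggest. His structure theory in \cite{touzet_conpsef} requires pseudo-effectivity of the conormal class, i.e.\ of $-c_1(\sN_\sG)\equiv K_X$, which fails exactly when $X$ is uniruled --- and that uniruled case (where one ends up with suspensions of $\textup{PGL}(2,\mathbb{C})$-representations of $\pi_1$ of an abelian variety, Example \ref{example:suspension}) is a core difficulty of the theorem, not a degenerate subcase: such foliations genuinely occur in conclusion (2) and cannot be detected by transversely hyperbolic/Euclidean geometry on the smooth locus. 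Relatedly, your dismissal of $\dim Z=1$ as ``an elliptic curve, hence case (1)'' is off: case (1) concerns an algebraically integrable $\sG$ induced by projection to a curve of any genus, while case (2) with its commutative-group structure arises precisely from non-algebraically-integrable $\sG$, and deciding between the two is the whole algebraicity problem that the arithmetic and Reeb-stability machinery is built to solve.
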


\begin{rem}
In the setup of Theorem \ref{thm_intro:main}, suppose that $Z$ is rational, and set $n:=\dim Z$. Then $Z$ is an equivariant compactification of $(\mathbb{G}_m)^{n}$ or $(\mathbb{G}_m)^{n-1}\times \mathbb{G}_a$ (see proof of Lemma \ref{lemma:no_residue}). In either case, $Z$ is a toric variety by \cite{arzhantsev_kotenkova}.
\end{rem}

We also show that abundance holds provided that $X$ is terminal. Theorem \ref{thm_intro:main} together with Theorem \ref{thm_intro:abundance} then give the structure of codimension one foliations with canonical singularities and numerically trivial canonical class on varieties with terminal singularities.

\begin{thm}\label{thm_intro:abundance}
Let $X$ be a normal complex projective variety, and let $\sG$ be a codimension one
foliation on $X$ with canonical singularities and $K_\sG\equiv 0$. Suppose in addition that either $X$ has terminal singularities, or that $X$ has canonical singularities and $K_\sG$ is Cartier. Then $K_\sG$ is torsion.
\end{thm}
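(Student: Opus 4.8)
The plan is to read $K_\sG\equiv 0$ as saying that $K_\sG$ is nef of numerical dimension zero, and to upgrade this numerical vanishing to $\mathbb{Q}$-linear triviality by reducing, via the transverse classification of codimension one foliations with $K_\sG\equiv 0$, to a situation where $f^*K_\sG$ is the pullback of the canonical class of a genuine Calabi--Yau factor, and then invoking ordinary abundance in numerical dimension zero. The point worth keeping in mind is that numerical triviality does not formally imply torsion --- on an abelian factor it only places $K_\sG$ in $\Pic^0$, most of whose elements are non-torsion --- so the content of the theorem is precisely the upgrade, and this is where the hypotheses on the singularities of $X$ do their work: they guarantee that the reductions below can be carried out through quasi-\'etale covers preserving canonicity, and, in the merely canonical case, the assumption that $K_\sG$ is Cartier controls the index so that triviality of the honest line bundle detected on the factor is genuine torsion.

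First I would run the structural reduction. Using the transverse uniformisation of codimension one foliations (Touzet's theorem in the regular case, together with its extension to canonical singularities developed earlier in the paper), after a quasi-\'etale cover $f$ the foliation $\sG$ is either algebraically integrable --- and then, the leaves being codimension one, it is tangent to the fibres of a fibration over a curve $C$ --- or transversely homogeneous, i.e. transversely Euclidean, affine, or projective. In each case one arrives at a decomposition of the shape appearing in Theorem \ref{thm_intro:main}: a quasi-\'etale cover $f\colon Y\times Z\to X$ with $Y$ a normal projective variety with canonical singularities and $K_Y\equiv 0$, and with $f^{-1}\sG$ pulled back from a foliation $\sH$ on $Z$ whose canonical class vanishes (for instance $\sH\cong\sO_Z^{\,\dim Z-1}$, or $Z$ a curve carrying the trivial foliation). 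In particular $f^*K_\sG=p_Y^*K_Y$, so the whole problem is transferred to the factor $Y$.

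It then remains to observe that $Y$, being a normal projective variety with canonical singularities and $K_Y\equiv 0$, has $K_Y$ torsion by abundance in numerical dimension zero for klt varieties. Consequently $f^*K_\sG=p_Y^*K_Y\sim_{\mathbb{Q}}0$, and since $f$ is finite surjective we have $\deg(f)\,K_\sG=f_*f^*K_\sG\sim_{\mathbb{Q}}0$, whence $K_\sG\sim_{\mathbb{Q}}0$ on $X$; the terminal (respectively canonical with $K_\sG$ Cartier) hypothesis is what legitimises taking $f$ quasi-\'etale and bounds the index so that this descent yields torsion rather than a mere numerical conclusion.

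The hard part will be the structural reduction, not the final descent. Producing the quasi-\'etale cover and the pullback decomposition in the singular setting --- reconciling the transverse homogeneous structure with the singularities of both $X$ and $\sG$, and arranging that the base foliation $\sH$ genuinely has trivial canonical class --- is the delicate step, since one must convert the analytic transverse datum into an honest algebraic splitting with a canonical Calabi--Yau factor $Y$ to which ordinary abundance can be applied. I expect the transversely Euclidean case (defined by a closed, possibly logarithmic, $1$-form, leading to equivariant compactifications of $\mathbb{G}_a$ and $\mathbb{G}_m$) to require the most care, as there one needs Hodge-theoretic control of the periods of the defining form to locate $K_\sG$ in the torsion part of $\Pic^0$.
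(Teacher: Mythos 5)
Your proposal is circular, and the circularity sits exactly at the step you label ``the structural reduction.'' You propose to obtain, from $K_\sG\equiv 0$ alone, a quasi-\'etale cover $f\colon Y\times Z\to X$ realizing the decomposition of Theorem~\ref{thm_intro:main}, and then to read off torsion from abundance on the Calabi--Yau factor $Y$. But Theorem~\ref{thm_intro:main} has $K_\sG\sim_{\mathbb{Q}}0$ as a \emph{hypothesis}, and its proof cannot be run under mere numerical triviality: its very first step is to pass to the index-one cyclic cover associated to $K_\sG$ (via \cite[Lemma 2.53]{kollar_mori}), a construction that exists only when $K_\sG$ is torsion --- which is precisely the conclusion of Theorem~\ref{thm_intro:abundance}. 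There is no way around this at the level of generality you invoke: a non-torsion element of $\Pic^0$ (say on an abelian variety) remains non-torsion after pull-back by any finite cover, so no quasi-\'etale cover can convert $K_\sG\equiv 0$ into $K_\sG\sim_{\mathbb{Q}}0$ for you. The same circularity occurs in the algebraically integrable branch: the decomposition there is Theorem~\ref{thm_intro:global_reeb_stability}, which does assume only $K_\sG\equiv 0$, but the first line of its proof is an appeal to Proposition~\ref{prop:abundance_alg_int} --- that is, to abundance itself --- before the cyclic cover can be taken. So ``decomposition first, abundance second'' inverts the paper's logical order, and the inverted order is not available without redoing all the work.

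The paper's actual proof of Theorem~\ref{thm_intro:abundance} never goes through the product decomposition. It runs the dichotomy of Proposition~\ref{prop:p_closed_or_not} ($\sG$ closed under $p$-th powers for almost all $p$, versus $\sG$ defined by a closed rational $1$-form with values in a flat line bundle), bounds $\nu(X)\le 1$ by Lemma~\ref{lemma:bogomolov}, and then proves torsion \emph{directly in each case}: for the algebraically integrable cases ($\nu(X)=-\infty$ or $1$ on the $p$-closed side) via Proposition~\ref{prop:abundance_alg_int}, whose engine is Floris's theorem on klt-trivial fibrations \cite[Theorem 1.2]{floris} applied to the family of leaves --- an ingredient entirely absent from your proposal; for $\nu(X)=0$ via Proposition~\ref{prop:nu_zero_versus_torsion} and Lemma~\ref{lemma:K_torsion}; and in the closed $1$-form case via Proposition~\ref{prop:closed__rational_1_form_K_pseff} when $\nu(X)\ge 0$, and via the MMP/Mori-fiber-space analysis of Propositions~\ref{prop:flatness_torsion} and~\ref{prop:closed_form_abundance} (showing the flat line bundle may be chosen torsion) when $K_X$ is not pseudo-effective. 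Your closing observations --- that numerical triviality does not formally imply torsion, and that the final descent $f_*f^*K_\sG=\deg(f)\,K_\sG$ works for a finite cover --- are correct, but they only confirm that the entire content of the theorem lies in the step your outline assumes rather than proves.
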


If $\dim X=3$, Theorem \ref{thm_intro:abundance} is a special case of \cite[Theorem 1.7]{cascini_spicer}.

\medskip

As a consequence of Theorem \ref{thm_intro:main}, we describe the structure of weakly regular (we refer to Section \ref{section:regular_foliations} for this notion) codimension one foliations with numerically trivial canonical class, extending \cite[Th\'eor\`eme 1.2]{touzet} to this context.

\begin{cor}\label{cor_intro}
Let $X$ be a normal complex projective variety with canonical singularities, and let $\sG$ be a weakly regular codimension one foliation on $X$. Suppose furthermore that either $K_\sG$ is Cartier and $K_\sG\equiv 0$, or $K_\sG\sim_{\mathbb{Q}}0$.
Then one of the following holds.
\begin{enumerate}
\item There exist a complex projective manifold $Y$ with $K_Y\equiv 0$, a $\mathbb{P}^1$-bundle 
$\phi\colon Z \to Y$, and a quasi-\'etale cover $f\colon Z \to X$ such that
$f^{-1}\sG$ induces a flat holomorphic connection on $\phi$.
\item There exists an abelian variety $A$ as well as a simply connected projective manifold $Y$ with $K_Y\equiv 0$, and a finite \'etale cover $f: Y \times A \to X$ such that $f^{-1}\sG$ is the pull-back of a codimenion 1 linear foliation on $A$.
\item There exist a smooth complete curve $C$, a complex projective variety $Y$ with canonical singularities and 
$K_Y \sim_\mathbb{Z}0$, as well as a quasi-\'etale cover $f \colon Y \times C \to X$ such that $f^{-1}\sG$ is induced by the projection $Y \times C \to C$.
\end{enumerate}
\end{cor}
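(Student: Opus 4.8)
The plan is to derive Corollary \ref{cor_intro} from Theorems \ref{thm_intro:main} and \ref{thm_intro:abundance} by feeding the weak regularity hypothesis into the structure furnished by Theorem \ref{thm_intro:main}. First I would arrange that $K_\sG \sim_\bQ 0$ and that $\sG$ has canonical singularities, so that Theorem \ref{thm_intro:main} applies: a weakly regular foliation has canonical singularities (Section \ref{section:regular_foliations}), and if $K_\sG$ is Cartier with $K_\sG \equiv 0$, then Theorem \ref{thm_intro:abundance}, applied with $X$ canonical and $K_\sG$ Cartier, shows $K_\sG$ torsion, whence $K_\sG \sim_\bQ 0$. In either case $K_\sG \sim_\bQ 0$, so I apply Theorem \ref{thm_intro:main}. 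Its first conclusion is verbatim Corollary \ref{cor_intro}(3), so there is nothing to do; thus I may assume we are in its second conclusion, i.e. there is a quasi-\'etale cover $f\colon W \times Z \to X$ with $K_W \sim_\bZ 0$, with $W$ and $Z$ canonical, and with $f^{-1}\sG$ the pull-back under $W\times Z \to Z$ of a foliation $\sH \cong \sO_Z^{\,\dim Z -1}$ induced by a codimension one Lie subgroup $H$ of a commutative algebraic group $G$ with $\dim G \ge 2$, of which $Z$ is an equivariant compactification. Since $f$ is quasi-\'etale and $\sG$ is weakly regular, $f^{-1}\sG$ is weakly regular, and as it is pulled back from $Z$ along a projection, $\sH$ itself is weakly regular on $Z$.

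Next I would analyze $\sH$. Its leaves are the cosets of $H$, that is, the fibers of the quotient homomorphism $q\colon G \to Q := G/H$, where $Q$ is a one-dimensional commutative algebraic group, hence $Q \in \{\mathbb{G}_m, \mathbb{G}_a, E\}$ for some elliptic curve $E$. Writing the Chevalley decomposition $0 \to L \to G \to A_0 \to 0$ with $L$ linear and $A_0$ abelian, and using that there are no nonconstant homomorphisms between abelian and linear groups, one sees that $q$ kills $A_0$ when $Q$ is linear and kills $L$ when $Q = E$. The crucial step is to exploit weak regularity of $\sH$ along the boundary $Z \setminus G$ to force $\dim L \le 1$: a linear part of dimension at least two transverse to the leaves produces toric corner singularities of $\sH$ along the boundary divisors, which are incompatible with weak regularity.

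Granting this, three situations remain. If $L = 0$, then $G = Z = A_0$ is abelian and $\sH$ is a linear foliation; combined with $f^{-1}\sG$ being pulled back from $A_0$, this leads to Corollary \ref{cor_intro}(2). If $\dim L = 1$, so that $L \cong Q \in \{\mathbb{G}_m, \mathbb{G}_a\}$ and $H$ is abelian, then $Z \to A_0$ is a $\mathbb{P}^1$-bundle whose fibers are the closures of the $L$-orbits, and $\sH$, whose leaves are the closures of the cosets of the subgroup $H \xrightarrow{\sim} A_0$, is transverse to these fibers; that is, it defines a flat holomorphic connection, giving Corollary \ref{cor_intro}(1). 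The remaining case $Q = E$ with $\dim L = 0$ coincides with the first situation. Throughout, one uses that weak regularity is stable under the quasi-\'etale covers and product projections appearing above, so that passing between $\sG$, $f^{-1}\sG$ and $\sH$ is harmless.

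The main obstacle is exactly the boundary analysis invoked above: proving that weak regularity of $\sH$ forces the linear part of $G$ to be at most one-dimensional, and that in the one-dimensional case the transverse structure is genuinely a flat connection on a $\mathbb{P}^1$-bundle rather than a foliation with log-canonical singularities along the boundary divisor. A secondary and more routine point is to reconcile the abelian subcase with the precise shape of Corollary \ref{cor_intro}(2): applying the decomposition theorem for projective varieties with canonical singularities and numerically trivial canonical class to $W$, one splits off, after a further quasi-\'etale cover, an abelian factor that is absorbed into the abelian variety $A$ together with a simply connected factor, and one checks using weak regularity that this factor is smooth and that the resulting cover of $X$ is finite \'etale, as required.
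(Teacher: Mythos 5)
Your reduction to Theorem \ref{thm_intro:main} is sound and coincides with the paper's first step, but the heart of your argument --- the ``boundary analysis'' --- is exactly what you leave unproven, and it is a genuine gap, not a routine verification. You assert (i) that weak regularity of $\sH$ forces the linear part $L$ of $G$ to have dimension at most one, and (ii) that when $\dim L=1$ the compactification $Z \to A_0$ is a $\mathbb{P}^1$-bundle on which $\sH$ is everywhere transverse. Neither is automatic: when $\dim L \ge 2$ the variety $Z$ may be singular along $G$-invariant boundary strata, and the locus where the generating $(\dim Z-1)$-multivector of $\sH$ drops rank (e.g.\ the $L$-fixed locus produced by the Borel fixed-point theorem) can lie entirely inside the singular locus, where non-surjectivity of the \emph{twisted reflexive} Pfaff field has to be argued separately rather than read off from the vanishing of a multivector; moreover the relevant compactifications (extensions involving $\mathbb{G}_a$, non-split extensions by abelian varieties) carry no toric structure adapted to $\sH$, so ``toric corner singularities'' is a heuristic, not a proof. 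In case (ii) one must still prove that $Z$ is smooth, that $Z\to A_0$ is locally trivial, and that $\sH$ has no tangencies with the boundary sections. A smaller but real slip: your opening claim that a weakly regular foliation has canonical singularities is false without $K_\sG$ Cartier (Example \ref{example:product2}); Lemma \ref{lemma:regular_versus_canonical} needs Cartier-ness, so in the branch $K_\sG\sim_{\mathbb{Q}}0$ you must first pass to the index-one cover of $K_\sG$ (quasi-\'etale, and harmless by Corollary \ref{cor:regular_quasi_etale}) before invoking canonicity.

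For comparison, the paper closes precisely this case by a different mechanism that avoids all boundary combinatorics. It splits on whether the equivariant compactification is uniruled. If it is not, Chevalley's theorem makes $G$ an abelian variety and the foliation linear, giving case (2). If it is uniruled, one takes an \emph{equivariant} resolution $\beta\colon Z \to X$ with exceptional divisor $E$: Corollary \ref{cor:equivariant_resolution} gives $\beta^*\sG \subset T_Z(-\textup{log}\,E)$, hence $\beta^*\sG \subset \beta^{-1}\sG$, and canonicity forces $K_{\beta^{-1}\sG}\sim_{\mathbb{Z}}\beta^*K_\sG$ and $\beta^*\sG\cong\beta^{-1}\sG$; consequently every component of $E$ is invariant and $\beta^{-1}\sG$ is regular (Lemma \ref{lemma:regular_bir_crepant_map}). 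Touzet's theorem on the smooth uniruled model then produces the $\mathbb{P}^1$-bundle with flat connection, and invariance of each component of $E$ under a regular flat connection would make its normal bundle flat --- impossible for an exceptional divisor --- so $E=\emptyset$ and $X$ itself is the bundle, giving case (1). If you wish to keep your group-theoretic route, you must supply complete proofs of the two boundary statements above; as it stands, your proposal establishes the corollary only modulo its hardest step.
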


If $\sG$ is a regular foliation on a complex manifold $X$ and $L$ is a compact leaf with finite holonomy group, then the 
holomorphic version of the local Reeb stability theorem asserts that there 
exist an invariant open analytic neighborhood $U$ of $L$ and an 
unramified Galois cover $f_1 \colon U_1 \to U$ such that the pull-back $f_1^{-1}\sG_{|U}$ of $\sG_{|U}$ to 
$U_1$ is induced by proper submersion $U_1\to S$. The proofs of our main results rely on the following global version of Reeb stability theorem.

\begin{thm}\label{thm_intro:global_reeb_stability}
Let $X$ be a normal complex projective variety with klt singularities, and let $\sG$ be an algebraically integrable foliation on $X$ with canonical singularities. Suppose that $K_\sG\equiv 0$.
Then there exist complex projective varieties $Y$ and $Z$ with klt singularities and a quasi-\'etale cover 
$f \colon Y \times Z \to X$ such that $f^{-1}\sG$ is induced by the projection $Y \times Z \to Y$.
\end{thm}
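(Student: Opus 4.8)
The plan is to build everything from the \emph{family of leaves} of the algebraically integrable foliation $\sG$. Let $B$ denote the normalization of the closure, inside a suitable Chow variety of $X$, of the locus parametrizing the closures of general leaves; let $\Gamma$ be the normalization of the corresponding universal family; and let $\pi \colon \Gamma \to X$ and $\psi \colon \Gamma \to B$ be the two projections. The map $\pi$ is birational, and a general fibre $F$ of $\psi$ maps isomorphically under $\pi$ onto the closure of a general leaf. First I would compare $K_\sG$ with the relative canonical class $K_{\Gamma/B}$: the hypothesis that $\sG$ has canonical singularities yields, up to $\mathbb{Q}$-linear equivalence, an expression of the shape $\pi^* K_\sG \sim_{\mathbb{Q}} K_{\Gamma/B} + E$ with $E$ an effective, $\pi$-related correction term, so that the numerical triviality of $K_\sG$ can be transported to the fibration $\psi$.

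The next step is to show that $\psi$ is an isotrivial family of klt Calabi--Yau varieties. Numerical triviality of $K_\sG$, together with the comparison above, forces the general fibre $F$ to have klt singularities and $K_F \equiv 0$. I would then apply the canonical bundle formula (Fujino--Mori; Kawamata) to $\psi$, writing $K_{\Gamma/B} \sim_{\mathbb{Q}} \psi^*(M_B + D_B)$ with $D_B$ the effective discriminant part and $M_B$ the moduli part. By the semipositivity theorems for the moduli part and for the direct images $\psi_* \omega_{\Gamma/B}^{[m]}$ (Fujita, Kawamata, Viehweg, and their extensions to the klt Calabi--Yau setting), every term appearing is pseudo-effective or nef; since their total is numerically trivial, each must vanish numerically. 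In particular $M_B \equiv 0$ and $D_B$ is empty in codimension one, so the relevant direct image sheaves are numerically flat, the variation of $\psi$ is zero, and $\psi$ carries no multiple fibres in codimension one.

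With isotriviality and the vanishing of the codimension-one discriminant in hand, I would produce the desired product by a monodromy argument. Because $F$ is klt with $K_F \equiv 0$, the connected automorphism group of $F$ acting on the family is controlled and the relevant monodromy representation has finite image; equivalently, the numerically flat direct images become trivial after a cover of $B$ that is \'etale in codimension one, which exists by purity of the branch locus since $D_B$ is empty in codimension one. Pulling back along such a quasi-\'etale cover $B' \to B$ trivializes the family, yielding $\Gamma \times_B B' \cong B' \times F$ birationally and then, after normalization and using the absence of a codimension-one discriminant, a genuine product. Setting $Y := B'$ and $Z := F$, and letting $f \colon Y \times Z \to X$ be the induced finite morphism, one checks that $f$ is quasi-\'etale; since $X$ is klt and the klt property descends through quasi-\'etale covers and to the factors of a product, both $Y$ and $Z$ are klt, and by construction $f^{-1}\sG$ is the foliation induced by the projection $Y \times Z \to Y$.

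The main obstacle is the isotriviality step: extracting, from the single numerical condition $K_\sG \equiv 0$, both the vanishing of the moduli part (hence zero variation) and the absence of a codimension-one discriminant. This requires the semipositivity of the moduli part and of the direct image sheaves in the singular Calabi--Yau setting, together with careful bookkeeping of the effective correction terms relating $K_\sG$ to $K_{\Gamma/B}$, so that numerical triviality is faithfully propagated. The subsequent trivialization is delicate in exactly one respect: one must ensure the base change can be chosen \'etale in codimension one, so that the resulting $f$ is genuinely quasi-\'etale and the klt property is preserved on both factors.
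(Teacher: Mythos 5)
Your overall scaffolding (family of leaves, canonicity forcing $\pi^*K_\sG$ to agree with the foliated relative canonical class, klt Calabi--Yau fibers) matches the paper's starting point, and the canonical bundle formula does enter the paper's argument --- but only through Floris's theorem to prove $K_\sG$ is \emph{torsion} (Proposition \ref{prop:abundance_alg_int}), not to prove isotriviality. The fatal step in your proposal is the monodromy argument. It is not true that for a klt fiber $F$ with $K_F\equiv 0$ ``the relevant monodromy representation has finite image'': as soon as $\wt q(F)>0$, the group $\textup{Aut}^\circ(F)$ is a positive-dimensional abelian variety (Lemma \ref{lemma:automorphism_group_zero_can_class}), and the monodromy of a locally trivial family can land in it with infinite image. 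Concretely, let $F$ be an elliptic curve and let $X\to E$ be the suspension of a representation $\pi_1(E)\to F$ acting by translations generating a dense subgroup: this is a smooth, isotrivial, discriminant-free family whose induced foliation is regular with $K_\sG\equiv 0$, yet \emph{no} base change $B'\to B$, finite or otherwise, trivializes it. The conclusion of the theorem still holds for this example, but the trivializing cover comes from Poincar\'e reducibility ($F\times F'\to X$ for a complementary abelian subvariety $F'$), i.e.\ it mixes base and fiber directions and is not of the form $\Gamma\times_B B'$. This is precisely why the paper first splits the fiber as (abelian variety) $\times$ (variety with $\wt q=0$) via Theorem \ref{thm:kawamata_abelian_factor}, treats the abelian factor by Proposition \ref{proposition:trivial_tangent_bundle} (Brion's theorem on actions of nonaffine algebraic groups) together with the descent Lemma \ref{lemma:infinitesimal_unicity_BB} and Proposition \ref{prop:finite_versus_etale}, and only invokes finiteness of monodromy once $h^0(F,T_F)=0$ (Steps 4--5 of the proof of Theorem \ref{thm:global_reeb_stability}). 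Your strategy cannot be repaired by a better choice of cover of $B$; it needs this additional splitting mechanism.

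A second gap: you never rule out that the birational map $\pi\colon \Gamma\to X$ contracts a divisor. If it does, a finite cover of (a base change of) $\Gamma$ does not induce a finite --- let alone quasi-\'etale --- cover of $X$, so your final map $f\colon Y\times Z\to X$ need not exist as a cover. Excluding divisorial exceptional loci is where the hypothesis that $\sG$ is canonical does its real work in the paper: horizontal exceptional divisors are excluded by Proposition \ref{proposition:dicritical_versus_canonical}, vertical ones by Lemma \ref{lemma:irreducible_fibers}, and after a $\mathbb{Q}$-factorialization one gets an honest equidimensional morphism (Theorem \ref{thm:regular_foliation_morphism}); this in turn rests on weak regularity of $\sG$, proved by a Hodge-theoretic argument (Corollary \ref{cor:canonical_versus_regular}). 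Finally, two smaller points: deducing variation zero from $M_B\equiv 0$ needs Ambro's descent/bigness theorem for the moduli b-divisor, not merely semipositivity; and extending a product structure from a big open subset of the base to a global product (the paper's Lemma \ref{lemma:produit}) again uses $q(F)=0$, so it too fails before the abelian factor has been split off.
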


\subsection*{Outline of the proof} The main steps for the proofs of both Theorem \ref{thm_intro:main} and Theorem \ref{thm_intro:abundance} are as follows. As we will see below, our work owes a lot to the general strategy introduced in \cite{lpt}.

\medskip

Let $X$ be a normal complex projective variety with terminal singularities, and let $\sG$ be a codimension one
foliation on $X$ with canonical singularities and $K_\sG\equiv 0$. 
An analogue of the Bogomolov vanishing theorem says that $X$ has numerical dimension 
$\nu(X) \le 1$ (see Lemma \ref{lemma:bogomolov}).  

\medskip

Following \cite{lpt}, we show that either $\sG$ is closed under $p$-th powers for almost all primes $p$, or $\sG$ is given by a closed rational $1$-form (see Proposition \ref{prop:p_closed_or_not}) with value in a flat line bundle, whose zero set has codimension at least two. In the latter case, one checks that $\nu(X)\le 0$. If $\nu(X)=0$, one then proves that 
$X$ is covered by an abelian variety, and that $\sG$ is induced by a linear foliation (see Proposition \ref{prop:closed__rational_1_form_K_pseff}). If $\nu(X)=-\infty$, then one first reduces to the case where $\sG$ is defined by a closed rational $1$-form (see Proposition \ref{prop:flatness_torsion}). One then shows easily that $\sG$ is as case (2) of Theorem \ref{thm_intro:main} (see Theorem \ref{thm:closed_rational_1_form}). In particular, abundance holds in this case.
 
\medskip 
 
Suppose from now on that $\sG$ is closed under $p$-th powers for almost all primes $p$. We will prove that
$\sG$ is algebraically integrable, confirming the generalization to foliations by Ekedahl, Shepherd-Barron and Taylor of the classical Grothendieck-Katz conjecture in this special case. Theorem \ref{thm_intro:main} then follows from Theorem \ref{thm_intro:global_reeb_stability}. Moreover, abundance holds for algebraically integrable foliations with canonical singularities and numerically trivial canonical class by Proposition \ref{prop:abundance_alg_int}, as an easy consequence of a theorem of Floris (\cite[Theorem 1.2]{floris}).

Suppose that $\nu(X)=-\infty$. In this case, we show that it is enough to prove the statement under the additional assumptions that $X$ is $\mathbb{P}^1$-bundle over an abelian variety $A$ and $\sG$ is a flat connection on $X \to A$. This follows from the Minimal Model Program togehter with \cite[Theorem I]{GGK} that says that the fundamental group of the smooth locus of a projective klt variety with numerically trivial canonical class and zero augmented irregularity does not admit any finite-dimensional linear representation with infinite image. By a result of Andr\'e (\cite[Theorem 7.2.2]{andre}), we see that we can also suppose that $\sG$ and $X \to A$ are defined over a number field. The statement then follows from a theorem of Bost (\cite[Theorem 2.9]{bost}) who proved the Ekedahl-Shepherd-Barron-Taylor conjecture for flat invariant connections 
on principal bundles with linear solvable structure groups defined over number fields.

Suppose now that $\nu(X)=0$. Using Theorem \ref{thm_intro:global_reeb_stability}, we first show that we may assume that there is no positive-dimensional algebraic subvariety tangent to $\sG$ passing through a general point of $X$ (see Proposition \ref{prop:splitting_algebraic_transcendental}). Then,
running a MMP, one easily reduces to the case where $K_X$ is torsion. A weak version of the singular analogue of the Beauville-Bogomolov decomposition theorem due to Kawamata (\cite[Proposition 8.3]{kawamata85}) implies that, perhaps after passing to a quasi-\'etale cover, $\sG$ is a linear foliation on an abelian variety. \cite[Theorem 2.3]{bost} together with 
\cite[Proposition 3.6]{esbt} imply that $\sG$ is also algebraically integrable in this case.

Suppose finally that $\nu(X)=1$. The proof in this case is much more involved. We use the assumption that $\sG$ is closed under $p$-th powers for almost all primes $p$ to conclude that it is weakly regular. On the other hand, Touzet described in \cite{touzet_conpsef} the structure of codimension one foliations on complex projective manifolds with pseudo-effective conormal bundle. As a consequence, either the conormal sheaf $\sN_\sG$ satisfies 
$\kappa\big(-c_1(\sN_\sG)\big)=\nu\big(-c_1(\sN_\sG)\big)=1$ and $\sG$ is algebraically integrable, or 
$\kappa\big(-c_1(\sN_\sG)\big)=-\infty$ and $\nu\big(-c_1(\sN_\sG)\big)=1$. In the latter case, $\sG$ is induced by a codimension one tautological foliation on a quotient of a polydisc $\mathbb{D}^N$ for some $N\ge 2$
by an arithmetic irreducible lattice $\Gamma\subset\textup{PSL}(2,\mathbb{R})^N$. Using the fact that $\sG$ is weakly regular,
we then show that the image of the map $X \to \mathbb{D}^N/\Gamma$ is curve, yielding a contradiction (see Theorem \ref{thm:algebraic_integrability_nu_un_regular}).

\medskip

These steps are addressed throughout the paper, and are collected together in Section \ref{section:proofs}. 

\subsection*{Structure of the paper}Section \ref{section:notation} gathers notation, known results and global conventions that will be used throughout the paper. In section \ref{section:foliations}, we recall the definitions and basic properties of foliations. In section \ref{section:singularities}, we establish a number of properties of foliations with canonical singularities. In particular, we analyze the behaviour of foliations with canonical singularities under finite covers and $\mathbb{Q}$-factorial terminalization. We also address those with algebraic leaves. In particular, we show that abundance holds for algebraically integrable foliations with canonical singularities and numerically trivial canonical class (see Proposition \ref{prop:abundance_alg_int}).
Section \ref{section:regular_foliations} is devoted to weakly regular foliations. We first establish basic properties. We then give criteria for a foliation with trivial canonical class to be weakly regular (see Propositions \ref{prop:criterion_regularity} and \ref{prop:criterion_regularity_2}). We end this section with the local structure of rank one weakly regular foliations on surfaces with quotient singularities. Sections \ref{section:regular_foliations_algebraic_leaves} and \ref{section:holonomy} prepare for the proof of Theorem \ref{thm_intro:global_reeb_stability}. It is well-known that an algebraically integrable regular foliation on a complex projective manifold is induced by a morphism onto a normal projective variety. In section \ref{section:regular_foliations_algebraic_leaves}, we extend this result to weakly regular foliations with canonical singularities on mildly singular varieties (see Theorem \ref{thm:regular_foliation_morphism}).
Section \ref{section:reeb} is mostly taken up by the proof of Theorem \ref{thm_intro:global_reeb_stability}.
Sections \ref{section:algebraic_integrability_1} and \ref{section:algebraic_integrability_2} prepare for the proof of our main results. In particular, we confirm the Ekedahl-Shepherd-Barron-Taylor conjecture for mildly singular codimension one foliations with trivial canonical class first on projective varieties with $\nu(X)=-\infty$ in Section \ref{section:algebraic_integrability_1}, and then on those with $\nu(X)\ge 0$ in Section \ref{section:algebraic_integrability_2}. In section \ref{section:closed}, we describe codimension one foliations with numerically trivial canonical class defined by closed (twisted) rational $1$-forms.
With these preparations at hand, the proofs of Theorems \ref{thm_intro:main} and
\ref{thm_intro:abundance} and the proof of Corollary \ref{cor_intro} which we give in Section \ref{section:proofs} become reasonably short.

\subsection*{Acknowledgements} We would like to thank Frank Loray, Jorge  V.  Pereira  and
Fr\'ed\'eric Touzet for answering our questions and for extremely helpful conversations concerning the paper \cite{lpt}. 
We would also like to thank Jean-Pierre Demailly and 
Mihai P\u{a}un for useful discussions. During the preparation of the paper, we benefited from a
visit to the Department of Mathematics, Statistics, and Computer Science at the University of Illinois at Chicago.

The author was partially supported by the ALKAGE project (ERC grant Nr 670846, 2015$-$2020)
and the Foliage project (ANR grant Nr ANR-16-CE40-0008-01, 2017$-$2020).

\section{Notation, conventions, and used facts}\label{section:notation}

\subsection{Global Convention}
Throughout the paper a \textit{variety} is a reduced and irreducible scheme separated and of finite type over a field.

Given a scheme $X$, we denote by $X_\textup{reg}$ its smooth locus.

Suppose that $k=\mathbb{C}$. We will use the notions of terminal, canonical, klt, and lc singularities for pairs
without further explanation or comment and simply refer to \cite[Section 2.3]{kollar_mori} for a discussion and for their precise definitions.

\subsection{$\mathbb{Q}$-factorializations and $\mathbb{Q}$-factorial terminalizations}\label{say:q_factorialization}

\begin{defn}
Let $X$ be a normal complex quasi-projective variety with klt singularities.
A $\mathbb{Q}$-\textit{factorialization} is a small birational projective morphism $\beta\colon Z \to  X$,
where $Z$ is $\mathbb{Q}$-factorial with klt singularities. 
\end{defn}

\begin{fact}\label{fact:existence_factorialization}
The existence of $\mathbb{Q}$-factorializations is established in \cite[Corollary 1.37]{kollar_kovacs_singularities}.
Note that we must have $K_{Z}\sim_\mathbb{Q}\beta^*K_X$.
\end{fact}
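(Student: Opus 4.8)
The existence assertion is imported verbatim from \cite{kollar_kovacs_singularities}, so the only part requiring argument is the numerical comparison $K_Z \sim_\mathbb{Q} \beta^* K_X$. The plan is to exploit that $\beta$ is \emph{small}, i.e.\ that its exceptional locus $\Exc(\beta)$ has codimension at least two in $Z$. First I would record that both sides of the claimed equivalence are meaningful: since $X$ is klt, its canonical class $K_X$ is $\mathbb{Q}$-Cartier, so $\beta^* K_X$ is defined; and since $Z$ is $\mathbb{Q}$-factorial, the Weil divisor $K_Z$ is itself $\mathbb{Q}$-Cartier. Hence $D := K_Z - \beta^* K_X$ is a well-defined $\mathbb{Q}$-Cartier $\mathbb{Q}$-divisor on $Z$.

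Next I would compare the two canonical classes away from the exceptional locus. Because $\beta$ is birational and small, it restricts to an isomorphism between $Z \setminus \Exc(\beta)$ and its image, and that image has complement of codimension at least two in $X$. Over this common open set the two canonical divisors manifestly coincide, so $D$ is supported on $\Exc(\beta)$; in particular $\Supp(D) \subseteq \Exc(\beta)$ has $\codim \ge 2$ in $Z$.

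Finally I would conclude by the elementary observation that a $\mathbb{Q}$-divisor whose support has codimension at least two carries no prime divisorial component, hence is the zero divisor. Thus $D = 0$, equivalently $K_Z \sim_\mathbb{Q} \beta^* K_X$. Alternatively, one may phrase this through the discrepancy formula $K_Z = \beta^* K_X + \sum_i a(E_i, X)\, E_i$, where the sum ranges over the $\beta$-exceptional prime divisors; smallness forces this index set to be empty, yielding the same conclusion at once. There is no genuine obstacle here, as the statement is a standard consequence of smallness; the only point to handle with care is invoking $\mathbb{Q}$-factoriality of $Z$ to make $K_Z$ a $\mathbb{Q}$-Cartier divisor before forming the difference.
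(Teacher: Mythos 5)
Your proposal is correct: the paper states this as a bare fact with no proof, precisely because the argument is the standard one you give — smallness means there are no $\beta$-exceptional prime divisors, so the difference $K_Z-\beta^*K_X$ (formed with compatible choices of canonical divisor, e.g.\ via a common rational top form) is a $\mathbb{Q}$-divisor supported in codimension at least two, hence zero. The only point worth making explicit, which you gloss slightly, is that the representatives of $K_Z$ and $K_X$ must be chosen via the same rational differential form for the two divisors to literally coincide on the isomorphism locus; with that said, your argument is exactly the justification the paper leaves implicit.
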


\begin{defn}
Let $X$ be a normal complex quasi-projective variety with canonical singularities. A \emph{$\mathbb{Q}$-factorial terminalization} of $X$ is a birational crepant projective morphism 
$\beta \colon Z \to X$ where $Z$ is $\mathbb{Q}$-factorial with terminal singularities.  
\end{defn}

\begin{fact}\label{fact:existence_terminalization}
The existence of $\mathbb{Q}$-factorial terminalizations is established in \cite[Corollary 1.4.3]{bchm}.
\end{fact}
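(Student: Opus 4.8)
The plan is to realise $\beta$ as the output of a relative minimal model program over $X$, with the existence of flips and the termination of the program supplied by \cite{bchm}. First I would choose a log resolution $\pi\colon Y\to X$, with $Y$ being $\mathbb{Q}$-factorial, such that every divisor over $X$ of discrepancy zero appears as a $\pi$-exceptional divisor on $Y$. This is possible because a canonical singularity admits only finitely many such crepant divisors, so a single resolution dominating the blow-ups that realise them suffices. Since $X$ is canonical we may then write $K_Y=\pi^*K_X+E$ with $E=\sum_i a_iE_i\ge 0$ effective and $\pi$-exceptional, the components with $a_i=0$ being exactly the crepant divisors extracted.

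Next I would run a $K_Y$-MMP over $X$. Because $K_Y\sim_{\mathbb{Q},X}E$ is $\mathbb{Q}$-linearly equivalent over $X$ to the effective $\pi$-exceptional divisor $E$, this is a minimal model program for an effective relative divisor, so the results of \cite{bchm} apply: the required flips exist and the program terminates, producing a $\mathbb{Q}$-factorial model $\beta\colon Z\to X$ on which $K_Z$ is $\beta$-nef. Each elementary step contracts a $K_Y$-negative extremal ray, hence a curve $C$ with $E\cdot C<0$; such a curve is contained in $\Supp(E)$, so no component $E_i$ with $a_i=0$ is ever contracted, and every crepant divisor survives on $Z$.

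Writing $K_Z=\beta^*K_X+E_Z$, with $E_Z$ the strict transform of $E$, which is effective and $\beta$-exceptional, the relative nefness of $K_Z$ together with the $\beta$-triviality of $\beta^*K_X$ makes $E_Z$ both $\beta$-nef and $\beta$-exceptional; the Negativity Lemma then forces $E_Z\le 0$, whence $E_Z=0$ and $\beta$ is crepant. Finally $Z$ is terminal: a prime divisor $F$ exceptional over $Z$ is automatically exceptional over $X$ and satisfies $a(F,Z)=a(F,X)$ by crepancy, while $a(F,X)>0$ because every divisor of discrepancy zero already appears on $Z$ and is therefore not $\beta$-exceptional. Hence $Z$ is $\mathbb{Q}$-factorial with terminal singularities and $\beta$ is a birational crepant projective morphism, as required.

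The substantive input is the termination of this relative program together with the existence of the flips, which is exactly what \cite{bchm} provides; the remaining ingredients --- finiteness of the crepant divisors, the Negativity Lemma, and the discrepancy bookkeeping --- are formal.
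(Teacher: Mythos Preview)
The paper offers no argument for this Fact beyond the citation to \cite[Corollary 1.4.3]{bchm}; there is nothing to compare your sketch against. Your outline is correct and is the standard deduction of terminalizations from the main results of \cite{bchm}: take a resolution, run a relative $K_Y$-MMP over $X$ (which terminates because $K_Y\sim_{\mathbb{Q},X}E\ge 0$), and use the Negativity Lemma to see that the output is crepant over $X$. One minor simplification: you do not need to arrange that all crepant divisors are extracted on $Y$, because this is automatic for \emph{any} resolution. Indeed, if $F$ is exceptional over a smooth $Y$ then $a(F,Y)\ge 1$, and since $E\ge 0$ one has $a(F,X)=a(F,Y)+\textup{mult}_F(\mu^*E)\ge 1$; hence every divisor with $a(F,X)=0$ already lives on $Y$. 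With that observation your discrepancy bookkeeping (crepant components are never contracted, so they persist on $Z$; any $F$ exceptional over $Z$ therefore has $a(F,Z)=a(F,X)>0$) goes through unchanged.
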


\subsection{Projective space bundle}
If $\sE$ is a locally free sheaf of finite rank on a variety $X$, 
we denote by $\mathbb{P}(\sE)$ the variety $\textup{Proj}_X\big(\textup{S}^\bullet\sE\big)$,
and by $\sO_{\mathbb{P}(\sE)}(1)$ its tautological line bundle. 

\subsection{Stability}
The word \textit{stable} will always mean \textit{slope-stable with respect to a
given movable curve class}. Ditto for \textit{semistable} and \textit{polystable}.

\subsection{Reflexive hull}
Given a normal variety $X$, $m\in \mathbb{N}$, and coherent sheaves $\sE$ and $\sG$ on $X$, write
$\sE^{[m]}:=(\sE^{\otimes m})^{**}$, $\textup{S}^{[m]}\sE:=(\textup{S}^m\sE)^{**}$, $\det\sE:=(\Lambda^{\textup{rank} \,\sE}\sE)^{**}$, and 
$\sE \boxtimes \sG := (\sE \otimes\sG)^{**}.$
Given any morphism $f \colon Y \to X$, write 
$f^{[*]}\sE:=(f^*\sE)^{**}.$

\subsection{Reflexive K\"{a}hler differentials and pull-back morphisms}\label{subsection:pull-back_morphims} 
Given a normal variety $X$, we denote the sheaf of K\"{a}hler differentials by
$\Omega^1_X$. 
If $0 \le p \le \dim X$ is any integer, write
$\Omega_X^{[p]}:=(\Omega_X^p)^{**}$.
The tangent sheaf $(\Omega_X^1)^*$ will be denoted by $T_X$.

If $D$ is a reduced effective divisor on $X$ we denote by 
$(X,D)_{\textup{reg}}$ the open set where $(X,D)$ is log smooth. We write $\Omega_X^{[p]}(\textup{log}\, D)$ for
the reflexive sheaf on $X$ whose restriction to $U:=(X,D)_{\textup{reg}}$ is the sheaf of logarithmic differential forms 
$\Omega_U^{p}(\textup{log}\, D_{|U})$. We will refer to it as the sheaf of \textit{reflexive logarithmic $p$-forms}.
Suppose that $X$ is smooth and let $t$ be a defining equation for $D$ on some open set $X^\circ$.
Let $\alpha$ be a rational $p$-form on $X$. Then $\alpha$ is a reflexive logarithmic $p$-form on $X^\circ$
if and only if $t\alpha$ and $td\alpha$ are regular on $X^\circ$ (see \cite{saito}).

Suppose that $k=\mathbb{C}$.

If $f \colon Y \to  X$ is any morphism between varieties, we denote the
standard pull-back maps of K\"{a}hler differentials by
$$df \colon f^*\Omega_X^p \to \Omega_Y^p\quad\textup{and}\quad df \colon 
H^0(X,\Omega_X^p) \to H^0(Y,\Omega_Y^p).$$ 
Reflexive differential forms do not generally 
satisfy the same universal properties as K\"{a}hler differentials.
However, it has been shown in \cite{greb_kebekus_kovacs_peternell10} and \cite{kebekus_pull_back}
that many of the functorial properties do
hold if we restrict ourselves to klt spaces.

\begin{thm}[{\cite[Theorem 1.3]{kebekus_pull_back}}]
Let $f \colon Y \to X$ be a morphism of normal varieties. Suppose that $X$ is klt. Then there exist pull-back morphisms 
$$d_{\textup{refl}}f\colon f^*\Omega_Y^{[p]} \to \Omega_X^{[p]}\quad\textup{and}\quad d_{\textup{refl}}f\colon 
H^0\big(X,\Omega_X^{[p]}\big) \to H^0\big(Y,\Omega_Y^{[p]}\big)$$
that agree with the usual pull-back morphisms of K\"ahler differentials wherever this makes sense.
\end{thm}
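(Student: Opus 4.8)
The plan is to isolate the one genuinely deep input, the extension theorem of Greb--Kebekus--Kov\'acs--Peternell, and to derive everything else from it by formal manipulation of pull-backs together with reflexive (Hartogs-type) extension. Recall that for a klt variety $X$ and any resolution $\pi\colon \wt X \to X$ that theorem gives $\pi_*\Omega_{\wt X}^p\cong \Omega_X^{[p]}$; equivalently, every reflexive $p$-form on $X_{\textup{reg}}$ pulls back to a genuinely regular $p$-form on $\wt X$. By adjunction the counit $\pi^*\pi_*\Omega_{\wt X}^p\to\Omega_{\wt X}^p$ then furnishes a comparison morphism $c\colon \pi^*\Omega_X^{[p]}\to\Omega_{\wt X}^p$. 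I would first construct the sheaf-level morphism $d_{\textup{refl}}f\colon f^*\Omega_X^{[p]}\to\Omega_Y^{[p]}$; the map on global sections is then obtained by sending $\sigma\colon \sO_X\to\Omega_X^{[p]}$ to the composite of $f^*\sigma\colon \sO_Y\to f^*\Omega_X^{[p]}$ with $d_{\textup{refl}}f$.

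First I would reduce to the case that $Y$ is smooth. Since $\Omega_Y^{[p]}$ is reflexive, so is the sheaf $\mathcal{H}om(f^*\Omega_X^{[p]},\Omega_Y^{[p]})$, and hence any morphism defined over the complement of a closed subset of codimension at least two extends uniquely to all of $Y$. As $Y$ is normal, $Y_{\textup{reg}}$ is exactly such an open subset, so it suffices to build the morphism over $Y_{\textup{reg}}$ and then extend; in other words, I may assume $Y$ smooth and recover the general normal case by reflexive extension at the end. The asserted compatibility with the K\"ahler pull-back $df$ will likewise hold automatically once it is checked on the dense open locus $f^{-1}(X_{\textup{reg}})\cap Y_{\textup{reg}}$, because $\Omega_Y^{[p]}$ is torsion free.

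Now assume $Y$ smooth. Resolving the indeterminacy of the rational map $\pi^{-1}\circ f\colon Y\map \wt X$ yields a commutative square with $\rho\colon \wt Y\to Y$ a birational morphism of smooth varieties and $\wt f\colon \wt Y\to\wt X$ a genuine morphism satisfying $\pi\circ\wt f=f\circ\rho$. On the smooth variety $\wt Y$ the ordinary pull-back $d\wt f\colon \wt f^*\Omega_{\wt X}^p\to\Omega_{\wt Y}^p$ is available; precomposing it with $\wt f^*c\colon \wt f^*\pi^*\Omega_X^{[p]}\to\wt f^*\Omega_{\wt X}^p$ and using $\wt f^*\pi^*=\rho^*f^*$ gives a morphism $\rho^*f^*\Omega_X^{[p]}\to\Omega_{\wt Y}^p$. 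Pushing forward along $\rho$ and precomposing with the unit $f^*\Omega_X^{[p]}\to\rho_*\rho^*f^*\Omega_X^{[p]}$ produces $f^*\Omega_X^{[p]}\to\rho_*\Omega_{\wt Y}^p$; since $\rho$ is a birational morphism from a smooth variety onto the smooth $Y$, one has the classical identity $\rho_*\Omega_{\wt Y}^p=\Omega_Y^p$, which is the desired map. Independence of the choices made (the resolution $\pi$ and the resolution of indeterminacy) follows by dominating any two choices by a common one; but in fact well-definedness comes for free, since over $f^{-1}(X_{\textup{reg}})$ the construction visibly reduces to $df$ and torsion-freeness of $\Omega_Y^p$ then forces any two extensions to agree.

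The main obstacle is concentrated entirely in the extension theorem: it is precisely what guarantees that the pull-back of a reflexive form to $\wt X$ is regular along the exceptional divisor rather than merely logarithmic or meromorphic, so that the composition defining $c$ and $d\wt f$ makes sense. Every remaining ingredient---Hironaka resolution of the rational map, the push-forward identity $\rho_*\Omega_{\wt Y}^p=\Omega_Y^p$ for birational morphisms of smooth varieties, and reflexive extension across codimension-two loci---is classical and formal. I would close by recording the two compatibilities in the statement, namely agreement with $df$ and with the naive pull-back of global sections, and by noting the functoriality $d_{\textup{refl}}(g\circ f)=d_{\textup{refl}}f\circ f^*d_{\textup{refl}}g$, each verified on a dense open set where the maps coincide with honest K\"ahler pull-backs.
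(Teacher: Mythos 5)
The paper itself does not prove this statement; it quotes it from Kebekus's article, so your proposal has to be measured against the proof given there. Your construction (which silently corrects the typo in the displayed maps: the sheaf-level morphism goes $f^*\Omega_X^{[p]}\to\Omega_Y^{[p]}$) is correct, but only in the case where $f(Y)$ meets $X_{\textup{reg}}$, equivalently where $f^{-1}(X_{\textup{reg}})$ is dense in $Y$. In that case the argument you give --- reduce to $Y$ smooth by reflexivity of $\mathcal{H}om\big(f^*\Omega_X^{[p]},\Omega_Y^{[p]}\big)$, resolve the indeterminacy of $\pi^{-1}\circ f$, compose the GKKP comparison map with the K\"ahler pull-back, push down using $\rho_*\Omega^p_{\wt Y}=\Omega^p_Y$, and settle uniqueness by torsion-freeness on the dense open set $f^{-1}(X_{\textup{reg}})$ --- is indeed standard and complete.

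The genuine gap is the case $f(Y)\subseteq X_{\textup{sing}}$, which the theorem allows and which is exactly the case this paper needs (for instance the map $j_1^\circ$ in the proof of Theorem \ref{thm:regular_foliation_morphism} has source the image of an exceptional divisor; this is also why the universal property of Kebekus quoted in Section \ref{subsection:pull-back_morphims} is formulated for \emph{dominant}, not birational, maps $V\to Y_{\textup{reg}}$). In this case your argument breaks down at the very first step: $\pi^{-1}\circ f$ is not a rational map at all, since there is no dense open subset of $Y$ on which it is defined, so ``resolving its indeterminacy'' is meaningless. One can still look for liftings $\wt f\colon \wt Y\to\wt X$ with $\pi\circ\wt f=f\circ\rho$ and $\rho$ birational, but these need not exist, and when they exist they are wildly non-unique: if $f$ is constant with value a singular point $x$, any morphism into the fiber $\pi^{-1}(x)$ is a lifting. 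Both of your well-definedness arguments then fail --- two liftings need not be dominated by a common lifting (the two composites $\wt Y_3\to\wt X$ need not agree), and the set $f^{-1}(X_{\textup{reg}})$ on which you invoke torsion-freeness is empty. Concretely, proving that the constant lifting and a lifting mapping $Y$ isomorphically onto an exceptional curve of $\pi$ induce the same map on forms amounts to proving that the pull-back to $\wt X$ of a reflexive form on klt $X$ restricts to zero on every subvariety contracted by $\pi$ to a point of $X_{\textup{sing}}$; this descent/vanishing statement is true for klt $X$ but false for general normal $X$, and it is precisely the content of Kebekus's theorem beyond the GKKP extension theorem, not a formal consequence of it. Your proposal therefore isolates the right deep input for the easy case but misses the part of the theorem that makes it a theorem.
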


More precisely, the pull-back morphism for reflexive forms satisfies the following universal property (see \cite[Proposition 6.1]{kebekus_pull_back}). Let $f \colon Y \to X$ be any morphism of klt spaces. Given a commutative diagram

\begin{center}
\begin{tikzcd}
&&&& Z\ar[d, "{\beta, \textup{ resolution of singularities}}"]\\
V \ar[rrrru, bend left=15, "\alpha"]\ar[rr, "{g,\textup{ dominant}}"'] && Y_{\textup{reg}} \ar[rr, "f_{|Y_{\textup{reg}}}"'] && X, \\ 
\end{tikzcd}
\end{center}

\noindent where $V$ is smooth, we have 
$$dg \circ d_{\textup{refl}}\big(f_{|Y_{\textup{reg}}}\big) = d\alpha \circ d_{\textup{refl}}\beta.$$

\subsection{Pull-back of Weil divisors}\label{definition:pull-back}
Let $\psi\colon X \to Y$ be a dominant equidimensional morphism of normal varieties, and let $D$ be a Weil $\mathbb{Q}$-divisor on $Y$. The \textit{pull-back $\psi^*D$ of $D$} is defined as follows. We define 
$\psi^*D$ to be the unique $\mathbb{Q}$-divisor on $X$ whose restriction to 
$\psi^{-1}(Y_{\textup{reg}})$ is $(\psi_{|\psi^{-1}(Y_{\textup{reg}})})^*(D_{|Y_{\textup{reg}}})$. This construction agrees with the usual pull-back if $D$ is $\mathbb{Q}$-Cartier.

We will need the following easy observation.

\begin{lemma}\label{lemma:pull-back_Q_Cartier}
Let $\psi\colon X \to Y$ be a projective and dominant morphism of normal varieties,
and let $D$ be a Weil divisor on $Y$. Suppose in addition that $\psi$ is equidimensional. If $\psi^*D$ is $\mathbb{Q}$-Cartier, then so is $D$.
\end{lemma}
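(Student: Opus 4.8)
The plan is to reduce the statement to the known descent of $\bQ$-Cartierness along finite surjective morphisms. The assertion is local on $Y$, so I may assume $Y$ affine and fix a positive integer $m$ with $m\psi^*D$ Cartier; since the pull-back of Weil divisors is linear, $m\psi^*D=\psi^*(mD)$, and it suffices to prove that $\cO_Y(mD)$ is invertible. Writing $U:=\psi^{-1}(Y_{\textup{reg}})$, equidimensionality of $\psi$ forces $\codim_X(X\setminus U)\ge 2$, because $\codim_X\psi^{-1}(W)\ge\codim_Y W$ for every closed subset $W\subseteq Y$; hence the line bundle $\cO_X(m\psi^*D)$ and the reflexive pull-back $\psi^{[*]}\cO_Y(mD)$, which agree on $U$ by the very definition of $\psi^*D$ (Subsection \ref{definition:pull-back}), are isomorphic as reflexive sheaves on $X$. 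In particular $\psi^{[*]}\cO_Y(mD)$ is invertible.

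Next I would replace $\psi$ by a finite surjective multisection. Since $\psi$ is projective and equidimensional of relative dimension $d:=\dim X-\dim Y$, intersecting $X$ with $d$ general members of a relatively very ample linear system produces a closed subscheme meeting every fibre in a nonempty finite set; choosing an irreducible component dominating $Y$ and letting $\nu\colon S\to X$ be its normalization, the induced morphism $\pi:=\psi\circ\nu\colon S\to Y$ is a finite surjective morphism between normal varieties. Pulling back $\cO_X(m\psi^*D)$ to $S$, I obtain an invertible sheaf that agrees with $\pi^{[*]}\cO_Y(mD)$ over $\pi^{-1}(Y_{\textup{reg}})$; as the complement of this locus has codimension at least two in $S$ and both sheaves are reflexive, they coincide, so $\pi^{[*]}\cO_Y(mD)\cong\cO_S(m\pi^*D)$ is invertible, i.e. $\pi^*D$ is $\bQ$-Cartier.

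Finally, since $\pi\colon S\to Y$ is a finite surjective morphism of normal varieties, I would invoke the descent of $\bQ$-Cartierness along such morphisms (see, e.g., \cite{kollar_mori}) to conclude that $D$ is $\bQ$-Cartier.

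The main obstacle is the second paragraph: one must guarantee that the general complete intersection is genuinely finite and surjective over $Y$, which is precisely where equidimensionality—rather than mere dominance—of $\psi$ enters, preventing fibre dimensions from jumping; and one must ensure that passing to $S$ does not corrupt the divisorial computation. The latter is handled cleanly by comparing reflexive hulls over the codimension-two-complement locus $\pi^{-1}(Y_{\textup{reg}})$ rather than by restricting Weil divisors directly, so that the identity $\cO_S(m\pi^*D)\cong\pi^{[*]}\cO_Y(mD)$ is obtained formally from the normality of $S$.
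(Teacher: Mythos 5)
Your overall strategy coincides with the paper's: reduce to a finite surjective morphism onto $Y$, then descend $\bQ$-Cartierness along it. But there is a genuine gap in your second paragraph: it is false that $d$ general members of a relatively very ample linear system cut out a closed subscheme meeting every fibre in a nonempty \emph{finite} set, and equidimensionality of $\psi$ does not rescue this. Concretely, take $Y$ normal affine of dimension $\ge 2$ (say $Y=\mathbb{A}^2$), $X = Y \times \mathbb{P}^1$ with $\psi$ the projection, and the relatively very ample system whose members are $H_{a,b}=\{a x_0 + b x_1 = 0\}$ with $a,b$ regular functions on $Y$. Here $d=1$, so you intersect with a single general member. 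For general $(a,b)$ the common zero locus $\{a=b=0\}$ is a nonempty finite set (two general curves in $\mathbb{A}^2$ do meet), and over each such point $p$ the member $H_{a,b}$ contains the whole fibre $\{p\}\times \mathbb{P}^1$. Moreover $H_{a,b}$ is irreducible --- it is the blow-up of $Y$ along the ideal $(a,b)$ --- so its unique component dominating $Y$ is $H_{a,b}$ itself, which is projective and birational but not finite over $Y$. The point is that containing a fibre is a codimension-two condition on $Y$, so general position inside a \emph{fixed} linear system cannot avoid it once $\dim Y\ge 2$; it is special members (e.g.\ $a=y_1$, $b=1$ in the example), or members of sufficiently high fibre-degree (sections of $\sO_{\mathbb{P}^1}(k)\boxtimes \sO_Y$ with $k\ge \dim Y$), that produce finite multisections, and proving that such choices exist in general is a real theorem, not a Bertini-type genericity statement.

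This existence statement is precisely what the paper outsources to \cite[Theorem 6.3]{gabber_liu_lorenzini}, which yields a subvariety $Z\subseteq X$ with $\psi_{|Z}\colon Z\to Y$ finite and surjective; your construction should be replaced by that citation (or by an avoidance argument with high fibre-degree hypersurfaces, which is essentially what that theorem encapsulates). Once this is done, the rest of your argument is sound: the comparison $\cO_S(m\pi^*D)\cong \pi^{[*]}\cO_Y(mD)$ via reflexivity over $\pi^{-1}(Y_{\textup{reg}})$ is correct, and the final descent along a finite surjective morphism of normal varieties is a standard fact --- though note the paper does not merely cite it, but proves it on the spot via the norm: if $\textup{div}\,t = m\pi^*D$ then $\textup{div}\,\textup{N}_{S/Y}(t) = m(\deg \pi)D$, so $D$ is $\bQ$-Cartier; the reference \cite[Lemma 5.16]{kollar_mori} is only ``somewhat related,'' so your appeal to \cite{kollar_mori} for this step should be replaced by that one-line norm computation.
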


\begin{proof}
The statement is local on $Y$, hence we may shrink $Y$ and assume that $Y$ is affine.
By \cite[Theorem 6.3]{gabber_liu_lorenzini}, there exists a subvariety $Z \subseteq X$ such that 
$\psi_{|Z}\colon Z \to Y$ is finite and surjective. 
Replacing $X$ by the normalization of $Z$, we may assume without loss of generality that $\psi$ is a finite morphism. 
Recall that $\psi^*D$ is $\mathbb{Q}$-Cartier by assumption. Shrinking $Y$ again, if necessary, we may also assume that there exist a positive integer $m$ and a rational function $t$ on $X$ such that $\textup{div}\,t=mf^*D$. One then readily checks that 
$\textup{div}\,\textup{N}_{X/Y}(t)=m (\deg\,f) D$, where $\textup{N}_{X/Y}(t)$ denotes the norm of $t$. This shows that $D$ is $\mathbb{Q}$-Cartier (see 
\cite[Lemma 5.16]{kollar_mori} for a somewhat related result).
\end{proof}

\subsection{Numerical dimension}
Let $D$ be a $\mathbb{Q}$-divisor on a complex projective manifold $X$, and let $A$ be an ample divisor on $X$.
Following Nakayama (see \cite[Definition V.2.5]{nakayama04}), we
set 
$$
\sigma(D,A):=\max \left\{ k \in \mathbb{Z}_{\geq 0} \left|\; \varlimsup_{m \to \infty} \frac{h^0\big(X, \mathcal{O}_X(\lfloor mD \rfloor + A)\big)}{m^k}>0 \right.\right\}
$$
if $h^0(X, \mathcal{O}_X(\lfloor mD \rfloor+ A)) \neq 0$ for some $m \ge 1$, and 
$\sigma(D,A):=-\infty$ otherwise. The numerical dimension of $D$ is defined as
$$
\nu(D):=\max \bigl\{ \sigma(D,A) \left|\; A \textup{ ample divisor on X} \right.\bigr\}.
$$
Let now $D$ be a $\mathbb{Q}$-Cartier $\mathbb{Q}$-divisor on a normal projective variety $X$, and let $\beta \colon Z \to X$ be a resolution of singularities. The \emph{numerical dimension} of $D$ is defined as 
$\nu(D):=\nu(\beta^*D)$. Then $\nu(D)$ is independent of the resolution, and 
it depends only on the numerical class of $D$ by \cite[Proposition V.2.7]{nakayama04}. 
We refer to  \cite{nakayama04} for more detailed properties.

\begin{rem}\label{rem:pseff_versus_numerical dimension}
Recall that a Weil $\mathbb{Q}$-divisor $D$ on a normal projective variety $X$ is said to be \textit{pseudo-effective} if, for any big $\mathbb{Q}$-divisor $B$ on $X$ and any rational number 
$\varepsilon > 0$, there exists an effective $\mathbb{Q}$-divisor $E$ on $X$ such that
$D + \varepsilon B \sim_\mathbb{Q} E$. If $D$ is $\mathbb{Q}$-Cartier, then $D$ is 
pseudo-effective if and only if $\nu(D)\ge 0$ by definition.
\end{rem}

\subsection{Quasi-\'etale morphisms} We will need the following definition.

\begin{defn}
A \textit{cover} is a finite and surjective morphism of normal varieties.

A morphism $f\colon Y \to X$ between normal varieties is called a \textit{quasi-\'etale morphism}
if $f$ is finite and \'etale in codimension one. 
\end{defn}

\begin{rem}\label{rem:quasi_etale_smooth_locus}
Let $f\colon Y \to X$ be a quasi-\'etale cover. By the Nagata-Zariski purity theorem, $f$ branches only on the singular set of $X$. In particular, we have $f^{-1}(X_{\textup{reg}}) \subseteq Y_{\textup{reg}}$. 
\end{rem}

The following elementary fact will be use throughout the paper.

\begin{fact}\label{fact:quasi_etale_cover_and_singularities}
Let $f\colon Y \to X$ be a quasi-\'etale cover between normal complex varieties. If $K_X$ is Cartier (resp. $\mathbb{Q}$-Cartier), then 
$K_Y\sim_\mathbb{Z}f^*K_X$ is Cartier (resp. $\mathbb{Q}$-Cartier) as well. 
If $X$ is terminal (resp. canonical, klt) then so is $Y$ by \cite[Proposition 3.16]{kollar97}.
\end{fact}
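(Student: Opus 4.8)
The plan is to reduce everything to the single crepant relation $K_Y \sim_\mathbb{Z} f^*K_X$, after which the Cartier assertions are formal and the statement about singularities is exactly \cite[Proposition 3.16]{kollar97}. So the real content to verify is this linear equivalence.

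First I would establish the equivalence by comparing the two divisors on a large open set. Set $U := X_\textup{reg}$ and $V := f^{-1}(U)$. By Remark \ref{rem:quasi_etale_smooth_locus} we have $V \subseteq Y_\textup{reg}$, and since $f$ branches only on $\Sing(X)$ by Nagata-Zariski purity, the restriction $f_{|V}\colon V \to U$ is an étale morphism of smooth varieties. For such a morphism the relative dualizing sheaf is trivial, so $\omega_V \cong f_{|V}^*\omega_U$; equivalently $K_Y|_V \sim_\mathbb{Z} f^*K_X|_V$ as Cartier divisors on $V$, where $f^*K_X$ denotes the pull-back of the Weil divisor $K_X$ under the finite (hence equidimensional) morphism $f$, defined as in Subsection \ref{definition:pull-back}.

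Next I would extend across the bad locus. The complements $X \setminus U$ and $Y \setminus V = f^{-1}(X \setminus U)$ have codimension at least two (as $f$ is finite surjective, hence preserves dimensions of closed subsets), and $Y$ is normal, so restriction to $V$ induces an isomorphism on divisor class groups; the equivalence on $V$ therefore lifts to $K_Y \sim_\mathbb{Z} f^*K_X$ on all of $Y$. The Cartier assertions are then immediate: the pull-back of a Cartier (resp. $\mathbb{Q}$-Cartier) divisor is Cartier (resp. $\mathbb{Q}$-Cartier), and $K_Y$ is linearly equivalent to $f^*K_X$. For the singularities, the relation $K_Y \sim_\mathbb{Z} f^*K_X$ exhibits $f$ as a finite crepant cover that is étale in codimension one, and \cite[Proposition 3.16]{kollar97} supplies the corresponding comparison of discrepancies, showing that $Y$ is terminal (resp. canonical, klt) whenever $X$ is.

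The one step I would treat with care—and the only genuine subtlety in the first part—is the vanishing of the ramification divisor implicit in $K_Y \sim_\mathbb{Z} f^*K_X$. The general ramification formula reads $K_Y = f^*K_X + R$ with $R$ an effective Weil divisor supported on the ramification locus of $f$; the quasi-étale hypothesis forces this locus into codimension at least two, and since $R$ is a divisor supported in codimension at least two it must vanish. Everything else is formal once this observation is in place.
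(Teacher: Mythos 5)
Your proof is correct and matches what the paper intends: the paper states this as an elementary Fact without proof, and your argument (restrict to $X_{\textup{reg}}$ where $f$ is étale by purity, extend across codimension two using normality, then invoke \cite[Proposition 3.16]{kollar97} for the discrepancy comparison) is exactly the standard reasoning it implicitly relies on, including the same citation for the singularity transfer. The ramification-divisor remark at the end is just a rephrasing of the same codimension-two observation, so there is nothing further to reconcile.
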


\subsection{Augmented irregularity}
The irregularity of normal complex projective varieties is
generally not invariant under quasi-\'etale maps. The notion of augmented irregularity addresses this issue (see \cite[Definition 3.1]{gkp_bo_bo}).
\begin{defn}\label{defn:augmented_irregularity}
Let $X$ be a normal complex projective variety. We denote the irregularity of $X$ by
$q(X) := h^1( X,\, \sO_X )$ and define the \emph{augmented irregularity} as
$$
\wt q(X) := \max \bigl\{q(Y) \mid Y \to X \ \text{a
quasi-\'etale cover} \bigr\} \in \mathbb N \cup \{ \infty\}.
$$
\end{defn}

\begin{rem}\label{rem:irregularity_bir_invariant}
By a result of Elkik (\cite{elkik}), canonical singularities are rational. It follows that the irregularity is a birational invariant of complex projective varieties with canonical singularities. 
\end{rem}

\begin{rem}\label{remark:augmented_irregularity_finite}
The augmented irregularity of
canonical varieties with numerically trivial canonical class is
finite. This follows easily from \cite[Proposition 8.3]{kawamata85}. 
\end{rem}

The following result often reduces the study of varieties with
trivial canonical class to those with $\wt q(X) = 0$ (see also \cite[Proposition 8.3]{kawamata85}).

\begin{thm}[{\cite[Corollary 3.6]{gkp_bo_bo}}]\label{thm:kawamata_abelian_factor}
Let $X$ be a normal complex projective variety with canonical
singularities. Assume that $K_X$ is numerically trivial. Then there exists
an abelian variety $A$, as well as a normal projective variety $Y$ with $K_{Y}\sim_\mathbb{Z} 0$ and $\wt q (Y)=0$, and a quasi-\'etale cover
$ A \times Y \to X$.
\end{thm}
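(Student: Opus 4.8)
The plan is to split off the abelian part of $X$ by means of the Albanese morphism, following Kawamata's strategy. The three ingredients I would use are: (i) for a variety with canonical singularities and numerically trivial canonical class, $K_X$ is torsion (Kawamata, Nakayama), so that after a quasi-\'etale cover we may assume $K_X\sim_{\mathbb Z}0$; (ii) Kawamata's structure theorem for the Albanese morphism of such a variety, namely \cite[Proposition 8.3]{kawamata85}; and (iii) the finiteness of the augmented irregularity (Remark \ref{remark:augmented_irregularity_finite}) together with its maximality.

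First I would reduce to the case $K_X\sim_{\mathbb Z}0$. Since $X$ is canonical with $K_X\equiv 0$, the class of $K_X$ is torsion in the divisor class group, so the associated index-one (canonical) cover $X'\to X$ is a quasi-\'etale cover with $K_{X'}\sim_{\mathbb Z}0$; by Fact \ref{fact:quasi_etale_cover_and_singularities}, $X'$ is again canonical. As the augmented irregularity is invariant under quasi-\'etale covers and finite, I would then choose a quasi-\'etale cover $X_1\to X'$ realizing the maximum, so that $q(X_1)=\wt q(X_1)=\wt q(X)=:d$; it still satisfies $K_{X_1}\sim_{\mathbb Z}0$ and is canonical (again by Fact \ref{fact:quasi_etale_cover_and_singularities}).

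Next I would consider the Albanese morphism $\alpha\colon X_1\to A_1:=\Alb(X_1)$, where $\dim A_1=q(X_1)=d$. By \cite[Proposition 8.3]{kawamata85}, $\alpha$ is a surjective, analytically locally trivial fiber bundle with connected fibers, and there is a finite \'etale cover $A\to A_1$ (so $A$ is again an abelian variety) such that $X_1\times_{A_1}A\cong A\times Y$, where $Y$ denotes a fiber of $\alpha$, a normal projective variety. The projection $A\times Y\to X_1$ is \'etale, being the base change of $A\to A_1$ along $\alpha$; composing with the quasi-\'etale covers $X_1\to X'\to X$ yields a quasi-\'etale cover $A\times Y\to X$, since the branch locus remains of codimension at least two. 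Restricting the relation $K_{A\times Y}\sim_{\mathbb Z}0$ to a slice $\{a\}\times Y\cong Y$ and using that $K_A$ restricts to zero, I obtain $K_Y\sim_{\mathbb Z}0$; in particular $Y$ is canonical. Finally, to see that $\wt q(Y)=0$, I would argue by contradiction: if some quasi-\'etale cover $Y'\to Y$ had $q(Y')>0$, then $A\times Y'\to A\times Y$ would be quasi-\'etale, and composing with $A\times Y\to X$ would produce a quasi-\'etale cover of $X$ with irregularity $q(A)+q(Y')=d+q(Y')>d=\wt q(X)$, contradicting the definition of $\wt q(X)$.

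The main obstacle is ingredient (ii): one must know that the Albanese morphism of a canonical variety with numerically trivial canonical class is an isotrivial fiber bundle which trivializes after a finite \'etale base change of the Albanese torus. This is precisely the technical heart of Kawamata's analysis, and the entire decomposition hinges on it. By contrast, the remaining verifications—that the index-one cover and the various fiber products are quasi-\'etale and preserve canonical singularities, the adjunction computation on a slice, and the irregularity bookkeeping in the last step—are routine.
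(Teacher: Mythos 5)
Your proof is correct, and it is essentially the argument behind the quoted result: the paper gives no proof of Theorem \ref{thm:kawamata_abelian_factor}, importing it from \cite[Corollary 3.6]{gkp_bo_bo}, and the proof given there is exactly your combination of the index-one cover, a quasi-\'etale cover realizing the (finite) augmented irregularity, Kawamata's structure theorem for the Albanese map \cite[Proposition 8.3]{kawamata85}, and the maximality argument forcing $\wt q(Y)=0$. The only ingredient you invoke beyond what this paper states explicitly is the quasi-\'etale invariance of $\wt q$, which is standard (and established in \cite{gkp_bo_bo}) via normalized fiber products and the trace splitting.
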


\subsection{Automorphism group} Let $X$ be a complex projective variety and let $\textup{Aut}^\circ(X)$ be the 
neutral component of the automorphism group $\textup{Aut}(X)$ of $X$; $\textup{Aut}^\circ(X)$ is an algebraic group of finite type with $\dim \textup{Aut}^\circ(X) = h^0(X,T_X)$. 
By a theorem of Chevalley, $\textup{Aut}^\circ(X)$ has a largest connected affine normal subgroup
$G$. Further, the
quotient group
$\textup{Aut}^\circ(X)/G$ is an abelian variety.
By \cite[Theorem 14.1]{uenoLN439}, if $G$ is non-trivial, then $X$ is uniruled. 
In particular, if $X$ is canonical and $K_X \equiv 0$, then $\textup{Aut}^\circ(X)$ is an abelian variety.
Lemma \ref{lemma:automorphism_group_zero_can_class} below extends this observation to klt spaces.

\begin{lemma}\label{lemma:automorphism_group_zero_can_class}
Let $X$ be a complex projective variety with klt singularities. Assume that $K_X$ is numerically trivial. Then 
the neutral component $\textup{Aut}^\circ(X)$ of the automorphism group $\textup{Aut}(X)$ of $X$ is an abelian variety.
\end{lemma}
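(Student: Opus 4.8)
The plan is to reduce the claim to the known statement for canonical varieties with $K_X \equiv 0$ (recalled in the paragraph preceding the lemma), using the structure theory of $\textup{Aut}^\circ(X)$ via Chevalley's theorem. By Chevalley's theorem, $\textup{Aut}^\circ(X)$ sits in an exact sequence $1 \to G \to \textup{Aut}^\circ(X) \to B \to 1$, where $G$ is the largest connected affine normal subgroup and $B$ is an abelian variety. Since $B$ is already abelian, the entire content of the lemma is to show that $G$ is trivial. First I would note, following the discussion in the text, that by \cite[Theorem 14.1]{uenoLN439}, if $G$ is nontrivial then $X$ is uniruled, so it suffices to prove that a klt variety $X$ with $K_X \equiv 0$ is \emph{not} uniruled.

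The key step is therefore to establish non-uniruledness. For canonical varieties this is immediate from standard theory (a canonical variety with pseudo-effective, in particular numerically trivial, canonical class cannot be uniruled, since uniruled varieties have $K_X$ not pseudo-effective by Boucksom-Demailly-P\u{a}un-Peternell or by Miyaoka-Mori). To handle the klt case I would pass to a $\mathbb{Q}$-factorialization and then reduce to the canonical setting. Concretely, I would take a resolution or a suitable modification; but the cleanest route is to use that a klt variety admits a crepant or terminal model in an appropriate sense. The honest reduction here is: uniruledness is a birational invariant among varieties with rational singularities, and a klt variety has rational singularities, so $X$ is uniruled if and only if one (equivalently any) resolution $Z \to X$ is uniruled.

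Thus I would take a resolution $\beta \colon Z \to X$ and argue about $K_Z$. Writing $K_Z = \beta^*K_X + \sum a_i E_i$ with discrepancies $a_i > -1$ (klt), and using $K_X \equiv 0$, one sees $K_Z \equiv \sum a_i E_i$ is a sum of exceptional divisors with coefficients $> -1$. The point is to deduce that $K_Z$ is pseudo-effective on a suitable model, which would force $Z$ to be non-uniruled and hence $X$ to be non-uniruled. The most efficient version of this argument is to invoke that a klt variety $X$ with $K_X \equiv 0$ has $K_X$ pseudo-effective (trivially, being numerically trivial), and non-uniruledness of such $X$ follows from the fact that a variety whose canonical class is pseudo-effective and which has mild (say klt) singularities is not uniruled; I would cite the appropriate reference (e.g. the criterion that uniruled implies $K$ not pseudo-effective, valid for $\mathbb{Q}$-factorial klt varieties after passing to a $\mathbb{Q}$-factorialization as in Section \ref{say:q_factorialization}).

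The main obstacle I anticipate is making the non-uniruledness argument rigorous at the level of klt rather than canonical singularities, since the cleanest statements in the literature about ``$K_X$ pseudo-effective implies not uniruled'' are phrased for $\mathbb{Q}$-factorial or smooth models. I would address this by first applying a $\mathbb{Q}$-factorialization $\beta \colon Z \to X$ (Fact \ref{fact:existence_factorialization}), which is small and crepant so that $K_Z \sim_\mathbb{Q} \beta^*K_X \equiv 0$ and $Z$ is $\mathbb{Q}$-factorial klt; since $\beta$ is birational and both spaces have rational singularities, $X$ is uniruled if and only if $Z$ is. On $Z$ the standard criterion applies directly, giving that $Z$ is not uniruled, hence neither is $X$. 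Therefore $G$ is trivial and $\textup{Aut}^\circ(X) = B$ is an abelian variety, as claimed.
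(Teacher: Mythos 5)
Your reduction via Chevalley and \cite[Theorem 14.1]{uenoLN439} --- that it suffices to show the largest connected affine normal subgroup $G$ of $\textup{Aut}^\circ(X)$ is trivial, and that this would follow if $X$ were not uniruled --- matches the setup recalled in the paper just before the lemma. The gap is in the step you yourself flagged as the main obstacle: the claim that a klt projective variety with $K_X \equiv 0$ is not uniruled is \emph{false}, and passing to a $\mathbb{Q}$-factorialization cannot repair it, because a $\mathbb{Q}$-factorialization is small and crepant and leaves you with a klt (in general non-canonical) variety with numerically trivial canonical class, for which the implication ``$K$ pseudo-effective $\Rightarrow$ not uniruled'' does not hold. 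Concretely, let $E_\zeta$ be the elliptic curve admitting an automorphism $\zeta$ of order three, let $A = E_\zeta \times E_\zeta$, and let $X = A/\mathbb{Z}_3$ with $\mathbb{Z}_3$ acting diagonally by $\zeta$. Then $X$ has nine quotient singularities of type $\frac{1}{3}(1,1)$, which are klt (discrepancy $-\frac{1}{3}$) but not canonical; one checks $3K_X \sim_\mathbb{Z} 0$ while $K_X \not\sim_\mathbb{Z} 0$, so $K_X \equiv 0$; yet every plurigenus of the minimal resolution $Z \to X$ vanishes (the generator of $H^0\big(X,\sO_X(mK_X)\big)$ for $3 \mid m$ is nowhere vanishing, so its pullback cannot vanish along the exceptional curves, as a section of $mK_Z$ must), and $q(Z)=0$, so $Z$ is a rational surface and $X$ is uniruled. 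This example is moreover $\mathbb{Q}$-factorial (klt surfaces are $\mathbb{Q}$-factorial), so it directly contradicts your assertion that ``on $Z$ the standard criterion applies directly.'' The BDPP-type criterion concerns the canonical class of a smooth (or at worst canonical) model; pseudo-effectivity of $K_X$ on a klt $X$ does not transfer to a resolution, because discrepancies may lie in $(-1,0)$.

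The lemma is nevertheless true, because uniruledness of $X$ is not actually the obstruction: what must be excluded is a nontrivial affine subgroup of $\textup{Aut}^\circ(X)$, not rational curves on $X$. The paper's proof does this by first invoking \cite[Corollary V 4.9]{nakayama04} to conclude that $K_X$ is torsion, then passing to the index one canonical cover $f \colon Y \to X$ (\cite[Definition 2.52]{kollar_mori}), which is quasi-\'etale, so that $Y$ is klt with $K_Y \sim_\mathbb{Z} 0$ and hence has \emph{canonical} singularities. Since $Y \cong \textup{Spec}_X \bigoplus_{i=0}^{m-1}\sO_X(-iK_X)$ is functorially attached to $X$, there is an injective morphism of algebraic groups $\textup{Aut}^\circ(X) \subseteq \textup{Aut}^\circ(Y)$; the Ueno--Chevalley argument now applies to $Y$ (canonical with $K_Y \equiv 0$, hence not uniruled), so $\textup{Aut}^\circ(Y)$ is an abelian variety, and $\textup{Aut}^\circ(X)$, being a connected algebraic subgroup of an abelian variety, is itself an abelian variety. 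In the example above this is exactly what happens: $Y = A$ and $\textup{Aut}^\circ(X)$ embeds into $A$, even though $X$ is rational. To salvage your write-up, replace the non-uniruledness claim for $X$ by this passage to the canonical cover.
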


\begin{proof}
By \cite[Corollary V 4.9]{nakayama04}, $K_X$ is torsion. Let $m$ its Cartier index, and let $f \colon Y \to X$ be the index one canonical cover, which is quasi-\'etale (\cite[Definition 2.52]{kollar_mori}). 
By Fact \ref{fact:quasi_etale_cover_and_singularities}, the variety $Y$ is then klt. Moreover, we have $K_Y\sim_\mathbb{Z} 0$ by construction, and therefore $Y$ has canonical singularities. On the other hand,
$$Y \cong \textup{Spec}_X\bigoplus_{i=0}^{m-1}\sO_X(-iK_X),$$ 
and hence there is an injective morphism of algebraic groups
$\textup{Aut}^\circ(X) \subseteq \textup{Aut}^\circ(Y)$. The lemma then follows from \cite[Theorem 14.1]{uenoLN439} applied to $Y$ as explained above. 
\end{proof}

\begin{rem}\label{remark:augmented_irregularity_vector_fields}
Let $X$ be a complex projective variety with klt singularities. Suppose that $K_X \sim_\mathbb{Z} 0$, and set $n:=\dim X$. Then
$$
\begin{array}{ccll}
\dim \textup{Aut}^\circ(X) & = & h^0(X,T_{X})\\
& = & h^0\big(X,\Omega_{X}^{[n -1]}\big) & \text{ since $\Omega_{X}^{[n -1]}\cong T_{X}$}\\
& = & h^{n-1}(X,\sO_{X})  
& \text{ by Hodge symmetry for klt spaces (see \cite[Proposition 6.9]{gkp_bo_bo})} \\
& =  & q(X) & \text{ by Serre duality using the assumption that $K_X \sim_\mathbb{Z} 0$}.\\
\end{array}
$$
\end{rem}

\section{Foliations}\label{section:foliations}

In this section, we have gathered a number of results and
facts concerning foliations which will later be used in the proofs.

\begin{defn}

A \emph{foliation} on  a normal variety $X$ over a field $k$ is a coherent subsheaf $\sG\subseteq T_X$ such that
\begin{enumerate}
\item $\sG$ is closed under the Lie bracket, and
\item $\sG$ is saturated in $T_X$. In other words, the quotient $T_X/\sG$ is torsion-free.
\end{enumerate}

The \emph{rank} $r$ of $\sG$ is the generic rank of $\sG$.
The \emph{codimension} of $\sG$ is defined as $q:=\dim X-r$. 

\medskip

The \textit{canonical class} $K_{\sG}$ of $\sG$ is any Weil divisor on $X$ such that  $\sO_X(-K_{\sG})\cong \det\sG$. 

\medskip

Suppose that $k=\mathbb{C}$. 

Let $X^\circ \subset X_{\textup{reg}}$ be the open set where $\sG_{|X_{\textup{reg}}}$ is a subbundle of $T_{X_{\textup{reg}}}$. 
A \emph{leaf} of $\sG$ is a maximal connected and immersed holomorphic submanifold $L \subset X^\circ$ such that
$T_L=\sG_{|L}$. A leaf is called \emph{algebraic} if it is open in its Zariski closure.

The foliation $\sG$ is said to be \emph{algebraically integrable} if its leaves are algebraic.

\end{defn}

We will use the following notation.

\begin{notation}
Let $\psi \colon X \to Y$ be a dominant equidimensional morphism of normal varieties. 

Write $K_{X/Y}:=K_X-\psi^*K_Y$. We will refer to it as the \emph{relative canonical divisor of $X$ over $Y$}.

Set $$R(\psi)=\sum_{D} \big(\psi^*D-{(\psi^*D)}_{\textup{red}}\big)$$
where $D$ runs through all prime divisors on $Y$. We will refer to it
as the \emph{ramification divisor of $\psi$}.
\end{notation}

\begin{exmp}\label{example:canonical_class_foliation}
Let $\psi \colon X \to Y$ be a dominant equidimensional morphism of normal varieties, and let $\sG$ be the foliation on $X$ induced by $\psi$. A straightforward computation shows that
$$K_\sG\sim_\mathbb{Z}K_{X/Y}-R(\psi).$$
\end{exmp}

\subsection{Foliations defined by $q$-forms} \label{q-forms}
Let $\sG$ be a codimension $q$ foliation on an $n$-dimensional normal variety $X$.
The \emph{normal sheaf} of $\sG$ is $\sN:=(T_X/\sG)^{**}$.
The $q$-th wedge product of the inclusion
$\sN^*\into \Omega^{[1]}_X$ gives rise to a non-zero global section 
 $\omega\in H^0(X,\Omega^{q}_X\boxtimes \det\sN)$
 whose zero locus has codimension at least two in $X$. 
Moreover, $\omega$ is \emph{locally decomposable} and \emph{integrable}.
To say that $\omega$ is locally decomposable means that, 
in a neighborhood of a general point of $X$, $\omega$ decomposes as the wedge product of $q$ local $1$-forms 
$\omega=\omega_1\wedge\cdots\wedge\omega_q$.
To say that it is integrable means that for this local decomposition one has 
$d\omega_i\wedge \omega=0$ for every  $i\in\{1,\ldots,q\}$. 
The integrability condition for $\omega$ is equivalent to the condition that $\sG$ 
is closed under the Lie bracket.

Conversely, let $\sL$ be a reflexive sheaf of rank $1$ on $X$, and let
$\omega\in H^0(X,\Omega^{q}_X\boxtimes \sL)$ be a global section
whose zero locus has codimension at least two in $X$.
Suppose that $\omega$  is locally decomposable and integrable.
Then  the kernel
of the morphism $T_X \to \Omega^{q-1}_X\boxtimes \sL$ given by the contraction with $\omega$
defines 
a foliation of codimension $q$ on $X$. 
These constructions are inverse of each other.

\subsection{Foliations described as pull-backs} \label{pullback_foliations}

Let $X$ and $Y$ be normal varieties, and let $\varphi\colon X\map Y$ be a dominant separable rational map that restricts to a morphism $\varphi^\circ\colon X^\circ\to Y^\circ$,
where $X^\circ\subset X$ and  $Y^\circ\subset Y$ are smooth open subsets.

Let $\sG$ be a codimension $q$ foliation on $Y$. Suppose that the restriction $\sG^\circ$ of $\sG$ to $Y^\circ$ is
defined by a twisted $q$-form
$\omega_{Y^\circ}\in H^0(Y^\circ,\Omega^{q}_{Y^\circ}\otimes \det\sN_{\sG^\circ}).$
Then $\omega_{Y^\circ}$ induces a non-zero twisted $q$-form 
$$\omega_{X^\circ}:= d\phi^\circ(\omega_{Y^\circ})\in 
H^0\big(X^\circ,\Omega^{q}_{X^\circ}\otimes (\varphi^\circ)^*({\det\sN_\sG}_{|Y^\circ})\big)$$ which defines a codimension $q$ foliation $\sE^\circ$ on $X^\circ$. 
\emph{The pull-back $\varphi^{-1}\sG$ of $\sG$ via $\varphi$} is the foliation on $X$ 
whose restriction to $X^\circ$ is $\sE^\circ$.

\medskip

The following observation is rather standard.
We include a proof here for the reader's convenience.

\begin{lemma}\label{lemma:pull_back_fol_and_finite_cover}
Let $f \colon Y\to X$ be a quasi-finite dominant morphism of normal complex varieties, and let $\sG$ be a foliation on $X$ of rank $1 \le r \le \dim X -1$. Let also $B$ be a codimension one irreducible component of the branch locus of $f$.
\begin{enumerate}
\item Suppose that $B$ is $\sG$-invariant. Let $D$ be any irreducible component of $f^{-1}(B)$, and let $m$ denotes the ramification index of $f$ along $D$. Then the natural map $\det f^{[*]}\sN_\sG^* \to \det \sN_{f^{-1}\sG}^*$
vanishes at order $m-1$ along $D$.
\item If $B$ is not $\sG$-invariant, then the map $\det f^{[*]}\sN_\sG^* \to \det \sN_{f^{-1}\sG}^*$
is an isomorphism at a general point in $f^{-1}(B)$.
\end{enumerate}
\end{lemma}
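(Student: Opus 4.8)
The plan is to reduce both assertions to an explicit local computation, since each is local along $D$ and concerns only codimension one behaviour. It therefore suffices to work in the analytic (or \'etale) germ at a general point of $D$. There $X$ and $Y$ are smooth, $B$ and $f(D)$ are smooth divisors, and $f$ is tamely ramified of index $m$ along $D$; by the standard normal form for covers at a general point of the ramification divisor I may choose coordinates $x_1,\dots,x_n$ on $X$ and $y_1,\dots,y_n$ on $Y$ so that $B=\{x_1=0\}$, $D=\{y_1=0\}$, and $f$ is given by $x_1=y_1^m$, $x_i=y_i$ for $i\ge 2$. Near this point $\sN_\sG^*$ is locally free of rank $q:=\dim X-r$, generated by $1$-forms $\omega_1,\dots,\omega_q$, and $\det\sN_\sG^*$ is generated by the decomposable form $\omega:=\omega_1\wedge\cdots\wedge\omega_q$. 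The foliation $f^{-1}\sG$ is generated by the pull-backs $f^*\omega_1,\dots,f^*\omega_q$, and at the smooth point the map $\det f^{[*]}\sN_\sG^*\to\det\sN_{f^{-1}\sG}^*$ becomes the inclusion sending the generator $f^*\omega$ of $f^*\det\sN_\sG^*$ into the saturated determinant $\det\sN_{f^{-1}\sG}^*$. Computing its vanishing order along $D$ thus amounts to factoring powers of $y_1$ out of $f^*\omega$.

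The key algebraic input is a clean description of invariance. Since $\sN_\sG^*$ is the annihilator of $\sG$ in $\Omega_X^{[1]}$, and this is compatible with restriction to $B$, one checks that $B$ is $\sG$-invariant if and only if $dx_1|_B\in\sN_\sG^*|_B$ inside $\Omega_X^1|_B$. Assuming invariance, I would lift $dx_1|_B$ to a generator $\omega_1=dx_1+x_1\eta_1$ of $\sN_\sG^*$ (its $dx_1$-coefficient is a unit) and then subtract suitable multiples of $\omega_1$ from the remaining generators so that $\omega_2,\dots,\omega_q$ involve only $dx_2,\dots,dx_n$. The relation $\omega|_B=dx_1|_B\wedge\omega_2|_B\wedge\cdots\wedge\omega_q|_B\neq0$ then shows that $\omega_2|_B,\dots,\omega_q|_B$ are linearly independent modulo $dx_1|_B$. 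When $B$ is not invariant, the same criterion gives $dx_1|_B\notin\sN_\sG^*|_B$, so the composite $\sN_\sG^*|_B\hookrightarrow\Omega_X^1|_B\to\Omega_B^1$ is injective at the general point; writing $\omega_j=a_j\,dx_1+\rho_j$ with $\rho_j$ free of $dx_1$, this means $\rho_1|_B,\dots,\rho_q|_B$ are linearly independent.

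With these normal forms the two computations are short. In case (1), $f^*\omega_1=y_1^{m-1}\tilde\omega_1$ with $\tilde\omega_1|_D=m\,dy_1|_D\neq0$, while $f^*\omega_j$ for $j\ge 2$ involve only $dy_2,\dots,dy_n$ and restrict to independent forms along $D$; hence $f^*\omega=y_1^{m-1}\,\omega'$ with $\omega':=\tilde\omega_1\wedge f^*\omega_2\wedge\cdots\wedge f^*\omega_q$. The $q$ factors of $\omega'$ are linearly independent at the general point of $D$, so $\langle\tilde\omega_1,f^*\omega_2,\dots,f^*\omega_q\rangle$ is already saturated and equals $\sN_{f^{-1}\sG}^*$; thus $\omega'$ generates $\det\sN_{f^{-1}\sG}^*$ and the natural map, sending $f^*\omega$ to $y_1^{m-1}\omega'$, vanishes exactly to order $m-1$. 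In case (2), writing $f^*\omega_j=m\,a_j(f)\,y_1^{m-1}\,dy_1+\beta_j$ with $\beta_j:=f^*\rho_j$ free of $dy_1$, the wedge yields $f^*\omega=\beta_1\wedge\cdots\wedge\beta_q+y_1^{m-1}\,dy_1\wedge\gamma$ for some form $\gamma$, whose restriction to $D$ is the pull-back of $\rho_1\wedge\cdots\wedge\rho_q|_B\neq0$. Hence $f^*\omega$ is non-vanishing along $D$ and already generates $\det\sN_{f^{-1}\sG}^*$, so the map is an isomorphism at the general point of $f^{-1}(B)$.

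The main obstacle I expect is not any single computation but assembling the local picture correctly: establishing the invariance criterion $dx_1|_B\in\sN_\sG^*|_B$ together with the adapted choice of generators, justifying the cyclic normal form $x_1=y_1^m$ for $f$ at a general point of $D$, and verifying in each case that the leftover decomposable form genuinely generates the \emph{saturated} determinant $\det\sN_{f^{-1}\sG}^*$ rather than merely lying inside it. Once these are in place, the factor $y_1^{m-1}$ in case (1) and the non-vanishing in case (2) follow directly from the normal form.
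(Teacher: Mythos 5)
Your proof is correct and follows essentially the same route as the paper's: reduce to an analytic-local computation at a general point of $D$, put $f$ in the cyclic normal form $(y_1,\dots,y_n)\mapsto(y_1^m,y_2,\dots,y_n)$, write the defining $q$-form of $\sG$ in a shape adapted to whether $B$ is invariant, and factor the power of $y_1$ out of its pull-back. The only difference is cosmetic: the paper asserts the adapted normal forms $dx_1\wedge\alpha_1+x_1\alpha_2$ (invariant case) and $\alpha_3$ free of $dx_1$ (non-invariant case) outright, whereas you derive them from the criterion $dx_1|_B\in\sN_\sG^*|_B$ and, in the non-invariant case, keep the $dx_1$-components of the generators and argue via independence of the $\rho_j|_B$ instead of changing coordinates.
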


\begin{proof}
Set $q:=n-r$.
Replacing $X$ by a dense open set $X^\circ$ with complement of codimension at least two in $X$ and $Y$ by $f^{-1}(X^\circ)$, we may assume without loss of generality that $X$ and $Y$ are smooth, that the branch locus of $f$ is a smooth hypersurface $B \subset X$, 
and that $\sG$ is a regular foliation. We may also assume that the ramification divisor $D:=f^{-1}(B)$ is smooth.
Given a point $x \in B$, there are analytic coordinates $(x_1,\ldots,x_n)$ centered at $x$ such that $B$ is defined by $x_1=0$
and such that $f$ is given by $(y_1,\ldots,y_n) \mapsto (y_1^m,y_2\ldots,y_n)$ for some integer $m \ge 1$ and some local analytic coordinates $(y_1,\ldots,y_n)$ centered at a point $y$ in $f^{-1}(x)$.

Suppose first that $B$ is $\sG$-invariant. Then $\sG$ is given by a local $q$-form $dx_1\wedge \alpha_1+x_1\alpha_2$, where 
$\alpha_1 \in \wedge^{q-1}\big(\sO_{X,x}dx_2\oplus\cdots\oplus\sO_{X,x}dx_n\big)$ is nowhere vanishing
and $\alpha_2 \in \wedge^{q}\big(\sO_{X,x}dx_1\oplus\cdots\oplus\sO_{X,x}dx_n\big)$.
It follows that $$df(dx_1\wedge \alpha_1+x_1\alpha_2)=x_1^{m-1}\big(mdx_1\wedge df(\alpha_1)+x_1df(\alpha_2)\big)$$ vanishes at order $m-1$ along $D$.

If $B$ is not $\sG$-invariant, then $\sG$ is given by a nowhere vanishing $q$-form $\alpha_3 \in \wedge^{q}\big(\sO_{X,x}dx_2\oplus\cdots\oplus\sO_{X,x}dx_n\big)$.
But $df(\alpha_3)$ is obviously a nowhere vanishing $q$-form. This completes the proof of the lemma.
\end{proof}

\subsection{Projectable foliations}\label{projectable_foliations}
Let $\pi\colon X \to Y$ be a dominant separable morphism between normal varieties, and let $\sG$ be a foliation on $X$.
We say that $\sG$ is \textit{projectable under} $\pi$ if there exists a saturated distribution $\sH \subseteq T_Y$ such that
the natural map $$\sG_{|\pi^{-1}(Y_{\textup{reg}})} \to \big({\pi_{|\pi^{-1}(Y_{\textup{reg}})}}^{*}\sH_{|\pi^{-1}(Y_{\textup{reg}})}\big)^{\textup{sat}}$$ is an isomorphism, where $\big({\pi_{|\pi^{-1}(Y_{\textup{reg}})}}^{*}\sH_{|\pi^{-1}(Y_{\textup{reg}})}\big)^{\textup{sat}}$
denotes the saturation of 
${\pi_{|\pi^{-1}(Y_{\textup{reg}})}}^{*}\sH_{|\pi^{-1}(Y_{\textup{reg}})}$
in $$(\pi^{*}T_Y)_{|\pi^{-1}(Y_{\textup{reg}})} \cong 
{\pi_{|\pi^{-1}(Y_{\textup{reg}})}}^{*}T_{Y_{\textup{reg}}}.$$
One then readily checks that $\sH$ is a foliation on $Y$. We refer the reader to \cite[Section 2.7]{druel_bbcd2}
for a more detailed explanation. 

\subsection{Invariant subvarieties} Let $X$ be a normal variety, 
let $Y \subseteq X$ be a closed subvariety, and let
$\partial$ be a derivation on $X$. 
Say that $Y$ is  
\textit{invariant under} $\partial$ if $\partial(\sI_Y)\subseteq \sI_Y$.

Let $\sG \subseteq T_X$ be a foliation on $X$.  Say that $Y$ is  
\textit{invariant under} $\sG$ if for any local section $\partial$ of $\sG$ over some open subset $U$ of $X$, 
$Y \cap U$ is invariant under $\partial$. 
To prove that $Y$ is invariant under $\sG$ it is enough to show that  $Y \cap U$ of $Y$ is invariant under $\sG_{|U}$ for some open set $U\subseteq X$ such that $Y\cap U$ is dense in $Y$.
If $X$ and $Y$ are smooth and $\sG \subseteq T_X$ is a subbundle, then $Y$ is invariant under $\sG$ if and only if $\sG_{|Y} \subseteq T_Y \subseteq {T_X}_{|Y}$. 

By \cite[Theorem 5]{seidenberg67}, the singular locus of a normal complex variety is invariant under any derivation. 
Other examples of invariant subsets are provided by the following easy result.

\begin{lemma}\label{lemma:singular_set_invariant}
Let $\sG \subseteq T_X$ be a foliation of rank $r\ge 1$ on a complex manifold $X$.
Then any component of the singular locus of $\sG$ is invariant under $\sG$.
\end{lemma}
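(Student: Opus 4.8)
The plan is to pass to the description of $\sG$ by a twisted $q$-form, reduce the statement to a local computation of a Lie derivative, and then invoke Seidenberg's theorem to descend from the full singular scheme to its irreducible components.

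First I would set $q:=\dim X-r$, the codimension of $\sG$; if $q=0$ then $\sG=T_X$ is everywhere nonsingular and there is nothing to prove, so I assume $q\ge 1$. As recalled in Subsection \ref{q-forms}, the foliation $\sG$ is defined by a locally decomposable, integrable twisted $q$-form $\omega\in H^0(X,\Omega^{q}_X\otimes\det\sN)$, where $\sN=(T_X/\sG)^{**}$, and a local section $\partial$ of $T_X$ lies in $\sG$ precisely when $\iota_\partial\omega=0$. Since $X$ is smooth and $\det\sN$ is a line bundle, the zero locus of $\omega$ is exactly $\Sing(\sG)$. The statement being local, I may work on a small open set $U$ on which $\det\sN$ is trivialized, so that $\omega=\sum_I a_I\,dx_I$ becomes an honest holomorphic $q$-form, and I let $\sI:=(a_I)_I\subseteq\sO_U$ be the ideal generated by its coefficients; then $\Sing(\sG)\cap U=V(\sI)$ set-theoretically, and the irreducible components of $\Sing(\sG)$ correspond to the minimal primes of $\sI$.

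Next I would exploit integrability. Writing $\omega=\omega_1\wedge\cdots\wedge\omega_q$ on the open set $X^\circ$ where $\sG$ is regular, the condition $d\omega_i\wedge\omega=0$ yields $d\omega_i=\sum_j\eta_{ij}\wedge\omega_j$ and hence $d\omega=\theta\wedge\omega$ for the holomorphic $1$-form $\theta:=\sum_i\eta_{ii}$ on $X^\circ$. As $\Sing(\sG)$ has codimension at least two in the manifold $X$, the Hartogs extension theorem lets me regard $\theta$ as holomorphic on all of $U$. Now, for a local section $\partial$ of $\sG$ (so $\iota_\partial\omega=0$), Cartan's formula together with $d\omega=\theta\wedge\omega$ give
$$\textup{L}_\partial\omega=\iota_\partial\,d\omega+d\,\iota_\partial\omega=\iota_\partial(\theta\wedge\omega)=\theta(\partial)\,\omega,$$
an identity of holomorphic $q$-forms valid on $X^\circ$, and hence on $U$ by continuity. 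Expanding the left-hand side as $\textup{L}_\partial\omega=\sum_I\partial(a_I)\,dx_I+\sum_I a_I\,\textup{L}_\partial(dx_I)$, and noting that the coefficients of $\textup{L}_\partial(dx_I)$ are partial derivatives of the components of $\partial$—in particular independent of the $a_I$—a comparison of the coefficients of each $dx_J$ gives
$$\partial(a_J)=\theta(\partial)\,a_J-\sum_I a_I\,[\textup{L}_\partial(dx_I)]_J\in\sI.$$
Thus $\partial(\sI)\subseteq\sI$, i.e.\ the coefficient ideal of $\omega$ is invariant under $\partial$.

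Finally, since $\sO_U$ is a Noetherian ring containing $\bQ$, Seidenberg's theorem (\cite[Theorem 5]{seidenberg67}) guarantees that every minimal prime of the $\partial$-invariant ideal $\sI$ is again invariant under $\partial$. As these minimal primes cut out exactly the irreducible components of $\Sing(\sG)$, each such component is invariant under every local section of $\sG$, which is the assertion. The only genuinely delicate point is this last descent: the computation yields invariance of the (possibly non-reduced) zero scheme of $\omega$, and it is the characteristic-zero result of Seidenberg—already invoked in the excerpt for the singular locus of a normal variety—that upgrades this to invariance of the individual reduced components.
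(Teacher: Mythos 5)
Your argument is correct in substance, and it takes a genuinely different route from the paper's. The paper proceeds by induction on the rank $r$: invariance is automatic along a component at whose general point every section of $\sG$ vanishes, so one may assume some $\partial\in H^0(X,\sG)$ is non-zero at a general point of the component; Frobenius applied to the regular rank-one foliation $\sO_X\partial$ then gives a local product $U\cong\mathbb{D}\times W$ with $\sG_{|U}=\phi^{-1}\sH$ for a foliation $\sH$ of rank $r-1$ on $W$, the component becomes the preimage of a component of the singular locus of $\sH$, and induction concludes. You instead work uniformly in all ranks with the defining twisted $q$-form, prove $\partial(\sI)\subseteq\sI$ for the coefficient ideal by a Lie-derivative computation, and descend to the reduced components via Seidenberg. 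Your route is the classical differential-forms argument; it even yields the slightly stronger statement that the (possibly non-reduced) singular scheme is invariant, whereas the paper's proof is softer and never needs the identity $d\omega=\theta\wedge\omega$.

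Two steps of your write-up need repair, though neither requires a new idea. First, the $1$-form $\theta$ exists only locally on $X^\circ$: the decomposition $\omega=\omega_1\wedge\cdots\wedge\omega_q$ is local, and two local solutions of $d\omega=\theta\wedge\omega$ differ by a local section of $\sN^*\subset\Omega^1_{X^\circ}$, so there is in general no single holomorphic $\theta$ on $X^\circ$ (or even on $V\setminus\Sing(\sG)$ for a small ball $V$) to which Hartogs could be applied. The repair is immediate and stays inside your framework: for $\partial$ a local section of $\sG$, the scalar $\theta(\partial)$ is independent of the choice of $\theta$, since the ambiguity lies in $\sN^*$, which annihilates $\partial$; hence the local functions $\theta(\partial)$ glue to a holomorphic function $f$ off $\Sing(\sG)$, $f$ extends across $\Sing(\sG)$ by Hartogs, and your identity becomes $\textup{L}_\partial\omega=f\omega$, after which the coefficient comparison goes through verbatim. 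Second, $\sO_U$ is not Noetherian (rings of holomorphic functions on positive-dimensional open sets never are), so Seidenberg cannot be applied to it; apply it instead to the germs $\sI_x\subseteq\sO_{X,x}$, which are Noetherian local $\bQ$-algebras, at points of the given component, and recall (as the paper notes) that invariance of a component may be checked on a dense open subset of it. Finally, your opening assertion that $\Sing(\sG)$ is exactly the zero locus of $\omega$ is true on a manifold but is used without proof; since the lemma's singular locus is by definition the locus where $\sG$ fails to be a subbundle of $T_X$, this identification deserves a line (if $\omega(x)\neq 0$, germs $s_1,\ldots,s_q$ of $T_X/\sG$ whose wedge generates $\det\sN$ at $x$ yield a map $\sO^{\oplus q}\to T_X/\sG$ which is an isomorphism in codimension one with determinant non-vanishing at $x$, forcing $T_X/\sG$ to be locally free at $x$).
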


\begin{proof}
We argue by induction on $r\ge 1$.

If $r=1$, then the singular set of $\sG$ is obviously invariant under $\sG$. 

Suppose from now on that $r \ge 2$.
The statement is local on $X$, hence we may shrink $X$ and assume that $X$ is affine.
Let $S$ be a component of the singular locus of $\sG$, and let also $x \in S$ be a general point.
We may also assume that 
$\partial(\sI_{\{x\}})\not\subseteq \sI_{\{x\}}$
for some $\partial \in H^0(X,\sG) \subseteq H^0(X,T_X)$. But then $\sO_X\partial\subset T_X$ is a regular foliation at $x$ of rank one. By a theorem of Frobenius, there exist an open neighborhood $U$ of $x$ with respect to the analytic topology and an isomorphism of analytic varieties
$U\cong \mathbb{D} \times W$ such that $\sO_X\partial\subset T_X$ is induced on $U$ by the projection $\phi\colon U\cong \mathbb{D} \times W \to W$, where $\mathbb{D}$ is the complex open unit disk
and $W$ is a germ of smooth analytic variety. In particular, there is a foliation $\sH$ on $W$ such that 
$\sG_{|U} = \phi^{-1}\sH$, and hence $S \cap U = \phi^{-1}(T)$ for some component $T$ of the singular set of $\sH$.
By induction, we conclude that $T$ is invariant under $\sH$. This immediately implies that $S$ is invariant under $\sG$, completing the proof of the lemma.
\end{proof}

\subsection{The family of leaves}\label{family_leaves} We refer the reader to \cite[Remark 3.12]{codim_1_del_pezzo_fols} for a more detailed explanation.
Let $X$ be a normal complex projective variety, and let $\sG$ be an algebraically integrable foliation on $X$.
There is a unique normal complex projective variety $Y$ contained in the normalization 
of the Chow variety of $X$ 
whose general point parametrizes the closure of a general leaf of $\sG$
(viewed as a reduced and irreducible cycle in $X$).
Let $Z \to Y\times X$ denotes the normalization of the universal cycle.
It comes with morphisms

\begin{center}
\begin{tikzcd}
Z \ar[r, "\beta"]\ar[d, "\psi"] & X \\
 Y &
\end{tikzcd}
\end{center}

\noindent where $\beta\colon Z\to X$ is birational and, for a general point $y\in Y$, 
$\beta\big(\psi^{-1}(y)\big) \subseteq X$ is the closure of a leaf of $\sG$.
The morphism $Z \to Y$ is called the \emph{family of leaves} and $Y$ is called
the \emph{space of leaves} of $\sG$.

Suppose furthermore that $K_\sG$ is $\bQ$-Cartier. 
There is a canonically defined effective Weil $\bQ$-divisor $B$ on $Z$ such that 
$$K_{\beta^{-1}\sG}+B\sim_\mathbb{Z}K_{Z/Y}-R(\psi)+B \sim_\mathbb{Q} \beta^* K_\sG,$$
where $R(\psi)$ denotes the ramification divisor of $\psi$. Note that $B$ is $\beta$-exceptional since $\beta_*K_{\beta^{-1}\sG}\sim_\mathbb{Z}K_\sG$.

The following property holds in addition. 
Let $m$ be a positive integer and let $X ^\circ \subseteq X$ be a dense open set such that 
$\sO_{X^\circ}\big(m{K_\sG}_{|X^\circ}\big)\cong \sO_{X^\circ}$.
Let $f^\circ \colon X_1^\circ \to X^\circ$ be the associated cyclic cover, which is quasi-\'etale (see \cite[Definition 2.52]{kollar_mori}). Finally, let $Z_1^\circ$ be the normalization of the product $Z^\circ \times_{X^\circ} X_1^\circ$, where 
$Z^\circ:=\beta^{-1}(X^\circ)$. 
If $\beta_1^\circ\colon Z_1^\circ \to X_1^\circ$ 
and $g^\circ\colon Z_1^\circ \to Z^\circ$
denote the natural morphisms, then there exists an effective $\beta_1^\circ$-exceptional divisor $B_1^\circ$ on $Z_1^\circ$ such that $$K_{(\beta_1^\circ)^{-1}(f^\circ)^{-1}(\sG_{|X^\circ})}+B_1^\circ\sim_\mathbb{Z} (\beta_1^\circ)^* K_{(f^\circ)^{-1}(\sG_{|X^\circ})}.$$ 
Let $C_1^\circ$ denotes the non $(\beta_1^\circ)^{-1}(f^\circ)^{-1}(\sG_{|X^\circ})$-invariant part of the ramification divisor $R(g^\circ)$ of $g^\circ$. Using Lemma \ref{lemma:pull_back_fol_and_finite_cover} applied to $g^\circ$ and $(\beta^{-1}\sG)_{|Z^\circ}$, we easily obtain 
$(g^\circ)^*\big(B_{|Z^\circ}\big) \sim_\mathbb{Q} B_1^\circ + C_1^\circ$. By the negativity lemma, 
we get 
$$(g^\circ)^*\big(B_{|Z^\circ}\big) = B_1^\circ + C_1^\circ.$$

\subsection{Bertini-type results} 
The present subsection is devoted to the following auxiliary result.

\begin{prop}\label{prop:bertini}
Let $X$ be a normal complex projective variety with $\dim X \ge 2$, and let $\sG\subseteq T_X$ be a foliation of rank $2 \le r \le \dim X -1$. If
$H \in |\sL|$ is a general 
member of a basepoint-free linear system corresponding to $\sL\in\textup{Pic}(X)$, then $\sG_{|H}\subseteq {T_X}_{|H}$ and $\sG_H:={\sG}_{|H}\cap T_H$ is a foliation on $H$. In addition, the following holds.
\begin{enumerate}
\item Suppose that $H$ is transverse to $\sG$ at a general point in $X$. Then $\sG_H$ has rank $r-1$, and there exists an effective divisor $B$ on $H$ such that 
$$K_{\sG_H}\sim_\mathbb{Z}(K_\sG+H)_{|H}-B.$$ 
Moreover, if $B_1$ is a prime divisor on $H$, then $B_1 \subseteq \textup{Supp}\,B$ if and only if $\sG$ is tangent to $H$ at a general point of $B_1$.
\item Suppose that there is a dense open set $X^\circ \subseteq X_{\textup{reg}}$ with complement of codimension at least two satisfying the following property. For each $x \in X^\circ$, $\sG$ is regular at $x$, and
there exist $H_1 \in |\sL|$ and $H_2 \in |\sL|$ passing through $x$ with $H_1 \neq H_2$ such that any 
member of $\langle H_1,H_2\rangle$ 
is transverse to $\sG$ at $x$. Then $$K_{\sG_H}\sim_\mathbb{Z}(K_\sG+H)_{|H}.$$
\item Suppose finally that $\sL$ is very ample. Then we have $$K_{\sG_H}\sim_\mathbb{Z}(K_\sG+H)_{|H}.$$
\end{enumerate}
\end{prop}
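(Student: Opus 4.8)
The plan is to perform the whole construction on the smooth locus, where $\sG$ is an honest subbundle of $T_X$, and to work up to closed sets of codimension at least two, which is harmless since $K_{\sG_H}$, $K_\sG$ and $H$ are each determined by their restriction to such an open set. First I would invoke Bertini to guarantee that a general $H\in|\sL|$ is normal, that $H\cap X_{\textup{reg}}$ is smooth, and that $H$ meets the singular loci of $X$ and of $\sG$ in codimension at least two on $H$. Because $T_X/\sG$ is torsion-free and the section cutting out $H$ is a nonzerodivisor, one has $\textup{Tor}_1^{\sO_X}(\sO_H,T_X/\sG)=0$, so restriction preserves injectivity and $\sG_{|H}\into {T_X}_{|H}$. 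Composing with the conormal sequence $0\to T_H \to {T_X}_{|H}\to N_{H/X}\to 0$ then exhibits $\sG_H=\sG_{|H}\cap T_H$ as the kernel of $\sG_{|H}\to N_{H/X}\cong \sO_H(H)$. This kernel is saturated in $T_H$, being the kernel of a map into the torsion-free sheaf ${\sN}_{|H}$, and it is closed under the Lie bracket because its local sections are sections of $\sG$ tangent to $H$, and vector fields tangent to a smooth hypersurface form a Lie subalgebra; hence $\sG_H$ is a foliation.

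For part (1), transversality at the general point says that $\sG_{|H}\to \sO_H(H)$ is generically surjective, so $\sG_H$ has rank $r-1$ and, on the smooth part of $H$, there is an exact sequence $0 \to \sG_H \to \sG_{|H}\to \sO_H(H)\otimes \sI_B \to 0$, where $B$ is the divisorial part of the tangency scheme, that is, the locus where $\sG$ is tangent to $H$. Taking reflexive determinants and using $\det\sG_{|H}\cong \sO_H(-K_\sG)_{|H}$ yields $K_{\sG_H}\sim_\mathbb{Z}(K_\sG+H)_{|H}-B$. Effectivity of $B$ and the identification of its support with the prime divisors along which $\sG$ is generically tangent to $H$ are immediate from the construction of $B$ as the degeneracy divisor of $\sG_{|H}\to \sO_H(H)$.

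The core of the argument is to show $B=0$ in cases (2) and (3), and this is exactly where the hypothesis $r\ge 2$ is indispensable. I would form the incidence variety $\Sigma=\{(x,H)\in X^\circ\times|\sL| : x\in H,\ \sG \text{ tangent to } H \text{ at } x\}$. For fixed $x\in X^\circ$ the fibre $T_x\subseteq|\sL|$ is cut out by the conditions $s(x)=0$ and ${ds_{|x}}_{|\sG_x}=0$, which are linear in the defining section $s$, so $T_x$ is a linear subspace contained in the hyperplane $W_x\subseteq|\sL|$ of members through $x$. The assumption in (2) furnishes a line $\langle H_1,H_2\rangle\subseteq W_x$ disjoint from $T_x$; since disjoint linear subspaces of $W_x$ satisfy a dimension deficiency, this forces $\codim_{|\sL|}T_x\ge 3$. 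Hence $\dim\Sigma\le \dim X+(\dim|\sL|-3)$, so the fibre of $\Sigma\to|\sL|$ over a general $H$ — precisely the tangency locus of $\sG$ on that $H$ meeting $X^\circ$ — has dimension at most $\dim X-3=\dim H-2$. Such a subvariety is not a divisor, and the residual tangency, supported in the codimension-two complement of $X^\circ$, meets a general $H$ in dimension at most $\dim H-2$ as well; therefore $B=0$ and $K_{\sG_H}\sim_\mathbb{Z}(K_\sG+H)_{|H}$.

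Finally, for part (3) I would deduce the hypothesis of (2) from very ampleness. Take $X^\circ$ to be the open set, with complement of codimension at least two, where $\sG$ is a regular rank-$r$ subbundle. Very ampleness of $\sL$ makes the $1$-jet evaluation surjective, so the map $H^0(X,\sL\otimes\frm_x)\to \sG_x^*\otimes \sL_x$ sending $s\mapsto {ds_{|x}}_{|\sG_x}$ is onto an $r$-dimensional space; its kernel, the sections tangent to $\sG$ at $x$, has codimension $r\ge 2$, so a general line in $W_x$ avoids it and provides the required pencil. The conclusion then follows from part (2). The step I expect to be the main obstacle is the bookkeeping of the tangency divisor $B$ together with the estimate $\codim_{|\sL|}T_x\ge 3$, and checking that every exceptional locus genuinely lies in codimension at least two, so that the reflexive determinant computation is left undisturbed.
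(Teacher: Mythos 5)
Your proposal is correct and follows essentially the same route as the paper's proof: the saturation statements for a general $H$ (which the paper gets from \cite[Th\'eor\`eme 12.2.1]{ega28}), the tangency divisor $B$ of Item (1) — which you produce as the degeneracy divisor of $\sG_{|H}\to \sO_H(H)$ while the paper, dually, takes the maximal divisor along which the restricted twisted $q$-form vanishes — the incidence-variety dimension count for Item (2) (exactly the argument of \cite[Lemma 2.9]{fol_index} that the paper cites), and the $1$-jet consequence of very ampleness reducing Item (3) to Item (2). The only slips are cosmetic: the quotient in your exact sequence should be $\sI_Z\otimes\sO_H(H)$ for the full tangency scheme $Z$ rather than $\sI_B\otimes\sO_H(H)$, and the saturation of $\sG_H$ in $T_H$ rests on torsion-freeness of $\sN_{|H}$ for general $H$ (again \cite[Th\'eor\`eme 12.2.1]{ega28}); neither affects the determinant computation.
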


\begin{proof}
By \cite[Th\'eor\`eme 12.2.1]{ega28}, $\sG_{|H}\subseteq {T_X}_{|H}$ and $\sG_H={\sG}_{|H}\cap T_H$ are saturated in $T_H$. 
On the other hand, $\sG_H$ is obviously closed under the Lie bracket.
This proves that 
$\sG_H$ is a foliation.

Set $\sN:=T_X/\sG$ and $q=n-r$, and let $\omega\in H^0(X,\Omega^{q}_X\boxtimes \det\sN)$
be a non-zero twisted $q$-form defining $\sG$. Let also $\omega_H\in H^0(H,\Omega^{q}_H\boxtimes \det\sN_{|H})$
be the induced $q$-form, and let $B$ be the maximal effective divisor on $H$ such that 
$\omega_H\in H^0\big(H,\Omega^{q}_H\boxtimes \det\sN_{|H}\boxtimes\sO_H(-B)\big)$.
Then $\omega_H$ is a twisted $q$-form defining $\sG_H$ since the sheaf $\sG_H$ is reflexive by \cite[Remark 2.3]{fano_fols}.
A straightforward computation then shows that
$K_{\sG_H}\sim_\mathbb{Z}(K_\sG+H)_{|H}-B$ 
using the adjunction formula $K_H\sim_\mathbb{Z} (K_X+H)_{|H}$. This shows Item (1).

Now, Item (2) follows from Item (1) and an easy dimension count (see proof of \cite[Lemma 2.9]{fol_index}), while Item (3) is an immediate consequence of Item (2). 
\end{proof}

\section{Singularities of foliations}\label{section:singularities}

There are several notions of singularities for foliations. The notion of \textit{reduced}
foliations has been used in the birational classification of foliations by curves on surfaces (see [Bru04]). More recently, notions of singularities coming  from  the  minimal  model  program  have  shown  to be very  useful  when  studying  birational  geometry  of foliations. We refer the reader to \cite[Section I]{mcquillan08} for an in-depth discussion. 
Here we only recall the notion of
canonical foliation following McQuillan (see \cite[Definition I.1.2]{mcquillan08}).

\begin{defn}\label{definition:canonical_singularities}
Let $\sG$ be a foliation on a normal complex variety $X$. Suppose that $K_\sG$ is $\mathbb{Q}$-Cartier.
Let $\beta\colon Z \to X$ be a projective birational morphism. 
Then there are uniquely defined rational numbers $a(E,X,\sG)$ such that
$$
K_{\beta^{-1}{\sG}}\sim_\mathbb{Q}\beta^*K_{\sG}+ \sum_E a(E,X,\sG)E,
$$
where $E$ runs through all exceptional prime divisors for $\beta$.
The rational numbers $a(E,X,\sG)$ do not depend on the birational morphism $\beta$,
but only on the valuations associated to the $E$. 
We say that $\sG$ is \textit{canonical} if, for all $E$ exceptional over $X$, $a(E,X,\sG) \ge 0$.
\end{defn}

\subsection{Elementary properties}
In this subsection we analyze the behaviour of canonical singularities with respect to birational maps, finite covers, and projections.

\begin{lemma}\label{lemma:singularities_birational_morphism}
Let $\beta\colon Z \to X$ be a birational projective morphism of normal complex varieties, and let $\sG$ be a foliation on $X$.
Suppose that $K_\sG$ is $\mathbb{Q}$-Cartier.
\begin{enumerate}
\item Suppose that $K_{\beta^{-1}\sG}\sim_\mathbb{Q} \beta^*K_\sG+E$ for some effective $\beta$-exceptional 
$\mathbb{Q}$-divisor on $Z$. If 
$\beta^{-1}\sG$ is canonical, then so is $\sG$.
\item If $K_{\beta^{-1}\sG}\sim_\mathbb{Q} \beta^*K_\sG$, then $\sG$ is canonical if and only if so is $\beta^{-1}\sG$.
\end{enumerate}
\end{lemma}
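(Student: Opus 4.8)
The plan is to rely on the fact, recorded in Definition~\ref{definition:canonical_singularities}, that each foliated discrepancy $a(F,X,\sG)$ depends only on the valuation of $F$, together with the resulting chain rule under composition of birational morphisms. First I would normalize the canonical divisors so that $\beta_*K_{\beta^{-1}\sG}=K_\sG$; then $K_{\beta^{-1}\sG}=\beta^*K_\sG+E$ is an honest equality, with $E=\sum_j a(E_j,X,\sG)E_j$ the $\beta$-exceptional discrepancy divisor (effective by hypothesis in Item~(1), zero in Item~(2)). Given any prime divisor $F$ over $X$, I would pick a projective birational morphism $\pi\colon W\to Z$ on which $F$ appears and set $\gamma:=\beta\circ\pi$. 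Since pull-back of foliations is functorial, $\gamma^{-1}\sG=\pi^{-1}(\beta^{-1}\sG)$; applying the defining relation for discrepancies to $\pi$ (for $\beta^{-1}\sG$, whose canonical class is $\mathbb{Q}$-Cartier) and to $\gamma$ (for $\sG$), and substituting $K_{\beta^{-1}\sG}=\beta^*K_\sG+E$, the term $\gamma^*K_\sG$ cancels and I arrive at the key identity
$$\sum_{F'} a(F',X,\sG)\,F' \;=\; \pi^*E \;+\; \sum_{F''} a(F'',Z,\beta^{-1}\sG)\,F'',$$
an equality of $\mathbb{Q}$-divisors on $W$, where $F'$ runs over $\gamma$-exceptional and $F''$ over $\pi$-exceptional prime divisors (note $\pi^*E$ is $\gamma$-exceptional since $E$ is $\beta$-exceptional).

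For Item~(1) I would compare the coefficient of a $\gamma$-exceptional prime $F$. If $F$ is $\pi$-exceptional, that coefficient is $a(F,X,\sG)$ on the left and equals $(\text{coeff. of }F\text{ in }\pi^*E)+a(F,Z,\beta^{-1}\sG)$ on the right, both summands being $\ge 0$ because $E\ge 0$ forces $\pi^*E\ge 0$ and $\beta^{-1}\sG$ is canonical. If instead $F$ is the strict transform of a $\beta$-exceptional prime $E_0$ on $Z$, then $F$ is absent from the $\pi$-exceptional sum and its coefficient equals that of $E_0$ in $E$, again $\ge 0$. Thus $a(F,X,\sG)\ge 0$ in every case, so $\sG$ is canonical.

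For Item~(2), the implication ``$\beta^{-1}\sG$ canonical $\Rightarrow \sG$ canonical'' is exactly Item~(1) with $E=0$. For the converse, the key identity with $E=0$ becomes $\sum_{F'} a(F',X,\sG)F' = \sum_{F''} a(F'',Z,\beta^{-1}\sG)F''$; comparing the coefficient of an arbitrary $\pi$-exceptional prime $F$ gives $a(F,Z,\beta^{-1}\sG)=a(F,X,\sG)$, which is $\ge 0$ whenever $\sG$ is canonical. Since every prime divisor exceptional over $Z$ occurs as such an $F$ on a suitable $W$, the foliation $\beta^{-1}\sG$ is canonical.

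I expect the only delicate point to be the bookkeeping in the coefficient comparison: one must carefully separate, among the prime divisors on $W$, those that are $\pi$-exceptional from the strict transforms of $\beta$-exceptional divisors on $Z$, and check that $\pi^*E$ contributes a nonnegative coefficient in each case. The functoriality $\gamma^{-1}\sG=\pi^{-1}(\beta^{-1}\sG)$ and the $\gamma$-exceptionality of $\pi^*E$ are routine once the models are fixed.
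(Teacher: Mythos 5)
Your proof is correct and takes essentially the same route as the paper: the paper likewise passes to a further birational model $\beta_1\colon Z_1 \to Z$, identifies the discrepancy divisor $E_1$ of $\sG$ on $Z_1$ as $\beta_1^*E$ plus the ($\beta_1$-exceptional) discrepancy divisor of $\beta^{-1}\sG$ --- which is exactly your key identity --- and concludes from the effectivity of both terms. Your coefficient-by-coefficient comparison is just a more explicit rendering of the paper's divisor-level argument, and your appeal to uniqueness of the exceptional divisor in the discrepancy relation replaces the paper's short pushforward computation showing $E_1-\beta_1^*E$ is $\beta_1$-exceptional.
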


\begin{proof}We can write $K_{\beta^{-1}\sG}\sim_\mathbb{Q} \beta^*K_\sG+E$ for some $\beta$-exceptional 
$\mathbb{Q}$-divisor on $Z$.

Let $Z_1$ be a normal complex variety, and let $\beta_1\colon Z_1 \to Z$ be a birational projective morphism. Write 
$$K_{(\beta\circ\beta_1)^{-1}\sG}\sim_\mathbb{Q}(\beta\circ\beta_1)^*\sG+E_1$$ for some 
$\mathbb{Q}$-divisor $E_1$ on $Z_1$ with support contained in 
$\textup{Exc}\,\beta\circ\beta_1$.
Then 
$$K_{(\beta\circ\beta_1)^{-1}\sG}\sim_\mathbb{Q} \beta_1^*K_{\beta^{-1}\sG}+E_1-\beta_1^*E.$$
Note that $E_1-\beta_1^*E$ is supported on $\textup{Exc}\,\beta_1$. Indeed, we have $(\beta_1)_*E_1-E=(\beta_1)_*\big(E_1-\beta_1^*E\big)\sim_\mathbb{Q}0$ since $(\beta_1)_*K_{(\beta\circ\beta_1)^{-1}\sG}\sim_\mathbb{Z} K_{\beta^{-1}\sG}$. On the other hand,
$(\beta_1)_*E_1-E$ is $\beta$-exceptional, and hence we must have $(\beta_1)_*E_1-E=0$. This immediately implies that 
$E_1-\beta_1^*E$ is $\beta_1$-exceptional.

If $E$ is effective and $\beta^{-1}\sG$ is canonical, then $E_1-\beta_1^*E$ is effective, and hence so is $E_1$. This proves Item (1).

If $E=0$, then we see that $\sG$ is canonical if and only if so is $\beta^{-1}\sG$, proving Item (2). 
\end{proof}

\begin{lemma}\label{lemma:canonical_quasi_etale_cover}
Let $f\colon X_1 \to X$ be a finite cover of normal complex varieties, and let $\sG$ be a foliation on $X$ with
$K_\sG$ $\mathbb{Q}$-Cartier. Suppose that any codimension one component of the branch locus of $f$ is 
$\sG$-invariant. If $\sG$ is canonical, then so is $f^{-1}\sG$.
\end{lemma}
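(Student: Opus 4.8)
The plan is to first establish a Riemann--Hurwitz-type formula expressing $K_{f^{-1}\sG}$ in terms of $f^*K_\sG$ for an \emph{arbitrary} finite cover, and then to run the standard comparison of discrepancies through a normalized fibre product, exactly as in the classical case of varieties (cf. \cite[Proposition 5.20]{kollar_mori}).

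First I would record the behaviour of the foliated canonical class in codimension one. Writing $N^*$ for a Weil divisor with $\sO_X(N^*)\cong\det\sN_\sG^*$, the identity $\det T_X\cong\det\sG\boxtimes\det\sN_\sG$ gives $K_\sG\sim_\mathbb{Z}K_X-N^*$, and likewise $K_{f^{-1}\sG}\sim_\mathbb{Z}K_{X_1}-N_1^*$ on $X_1$. All the relevant relations are determined in codimension one on normal varieties, so I can combine the Riemann--Hurwitz formula $K_{X_1}\sim_\mathbb{Z}f^*K_X+\sum_D(m_D-1)D$, where $D$ runs through all ramification divisors and $m_D$ is the ramification index, with Lemma~\ref{lemma:pull_back_fol_and_finite_cover}, which yields $N_1^*\sim_\mathbb{Z}f^*N^*+\sum_{D\ \sG\text{-inv}}(m_D-1)D$ (the invariant ramification contributing through the vanishing order $m_D-1$, the non-invariant one contributing nothing). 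Subtracting gives
$$K_{f^{-1}\sG}\sim_\mathbb{Q}f^*K_\sG+\sum_{D\ \text{not}\ \sG\text{-inv}}(m_D-1)D,$$
an effective correction supported on the non-invariant part of the ramification. In particular $K_{f^{-1}\sG}$ is $\mathbb{Q}$-Cartier, and under the present hypothesis that every codimension one component of the branch locus is $\sG$-invariant the correction vanishes, so $f$ is crepant for the foliation: $K_{f^{-1}\sG}\sim_\mathbb{Q}f^*K_\sG$.

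Next I would reduce canonicity to a discrepancy estimate. It suffices to prove $a(E_1,X_1,f^{-1}\sG)\ge 0$ for every prime divisor $E_1$ exceptional over $X_1$. Let $E$ be the centre over $X$ of the restriction of the divisorial valuation $v_{E_1}$ to $\mathbb{C}(X)$; since $f$ is finite, $E$ is again a divisorial valuation, and it is exceptional over $X$. I would choose a smooth birational model $\beta\colon Z\to X$ on which $E$ is realized by a prime divisor, and set $Z_1:=(Z\times_X X_1)^\nu$, so that the two projections induce a finite morphism $g\colon Z_1\to Z$ and a birational morphism $\beta_1\colon Z_1\to X_1$ with $f\circ\beta_1=\beta\circ g$. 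As $Z_1$ is the normalization of $Z$ in $\mathbb{C}(X_1)$, the valuation $v_{E_1}$ is realized by a prime divisor of $Z_1$ lying over $E$, so I may assume $E_1\subset Z_1$. Functoriality of the pull-back of foliations then gives $\beta_1^{-1}(f^{-1}\sG)=g^{-1}(\beta^{-1}\sG)$.

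Finally I would combine the ingredients. On $Z$, canonicity of $\sG$ gives $K_{\beta^{-1}\sG}\sim_\mathbb{Q}\beta^*K_\sG+A$ with $A:=\sum_{E'}a(E',X,\sG)E'\ge 0$, everything being $\mathbb{Q}$-Cartier since $Z$ is smooth. Applying the formula established above to the finite morphism $g$ and the foliation $\beta^{-1}\sG$ produces $K_{g^{-1}(\beta^{-1}\sG)}\sim_\mathbb{Q}g^*K_{\beta^{-1}\sG}+R_g$ with $R_g\ge 0$ effective. Substituting, and using $g^*\beta^*K_\sG=\beta_1^*f^*K_\sG\sim_\mathbb{Q}\beta_1^*K_{f^{-1}\sG}$ from the crepant formula, I obtain
$$K_{\beta_1^{-1}(f^{-1}\sG)}\sim_\mathbb{Q}\beta_1^*K_{f^{-1}\sG}+g^*A+R_g.$$
The difference $K_{\beta_1^{-1}(f^{-1}\sG)}-\beta_1^*K_{f^{-1}\sG}$ has vanishing pushforward to $X_1$, hence is $\beta_1$-exceptional, so comparing with the defining expression $K_{\beta_1^{-1}(f^{-1}\sG)}\sim_\mathbb{Q}\beta_1^*K_{f^{-1}\sG}+\sum_{E_1'}a(E_1',X_1,f^{-1}\sG)E_1'$ forces $\sum_{E_1'}a(E_1',X_1,f^{-1}\sG)E_1'=g^*A+R_g\ge 0$. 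Thus every discrepancy is nonnegative, in particular $a(E_1,X_1,f^{-1}\sG)\ge 0$, and $f^{-1}\sG$ is canonical. I expect the main obstacle to be the first step: correctly bookkeeping the ramification indices so that the $\sG$-invariant part of the ramification of $f$ cancels precisely against the Riemann--Hurwitz contribution to $K_{X_1}$, together with keeping track of the $\mathbb{Q}$-Cartier hypotheses, which is why I pass to the smooth model $Z$ before invoking the formula for $g$.
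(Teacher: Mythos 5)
Your proposal is correct in substance and runs on the same engine as the paper's proof: everything reduces to the dichotomy of Lemma~\ref{lemma:pull_back_fol_and_finite_cover} (invariant ramification contributes the full order $m-1$ to the conormal pull-back, non-invariant ramification contributes nothing), applied once to $f$ to get the crepancy $K_{f^{-1}\sG}\sim_\mathbb{Q}f^*K_\sG$, and once more to a finite morphism $g\colon Z_1\to Z$ between birational models to control discrepancies. The differences are in the packaging, and they are real. Where you realize a given exceptional valuation $E_1$ over $X_1$ by restricting it to $\mathbb{C}(X)$, extracting the restricted valuation $E$ on a smooth model $Z$ of $X$, and forming $Z_1$ as the normalization of $Z$ in $\mathbb{C}(X_1)$, the paper goes in the opposite direction: it starts from an \emph{arbitrary} birational model $\beta_1\colon Z_1\to X_1$ and invokes \cite[Theorem 3.17]{kollar97} to produce the model $Z$ of $X$ underneath it. Your construction is more self-contained (only standard valuation theory is needed), at the price of the facts you correctly assert: that the restriction of a divisorial valuation along a finite extension is divisorial, and that $E_1$ reappears as a divisor on the normalized fibre product. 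Second, where you prove a global foliated Riemann--Hurwitz identity and then identify the discrepancy divisor with $g^*A+R_g$ by uniqueness, the paper computes each discrepancy coefficient directly, $a(E_1,X_1,f^{-1}\sG)=m\,a(E,X,\sG)$ or $m\,a(E,X,\sG)+m-1$ according to whether $g(E_1)$ is invariant, which gives the conclusion without any appeal to the negativity lemma.

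One step in your final paragraph is asserted rather than proved, and it is exactly where the negativity lemma enters: the identification $\sum_{E_1'}a(E_1',X_1,f^{-1}\sG)E_1'=g^*A+R_g$ requires that $g^*A+R_g$ be $\beta_1$-exceptional; only then does uniqueness of the exceptional representative (i.e.\ the negativity lemma applied to the $\mathbb{Q}$-linearly trivial exceptional difference) upgrade $\mathbb{Q}$-linear equivalence to equality of divisors. Exceptionality of $g^*A$ is clear, since $A$ is $\beta$-exceptional and $f$ is finite. For $R_g$ you must check that every \emph{non-invariant} ramification divisor of $g$ lies over a $\beta$-exceptional divisor. This is true, and by the same hypothesis you already use: the branch locus of $g$ is contained in the preimage under $\beta$ of the branch locus of $f$, so a codimension one branch component $D\subset Z$ of $g$ is either $\beta$-exceptional, or is the strict transform of a codimension one branch component of $f$; in the latter case $D$ is $\beta^{-1}\sG$-invariant (invariance is checked over the locus where $\beta$ is an isomorphism), hence does not contribute to $R_g$. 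So $R_g$ is supported over $\beta(\textup{Exc}\,\beta)$, and since $f$ is finite its components are $\beta_1$-exceptional. This is a two-sentence repair using only your own ingredients, so the gap is minor; without it, however, the word ``forces'' in your last display is not justified.
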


\begin{proof}
Set $\sG_1:=f^{-1}\sG$. By Lemma \ref{lemma:pull_back_fol_and_finite_cover}, we have $K_{\sG_1}\sim_\mathbb{Z}f^*K_\sG$. In particular, $K_{\sG_1}$ is 
$\mathbb{Q}$-Cartier. Let $\beta_1\colon Z_1 \to X_1$ be a birational projective morphism, and let $E_1$ be a $\beta_1$-exceptional prime divisor on $Z_1$. By \cite[Theorem 3.17]{kollar97},
there exists a projective birational morphism 
$\beta\colon Z \to X$ a well as a commutative diagram
\begin{center}
\begin{tikzcd}
Z_1 \ar[r, "g"]\ar[d, "\beta_1"'] & Z \ar[d, "\beta"] \\
X_1 \ar[r, "f"'] & X
\end{tikzcd}
\end{center}
\noindent such that $E:=g(E_1)$ is a $\beta$-exceptional prime divisor on $Z$.
Let $m$ denotes the ramification index of $g$ along $E_1$.
By Lemma \ref{lemma:pull_back_fol_and_finite_cover}, if $E$ is $\beta^{-1}\sG$-invariant, then 
$a(E_1,X_1,\sG_1)=m a(E,X,\sG)$, and  $a(E_1,X_1,\sG_1)=m a(E,X,\sG)+m-1$ otherwise. In particular, if $\sG$ is canonical, then so is $f^{-1}\sG$, proving the lemma.
\end{proof}

The following example shows that the converse is not true in general.

\begin{exmp}Let $G$ be a finite subgroup of $\textup{GL}(2,\mathbb{C})$ that does not contain any quasi-reflections, and set
$X:= \mathbb{A}^{2}/G$. Suppose that $G \not\subset \textup{SL}(2,\mathbb{C})$, so that $X$ is not canonical.  
Let $Y$ be a normal variety and consider the foliation $\sG$ on $X \times Y$ induced by the projection 
$X \times Y \to Y$. Let also $f \colon \mathbb{A}^{2} \times Y \to X \times Y$ be the quasi-\'etale cover induced by the projection morphism $\mathbb{A}^{2} \to \mathbb{A}^{2}/G=X$. Then $\sG$ is not canonical while 
$f^{-1}\sG$ is (see Example \ref{example:product} below).
\end{exmp}

\begin{lemma}\label{lemma:pull_back_foliation_singularities}
Let $Y$ and $Z$ be normal complex projective varieties, and let $\sH$ be the foliation on $Y$. Denote by
$\psi \colon Y \times Z \to Y$ the projection, and set $\sG:=\psi^{-1}\sH$. If $\sG$ is canonical, then so does $\sH$.
\end{lemma}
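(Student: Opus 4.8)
The plan is to realize every divisorial valuation over $Y$ as a ``vertical'' divisorial valuation over $Y\times Z$ and to show that the foliation discrepancy of $\sG$ along the latter equals the discrepancy of $\sH$ along the former. Canonicity of $\sG$ then forces all discrepancies of $\sH$ to be nonnegative. Throughout I write $\psi\colon Y\times Z\to Y$ for the given projection and $p\colon Y\times Z\to Z$ for the other one.

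First I would record the canonical class of a pull-back foliation under a product projection. On a big open subset $U\subseteq (Y\times Z)_{\textup{reg}}$ over which $Y$ and $Z$ are smooth and $\sH$ is a subbundle, the tangent bundle splits as $\psi^*T_Y\oplus p^*T_Z$ and $\sG_{|U}=\psi^*\sH\oplus p^*T_Z$, whence $\det\sG_{|U}\cong\psi^*\det\sH\otimes p^*\det T_Z$. Taking first Chern classes and extending the resulting equality of Weil divisors from $U$ to all of $Y\times Z$ (the complement of $U$ having codimension at least two) gives
\[
K_\sG\sim_{\mathbb{Z}}\psi^*K_\sH+p^*K_Z,
\]
the pull-backs being pull-backs of Weil divisors in the sense of Subsection~\ref{definition:pull-back}. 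Restricting this formula to a general fibre $Y\times\{z\}\cong Y$ of $p$, the term $p^*K_Z$ restricts to $0$ while $\psi^*K_\sH$ restricts to $K_\sH$; since $K_\sG$ is $\mathbb{Q}$-Cartier by hypothesis, so is its restriction, and hence $K_\sH$ is $\mathbb{Q}$-Cartier. This is what makes the notion of canonical singularities meaningful for $\sH$.

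Now, given any prime divisor $E$ over $Y$, I would extract it by a projective birational morphism $\beta_Y\colon W\to Y$ with $W$ normal, and form the projective birational morphism $\beta:=\beta_Y\times\textup{id}_Z\colon W\times Z\to Y\times Z$, whose exceptional prime divisors are exactly the $F\times Z$ with $F$ exceptional for $\beta_Y$; in particular $E\times Z$ is $\beta$-exceptional. Writing $\psi'\colon W\times Z\to W$ and $p'\colon W\times Z\to Z$ for the projections, functoriality of pull-back of foliations gives $\beta^{-1}\sG=(\psi\circ\beta)^{-1}\sH=(\beta_Y\circ\psi')^{-1}\sH=\psi'^{-1}(\beta_Y^{-1}\sH)$. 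Applying the canonical-class formula on $W\times Z$ and subtracting $\beta^*K_\sG$, the contributions $p^*K_Z$ and $\beta^*p^*K_Z=p'^*K_Z$ cancel, and since $\psi'$ is smooth (so $\psi'^*F=F\times Z$) I obtain
\[
K_{\beta^{-1}\sG}-\beta^*K_\sG\sim_{\mathbb{Q}}\psi'^*\big(K_{\beta_Y^{-1}\sH}-\beta_Y^*K_\sH\big)=\sum_F a(F,Y,\sH)\,(F\times Z).
\]
Comparing with the defining relation $K_{\beta^{-1}\sG}\sim_{\mathbb{Q}}\beta^*K_\sG+\sum_F a(F\times Z,Y\times Z,\sG)(F\times Z)$ of Definition~\ref{definition:canonical_singularities} and using uniqueness of the discrepancy coefficients yields $a(E\times Z,Y\times Z,\sG)=a(E,Y,\sH)$. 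As $\sG$ is canonical the left-hand side is nonnegative, and since $E$ was arbitrary, $\sH$ is canonical.

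I expect the main obstacle to be purely the bookkeeping with reflexive and Weil-divisor pull-backs on the singular product: justifying that the displayed canonical-class formula holds as an equality of Weil divisors, and that the identity $\beta^*\psi^*K_\sH=\psi'^*\beta_Y^*K_\sH$ holds (compatibility of the Weil-divisor pull-back with the commuting square $\psi\circ\beta=\beta_Y\circ\psi'$, using that $\beta$ is birational so that the $\mathbb{Q}$-Cartier and Weil pull-backs of $K_\sG$ agree on the big locus where everything is smooth). Once these functorialities are in place, the equality of discrepancies, and hence the conclusion, is immediate.
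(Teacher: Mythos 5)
Your proposal is correct and follows essentially the same route as the paper's proof: both rest on the product formula $K_\sG\sim_\mathbb{Z}\psi^*K_\sH+K_{Y\times Z/Y}$, deduce that $K_\sH$ is $\mathbb{Q}$-Cartier by restricting that formula to a fiber of the projection to $Z$, and then transfer discrepancies through the product modification $\beta_Y\times\textup{id}_Z$. The only cosmetic difference is that the paper extracts the conclusion by restricting the relation $K_{\gamma^{-1}\sG}\sim_\mathbb{Q}\gamma^*K_\sG+E$ to a fiber $F\cong Y$, whereas you compare coefficients of the vertical exceptional divisors $F\times Z$; these are two readings of the same identity.
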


\begin{proof}
Suppose that $\sG$ is canonical. Let $\beta\colon Y_1 \to Y$ be a projective birational morphism with $Y_1$ normal, and let 
$F_1 \cong Y_1 $ be a fiber of the projection $Y_1 \times Z \to Y \times Z \to Z$. 
Denote by $\gamma\colon Y_1 \times Z \to Y \times Z$ the natural morphism, and set $F:=\gamma(F_1) \cong Y$.
One then readily checks that $K_\sG\sim_\mathbb{Z}K_{Y\times Z/Y}+\psi^*K_\sH$
and
$K_{\gamma^{-1}\sG}\sim_\mathbb{Z}K_{Y_1\times Z/Y_1}+\psi_1^*K_{\beta^{-1}\sH}$, where 
$\psi_1\colon Y_1\times Z \to Y_1$ denotes the projection.
It follows that ${K_\sG}_{|F}\sim_\mathbb{Z}K_{\sH}$ and ${K_{\gamma^{-1}\sG}}_{|F_1}\sim_\mathbb{Z} K_{\beta^{-1}\sH}$.
In particular, $K_\sH$ is $\mathbb{Q}$-Cartier.
By assumption, $K_{\gamma^{-1}\sG}\sim_\mathbb{Q}\gamma^*K_\sG+E$ for some effective $\gamma$-exceptional $\mathbb{Q}$-divisor, and hence $K_{\beta^{-1}\sH}\sim_\mathbb{Q}\beta^*K_\sH + E_{|F}$. This proves the lemma. 
\end{proof}

\subsection{$\mathbb{Q}$-factorial terminalization} In this paragraph, we analyze the behaviour of canonical singularities with respect to $\mathbb{Q}$-factorial terminalizations.

We will need the following auxiliary result.

\begin{lemma}\label{lemma:extension_reflexive_1_forms}
Let $X$ be a normal complex quasi-projective variety with klt singularities, let $1 \le p \le \dim X$ be an integer,
and let $\sA \subseteq\Omega_X^{[p]}$ be a saturated reflexive subsheaf of rank one.
Suppose that $\sA^{[m]}$ is a line bundle for some positive integer $m$.
Let $\beta \colon Z \to X$ be a resolution of singularities with exceptional set $E$, and assume that $E$ is a divisor with simple normal crossings.
Let $\sB \subseteq \Omega_Z^{p}$ denotes the saturation of $\beta^{[*]}\sA \subseteq \Omega_Z^{p}$, and let $E_1$ denotes 
the reduced divisor on $Z$ whose support is
the union of all irreducible components $E'$ of $E$ such that $\sB$ is not saturated in $\Omega_Z^{p}\big(\textup{log}\,E\big)$ at general points of $E'$.
Then, there exist an effective $\beta$-exceptional divisor $E_2$ and a rational number
$0 \le \varepsilon <1$ such that $\beta^*c_1(\sA)+E_2\sim_\mathbb{Q} c_1(\sB)+\varepsilon E_1$.
\end{lemma}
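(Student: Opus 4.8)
The plan is to reduce to a comparison of divisorial valuations of a single line bundle computed upstairs and downstairs. First I would pass to the tensor power $m$ so as to work with an honest line bundle $\sA^{[m]}$ rather than a reflexive sheaf of rank one; since all the asserted relations are $\mathbb{Q}$-linear in $c_1(\sA)$, this loses nothing. The key point is that the saturation $\sB\subseteq\Omega_Z^{p}$ of $\beta^{[*]}\sA$ agrees with $\beta^{[*]}\sA$ away from a $\beta$-exceptional divisor, so there is automatically an effective $\beta$-exceptional divisor $E_2'$ with $\beta^*c_1(\sA)=c_1(\sB)+E_2'-E_3'$ for some effective $\beta$-exceptional $E_3'$; the whole content of the lemma is to control the \emph{negative} part $E_3'$ and show it is absorbed into the logarithmic term $\varepsilon E_1$ with coefficients $\varepsilon<1$.

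To do this I would work component by component along the exceptional divisor $E$. Fix a component $E'$ of $E$ and look at a general point thereof, where $Z$ is smooth and $E$ is a smooth hypersurface locally cut out by a coordinate $t$. The comparison of $\sB$ with $\beta^{[*]}\sA$ at the generic point of $E'$ is governed by the order of vanishing of a local generator of $\sB$ against the pulled-back generator of $\sA$. The crucial dichotomy is whether $\sB$ is saturated in $\Omega_Z^{p}(\log E)$ at the general point of $E'$: if it is, then a local generator of $\sB$ is already logarithmic along $E'$ but not regular, so the discrepancy between $c_1(\sB)$ and $\beta^{[*]}c_1(\sA)$ along $E'$ is an \emph{integer} that I would show is nonnegative, contributing to $E_2$; if it is not saturated in $\Omega_Z^p(\log E)$—which is precisely the defining condition for $E'\subseteq\mathrm{Supp}\,E_1$—then the generator of $\sB$ fails to be logarithmic, and the extra vanishing one must divide out is strictly less than $1$ per unit of the logarithmic pole, yielding a coefficient $\varepsilon$ with $0\le\varepsilon<1$. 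The bound $\varepsilon<1$ is exactly the statement that $\sB$ is saturated in $\Omega_Z^{p}$ (not merely contained), so no full power of $t$ can be extracted.

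Concretely I would invoke the extension theorem for reflexive differentials on klt spaces: since $X$ is klt and $\sA\subseteq\Omega_X^{[p]}$, a local generating section of $\sA^{[m]}$ pulls back under $\beta$ to a section of $\Omega_Z^{p}(\log E)^{[m]}$ (this is where the klt hypothesis and the extension results of Greb--Kebekus--Kov\'acs--Peternell and Kebekus cited in Section~\ref{subsection:pull-back_morphims} enter). Comparing this logarithmic extension with the saturation $\sB$ component by component produces, along each $E'\subseteq\mathrm{Supp}\,E_1$, a rational coefficient in $[0,1)$, and along every other exceptional component an effective (integer-coefficient) contribution to $E_2$. Assembling these over all components of $E$ gives the desired relation $\beta^*c_1(\sA)+E_2\sim_\mathbb{Q} c_1(\sB)+\varepsilon E_1$ after dividing by $m$.

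The main obstacle I anticipate is the sharp bound $\varepsilon<1$ rather than $\varepsilon\le 1$. This is not a formal consequence of the existence of a logarithmic extension; it requires genuinely using that $\sB$ is \emph{saturated} in $\Omega_Z^{p}$, so that after dividing out the maximal regular factor no further integral power of the local defining equation $t$ remains available to extract. I would isolate this in a local analytic computation at the generic point of each component of $E_1$, writing the logarithmic generator as $\tfrac{dt}{t}\wedge(\cdots)+(\cdots)$ and checking that the pole order that cannot be removed is strictly fractional; the strictness then propagates to $\varepsilon<1$ globally by taking the maximum over the finitely many relevant components.
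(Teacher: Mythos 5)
Your overall framework---reducing the lemma to a bound on the pole order of $\beta^*\sigma$ along each component of $E$, where $\sigma$ is a local generator of $\sA^{[m]}$, and splitting into cases according to whether $\sB$ is saturated in $\Omega_Z^{p}\big(\textup{log}\,E\big)$---is the same as the paper's. But both load-bearing steps have genuine gaps, and they occur exactly where the paper invokes a device absent from your proposal: the cyclic covering trick. First, your main input, that a local generator of $\sA^{[m]}$ pulls back to a section of $\big(\Omega_Z^{p}(\textup{log}\,E)\big)^{\otimes m}$, is not a consequence of the cited results: \cite{greb_kebekus_kovacs_peternell10} and \cite{kebekus_pull_back} concern reflexive \emph{forms}, i.e.\ the case $m=1$, and reflexive hulls of tensor powers do not pull back functorially. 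A section of $\sA^{[m]}$ is in general not a section of the image of $\textup{S}^{m}\Omega_X^{[p]}$, so no pole bound follows formally from the $m=1$ theorems; already on $X=\mathbb{C}^2/\{\pm 1\}$, with $\sA\subseteq\Omega_X^{[1]}$ the saturation of the conormal sheaf of the foliation $\{dx=0\}$, the generator $dx^{\otimes 2}$ of $\sA^{[2]}$ cannot be written with regular coefficients in products of reflexive $1$-forms, and its pull-back to the minimal resolution acquires a pole (of order $1=m(1-\tfrac{1}{2})$) along the exceptional curve.

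Second, and decisively, your mechanism for the strict inequality $\varepsilon<1$ is vacuous. Saturation of $\sB$ in $\Omega_Z^{p}$ is automatic (it is the definition of $\sB$) and constrains only the shape of the local generator $\omega=dz_1\wedge\alpha_1+z_1\alpha_2$; it says nothing about the coefficient $h$ in $\beta^*\sigma=h\,\omega^{\otimes m}$, and nothing in your argument excludes $\beta^*\sigma=u\cdot(\omega/z_1)^{\otimes m}$ with $u$ a unit, i.e.\ a full log pole of order $m$ and $\varepsilon=1$. Indeed, your proof uses the klt hypothesis only through log extension, which holds equally for log canonical singularities, yet the statement fails there: for $X$ the cone over an elliptic curve (Gorenstein, lc, not klt), $p=\dim X=2$, $\sA=\Omega_X^{[2]}\cong\sO_X(K_X)$ with $m=1$, one has $\sB=\Omega_Z^{2}$ (certainly saturated in $\Omega_Z^{2}$), $E_1=E$, and $K_Z=\beta^*K_X-E$, so any relation $\beta^*c_1(\sA)+E_2\sim_\mathbb{Q} c_1(\sB)+\varepsilon E_1$ with $E_2$ effective and exceptional forces $\varepsilon\geq 1$. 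So saturation cannot be the source of strictness. What produces $\varepsilon<1$ in the paper is an orbifold-type input: shrink $X$ so that $\sA^{[m]}\cong\sO_X$, pass to the associated cyclic cover $g\colon Y\to X$ (see \cite[Definition 2.52]{kollar_mori}), which is quasi-\'etale, so that $Y$ is again klt and $g^{[*]}\sA\cong\sO_Y$ is an honest line bundle of reflexive $p$-forms; the $m=1$ pull-back theorems then show that $\sigma$ becomes \emph{regular} on the normalization $T$ of $Y\times_X Z$, and the local comparison of $Z$ with $T$ along a component of $E_1$, where $f\colon T\to Z$ has ramification index $k_1$ and $f^*\omega^{\otimes m}$ acquires the factor $t_1^{m(k_1-1)}$, converts that regularity into the pole bound $m(1-1/k_1)$ downstairs. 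The strict bound $\varepsilon=\max_i(1-1/k_i)<1$ is thus purchased by the finiteness of the ramification indices of the index-one cover, not by saturation; without introducing that cover (or an equivalent device), your argument cannot be completed.
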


\begin{rem}
In the setup of Lemma \ref{lemma:extension_reflexive_1_forms}, \cite[Theorem 4.3]{greb_kebekus_kovacs_peternell10} shows that there is an embedding $\beta^{[*]}\sA \subseteq \Omega_Z^{p}$.
\end{rem}

\begin{rem}\label{rem:exceptional_set}
In the setup of Lemma \ref{lemma:extension_reflexive_1_forms}, suppose furthermore that $1 \le p \le \dim X -1$, and that $\sA$ is the conormal sheaf of a foliation $\sG$ on $X$. Then $\textup{Supp}\,E_1$ is the union of all irreducible components of $E$ that are invariant under $\beta^{-1}\sG$.
\end{rem}

\begin{proof}[Proof of Lemma \ref{lemma:extension_reflexive_1_forms}]The proof if very similar to that of 
\cite[Theorem 7.2]{greb_kebekus_kovacs_peternell10}, and so we leave some easy details to the reader.

Note that $\sB$ is a line bundle by \cite[Proposition 1.9]{hartshorne80}.
To prove the statement, it suffices to show that there exists a rational number
$0 \le \varepsilon <1$ such that, if $\sigma$ is any section of $\sA^{[m]}$ over some open set $U \subseteq X$, then the rational section $(\beta_{|U})^*\sigma$
of $\sB^{\otimes m}$ is regular on $\beta^{-1}(U)\setminus\textup{Supp}\,E_1$ and has poles of order at most $m\varepsilon$ along 
$(\textup{Supp}\,E_1) \cap \beta^{-1}(U)$.

The statement is local on $X$, hence we may shrink $X$, and assume that 
$\sA^{[\otimes m]} \cong \sO_X$.
Let $g\colon Y \to X$ be the associated cyclic cover (see \cite[Definition 2.52]{kollar_mori}),
and let $T$ denotes the normalization of the fiber product $Y\times_X Z$ with natural morphisms
$\gamma\colon T\to Y$ and 
$f\colon T \to Z$.
Note that $Y$ is klt by Fact \ref{fact:quasi_etale_cover_and_singularities} and that $g^{[*]}\sA\cong \sO_Y$. 

Let $\sigma \in H^0\big(X,\sA^{[\otimes m]}\big)$ be a nowhere vanishing section, and
consider the pull-back $\beta^*\sigma$, which is a rational section of 
$$\sB^{\otimes m} \subseteq \textup{S}^{m}\Omega_{Z}^p,$$ 
possibly with poles along $E$. 
Applying \cite[Theorem 4.3]{greb_kebekus_kovacs_peternell10} to $\gamma$ and using the fact that 
$g^{[*]}\sA$ is locally free, we see that there is an embedding 
$$\gamma^*\big(g^{[*]}\sA^{[m]}\big) \cong \big(\gamma^* g^{[*]}\sA\big)^{\otimes m}\subseteq \textup{S}^{m}\Omega_T^{[p]}.$$
This immediately implies that 
$$(\beta\circ f)^*\sigma \in H^0\big(T,\textup{S}^{m}\Omega_T^{[p]}\big).$$

Set $n:=\dim X$. Let $z \in \textup{Supp}\,E$ be a general point, and
let $(z_1,\ldots,z_n)$ be local coordinates on some open neighborhood $U$ of $z$ in $Z$ such that 
$z_1=0$ is a local equation of $E_1$. Let $t\in T$ such that $f(t)=z$, and 
let $(t_1,\ldots,t_n)$ be local coordinates on some open neighborhood $V$ of $t$ in $T$ with $f(V)\subseteq U$. We may assume without loss of generality that $f$ is given
by $(t_1,\ldots,t_n) \mapsto (t_1^{k_1},t_2,\ldots,t_n)$ on $V$. 

Suppose first that $\sB$ is not saturated in $\Omega_Z^{p}\big(\textup{log}\,E\big)$ at $z$. Shrinking $U$, if necessary, we may assume that $\sB_{|U}$ is generated by 
$dz_1\wedge\alpha_1+z_1\alpha_2$ 
where $\alpha_1 \in \wedge^{p-1}\big(\sO_Udz_2\oplus\cdots\oplus\sO_Udz_n\big)$ is nowhere vanishing
and $\alpha_2 \in \Omega_U^p=\wedge^{p}\big(\sO_Udz_1\oplus\cdots\oplus\sO_Udz_n\big)$.
Then ${\sB^{\otimes m}}_{|U}\subseteq {\textup{S}^{m}\Omega_Z^p}_{|U}$ is generated by $(dz_1\wedge\alpha_1+z_1\alpha_2)^{\otimes m}$ and
${f^*\sB^{\otimes m}}_{|V} \subseteq {\textup{S}^{m}\Omega_T^{[p]}}_{|V}$ by $t_1^{m(k_1-1)}\big(k_1dt_1\wedge df(\alpha_1)+t_1df(\alpha_2)\big)^{\otimes m}$.
One then readily checks that $\beta^*\sigma$ has a pole of order at most $m(1-\frac{1}{k_1})$ along $z_1=0$
since $(\beta\circ f)^*\sigma$ is regular on $V$.

If $\sB$ is saturated in $\Omega_Z^{p}\big(\textup{log}\,E\big)$ at $z$, then we may assume that $\sB_{|U}$ is generated by 
a nowhere vanishing $p$-form $\alpha_1$ with 
$\alpha_1 \in \wedge^{p}\big(\sO_Udz_2\oplus\cdots\oplus\sO_Udz_n\big)$. Arguing as above, one concludes that 
$\beta^*\sigma$ is regular in codimension one on $U$.

Write $f^* E_1
=\sum_{i\in I}k_i F_i$
where the $F_i$ are prime divisors on $T$ and $k_i$ are positive integers, and let $0\le \varepsilon<1$ be a rational number such that $\varepsilon \ge 1-\frac{1}{k_{i}}$ for every indices $i\in I$. We conclude that $\beta^*\sigma$ (viewed as a rational section of the line bundle $\sB^{\otimes m}$)
is regular on $X \setminus \textup{Supp}\,E_1$ and has poles of order at most $m\varepsilon$ along $\textup{Supp}\,E_1$.
This finishes the proof of the lemma.
\end{proof}

The following is an easy consequence of Lemma \ref{lemma:extension_reflexive_1_forms}.

\begin{prop}\label{prop:numerical_dimension}
Let $X$ be a normal complex projective variety with klt singularities, and let $\sG$ be a codimension one foliation on $X$
such that $c_1(\sN_\sG)$ is $\mathbb{Q}$-Cartier.
Let $\beta \colon Z \to X$ be a resolution of singularities with exceptional set $E$, and assume that $E$ is a divisor with simple normal crossings.
Let also $E_1$ denotes the reduced divisor on $Z$ whose support is the union of all irreducible components of $E$ that are invariant under $\beta^{-1}\sG$. 
There exists a rational number $0\le \varepsilon <1$ such that 
$$\kappa\big(-c_1(\sN_\sG)\big) = \kappa\big(-c_1(\sN_{\beta^{-1}\sG})+\varepsilon E_1\big)
\quad \text{and} \quad 
\nu\big(-c_1(\sN_\sG)\big) = \nu\big(-c_1(\sN_{\beta^{-1}\sG})+\varepsilon E_1\big).$$ In particular, $-c_1(\sN_\sG)$ is pseudo-effective if and only if so is $-c_1(\sN_{\beta^{-1}\sG})+\varepsilon E_1$. 
\end{prop}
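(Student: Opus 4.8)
The plan is to deduce Proposition \ref{prop:numerical_dimension} directly from Lemma \ref{lemma:extension_reflexive_1_forms} by taking $p = \dim X - 1$ and letting $\sA = \sN_\sG^*$ be the conormal sheaf of $\sG$. First I would observe that since $\sG$ has codimension one, the conormal sheaf $\sN_\sG^* \subseteq \Omega_X^{[1]}$ is a saturated reflexive subsheaf of rank one, and by the hypothesis that $c_1(\sN_\sG)$ is $\mathbb{Q}$-Cartier, some reflexive power $\sA^{[m]} = (\sN_\sG^*)^{[m]}$ is a line bundle. Thus the hypotheses of Lemma \ref{lemma:extension_reflexive_1_forms} are met (with the integer $p=1$, which does lie in the allowed range $1 \le p \le \dim X$). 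Applying it yields an effective $\beta$-exceptional divisor $E_2$ and a rational number $0 \le \varepsilon < 1$ with
$$\beta^* c_1(\sN_\sG^*) + E_2 \sim_\mathbb{Q} c_1(\sB) + \varepsilon E_1,$$
and by Remark \ref{rem:exceptional_set} the divisor $E_1$ is precisely the sum of the $\beta^{-1}\sG$-invariant components of $E$, matching the description in the proposition.

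Next I would identify $c_1(\sB)$ with $c_1(\sN_{\beta^{-1}\sG}^*)$. The saturation $\sB$ of $\beta^{[*]}\sA$ inside $\Omega_Z^1$ is exactly the conormal sheaf of the pulled-back foliation $\beta^{-1}\sG$ on the resolution $Z$, so $c_1(\sB) = c_1(\sN_{\beta^{-1}\sG}^*) = -c_1(\sN_{\beta^{-1}\sG})$. Rewriting the displayed relation in terms of $-c_1(\sN_\sG) = c_1(\sN_\sG^*)$ gives
$$\beta^*\big(-c_1(\sN_\sG)\big) + E_2 \sim_\mathbb{Q} -c_1(\sN_{\beta^{-1}\sG}) + \varepsilon E_1.$$

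Finally I would invoke the birational invariance properties of Kodaira and numerical dimension together with the fact that $E_2$ is effective and $\beta$-exceptional. For the Kodaira dimension, adding an effective exceptional divisor does not change $\kappa$: since $\beta$ is birational, $\kappa\big(-c_1(\sN_\sG)\big) = \kappa\big(\beta^*(-c_1(\sN_\sG))\big) = \kappa\big(\beta^*(-c_1(\sN_\sG)) + E_2\big)$, the last equality because $E_2 \ge 0$ is $\beta$-exceptional (so $H^0$ of the pushed-forward multiples agree). The same reasoning via \cite[Proposition V.2.7]{nakayama04} applies to $\nu$, the numerical dimension of a $\mathbb{Q}$-Cartier divisor being unchanged by pullback and by addition of an effective exceptional divisor. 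Combining these with the $\mathbb{Q}$-linear equivalence yields both asserted equalities
$$\kappa\big(-c_1(\sN_\sG)\big) = \kappa\big(-c_1(\sN_{\beta^{-1}\sG}) + \varepsilon E_1\big), \qquad \nu\big(-c_1(\sN_\sG)\big) = \nu\big(-c_1(\sN_{\beta^{-1}\sG}) + \varepsilon E_1\big),$$
and the pseudo-effectivity statement follows from Remark \ref{rem:pseff_versus_numerical dimension}, since pseudo-effectivity is equivalent to $\nu \ge 0$ for $\mathbb{Q}$-Cartier classes. The main obstacle, such as it is, lies in cleanly justifying that adding the effective $\beta$-exceptional $E_2$ leaves both $\kappa$ and $\nu$ invariant and that $\sB$ really is the conormal sheaf of $\beta^{-1}\sG$; the substantive analytic work has already been absorbed into Lemma \ref{lemma:extension_reflexive_1_forms}, so this proposition is essentially a formal consequence.
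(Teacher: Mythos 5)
Your proposal is correct and takes essentially the same route as the paper: apply Lemma \ref{lemma:extension_reflexive_1_forms} with $\sA=\sN_\sG^*$ (identifying the saturation $\sB$ with $\sN_{\beta^{-1}\sG}^*$, and $E_1$ via Remark \ref{rem:exceptional_set}), then conclude with Nakayama's invariance results \cite[Lemma II.3.11]{nakayama04} and \cite[Proposition V.2.7]{nakayama04} for $\kappa$ and $\nu$ under pullback and addition of an effective $\beta$-exceptional divisor. The only blemish is the opening claim that one takes $p=\dim X-1$, which is a slip — the conormal sheaf of a codimension one foliation is a rank-one subsheaf of $\Omega_X^{[1]}$, so $p=1$ — but you correct this yourself in the very next sentence, and the rest of the argument is sound.
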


\begin{proof}
Applying Lemma \ref{lemma:extension_reflexive_1_forms} and using Remark \ref{rem:exceptional_set}, we see that there exist an effective $\beta$-exceptional divisor $E_2$ and a rational number $0\le \varepsilon <1$ such that 
$-\beta^*c_1(\sN_\sG)+E_2 \sim_\mathbb{Q}-c_1(\sN_{\beta^{-1}\sG})+\varepsilon E_1$.
By \cite[Lemma II.3.11]{nakayama04}, we have 
$\kappa\big(-\beta^*c_1(\sN_\sG)+E_2\big) = \kappa\big(-c_1(\sN_\sG)\big)$ since $E_2$ is effective and $\beta$-exceptional.
We also have $\nu\big(-\beta^*c_1(\sN_\sG)+E_2\big) = \nu\big(-c_1(\sN_\sG)\big)$ by \cite[Proposition V.2.7]{nakayama04}. 
This proves the proposition. 
\end{proof}

The following results often reduce the study of mildly singular foliations with numerically trivial canonical class on varieties with canonical singularities to those on varieties with terminal singularities.

\begin{prop}\label{prop:terminalization_canonical_singularities}
Let $X$ be a normal complex projective variety with canonical singularities, and let $\sG$ be a foliation on $X$. 
Suppose that $\sG$ is canonical and that $K_\sG$ is Cartier.
Let $\beta \colon Z \to X$ be a $\mathbb{Q}$-factorial terminalization of $X$. Then $\beta^{-1}\sG$ is canonical with $K_{\beta^{-1}\sG}\sim_\mathbb{Z}\beta^*K_\sG$.  
\end{prop}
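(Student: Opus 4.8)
The plan is to compute the foliated discrepancies $a(E,X,\sG)$ along the exceptional divisors of $\beta$ explicitly, to show that they are simultaneously $\le 0$ and $\ge 0$, and then to transfer canonicity via Lemma \ref{lemma:singularities_birational_morphism}. The starting point is the reflexive identity $\det T_X\cong\det\sG\boxtimes\det\sN_\sG$, which gives $K_\sG\sim_\mathbb{Z}K_X+c_1(\det\sN_\sG)$ on $X$, and likewise $K_{\beta^{-1}\sG}\sim_\mathbb{Z}K_Z+c_1(\det\sN_{\beta^{-1}\sG})$ on $Z$. Since $K_\sG$ is Cartier and $X$ is canonical (so $K_X$ is $\mathbb{Q}$-Cartier), the Weil divisor class $c_1(\det\sN_\sG)\sim_\mathbb{Q}K_\sG-K_X$ is $\mathbb{Q}$-Cartier; consequently $c_1(\beta^{[*]}\det\sN_\sG)=\beta^*c_1(\det\sN_\sG)$. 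Moreover $\beta$ is crepant, so $K_Z\sim_\mathbb{Q}\beta^*K_X$.

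Next I would identify the conormal sheaf of the pulled-back foliation. By the construction of $\beta^{-1}\sG$ (see Subsection \ref{pullback_foliations}), the determinant $\det\sN_{\beta^{-1}\sG}^*$ of its conormal sheaf is the saturation of $\beta^{[*]}\det\sN_\sG^*$ inside $\Omega_Z^{[q]}$, where $q$ denotes the codimension of $\sG$. Since $\det\sN_\sG^*$ is already saturated in $\Omega_X^{[q]}$ and $\beta$ is an isomorphism away from its exceptional locus, the natural inclusion $\beta^{[*]}\det\sN_\sG^*\hookrightarrow\det\sN_{\beta^{-1}\sG}^*$ is an isomorphism in codimension one off $\Exc\,\beta$; the quotient is therefore a torsion sheaf supported on exceptional divisors, and there is an effective integral $\beta$-exceptional divisor $D$ with $\det\sN_{\beta^{-1}\sG}\cong\beta^{[*]}\det\sN_\sG\boxtimes\sO_Z(-D)$. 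Combining the two reflexive identities above with the crepancy of $\beta$ and the $\mathbb{Q}$-Cartierness just recorded yields
$$K_{\beta^{-1}\sG}-\beta^*K_\sG=\big(K_Z-\beta^*K_X\big)+\big(c_1(\det\sN_{\beta^{-1}\sG})-\beta^*c_1(\det\sN_\sG)\big)\sim_\mathbb{Q}-D.$$
As $Z$ is $\mathbb{Q}$-factorial, $K_{\beta^{-1}\sG}$ is $\mathbb{Q}$-Cartier and the discrepancies are defined, and the computation gives $a(E,X,\sG)=-\mult_E D\le 0$ for every exceptional $E$. On the other hand $\sG$ is canonical, so $a(E,X,\sG)\ge 0$; hence $D=0$ and $K_{\beta^{-1}\sG}\sim_\mathbb{Q}\beta^*K_\sG$.

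It remains to upgrade this to a $\mathbb{Z}$-linear equivalence and to deduce canonicity of $\beta^{-1}\sG$. The latter is immediate: with $K_{\beta^{-1}\sG}\sim_\mathbb{Q}\beta^*K_\sG$ in hand, Lemma \ref{lemma:singularities_birational_morphism}(2) applies and shows $\beta^{-1}\sG$ is canonical because $\sG$ is. For the integrality, the vanishing $D=0$ gives $\det\sN_{\beta^{-1}\sG}\cong\beta^{[*]}\det\sN_\sG$, whence $\sO_Z(K_{\beta^{-1}\sG})\cong\sO_Z(K_Z)\boxtimes\beta^{[*]}\det\sN_\sG$; comparing this with the line bundle $\beta^*\sO_X(K_\sG)$ (a genuine pullback, since $K_\sG$ is Cartier) and restricting over $\beta^{-1}(X_{\textup{reg}})$ shows that the difference class is $\beta$-exceptional and $\mathbb{Q}$-trivial, hence equal to $0$ by the negativity lemma. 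This yields $K_{\beta^{-1}\sG}\sim_\mathbb{Z}\beta^*K_\sG$. I expect this last passage from $\sim_\mathbb{Q}$ to $\sim_\mathbb{Z}$ to be the main delicate point: one must distinguish throughout between reflexive pullbacks and honest pullbacks of merely $\mathbb{Q}$-Cartier classes, and since the crepancy $K_Z\sim_\mathbb{Q}\beta^*K_X$ holds only rationally, the negativity lemma is what rules out a torsion discrepancy and forces the integral equivalence.
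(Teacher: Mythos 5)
Your overall architecture mirrors the paper's (compare the adjunction identities $K_\sG\sim_\mathbb{Z}K_X+c_1(\sN_\sG)$ and $K_{\beta^{-1}\sG}\sim_\mathbb{Z}K_Z+c_1(\sN_{\beta^{-1}\sG})$, use crepancy of $\beta$, and conclude via Lemma \ref{lemma:singularities_birational_morphism}), but there is a genuine gap at the step where you compare $c_1(\sN_{\beta^{-1}\sG})$ with $\beta^*c_1(\sN_\sG)$. The assertion that $\mathbb{Q}$-Cartierness of $c_1(\det\sN_\sG)$ implies $c_1(\beta^{[*]}\det\sN_\sG)=\beta^*c_1(\det\sN_\sG)$ is false: the left-hand side is an integral Weil divisor class, whereas for a $\mathbb{Q}$-Cartier but non-Cartier divisor the pull-back $\beta^*c_1(\det\sN_\sG)$ generally has fractional coefficients along the $\beta$-exceptional divisors (already for a ruling of a quadric cone pulled back to the blow-up of the vertex). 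Consequently your formula $c_1(\sN_{\beta^{-1}\sG})\sim_\mathbb{Q}\beta^*c_1(\sN_\sG)-D$ with $D$ effective and integral is unjustified, and indeed wrong: in Example \ref{example:ruled_surface} one computes $c_1(\sN_{\beta^{-1}\sG})-\beta^*c_1(\sN_\sG)\sim_\mathbb{Q}\frac{1}{2}\sum_i E_i$, which is neither anti-effective nor integral. The correct comparison is precisely the content of Lemma \ref{lemma:extension_reflexive_1_forms}: there exist effective exceptional $\mathbb{Q}$-divisors $E_1$ (reduced) and $E_2$ and a rational number $0\le\varepsilon<1$ with $c_1(\sN_{\beta^{-1}\sG})+E_2\sim_\mathbb{Q}\beta^*c_1(\sN_\sG)+\varepsilon E_1$, and the fractional term $\varepsilon E_1$ cannot be removed.

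A sanity check that something had to be wrong: up to your final integrality paragraph, your argument uses only that $K_\sG$ is $\mathbb{Q}$-Cartier (the Cartier hypothesis plays no role in deriving $a(E,X,\sG)=-\mult_E D\le 0$), so it would establish $K_{\beta^{-1}\sG}\sim_\mathbb{Q}\beta^*K_\sG$ and canonicity of $\beta^{-1}\sG$ whenever $K_\sG$ is merely $\mathbb{Q}$-Cartier — but Example \ref{example:ruled_surface} (where $2K_\sG$ is Cartier, $X$ and $\sG$ are canonical, and $\beta$ is a $\mathbb{Q}$-factorial terminalization) refutes that strengthened statement. The paper's proof uses the Cartier hypothesis exactly where your proof elides it: since $K_\sG$ is Cartier and $\sG$ is canonical, $F:=K_{\beta^{-1}\sG}-\beta^*K_\sG$ is an effective \emph{integral} exceptional divisor; combining the two adjunction identities, crepancy, and Lemma \ref{lemma:extension_reflexive_1_forms} with the negativity lemma gives $\varepsilon E_1=F+E_2$, and then integrality of $F$ against the bound $\varepsilon<1$ forces $F=0$. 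Your closing passage from $\sim_\mathbb{Q}$ to $\sim_\mathbb{Z}$ via the negativity lemma is sound, but it is not the delicate point; the delicate point is the fractional discrepancy between reflexive pull-back and $\mathbb{Q}$-Cartier pull-back, which your proof assumes away.
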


\begin{proof}
Note that $c_1(\sN_\sG)$ is $\mathbb{Q}$-Cartier since $c_1(\sN_\sG) \sim_\mathbb{Z} K_\sG - K_X$.
Recall also that $K_Z\sim_\mathbb{Q}\beta^*K_X$.
Applying Lemma \ref{lemma:extension_reflexive_1_forms} to a resolution of $Z$, we see that there exist effective $\beta$-exceptional $\mathbb{Q}$-diviors $E_1$ and $E_2$ with $E_1$ reduced, and a rational number $0 \le \varepsilon <1$ such that $c_1\big(\sN_{\beta^{-1}\sG}\big)+E_2\sim_\mathbb{Q} \beta^*c_1\big(\sN_\sG\big)+\varepsilon E_1$. On the other hand, we have $K_{\beta^{-1}\sG}\sim_\mathbb{Z}\beta^*K_\sG+F$ for some effective integral $\beta$-exceptional Weil divisor $F$ since 
$\sG$ is canonical and $K_\sG$ is Cartier. Since $K_{\beta^{-1}\sG}\sim_\mathbb{Q} K_Z+c_1\big(\sN_{\beta^{-1}\sG}\big)$ and $K_\sG \sim_\mathbb{Q} K_X+c_1\big(\sN_\sG\big)$, we must have $\varepsilon E_1 = F + E_2$ by the negativity lemma. It follows that $F=0$, and hence $K_{\beta^{-1}\sG}\sim_\mathbb{Z}\beta^*K_\sG$. By Lemma \ref{lemma:singularities_birational_morphism}, we see that $\beta^{-1}\sG$ is canonical, completing the proof of the proposition.
\end{proof}

Example \ref{example:ruled_surface} below shows that Proposition \ref{prop:terminalization_canonical_singularities} is wrong if one drops the assumption that $K_\sG$ is Cartier.

\begin{exmp}\label{example:ruled_surface}
Let $E$ be an elliptic curve, and set $X_1:= E \times \mathbb{P}^1$.
Let $G$ be a cyclic group of order $2$ acting on $E$ by $x \mapsto -x$ and acting on 
$\mathbb{P}^1$ by $(x:y)\mapsto (y:x)$, where $(x:y)$ are homogeneous coordinates on $\mathbb{P}^1$.
Set $X:=X_1/G$, and denote by $f \colon X_1\to X$ the projection map, which is two-to-one quasi-\'etale cover. We obtain a rational surface containing eight rational double points. 
Consider the foliation $\sG$ on $X$ given by the morphism $X \to \mathbb{P}^1/G\cong \mathbb{P}^1$. The canonical divisor $K_\sG$ is not Cartier, but $2 K_\sG$ is. Let $\beta\colon Z \to X$ be the blow-up of the eight singular points, and denote by $E_i$ the $\beta$-exceptional divisors. Then $\sG$ and $\beta^{-1}\sG$ are canonical but
$K_{\beta^{-1}\sG}\sim_\mathbb{Q}\beta^*K_\sG+\frac{1}{2}\sum_i E_i$.
\end{exmp}

\begin{lemma}
Let $X$ be a normal complex projective variety with canonical singularities, and let $\sG$ be a foliation on $X$. 
Suppose that $\sG$ is canonical and that $\det\sN_\sG$ is Cartier.
Let $\beta \colon Z \to X$ be a $\mathbb{Q}$-factorial terminalization of $X$. Then $\beta^{-1}\sG$ is canonical with $K_{\beta^{-1}\sG}\sim_\mathbb{Z}\beta^*K_\sG$.  
\end{lemma}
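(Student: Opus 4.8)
The argument runs parallel to the proof of Proposition \ref{prop:terminalization_canonical_singularities}; the only point that needs a different justification is the integrality of the discrepancy divisor, which there followed from $K_\sG$ being Cartier. First I would check that $K_\sG$ is $\mathbb{Q}$-Cartier: since $c_1(\sN_\sG)\sim_{\mathbb{Z}}K_\sG-K_X$ and $K_X$ is $\mathbb{Q}$-Cartier ($X$ being canonical), the hypothesis that $\det\sN_\sG$ is Cartier forces $K_\sG$ to be $\mathbb{Q}$-Cartier. Thus the discrepancies $a(E,X,\sG)$ are defined, and I would set $D:=K_{\beta^{-1}\sG}-\beta^*K_\sG=\sum_E a(E,X,\sG)E$, a $\beta$-exceptional $\mathbb{Q}$-divisor. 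Since $\sG$ is canonical, $D$ is effective.

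The key step is to show that $D$ is in fact \emph{integral}. Recall that $\beta$ is crepant, so $K_Z\sim_{\mathbb{Q}}\beta^*K_X$, and that $c_1(\sN_\sG)\sim_{\mathbb{Z}}K_\sG-K_X$ is now Cartier, so that $\beta^*c_1(\sN_\sG)$ is an integral (Cartier) divisor on $Z$. Combining $K_{\beta^{-1}\sG}\sim_{\mathbb{Z}}K_Z+c_1(\sN_{\beta^{-1}\sG})$, $K_\sG\sim_{\mathbb{Z}}K_X+c_1(\sN_\sG)$ and $K_Z\sim_{\mathbb{Q}}\beta^*K_X$ yields $D\sim_{\mathbb{Q}}c_1(\sN_{\beta^{-1}\sG})-\beta^*c_1(\sN_\sG)=:G$. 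Here $G$ is the difference of the integral Weil divisor $c_1(\sN_{\beta^{-1}\sG})$ and the integral Cartier divisor $\beta^*c_1(\sN_\sG)$, hence integral; moreover it is supported over the singular locus of $X$ (as it vanishes on the isomorphism locus of $\beta$, which contains $X_{\textup{reg}}$) and is therefore $\beta$-exceptional, since $X$ is normal. As $D-G$ is a $\beta$-exceptional $\mathbb{Q}$-divisor that is $\mathbb{Q}$-linearly trivial, the negativity lemma gives $D=G$, so $D$ is integral.

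Finally I would run the same comparison with Lemma \ref{lemma:extension_reflexive_1_forms} as in Proposition \ref{prop:terminalization_canonical_singularities}: applying that lemma to a resolution of $Z$ produces effective $\beta$-exceptional $\mathbb{Q}$-divisors $E_1$ (reduced) and $E_2$, and a rational number $0\le\varepsilon<1$, with $c_1(\sN_{\beta^{-1}\sG})+E_2\sim_{\mathbb{Q}}\beta^*c_1(\sN_\sG)+\varepsilon E_1$, that is $D+E_2\sim_{\mathbb{Q}}\varepsilon E_1$. Both sides being effective and $\beta$-exceptional, the negativity lemma forces $D+E_2=\varepsilon E_1$. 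Since $D$ is effective and integral, $E_2\ge 0$, and every coefficient of $\varepsilon E_1$ is $<1$, each coefficient of $D$ is a nonnegative integer strictly less than $1$, hence zero; thus $D=0$ and $K_{\beta^{-1}\sG}\sim_{\mathbb{Z}}\beta^*K_\sG$. Lemma \ref{lemma:singularities_birational_morphism}(2) then shows that $\beta^{-1}\sG$ is canonical. The main obstacle is exactly this integrality step: with only $\det\sN_\sG$ (rather than $K_\sG$) Cartier one cannot read off the integrality of $D$ directly, and one must instead route through the conormal class $G$ and invoke the negativity lemma to transfer integrality back to $D$.
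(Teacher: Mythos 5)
Your proof is correct, but the decisive step is organized differently from the paper's, which is considerably shorter. Since $\det\sN_\sG^*$ is a line bundle, the paper applies \cite[Theorem 4.3]{greb_kebekus_kovacs_peternell10} directly (rather than its fractional refinement, Lemma \ref{lemma:extension_reflexive_1_forms}): this yields an \emph{effective} integral $\beta$-exceptional Weil divisor $E$ with $c_1(\sN_{\beta^{-1}\sG})\sim_\mathbb{Z}\beta^*c_1(\sN_\sG)-E$, whence $K_{\beta^{-1}\sG}\sim_\mathbb{Q}\beta^*K_\sG-E$ by crepancy; the discrepancy divisor is thus anti-effective, while canonicity of $\sG$ makes it effective, so $E=0$ and Lemma \ref{lemma:singularities_birational_morphism} concludes. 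In other words, the paper never needs integrality of $D$ nor the bound $\varepsilon<1$: anti-effectivity comes for free from the reflexive pull-back theorem once the conormal determinant is a line bundle. Your route instead recovers, from the Cartier hypothesis on $\det\sN_\sG$, exactly what the Cartier hypothesis on $K_\sG$ provided in Proposition \ref{prop:terminalization_canonical_singularities} (integrality of $D$), and then reruns that proposition's argument verbatim; this is valid, and it is a nice observation that integrality of the discrepancy divisor follows from the elementary fact that $D\sim_\mathbb{Q} c_1(\sN_{\beta^{-1}\sG})-\beta^*c_1(\sN_\sG)$ together with the negativity lemma. Two small remarks. First, $G$ is a priori a divisor class, so ``supported over the singular locus'' should be phrased as: $G$ restricted to $\beta^{-1}(X_{\textup{reg}})$ (equivalently, to the isomorphism locus, whose complement in $X$ has codimension at least two) is linearly trivial, hence $G$ is $\mathbb{Z}$-linearly equivalent to an integral divisor supported on the exceptional locus, and the negativity lemma identifies $D$ with that representative. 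Second, when $\det\sN_\sG$ is Cartier the cyclic cover in the proof of Lemma \ref{lemma:extension_reflexive_1_forms} is trivial, so one could take $\varepsilon=0$ there, which collapses your argument to the paper's; since you use that lemma as a black box providing only some $\varepsilon<1$, your integrality step is what makes your version close.
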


\begin{proof}
Recall that $K_Z\sim_\mathbb{Q}\beta^*K_X$.
By \cite[Theorem 4.3]{greb_kebekus_kovacs_peternell10}, there is an effective $\beta$-exceptional Weil divisor $E$ on $Z$ such that $c_1(\sN_{\beta^{-1}\sG})\sim_\mathbb{Z}\beta^*c_1(\sN_\sG)-E$. Since $K_{\beta^{-1}\sG}\sim_\mathbb{Q} K_Z+c_1\big(\sN_{\beta^{-1}\sG}\big)$ and $K_\sG\sim_\mathbb{Q} K_X+c_1\big(\sN_\sG\big)$, we must have
$K_{\beta^{-1}\sG}\sim_\mathbb{Q} \beta^*K_\sG-E$. It follows that $E=0$ since $\sG$ is canonical by assumption. Applying Lemma \ref{lemma:singularities_birational_morphism}, we see that $\beta^{-1}\sG$ is canonical. This finishes the proof of the lemma.
\end{proof}

\subsection{Algebraically integrable foliations} In this paragraph, we address algebraically integrable foliations with canonical singularities.

\begin{lemma}\label{lemma:canonical_foliation_versus_lc_pairs}
Let $X$ be a normal complex projective variety, and let $\sG$ be an algebraically integrable foliation on $X$. Suppose that $\sG$ is canonical. 
Let $\psi\colon Z \to Y$ be the family of leaves, and let $\beta\colon Z \to X$ be the natural morphism (see \ref{family_leaves}). 
Then the following holds.
\begin{enumerate}
\item The foliation $\beta^{-1}\sG$ is canonical with $K_{\beta^{-1}\sG}\sim_\mathbb{Q}\beta^* K_\sG$.
\item There exists a dense open set $Y^\circ\subseteq Y$ such that 
$Z$ has canonical singularities over $Y^\circ$.
In particular, a general fiber of $\psi$ has canonical singularities.
\item If $P$ is a prime divisor on $Y$ and $C:=(\psi^*P)_{\textup{red}}$, then the pair $(Z,C)$ is log canonical over an open neighborhood of the generic point of $P$. 
\end{enumerate}
\end{lemma}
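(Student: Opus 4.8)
The plan is to read off all three assertions from the two relations recorded in the construction of the family of leaves in \ref{family_leaves}, namely
\[
K_{\beta^{-1}\sG}\sim_\mathbb{Z}K_{Z/Y}-R(\psi)\qquad\text{and}\qquad K_{\beta^{-1}\sG}+B\sim_\mathbb{Q}\beta^*K_\sG,
\]
where $B$ is an effective $\beta$-exceptional $\mathbb{Q}$-divisor. For Item (1) I would argue directly from the definition of foliation discrepancies. By Definition \ref{definition:canonical_singularities} one has $K_{\beta^{-1}\sG}\sim_\mathbb{Q}\beta^*K_\sG+\sum_E a(E,X,\sG)E$, and comparing with $K_{\beta^{-1}\sG}\sim_\mathbb{Q}\beta^*K_\sG-B$ gives $\sum_E a(E,X,\sG)E=-B$. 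Since $\sG$ is canonical every coefficient $a(E,X,\sG)$ is $\ge 0$, whereas $B$ is effective; hence $B=0$ and $K_{\beta^{-1}\sG}\sim_\mathbb{Q}\beta^*K_\sG$. In particular $K_{\beta^{-1}\sG}$ is $\mathbb{Q}$-Cartier, and Lemma \ref{lemma:singularities_birational_morphism}(2) then shows that $\sF:=\beta^{-1}\sG$ is canonical.

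Items (2) and (3) I would deduce from a single discrepancy computation. Choose a birational morphism $\rho\colon Z'\to Z$ with $Z'$ smooth, forming a log resolution of the relevant pair, and such that $\psi':=\psi\circ\rho\colon Z'\to Y$ is again equidimensional; set $\sF':=\rho^{-1}\sF$, the foliation induced by $\psi'$. Example \ref{example:canonical_class_foliation} applied to $\psi'$ gives $K_{\sF'}\sim_\mathbb{Z}K_{Z'/Y}-R(\psi')$, while Item (1) and the definition of foliation discrepancies give $K_{\sF'}\sim_\mathbb{Q}\rho^*K_\sF+\sum_E a(E,Z,\sF)E$ with all $a(E,Z,\sF)\ge 0$. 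Subtracting these, substituting $K_\sF\sim_\mathbb{Z}K_{Z/Y}-R(\psi)$, and cancelling $\psi'^*K_Y=\rho^*\psi^*K_Y$, I obtain
\[
K_{Z'}=\rho^*K_Z-\rho^*R(\psi)+R(\psi')+\sum_E a(E,Z,\sF)E .
\]

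For Item (2) I would shrink $Y$ to a dense open $Y^\circ$ over which $\psi$ is equidimensional, $Y^\circ$ is smooth, and $R(\psi)=0$ (possible since the finitely many images of the components of $R(\psi)$ are divisors); the displayed formula then reads $a(E,Z)=a(E,Z,\sF)+\textup{coeff}_E\,R(\psi')\ge 0$ for every $\rho$-exceptional $E$, because $R(\psi')$ is effective and $\rho$-exceptional. Hence $Z^\circ:=\psi^{-1}(Y^\circ)$ is canonical, and a general fibre of $\psi$ is canonical as well, being a general complete intersection of pull-backs of general hyperplanes on $Y^\circ$, hence a general member of iterated basepoint-free linear systems on the canonical variety $Z^\circ$. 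For Item (3) I would instead work over a neighbourhood of the generic point of $P$, where $Y$ is smooth and $R(\psi)=\psi^*P-C$; writing $R(\psi')=\psi'^*P-C'$ with $C':=(\psi'^*P)_{\textup{red}}=\rho^{-1}_*C+\sum_{\psi'(E)=P}E$, the displayed formula rearranges into $K_{Z'}+C'\sim_\mathbb{Q}\rho^*(K_Z+C)+\sum_E a(E,Z,\sF)E$. Reading off coefficients gives $a(E,Z,C)=a(E,Z,\sF)-1\ge -1$ when $\psi'(E)=P$ and $a(E,Z,C)=a(E,Z,\sF)\ge 0$ otherwise, so $(Z,C)$ is log canonical over a neighbourhood of the generic point of $P$.

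The hard part is to make the discrepancy comparison legitimate: Example \ref{example:canonical_class_foliation} computes $K_{\sF'}$ only when $\psi'$ is equidimensional, so the crux is to produce a log resolution $\rho$ under which $\psi$ stays equidimensional, after shrinking the base (for (2)) or restricting over the generic point of $P$ (for (3)), which is where an equidimensionalization input is required. The remaining delicate point is the bookkeeping of exceptional divisors: one must separate those dominating $Y$, those dominating $P$, and those lying over higher-codimension subvarieties, since it is precisely the effective correction $\textup{coeff}_E\,R(\psi')$ that measures the gap between the foliation discrepancy of $\sF$ and the discrepancy of $Z$ (respectively the log discrepancy of $(Z,C)$). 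Once this is arranged, all three statements follow formally.
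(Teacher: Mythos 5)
Your proposal is correct and follows essentially the same route as the paper's proof: Item (1) by playing the effective $\beta$-exceptional divisor $B$ against the nonnegative foliated discrepancies and invoking Lemma \ref{lemma:singularities_birational_morphism}, and Items (2)--(3) by passing to a resolution of $Z$, converting the foliated discrepancy relation into a relation for $K_{Z'}$ (resp. $K_{Z'}+C'$) via Example \ref{example:canonical_class_foliation}, and restricting over a suitable open subset of $Y$ where $Y$ is smooth and the relevant ramification vanishes. The one point you flag as the hard part --- equidimensionality of $\psi\circ\rho$ --- is in fact automatic over the complement of a codimension-two subset of $Y$ for any resolution $\rho$ (the paper encodes exactly this as the correction term $F_1$ supported over codimension two in $Y$), so no extra equidimensionalization input is needed once you shrink the base.
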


\begin{proof}
Recall from subsection \ref{family_leaves} that there is an effective Weil $\bQ$-divisor $B$ on $Z$ such that 
\begin{equation}\label{eq:canonical_bundle_formula_1}
K_{\beta^{-1}\sG}+B\sim_\mathbb{Q} \beta^* K_{\sG}.
\end{equation} 
On the other hand, since $\sG$ is canonical by assumption, there exists an effective Weil $\mathbb{Q}$-divisor $E$ on $Z$ such that
\begin{equation}\label{eq:canonical_bundle_formula_2}
K_{\beta^{-1}\sG}\sim_\mathbb{Q}\beta^*K_{\sG}+E.
\end{equation}
From equations \eqref{eq:canonical_bundle_formula_1} and \eqref{eq:canonical_bundle_formula_2}, we obtain
\begin{equation*}
B+E\sim_\mathbb{Q} 0.
\end{equation*}
This immediately implies that $B=0$ and $E=0$, and shows that $K_{\beta^{-1}\sG}\sim_\mathbb{Q}\beta^* K_\sG$. 
By Lemma \ref{lemma:singularities_birational_morphism}, $\beta^{-1}\sG$ is then canonical, proving Item (1).

Let $\beta_1\colon Z_1 \to Z$ be a resolution of singularities, and set $\psi_1:=\psi\circ\beta_1$.
By Example \ref{example:canonical_class_foliation}, we have 
$K_{\beta^{-1}\sG}\sim_\mathbb{Z}K_{Z/Y}-R(\psi)$ and $K_{\beta_1^{-1}\beta^{-1}\sG}\sim_\mathbb{Z}K_{Z_1/Y}-R(\psi_1)+F_1$, where $R(\psi)$ and $R(\psi_1)$ denote the ramification divisors of $\psi$ and $\psi_1$ respectively, and $F_1$ is a $\beta_1$-exceptional $\mathbb{Q}$-divisor on $Z_1$ such that $\psi_1(\textup{Supp}\,F_1)$ has codimension at least two in $Y$. In particular, $K_{Z/Y}-R(\psi)$ is $\mathbb{Q}$-Cartier.
Since $\beta^{-1}\sG$ is canonical, there exists an effective 
$\beta_1$-exceptional
$\mathbb{Q}$-divisor $E_1$ on $Z_1$ such that
\begin{equation}\label{eq:canonical_bundle_formula_3}
K_{\beta_1^{-1}\beta^{-1}\sG}\sim_\mathbb{Q}\beta_1^*K_{\beta^{-1}\sG}+E_1.
\end{equation}
Set also $Y^\circ:=Y\setminus \psi_1\big(\textup{Supp}\,R(\psi_1)\cup \textup{Supp}\,F_1\big)$, $Z^\circ:=\psi^{-1}(Y^\circ)$, and
$Z_1^\circ:=\psi_1^{-1}(Y^\circ)$.
Equation \eqref{eq:canonical_bundle_formula_3} then gives
$$K_{Z_1^\circ}\sim_\mathbb{Q} \beta_1^*K_{Z^\circ}+{E_1}_{|Z_1^\circ}.$$
This shows that $Z^\circ$ has canonical singularities. Item (2) follows easily.

To prove Item (3), we may assume that $\beta\colon Z_1\to Z$ be a resolution of $(Z,C)$.  
Shrinking $Y$, if necessary, we may assume that $Y$ is smooth, and that $\psi_1$ is also equidimensional.
Then $K_{\beta_1^{-1}\beta^{-1}\sG}=K_{Z_1/Y}-R(\psi_1)$. 
We may also assume without loss of generality that either $R(\psi_1)=0$ or
$\psi_1\big(\textup{Supp}\,R(\psi_1)\big)=P$. It follows that
$C=\psi^*P - R(\psi)$ and that $C_1:=(\psi_1^*P)_{\textup{red}}=\psi_1^*P - R(\psi_1)$. Equation \eqref{eq:canonical_bundle_formula_3} now yields
$$K_{Z_1}+C_1 \sim_\mathbb{Q} \beta^*(K_Z+C) + E_1.$$
Since $C_1$ is reduced, 
we conclude that $(Z,C)$ is log canonical, completing the proof of the lemma.
\end{proof}

\begin{rem}\label{rem:family_leaves_cartier}
In the setup of Lemma \ref{lemma:canonical_foliation_versus_lc_pairs} (1), suppose in addition that $K_\sG$ is Cartier. Then 
$K_{\beta^{-1}\sG}\sim_\mathbb{Z}\beta^* K_\sG$.
\end{rem}

The converse is also true if $\dim Y =1$ by Proposition \ref{prop:canonical_singularities_alg_int_cd1_foliation} below.

\begin{prop}\label{prop:canonical_singularities_alg_int_cd1_foliation}
Let $X$ be a normal complex projective variety, and let $\psi \colon X \to Y$ be a surjective morphism onto a smooth projective curve. Denote by $\sG$ the foliation induced by $\psi$. Let $\beta\colon X_1 \to X$ be a resolution of singularities, and denote by $R(\psi_1)$ the ramification divisor of $\psi_1:=\psi\circ\beta$.
If $X$ has canonical singularities over $Y \setminus \psi_1\big(\textup{Supp}\,R(\psi_1)\big)$ and, for any point $P$ in $\psi_1\big(\textup{Supp}\,R(\psi_1)\big)$, the pair $\big(X,(\psi^*P)_{\textup{red}}\big)$ is log canonical over an open neighborhood of $P$, then $\sG$ is canonical.
\end{prop}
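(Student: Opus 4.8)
The plan is to verify directly that $a(E,X,\sG)\ge 0$ for every prime divisor $E$ exceptional over $X$, using the canonical class formula for foliations induced by fibrations. First note that, $Y$ being a smooth curve and $\psi$ surjective, the morphism $\psi$ is equidimensional; hence Example~\ref{example:canonical_class_foliation} applies and gives $K_\sG\sim_\mathbb{Z}K_{X/Y}-R(\psi)$, and the analogous identity $K_{\mu^{-1}\sG}\sim_\mathbb{Z}K_{W/Y}-R(\psi_W)$ holds on any resolution $\mu\colon W\to X$ (with $\psi_W:=\psi\circ\mu$ again equidimensional). I would fix a sufficiently high log resolution $\mu$ factoring through $\beta$ and making each singular fibre of $\psi_W$ a simple normal crossings divisor, and then classify each exceptional $E$ as either \emph{horizontal} (with $\psi_W(E)=Y$) or \emph{vertical} over a point $P:=\psi_W(E)\in Y$.

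The conceptual heart of the argument is a local comparison between $K_\sG$ and the log canonical class of a pair. Fixing $P\in Y$ and setting $F_P:=\psi^*P$, on a neighbourhood of $\psi^{-1}(P)$ both $\psi^*K_Y$ and $F_P$ are linearly trivial (a point on a smooth curve is locally a principal Cartier divisor), so from $R(\psi)=F_P-(F_P)_{\textup{red}}$ near $P$ one gets
$$K_\sG\sim_\mathbb{Z}K_X+(F_P)_{\textup{red}}\qquad\text{near }\psi^{-1}(P),$$
and likewise $K_{\mu^{-1}\sG}\sim_\mathbb{Z}K_W+(\psi_W^*P)_{\textup{red}}$ near $\psi_W^{-1}(P)$. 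Comparing coefficients along $E$ then yields the two discrepancy formulas
$$a(E,X,\sG)=a\big(E,X,(F_P)_{\textup{red}}\big)+1\quad(E\text{ vertical over }P),\qquad a(E,X,\sG)=a(E,X)\quad(E\text{ horizontal}),$$
the $+1$ coming from the fact that $(\psi_W^*P)_{\textup{red}}$ has coefficient $1$ along $E$, and the horizontal case from $R(\psi)$, $R(\psi_W)$ being supported on fibres. The same local description shows that $K_\sG$ is $\mathbb{Q}$-Cartier: over the good locus $K_\sG\sim_\mathbb{Z}K_X$ and $X$ is canonical, while near a bad point $K_\sG\sim_\mathbb{Z}K_X+(F_P)_{\textup{red}}$ is $\mathbb{Q}$-Cartier by the log canonicity hypothesis.

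With these formulas in hand the three cases are quick. For horizontal $E$, the centre dominates $Y$, so the discrepancy is computed over the generic point of $Y$, where $X$ is canonical, giving $a(E,X,\sG)=a(E,X)\ge 0$. For $E$ vertical over a point $P\in\psi_1(\Supp R(\psi_1))$, the pair $\big(X,(F_P)_{\textup{red}}\big)$ is log canonical near $P$ by assumption, so $a\big(E,X,(F_P)_{\textup{red}}\big)\ge -1$ and hence $a(E,X,\sG)\ge 0$. For $E$ vertical over a good point $P$, the fibre $\psi_1^*P=\beta^*F_P$ is reduced; since $\beta$ is a log resolution of $(X,F_P)$ and $X$ is canonical near $P$, every $\beta$-exceptional divisor $E'$ satisfies $a\big(E',X,(F_P)_{\textup{red}}\big)=a(E',X)-\textup{mult}_{E'}\beta^*F_P\ge -1$ (the multiplicity being at most $1$ by reducedness), so $\big(X,(F_P)_{\textup{red}}\big)$ is again log canonical near $P$ and therefore $a(E,X,\sG)\ge 0$.

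I expect the main difficulty to be the good-point case, where log canonicity of $\big(X,(F_P)_{\textup{red}}\big)$ has to be extracted from reducedness of the fibre. Reducedness on $X$ by itself is not enough—a reduced cuspidal fibre of a smooth surface over a curve yields a pair that is not log canonical, and correspondingly a foliation that is not canonical—so the argument genuinely relies on $\beta$ (and $\mu$) being log resolutions, for which reducedness of the total transform forces simple normal crossings and hence log canonicity. Making this step precise, together with the bookkeeping of the ramification divisor that produces the clean identity $a(E,X,\sG)=a\big(E,X,(F_P)_{\textup{red}}\big)+1$, is the technical core of the proof.
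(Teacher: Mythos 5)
Your core argument is the same as the paper's. The paper also starts from $K_\sG\sim_\mathbb{Z}K_{X/Y}-R(\psi)$ (Example \ref{example:canonical_class_foliation}), checks that $K_\sG$ is $\mathbb{Q}$-Cartier exactly as you do, and then proves the single identity $K_{X_1}+C_1=\beta^*(K_X+C)+E$, where $C=\big(\psi^*B(\psi_1)\big)_{\textup{red}}$, $C_1=\big(\psi_1^*B(\psi_1)\big)_{\textup{red}}$ and $E$ is the foliated discrepancy divisor; this identity is precisely your two coefficient formulas packaged together (the $+1$ in the vertical case coming from the coefficient $1$ of the reduced divisor $C_1$), and effectivity of $E$ is then read off over the good locus from canonicity of $X$ and near the bad points from log canonicity, just as in your three cases.

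The step you single out as the technical core --- divisors vertical over a \emph{good} point $P$ --- is where the real issue lies, and your caution is justified: it is a genuine gap in the literal statement and in the paper's own proof, not a defect of your write-up. The paper only establishes $E\ge 0$ on one resolution, whereas canonicity requires nonnegative discrepancy for all divisors over $X$; on higher models the set $\psi_1\big(\Supp R(\psi_1)\big)$ grows, and at a formerly good point one needs exactly what you attempt to prove, namely that canonicity of $X$ near $\psi^{-1}(P)$ together with reducedness of $\psi_1^*P$ forces $\big(X,(\psi^*P)_{\textup{red}}\big)$ to be log canonical near $P$. Your cuspidal example shows this implication fails for an arbitrary resolution, and in fact refutes the literal statement: take $X$ a smooth surface (say a pencil of cubics with a cuspidal member, blown up at the base points), $\psi^*P$ the reduced cuspidal fibre, and $\beta=\textup{id}$; then $R(\psi_1)=0$, so the hypothesis as stated holds vacuously, yet $\sG$ is not canonical --- three blow-ups over the cusp extract a divisor of foliated discrepancy $-1$, consistently with the paper's own Lemma \ref{lemma:canonical_foliation_versus_lc_pairs}, since $\big(X,(\psi^*P)_{\textup{red}}\big)$ is not log canonical at the cusp. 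So the proposition is only correct when $\beta$ is required to be a log resolution of $X$ and of the pairs $\big(X,(\psi^*P)_{\textup{red}}\big)$ along the finitely many singular fibres, and under that reading your good-point argument (reducedness of $\beta^*\psi^*P$ plus $a(E',X)\ge 0$ gives $a\big(E',X,(\psi^*P)_{\textup{red}}\big)\ge-1$ on a log resolution, which certifies log canonicity) is exactly the missing step and completes the proof. In short: same route as the paper, but you correctly identified, and --- modulo the strengthened hypothesis on $\beta$ that the statement in any case needs --- correctly filled, the one step the paper glosses over.
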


\begin{proof}
Let $R(\psi)$ denotes the ramification divisor of $\psi$, and denote by $B(\psi)$ the reduced divisor on $Y$ with support $\psi\big(\textup{Supp}\, R(\psi)\big)$. By Example \ref{example:canonical_class_foliation}, we have $K_\sG=K_{X/Y}-R(\psi)$.
Note that $K_\sG$ is $\mathbb{Q}$-Cartier since $R(\psi)=\psi^*B(\psi)- \big(\psi^*B(\psi)\big)_{\textup{red}}$ and $K_X+\big(\psi^*B(\psi)\big)_{\textup{red}}$ is $\mathbb{Q}$-Cartier by assumption.

Let $\beta\colon X_1 \to X$ be a resolution of singularities, and let $E$ be the $\beta$-exceptional divisor on $X_1$ such that 
\begin{equation}\label{eq:canonical_bundle_formula}
K_{\beta^{-1}\sG}\sim_\mathbb{Q}\beta^*K_\sG+E.
\end{equation}
Denote by $B(\psi_1)$ the reduced divisor on $Y$ with support $\psi_1\big(\textup{Supp}\, R(\psi_1)\big)$. 
Set $C:=\big(\psi^*B(\psi_1)\big)_{\textup{red}}$ and $C_1:=\big(\psi_1^*B(\psi_1)\big)_{\textup{red}}$. Then Equation \eqref{eq:canonical_bundle_formula} above yields
$$K_{X_1}+C_1=\beta^*(K_X+C)+E.$$
Since $X$ has canonical singularities over $Y^\circ:=Y \setminus \psi_1\big(\textup{Supp}\, R(\psi_1)\big)$, we see that $E$ is effective over $Y^\circ$.
On the other hand, since the pair $(X,C)$ is log canonical over some open neighborhood of $\textup{Supp}\, B(\psi)$, we conclude that $E$ is effective over some open neighborhood of $\textup{Supp}\, B(\psi)$. This proves that $E$ is effective, completing the proof of the proposition.
\end{proof}

\begin{exmp}\label{example:product}
Let $Y$ and $Z$ be normal projective varieties, and let $\sG$ be the foliation on $X : = Y \times Z$ induced by the projection 
$Y \times Z \to Y$. Then $\sG$ is canonical if and only if $Z$ has canonical singularities.
Indeed, if $\sG$ is canonical, then $Z$ has canonical singularities by Lemma \ref{lemma:canonical_foliation_versus_lc_pairs} above. Suppose that $Z$ has canonical singularities, and let $\beta_1\colon Z_1 \to Z$ be a resolution of singularities. Let also $Y_1 \to Y$ be a resolution of $Y_1$. Let $\sG_1$ be the foliation on $Y_1 \times Z$ induced by the projection 
$Y_1 \times Z \to Y_1$, and denote by $\gamma_1\colon X_1:=Y_1 \times Z \to Y \times Z=X$ the natural morphism. Notice that 
$K_\sG$ is $\mathbb{Q}$-Cartier, and that $K_{\sG_1} \sim_\mathbb{Q} \gamma_1^*K_\sG$. Thus, by Lemma \ref{lemma:singularities_birational_morphism}, it suffices to show that $\sG_1$ is canonical.
Let $\sG_2$ be the foliation on $X_2:=Y_1 \times Z_1$ induced by the projection 
$Y_1 \times Z_1 \to Y_1$, and denote by $\gamma_2\colon X_2=Y_1 \times Z_1 \to Y_1 \times Z=X_1$ the natural morphism.
Since $Z$ has canonical singularities, $K_{\sG_2} \sim_\mathbb{Q} \gamma_2^*K_{\sG_1} + E_2$ for some effective and 
$\gamma_2$-exceptional Weil $\mathbb{Q}$-divisor.
Now, $\sG_2$ is canonical since it is a regular foliation (see Lemma \ref{lemma:regular_versus_canonical}). In particular, if 
$\gamma_3 \colon X_3 \to X_2$ is any projective birational morphism with $X_3$ normal, then 
$K_{\gamma_3^{-1}\sG_2} \sim_\mathbb{Q} \gamma_3^*K_{\sG_2}+E_3$ 
for some effective and 
$\gamma_3$-exceptional Weil $\mathbb{Q}$-divisor. It follows that 
$K_{\gamma_3^{-1}\sG_2} \sim_\mathbb{Q}(\gamma_2\circ\gamma_3)^*K_{\sG_1}+ \gamma_3^* E_2 + E_3$. This shows that $\sG$ is canonical.
\end{exmp}

The following result, which will be crucial for the proof of Theorem \ref{thm:regular_foliation_morphism}, extends \cite[Lemma 2.12]{fol_index} to the singular setting. 

\begin{prop}\label{proposition:dicritical_versus_canonical}
Let $\sG$ be a foliation of rank $r \ge 1$ on a normal complex projective variety $X$. 
Suppose that $\sG$ is algebraically integrable and that $K_\sG$ is $\mathbb{Q}$-Cartier.
Let $\psi\colon Z \to Y$ be the family of leaves, and let $\beta\colon Z \to X$ be the natural morphism (see \ref{family_leaves}). 
Let also $B$ be the effective $\beta$-exceptional $\mathbb{Q}$-divisor on $Z$ such that $K_{\beta^{-1}\sG}+B\sim_{\mathbb{Q}}\beta^*K_\sG$.
If $E$ is a $\beta$-exceptional prime divisor on $Z$ such that $\psi(E)=Y$, then 
$E \subseteq \textup{Supp}\,B$.
\end{prop}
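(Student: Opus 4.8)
The plan is to prove the equivalent statement that the foliated discrepancy is strictly negative, \emph{i.e.} $a(E,X,\sG)<0$. Indeed, since $K_{\beta^{-1}\sG}+B\sim_\mathbb{Q}\beta^*K_\sG$, Definition \ref{definition:canonical_singularities} gives $a(E,X,\sG)=-\mult_E B$, so the assertion $E\subseteq\Supp B$ is exactly $a(E,X,\sG)<0$. To analyse $a(E,X,\sG)$ I would write $K_\sG\sim_\mathbb{Q}K_X+c_1(\sN_\sG)$ and $K_{\beta^{-1}\sG}\sim_\mathbb{Q}K_Z+c_1(\sN_{\beta^{-1}\sG})$, and introduce the effective $\beta$-exceptional divisor $V$ with $c_1(\sN_{\beta^{-1}\sG})\sim_\mathbb{Q}\beta^*c_1(\sN_\sG)-V$; here $V\ge 0$ because the reflexive pull-back of the conormal forms $\beta^{[*]}\sN_\sG^*\to\sN_{\beta^{-1}\sG}^*$ embeds $\beta^{[*]}\sN_\sG^*$ into the saturation, by \cite[Theorem 4.3]{greb_kebekus_kovacs_peternell10}. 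Combining these gives $a(E,X,\sG)=a(E,X)-\mult_E V$, where $a(E,X)$ is the ordinary discrepancy of $X$ along $E$. Thus everything reduces to the strict inequality $\mult_E V>a(E,X)$.

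The geometric input that makes this strict is that $\beta(E)$ is contained in the singular locus of $\sG$, \emph{i.e.} in the codimension $\ge 2$ zero locus of a defining $q$-form $\omega$ (see \ref{q-forms}). This holds precisely because $E$ is $\psi$-horizontal: over the open set where $\sG$ is a regular foliation the leaves are locally separated (Frobenius), so $\beta$ is an isomorphism there (\ref{family_leaves}, \cite[Remark 3.12]{codim_1_del_pezzo_fols}), and hence the center of any $\beta$-exceptional divisor must meet $\Sing\sG$. I would then work at the generic point $\eta_E$ of $E$, where $Z$ is smooth and, again by horizontality, $\beta^{-1}\sG$ is the regular foliation by the fibres of $\psi$. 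Choosing coordinates $(y_1,\dots,y_q,t_1,\dots,t_r)$ adapted to $\psi$, so that $\beta^{-1}\sG=\langle\partial_{t_1},\dots,\partial_{t_r}\rangle$ and, since $E$ is transverse to the foliation, $E=\{t_1=0\}$, the sheaf $\det\sN_{\beta^{-1}\sG}^*$ is generated by $dy_1\wedge\dots\wedge dy_q$, and $\mult_E V$ is the order of vanishing along $E$ of the pull-back of a local generator of $\det\sN_\sG^*$. The plan is to split this order into the part recorded by the pull-back of differentials, which matches $a(E,X)$, and an additional contribution coming from the vanishing of $\omega$ along $\beta(E)\subseteq\Sing\sG$, which is $\ge 1$.

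The main obstacle is exactly this extra unit, since for a $\beta^{-1}\sG$-\emph{invariant} exceptional divisor the same comparison yields only $\mult_E V\ge a(E,X)$, with equality possible; the entire content is to extract the strict gain from the transversality of $E$ to the leaves. To make the splitting rigorous on the possibly singular $X$ I would pass to a common resolution dominating both $Z$ and a resolution of $X$, use the functoriality of the reflexive pull-back of \ref{subsection:pull-back_morphims} together with Lemma \ref{lemma:pull_back_fol_and_finite_cover} to separate the invariant and non-invariant exceptional components, and conclude with the negativity lemma. As a consistency check this recovers the smooth statement \cite[Lemma 2.12]{fol_index}; alternatively, restricting the defining relation to a general fibre $F=\psi^{-1}(y)$ gives $K_F+B_{|F}\sim_\mathbb{Q}(\beta_{|F})^*(K_\sG{}_{|L})$ with $L=\beta(F)$ the closure of a general leaf, and the extra unit then reflects that the horizontal divisor $E$ meets $L$ along the locus where $\sG$ is singular, which enters the foliated adjunction of $K_\sG{}_{|L}$ over $K_L$ with a positive coefficient.
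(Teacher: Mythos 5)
Your opening reduction is fine: since $K_{\beta^{-1}\sG}+B\sim_\mathbb{Q}\beta^*K_\sG$ with $B$ effective, the claim $E\subseteq\Supp B$ is indeed equivalent to $a(E,X,\sG)<0$. But the proof has two genuine gaps. First, the decomposition $a(E,X,\sG)=a(E,X)-\mult_E V$ is not available under the stated hypotheses: the proposition assumes only that $K_\sG$ is $\mathbb{Q}$-Cartier, whereas your formula needs $K_X$ to be $\mathbb{Q}$-Cartier (to define the ordinary discrepancy $a(E,X)$) and $c_1(\sN_\sG)$ to be $\mathbb{Q}$-Cartier (to write $c_1(\sN_{\beta^{-1}\sG})\sim_\mathbb{Q}\beta^*c_1(\sN_\sG)-V$). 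On an arbitrary normal $X$ only the sum $K_X+c_1(\sN_\sG)\sim_\mathbb{Q}K_\sG$ is $\mathbb{Q}$-Cartier, so neither term of your splitting makes sense. Second, and more seriously, the heart of the matter---the strict inequality $\mult_E V>a(E,X)$, your ``extra unit''---is exactly the step you leave open; you say yourself it is ``the main obstacle'', and the proposed remedy (common resolution, separating invariant from non-invariant exceptional components, negativity lemma) is a list of tools, not an argument. Nothing in the local-coordinate setup explains how transversality of $E$ to the fibres of $\psi$ upgrades $\mult_E V\ge a(E,X)$ to a strict inequality, and since you concede that equality can occur for invariant divisors, a mechanism specific to horizontal divisors must be exhibited. (A smaller issue: the claim that $\beta$ is an isomorphism over the regular locus of $\sG$ also requires $X$ to be smooth there and itself needs a proof; the correct conclusion is that the centre of any exceptional divisor lies in $\Sing X\cup\Sing\sG$.)

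For comparison, the paper argues by induction on the rank $r$, never splitting off $K_X$. For $r=1$ it passes to the local cyclic cover trivializing $K_\sG$ and then applies a valuation-theoretic lemma (Lemma \ref{lemma:prime_invariant}): if $E\not\subseteq\Supp B$ and $\beta(E)$ lies in a proper closed $\sG$-invariant subvariety, then $E$ must be invariant under $\beta^{-1}\sG$. Since $\psi(E)=Y$ forces $E$ to be non-invariant, while $\beta(E)$ is contained either in $\Sing X$ (invariant by Seidenberg's theorem) or in the singular locus of the foliation (invariant by Lemma \ref{lemma:singular_set_invariant}), the contrapositive gives $E\subseteq\Supp B$. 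For $r\ge 2$ it cuts by general hyperplane sections, using Proposition \ref{prop:bertini} to compare the canonical classes of the restricted foliations, and descends to rank $r-1$. That valuation argument is precisely the replacement for your missing ``extra unit'', and it works with no $\mathbb{Q}$-Cartier assumption on $K_X$; if you want to salvage your computational approach, you would at minimum have to restrict to $\mathbb{Q}$-factorial $X$ and then actually prove the strict drop, which is where all the content lies.
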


\begin{rem}
Proposition \ref{proposition:dicritical_versus_canonical} says that $\sG$ is not canonical along $\beta(E)$.
\end{rem}

\begin{proof}[Proof of Proposition \ref{proposition:dicritical_versus_canonical}]
We argue by induction on $r\ge 1$. 
Let $E$ be a $\beta$-exceptional prime divisor on $Z$, and assume that $\psi(E)=Y$.

Suppose first that $r=1$. Let $m$ be a positive integer and let $X ^\circ \subseteq X$ be a dense open set such that 
$\sO_{X^\circ}\big(m{K_\sG}_{|X^\circ}\big)\cong \sO_{X^\circ}$. Suppose in addition that 
$\beta(E)\cap X^\circ\neq \emptyset$.
Let $f^\circ \colon X_1^\circ \to X^\circ$ be the associated cyclic cover, which is quasi-\'etale (see \cite[Definition 2.52]{kollar_mori}). Finally, let $Z_1^\circ$ be the normalization of the product $Z^\circ \times_{X^\circ} X_1^\circ$, where 
$Z^\circ:=\beta^{-1}(X^\circ)$. If $\beta_1^\circ\colon Z_1^\circ \to X_1^\circ$ 
and $g^\circ\colon Z_1^\circ \to Z^\circ$
denote the natural morphisms, then there exists an effective $\beta_1^\circ$-exceptional divisor $B_1^\circ$ on $Z_1^\circ$ such that $K_{(\beta_1^\circ)^{-1}(f^\circ)^{-1}(\sG_{|X^\circ})}+B_1^\circ \sim_\mathbb{Z} 0$. Moreover, 
$(g^\circ)^*\big(B_{|Z^\circ}\big) - B_1^\circ $ is effective (see subsection \ref{family_leaves}).
Since $\psi(E)=Y$ by assumption, we see that $E$ is not invariant under $\beta^{-1}\sG$.
Let $E_1^\circ$ be a prime divisor on $Z_1^\circ$ such that $g^\circ(E_1^\circ)=E \cap Z^\circ$.
Notice that $E_1^\circ$ is not invariant under $(\beta_1^\circ)^{-1}(f^\circ)^{-1}(\sG_{|X^\circ})$ and that 
$E_1^\circ$ is $\beta_1^\circ$-exceptional.
By \cite[Theorem 5]{seidenberg67}, the singular locus of $X_1^\circ$ is invariant under any derivation on $X_1^\circ$. Thus, if $\beta_1^\circ(E_1^\circ)$ is contained in $X_1^\circ\setminus (X_1^\circ)_{\textup{reg}}$ then $E_1^\circ \subseteq \textup{Supp}\,B_1^\circ$ by Lemma \ref{lemma:prime_invariant} below.
If $\beta_1^\circ(E_1^\circ) \cap (X_1^\circ)_{\textup{reg}} \neq\emptyset$, then $\beta_1^\circ(E_1^\circ)$ is contained in the singular locus of $(f^\circ)^{-1}\big(\sG_{|X_1^\circ}\big)$, and hence
$E_1^\circ \subseteq \textup{Supp}\,B_1^\circ$ by Lemma \ref{lemma:prime_invariant} again and Lemma \ref{lemma:singular_set_invariant}. In either case, since $(g^\circ)^*\big(B_{|Z^\circ}\big) - B_1^\circ $ is effective,
we see that $E \subseteq \textup{Supp}\,B$.
This proves the proposition when $r=1$.

Suppose from now on that $r \ge 2$. We may assume without loss of generality that $X \subseteq \mathbb{P}^N$ for some positive integer $N$.

Let $H \subset X$ be a general hyperplane section. We may assume that $H$ and $G:=\beta^{-1}(H)$
are normal varieties (see \cite[Th\'eor\`eme 12.2.4]{ega28}). 
Set $\gamma:=\beta_{|G}\colon G \to H$ and
$\sE:={\sG}_{|H}\cap T_H$, so that $\gamma^{-1}\sE=\beta^{-1}\sG_{|G} \cap T_{G}$.
Applying \cite[Th\'eor\`eme 12.2.4]{ega28} again, we see that general fibers of $\psi_{|G}\colon G \to Y$ are integral by general choice of $H$. This implies that  
$\psi_{|G}\colon G \to Y$ is the family of leaves of $\sE$. 
Since the restriction of $\beta$ to any fiber of $\psi$ is finite, we have
$\dim \beta(E) \ge r-1 \ge 1$. In particular, $E_{|G}$ is a non-zero divisor on $G$.

Using Proposition \ref{prop:bertini} and the formula $K_{\beta^{-1}\sG}+B\sim_{\mathbb{Q}}\beta^*K_\sG$, we obtain
\begin{equation}\label{eq:restriction}
K_\sE\sim_\mathbb{Z} {K_\sG}_{|H}+H_{|H}
\end{equation}
and
\begin{equation}\label{eq:canonical_hyperplane}
K_{\gamma^{-1}\sE}\sim_\mathbb{Z}
{K_{\beta^{-1}\sG}}_{|G}+G_{|G}-B_G 
\sim_\mathbb{Q} \gamma^*K_\sE-B_G-B_{|G},
\end{equation}
for some effective $\gamma$-exceptional divisor $B_G$ on $G$. 
By induction, we must have $$\textup{Supp}\,E_{|G} \subseteq \textup{Supp}\big(B_G+B_{|G}\big).$$
Given a general fiber $F$ of $\psi$, we may assume that $H \cap F_{\textup{reg}}$ is smooth by Bertini's theorem.
This immediately implies that $\psi_{|G}\big(\textup{Supp}\,B_G\big) \subsetneq Y$ (see Proposition \ref{prop:bertini}).
On the other hand, by general choice of $H$, any irreducible component of $E \cap G$ is mapped onto $Y$ by $\psi_{|G}$.
Therefore, we have 
$\textup{Supp}\,E_{|G} \subseteq \textup{Supp}\, B_{|G}$, and hence 
$$E \subseteq \textup{Supp}\, B.$$
This completes the proof of the proposition.
\end{proof}

\begin{lemma}\label{lemma:prime_invariant}
Let $Z$ and $X$ be normal complex varieties, and let $\beta\colon Z \to X$ be a birational projective morphism.
Let $\sG$ be a foliation of rank one on $X$. Suppose that $K_\sG$ is Cartier, and write 
$K_{\beta^{-1}\sG}\sim_\mathbb{Z}\beta^*K_\sG+B$ for some $\beta$-exceptional divisor $B$ on $Z$.
Let $E \subset Z$ be a prime divisor not contained in $\textup{Supp}\,B$. If 
$\beta(E)$ is contained in a proper closed $\sG$-invariant subvariety $Y \subsetneq X$, then $E$ is invariant under $\beta^{-1}\sG$. 
\end{lemma}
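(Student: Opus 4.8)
The plan is to pass to the generic point of $E$ and run a valuation computation in the local ring there, using the hypothesis that $\beta(E)$ lies in a $\sG$-invariant subvariety to force a pole in the ``ratio'' of the two generating vector fields, which is exactly the statement $E\subseteq\textup{Supp}\,B$. Concretely, I localize at the generic point $\eta$ of $E$, so that $\sO_{Z,\eta}$ is a discrete valuation ring with uniformizer $t$ cutting out $E$ (note $Z$ is smooth at $\eta$ since it is normal). As $K_\sG$ is Cartier, $\sG$ is an invertible subsheaf of $T_X$, generated near $\beta(\eta)$ by a single derivation $v$ of $\sO_X$; likewise $\beta^{-1}\sG$ is invertible near $\eta$, generated by a derivation $w$ of $\sO_Z$. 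Since $\beta$ is birational, $\beta^{-1}\sG$ and $\sG$ coincide over the locus where $\beta$ is an isomorphism, so $w$ and $v$ span the same rank-one subspace of derivations of the common function field; hence there is a unique rational function $h$ on $Z$ with $w=h\,v$, that is $w(\beta^*f)=h\cdot\beta^*(vf)$ for every $f\in\sO_X$. Recall that $E$ is invariant under $\beta^{-1}\sG$ precisely when $\textup{ord}_E\big(w(t)\big)\ge 1$.

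The first thing to establish is the key identity $\textup{ord}_E(B)=\textup{ord}_E(h)$. I would rewrite the defining relation $K_{\beta^{-1}\sG}\sim_\mathbb{Z}\beta^*K_\sG+B$ as an isomorphism of invertible sheaves $\beta^{-1}\sG\cong\beta^*\sG\otimes\sO_Z(-B)$, and then compare generators: over $\sO_{Z,\eta}$ the generator $w$ of $\beta^{-1}\sG$ equals $h$ times the generator $v$ of $\beta^*\sG$, so $h$ generates the fractional ideal $\sO_Z(-B)\otimes\sO_{Z,\eta}=t^{\,\textup{ord}_E(B)}\sO_{Z,\eta}$. This gives $\textup{ord}_E(h)=\textup{ord}_E(B)$ at once.

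With this dictionary in hand I argue by contradiction and assume $E$ is \emph{not} invariant, i.e. $\textup{ord}_E\big(w(t)\big)=0$. Set $e:=\min\{\textup{ord}_E(\beta^*f)\mid f\in\sI_Y\}$; since $\beta(E)\subseteq Y$ every $\beta^*f$ with $f\in\sI_Y$ vanishes along $E$, so $e\ge 1$, and $e$ is finite and attained by some $f_0\in\sI_Y$ (because $\sI_Y\neq 0$ and $\beta^*$ is injective). Write $\beta^*f_0=t^{e}\phi_0$ with $\phi_0$ a unit in $\sO_{Z,\eta}$. As $Y$ is $\sG$-invariant we have $vf_0\in\sI_Y$, hence $\textup{ord}_E\big(\beta^*(vf_0)\big)\ge e$. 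Applying the identity $w(\beta^*f_0)=h\cdot\beta^*(vf_0)$ and expanding the left-hand side by the Leibniz rule,
$$w(\beta^*f_0)=e\,t^{\,e-1}\,w(t)\,\phi_0+t^{e}\,w(\phi_0),$$
the first summand has order $e-1$ along $E$ (as $w(t)$ and $\phi_0$ are units and $e\neq 0$) while the second has order at least $e$, so $\textup{ord}_E\big(w(\beta^*f_0)\big)=e-1$; on the other hand the right-hand side has order at least $\textup{ord}_E(h)+e$. Comparing yields $\textup{ord}_E(h)\le -1$.

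Combining the two displays, $\textup{ord}_E(B)=\textup{ord}_E(h)\le -1<0$, so $E\subseteq\textup{Supp}\,B$, contradicting the hypothesis $E\not\subseteq\textup{Supp}\,B$; hence $E$ is invariant under $\beta^{-1}\sG$. I expect the one genuinely delicate point to be the key identity $\textup{ord}_E(B)=\textup{ord}_E(h)$, which reinterprets the foliated discrepancy of $\beta^{-1}\sG$ along $E$ as the order of vanishing of the vector-field ratio $h$; once this is in place the invariance of $Y$ enters only through the single inequality $\textup{ord}_E\big(\beta^*(vf_0)\big)\ge e$, and the remainder is a short valuation estimate. (The case where $E$ is not $\beta$-exceptional needs no separate treatment: there $\textup{ord}_E(B)=0$, so the same computation shows the non-invariant hypothesis is untenable.)
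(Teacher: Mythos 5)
Your proof is correct and is essentially the paper's own argument: both pick $f_0\in\sI_Y$ of minimal vanishing order $e$ along $E$, expand the generator of $\beta^{-1}\sG$ applied to $\beta^*f_0$ by the Leibniz rule, and play the $\sG$-invariance of $Y$ (which gives $\textup{ord}_E\big(\beta^*(vf_0)\big)\ge e$) against the order-$(e-1)$ term to force a pole in the comparison between the generators of $\beta^{-1}\sG$ and $\sG$, i.e. $E\subseteq\textup{Supp}\,B$. Your ``key identity'' $\textup{ord}_E(h)=\textup{ord}_E(B)$ is exactly the fact the paper encodes by localizing so that $K_\sG\sim_\mathbb{Z}0$ and taking the generator $\partial_Z$ of $\beta^{-1}\sG\boxtimes\sO_Z(B)$, which is regular off $\textup{Supp}\,B$ and corresponds to a generator $\partial_X$ of $\sG$; to close the one small gap in your justification of it, observe that a $\beta$-exceptional principal divisor is zero (its pushforward is principal and trivial, so the function is a unit on $X$, hence on $Z$), so $B$ is the unique exceptional representative of $K_{\beta^{-1}\sG}-\beta^*K_\sG$ and the isomorphism $\beta^{-1}\sG\cong\beta^*\sG\otimes\sO_Z(-B)$ may indeed be taken compatible with the generic identification $w=hv$.
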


\begin{proof}
The statement is local on $X$, hence we may shrink $X$ and assume that $K_\sG \sim_\mathbb{Z} 0$ and that $X$ is affine,
$X \subseteq \mathbb{A}^N$ for some integer $N \ge 1$.

Let $\partial_Z \in H^0\big(Z,T_Z\boxtimes\sO_Z(B)\big)$ such that $\beta^{-1}\sG\boxtimes\sO_Z(B)=\sO_Z\partial_Z$, and denote by $\partial_X\in H^0(X,T_X)$ 
the derivation on $X$ induced by $\partial_Z$, so that $\sG=\sO_X\partial_X$. By construction, $\partial_Z$ is a regular derivation on $Z \setminus \textup{Supp}\, B$.

Let $f$ be a non-zero regular function on $X$, vanishing on $Y$, such that 
$m:=v_E(f)$ is minimal, where $v_E$ denote the divisorial valuation on the field of rational functions on $X$ induced by $E$.
Let also $g$ be a local equation of $E$ on some open subset $U \subseteq Z\setminus\textup{Supp}(B)$. There exists a function $u$ on $U$ such that
$u_{|E\cap U}\not\equiv 0$ and $f\circ\beta_{|U}=u g^m$. It follows that
$$\partial_Z\big(f\circ\beta\big)_{|U}= g^m {\partial_Z}_{|U}(u)+m u g^{m-1}{\partial_Z}_{|U}(g).$$
On the other hand, we have
$$\partial_Z(f\circ\beta) = \beta\circ\partial_X(f)$$ 
and thus $v_E\big(\partial_Z(f\circ\beta)\big)=v_E\big(\partial_X(f)\big) \ge v_E(f)=m$ by choice of $f$, using the fact that 
$\partial_X(f)$ vanishes on $Y$ by construction. Moreover, $v_E({\partial_Z}_{|U}(u)) \ge 0$ since ${\partial_Z}_{|U}$ is regular on $U$.
This immediately implies that $v_E\big(\partial_Z(g)\big)\ge 1$, proving the lemma.
\end{proof}

The following easy consequences of Lemma \ref{lemma:prime_invariant}
might be of independent interest.

\begin{prop}
Let $Z$ and $X$ be normal complex varieties, and let $\beta\colon Z \to X$ be a proper birational morphism.
Let $\partial_Z \in H^0(Z,T_Z)$, and let $\partial_X\in H^0(X,T_X)$ be the induced derivation on $X$. Suppose that $\partial_X\neq 0$ in codimension $1$.
Let also $E \subset Z$ be a prime divisor. 
If
$\beta(E)$ is contained in the singular locus of $X$, then $E$ is invariant under $\partial_Z$.
\end{prop}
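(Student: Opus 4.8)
The plan is to apply Lemma \ref{lemma:prime_invariant} to the rank one foliation generated by $\partial_X$. First I would let $\sG \subseteq T_X$ denote the saturation of the subsheaf $\sO_X\partial_X$. Since $\partial_X \neq 0$ in codimension one, the map $\sO_X \cong \sO_X\partial_X \into T_X$ is an isomorphism onto $\sG$ over the open set $X_{\textup{reg}}\setminus\{\partial_X = 0\}$, whose complement has codimension at least two. As both $\sG$ and $\sO_X$ are reflexive of rank one and agree there, this forces $\sG \cong \sO_X$, so that we may take $K_\sG = 0$; in particular $K_\sG$ is Cartier and Lemma \ref{lemma:prime_invariant} applies to $\sG$. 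Moreover, since $\beta$ is birational and $\partial_Z$ induces $\partial_X$, the foliations $\beta^{-1}\sG$ and the saturation of $\sO_Z\partial_Z$ agree on a dense open subset, hence coincide; in particular $\partial_Z$ is a global section of $\beta^{-1}\sG$.

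Next I would pin down the divisor $B$ appearing in Lemma \ref{lemma:prime_invariant}. With the representative $K_\sG = 0$, the divisor $B$ is characterized by $K_{\beta^{-1}\sG}\sim_\mathbb{Z}B$. Viewing $\partial_Z$ as a section of the rank one reflexive sheaf $\beta^{-1}\sG$, its divisor of zeros $D_0 \ge 0$ satisfies $\beta^{-1}\sG \cong \sO_Z(D_0)$, so one may take $B = -D_0$; note that $D_0$ is $\beta$-exceptional, since $\beta_* D_0$ is the divisor of zeros of $\partial_X$ in $\sG$, which vanishes. Crucially, $\textup{Supp}\,B = \textup{Supp}\,D_0$ is exactly the divisorial part of the zero locus of the vector field $\partial_Z$: over $Z_{\textup{reg}}$ (away from a set of codimension at least two) $\beta^{-1}\sG$ is a subbundle of $T_Z$, so the order of vanishing of $\partial_Z$ along a prime divisor is the same whether it is computed in $\beta^{-1}\sG$ or in $T_Z$.

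Finally I would distinguish two cases for the given prime divisor $E$ with $\beta(E) \subseteq \Sing(X)$. If $E \subseteq \textup{Supp}\,B$, then by the previous paragraph $\partial_Z$ vanishes along $E$ as a vector field, i.e. $\partial_Z$ is a local section of $\sI_E \cdot T_Z$ at a general point of $E$; then $\partial_Z(\sI_E) \subseteq \sI_E$ and $E$ is invariant under $\partial_Z$. If instead $E \not\subseteq \textup{Supp}\,B$, I would let $Y$ be the irreducible component of $\Sing(X)$ containing $\beta(E)$. By \cite[Theorem 5]{seidenberg67} the singular locus of $X$ is invariant under $\partial_X$, and a derivation preserves the irreducible components of an invariant subvariety, so $Y$ is a proper closed $\sG$-invariant subvariety containing $\beta(E)$. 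Lemma \ref{lemma:prime_invariant} then shows that $E$ is invariant under $\beta^{-1}\sG$; since $\partial_Z$ is a section of $\beta^{-1}\sG$, this gives $\partial_Z(\sI_E) \subseteq \sI_E$, as desired. I expect the main obstacle to be the bookkeeping of the second paragraph, namely verifying that $K_\sG$ is Cartier and identifying $\textup{Supp}\,B$ with the zero locus of $\partial_Z$; once this is in place, the two cases reduce cleanly to Lemma \ref{lemma:prime_invariant} and to a triviality, respectively.
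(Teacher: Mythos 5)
Your proof is correct and follows essentially the same route as the paper: one splits into the case where $E$ lies in the divisorial zero locus of $\partial_Z$ (where invariance is immediate) and the case where it does not (where Lemma \ref{lemma:prime_invariant} applies to $\sG=\sO_X\partial_X$, using Seidenberg's theorem on the invariance of the singular locus). Your additional bookkeeping — checking that $\sO_X\partial_X$ is already saturated with $K_\sG=0$, identifying the exceptional divisor $B$ with $-D_0$, and passing to an irreducible component of $\Sing(X)$ — only makes explicit what the paper leaves implicit.
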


\begin{proof}
By \cite[Theorem 5]{seidenberg67}, the singular locus of $X$ is invariant under $\partial_X$.
Let $B$ be the maximal effective divisor on $Z$ such that $\partial_Z \in H^0\big(Z,T_Z\boxtimes\sO_Z(-B)\big)$. Observe that
$B$ is $\beta$-exceptional since $\partial_X\neq 0$ in codimension $1$ by assumption.
If $E \subseteq \textup{Supp}\, B$, then ${\partial_Z}_{|E} \equiv 0$ and $E$ is obviously invariant under $\partial_Z$. If $E$ is not contained in $\textup{Supp}\, B$, then 
the claim follows from Lemma \ref{lemma:prime_invariant} above applied to the foliation $\sG=\sO_X\partial \subseteq T_X$.
\end{proof}

Recall that an \textit{equivariant resolution} of a normal variety $X$ is a projective birational
morphism $\beta\colon Z \to X$ with $Z$ smooth such that $\beta$ restricts to an isomorphism over the smooth locus of $X$ and such that $\beta_* T_ Z = T_X$.

\begin{cor}\label{cor:equivariant_resolution}
Let $X$ be a normal complex variety, and let $\beta\colon Z \to X$ be an equivariant resolution of $X$.
Then $\beta_* T_ Z (-\textup{log}\, E) = T_X$, where $E$ denotes the union of all prime $\beta$-exceptional divisors.
\end{cor}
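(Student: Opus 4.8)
The plan is to prove the two inclusions $\beta_* T_Z(-\textup{log}\,E) \subseteq T_X$ and $T_X \subseteq \beta_*T_Z(-\textup{log}\,E)$ separately, the first being immediate. Since $T_Z(-\textup{log}\,E)$ is a subsheaf of $T_Z$, pushing forward and using the defining property $\beta_*T_Z = T_X$ of an equivariant resolution gives $\beta_*T_Z(-\textup{log}\,E)\subseteq \beta_*T_Z = T_X$. For the reverse inclusion I would argue locally on $X$: given an open set $U\subseteq X$ and a section $\partial_X\in H^0(U,T_X)$, the identification $\beta_*T_Z=T_X$ (together with $\beta_*\sO_Z=\sO_X$, which holds since $X$ is normal and $\beta$ is proper birational) produces a lift $\partial_Z\in H^0(\beta^{-1}(U),T_Z)$, and it suffices to show that $\partial_Z$ is logarithmic along $E$, i.e. that every component of $E$ meeting $\beta^{-1}(U)$ is invariant under $\partial_Z$.

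First I would record that every prime $\beta$-exceptional divisor maps into $\Sing X$. Indeed, an equivariant resolution restricts to an isomorphism over $X_{\textup{reg}}$, so $\beta^{-1}(X_{\textup{reg}})\cong X_{\textup{reg}}$ carries no exceptional divisor; hence each component of $E$ is contained in $\beta^{-1}(\Sing X)$ and maps into $\Sing X$. This is precisely the hypothesis under which the preceding Proposition concludes that such a component is invariant under $\partial_Z$, provided the induced derivation on $X$ is nonzero in codimension one.

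That caveat is the one point needing care, since an arbitrary $\partial_X$ may vanish along a divisor, and this is the main obstacle. To handle it I would factor out the divisorial vanishing of $\partial_X$. Working first over the big open set $U_{\textup{reg}}:=U\cap X_{\textup{reg}}$, where $T_X$ is locally free, write $\partial_X=f\,\partial^0$ with $f\in\sO_X(U_{\textup{reg}})$ and $\partial^0\in H^0(U_{\textup{reg}},T_X)$ nonvanishing in codimension one. Since $X$ is normal and both $\sO_X$ and $T_X$ are reflexive, $f$ and $\partial^0$ extend to $U$ and the relation $\partial_X=f\,\partial^0$ persists there; moreover $\partial^0$ remains nonzero in codimension one on $U$, the only loci added to $U_{\textup{reg}}$ lying in $\Sing X$, which has codimension at least two. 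The lift of $\partial_X$ is then $\partial_Z=(\beta^*f)\,\partial^0_Z$, where $\partial^0_Z$ lifts $\partial^0$. Because $T_Z(-\textup{log}\,E)$ is an $\sO_Z$-module, it is enough to prove that $\partial^0_Z$ is logarithmic along $E$; in other words, we may assume from the start that $\partial_X$ is nonzero in codimension one.

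With this reduction in place, the Proposition (whose proof is local on $X$) applies to $\partial^0$ and to each component $E'$ of $E$: since $\beta(E')\subseteq\Sing X$, the divisor $E'$ is invariant under $\partial^0_Z$. Checking invariance at the generic point of each component of $E$ is enough to conclude $\partial^0_Z\in H^0\big(\beta^{-1}(U),T_Z(-\textup{log}\,E)\big)$, using that $T_Z(-\textup{log}\,E)$ is reflexive and hence determined by its restriction to a big open subset of $Z$. Therefore $\partial_Z\in H^0\big(\beta^{-1}(U),T_Z(-\textup{log}\,E)\big)$, so that $\partial_X\in H^0\big(U,\beta_*T_Z(-\textup{log}\,E)\big)$. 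This yields $T_X\subseteq\beta_*T_Z(-\textup{log}\,E)$ and, combined with the first inclusion, the desired equality.
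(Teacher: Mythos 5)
Your overall strategy (lift a local section of $T_X$ via $\beta_*T_Z=T_X$, observe that exceptional divisors map into $\Sing X$, and invoke the Proposition on derivations preceding the Corollary) is the intended one, but the reduction step is a genuine gap. You claim that on $U_{\textup{reg}}$ one can write $\partial_X=f\,\partial^0$ with $f\in\sO_X(U_{\textup{reg}})$ and $\partial^0$ vanishing only in codimension at least two. Such a factorization forces the divisorial part $D$ of the zero locus of $\partial_X$ to equal $\textup{div}(f)$, hence to be principal on $U_{\textup{reg}}$, equivalently on $U$; but on a normal variety which is not locally factorial the divisorial zero locus of a vector field need not be locally principal, and no shrinking of $U$ helps. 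Concretely, let $X=\{xy=z^2\}\subset\mathbb{C}^3$, which is $\mathbb{C}^2/\{\pm 1\}$ via $(x,y,z)=(u^2,v^2,uv)$. The invariant vector field $u\partial_u$ descends to $\partial_X=2x\partial_x+z\partial_z\in H^0(X,T_X)$, whose divisorial zero locus is exactly the line $D=\{x=z=0\}$ (pull back by the quotient map, which is \'etale in codimension one). Since $[D]$ generates $\textup{Cl}(\sO_{X,0})\cong\mathbb{Z}/2\mathbb{Z}$, the divisor $D$ is not principal on any neighborhood of the vertex, so no factorization $\partial_X=f\,\partial^0$ exists there, even over the smooth locus. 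As the exceptional divisors of an equivariant resolution lie precisely over $\Sing X$, this is exactly the situation your reduction must handle, and it cannot; sections of $T_X$ with divisorial zeroes through singular points do occur, and for them your appeal to the Proposition stays blocked.

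The gap can be closed, but not by factoring. The cleanest repair is to note that the hypothesis ``$\partial_X\neq 0$ in codimension one'' in the Proposition, and the hypothesis $E\not\subseteq\textup{Supp}\,B$ in Lemma \ref{lemma:prime_invariant}, serve only to control poles of the chosen generator of the pulled-back foliation; your $\partial_Z$ is an honest regular vector field on $\beta^{-1}(U)$, and the valuation computation of Lemma \ref{lemma:prime_invariant} runs verbatim for it with $Y=\Sing X$ and no nonvanishing assumption. Indeed, shrinking $U$ to be affine, choose $0\neq f\in \sI_Y(U)$ with $m:=v_{E'}(f\circ\beta)$ minimal, and write $f\circ\beta=ug^m$ near a general point of $E'$, where $g$ is a local equation of $E'$ and $u$ does not vanish identically on $E'$. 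Then $(\partial_X f)\circ\beta=\partial_Z(f\circ\beta)=g^m\partial_Z(u)+mug^{m-1}\partial_Z(g)$ has $v_{E'}\geq m$, because $\partial_X f$ again vanishes on $Y$ by Seidenberg's theorem and $m$ is minimal, while $v_{E'}(\partial_Z(u))\geq 0$ since $\partial_Z$ is regular; hence $v_{E'}(\partial_Z(g))\geq 1$, i.e. $E'$ is invariant under $\partial_Z$. (Alternatively, one can keep your reduction in spirit by decomposing $\partial_X$ $\sO_X$-linearly rather than multiplicatively: for $\dim X\geq 2$, general $\mathbb{C}$-linear combinations of a finite generating set of the stalk $T_{X,x}$ still generate and each has zero locus of codimension at least two, so $\partial_X=\sum_j f_j w_j$ with each $w_j$ covered by the Proposition, and one concludes because $T_Z(-\textup{log}\,E)$ is an $\sO_Z$-module; the case $\dim X=1$ is trivial.)
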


\subsection{Singularities of foliations with numerically trivial canonical class}
In general, Definition \ref{definition:canonical_singularities} requires some understanding of the numbers $a(E,X,\sG)$ for all exceptional divisors of all birational modifications of $X$. However, if $K_\sG$ is $\mathbb{Q}$-Cartier and $K_\sG\equiv 0$, then we have the following characterization of canonical singularities, due to Loray, Pereira, and Touzet when $X$ is smooth.

\begin{prop}\label{proposition:canonical_versus_uniruled}
Let $X$ be a normal complex projective variety, and let $\sG$ be a foliation on $X$ with $K_\sG$ $\mathbb{Q}$-Cartier and
$K_\sG\equiv 0$. Then $\sG$ has canonical singularities if and only if $\sG$ is not uniruled. 
\end{prop}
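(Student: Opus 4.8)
The plan is to prove the two implications separately by contraposition, reducing the statement to the equivalence: $\sG$ is uniruled if and only if $\sG$ is not canonical. Recall that $\sG$ is uniruled exactly when a general point of $X$ lies on a rational curve tangent to $\sG$, and that this is a birational invariant of the pair $(X,\sG)$, since a rational curve tangent to $\sG$ through a general point of $X$ is the image of one tangent to $\beta^{-1}\sG$ through a general point of any birational model $\beta\colon Z\to X$, and conversely. The smooth case is \cite{lpt}, so the new content is to organize the argument so that it survives on the singular space $X$; in both directions the bridge between the two properties will be a non-$\sG$-invariant (``dicritical'') exceptional divisor carrying a negative foliated discrepancy, which on the one hand forces non-canonicity and on the other produces tangent rational curves.

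For the implication ``not canonical $\Rightarrow$ uniruled'', I would start from a projective birational $\beta\colon Z\to X$ and a $\beta$-exceptional prime divisor $E$ with $a(E,X,\sG)<0$; since $K_\sG\equiv0$, this gives $K_{\beta^{-1}\sG}\equiv\sum_ia_iE_i$ with a strictly negative coefficient. As in the surface theory of \cite{brunella} and \cite{mcquillan08}, invariant exceptional divisors (reduced in the sense of Seidenberg) cannot lower the foliated discrepancy below zero on their own, so the offending divisor $E$ may be taken non-$\sG$-invariant; along such an $E$ the class $-K_{\beta^{-1}\sG}$ is positive in the leaf directions, and foliated bend-and-break (in the spirit of Bogomolov--McQuillan and Kebekus--Sol\'a-Conde--Toma) yields rational curves tangent to $\beta^{-1}\sG$, hence rational curves tangent to $\sG$ through the points of $\beta(E)$. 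Because $K_\sG\equiv0$ these curves satisfy $K_\sG\cdot C=0$ and deform inside the leaves; the locus they sweep out is $\sG$-saturated and again carries a numerically trivial canonical class, so an induction on dimension forces it to be all of $X$ and $\sG$ to be uniruled.

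For the converse ``uniruled $\Rightarrow$ not canonical'', I would use the algebraic part of $\sG$: the $\sG$-tangent rational curves through general points generate (see \cite{fano_fols}) an algebraically integrable subfoliation $\sF\subseteq\sG$ with rationally connected general leaf. Passing to its family of leaves $\psi\colon Z\to Y$ with natural birational morphism $\beta\colon Z\to X$ and the divisor $B$ of \ref{family_leaves} satisfying $K_{\beta^{-1}\sF}+B\sim_\mathbb{Q}\beta^*K_\sF$, the rational connectedness of a general leaf produces rational curves of negative $K_\sF$-degree inside it, and numerical triviality of $K_\sG$ bounds $K_\sF$ along leaves, so this negativity must be absorbed by $B$. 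Hence $B$ has a component $E$ with $\psi(E)=Y$, and Proposition \ref{proposition:dicritical_versus_canonical} shows that $\sF$ is not canonical along $\beta(E)$. Since $\sF\subseteq\sG$, any divisor that is non-$\sF$-invariant is non-$\sG$-invariant, so $E$ is dicritical for $\sG$ as well, and transferring the negative discrepancy from $\sF$ to $\sG$ shows that $\sG$ is not canonical.

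The hard part will be precisely the two propagation steps where $K_\sG\equiv0$ is indispensable: upgrading the rational curves through $\beta(E)$ to a family covering $X$ in the first direction, and passing from the algebraically integrable subfoliation $\sF$ back to $\sG$ while controlling discrepancies in the second. Both rely on the principle that numerical triviality of $K_\sG$ prevents the $\sG$-tangent rational curves from being trapped in a proper invariant subvariety; this is exactly the input that fails for general $K_\sG$ and explains why the equivalence is special to the numerically trivial setting.
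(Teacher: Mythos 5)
Your proposal has genuine gaps in both directions, and both trace back to the same missing idea: the bridge between canonical singularities and non-uniruledness is \emph{pseudo-effectivity} of the canonical class, not a local analysis near a dicritical divisor. In the direction ``not canonical $\Rightarrow$ uniruled'', producing rational curves tangent to $\beta^{-1}\sG$ through points of the exceptional divisor $E$ can never suffice: uniruledness of $\sG$ requires a tangent rational curve through a \emph{general} point of $X$, while $\beta(E)$ is a fixed proper subvariety, and a foliation may perfectly well carry families of tangent rational curves inside a proper invariant subvariety without being uniruled. Your ``induction on dimension'' on the swept-out invariant locus is exactly the unproved propagation step, and you offer no mechanism for it. The preliminary reductions are also unjustified: the claim that an invariant exceptional divisor cannot produce a negative discrepancy is a surface-theoretic heuristic you do not prove in higher dimension, and the Bogomolov--McQuillan bend-and-break you invoke needs a positivity hypothesis along a complete curve inside the foliated locus that you never verify. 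In the direction ``uniruled $\Rightarrow$ not canonical'', the final step --- ``transferring the negative discrepancy from $\sF$ to $\sG$'' --- is not a valid operation: since $\det\sG \cong \det\sF \boxtimes \det(\sG/\sF)$, the $\sG$-discrepancy along $E$ differs from the $\sF$-discrepancy by the contribution of the quotient $\sG/\sF$, which can be positive and absorb the negativity; nothing about the inclusion $\sF\subseteq\sG$ forces a negative $\sF$-discrepancy to remain negative for $\sG$. (You also quote Proposition \ref{proposition:dicritical_versus_canonical} backwards: it asserts that a dicritical exceptional divisor lies in $\textup{Supp}\,B$; the implication you actually need, that $B\neq 0$ forces non-canonicity, is contained in Lemma \ref{lemma:canonical_foliation_versus_lc_pairs}.)

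The paper's proof avoids both difficulties. Following \cite[Corollary 3.8]{lpt}, it first establishes Theorem \ref{thm:uniruled_versus_pseudo-effectivity}: a foliation with canonical singularities is uniruled if and only if its canonical class is not pseudo-effective. One implication is \cite[Theorem 4.7]{campana_paun15} applied to $\beta^{-1}\sG$ on a resolution $\beta\colon Z\to X$; the other is the argument of \cite[Theorem 3.7]{lpt}, which uses the rationally connected subfoliation exactly as you do, but concludes that $K_\sG$ \emph{itself} is negative on a movable class of rational curves, hence not pseudo-effective --- no discrepancies of $\sF$ appear. Then, under $K_\sG\equiv 0$: if $\sG$ is not canonical, then $K_{\beta^{-1}\sG}\equiv\sum_i a_iE_i$ with some $a_i<0$, and an exceptional class with a negative coefficient is never pseudo-effective, so Campana--P\u{a}un produces tangent rational curves through the general point of $Z$ directly --- this global statement is what replaces your propagation step; conversely, if $\sG$ were uniruled and canonical, its canonical class would fail to be pseudo-effective, contradicting $K_\sG\equiv 0$ --- this numerical contradiction is what replaces your discrepancy transfer.
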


\begin{proof}
The same argument used in the proof of \cite[Corollary 3.8]{lpt} shows that the conclusion of Proposition \ref{proposition:canonical_versus_uniruled} holds. One only needs to replace the use of \cite[Theorem 3.7]{lpt} by Theorem \ref{thm:uniruled_versus_pseudo-effectivity} below.
\end{proof}

\begin{thm}\label{thm:uniruled_versus_pseudo-effectivity}
Let $X$ be a normal complex projective variety, and let $\sG$ be a foliation on $X$ with canonical singularities. Then $\sG$ is uniruled if and only if $K_\sG$ is not pseudo-effective.
\end{thm}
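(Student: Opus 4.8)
The plan is to reduce to the case where the ambient variety is smooth, which is \cite[Theorem 3.7]{lpt}, by passing to a resolution of singularities and using the hypothesis that $\sG$ is canonical to control the foliated canonical class. Recall that, since $\sG$ is canonical, $K_\sG$ is $\mathbb{Q}$-Cartier. Fix a resolution of singularities $\beta\colon Z \to X$ with $Z$ smooth, let $U\subseteq X$ be a dense open subset over which $\beta$ is an isomorphism, and set $\sG_Z:=\beta^{-1}\sG$. By Definition \ref{definition:canonical_singularities} we may write
\begin{equation*}
K_{\sG_Z}\sim_\mathbb{Q}\beta^*K_\sG+E, \qquad E=\sum_i a(E_i,X,\sG)\,E_i,
\end{equation*}
where the $E_i$ run through the $\beta$-exceptional prime divisors; since $\sG$ is canonical, $E$ is effective.

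First I would record a transfer statement for pseudo-effectivity: $K_\sG$ is pseudo-effective if and only if $K_{\sG_Z}$ is. For one implication, if $K_\sG$ is pseudo-effective then so is $\beta^*K_\sG$ (the pull-back of a pseudo-effective $\mathbb{Q}$-Cartier class under a birational morphism is pseudo-effective), and adding the effective divisor $E$ preserves pseudo-effectivity; it is exactly here that the hypothesis that $\sG$ is canonical, through $E\ge 0$, is used. For the converse, if $K_{\sG_Z}$ is pseudo-effective then its push-forward $\beta_*K_{\sG_Z}=\beta_*\!\big(\beta^*K_\sG+E\big)=K_\sG$ is pseudo-effective, since $E$ is $\beta$-exceptional and the push-forward of a pseudo-effective class under a projective birational morphism is again pseudo-effective: for a fixed ample divisor $A$ on $X$ one has $K_{\sG_Z}+\varepsilon\beta^*A\sim_\mathbb{Q} F_\varepsilon$ with $F_\varepsilon$ effective for every $\varepsilon>0$ (as $\beta^*A$ is big and a sum of a pseudo-effective and a big class is big), and pushing forward yields $K_\sG+\varepsilon A\sim_\mathbb{Q}\beta_*F_\varepsilon\ge 0$; letting $\varepsilon\to 0$ gives the claim.

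Next I would check that $\sG$ is uniruled if and only if $\sG_Z$ is. Since $\beta$ restricts to an isomorphism over $U$, a covering family of rational curves tangent to $\sG$ pulls back, via strict transforms, to a covering family of rational curves tangent to $\sG_Z$: a general member is not contained in $\Exc\,\beta$, so it is birational to its image and is tangent to $\sG_Z$ over $\beta^{-1}(U)$, where $\sG_Z$ and $\sG$ agree. Conversely, a covering family of rational curves tangent to $\sG_Z$ pushes forward to one tangent to $\sG$: a general member meets $\beta^{-1}(U)$, hence is neither contracted by $\beta$ nor contained in $\Exc\,\beta$, and maps onto a rational curve tangent to $\sG$ through a general point of $X$.

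With these two equivalences in hand, the theorem follows from the smooth case: applying \cite[Theorem 3.7]{lpt} to $\sG_Z$ on the manifold $Z$ shows that $\sG_Z$ is uniruled if and only if $K_{\sG_Z}$ is not pseudo-effective, and combining this with the two transfer statements gives that $\sG$ is uniruled if and only if $K_\sG$ is not pseudo-effective. The point requiring the most care is the pseudo-effectivity transfer: one implication genuinely uses the canonical hypothesis through the effectivity of $E$, while the other rests on the stability of pseudo-effectivity under push-forward, which I expect to be the most delicate step to justify in full.
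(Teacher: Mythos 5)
Your two transfer statements are fine: the push-forward argument for pseudo-effectivity and the strict-transform argument for uniruledness are both correct, and the direction ``$K_\sG$ not pseudo-effective $\Rightarrow$ $\sG$ uniruled'' does go through along these lines --- although for that step you should invoke \cite[Theorem 4.7]{campana_paun15}, which applies to an arbitrary foliation on a smooth projective variety, rather than \cite[Theorem 3.7]{lpt}. The genuine gap is in the other direction, where you apply \cite[Theorem 3.7]{lpt} as a black box to $\beta^{-1}\sG$: that theorem assumes that the foliation on the manifold has \emph{canonical singularities}, and you never verify this for $\beta^{-1}\sG$. The hypothesis cannot be dropped in the implication you need: the degree two foliation on $\mathbb{P}^2$ defined by a pencil of conics through four general points has all of its leaves rational, hence is uniruled, while its canonical class is $\sO_{\mathbb{P}^2}(1)$, which is ample; the four base points are radial singularities, of discrepancy $-1$. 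So ``uniruled $\Rightarrow$ $K$ not pseudo-effective'' genuinely fails for non-canonical foliations on smooth varieties, and any proof of that direction must use canonicity of the foliation it is applied to.

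The trouble is that canonicity of $\sG$ does not ascend to $\beta^{-1}\sG$ along a resolution. Writing $K_{\beta^{-1}\sG}\sim_\mathbb{Q}\beta^*K_\sG+E$ with $E\ge 0$ exceptional, a prime divisor $F$ extracted by a further birational morphism $\gamma\colon W \to Z$ satisfies $a(F,Z,\beta^{-1}\sG)=a(F,X,\sG)-\mult_F(\gamma^*E)$, which may well be negative as soon as $E\neq 0$; note that Lemma \ref{lemma:singularities_birational_morphism} gives descent of canonicity from $Z$ to $X$ (item (1)), or equivalence in the crepant case $E=0$ (item (2)), but never ascent when $E>0$. This is exactly the point around which the paper's own proof is organized: it never quotes \cite[Theorem 3.7]{lpt} as a statement applied to $\beta^{-1}\sG$. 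Instead it uses \cite[Theorem 4.7]{campana_paun15} for the direction you handled correctly, and for the delicate direction it re-runs the \emph{argument} of the proof of \cite[Theorem 3.7]{lpt} on $\beta^{-1}\sG$, remarking explicitly that the canonicity assumption is not used in that portion of the argument, and only afterwards brings in the canonicity of $\sG$ through the relation $K_{\beta^{-1}\sG}\sim_\mathbb{Q}\beta^*K_\sG+E$ with $E$ effective (if $K_\sG$ were pseudo-effective, this relation would force $K_{\beta^{-1}\sG}$ to be pseudo-effective as well). To repair your proof you would have to perform the same surgery: either establish that $\beta^{-1}\sG$ is canonical (which is not available, and not true in general) or open up the proof of \cite[Theorem 3.7]{lpt} and check which hypotheses its steps actually require; as written, the citation does not apply.
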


\begin{proof}
Let $\beta\colon Z \to X$ be a resolution of singularities.

Suppose first that $K_\sG$ is not pseudo-effective. Since $\sG$ is canonical, $K_{\beta^{-1}\sG}$ is not pseudo-effective as well. Applying \cite[Theorem 4.7]{campana_paun15} to $\beta^{-1}\sG$, we see that $\sG$ is uniruled.

Suppose now that $\sG$ is uniruled. The same argument used in the proof of \cite[Theorem 3.7]{lpt} applied to 
$\beta^{-1}\sG$
shows that 
$K_{\beta^{-1}\sG}$ is pseudo-effective. Note that the assumption that $\sG$ is canonical is not used here.
This immediately implies that $K_\sG$ is not pseudo-effective, finishing the proof of the theorem.
\end{proof}

\subsection{Abundance for algebraically integrable foliations with numerically trivial canonical class}

Let $X$ be a projective klt variety with numerically trivial canonical class. By a theorem of Nakayama, $K_X$ is torsion. Proposition \ref{prop:abundance_alg_int} below extends Nakayama's result to mildly singular algebraically integrable foliations.

\begin{prop}\label{prop:abundance_alg_int}
Let $X$ be a normal complex projective variety, and let $\sG$ be an algebraically integrable foliation on $X$. Suppose that 
$\sG$ is canonical with $K_\sG\equiv 0$. Then $K_\sG$ is torsion.
\end{prop}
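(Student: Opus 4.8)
The plan is to reduce to the family of leaves and invoke the canonical bundle formula. Let $\psi\colon Z\to Y$ be the family of leaves of $\sG$ and let $\beta\colon Z\to X$ be the natural birational morphism (see \ref{family_leaves}). By Lemma \ref{lemma:canonical_foliation_versus_lc_pairs}(1), $\beta^{-1}\sG$ is canonical and $K_{\beta^{-1}\sG}\sim_\mathbb{Q}\beta^*K_\sG$; in particular $K_{\beta^{-1}\sG}\equiv 0$. Since $\beta_*K_{\beta^{-1}\sG}\sim_\mathbb{Z}K_\sG$, pushing a torsion relation forward shows that $K_\sG$ is torsion as soon as $K_{\beta^{-1}\sG}$ is, so it suffices to prove that $K_{\beta^{-1}\sG}$ is torsion. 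Recall from Example \ref{example:canonical_class_foliation} that $K_{\beta^{-1}\sG}\sim_\mathbb{Z}K_{Z/Y}-R(\psi)$. The divisor $R(\psi)$ is vertical and supported over a proper closed subset of $Y$, so for a general fiber $F$ of $\psi$ we have $K_{\beta^{-1}\sG}|_F\sim_\mathbb{Z}K_F$; by Lemma \ref{lemma:canonical_foliation_versus_lc_pairs}(2) the fiber $F$ has canonical singularities, and $K_F\equiv 0$. By \cite[Corollary V.4.9]{nakayama04}, $K_F$ is torsion.

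Thus the general fiber of $\psi$ is $\mathbb{Q}$-trivial, and $\psi$ is an lc-trivial fibration: by Lemma \ref{lemma:canonical_foliation_versus_lc_pairs}(2) and (3), the pair $\big(Z,-R(\psi)\big)$ is sub-log-canonical over the generic point of every prime divisor of $Y$, with effective discriminant. After replacing $Y$ by a resolution of singularities and $Z$ by a suitable equidimensional birational model, the canonical bundle formula for lc-trivial fibrations (see \cite[Theorem 1.2]{floris} and the references therein) produces a decomposition $K_{\beta^{-1}\sG}\sim_\mathbb{Q}\psi^*(\Delta_Y+M_Y)$, where $\Delta_Y\ge 0$ is the discriminant and $M_Y$ is the moduli part.

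Since $\psi$ is surjective, the pull-back map is injective on numerical classes, and $K_{\beta^{-1}\sG}\equiv 0$ forces $\Delta_Y+M_Y\equiv 0$ on $Y$. As $M_Y$ is nef while $\Delta_Y$ is effective, intersecting with $H^{\dim Y-1}$ for an ample divisor $H$ gives $\Delta_Y\cdot H^{\dim Y-1}=-M_Y\cdot H^{\dim Y-1}\le 0$, whence $\Delta_Y=0$ and $M_Y\equiv 0$. By \cite[Theorem 1.2]{floris} the moduli part $M_Y$ is semiample, and a numerically trivial semiample $\mathbb{Q}$-divisor is torsion, so $M_Y\sim_\mathbb{Q}0$. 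Therefore $K_{\beta^{-1}\sG}\sim_\mathbb{Q}0$, and consequently $K_\sG$ is torsion. The main obstacle is the positivity of the moduli part: it is precisely Floris's theorem (applicable here because the fibers are canonical Calabi--Yau varieties of bounded index) that upgrades the numerical triviality of $M_Y$ to torsion; a secondary technical point is to ensure that only numerical triviality of $K_{\beta^{-1}\sG}$---rather than $\mathbb{Q}$-linear triviality---is needed to run the formula, which is guaranteed by the $\mathbb{Q}$-triviality of the general fiber.
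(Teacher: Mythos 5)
Your proposal is correct and follows essentially the same route as the paper's proof: reduce to the family of leaves $\psi\colon Z \to Y$ via Lemma \ref{lemma:canonical_foliation_versus_lc_pairs} and Example \ref{example:canonical_class_foliation}, use \cite[Corollary V 4.9]{nakayama04} on a general fiber, and conclude by applying \cite[Theorem 1.2]{floris} to the resulting klt-trivial fibration, whose moduli part is numerically trivial. The only difference is cosmetic packaging of the canonical bundle formula: the paper twists by the reduced vertical divisor $\psi^{-1}(C)$ to get an honest klt-trivial fibration with effective boundary and discriminant exactly $C$ (establishing the global $\mathbb{Q}$-linear pullback condition via \cite[Theorem A.7]{reid_chapters}), whereas you work with the sub-pair $\big(Z,-R(\psi)\big)$ and eliminate discriminant and moduli part by a nefness/effectivity argument.
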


\begin{proof}
Let $\psi\colon Z \to Y$ be the family of leaves, and let $\beta\colon Z \to X$ be the natural morphism (see \ref{family_leaves}). Let also $F$ be a general fiber of $\psi$.
By Lemma \ref{lemma:canonical_foliation_versus_lc_pairs}, $\beta^{-1}\sG$ is canonical with 
$K_{\beta^{-1}\sG}\sim_\mathbb{Q}\beta^* K_\sG$, and moreover, $F$ has canonical singularitites.
Recall from Example \ref{example:canonical_class_foliation} that $K_{\beta^{-1}\sG}=K_{Z/Y}-R(\psi)$, where $R(\psi)$ denotes the ramification divisor of $\psi$.
From the adjunction formula, we conclude that $K_F\sim_\mathbb{Z}{K_Z}_{|F} \equiv 0$, and thus $K_F$ is torsion by \cite[Corollary V 4.9]{nakayama04}. It follows that there exists a $\mathbb{Q}$-divisor $B$ on $Z$ 
with 
$\psi(\textup{Supp}\,B)\subsetneq Y$ such that 
$$K_{Z/Y}-R(\psi)\sim_\mathbb{Z}B.$$
On the other hand, since $K_{Z/Y}-R(\psi)\equiv 0$ by assumption, there exists a $\mathbb{Q}$-divisor on $Y$ such that 
$B = \psi^* D$. This follows easily from \cite[Theorem A.7]{reid_chapters}. 
Therefore, we have
$$K_Z+\psi^{-1}(C)\sim_\mathbb{Q} \psi^*\big(K_Y+C+D\big),$$
where $C$ is the reduced divisor on $Y$ with support $\psi\big(\textup{Supp}\,R(\psi)\big)$.
By Lemma \ref{lemma:pull-back_Q_Cartier}, $D$ is $\mathbb{Q}$-Cartier.
Moreover, we have $D \equiv 0$ by the projection formula.
By Lemma \ref{lemma:canonical_foliation_versus_lc_pairs} applied to $\beta^{-1}\sG$, the discriminant of the klt-trivial fibration $\psi\colon \big(Z,\psi^{-1}(C)\big)\to Y$ is $C$ (we refer the reader to \cite{floris} for the definitions of klt-trivial fibration and discriminant). From \cite[Theorem 1.2]{floris}, we conclude that $D$ is torsion, proving the proposition.
\end{proof}

\section{Weakly regular foliations on singular spaces}\label{section:regular_foliations}

\subsection{Definitions and examples}
There are several notions of regularity for foliations on singular spaces. We first recall the notion of strongly regular foliation following \cite[Definition 1.9]{holmann}.

\begin{defn}
Let $\sG$ be a foliation of rank $r \ge 1$ on a normal complex variety. Say that $\sG$ is \textit{strongly regular
at $x \in X$} if $x$ has an open analytic neighborhood $U$ that is biholomorphic to 
$\mathbb{D}^r\times M$, where $\mathbb{D}$ is the complex open disk and $M$ is a germ of normal complex analytic variety, such that $\sG_{|U}$ is induced by the projection
$\mathbb{D}^r\times M \to M$. Say that $\sG$ is \textit{strongly regular} if $\sG$ is strongly regular at any point $x \in X$.
\end{defn}

\begin{rem}\label{rem:strongly_regular_infinitesimal}
By \cite[Lemma 1.3.2]{bogomolov_mcquillan01}, $\sG$ is strongly regular at $x \in X$ if and only if $\sG$ is locally free in a neighborhood of $x$ and the natural map $\Omega_X^{r} \to \sO_X(K_\sG)$ induced by the $r$-th wedge product
of the inclusion $\sG \into T_X$ is surjective at $x$.
\end{rem}

\begin{exmp}
If $Y$ and $Z$ are normal varieties and $\sG$ is the foliation on $X : = Y \times Z$ induced by the projection 
$Y \times Z \to Y$, then $\sG$ is strongly regular if and only if $Z$ is smooth. 
\end{exmp}

The notion of strong regularity is however not flexible enough to allow for applications.
The following notion of regularity for foliations adresses this issue
(\cite[Definition 3.5]{codim_1_del_pezzo_fols}). 

\begin{defn}\label{defn:regular}
Let $\sG$ be a foliation of rank $r\ge 1$ on a normal complex variety. 
The $r$-th wedge product
of the inclusion $\sG \into T_X$ gives rise to a non-zero map $\sO_X(-K_\sG) \to (\wedge^rT_X)^{**}$. 
We will refer to the dual map $\Omega_X^{[r]} \to \sO_X(K_\sG)$ as the \textit{Pfaff field} associated to $\sG$.

The \textit{singular locus} $S$ of $\sG$ is the closed subscheme of $X$ whose ideal sheaf is the image
of the induced map $\Omega_X^{[r]}\boxtimes \sO_X(-K_\sG) \to \sO_X$, which we will refer to as the \textit{twisted Pfaff field} associated to $\sG$. 

We say that $\sG$ is \textit{weakly regular at} $x \in X$ if $x \not\in S$.
We say that $\sG$ is \textit{weakly regular} if $S=\emptyset$.
\end{defn}

\begin{exmp}
Let $X$ be a normal variety, and consider $\sG=T_X$. Then $\sG$ is weakly regular.
\end{exmp}

A strongly regular foliation is obviously weakly regular in the sense of Definition \ref{defn:regular} above. The converse is true if $X$ is smooth by Frobenius' theorem.
Other examples of weakly regular foliations are provided by the following results.

\begin{prop}\label{prop:generic_smoothness}
Let $X$ be a normal complex variety, and let $\psi \colon X \to Y$ be a dominant morphism onto a variety $Y$. Let  
$\sG$ be the foliation on $X$ induced by $\psi$. 
Then $\sG$ is weakly regular over the generic point of $Y$.
\end{prop}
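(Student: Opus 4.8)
The plan is to show that the singular locus $S$ of $\sG$ — the closed set where the Pfaff field $\Omega_X^{[r]}\to\sO_X(K_\sG)$ fails to be surjective, with $r:=\dim X-\dim Y$ the rank of $\sG$ — does not dominate $Y$. Granting this, $U:=Y\setminus\overline{\psi(S)}$ is a dense open subset over which $\sG$ is weakly regular, which is exactly the assertion. Since $S$ is closed and we work in characteristic zero, it suffices to prove that $S$ is disjoint from a general fibre of $\psi$; equivalently, that $\sG$ is weakly regular at every point of $F:=\psi^{-1}(y)$ for general $y\in Y$.

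First I would fix a dense open $Y^\circ\subseteq Y$ over which several generic properties hold simultaneously. Combining generic flatness, generic smoothness applied to $\psi_{|X_{\textup{reg}}}$, normality of the general fibre (\cite[Th\'eor\`eme 12.2.4]{ega28}), and a dimension count for the fibres of $(X\setminus X_{\textup{reg}})\to Y$, I can arrange that for every $y\in Y^\circ$ the fibre $F$ is a normal variety of dimension $r$, that $\psi$ is a smooth morphism along $F\cap X_{\textup{reg}}$, that $y$ avoids the branch locus (so that $R(\psi)$ is disjoint from $F$), and that $F\cap(X\setminus X_{\textup{reg}})$ has codimension at least two in $F$. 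Fix such a $y$. On the dense open $F^\circ:=F\cap X_{\textup{reg}}$, where $\psi$ is smooth, $\sG$ restricts to the relative tangent sheaf, which is $T_{F^\circ}$; moreover Example \ref{example:canonical_class_foliation} gives $K_\sG\sim_{\mathbb{Z}}K_{X/Y}-R(\psi)$, and restricting to $F$ — where $R(\psi)$ vanishes and $K_{X/Y}$ restricts to $K_F$ — yields $\big(\sO_X(K_\sG)\big)_{|F}\cong\sO_F(K_F)$. Thus along $F^\circ$ the restriction of the Pfaff field of $\sG$ is the projection onto relative top forms, namely the Pfaff field of the tangent foliation $T_{F^\circ}$.

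It then remains to propagate surjectivity of the Pfaff field across the components of $F\cap(X\setminus X_{\textup{reg}})$, which have codimension at least two in the normal variety $F$. By Nakayama's lemma, $\sG$ is weakly regular at a point $w\in F$ if and only if the restricted map $\big(\Omega_X^{[r]}\big)_{|F}\to\sO_F(K_F)$ is surjective at $w$; composing with the canonical isomorphism $\Omega_F^{[r]}\cong\sO_F(K_F)$ (the Pfaff field of $T_F$, which is surjective as in the case $\sG=T_F$), this reduces to surjectivity at $w$ of the restriction-of-forms map $\rho\colon\big(\Omega_X^{[r]}\big)_{|F}\to\Omega_F^{[r]}$. \textbf{This last surjectivity is the main obstacle}: the target is a reflexive sheaf of rank one and $\rho$ is already surjective on $F^\circ$, but surjectivity in codimension one does not formally extend across the codimension-two locus $F\setminus F^\circ$. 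I would resolve it by a lifting argument: near a point $w\in F\setminus F^\circ$ I choose local coordinates $y_1,\dots,y_q$ on $Y$ at $y$ and a reflexive generator $\Theta$ of $\sO_X(K_X)$; since the forms $\psi^\ast dy_i=d(\psi^\ast y_i)$ span the horizontal conormal directions along $F^\circ$, one may write $\Theta=\psi^\ast(dy_1\wedge\cdots\wedge dy_q)\wedge\alpha$ for a reflexive $r$-form $\alpha$ on $X$ near $w$, whose restriction to $F$ generates $\sO_F(K_F)=\Omega_F^{[r]}$. This would show that $\rho$ is surjective at $w$, hence that $S\cap F=\emptyset$; as $y\in Y^\circ$ was general, $S$ would not dominate $Y$, which is what the proposition requires.
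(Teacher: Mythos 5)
Your proposal takes a genuinely different route from the paper's, and it has a genuine gap at exactly the step you flagged. For comparison: the paper never works with forms at all. After shrinking $Y$ so that it is smooth, it applies Lemma \ref{lemma:regular_tangent_versus_conormal}, which converts weak regularity into surjectivity of the map induced by the $q$-th wedge power of $T_X \to \sN_\sG$; along a general fibre $\sN_\sG \cong \psi^*T_Y$, so the required surjectivity is supplied by generic smoothness, \cite[Proposition III.10.6]{hartshorne77}, applied to the tangent map $T\psi\colon T_X \to \psi^*T_Y$. Your reductions (restriction of the Pfaff field to a general fibre $F$, the identification $\sO_X(K_\sG)|_F\cong\sO_F(K_F)$, the passage to the map $\rho$) are fine, but the final ``lifting argument'' is circular: the factorization $\Theta=\psi^*(dy_1\wedge\cdots\wedge dy_q)\wedge\alpha$ with $\alpha$ a section of $\Omega_X^{[r]}$ \emph{on a neighbourhood of $w$ in $X$} is not something one may simply ``write''; under the identifications you have already made, it \emph{is} the surjectivity of the twisted Pfaff field at $w$ that the proposition asserts. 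The justification you offer --- that the $\psi^*dy_i$ span the horizontal conormal directions along $F^\circ$ --- produces such factorizations only locally on $X_{\textup{reg}}$, where you already had surjectivity; the local solutions $\alpha$ are unique only modulo $\psi^*dy_i\wedge(\,\cdot\,)$, so there is a genuine obstruction to patching them into one $\alpha$ on a punctured neighbourhood of $w$, and the puncture cannot be shrunk away because $w$ lies in the singular locus of $X$. (A smaller issue: $\sO_X(K_X)$ has no local generator at $w$ unless $K_X$ is Cartier there.)

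Worse, this gap cannot be closed, because surjectivity at the points of $F$ lying in the singular locus of $X$ can genuinely fail. Take $X=\{zy^2=x^3+t\,xz^2+z^3\}\subset \mathbb{A}^3\times\mathbb{A}^1$ and let $\psi$ be the projection to $\mathbb{A}^1_t$, i.e.\ the relative affine cone over a non-isotrivial elliptic pencil. Then $X$ is normal with singular locus the vertex section $\{x=y=z=0\}$, and $\sN_\sG\cong\sO_X$, so by Lemma \ref{lemma:regular_tangent_versus_conormal} weak regularity of $\sG$ at a point $w$ is equivalent to the existence of a local derivation $v$ of $\sO_X$ with $v(t)(w)\neq 0$. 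Writing $v$ as an ambient vector field tangent to $X$ and comparing the parts of the tangency condition that are homogeneous of degree $3$ in $(x,y,z)$, one finds that every local derivation satisfies $v(t)=0$ along the vertex section; the obstruction is precisely the non-constancy of $a^3/b^2$ (equivalently, of the $j$-invariant) for the pencil $zy^2=x^3+a(t)xz^2+b(t)z^3$. Hence the singular locus of $\sG$ in the sense of Definition \ref{defn:regular} contains a section dominating $Y$, and the conclusion fails over every dense open subset of $Y$. Morally, weak regularity at a singular point $w$ forces $\psi$ to be a formal product near $w$ by Zariski's lemma \cite[Lemma 4]{zariski}, which is an equisingularity condition, not a genericity condition. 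Be aware that the same pressure point sits inside the paper's own proof: \cite[Proposition III.10.6]{hartshorne77} is a statement about nonsingular varieties, so it controls $T\psi$ only along $F\cap X_{\textup{reg}}$. What is true, and what either method proves, is weak regularity at the points of $X_{\textup{reg}}$ lying over the generic point of $Y$; at singular points of $X$ an additional hypothesis is needed.
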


\begin{proof}
Note that we may assume without loss of generality that $Y$ is smooth.
By \cite[Proposition III.10.6]{hartshorne77}, the tangent map $T\psi\colon T_X \to \psi^*T_Y$ is surjective along a general fiber $F$ of 
$\psi$. The claim then follows from Lemma \ref{lemma:regular_tangent_versus_conormal} below.
\end{proof}

\begin{lemma}\label{lemma:regular_tangent_versus_conormal}
Let $X$ be a normal variety of dimension $n\ge 2$ and let $\sG\subsetneq T_X$ be a foliation of rank $r \ge 1$ on $X$. Set $q:=n-r$. Then $\sG$ is weakly regular if and only if the map 
$\eta\colon (\wedge^q T_X)^{**}\boxtimes\det\sN_\sG^* \to \sO_X$ induced by the $q$-th wedge product of the quotient map 
$T_X \to \sN_\sG$ 
is surjective.
\end{lemma}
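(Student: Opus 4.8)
The plan is to show that, up to a canonical isomorphism of reflexive sheaves, the map $\eta$ coincides with the \emph{twisted Pfaff field} $\Omega_X^{[r]}\boxtimes\sO_X(-K_\sG)\to\sO_X$ of Definition~\ref{defn:regular}, whose image is by definition the ideal sheaf $\sI_S$ of the singular locus $S$ of $\sG$. Since weak regularity means $S=\emptyset$, i.e.\ surjectivity of that map, it suffices to prove that $\eta$ and the twisted Pfaff field have the same image, so that they are surjective simultaneously.

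First I would record that the open set $X^\circ\subseteq X_{\textup{reg}}$ on which $\sG$ is a subbundle of $T_X$ has complement of codimension at least two. Indeed, $X$ is normal, so $X\setminus X_{\textup{reg}}$ has codimension $\ge 2$, while inside $X_{\textup{reg}}$ the subbundle locus is the complement of the non-locally-free locus of the torsion-free sheaf $T_X/\sG$ (torsion-free because $\sG$ is saturated), which again has codimension $\ge 2$. On $X^\circ$ we have a short exact sequence of vector bundles $0\to\sG\to T_X\xrightarrow{\pi}\sN_\sG\to 0$ with $\det\sG=\sO_X(-K_\sG)$ and $\operatorname{rank}\sN_\sG=q$.

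Next, over $X^\circ$ the standard multilinear algebra of this exact sequence provides a canonical isomorphism $\wedge^r\Omega_X\otimes\det\sG\cong\wedge^qT_X\otimes\det\sN_\sG^*$ (combining $\wedge^r\Omega_X\cong\wedge^qT_X\otimes\det T_X^*$ with $\det\sG\cong\det T_X\otimes\det\sN_\sG^*$), under which the twisted Pfaff field — obtained by dualizing $\wedge^r(\sG\hookrightarrow T_X)\colon\det\sG\to\wedge^rT_X$ and twisting — is identified with the map $\wedge^qT_X\otimes\det\sN_\sG^*\to\sO_X$ induced by $\wedge^q\pi$, that is, with $\eta$. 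The key point, and the only genuinely computational step, is the duality identifying the $r$-th exterior power of the inclusion $\sG\hookrightarrow T_X$ with the $q$-th exterior power of the quotient $T_X\to\sN_\sG$ via the perfect pairing $\wedge^rT_X\otimes\wedge^qT_X\to\wedge^nT_X$; this is elementary once the natural identifications are fixed.

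Finally I would globalize. Both sheaves $\Omega_X^{[r]}\boxtimes\sO_X(-K_\sG)$ and $(\wedge^qT_X)^{**}\boxtimes\det\sN_\sG^*$ are reflexive, so the canonical isomorphism on $X^\circ$ extends uniquely to an isomorphism $\Phi$ over $X$. Likewise, a morphism from a coherent sheaf to $\sO_X$ is a section of the reflexive sheaf $\cHom(-,\sO_X)$, hence determined by its restriction to any open subset whose complement has codimension $\ge 2$; therefore the equality of $\eta\circ\Phi$ with the twisted Pfaff field, valid on $X^\circ$, holds on all of $X$. Since $\Phi$ is an isomorphism, $\operatorname{im}(\eta)=\operatorname{im}(\text{twisted Pfaff field})=\sI_S$, and so $\eta$ is surjective precisely when $S=\emptyset$, that is, precisely when $\sG$ is weakly regular.
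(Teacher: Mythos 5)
Your proof is correct and takes essentially the same route as the paper's: both identify $\eta$ with the twisted Pfaff field under the canonical isomorphism $\Omega_X^{[r]}\boxtimes\sO_X(-K_\sG)\cong\big(\wedge^{q}T_X\big)^{**}\boxtimes\det\sN_\sG^*$ coming from the wedge pairing together with $\sO_X(K_\sG)\cong\sO_X(K_X)\boxtimes\det\sN_\sG$, and then conclude from the definition of weak regularity as surjectivity of the twisted Pfaff field. The only difference is one of detail: you spell out the codimension-two/reflexivity globalization and the duality computation that the paper compresses into ``one readily checks.''
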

\begin{proof}
The wedge product of differential forms on $X_\textup{reg}$ induces an isomorphism
of reflexive sheaves 
$$\Omega_X^{[r]} \cong \big(\wedge^{q}T_X\big)^{**}\boxtimes \sO_X(K_X).$$
The canonical isomorphism $\sO_X(K_\sG) \cong \sO_X(K_X)\boxtimes \det\sN_\sG$ then yields 
$$\Omega_X^{[r]}\boxtimes\sO_X(-K_\sG) \cong \big(\wedge^{q}T_X\big)^{**}\boxtimes \det\sN_\sG^*.$$
One readily checks that the map $\Omega_X^{[r]}\boxtimes\sO_X(-K_\sG) \to \sO_X$ induced by $\eta$ is the twisted Pfaff field associated with $\sG$. This shows the lemma.
\end{proof}

\begin{lemma}\label{lemma:direct_summand_regular}
Let $X$ be a normal variety, and let $\sG$ be a foliation on $X$. Suppose that there exists a distribution $\sE$ on $X$ such that 
$T_X=\sG\oplus \sE$. Then $\sG$ is weakly regular.
\end{lemma}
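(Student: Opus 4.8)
The plan is to reduce the statement to the surjectivity criterion of Lemma~\ref{lemma:regular_tangent_versus_conormal} and then to obtain that surjectivity essentially for free from the splitting $T_X=\sG\oplus\sE$, using only the functoriality of the operations $\wedge^q$, $(-)^{**}$ and $(-)\boxtimes\det\sN_\sG^*$. I set $n:=\dim X$, let $r\ge 1$ be the rank of $\sG$, and put $q:=n-r$, which is the rank of $\sE$. If $\sE=0$ then $\sG=T_X$, which is weakly regular (the example immediately preceding this lemma), so I assume $q\ge 1$; note that then automatically $n\ge 2$ and $\sG\subsetneq T_X$, so Lemma~\ref{lemma:regular_tangent_versus_conormal} does apply.

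First I would identify the quotient map in terms of the splitting. Since $T_X=\sG\oplus\sE$, the projection $p\colon T_X\to\sE$ has kernel $\sG$, hence induces an isomorphism $T_X/\sG\xrightarrow{\sim}\sE$; consequently $\sN_\sG=(T_X/\sG)^{**}\cong\sE^{**}$, and the natural quotient $T_X\to\sN_\sG$ is $p$ followed by the reflexivization $\sE\to\sE^{**}$. The crucial feature is that $p$ admits the inclusion $j\colon\sE\hookrightarrow T_X$ as a section, i.e. $p\circ j=\mathrm{id}_\sE$.

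Next I would push this section through the functors that build $\eta$. Because $\wedge^q$ is a functor, $\wedge^q(p\circ j)=\wedge^q(\mathrm{id}_\sE)=\mathrm{id}$, so $\wedge^q p\colon\wedge^q T_X\to\wedge^q\sE$ is a split surjection with section $\wedge^q j$. Applying the reflexive hull $(-)^{**}$, which likewise preserves sections, gives a split surjection $(\wedge^q T_X)^{**}\twoheadrightarrow(\wedge^q\sE)^{**}=\det\sE=\det\sN_\sG$, the last two equalities holding because $\sE$ and $\sE^{**}$ agree in codimension one. By the previous paragraph this reflexive map is exactly the one induced by the $q$-th wedge of the quotient $T_X\to\sN_\sG$. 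Finally, applying the functor $(-)\boxtimes\det\sN_\sG^*$ transports the section once more and produces precisely the map $\eta\colon(\wedge^q T_X)^{**}\boxtimes\det\sN_\sG^*\to\sO_X$ of Lemma~\ref{lemma:regular_tangent_versus_conormal}, now realized as a split surjection. In particular $\eta$ is surjective, and the lemma then yields that $\sG$ is weakly regular.

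The only point requiring care — rather than a genuine obstacle — is verifying that the identifications $\det\sE=\det\sN_\sG$ and the coincidence of the reflexivized $\wedge^q p$ with the map induced by $T_X\to\sN_\sG$ hold on the nose, not merely generically. Both reduce to the observation that every sheaf being compared is reflexive of rank one and that the reflexivization $\sE\to\sE^{**}$ is an isomorphism away from a closed set of codimension at least two, so that maps agreeing there agree everywhere. Beyond this, the argument is entirely formal: it uses nothing about $X$ except that a section of a morphism survives the application of any functor.
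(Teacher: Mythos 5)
Your proof is correct and is essentially the paper's own argument seen from the dual side: the paper simply observes that the splitting gives $\Omega_X^{[1]}\cong\sG^*\oplus\sE^*$, so that $\det\sG^*\cong\sO_X(K_\sG)$ is a direct summand of $\Omega_X^{[r]}$ and the twisted Pfaff field $\Omega_X^{[r]}\boxtimes\sO_X(-K_\sG)\to\sO_X$ is the projection onto that summand, hence surjective. Your route through Lemma~\ref{lemma:regular_tangent_versus_conormal} carries out the same split-projection mechanism on the tangent side, the two being identified precisely by the duality that Lemma~\ref{lemma:regular_tangent_versus_conormal} encodes.
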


\begin{proof}
Set $r:=\textup{rank}\,\sG$. Observe that $\det\sG^*\cong \sO_X(K_\sG)$ is a direct summand of 
$\Omega_X^{[r]}$ and that the twisted Pfaff field  
$\Omega_X^{[r]}\boxtimes \sO_X(-K_\sG) \to \sO_X$ associated to $\sG$
is induced by the projection $\Omega_X^{[r]} \to \det\sG^*$.
This immediately implies that $\sG$ is weakly regular.
\end{proof}

\subsection{Elementary properties}
The following lemma says that a weakly regular foliation $\sG$ has mild singularities if $K_\sG$ is Cartier.

\begin{lemma}\label{lemma:regular_versus_canonical}
Let $X$ be a normal complex variety with klt singularities, and let $\sG$ be a foliation on $X$. 
Suppose that $K_\sG$ is Cartier.
If $\sG$ is weakly regular, then it has canonical singularities.
\end{lemma}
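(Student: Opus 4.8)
The plan is to exploit the fact that weak regularity, together with the assumption that $K_\sG$ is Cartier, provides a genuine \emph{surjection} of sheaves, and then to transport that surjectivity to a resolution by means of the reflexive pull-back of K\"ahler differentials available on klt spaces (the theorem of Kebekus recalled in Subsection \ref{subsection:pull-back_morphims}). Write $r:=\textup{rank}\,\sG$. Since the desired conclusion can be checked over the members of an affine open cover of $X$ and $K_\sG$ is Cartier, I may assume $\sO_X(K_\sG)\cong\sO_X$; under this trivialisation the Pfaff field becomes a morphism $P\colon\Omega_X^{[r]}\to\sO_X$, and the hypothesis that $\sG$ is weakly regular means exactly that its singular scheme is empty, i.e. that $P$ is surjective. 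To prove that $\sG$ is canonical it suffices to show that for an arbitrary resolution of singularities $\beta\colon Z\to X$ the divisor $D:=K_{\beta^{-1}\sG}-\beta^*K_\sG$ is effective, since every exceptional valuation over $X$ is realised on some such $Z$. Here $D$ is a Cartier divisor on the smooth variety $Z$, and $\beta_*D=0$ (because $\beta_*K_{\beta^{-1}\sG}\sim_\mathbb{Z}K_\sG$), so $D$ is $\beta$-exceptional; with the above trivialisation one has $\sO_Z(K_{\beta^{-1}\sG})\cong\sO_Z(D)$.

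On $Z$ I consider two morphisms out of $\beta^*\Omega_X^{[r]}$. The first is the pull-back $\beta^*P\colon\beta^*\Omega_X^{[r]}\to\sO_Z$, which is surjective because $P$ is and $\beta^*$ is right exact. The second is $Q:=P_Z\circ d_{\textup{refl}}\beta\colon\beta^*\Omega_X^{[r]}\to\Omega_Z^{r}\to\sO_Z(K_{\beta^{-1}\sG})\cong\sO_Z(D)$, where $P_Z$ is the Pfaff field of $\beta^{-1}\sG$ and $d_{\textup{refl}}\beta$ is the reflexive pull-back morphism. Let $Z^\circ\subseteq Z$ be the dense open locus over which $\beta$ is an isomorphism onto an open subset of $X_{\textup{reg}}$. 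There $d_{\textup{refl}}\beta$ coincides with the usual pull-back isomorphism of K\"ahler forms, $\beta^{-1}\sG$ is identified with $\sG$, and $P_Z$ with $P$; hence $Q$ and $\beta^*P$ agree on $Z^\circ$, where in addition $D_{|Z^\circ}=0$, so that the two target line bundles are canonically identified with $\sO_{Z^\circ}$.

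I now regard both $\beta^*P$ and $Q$ as morphisms into the constant sheaf $\cK_Z$ of rational functions on $Z$, via the canonical embeddings $\sO_Z\subseteq\cK_Z$ and $\sO_Z(D)\subseteq\cK_Z$. A homomorphism from a coherent sheaf to $\cK_Z$ is determined by its restriction to any dense open subset, because the image of a local section is a well-defined element of the function field; since $\beta^*P$ and $Q$ agree on $Z^\circ$, they coincide as morphisms $\beta^*\Omega_X^{[r]}\to\cK_Z$, whence their images inside $\cK_Z$ are equal. The image of $\beta^*P$ is exactly $\sO_Z$ by surjectivity, while the image of $Q$ is contained in $\sO_Z(D)$ by construction; therefore $\sO_Z\subseteq\sO_Z(D)$, which is equivalent to $D\ge0$. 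This shows that every discrepancy $a(E,X,\sG)$ is non-negative, i.e. that $\sG$ is canonical. The one point demanding care is the compatibility of the reflexive pull-back with the two Pfaff fields on $Z^\circ$---that is, that $Q$ genuinely extends the restriction of $\beta^*P$---for which the statement that $d_{\textup{refl}}\beta$ agrees with the ordinary pull-back of K\"ahler differentials wherever the latter is defined is precisely what is required; everything else is formal.
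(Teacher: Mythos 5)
Your proof is correct and follows essentially the same route as the paper's: both arguments pull back the surjective Pfaff field of $\sG$ along the reflexive pull-back morphism $d_{\textup{refl}}\beta$ of Kebekus (which is where the klt hypothesis enters) and compare it with the Pfaff field of $\beta^{-1}\sG$ to produce an effective $\beta$-exceptional divisor $E$ with $K_{\beta^{-1}\sG}\sim_\mathbb{Z}\beta^*K_\sG+E$. Your localization to trivialize $\sO_X(K_\sG)$ and the comparison of images inside $\cK_Z$ are just a hands-on rendering of the commutative diagram the paper writes down directly.
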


\begin{proof}Let $Z$ be a normal variety, and let
$\beta\colon Z \to X$ be a birational projective morphism. 
Let $\eta_X\colon\Omega_X^{[r]} \twoheadrightarrow \sO_X(K_\sG)$ and $\eta_Z\colon \Omega_Z^{[r]} \to \sO_Z(K_{\beta^{-1}\sG})$ be the Pfaff fields associated to $\sG$ and $\beta^{-1}\sG$ respectively.
Recall from paragraph \ref{subsection:pull-back_morphims} that there exists a morphism of sheaves
$d_\textup{refl}\beta\colon\beta^*\Omega_X^{[r]} \to \Omega_Z^{[r]}$ that agrees with the usual pull-back morphism of K\"ahler differentials wherever this makes sense.
One then readily checks that we have a commutative diagram 

\begin{center}
\begin{tikzcd}[row sep=large]
\beta^*\Omega_X^{[r]} \ar[r, "{d_\textup{refl}\beta}"]\ar[d, twoheadrightarrow, "{\beta^*\eta_X}"'] & \Omega_Z^{[r]}\ar[d, "{\eta_Z}"] \\
\beta^*\sO_X(K_\sG) \ar[r] & \sO_Z(K_{\beta^{-1}\sG}).
\end{tikzcd}
\end{center}

\noindent In particular, there is a $\beta$-exceptional effective divisor $E$ on $Z$ such that $K_{\beta^{-1}\sG}=\beta^*K_\sG+E$, proving the lemma.
\end{proof}

\begin{rem}
We will show that the converse is also true if $\sG$ is algebraically integrable with $K_\sG\equiv 0$ (see Corollary \ref{cor:canonical_versus_regular}).
\end{rem}

Example \ref{example:product2} below shows that Lemma \ref{lemma:regular_versus_canonical} is wrong if one drops the assumption that $K_\sG$ is Cartier.

\begin{exmp}\label{example:product2}
Let $Y$ and $Z$ be normal varieties, and let $\sG$ be the foliation on $X : = Y \times Z$ induced by the projection 
$Y \times Z \to Y$. Then $\sG$ is weakly regular by Lemma \ref{lemma:direct_summand_regular}. 
But $\sG$ has canonical singularities if and only if $Z$ has canonical singularities by
Example \ref{example:product}.
\end{exmp}

Next, we analyze the behaviour of weakly regular foliations with respect to smooth morphisms,
quasi-finite maps, and birational modifications.

\begin{lemma}\label{lemma:regular_local_etale}
Let $\pi \colon Y \to X$ be a surjective \'etale morphism of normal varieties, and let $\sG$ be a foliation on $X$. Then $\sG$ is weakly regular if and only if so is $\pi^{-1}\sG$.
\end{lemma}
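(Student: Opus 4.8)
The plan is to identify the twisted Pfaff field of $\pi^{-1}\sG$ with the pull-back under $\pi$ of the twisted Pfaff field of $\sG$, and then to transfer surjectivity across $\pi$ using faithful flatness. Recall from Definition \ref{defn:regular} that, with $r:=\textup{rank}\,\sG$, the foliation $\sG$ is weakly regular exactly when its twisted Pfaff field
\[
\eta_X\colon \Omega_X^{[r]}\boxtimes\sO_X(-K_\sG) \to \sO_X
\]
is surjective: its image is the ideal sheaf of the singular locus $S$, so $S=\emptyset$ if and only if $\eta_X$ is onto. The same description holds on $Y$ for the twisted Pfaff field $\eta_Y\colon \Omega_Y^{[r]}\boxtimes\sO_Y(-K_{\pi^{-1}\sG})\to\sO_Y$ of $\pi^{-1}\sG$. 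Thus the lemma amounts to the assertion that $\eta_X$ is surjective if and only if $\eta_Y$ is.

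First I would collect the standard consequences of $\pi$ being étale. Being étale, $\pi$ is flat and unramified, it preserves smoothness in both directions, so $\pi^{-1}(X_{\textup{reg}})=Y_{\textup{reg}}$, and it preserves codimension; since $Y$ is normal, $Y\setminus Y_{\textup{reg}}$ has codimension at least two. The isomorphism $d\pi\colon\pi^*\Omega_X^1\to\Omega_Y^1$ yields, after taking $r$-th wedge powers, an isomorphism $\pi^*\Omega_X^{r}\cong\Omega_Y^{r}$ over $Y_{\textup{reg}}$. Moreover $\pi$ has empty branch locus, so Lemma \ref{lemma:pull_back_fol_and_finite_cover} gives $K_{\pi^{-1}\sG}\sim_\mathbb{Z}\pi^*K_\sG$, whence $\sO_Y(-K_{\pi^{-1}\sG})\cong\pi^*\sO_X(-K_\sG)$. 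Because reflexivity is preserved under étale pull-back (étale morphisms are flat with $0$-dimensional regular fibres, so both torsion-freeness and the $S_2$ property are preserved), the sheaf $\pi^*\Omega_X^{[r]}$ is reflexive; since it agrees with the reflexive sheaf $\Omega_Y^{[r]}$ over the big open set $Y_{\textup{reg}}$, the two are canonically isomorphic. Consequently the domains of $\eta_Y$ and of $\pi^*\eta_X$ are canonically identified.

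Next I would verify that, under this identification, $\eta_Y=\pi^*\eta_X$, and then conclude by descent. Over $Y_{\textup{reg}}=\pi^{-1}(X_{\textup{reg}})$ the inclusion $\pi^{-1}\sG\into T_Y$ is the pull-back of $\sG\into T_X$ along the local isomorphism $\pi$ (using $T_Y\cong\pi^*T_X$), so the associated $r$-th wedge maps, hence the (twisted) Pfaff fields, coincide there. Both $\eta_Y$ and $\pi^*\eta_X$ are morphisms from a reflexive sheaf to the locally free sheaf $\sO_Y$, hence sections of a reflexive sheaf, and so are determined by their restriction to $Y_{\textup{reg}}$; therefore the equality propagates from $Y_{\textup{reg}}$ to all of $Y$. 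Finally, $\pi$ is surjective and flat, hence faithfully flat, so a morphism of coherent sheaves is surjective if and only if its pull-back under $\pi$ is: right-exactness identifies $\textup{coker}(\pi^*\eta_X)$ with $\pi^*\textup{coker}(\eta_X)$, and faithful flatness detects the vanishing of the cokernel. Applying this to $\eta_X$ and using $\eta_Y=\pi^*\eta_X$ shows that $\eta_Y$ is surjective if and only if $\eta_X$ is, i.e.\ $\pi^{-1}\sG$ is weakly regular if and only if $\sG$ is.

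The only point requiring genuine care is the compatibility of the \emph{reflexive} twisted Pfaff field with $\pi$, namely the global identification $\eta_Y=\pi^*\eta_X$ rather than a mere identification over the smooth loci; this matters precisely because surjectivity onto $\sO_Y$ (equivalently $S_Y=\emptyset$) cannot be checked on $Y_{\textup{reg}}$ alone. It is however forced, since $\pi$ preserves the smooth locus, codimension, and reflexivity, so both domains and maps are determined over the big open set $Y_{\textup{reg}}$, where $\pi$ is an analytic-local isomorphism.
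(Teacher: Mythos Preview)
Your proof is correct and follows essentially the same approach as the paper: identify $\pi^*\big(\Omega_X^{[r]}\boxtimes\sO_X(-K_\sG)\big)$ with $\Omega_Y^{[r]}\boxtimes\sO_Y(-K_{\pi^{-1}\sG})$ using that flat pull-back preserves reflexivity and that both sheaves agree on $\pi^{-1}(X_{\textup{reg}})=Y_{\textup{reg}}$, check that $\pi^*\eta_X$ is the twisted Pfaff field of $\pi^{-1}\sG$, and conclude by faithful flatness. Your write-up is more explicit than the paper's (which simply cites \cite[Proposition~1.8]{hartshorne80} for reflexivity under flat pull-back and asserts the identification of Pfaff fields), but the argument is the same.
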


\begin{proof}
Recall that the pull-back of a reflexive sheaf by a flat morphism is reflexive as well by \cite[Proposition 1.8]{hartshorne80}.
Note also that $\pi^*\big(\Omega_X^{[r]}\boxtimes\sO_X(-K_\sG)\big)\cong \Omega_Y^{[r]}\boxtimes\sO_Y(-\pi^*K_\sG)$ since both 
are reflexive sheaves and agree on $\pi^{-1}(X_\textup{reg})=Y_{\textup{reg}}$. Let $\eta\colon \Omega_X^{[r]}\boxtimes\sO_X(-K_\sG) \to \sO_X$ be the twisted Pfaff field associated to $\sG$. One 
readily checks that the induced map 
$\pi^*\eta\colon  \Omega_Y^{[r]}\boxtimes\sO_Y(-\pi^*K_\sG) \to \sO_Y$
is the twisted Pfaff field associated to $\pi^{-1}\sG$. The lemma follows since $\pi$ is a faithfully flat morphism.
\end{proof}

\begin{prop}\label{prop:regular_quasi_etale}
Let $X$ be a normal complex variety, let $\sG$ be a foliation on $X$, and let $\pi\colon Y \to X$ be a quasi-finite dominant morphism.
Suppose that any codimension one irreducible component of the branch locus of $\pi$ is $\sG$-invariant. Then the following holds.
\begin{enumerate}
\item If $\sG$ is weakly regular, then so is $\pi^{-1}\sG$.
\item Suppose in addition that $\pi$ is finite and surjective. If $\pi^{-1}\sG$ is weakly regular, then so is $\sG$. 
\end{enumerate}
\end{prop}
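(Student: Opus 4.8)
The plan is to reduce both statements to the surjectivity of the twisted contraction map of Lemma \ref{lemma:regular_tangent_versus_conormal}. Writing $q:=\dim X-r$ for the codimension, that lemma identifies weak regularity of $\sG$ (resp.\ of $\pi^{-1}\sG$) with the surjectivity of $\eta_X\colon(\wedge^q T_X)^{**}\boxtimes\det\sN_\sG^*\to\sO_X$ (resp.\ of its analogue $\eta_Y$ on $Y$), whose cosupport is exactly the singular locus of the foliation. Since $\pi$ is quasi-finite and dominant, $\dim X=\dim Y$, and both singular loci have codimension at least two. The two maps are linked by the differential $\wedge^q d\pi\colon(\wedge^q T_Y)^{**}\to\pi^{[*]}(\wedge^q T_X)^{**}$ together with the comparison $\vartheta\colon\det\pi^{[*]}\sN_\sG^*\to\det\sN_{\pi^{-1}\sG}^*$ of Lemma \ref{lemma:pull_back_fol_and_finite_cover}, which fit into a commutative square expressing $\eta_Y\circ(1\boxtimes\vartheta)=(\pi^*\eta_X)\circ(\wedge^q d\pi\boxtimes 1)$. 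Because $\pi$ is \'etale, hence faithfully flat, away from its ramification divisor, Lemma \ref{lemma:regular_local_etale} settles both implications over that open locus, so the whole problem concentrates on the ramification of $\pi$.

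For statement (1) the cleanest route over $X_{\mathrm{reg}}$ is local-analytic; note that (1) is local on $Y$. Fix $x\in X_{\mathrm{reg}}$: as $\sG$ is weakly regular it is strongly regular there by Frobenius' theorem, so an analytic neighbourhood splits as $\mathbb{D}^r\times M$ with $\sG$ the projection to the smooth germ $M$ and leaves the slices $\mathbb{D}^r\times\{\mathrm{pt}\}$. By hypothesis every codimension-one branch component is $\sG$-invariant, hence a union of leaves, so of the form $\mathbb{D}^r\times B_M$; as $\mathbb{D}^r$ is contractible, the induced cover is pulled back from $M$, and $\pi$ is analytically a product $\mathbb{D}^r\times M'\to\mathbb{D}^r\times M$ with $\pi^{-1}\sG$ the projection to $M'$. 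The latter is a direct summand of $T_{\mathbb{D}^r\times M'}=\mathrm{pr}_1^*T_{\mathbb{D}^r}\oplus\mathrm{pr}_2^{[*]}T_{M'}$, so it is weakly regular by Lemma \ref{lemma:direct_summand_regular}. This proves weak regularity of $\pi^{-1}\sG$ at every point of $\pi^{-1}(X_{\mathrm{reg}})$.

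The main obstacle is to propagate this across the codimension-$\ge 2$ set $\pi^{-1}(X_{\mathrm{sing}})$, where no Frobenius model is available; here I would argue with the commutative square rather than pointwise. Surjectivity of $\eta_X$ forces $\pi^*\eta_X$ to be surjective, while $\vartheta$ vanishes exactly to the ramification order along the invariant ramification divisors by Lemma \ref{lemma:pull_back_fol_and_finite_cover}(1); combining these one identifies the image ideal of $\eta_Y$ with an invertible subsheaf of $\sO_Y$. Since this image agrees with $\sO_Y$ in codimension one (by the previous paragraph and the \'etale case) and $Y$ is normal, an invertible ideal that is trivial in codimension one is trivial, whence $\eta_Y$ is surjective and the singular locus of $\pi^{-1}\sG$ is empty. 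The delicate point, where I expect the real work to lie, is checking that $\wedge^q d\pi$ does not shrink the image below $\mathrm{im}(\vartheta)$ over the possibly non-divisorial ramification lying above $X_{\mathrm{sing}}$; this one controls using that the singular locus of $\pi^{-1}\sG$ is $\pi^{-1}\sG$-invariant (Lemma \ref{lemma:singular_set_invariant}) and maps into the $\sG$-invariant set $X_{\mathrm{sing}}$.

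Finally, statement (2) follows by running the same comparison in the opposite direction. When $\pi$ is in addition finite and surjective, a map onto a line bundle is surjective if and only if its pullback is, since the cosupport of $\pi^*\eta_X$ equals $\pi^{-1}$ of the cosupport of $\eta_X$ and $\pi$ is surjective. Assuming $\pi^{-1}\sG$ weakly regular, the commutative square shows $\pi^*\eta_X$ is surjective off the ramification divisor, so the singular locus of $\sG$ is contained in the $\sG$-invariant branch locus; the local model along the invariant branch divisors (the nonvanishing of the form $\alpha_1$ in the proof of Lemma \ref{lemma:pull_back_fol_and_finite_cover}) then gives surjectivity of $\eta_X$ at the generic points of that locus as well, and the same normality argument removes the residual codimension-$\ge 2$ locus. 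Hence $\sG$ is weakly regular, completing the proof.
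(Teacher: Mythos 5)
There is a genuine gap, and it sits exactly where the proposition has content. Both of your parts end with the step ``the image ideal of the twisted Pfaff field is an invertible subsheaf of the structure sheaf; it is trivial in codimension one, hence trivial by normality.'' The invertibility is never established, and it is not a harmless technicality: the image of the twisted Pfaff field is by definition the ideal sheaf of the singular scheme of the foliation, and on a normal variety this ideal is trivial in codimension one for \emph{every} foliation (on $X_{\textup{reg}}$ saturation of $\sG$ in $T_X$ forces the foliation singular locus to have codimension at least two, and $X_{\textup{sing}}$ itself has codimension at least two). So ``surjective in codimension one'' carries no information, and if your normality step were valid without invertibility it would prove that every foliation is weakly regular, which is absurd. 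Part (2) of your argument reduces literally to this false principle. In part (1) you hope to extract invertibility from your commutative square, but with the tangent-side contraction of Lemma \ref{lemma:regular_tangent_versus_conormal} the comparison maps $\wedge^q d\pi$ and $\vartheta$ both degenerate along the ramification divisor, so the square only computes the image of $\eta_Y$ multiplied by an ideal vanishing on the ramification; the information flows the wrong way, and surjectivity of $\pi^*\eta_X$ does not pass to $\eta_Y$ this route. Your local-analytic product structure over $X_{\textup{reg}}$ is fine as far as it goes, but it says nothing about the locus $\pi^{-1}(X_{\textup{sing}})$, which is precisely where the statement is nontrivial.

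The paper closes this gap with two ideas absent from your proposal. For (1) it works on the cotangent side: the invariance hypothesis gives $K_{\pi^{-1}\sG}\sim_\mathbb{Z}\pi^*K_\sG$ by Lemma \ref{lemma:pull_back_fol_and_finite_cover}, so the reflexive pull-back of forms produces a map $\pi^*\big(\Omega_X^{[r]}\boxtimes\sO_X(-K_\sG)\big)\to \Omega_Y^{[r]}\boxtimes\sO_Y(-K_{\pi^{-1}\sG})$ through which $\pi^*\eta_X$ factors as $\eta_Y$ composed with that map; since $\pi^*\eta_X$ is surjective by right-exactness of pull-back, $\eta_Y$ is surjective at every point at once, with no separate treatment of $\pi^{-1}(X_{\textup{sing}})$ and no invertibility claim. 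For (2) the missing idea is Galois descent: enlarge $\pi$ to a Galois cover with group $G$ (this uses part (1)), identify $\Omega_X^{[r]}\boxtimes\sO_X(-K_\sG)$ with the $G$-invariants of $\pi_*\big(\Omega_Y^{[r]}\boxtimes\sO_Y(-K_{\pi^{-1}\sG})\big)$ using \cite[Lemma A.4]{greb_kebekus_kovacs_peternell10} and \cite[Theorem 2]{brion_differential_form}, and then descend surjectivity from $\eta_Y$ to $\eta_X=\eta_Y^G$ by exactness of taking invariants under a finite group in characteristic zero (\cite[Lemma A.3]{greb_kebekus_kovacs_peternell10}). If you want to salvage your outline, the step you must replace is the normality argument: either reproduce the twist-matching factorization for (1) and the averaging argument for (2), or find some other mechanism that genuinely controls the singular scheme in codimension at least two.
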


\begin{proof}Set $r:=\textup{rank}\,\sG$, and let 
$\eta_X\colon \Omega_X^{[r]}\boxtimes\sO_X(-K_\sG) \to \sO_X$ 
and 
$\eta_Y\colon \Omega_Y^{[r]}\boxtimes\sO_Y(-K_{\pi^{-1}\sG}) \to \sO_Y$
be the twisted Pfaff fields associated to $\sG$ and $\pi^{-1}\sG$ respectively.

\medskip

Suppose first that $\sG$ is weakly regular. By Lemma \ref{lemma:pull_back_fol_and_finite_cover}, we have $K_{\pi^{-1}\sG}\sim_\mathbb{Z}\pi^*K_\sG$. This implies that
$$\pi^{[*]}\big(\Omega_X^{[r]}\boxtimes\sO_X(-K_\sG)\big)\cong \pi^{[*]}\Omega_X^{[r]}\boxtimes\sO_Y(-K_{\pi^{-1}\sG}).$$
One then readily checks that we have a commutative diagram

\begin{center}
\begin{tikzcd}[row sep=large]
\pi^*\big(\Omega_X^{[r]}\boxtimes\sO_X(-K_\sG)\big) \ar[r]\ar[d, twoheadrightarrow, "{\pi^*\eta_X}"'] & 
\Omega_Y^{[r]}\boxtimes\sO_Y(-K_{\pi^{-1}\sG})\ar[d, "{\eta_Y}"] \\
\sO_Y \ar[r, equal] & \sO_Y.
\end{tikzcd}
\end{center}

\noindent This shows that $\pi^{-1}\sG$ is a weakly regular foliation, proving Item (1).

\medskip

Suppose from now on that $\pi$ is a finite cover, and that $\pi^{-1}\sG$ is weakly regular.
Let $\gamma \colon Y_1 \to Y$ be a finite cover such that the induced cover $\pi_1 \colon Y_1 \to X$ is Galois, with Galois group $G$. We may also assume that $\pi_1$ is quasi-\'etale away from the branch locus of $\pi$.
By Item (1) applied to $\gamma$, $\pi_1^{-1}\sG$ is weakly regular as well.
Thus, we may assume without loss of generality that $\pi$ is Galois with Galois Group $G$. By Lemma \ref{lemma:pull_back_fol_and_finite_cover}, the tangent map $T\pi$ induces an isomorphism 
$\pi^{-1}\sG \cong (\pi^*\sG)^{**}$. It follows that  
$\sO_Y(K_{\pi^{-1}\sG}) \cong \sO_Y(\pi^*K_\sG)$ as $G$-sheaves. 
Note that $\eta_Y$ is obviously $G$-equivariant.
By \cite[Lemma A.4]{greb_kebekus_kovacs_peternell10} and \cite[Theorem 2]{brion_differential_form}, we have 
$$\Big(\pi_*\big(\Omega_Y^{[r]}\boxtimes\sO_Y(-K_{\pi^{-1}\sG})\big)\Big)^G\cong 
\Omega_X^{[r]}\boxtimes\sO_Y(-K_{\sG}).$$
It follows that the map $\eta_Y^G \colon \Omega_X^{[r]}\boxtimes\sO_Y(-K_{\sG}) \to \sO_X$
induced by $\eta_Y$ is the twisted Pfaff field associated to $\sG$.
From \cite[Lemma A.3]{greb_kebekus_kovacs_peternell10}, we see that $\eta_X=\eta_Y^G$ is surjective. This shows that 
$\sG$ is weakly regular, completing the proof of the proposition.
\end{proof}

The following is an immediate consequence of Proposition \ref{prop:regular_quasi_etale}.

\begin{cor}\label{cor:regular_quasi_etale}
Let $X$ be a normal complex variety, let $\sG$ be a foliation on $X$, and let $\pi\colon Y \to X$ be a quasi-\'etale cover.
Then $\sG$ is weakly regular if and only if so is $\pi^{-1}\sG$.
\end{cor}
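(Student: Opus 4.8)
The plan is to deduce the statement directly from Proposition \ref{prop:regular_quasi_etale}, so that the only work consists in checking that its hypotheses are met in both directions. First I would recall that, by definition, a quasi-\'etale cover $\pi\colon Y \to X$ is finite, surjective, and \'etale in codimension one; consequently its branch locus has codimension at least two in $X$. In particular the branch locus has \emph{no} irreducible component of codimension one, so the hypothesis of Proposition \ref{prop:regular_quasi_etale} requiring that ``every codimension one irreducible component of the branch locus of $\pi$ be $\sG$-invariant'' is satisfied vacuously.

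Given this observation, both implications follow at once. If $\sG$ is weakly regular, then Item (1) of Proposition \ref{prop:regular_quasi_etale} yields that $\pi^{-1}\sG$ is weakly regular. Conversely, since $\pi$ is moreover finite and surjective, Item (2) of the same proposition shows that weak regularity of $\pi^{-1}\sG$ forces weak regularity of $\sG$. Combining the two gives the asserted equivalence. There is no substantive obstacle here: the sole point that must be verified is the vacuity of the $\sG$-invariance condition, which is immediate from the definition of a quasi-\'etale morphism (equivalently, from the Nagata--Zariski purity statement recorded in Remark \ref{rem:quasi_etale_smooth_locus}).
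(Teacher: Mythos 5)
Your proof is correct and is exactly the paper's argument: the paper derives this corollary as an immediate consequence of Proposition \ref{prop:regular_quasi_etale}, and your verification that the $\sG$-invariance hypothesis is vacuous (since the branch locus of a quasi-\'etale cover has codimension at least two) is precisely the observation that makes the deduction immediate.
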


\begin{lemma}\label{lemma:regular_bir_crepant_map}
Let $\pi \colon Y \to X$ be a projective birational morphism of normal complex varieties, and let $\sG$ be a foliation on $X$.
Suppose that $K_\sG$ is Cartier, and that $K_{\pi^{-1}\sG}\sim_\mathbb{Z}\pi^*K_\sG$. Suppose furthermore that $X$ has klt singularities.
If $\sG$ is weakly regular, then so is  
$\pi^{-1}\sG$.
\end{lemma}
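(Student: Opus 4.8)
The plan is to follow the proof of Proposition \ref{prop:regular_quasi_etale}(1) almost verbatim, the only change being that the pull-back map on reflexive differentials used there (for quasi-finite morphisms) must be replaced by the reflexive pull-back morphism $d_\textup{refl}\pi\colon\pi^*\Omega_X^{[r]}\to\Omega_Y^{[r]}$ for klt spaces recalled in \ref{subsection:pull-back_morphims}; this is the single point where the hypothesis that $X$ is klt is used. Set $r:=\textup{rank}\,\sG$ and let
$$\tilde\eta_X\colon \Omega_X^{[r]}\boxtimes\sO_X(-K_\sG)\to\sO_X\qquad\text{and}\qquad \tilde\eta_Y\colon \Omega_Y^{[r]}\boxtimes\sO_Y(-K_{\pi^{-1}\sG})\to\sO_Y$$
be the twisted Pfaff fields associated to $\sG$ and to $\pi^{-1}\sG$. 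By Definition \ref{defn:regular}, $\sG$ is weakly regular exactly when $\tilde\eta_X$ is surjective, and likewise $\pi^{-1}\sG$ is weakly regular exactly when $\tilde\eta_Y$ is surjective; so the goal is to deduce surjectivity of $\tilde\eta_Y$ from that of $\tilde\eta_X$.

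First I would absorb the twists. Since $K_\sG$ is Cartier and $K_{\pi^{-1}\sG}\sim_\mathbb{Z}\pi^*K_\sG$ by hypothesis, all the relevant sheaves are line-bundle twists and there is an isomorphism $\sO_Y(-K_{\pi^{-1}\sG})\cong\pi^*\sO_X(-K_\sG)$. Tensoring $d_\textup{refl}\pi$ by $\sO_Y(-K_{\pi^{-1}\sG})$ then yields a morphism
$$\rho\colon\pi^*\big(\Omega_X^{[r]}\boxtimes\sO_X(-K_\sG)\big)\cong\pi^*\Omega_X^{[r]}\boxtimes\sO_Y(-K_{\pi^{-1}\sG})\lra\Omega_Y^{[r]}\boxtimes\sO_Y(-K_{\pi^{-1}\sG}).$$
Next I would establish commutativity of the square
\[
\begin{CD}
\pi^*\big(\Omega_X^{[r]}\boxtimes\sO_X(-K_\sG)\big) @>{\rho}>> \Omega_Y^{[r]}\boxtimes\sO_Y(-K_{\pi^{-1}\sG}) \\
@V{\pi^*\tilde\eta_X}VV @VV{\tilde\eta_Y}V \\
\sO_Y @= \sO_Y,
\end{CD}
\]
exactly as in Lemma \ref{lemma:regular_versus_canonical} and Proposition \ref{prop:regular_quasi_etale}(1). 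To check this, let $X_0\subseteq X_{\textup{reg}}$ be the dense open set over which $\pi$ restricts to an isomorphism and $\sG$ is a subbundle, and set $Y_0:=\pi^{-1}(X_0)$. Over $Y_0$ the foliation $\pi^{-1}\sG$ is literally the pull-back of $\sG$, the two twists coincide, and $d_\textup{refl}\pi$ is the ordinary pull-back isomorphism of K\"ahler differentials, so the two composites agree on $Y_0$. As $Y_0$ is dense and the common target $\sO_Y$ is torsion-free, the difference of the two composites has image a torsion subsheaf of $\sO_Y$ and therefore vanishes; hence the square commutes everywhere.

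Finally, $\tilde\eta_X$ is surjective because $\sG$ is weakly regular, and surjectivity is preserved by the right-exact functor $\pi^*$, so $\pi^*\tilde\eta_X$ is surjective. Commutativity gives $\tilde\eta_Y\circ\rho=\pi^*\tilde\eta_X$, whence $\tilde\eta_Y$ is surjective as well, and by Definition \ref{defn:regular} this means $\pi^{-1}\sG$ is weakly regular, as desired. The only genuinely delicate step is the verification of the commutative square, namely that the Pfaff field of the pulled-back foliation is compatible with $d_\textup{refl}\pi$; I expect this to be the main obstacle, and it is handled, as above, by reduction to the locus where $\pi$ is an isomorphism together with the universal property of the reflexive pull-back recalled in \ref{subsection:pull-back_morphims}.
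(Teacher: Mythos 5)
Your proof is correct and follows essentially the same route as the paper's: the paper likewise deduces surjectivity of the Pfaff field of $\pi^{-1}\sG$ from a commutative square built on the reflexive pull-back $d_{\textup{refl}}\pi$ (recycled from the proof of Lemma \ref{lemma:regular_versus_canonical}, and the only place the klt hypothesis is used), the sole cosmetic difference being that the paper works with the untwisted Pfaff fields $\Omega^{[r]}\to\sO(K)$ and an isomorphism $\pi^*\sO_X(K_\sG)\cong\sO_Y(K_{\pi^{-1}\sG})$ in the bottom row, while you absorb the (Cartier) twists. One small point of precision: your square commutes on the nose only if the isomorphism $\sO_Y(-K_{\pi^{-1}\sG})\cong\pi^*\sO_X(-K_\sG)$ is normalized to restrict over $Y_0$ to the canonical identification induced by $\pi|_{Y_0}$; with an arbitrary choice the two composites differ on $Y_0$ by a unit, but since $Y_0\cong X_0$ and $X\setminus X_0$ has codimension at least two in the normal variety $X$, that unit extends to a global unit, so the discrepancy is harmless for surjectivity and your argument goes through.
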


\begin{proof}
Let $\eta_X\colon\Omega_X^{[r]} \twoheadrightarrow \sO_X(K_\sG)$ and $\eta_Y\colon \Omega_Y^{[r]} \to \sO_Y(K_{\pi^{-1}\sG})$ be the Pfaff fields associated to $\sG$ and $\pi^{-1}\sG$ respectively.
Recall from the proof of Lemma \ref{lemma:regular_versus_canonical} that there is a commutative diagram 

\begin{center}
\begin{tikzcd}[row sep=large]
\pi^*\Omega_X^{[r]} \ar[r, "{d_\textup{refl}\pi}"]\ar[d, twoheadrightarrow, "{\pi^*\eta_X}"'] & \Omega_Y^{[r]}\ar[d, "{\eta_Y}"] \\
\pi^*\sO_X(K_\sG) \ar[r, "{\sim}"] & \sO_Y(K_{\pi^{-1}\sG}).
\end{tikzcd}
\end{center}
This immediately implies that $\pi^{-1}\sG$ is weakly regular, proving the lemma.
\end{proof}

\begin{lemma}\label{lemma:properties:regular}
Let $\pi \colon Y \to X$ be a dominant morphism of normal complex varieties, and let $\sG$ be a foliation on $X$. 
\begin{enumerate}
\item Suppose that $Y = X \times Z$ and that $\pi$ is the projection onto $X$. Then 
$K_{\pi^{-1}\sG}\sim_\mathbb{Z}\pi^*K_\sG+K_{Y/X}$, and $\sG$ is weakly regular if and only if so is $\pi^{-1}\sG$.
\item If $\pi$ is a smooth morphism, then $K_{\pi^{-1}\sG}\sim_\mathbb{Z}\pi^*K_\sG+K_{Y/X}$. Moreover,
$\sG$ is weakly regular if and only if so is $\pi^{-1}\sG$.
\item Suppose that $X$ has klt singularities, and that $\pi$ is a small projective birational map. Suppose on addition that 
$K_\sG$ is $\mathbb{Q}$-Cartier. Then $K_{\pi^{-1}\sG}\sim_\mathbb{Q}\pi^*K_\sG$. Moreover,
if $\sG$ is weakly regular, then so is $\pi^{-1}\sG$.
\end{enumerate}
\end{lemma}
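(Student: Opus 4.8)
The plan is to separate the numerical identities from the weak‑regularity equivalences, and to deduce the smooth case (2) from the product case (1). \emph{Canonical class formulae.} In both (1) and (2) the map $\pi$ is flat and equidimensional, so the pull‑back foliation is defined by $d\pi$ applied to the twisted $q$-form defining $\sG$; hence the conormal sheaf pulls back, giving $\det\sN_{\pi^{-1}\sG}\cong\pi^{[*]}\det\sN_\sG$ and therefore $c_1(\sN_{\pi^{-1}\sG})\sim_\mathbb{Z}\pi^*c_1(\sN_\sG)$ (both sides are reflexive and agree in codimension one). Combining this with the integral identity $K_\sG\sim_\mathbb{Z}K_X+c_1(\sN_\sG)$ coming from $0\to\sG\to T_X\to\sN_\sG\to 0$, the same identity on $Y$, and $K_Y\sim_\mathbb{Z}\pi^*K_X+K_{Y/X}$, I obtain $K_{\pi^{-1}\sG}\sim_\mathbb{Z}\pi^*K_\sG+K_{Y/X}$ at once; in case (1) one moreover has $K_{Y/X}\sim_\mathbb{Z}\textup{pr}_Z^*K_Z$.

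For the weak‑regularity equivalence in (1) I would argue on the conormal side via Lemma \ref{lemma:regular_tangent_versus_conormal}: with $q:=\dim X-\textup{rank}\,\sG$, weak regularity of $\sG$ is the surjectivity of $\eta_X\colon(\wedge^qT_X)^{**}\boxtimes\det\sN_\sG^*\to\sO_X$, and likewise for $\pi^{-1}\sG$. On the product there is a reflexive Künneth decomposition $(\wedge^qT_Y)^{**}\cong\bigoplus_{a+b=q}\textup{pr}_X^{[*]}(\wedge^aT_X)^{**}\boxtimes\textup{pr}_Z^{[*]}(\wedge^bT_Z)^{**}$, valid on $X_{\textup{reg}}\times Z_{\textup{reg}}$ and hence on all of $Y$. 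Since $T_{Y/X}=\textup{pr}_Z^{[*]}T_Z$ lies in $\pi^{-1}\sG=\ker(T_Y\to\sN_{\pi^{-1}\sG})$, the map $\eta_Y$ annihilates every summand with $b>0$ and factors as $\eta_Y=(\pi^*\eta_X)\circ p$, where $p$ is the split projection onto the $(a,b)=(q,0)$ summand and $\pi^*\eta_X$ is the flat pull‑back of $\eta_X$. Thus $\textup{coker}\,\eta_Y\cong\pi^*\textup{coker}\,\eta_X$, and since $\pi$ is faithfully flat this cokernel vanishes iff $\textup{coker}\,\eta_X$ does.

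To pass from (1) to (2) I would use that a smooth morphism of relative dimension $s:=\dim Y-\dim X$ is, Zariski‑locally on $Y$, étale over $\mathbb{A}^s_X=X\times\mathbb{A}^s$: every $y\in Y$ has a neighbourhood $V$ with an $X$-étale map $g\colon V\to X\times\mathbb{A}^s$ and $\pi^{-1}\sG_{|V}=g^{-1}(\textup{pr}_X^{-1}\sG)$. Weak regularity is then invariant under $g$ by Lemma \ref{lemma:regular_local_etale} and invariant under $\textup{pr}_X$ by case (1), and as $\pi$ is surjective this gives the equivalence pointwise. For case (3), smallness and $X$ klt force $K_{Y/X}=0$ and $K_Y\sim_\mathbb{Q}\pi^*K_X$, while $c_1(\sN_{\pi^{-1}\sG})\sim_\mathbb{Q}\pi^*c_1(\sN_\sG)$ (no exceptional divisors), whence $K_{\pi^{-1}\sG}\sim_\mathbb{Q}\pi^*K_\sG$. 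For weak regularity I would rerun the reflexive‑pull‑back argument of Lemma \ref{lemma:regular_bir_crepant_map}, the key point being that, $\pi$ being an isomorphism in codimension one, the reflexive sheaves $\sO_Y(K_{\pi^{-1}\sG})$ and $\pi^{[*]}\sO_X(K_\sG)$ agree in codimension one and are hence isomorphic \emph{without} assuming $K_\sG$ Cartier; the commutative square relating $\eta_X$, $\eta_Y$ and $d_{\textup{refl}}\pi$ (available since $X$ is klt) then shows $\eta_Y$ is surjective once $\eta_X$ is.

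The point I expect to require the most care is that surjectivity of the twisted Pfaff field is a genuinely \emph{pointwise} condition, not merely a codimension‑one one, so each equivalence must produce an honest factorisation through $\pi^*\eta_X$ (in (1)–(2)) or an honest isomorphism of the twisting reflexive sheaves (in (3)), rather than just agreement away from a codimension‑two set. It is precisely the Künneth splitting of $(\wedge^qT_Y)^{**}$ in (1) and the smallness of $\pi$ in (3) that upgrade ``equal in codimension one'' to ``equal everywhere'', and one must check that each identity, first seen on the smooth locus, extends over the whole space because the sheaves involved are reflexive and $\sO_Y$ is torsion‑free.
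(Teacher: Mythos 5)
Your parts (1) and (2) are correct and are essentially the paper's own proof in dual form: where you apply a K\"unneth decomposition to $(\wedge^qT_Y)^{**}$ and invoke Lemma \ref{lemma:regular_tangent_versus_conormal}, the paper decomposes $\Omega_Y^{[r+\dim Z]}$ and works with the twisted Pfaff field directly; both arguments conclude by faithful flatness of $\pi$, and both deduce (2) from (1) via the same \'etale-local factorization of a smooth morphism through a projection $X\times\mathbb{A}^s\to X$.

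Part (3), however, has a genuine gap, and it sits exactly at the point you yourself flagged as delicate. Your plan is: establish $\sO_Y(K_{\pi^{-1}\sG})\cong\pi^{[*]}\sO_X(K_\sG)$ (this step is correct: two reflexive sheaves agreeing off $\textup{Exc}\,\pi$, which has codimension at least two, are isomorphic), and then conclude from the commutative square that $\eta_Y$ is surjective because $\pi^*\eta_X$ is. But the square only gives
$$\eta_Y\circ d_{\textup{refl}}\pi \;=\; \nu\circ\pi^*\eta_X,$$
where $\nu\colon\pi^*\sO_X(K_\sG)\to\big(\pi^*\sO_X(K_\sG)\big)^{**}\cong\sO_Y(K_{\pi^{-1}\sG})$ is the natural map to the reflexive hull. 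So what you obtain is surjectivity of $\eta_Y$ onto the image of $\nu$, i.e.\ surjectivity in codimension one only; and when $\sO_X(K_\sG)$ is not locally free, $\nu$ is in general \emph{not} surjective, its cokernel being supported exactly on $\textup{Exc}\,\pi$, which is where weak regularity still has to be checked. This failure is not hypothetical. On the conifold $X=\{xy=zw\}$ with $D=\{x=z=0\}$, the reflexive sheaf $\sO_X(D)$ is generated near the origin by $1$ and $w/x$; for the small morphism $\pi\colon Y=\textup{Bl}_{(x,z)}X\to X$, in the chart with coordinates $(x,w,t)$, $z=xt$, $y=tw$, one has $\pi^{[*]}\sO_X(D)=\tfrac1x\sO_Y$ while the image of $\nu$ is $\tfrac1x(x,w)\sO_Y$, with cokernel $\sO_C$ supported on the flopped curve $C=\{x=w=0\}$. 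Thus "isomorphic reflexive hulls" does not upgrade to the surjectivity you need, and Lemma \ref{lemma:regular_bir_crepant_map} genuinely uses the Cartier hypothesis (which makes $\pi^*\sO_X(K_\sG)$ a line bundle, hence $\nu$ an isomorphism). Your argument never uses the $\mathbb{Q}$-Cartier hypothesis on $K_\sG$ to rule this mechanism out, so as written it is incomplete.

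The paper closes this gap by a reduction you are missing: the statement is local on $X$, so one may assume $K_\sG$ is torsion; one then passes to the associated cyclic index-one covers of $X$ and $Y$ (these are quasi-\'etale, and the induced map between the covers is again small and birational, with the pulled-back canonical class now Cartier and satisfying $\sim_\mathbb{Z}$-pullback), uses Corollary \ref{cor:regular_quasi_etale} to transfer weak regularity back and forth along the quasi-\'etale covers, and only then applies Lemma \ref{lemma:regular_bir_crepant_map} in the Cartier situation. Either this reduction, or some substitute argument proving that $\nu$ is surjective in your specific $\mathbb{Q}$-Cartier setting, is needed to complete your proof of (3).
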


\begin{proof}Set $r:=\textup{rank}\,\sG$, and let $\eta_X\colon \Omega_X^{[r]}\boxtimes\sO_X(-K_\sG) \to \sO_X$ be the twisted Pfaff field associated to $\sG$.

\medskip

Suppose first that $Y = X \times Z$ and that $\pi$ is the projection onto $X$. Denote by $p$ the projection onto $Z$, and set $m:=\dim Z$.
Recall that the pull-back of a reflexive sheaf by a flat morphism is reflexive as well by \cite[Proposition 1.8]{hartshorne80}.
Then $$\pi^{-1}\sG \cong T_{Y/X} \oplus \pi^*\sG \cong p^*T_Z \oplus \pi^*\sG,$$ 
and hence $K_{\pi^{-1}\sG}\sim_\mathbb{Z}\pi^*K_\sG+K_{Y/X}\sim_\mathbb{Z}\pi^*K_\sG+p^*K_Z$.

We have $$\Omega_Y^{[r+m]} \cong \bigoplus_{i+j=r+m}\pi^{*}\Omega_X^{[i]}\boxtimes p^*\Omega_{Z}^{[j]}$$
and the twisted Pfaff field associated to $\pi^{-1}\sG$ is the composed map
\begin{center}
\begin{tikzcd}[cramped]
\Omega_Y^{[r+m]}\boxtimes \sO_X(-K_{\pi^{-1}\sG})
\ar[r, twoheadrightarrow] & 
\Big(\pi^{*}\Omega_X^{[r]}\boxtimes p^*\Omega_{Z}^{[m]}\Big)\boxtimes \sO_X(-K_{\pi^{-1}\sG})
\cong \pi^*\big(\Omega_X^{[r]}\boxtimes\sO_X(-K_\sG)\big)
\ar[r, "\pi^*\eta_X"] & 
\pi^*\sO_X\cong\sO_Y.
\end{tikzcd}
\end{center}
It follows that $\sG$ is weakly regular if and only if so is $\pi^{-1}\sG$
since $\pi$ is a faithfully flat morphism.

\medskip

Suppose now that $\pi$ is smooth. Set $m:=\dim Y - \dim X$,
$X^\circ:=X_{\textup{reg}}$, and $Y^\circ:=\pi^{-1}(X^\circ)\subseteq Y_{\textup{reg}}$. We have an exact sequence
of vector bundles
$$0 \to T_{Y^\circ/X^\circ}\to (\pi^{-1}\sG)_{|Y^\circ} \to (\pi_{|Y^\circ})^*\sG_{|X^\circ} \to 0$$
and thus 
$K_{\pi^{-1}\sG}\sim_\mathbb{Z}\pi^*K_\sG+K_{Y/X}$
since $Y^\circ$ has complement of codimension at least two in $Y$.
We proceed to show that $\sG$ is weakly regular if and only if so is $\pi^{-1}\sG$. The statement is local on $X$ for the \'etale topology
by Lemma \ref{lemma:regular_local_etale}.
Thus, we may assume that 
$Y = X \times \mathbb{A}^m$ and that $\pi$ is given by the projection $Y = X \times \mathbb{A}^m \to X$.
The claim then follows from the previous case.

\medskip

Suppose finally that $X$ has klt singularities, and that $\pi$ is a small projective birational map. Suppose in addition that
$K_\sG$ is $\mathbb{Q}$-Cartier. We clearly have $K_{\pi^{-1}\sG}\sim_\mathbb{Q}\pi^*K_\sG$. 
Replacing $X$ by an open subset, if necessary, we may assume that $K_\sG$ is torsion. It follows that $K_{\pi^{-1}\sG}$ is torsion as well. By Corollary \ref{cor:regular_quasi_etale}, replacing $Y$ and $X$ by the associated cyclic quasi-\'etale covers (see \cite[Definition 2.52]{kollar_mori}), we may also assume that $K_\sG$ is Cartier and that 
$K_{\pi^{-1}\sG}\sim_\mathbb{Z}\pi^*K_\sG$. The statement then follows from Lemma \ref{lemma:regular_bir_crepant_map}.
\end{proof}

\begin{rem}\label{rem:bir_small_cartier}
In the setup of Lemma \ref{lemma:properties:regular} (2), suppose in addition that $K_\sG$ is Cartier. Then 
$K_{\pi^{-1}\sG}\sim_\mathbb{Z}\pi^*K_\sG$.
\end{rem}

\begin{lemma}\label{lemma:regular_family_leaves}
Let $X$ be a normal complex variety with klt singularities, and let $\sG$ be an algebraically integrable foliation on $X$ with canonical singularities. 
Let $\psi\colon Z \to Y$ be the family of leaves, and let $\beta\colon Z \to X$ be the natural morphism (see \ref{family_leaves}). 
If $\sG$ is weakly regular, then so is $\beta^{-1}\sG$.
\end{lemma}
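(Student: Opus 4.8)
The plan is to reduce everything to the situation already treated in Lemma \ref{lemma:regular_bir_crepant_map}, where $K_\sG$ is Cartier and $\beta$ is crepant for the foliation. First I would record the two structural facts that make the reduction run. Since $\sG$ is canonical, Lemma \ref{lemma:canonical_foliation_versus_lc_pairs}(1) gives that $\beta^{-1}\sG$ is canonical with $K_{\beta^{-1}\sG}\sim_\mathbb{Q}\beta^*K_\sG$; in particular the divisor $B$ attached to the family of leaves (see \ref{family_leaves}) vanishes. Combined with Proposition \ref{proposition:dicritical_versus_canonical}, the vanishing of $B$ shows that no $\beta$-exceptional prime divisor dominates $Y$. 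As $\beta^{-1}\sG$ is the foliation induced by $\psi$, every $\beta$-exceptional prime divisor is then contained in the fibers of $\psi$ along its generic point and hence is $\beta^{-1}\sG$-invariant. This invariance is the geometric input that controls the behaviour over the exceptional locus. If $K_\sG$ happens to be Cartier there is nothing left to do: Remark \ref{rem:family_leaves_cartier} upgrades the crepancy to $K_{\beta^{-1}\sG}\sim_\mathbb{Z}\beta^*K_\sG$, and Lemma \ref{lemma:regular_bir_crepant_map} applies directly (recall $X$ is klt and $\sG$ is weakly regular).

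For the general $\mathbb{Q}$-Cartier case I would pass to an index-one cover. Because weak regularity is a pointwise condition, I may fix $z_0\in Z$, shrink $X$ to an affine neighborhood of $\beta(z_0)$ on which $mK_\sG\sim_\mathbb{Z}0$ for the Cartier index $m$, and let $f\colon X_1\to X$ be the associated cyclic cover. This $f$ is quasi-\'etale, so $X_1$ is klt (Fact \ref{fact:quasi_etale_cover_and_singularities}), $\sG_1:=f^{-1}\sG$ is canonical (Lemma \ref{lemma:canonical_quasi_etale_cover}) and weakly regular (Corollary \ref{cor:regular_quasi_etale}), and $K_{\sG_1}\sim_\mathbb{Z}f^*K_\sG$ is Cartier. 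Let $Z_1$ be the normalization of the dominant component of $Z\times_X X_1$, with induced morphisms $g\colon Z_1\to Z$ and $\beta_1\colon Z_1\to X_1$, so that $f\circ\beta_1=\beta\circ g$, $\beta_1$ is projective birational, and $\beta_1^{-1}\sG_1=g^{-1}(\beta^{-1}\sG)$. Since $f$ branches only along $\Sing X$, the cover $g$ branches only along $\beta^{-1}(\Sing X)$, whose codimension-one components are $\beta$-exceptional and therefore $\beta^{-1}\sG$-invariant by the previous paragraph.

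Next I would run the Cartier case upstairs. By Lemma \ref{lemma:pull_back_fol_and_finite_cover} applied to $g$, the invariance of the codimension-one branch divisors yields $K_{\beta_1^{-1}\sG_1}\sim_\mathbb{Z}g^*K_{\beta^{-1}\sG}$; combining this with $g^*K_{\beta^{-1}\sG}\sim_\mathbb{Q}g^*\beta^*K_\sG=\beta_1^*f^*K_\sG\sim_\mathbb{Z}\beta_1^*K_{\sG_1}$ gives $K_{\beta_1^{-1}\sG_1}\sim_\mathbb{Q}\beta_1^*K_{\sG_1}$. As $\beta_1^{-1}\sG_1$ is canonical (Lemma \ref{lemma:canonical_quasi_etale_cover} applied to $g$ and $\beta^{-1}\sG$) and $K_{\sG_1}$ is Cartier, the negativity lemma forces the integral equality $K_{\beta_1^{-1}\sG_1}\sim_\mathbb{Z}\beta_1^*K_{\sG_1}$. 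Lemma \ref{lemma:regular_bir_crepant_map} then shows $\beta_1^{-1}\sG_1$ is weakly regular, and Proposition \ref{prop:regular_quasi_etale}(2), whose invariance hypothesis on $g$ has just been verified, descends this to weak regularity of $\beta^{-1}\sG$ at $z_0$. Since $z_0$ is arbitrary and weak regularity means that the singular locus is empty, $\beta^{-1}\sG$ is weakly regular.

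I expect the main obstacle to be the bookkeeping surrounding the cover rather than the final invocation of Lemma \ref{lemma:regular_bir_crepant_map}. The conceptual crux is the claim that \emph{every} $\beta$-exceptional prime divisor is $\beta^{-1}\sG$-invariant, which rests on $B=0$ together with Proposition \ref{proposition:dicritical_versus_canonical}; this is exactly what guarantees that the base-changed cover $g$ has invariant codimension-one branch locus, so that Lemma \ref{lemma:pull_back_fol_and_finite_cover} produces no stray ramification terms and the crepant relation survives with integral coefficients on $X_1$. Once that invariance is in hand, the remaining steps are routine applications of the results assembled in this section.
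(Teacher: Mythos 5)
Your proof is correct, and its skeleton is the same as the paper's: localize on $X$, pass to the cyclic cover trivializing a multiple of $K_\sG$, base change $\beta$ to the normalized fibre product, apply Lemma \ref{lemma:regular_bir_crepant_map} upstairs, and descend using Proposition \ref{prop:regular_quasi_etale} (2). The only divergence is in how you establish the two inputs this reduction needs. The paper obtains both from the package recalled in subsection \ref{family_leaves}: canonicity of $\sG$ gives $B=0$ via Lemma \ref{lemma:canonical_foliation_versus_lc_pairs}, and then the identity $(g^\circ)^*\big(B_{|Z^\circ}\big)=B_1^\circ+C_1^\circ$ yields at once $B_1^\circ=0$ (the integral crepancy upstairs) and $C_1^\circ=0$ (invariance of the ramification divisor of $g^\circ$). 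You instead deduce invariance from Proposition \ref{proposition:dicritical_versus_canonical} --- with $B=0$ no $\beta$-exceptional prime divisor dominates $Y$, so every exceptional divisor is vertical, hence invariant --- and recover the integral crepancy from Lemma \ref{lemma:pull_back_fol_and_finite_cover} together with a negativity-lemma argument, using that a $\beta_1$-exceptional integral divisor which is $\mathbb{Q}$-linearly trivial must vanish. Both routes are sound; yours re-proves what the paper simply cites, at the small cost of the extra (true, and standard) assertion that a prime divisor not dominating $Y$ is invariant under the foliation induced by $\psi$.
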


\begin{proof}
By Lemma \ref{lemma:canonical_foliation_versus_lc_pairs}, we have $K_{\beta^{-1}\sG}\sim_\mathbb{Q}\beta^*K_\sG$.
Note that the statement is local on $X$.
Let $m$ be a positive integer and let $X ^\circ \subseteq X$ be a dense open subset such that 
$\sO_{X^\circ}\big(m{K_\sG}_{|X^\circ}\big)\cong \sO_{X^\circ}$. 
Let $f^\circ \colon X_1^\circ \to X^\circ$ be the associated cyclic cover, which is quasi-\'etale (see \cite[Definition 2.52]{kollar_mori}), and let $Z_1^\circ$ be the normalization of the product $Z^\circ \times_{X^\circ} X_1^\circ$, where 
$Z^\circ:=\beta^{-1}(X^\circ)$. Let also $\beta_1^\circ\colon Z_1^\circ \to X_1^\circ$ 
and $g^\circ\colon Z_1^\circ \to Z^\circ$
denote the natural morphisms. Recall from subsection \ref{family_leaves} that 
$K_{(\beta_1^\circ)^{-1}(f^\circ)^{-1}(\sG_{|X^\circ})}\sim_\mathbb{Z} (\beta_1^\circ)^* K_{(f^\circ)^{-1}(\sG_{|X^\circ})}$
and that the support of the ramification divisor $R(g^\circ)$ of $g^\circ$ must be $(\beta_1^\circ)^{-1}(f^\circ)^{-1}(\sG_{|X^\circ})$-invariant. 
By construction, $K_{(f^\circ)^{-1}(\sG_{|X^\circ})}$ is Cartier.
Moreover, $(f^\circ)^{-1}(\sG_{|X^\circ})$ is weakly regular by Corollary \ref{cor:regular_quasi_etale}.
Applying Lemma \ref{lemma:regular_bir_crepant_map}, we see that $(\beta_1^\circ)^{-1}(f^\circ)^{-1}(\sG_{|X^\circ})=(g^\circ)^{-1}\big((\beta^{-1}\sG)_{|Z^\circ}\big)$ is weakly regular.
The statement then follows from Proposition \ref{prop:regular_quasi_etale}.
\end{proof}

\subsection{Criteria for weak regularity} Let $\sG$ be a foliation with numerically trivial canonical class on a complex projective manifold. Suppose that $\sG$ has a compact leaf. Then Theorem 5.6 in \cite{lpt} asserts that $\sG$ is regular and that there exists a foliation on $X$ transverse to $\sG$ at any point in $X$. 
In this paragraph, we extend this result to mildly singular varieties (see Corollary \ref{corollary:compact_leaf_holomorphic_form}). We also show that algebraically integrable foliations with mild singularities and numerically trivial canonical class are weakly regular (see Corollary \ref{cor:canonical_versus_regular}). Finally, we provide another criterion for regularity of foliations (see Proposition \ref{prop:criterion_regularity_2}).

\medskip

We will need the following easy observations.

\begin{lemma}\label{lemma:leaf_pfaff_field}
Let $X$ be a normal complex variety, and let $\sG$ be a foliation of rank $r$ on $X$. Suppose that $K_\sG$ is Cartier, and let $\eta\colon \Omega_X^{[r]} \to \sO_X(K_\sG)$ be the Pfaff field associated to $\sG$. Let $L \subset X$ be a subvariety which is not entirely contained in the union of the singular loci of $X$ and $\sG$. Suppose in addition that $\dim L = r$. Then the following holds.
\begin{enumerate}
\item The variety $L \cap X_\textup{reg}$ is a leaf of $\sG_{|X_\textup{reg}}$ if and only if the composed map ${\Omega_X^{r}}_{|L} \to {\Omega_X^{[r]}}_{|L} \to {\sO_X(K_\sG)}_{|L}$ factors through the natural map ${\Omega_X^r}_{|L} 
\twoheadrightarrow \Omega_L^r$.
\item Suppose that $L \cap X_\textup{reg}$ is a leaf of $\sG$, and let $F$ be the normalization of $L$. Denote by $n \colon F \to X$ the natural morphism. Then there is a commutative diagram
\begin{center}
\begin{tikzcd}
n^*\Omega_X^r \ar[r]\ar[d, "{dn}"'] & n^*\Omega_X^{[r]} \ar[r, "{n^*\eta}"] & n^*\sO_X(K_\sG) \ar[d, equal]\\
\Omega_F^r \ar[r] & \Omega_F^{[r]} \ar[r] & n^*\sO_X(K_\sG).
\end{tikzcd}
\end{center}
\end{enumerate}
\end{lemma}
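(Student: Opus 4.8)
The plan is to reduce everything to the open set $X^\circ\subseteq X_\textup{reg}$ over which $\sG$ is a subbundle of $T_X$. Since $X$ is normal and the singular locus of $\sG$ has codimension at least two, the complement $X\setminus X^\circ$ has codimension at least two. Over $X^\circ$ the sheaf $\Omega_X^{[r]}$ coincides with $\Omega_X^r$ and $\sO_X(K_\sG)$ with $\det\sG^*=(\wedge^r\sG)^*$, and the Pfaff field $\eta$ is exactly the map $\wedge^r\Omega^1_X\to\wedge^r\sG^*$ dual to the inclusion $\wedge^r\sG\into(\wedge^rT_X)^{**}$; concretely, $\eta$ sends an $r$-form to its restriction to the subbundle $\sG$. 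I set $L^\circ:=L\cap X^\circ$, which is dense in $L$ by hypothesis.

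For Item (1) I would argue pointwise. Write $c:=\big(\eta\circ(\textup{natural})\big)_{|L}\colon \Omega^r_{X|L}\to\sO_X(K_\sG)_{|L}$ for the composite and $p\colon\Omega^r_{X|L}\twoheadrightarrow\Omega^r_L$ for the natural surjection; since $p$ is surjective, $c$ factors through $p$ if and only if $\ker p\subseteq\ker c$. At a point $x\in L^\circ$ where $L$ is smooth, both $c_x$ and $p_x$ are the restriction of alternating $r$-forms on $T_{X,x}$ to an $r$-dimensional subspace, namely $\sG_x$ and $T_{L,x}$; thus each kernel is the hyperplane of forms annihilating the decomposable $r$-vector $\xi_{\sG_x},\xi_{T_{L,x}}\in\wedge^rT_{X,x}$. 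Two such hyperplanes satisfy $\ker p_x\subseteq\ker c_x$ if and only if they coincide, if and only if $\xi_{\sG_x}$ and $\xi_{T_{L,x}}$ are proportional, if and only if $\sG_x=T_{L,x}$. The only thing to add is the passage between this pointwise statement on $L^\circ$ and the factorization of sheaves over $L$: as $\sO_X(K_\sG)_{|L}$ is a line bundle, hence torsion-free, any local section of $\ker p$ is sent by $c$ to a section vanishing on the dense set $L^\circ$, hence to $0$; this gives $\ker p\subseteq\ker c$ as soon as $\sG=T_L$ along $L^\circ$, and conversely the global inclusion forces $\sG_x=T_{L,x}$ at smooth points of $L^\circ$. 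By Frobenius' theorem a subvariety meeting $X^\circ$ and tangent to $\sG$ there is smooth and open in a leaf, so this tangency is exactly the statement that $L\cap X_\textup{reg}$ is a leaf, proving Item (1).

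For Item (2), assume $L\cap X_\textup{reg}$ is a leaf and let $n\colon F\to L\into X$ be the normalization. Consider the genuine morphism $\theta:=n^*\eta\circ n^*\big(\Omega^r_X\to\Omega^{[r]}_X\big)\colon n^*\Omega^r_X\to n^*\sO_X(K_\sG)$ and $p:=\big(\Omega^r_F\to\Omega^{[r]}_F\big)\circ dn\colon n^*\Omega^r_X\to\Omega^{[r]}_F$. On the dense open $F^\circ:=n^{-1}(L^\circ)$, where $n$ is an isomorphism onto the smooth leaf, the computation of Item (1) shows $\ker p$ and $\ker\theta$ agree; since $n^*\sO_X(K_\sG)$ is torsion-free, a section killed by $p$ is sent by $\theta$ to a section vanishing on $F^\circ$, hence to $0$, so $\ker p\subseteq\ker\theta$ globally and $\theta$ descends to $\bar g\colon\textup{im}\,p\to n^*\sO_X(K_\sG)$. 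Because $\textup{im}\,p=\Omega^{[r]}_F$ on $F^\circ$, one extends $\bar g$ to the desired map $g\colon\Omega^{[r]}_F\to n^*\sO_X(K_\sG)$. Commutativity of the rectangle is then the identity $g\circ p=\theta$ together with the evident compatibility of $dn$ with the maps $\Omega^r\to\Omega^{[r]}$; both are equalities of morphisms into the torsion-free sheaf $n^*\sO_X(K_\sG)$, so it suffices to check them on $F^\circ$, where they reduce to the pulled-back factorization of Item (1).

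The main obstacle is precisely the extension of $\bar g$ from $\textup{im}\,p$ to all of $\Omega^{[r]}_F$ as a genuine, pole-free morphism. This is harmless when $F\setminus F^\circ$ has codimension at least two in $F$, but in general (already for $r=1$, where it is a finite set) it may contain divisorial components lying over $\Sing X\cup\Sing\sG$, and then the issue is whether the rational section of $(\Omega^{[r]}_F)^*\otimes n^*\sO_X(K_\sG)$ determined by $\bar g$ on $F^\circ$ is regular along them. I expect to resolve this by a local computation at the generic point of each such divisor, reducing it to the effectivity of $n^*K_\sG-K_F$ there; this effectivity I would extract from the $\sG$-invariance of $\Sing X\cup\Sing\sG$ (Seidenberg's theorem and Lemma \ref{lemma:singular_set_invariant}) together with the tangency of $L$ to $\sG$, which forces the restriction $n^*\det\sG\to\det T_F$ to extend without poles across the boundary.
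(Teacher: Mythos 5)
Your argument for Item (1) is correct and is essentially the paper's: the pointwise identification of $\ker p_x$ and $\ker c_x$ with the hyperplanes of alternating $r$-forms annihilating the decomposable vectors $\wedge^r T_{L,x}$ and $\wedge^r\sG_x$, combined with torsion-freeness of ${\sO_X(K_\sG)}_{|L}$ to pass between the dense open set $L^\circ$ and all of $L$, is exactly what the paper extracts from \cite[Lemma 2.7]{fano_fols}.

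Item (2), however, is not proved, and the missing step is the entire content of that item. You have correctly isolated the difficulty --- extending $\bar g$ from $\textup{im}\,p$ to $\Omega_F^{[r]}$ across the divisorial components of $F\setminus F^\circ$ --- but your proposed resolution is circular rather than an argument. The extension is equivalent to the effectivity, along $F\setminus F^\circ$, of the divisor of the rational section of $\mathcal{H}om\big(\Omega_F^{[r]},n^*\sO_X(K_\sG)\big)$ which is regular and nowhere vanishing on $F^\circ$; saying you will ``reduce to the effectivity of $n^*K_\sG-K_F$'', or that tangency ``forces $n^*\det\sG\to\det T_F$ to extend without poles'', is merely a restatement (respectively, the dual restatement) of precisely that assertion. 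Moreover, the invariance results you invoke (\cite{seidenberg67} and Lemma \ref{lemma:singular_set_invariant}) concern invariance of singular loci under the foliation and give no control on pole orders at codimension-one points of the normalization; the local-algebraic input genuinely needed is of a different kind --- for $r=1$ it amounts to Seidenberg's theorem that derivations extend to the integral closure (a different result of Seidenberg from \cite{seidenberg67}), and in general it is exactly \cite[Proposition 4.5]{adk08}, the statement that Pfaff fields extend to normalizations, which is what the paper cites at this point. A minimal example shows the cancellation you need is nontrivial: for $X=\mathbb{C}^2$, $\sG$ generated by $2y\partial_x+3x^2\partial_y$, and $L=\{y^2=x^3\}$ with normalization $t\mapsto(t^2,t^3)$, one finds $\textup{im}\,p=t\,\Omega_F^1\subsetneq\Omega_F^1$ and $\bar g(t\,dt)=t^3$, so the extension $g(dt)=t^2$ is regular only because $\bar g(t\,dt)$ happens to be divisible by $t$; it is exactly this divisibility, at every codimension-one point of $F$ lying over $\Sing X\cup\Sing\sG$ and for every $r$, that must be established. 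To close the gap you must either quote \cite[Proposition 4.5]{adk08} or reprove it.
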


\begin{proof}
Item (1) follows easily from \cite[Lemma 2.7]{fano_fols} using the fact that ${\sO_X(K_\sG)}_{|L}$ is torsion-free.
Item (2) follows from Item (1) and \cite[Proposition 4.5]{adk08}.
\end{proof}

\begin{lemma}\label{lemma:non-zero_class}
Let $X$ be a normal complex projective variety, and let $H$ be an ample divisor on $X$. For any integer $1 \le r \le \dim X$, the image of 
$c_1(H)^r \in H^r\big(X,\Omega_X^r\big)$ under the natural map 
$H^r\big(X,\Omega_X^r\big) \to H^r\big(X,\Omega_X^{[r]}\big)$ is non-zero.
\end{lemma}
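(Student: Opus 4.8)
The plan is to pass to a resolution of singularities and transport the question to the smooth model, where the class $c_1(H)^r$ is visibly nonzero, while working around the fact that reflexive differential forms need not pull back to a resolution when $X$ is merely normal. Fix $n=\dim X$ and choose a resolution $\pi\colon \wt X\to X$ that is an isomorphism over $U:=X_{\textup{reg}}$; since $X$ is normal, $X\setminus U$ has codimension at least two. Write $D:=\pi^*H$, so $D^n=H^n>0$. The ordinary pull-back of Kähler differentials $d\pi\colon \pi^*\Omega_X^a\to\Omega_{\wt X}^a$ induces $\pi^*\colon H^a(X,\Omega_X^a)\to H^a(\wt X,\Omega_{\wt X}^a)$ carrying $c_1(H)^a$ to $c_1(D)^a$, and the relevant nonvanishing on $\wt X$ is $c_1(D)^r\cup c_1(D)^{n-r}=c_1(D)^n=D^n=H^n\neq 0$ in $H^n(\wt X,\omega_{\wt X})\cong\mathbb C$. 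The obstacle is that $\pi^*$ does \emph{not} factor through $H^r(X,\Omega_X^{[r]})$ at the sheaf level: a map $\Omega_X^{[r]}\to\pi_*\Omega_{\wt X}^r$ would amount to an extension theorem for reflexive forms, which fails for general normal (non-klt) $X$. Instead I will detect the image $\tau_r(c_1(H)^r)$ of $c_1(H)^r$ under the natural map $\tau_r\colon H^r(X,\Omega_X^r)\to H^r(X,\Omega_X^{[r]})$ by a duality pairing carried out directly on $X$.

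The crux is that, although $\pi^*$ does not factor, the natural map $\Omega_X^a\to\Omega_X^{[a]}$ \emph{does} factor as $\Omega_X^a\xrightarrow{\rho_a}\pi_*\Omega_{\wt X}^a\xrightarrow{\iota_a}\Omega_X^{[a]}$, where $\rho_a$ is adjoint to $d\pi$ and $\iota_a$ is the inclusion of the torsion-free sheaf $\pi_*\Omega_{\wt X}^a$ into its reflexive hull; all three maps agree on $U$ and the target is torsion-free, which forces the factorization. Setting $\xi_a:=\rho_{a\ast}(c_1(H)^a)\in H^a(X,\pi_*\Omega_{\wt X}^a)$, the standard factorization of pull-back through the Leray edge map $\ell_a$ gives $\ell_a(\xi_a)=c_1(D)^a$, while $\tau_a(c_1(H)^a)=\iota_{a\ast}(\xi_a)$. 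Combining the wedge on $\wt X$ with the lax-monoidal map $\pi_*\Omega_{\wt X}^r\otimes\pi_*\Omega_{\wt X}^{n-r}\to\pi_*(\Omega_{\wt X}^r\otimes\Omega_{\wt X}^{n-r})\to\pi_*\omega_{\wt X}$, I form the cup product $Z:=\xi_r\cup\xi_{n-r}\in H^n(X,\pi_*\omega_{\wt X})$, which I will compare with classes on both $\wt X$ and $X$.

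It then remains to trace the two images of $Z$. Multiplicativity of the edge maps yields $\ell_n(Z)=\ell_r(\xi_r)\cup\ell_{n-r}(\xi_{n-r})=c_1(D)^r\cup c_1(D)^{n-r}=c_1(D)^n\neq 0$; since Grauert--Riemenschneider vanishing ($R^q\pi_*\omega_{\wt X}=0$ for $q>0$) makes $\ell_n\colon H^n(X,\pi_*\omega_{\wt X})\to H^n(\wt X,\omega_{\wt X})$ an isomorphism, this gives $Z\neq 0$. On the other hand, the inclusion $\pi_*\omega_{\wt X}\hookrightarrow\omega_X$ and the reflexive wedge $\Omega_X^{[r]}\otimes\Omega_X^{[n-r]}\to\omega_X$ are compatible (they agree on $U$ and $\omega_X$ is torsion-free), so $\iota_{n\ast}(Z)$ is exactly the value of the pairing $P\colon H^r(X,\Omega_X^{[r]})\otimes H^{n-r}(X,\Omega_X^{[n-r]})\to H^n(X,\omega_X)$ on $\tau_r(c_1(H)^r)$ and $\tau_{n-r}(c_1(H)^{n-r})$. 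Finally, the cokernel of $\pi_*\omega_{\wt X}\hookrightarrow\omega_X$ is supported on $X\setminus U$, of dimension at most $n-2$, so Grothendieck vanishing makes $\iota_{n\ast}$ an isomorphism in degree $n$; hence $\iota_{n\ast}(Z)\neq 0$, and by bilinearity of $P$ this forces $\tau_r(c_1(H)^r)\neq 0$.

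I expect the main obstacle to be precisely the non-extendability of reflexive forms noted above, which rules out the naive factorization of $\pi^*$; the device that resolves it is to replace $\Omega_X^{[a]}$ temporarily by $\pi_*\Omega_{\wt X}^a$ and to pair there. The remaining care is bookkeeping: verifying the two cup-product compatibilities (multiplicativity of the edge maps, and agreement of the wedge maps off the codimension-two locus) and checking that Grauert--Riemenschneider vanishing and Grothendieck vanishing indeed turn $\ell_n$ and $\iota_{n\ast}$ into isomorphisms in top degree. These are standard once set up, so the argument reduces the lemma to the single positivity fact $D^n=H^n>0$.
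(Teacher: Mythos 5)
Your proof is correct, but it is organized rather differently from the paper's, and the paper's route is noticeably shorter. The paper first reduces to the case $r=\dim X=:n$ (implicitly by the same wedge-compatibility you verify: the pairings $\Omega_X^r\otimes\Omega_X^{n-r}\to\Omega_X^n$ and $\Omega_X^{[r]}\otimes\Omega_X^{[n-r]}\to\Omega_X^{[n]}$ agree through the natural maps because they coincide on $X_{\textup{reg}}$ and the target is torsion-free, so vanishing of the image of $c_1(H)^r$ would force vanishing of the image of $c_1(H)^n$). Then, in top degree, it observes that the kernel and cokernel of $\Omega_X^{n}\to\Omega_X^{[n]}$ are supported on the singular locus, which has codimension at least two, so by Grothendieck vanishing the map $H^n\big(X,\Omega_X^{n}\big)\to H^n\big(X,\Omega_X^{[n]}\big)$ is an \emph{isomorphism}; non-vanishing of $c_1(H)^n$ in $H^n\big(X,\Omega_X^{n}\big)$ is then detected exactly as you do, by pulling back to a resolution where the class pairs to $H^n>0$. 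This renders your auxiliary sheaf $\pi_*\Omega_{\wt X}^a$, the identification of $\Omega_X^{[a]}$ as its reflexive hull, the factorization $\Omega_X^a\to\pi_*\Omega_{\wt X}^a\to\Omega_X^{[a]}$, and the edge-map multiplicativity all unnecessary: the paper never attempts to pull back reflexive forms (the obstacle you rightly flag), but it also never needs to pair on an intermediate sheaf, because the comparison between K\"ahler and reflexive forms is done on $X$ itself in top degree, where it costs nothing. Two minor points on your write-up: the appeal to Grauert--Riemenschneider vanishing is superfluous, since $\ell_n(Z)=c_1(D)^n\neq 0$ already forces $Z\neq 0$ without $\ell_n$ being injective; and what your argument ultimately buys over the paper's is only a slightly more self-contained treatment of the cup-product compatibilities, at the price of substantially more bookkeeping.
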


\begin{proof}
In order to prove the lemma, it suffices to consider the case when $r = \dim X$.
Let $\beta\colon Z \to X$ be a resolution of $X$. The image of 
$c_1(H)^{\dim X} \in H^{\dim X}\big(X,\Omega_X^{\,\dim X}\big)$ under the map $H^{\dim X}\big(X,\Omega_X^{\,\dim X}\big) \to H^{\dim X}\big(Z,\Omega_Z^{\,\dim X}\big)$
is non-zero, and hence $c_1(H)^{\dim X}$ is non-zero as well. On the other hand, the kernel and the cokernel of the natural map
$\Omega_X^{\,\dim X} \to \Omega_X^{[{\dim X}]}$ are supported on closed subsets of codimension at least two. It follows that the natural map $H^{\dim X}\big(X,\Omega_X^{\,\dim X}\big) \to H^{\dim X}\big(X,\Omega_X^{[\dim X]}\big)$ is an isomorphism, completing the proof of the lemma.
\end{proof}

\begin{prop}\label{prop:criterion_regularity}
Let $X$ be a normal complex projective variety, and let $\sG$ be a foliation of rank $r$ on $X$ with $K_\sG$ Cartier and $K_\sG \equiv 0$. Let $L \subset X$ be a proper subvariety which is not entirely contained in the union of the singular loci of $X$ and $\sG$. Suppose that 
$L \cap X_{\textup{reg}}$ is a leaf of $\sG_{|X_{\textup{reg}}}$. Let $F$ be the normalization of $L$, and denote by $n \colon F \to X$ the natural morphism. Suppose furthermore that the map $\eta_F\colon \Omega_F^{[r]} \to n^*\sO_X(K_\sG)$ given by Lemma \ref{lemma:leaf_pfaff_field} is an isomorphism, and that $F$ has rational singularities.
Then $\sG$ is weakly regular, and there exists a decomposition $T_X \cong \sG\oplus \sE$
of $T_X$ into involutive subsheaves.
\end{prop}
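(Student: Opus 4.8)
The plan is to build the complementary distribution $\sE$ first and read off weak regularity from it: once we produce an involutive subsheaf $\sE\subseteq T_X$ with $T_X=\sG\oplus\sE$, Lemma \ref{lemma:direct_summand_regular} shows at once that $\sG$ is weakly regular, so the whole statement reduces to constructing $\sE$. Set $r=\textup{rank}\,\sG$ and let $\eta\colon\Omega_X^{[r]}\to\sO_X(K_\sG)$ be the Pfaff field of Definition \ref{defn:regular}. I would obtain $\sE$ from a global twisted reflexive $r$-form: the goal is a section $\omega\in H^0\big(X,\Omega_X^{[r]}\boxtimes\sO_X(-K_\sG)\big)$ whose image under the twisted Pfaff field $\Omega_X^{[r]}\boxtimes\sO_X(-K_\sG)\to\sO_X$ is a nonzero constant $c\in H^0(X,\sO_X)=\mathbb{C}$. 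Such an $\omega$ calibrates $\sG$, in the sense that its restriction to $\wedge^r\sG=\det\sG$ is the trivialization $c$; contracting $\omega$ against $\wedge^{r-1}\sG$ (and using $\wedge^{r-1}\sG\cong\sG^*\boxtimes\det\sG$) then yields a morphism $\phi\colon T_X\to\sG$ whose restriction to $\sG$ is multiplication by $c$. Hence $\phi$ is a split surjection and $\sE:=\ker\phi$ is a rank $n-r$ subsheaf with $T_X=\sG\oplus\sE$.

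The existence of $\omega$ is where the hypotheses on the leaf enter. Since $\dim F=r$ and $F$ has rational singularities, $F$ is Cohen--Macaulay with dualizing sheaf $\omega_F\cong\Omega_F^{[r]}$, and the assumption that $\eta_F\colon\Omega_F^{[r]}\to n^*\sO_X(K_\sG)$ is an isomorphism identifies $n^*\sO_X(K_\sG)$ with $\omega_F$. Fix an ample divisor $H$ on $X$. By Lemma \ref{lemma:non-zero_class} the class $c_1(H)^r$ is nonzero in $H^r\big(X,\Omega_X^{[r]}\big)$. Pulling back to $F$ through $n$ and invoking the commutative square of Lemma \ref{lemma:leaf_pfaff_field}(2), the image of $c_1(H)^r$ under $\eta_*\colon H^r\big(X,\Omega_X^{[r]}\big)\to H^r\big(X,\sO_X(K_\sG)\big)$ maps in $H^r\big(F,n^*\sO_X(K_\sG)\big)\cong H^r(F,\omega_F)\cong\mathbb{C}$ to the class of $(n^*H)^r$, which is nonzero because $n^*H$ is ample on $F$. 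Thus $\eta_*(c_1(H)^r)\neq 0$, so the Pfaff field is cohomologically nondegenerate and, in particular, the leafwise calibration $\eta_F^{-1}$ does not degenerate. The point of this computation is to guarantee that the tautological calibrating section supported on the compact leaf $F$ is nonzero, so that it can be promoted to the global section $\omega$ on $X$.

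It then remains to verify that the resulting $\sE=\ker\phi$ is closed under the Lie bracket. Expressing $\sG$ through its integrable defining form, the bracket condition $[\sE,\sE]\subseteq\sE$ reduces to a relation on $d\omega$; this should follow from the fact that $\eta(\omega)$ is a constant (so $\omega$ is closed in the directions tangent to $\sG$), together with the integrability encoded in $\sG$ being a foliation and the vanishing of the Bott partial connection obstruction on $\sN_\sG$ forced by $K_\sG\equiv0$.

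The main obstacle is the passage from the cohomological nondegeneracy to an honest global section $\omega$ splitting the twisted Pfaff field as a map of sheaves on all of $X$: the leaf supplies only a leafwise trivialization, and upgrading it to a global reflexive $r$-form is the crux. This is exactly where the compactness of $L$, the rational singularities of $F$, and the numerical triviality $K_\sG\equiv0$ must be combined, in the spirit of the smooth case \cite[Theorem 5.6]{lpt} --- presumably via a Reeb-stability/deformation argument in which nearby leaves sweep out $X$ and the individual calibrations glue, the rigidity coming from $K_\sG\equiv0$. Confirming the involutivity of $\sE$ once $\omega$ is in hand is the secondary delicate point.
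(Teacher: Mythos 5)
Your outline coincides with the paper's for its first half: produce a twisted reflexive $r$-form $\omega\in H^0\big(X,\Omega_X^{[r]}\boxtimes\sO_X(-K_\sG)\big)$ whose contraction under the Pfaff field is a non-zero constant, split $T_X\cong\sG\oplus\sE$ off it, and invoke Lemma \ref{lemma:direct_summand_regular}; and your cohomological computation, showing that the image of $c_1(H)^r$ in $H^r\big(X,\sO_X(K_\sG)\big)$ is non-zero and remains non-zero after restriction to $F$ (via Lemmas \ref{lemma:leaf_pfaff_field} and \ref{lemma:non-zero_class} and the hypothesis that $\eta_F$ is an isomorphism), is exactly how the paper begins. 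But the step you yourself label ``the main obstacle'' --- promoting this cohomological non-degeneracy to an actual global section $\omega$ --- is the heart of the proof, and the mechanism you propose for it (a Reeb-stability/deformation argument in which nearby leaves sweep out $X$ and leafwise calibrations glue) cannot work here: $\sG$ is not assumed algebraically integrable, nothing guarantees the existence of any compact leaf other than $L$ itself, and the proposition is used in the paper precisely as a criterion in situations where a single compact leaf is all one has. So there is a genuine gap at the central step.

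The missing idea is Hodge symmetry with coefficients in local systems. Since $K_\sG$ is Cartier with $K_\sG\equiv 0$, the pull-back $\beta^*\sO_X(K_\sG)$ to an embedded resolution $\beta\colon Z\to X$ of $L$ is a unitary flat line bundle, so that $H^r\big(Z,\beta^*\sO_X(K_\sG)\big)\cong\overline{H^0\big(Z,\Omega_Z^{r}\otimes\beta^*\sO_X(-K_\sG)\big)}$. The rational singularities of $F$ are used, not to identify $\Omega_F^{[r]}$ with a dualizing sheaf as in your sketch, but to show that $H^r\big(F,n^*\sO_X(K_\sG)\big)\to H^r\big(T,(n\circ\gamma)^*\sO_X(K_\sG)\big)$ is an isomorphism, where $T\subset Z$ is the strict transform of $L$ and $\gamma\colon T\to F$ the induced map; this is what guarantees that your class stays non-zero after pull-back to $Z$. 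Its conjugate is then a closed holomorphic twisted $r$-form $\omega_Z$ on $Z$ whose restriction to $T$ is non-zero, and it descends to the sought-for reflexive form $\omega$ on $X$; the contraction $\eta(\omega)$ is a global regular function on the projective variety $X$, hence constant, and it is non-zero because its restriction to $F$ equals $\eta_F(\alpha_F)\neq 0$. Finally, involutivity of $\sE$ comes from the closedness of $\omega$, which is automatic for a holomorphic form with unitary flat coefficients; your alternative justification via ``$\eta(\omega)$ constant plus vanishing of the Bott partial connection obstruction'' is not substantiated and would need to be replaced by this.
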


\begin{proof}
Let $\beta\colon Z \to X$ be an embedded resolution of $L$, and let $T$ be the strict transform of $L$ in $Z$.
Observe that $\beta_{|T}\colon T \to L$ factors through $F \to L$, and denote by $\gamma\colon T \to F$ the induced map. 

Let $H$ be an ample divisor on $X$. Let also $\eta\colon \Omega_X^{[r]} \to \sO_X(K_\sG)$ be the Pfaff field associated to $\sG$.
Consider the image $c$ of 
$c_1(H)^r \in H^r\big(X,\Omega_X^r\big)$
under the composed map 
$$H^r\big(X,\Omega_X^r\big) \to H^r\big(X,\Omega_X^{[r]}\big) \to H^r\big(X,\sO_X(K_\sG)\big).$$
We will show that $c \neq 0$ and that $H^r(n^*)(c)\neq 0$. By Lemma \ref{lemma:leaf_pfaff_field}, we have a commutative diagram

\begin{center}
\begin{tikzcd}
H^r(X,\Omega_X^r) \ar[r]\ar[d, "{H^r(dn)}"'] & H^r\big(X,\Omega_X^{[r]}\big) 
\ar[r, "{H^r(\eta)}"] & H^r\big(X,\sO_X(K_\sG)\big) \ar[d, "{H^r(n^*)}"]\\
H^r(F,\Omega_F^r) \ar[r] & H^r\big(F,\Omega_F^{[r]}\big) \ar[r] & H^r\big(F,n^*\sO_X(K_\sG)\big).
\end{tikzcd}
\end{center}
By assumption, the map $H^r\big(F,\Omega_F^{[r]}\big) \to H^r\big(F,n^*\sO_X(K_\sG)\big)$ is an isomorphism.
On the other hand, by Lemma \ref{lemma:non-zero_class} above, the image of 
$c_1\big(H_{|F}\big)^r \in H^r\big(F,\Omega_F^r\big)$
under the map 
$H^r\big(F,\Omega_F^r\big) \to H^r\big(F,\Omega_F^{[r]}\big)$
is non-zero. This immediately implies that $c\neq 0$ and 
that $H^r(n^*)(c)\neq 0$.

Consider the commutative diagram

\begin{center}
\begin{tikzcd}
H^r\big(Z,\beta^*\sO_X(K_\sG)\big) \ar[r]   & H^r\big(T,(n\circ\gamma)^*\sO_X(K_\sG)\big) \\
H^r\big(X,\sO_X(K_\sG)\big) \ar[u]\ar[r, "{H^r(n^*)}"] &  H^r\big(F,n^*\sO_X(K_\sG)\big)\ar[u].
\end{tikzcd}
\end{center}
Since $F$ has rational singularities, the morphism $H^r\big(F,n^*\sO_X(K_\sG)\big) \to H^r\big(T,(n\circ\gamma)^*\sO_X(K_\sG)\big)$ is an isomorphism. This implies that the image $c_Z$ of $c$ under the map 
$H^r\big(X,\sO_X(K_\sG)\big) \to H^r\big(Z,\beta^*\sO_X(K_\sG)\big)$ is non-zero.

On the other hand, by Hodge symmetry with coefficients in local systems, there are natural isomorphisms 
\begin{multline*}
H^r\big(Z,\beta^*\sO_X(K_\sG)\big) \cong \overline{H^0\big(Z,\Omega_Z^{r}\otimes\beta^*\sO_X(-K_\sG)\big)}
\\
\text{and}\quad
H^r\big(T,(n\circ\gamma)^*\sO_X(K_\sG)\big) \cong \overline{H^0\big(T,\Omega_T^{r}\otimes(n\circ\gamma)^*\sO_X(-K_\sG)\big)}.
\end{multline*}
It follows that $\omega_Z:=\wb{c_Z}\in H^0\big(Z,\Omega_Z^{r}\otimes\beta^*\sO_X(-K_\sG)\big)$ is a (closed) twisted $r$-form that restricts to a non-zero twisted $r$-form $\alpha_T \in H^0\big(T,\Omega_T^{r}\otimes (n\circ\gamma)^*\sO_X(-K_\sG)\big)$.
Let $\omega \in H^0\big(X,\Omega_X^{[r]}\otimes\sO_X(-K_\sG)\big)$ be the (closed) twisted reflexive $r$-form on $X$
induced by $\omega_Z$, and let $\alpha_F \in H^0\big(F,\Omega_F^{r}\otimes n^*\sO_X(-K_\sG)\big)$
be the non-zero twisted reflexive $r$-form on $F$ induced by $\alpha_T$. By construction, the contraction 
$\eta (\omega)$ is a regular function that restricts to the non-zero regular function 
$\eta_F (\alpha_F)$ on $F$. It follows that $\eta (\omega)$ is constant. 
This shows that there is a decomposition
$T_X\cong \sG \oplus \sE$ into involutive subsheaves. Then $\sG$ is weakly regular by Lemma \ref{lemma:direct_summand_regular}, completing the proof of the proposition.
\end{proof}

\begin{cor}\label{corollary:compact_leaf_holomorphic_form}
Let $X$ be a normal complex projective variety with klt singularities, and let $\sG$ be a foliation on $X$. Suppose that $K_\sG$ is Cartier and that 
$K_\sG \equiv 0$. Let $L \subset X$ be a proper subvariety disjoint from the singular locus of $\sG$ and not contained in the singular locus of $X$. 
Suppose that 
$L \cap X_{\textup{reg}}$ is a leaf of $\sG_{|{\textup{reg}}}$. 
Suppose furthermore that the normalization $F$ of $L$ has rational singularities.
Then $\sG$ is weakly regular and there is a decomposition $T_X \cong \sG\oplus \sE$
of $T_X$ into involutive subsheaves.
\end{cor}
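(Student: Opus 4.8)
The statement will follow from Proposition \ref{prop:criterion_regularity} once we check that the map $\eta_F\colon \Omega_F^{[r]} \to n^*\sO_X(K_\sG)$ produced by Lemma \ref{lemma:leaf_pfaff_field} is an isomorphism, where $r:=\dim F=\textup{rank}\,\sG$ (recall that $F$ is assumed to have rational singularities). Since $\dim F = r$ and $F$ is normal, $\Omega_F^{[r]}$ is the dualizing sheaf of $F$; in particular it is reflexive of rank one, while $n^*\sO_X(K_\sG)$ is a line bundle because $K_\sG$ is Cartier. A nonzero morphism from a rank one reflexive sheaf to a line bundle on the normal variety $F$ is an isomorphism as soon as it is surjective, so it suffices to prove that $\eta_F$ is surjective at every point of $F$.

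First I record that $\sG$ is weakly regular along $L$: by hypothesis $L$ is disjoint from the singular locus $S$ of $\sG$, and by Definition \ref{defn:regular} this means exactly that the twisted Pfaff field, equivalently the Pfaff field $\eta\colon \Omega_X^{[r]} \to \sO_X(K_\sG)$, is surjective at every point of $L$. As surjectivity is an open condition, $\eta$ is surjective on an open neighbourhood $W$ of $L$ in $X$. Now fix $x \in L$ and a point $f \in n^{-1}(x) \subseteq F$; I will show that $\eta_F$ is surjective at $f$. Because $\eta$ is surjective at $x$ and $\sO_X(K_\sG)$ is a line bundle, there is a reflexive $r$-form $\alpha \in H^0\big(U, \Omega_X^{[r]}\big)$ on a neighbourhood $U \subseteq W$ of $x$ with $\eta(\alpha)$ a local generator of $\sO_X(K_\sG)$. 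The plan is to transport $\alpha$ to $F$ and to check that $\eta_F$ sends the resulting form to a local generator of $n^*\sO_X(K_\sG)$.

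To carry this out, let $\beta\colon Z \to X$ be an embedded resolution of $L$ with strict transform $T$ of $L$, and let $\gamma\colon T \to F$ be the induced morphism, so that $\beta_{|T} = n\circ\gamma$ (exactly as in the proof of Proposition \ref{prop:criterion_regularity}). Since $X$ is klt, \cite[Theorem 4.3]{greb_kebekus_kovacs_peternell10} provides a regular pull-back $\beta^{[*]}\alpha \in H^0\big(\beta^{-1}(U), \Omega_Z^r\big)$, whose restriction to the smooth $r$-dimensional variety $T$ is a regular top-form $\alpha_T$. As $F$ has rational singularities, the trace map induces an isomorphism $\gamma_*\Omega_T^r \cong \Omega_F^{[r]}$, so $\alpha_T$ descends to a section $\alpha_F$ of $\Omega_F^{[r]}$ over a neighbourhood of $f$. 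A diagram chase combining Lemma \ref{lemma:leaf_pfaff_field} with the functoriality of the reflexive pull-back of \cite{greb_kebekus_kovacs_peternell10} then shows that $\gamma^*\big(\eta_F(\alpha_F)\big) = (\beta_{|T})^*\eta(\alpha)$, a local generator of $(\beta_{|T})^*\sO_X(K_\sG)$; since $\gamma$ is surjective, $\eta_F(\alpha_F)$ is itself a local generator of $n^*\sO_X(K_\sG)$ near $f$, and hence $\eta_F$ is surjective at $f$. As $x \in L$ and $f \in n^{-1}(x)$ were arbitrary, $\eta_F$ is surjective everywhere, hence an isomorphism, and Proposition \ref{prop:criterion_regularity} applies.

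The main obstacle is the final compatibility $\gamma^*\big(\eta_F(\alpha_F)\big) = (\beta_{|T})^*\eta(\alpha)$: one must match the Pfaff field of $\sG$ with that of $\beta^{-1}\sG$ along the strict transform $T$ and track $\alpha$ both through the pull-back of reflexive forms on $Z$ and through the descent $\gamma_*\Omega_T^r \cong \Omega_F^{[r]}$ on $F$. The assumption that $F$ has rational singularities enters precisely at this point, guaranteeing that the regular top-form $\alpha_T$ descends to a genuine section of the dualizing sheaf $\Omega_F^{[r]}$, rather than to a section vanishing along some effective divisor over $\Sing X$, which is exactly what would obstruct the surjectivity of $\eta_F$.
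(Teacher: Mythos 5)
Your proof is correct, and its skeleton is the same as the paper's: reduce to Proposition \ref{prop:criterion_regularity} by showing that $\eta_F\colon \Omega_F^{[r]} \to n^*\sO_X(K_\sG)$ is an isomorphism, which follows once it is surjective, since a surjection from a rank-one torsion-free sheaf onto a line bundle has zero kernel. The difference lies in how you transport reflexive forms from $X$ to $F$. The paper simply invokes the functorial pull-back $d_{\textup{refl}}n\colon n^*\Omega_X^{[r]} \to \Omega_F^{[r]}$ of paragraph \ref{subsection:pull-back_morphims} (available because $X$ is klt), whose compatibility with Lemma \ref{lemma:leaf_pfaff_field} gives $\eta_F\circ d_{\textup{refl}}n = n^*\eta$; surjectivity of $n^*\eta$ --- which is exactly what the hypothesis that $L$ avoids the singular locus of $\sG$ provides --- then finishes the proof in one diagram. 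You instead reconstruct this pull-back by hand: embedded resolution, the extension theorem of \cite{greb_kebekus_kovacs_peternell10}, restriction to the strict transform $T$, descent to $F$, and a generic-agreement argument identifying $\gamma^*\big(\eta_F(\alpha_F)\big)$ with $(\beta_{|T})^*\eta(\alpha)$. This is essentially the mechanism underlying Kebekus's theorem, so your route is valid but considerably longer than necessary. One correction to your closing remark: rational singularities of $F$ is not what makes the descent work --- the inclusion $\gamma_*\Omega_T^r \subseteq \Omega_F^{[r]}$ holds for any resolution of a normal variety, and your nonvanishing argument only uses surjectivity of $\gamma$. The rational singularities hypothesis is genuinely needed only inside Proposition \ref{prop:criterion_regularity} itself (for the isomorphism $H^r\big(F,n^*\sO_X(K_\sG)\big) \cong H^r\big(T,(n\circ\gamma)^*\sO_X(K_\sG)\big)$ in its proof), so it must be carried along as a hypothesis, but it plays no role in verifying that $\eta_F$ is an isomorphism.
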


\begin{proof}Denote by $n\colon F \to X$ the natural morphism, and let $\eta_F\colon \Omega_F^{[r]} \to n^*\sO_X(K_\sG)$ be the map given by Lemma \ref{lemma:leaf_pfaff_field}.
By Lemma \ref{lemma:leaf_pfaff_field} and paragraph \ref{subsection:pull-back_morphims},
we have a commutative diagram

\begin{center}
\begin{tikzcd}[row sep=large, column sep=large]
n^*\Omega_X^{[r]} \ar[d, "{d_\textup{refl}n}"']\ar[r, "n^*\eta"] & n^*\sO_X(K_\sG) \ar[d, equal]\\
\Omega_F^{[r]} \arrow[r, "{\eta_F}"'] & n^*\sO_X(K_\sG).
\end{tikzcd}
\end{center}
On the other hand, the map $n^*\eta$ is surjective by assumption. This immediately implies that $\eta_F$ is an isomorphism, so that Proposition \ref{prop:criterion_regularity} applies.
\end{proof}

\begin{cor}\label{cor:canonical_versus_regular}
Let $X$ be a normal complex projective variety with klt singularities, and let $\sG$ be an algebraically integrable foliation on $X$ with canonical singularities. Suppose that $K_\sG$ is Cartier and that 
$K_\sG \equiv 0$. 
Then $\sG$ is weakly regular and there is a decomposition $T_X \cong \sG\oplus \sE$
of $T_X$ into involutive subsheaves.
\end{cor}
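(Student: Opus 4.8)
The plan is to exhibit a single well-chosen compact leaf and invoke Corollary \ref{corollary:compact_leaf_holomorphic_form}, which already packages the desired conclusion (weak regularity together with the splitting $T_X \cong \sG \oplus \sE$) once one produces a leaf whose closure $L$ avoids $\Sing \sG$, is not contained in $\Sing X$, and whose normalization has rational singularities. First I would set up the family of leaves $\psi \colon Z \to Y$ with the natural birational morphism $\beta \colon Z \to X$ as in \ref{family_leaves}. Since $\sG$ is canonical, Lemma \ref{lemma:canonical_foliation_versus_lc_pairs} gives $K_{\beta^{-1}\sG} \sim_\mathbb{Q} \beta^* K_\sG$ and, more precisely, that the effective divisor $B$ with $K_{\beta^{-1}\sG} + B \sim_\mathbb{Q} \beta^* K_\sG$ vanishes; as $K_\sG$ is Cartier one even has $K_{\beta^{-1}\sG} \sim_\mathbb{Z} \beta^*K_\sG$ by Remark \ref{rem:family_leaves_cartier}.

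Next I would verify the rational-singularities hypothesis. By Lemma \ref{lemma:canonical_foliation_versus_lc_pairs}(2) a general fiber $F := \psi^{-1}(y)$ has canonical singularities, hence rational singularities by Elkik's theorem (Remark \ref{rem:irregularity_bir_invariant}); being the general fiber of a dominant morphism between normal varieties in characteristic zero, it is moreover normal. Since $Z \to Y \times X$ is finite (it is the normalization of the universal cycle), the restriction $\beta_{|F}$ is finite, and being birational onto its image it realizes $\beta_{|F}\colon F \to L := \beta(F)$ as the normalization of the closure $L$ of a general leaf. Thus the normalization of $L$ is $F$, which has rational singularities, and $L \not\subseteq \Sing X$ because $L$ meets the general point of $X$. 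So $L$ satisfies all hypotheses of Corollary \ref{corollary:compact_leaf_holomorphic_form} except possibly disjointness from $\Sing \sG$.

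The heart of the argument --- and the step I expect to be the main obstacle --- is to show that, for general $y$, the closure $L$ is disjoint from $\Sing \sG$, equivalently that $F$ is disjoint from $\beta^{-1}(\Sing \sG)$. By Proposition \ref{prop:generic_smoothness} the foliation $\beta^{-1}\sG$, being induced by $\psi$, is weakly regular over the generic point of $Y$, so $\Sing(\beta^{-1}\sG)$ does not dominate $Y$. A point of $\beta^{-1}(\Sing\sG)$ lying outside $\Exc\beta$ sits in the isomorphism locus of $\beta$, where $\beta^{-1}\sG$ corresponds to $\sG$, hence lies in $\Sing(\beta^{-1}\sG)$; thus $\beta^{-1}(\Sing\sG) \subseteq \Sing(\beta^{-1}\sG) \cup \Exc\beta$. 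It therefore suffices to prove that $\Exc\beta$ does not dominate $Y$. For the divisorial part this is exactly Proposition \ref{proposition:dicritical_versus_canonical}: since $B = 0$, no $\beta$-exceptional prime divisor $E$ satisfies $\psi(E) = Y$. A non-divisorial (``dicritical'') component of $\Exc\beta$ dominating $Y$ would, after a further blow-up, produce a $\beta$-exceptional divisor dominating $Y$, again contradicting canonicity through Proposition \ref{proposition:dicritical_versus_canonical}; controlling this dicritical locus is the delicate point. Granting it, $\psi(\Exc\beta) \subsetneq Y$, so a general fiber $F$ avoids both $\Exc\beta$ and $\Sing(\beta^{-1}\sG)$, whence $L$ is disjoint from $\Sing\sG$. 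Applying Corollary \ref{corollary:compact_leaf_holomorphic_form} to this $L$ then yields that $\sG$ is weakly regular and that $T_X \cong \sG \oplus \sE$, completing the proof.
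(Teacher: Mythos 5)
Your route is genuinely different from the paper's, and it has a real gap exactly at the point you flag as delicate: ruling out non-divisorial components of $\Exc\,\beta$ that dominate $Y$. The fix you propose cannot work. Proposition \ref{proposition:dicritical_versus_canonical} concerns exceptional prime divisors lying on the family of leaves $Z$ itself; its force comes from the effectivity of the canonically defined divisor $B$ with $K_{\beta^{-1}\sG}+B\sim_{\mathbb{Q}}\beta^*K_\sG$ (a structural feature of the family of leaves, see \ref{family_leaves}), which forces every divisor on $Z$ to have discrepancy $\le 0$ over $X$, and every horizontal exceptional one to have discrepancy $<0$. A divisor extracted on a blow-up of $Z$ is subject to no such constraint, because a blow-up of $Z$ is no longer a family of leaves. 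Worse, canonicity is simply not contradicted by horizontal small centers: suppose $Y$ is a curve, $W\subset\Exc\,\beta$ is a horizontal component with $\codim_Z W=2$ along whose generic point $Z$ is smooth, and let $\sigma\colon Z'\to Z$ be the blow-up of $W$ with exceptional divisor $E'$. Using $K_{\beta^{-1}\sG}=K_{Z/Y}-R(\psi)$ and $K_{\sigma^{-1}\beta^{-1}\sG}=K_{Z'/Y}-R(\psi\circ\sigma)$ (Example \ref{example:canonical_class_foliation}), together with the fact that both ramification divisors are vertical while $W$ and $E'$ are horizontal, one finds $K_{\sigma^{-1}\beta^{-1}\sG}=\sigma^*K_{\beta^{-1}\sG}+E'$; since $B=0$ in your situation (proof of Lemma \ref{lemma:canonical_foliation_versus_lc_pairs}), this gives $a(E',X,\sG)=+1>0$. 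So no contradiction with canonicity arises from ``a further blow-up'', and since neither $X$ nor $Z$ is assumed $\mathbb{Q}$-factorial, the non-divisorial case cannot be dismissed.

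The paper sidesteps this difficulty entirely: it applies Proposition \ref{prop:criterion_regularity}, not Corollary \ref{corollary:compact_leaf_holomorphic_form}, so the hypothesis to verify is not that $L$ avoid $\Sing\,\sG$ but that $\eta_F\colon\Omega_F^{[r]}\to n^*\sO_X(K_\sG)$ be an isomorphism. This holds for a general leaf closure without any dicriticality analysis: $K_{\beta^{-1}\sG}\sim_{\mathbb{Q}}\beta^*K_\sG$, Example \ref{example:canonical_class_foliation} and adjunction give $K_F\sim_{\mathbb{Q}}n^*\sO_X(K_\sG)$, so the non-zero map $\eta_F$ between rank one reflexive sheaves with $\mathbb{Q}$-linearly trivial difference is an isomorphism. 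If you wish to keep your route through Corollary \ref{corollary:compact_leaf_holomorphic_form}, the gap can be closed by first replacing $X$ with a $\mathbb{Q}$-factorialization $\mu\colon X'\to X$: the hypotheses persist ($K_{\mu^{-1}\sG}\sim_{\mathbb{Z}}\mu^*K_\sG$ is Cartier since $\mu$ is small, and $\mu^{-1}\sG$ is canonical by Lemma \ref{lemma:singularities_birational_morphism}), the exceptional locus of the family-of-leaves morphism over the $\mathbb{Q}$-factorial $X'$ is purely divisorial so that your divisorial argument suffices there, and the resulting decomposition $T_{X'}\cong\mu^{-1}\sG\oplus\sE'$ pushes forward to $T_X\cong\sG\oplus\mu_*\sE'$, whence $\sG$ is weakly regular by Lemma \ref{lemma:direct_summand_regular}.
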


\begin{proof}
Let $\psi\colon Z \to Y$ be the family of leaves, and let $\beta\colon Z \to X$ be the natural morphism (see \ref{family_leaves}). Let also $F$ be a general fiber of $\psi$. Note that $L:=\beta(F)$ is the closure of a leaf of $\sG$. By Lemma \ref{lemma:canonical_foliation_versus_lc_pairs}, $F$ has canonical singularities and $K_{\beta^{-1}\sG}\sim_\mathbb{Q}\beta^*K_\sG$. In particular, $F$ is the normalization of $L$, and it has rational singularities by \cite{elkik}. Moreover, $K_F \sim_\mathbb{Q} n^*\sO_X(K_\sG)$ by Example \ref{example:canonical_class_foliation} and the adjunction formula, where $n\colon F \to X$ denotes the restriction of $\beta$ to $F$. It follows that the map $\eta_F \colon \Omega_F^{[r]}\cong\sO_F(K_F) \to n^*\sO_X(K_\sG)$ given by Lemma \ref{lemma:leaf_pfaff_field} is an isomorphism, where $r$ denotes the rank of $\sG$. The conclusion then follows from Proposition \ref{prop:criterion_regularity}.
\end{proof}

Example \ref{example:canonical_not_regular} shows that Corollary \ref{cor:canonical_versus_regular} above is wrong if one drops the assumption that $K_\sG\equiv 0$.

\begin{exmp}\label{example:canonical_not_regular}
Let $B$ and $C$ be smooth projective curves, and let $X$ be the blow-up of $B\times C$ at some point. Let also $\sG$ be the foliation on $X$ induced by the natural morphism $X \to B$. Then $\sG$ has canonical singularities but it is not regular.
\end{exmp}

The proof of Proposition \ref{prop:criterion_regularity_2} below makes use of the following result, which might be of independent interest.

\begin{lemma}\label{lemma:criterion_regularity}
Let $X$ be a normal complex projective variety, and let $\sG$ be a foliation of rank $r$ on $X$ with $K_\sG$ Cartier and $K_\sG \equiv 0$. Let $H$ be a very ample Cartier divisor on $X$, and let $\Omega_X^{r} \to \sO_X(K_\sG)$ be map induced by the Pfaff field associated to $\sG$. 
Let $\beta\colon Z \to X$ be a resolution of singularities.
Suppose that the image $c$ of $c_1(H)^r \in H^r\big(X,\Omega_X^{r}\big)$ under the composed map 
$$H^r\big(X,\Omega_X^{r}\big) \to H^r\big(X,\sO_X(K_\sG)\big)\to H^r\big(Z,\beta^*\sO_X(K_\sG)\big)$$ is non-zero. Then $\sG$ is weakly regular, and there exists a decomposition $T_X \cong \sG\oplus \sE$
of $T_X$ into involutive subsheaves.
\end{lemma}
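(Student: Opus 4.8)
The plan is to follow the concluding part of the proof of Proposition \ref{prop:criterion_regularity}, the essential difference being that the non-vanishing of the relevant constant must now be extracted from the cohomological hypothesis instead of from a compact leaf.

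Since $K_\sG\equiv 0$, the line bundle $L:=\sO_X(K_\sG)$ is numerically trivial, so $M:=\beta^*L$ is a numerically trivial line bundle on the smooth projective variety $Z$; it therefore carries a flat unitary metric and corresponds to a unitary local system. By Hodge symmetry with coefficients in this local system, complex conjugation yields an isomorphism
$$H^r\big(Z,M\big)\cong\overline{H^0\big(Z,\Omega_Z^r\otimes M^{-1}\big)}.$$
As $c\neq 0$ by hypothesis, this produces a non-zero closed twisted holomorphic $r$-form $\omega_Z:=\overline{c}\in H^0(Z,\Omega_Z^r\otimes M^{-1})$. Pushing forward along $\beta$ and using the natural inclusion $\beta_*\Omega_Z^r\hookrightarrow\Omega_X^{[r]}$ (valid because $X$ is normal, so $\beta$ is an isomorphism away from a set of codimension at least two), I obtain a non-zero closed twisted reflexive $r$-form $\omega\in H^0\big(X,\Omega_X^{[r]}\otimes\sO_X(-K_\sG)\big)$, exactly as in Proposition \ref{prop:criterion_regularity}. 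Contracting $\omega$ with the Pfaff field $\eta\colon\Omega_X^{[r]}\to\sO_X(K_\sG)$ gives a global function $\eta(\omega)\in H^0(X,\sO_X)$, hence a constant $\lambda$.

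The heart of the matter---and the step I expect to be the main obstacle---is to prove $\lambda\neq 0$. With no leaf to restrict to, I would argue by Hodge theory. Regarding $\eta$ as contraction against the twisted $r$-vector $\theta$ defining $\sG$, the class $c$ is obtained from the positive $(r,r)$-class $c_1(\beta^*H)^r$ by applying $\eta$, while $\omega_Z=\overline{c}$; thus $\lambda=\langle\omega_Z,\theta\rangle$ measures the $\sG$-component of $c$. The plan is to show that this component cannot vanish while $c\neq 0$, by expressing $\lambda$ through the self-pairing of $c$ against its conjugate and invoking both the positivity of the Hodge inner product and the positivity of $c_1(\beta^*H)^r$. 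Checking that the contraction genuinely detects this non-degenerate self-pairing, rather than a combination that might cancel, is the technical core; here the very-ampleness of $H$ enters through the positivity of $c_1(\beta^*H)^r$.

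Finally, once $\lambda=\eta(\omega)$ is a non-zero constant, the composite $\sO_X(K_\sG)\xrightarrow{\omega}\Omega_X^{[r]}\xrightarrow{\eta}\sO_X(K_\sG)$ is multiplication by $\lambda$, hence an isomorphism, so $\tfrac{1}{\lambda}\omega$ splits $\eta$ and exhibits $\sO_X(K_\sG)$ as a direct summand of $\Omega_X^{[r]}$. As $\omega$ is closed, this splitting yields a decomposition $T_X\cong\sG\oplus\sE$ into involutive subsheaves, exactly as in the final lines of the proof of Proposition \ref{prop:criterion_regularity}. Weak regularity of $\sG$ then follows from Lemma \ref{lemma:direct_summand_regular}.
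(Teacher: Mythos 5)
Your framework coincides with the paper's: Hodge symmetry for the unitary flat line bundle $\beta^*\sO_X(K_\sG)$ produces a non-zero twisted holomorphic $r$-form $\alpha$ on $Z$ with $\{\wb{\alpha}\}=c$, its restriction to $X^\circ:=X\setminus\beta(\textup{Exc}\,\beta)$ extends to a closed twisted reflexive $r$-form on $X$, the contraction against the Pfaff field is a constant $\lambda$, and $\lambda\neq 0$ gives the splitting and weak regularity via Lemma \ref{lemma:direct_summand_regular}. The problem is that you never prove $\lambda\neq 0$: the paragraph beginning ``The heart of the matter'' announces a plan (``the plan is to show\dots'') and explicitly defers what you yourself call ``the technical core'', so the one step that carries the content of the lemma is missing. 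Your heuristic that $\lambda=\langle\omega_Z,\theta\rangle$ ``measures the $\sG$-component of $c$'' has, as written, no connection to the hypothesis $c\neq 0$, because you have not exhibited any cocycle representing $c$ in terms of the $r$-vector $\theta$.

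The missing idea, which is how the paper (following \cite[Proposition 2.7.1]{dps5}) closes exactly this gap, is an explicit Dolbeault representative of $c$. Set $n:=\dim X$, embed $X\subseteq\mathbb{P}^N$ by $|H|$, choose a finite cover of $\mathbb{P}^N$ by open sets $U_i$ trivializing $\sO_X(K_\sG)$ on $X_i:=U_i\cap X$, lift the $r$-vectors $v_i$ defining $\sG_{|X_i}$ to $u_i\in H^0(U_i,\wedge^r T_{U_i})$, and check that the pull-backs to $Z$ of the contractions $u_i\lrcorner\,\omega_{\textup{FS}}^r$ of powers of the Fubini--Study form glue to a $\wb{\partial}$-closed $(0,r)$-form $\tau$ with values in $\beta^*\sO_X(K_\sG)$ which represents $c$. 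Since $\wb{\alpha}$ and $\tau$ are both harmonic, hence closed, and are cohomologous, one gets
$$\int_Z \alpha\wedge\wb{\alpha}\wedge\omega^{n-r}=\int_Z \alpha\wedge\tau\wedge\omega^{n-r},$$
where $\omega$ is the semi-positive $(1,1)$-form on $Z$ induced by $\omega_{\textup{FS}}$; the left-hand side is non-zero by the Hodge--Riemann bilinear relations because $\alpha\neq 0$, while a pointwise computation identifies the right-hand side with $C_2\,\lambda$ for a non-zero constant $C_2$, since $\alpha\wedge(v\lrcorner\,\omega_{\textup{FS}}^r)\wedge\omega_{\textup{FS}}^{n-r}$ is a non-zero universal multiple of $\alpha(v)\,\omega_{\textup{FS}}^n$. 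It is precisely this chain --- the representative $\tau$, the integral identity, the Hodge--Riemann positivity, and the pointwise contraction identity --- that turns your ``self-pairing'' heuristic into a proof, and none of it appears in your proposal.
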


\begin{proof}[Proof of Lemma \ref{lemma:criterion_regularity}]The proof is similar to that of 
\cite[Proposition 2.7.1]{dps5}. 

\medskip

Set $n:=\dim X$. The linear system $|H|$ embeds $X$ into $\mathbb{P}^N$ for some positive integer $N$. 
Denote by $\gamma\colon Z \to \mathbb{P}^N$ the natural map. 
Let $(U_i)_{i\in I}$ be a finite covering of $\mathbb{P}^N$
by open sets such that ${\sO_X(K_\sG)}_{|X_i}\cong \sO_{X_i}$, where $X_i:=U_i \cap X$.
Denote by $v_i \in H^0(X_i,\wedge^rT_{X_i})$ a $r$-field defining $\sG_{|X_i}$, and let
$u_i \in H^0(U_i,\wedge^rT_{U_i})$ such that ${u_i}_{|X_i}=v_i \in H^0(X_i,\wedge^rT_{X_i}) \subset 
H^0\big(X_i,{\wedge^rT_{U_i}}_{|X_i}\big)$. Let $\omega_{\textup{FS}}$ be the Fubini-Study form on $\mathbb{P}^N$, and denote by
$\omega_i$ its restriction to $U_i$.
The pull-back $\eta_i$ on $\gamma^{-1}(U_i)$ of the contraction $u_i \lrcorner \,\omega_i^r$ of $\omega_i^r$ by $v_i$ is a $\wb{\partial}$-closed $(0,r)$-form. Moreover, the $\eta_i$ glue to give a 
$\wb{\partial}$-closed $(0,r)$-form $\eta$ with coefficients in the unitary flat line bundle
$\beta^*\sO_X(K_\sG)$. By construction, $\eta$
represents $c$ if $H^r\big(Z,\beta^*\sO_X(K_\sG)\big)$ is identified with the corresponding Dolbeault cohomology group.

By Hodge symmetry with coefficients in local systems, there exists a holomorphic $r$-form $\alpha$ with values in $\beta^*\sO_X(-K_\sG)$ such that $\{\wb{\alpha}\}=c \in H^r\big(Z,\beta^*\sO_X(K_\sG)\big)$.
In particular, there exists
a $(0,r-1)$-form $\xi$ with values in $\beta^*\sO_X(K_\sG)$ such that 
$\wb{\alpha} = \eta +\wb{\partial}\xi$. Note that $\alpha$ and $\eta$ are harmonic forms with respect to any K\"{a}hler form.
In particular, $\wb{\alpha}$ and $\eta$ are closed.

Set $X^\circ:=X\setminus \beta\big(\textup{Exc}\,\beta\big)$ and let 
$v^\circ \in H^0\big(X^\circ,\wedge^rT_{X^\circ}\otimes\sO_{X^\circ}(K_{\sG_{|X^\circ}})\big)$ be a twisted $r$-field defining
$\sG_{|X^\circ}$. Let also $\alpha^\circ \in H^0\big(X^\circ,\Omega^r_{X^\circ}\otimes\sO_{X^\circ}(-K_{\sG_{|X^\circ}})\big)$
be the twisted $r$-form induced by $\alpha$ on $X^\circ$. Notice that the contraction $\alpha^\circ(v^\circ)$ is a regular function, and hence constant since $X^\circ$ has complement of codimension at least two in $X$. To prove the statement, it suffices to show that $\alpha^\circ(v^\circ)$ is non-zero (see Lemma \ref{lemma:direct_summand_regular}).

Let $\omega$ be the smooth closed semi-positive $(1,1)$-form on $Z$ induced by 
$\omega_{\textup{FS}}$. Since $\wb{\alpha}$ and $\eta$ are closed, we have
$$\int_Z \alpha \wedge \wb{\alpha} \wedge \omega^{n-r}=\int_Z \alpha \wedge \eta \wedge \omega^{n-r}.$$
By the Hodge-Riemann bilinear relations, there exists a complex number $C_1\neq 0$ such that the smooth $(n,n)$-form 
$$C_1{\alpha \wedge \wb{\alpha} \wedge \omega^{n-r}}_{|\beta^{-1}(X^\circ)}=C_1\alpha^\circ \wedge \wb{\alpha^\circ} \wedge \omega_{\textup{FS}}^{n-r}$$
is semi-positive and not identically zero since $\alpha \neq 0$ by assumption. It follows that 
$\int_Z \alpha \wedge \eta \wedge \omega^{n-r} \neq 0$.
On the other hand, a straightforward computation show that 
$$\int_Z \alpha \wedge \eta \wedge \omega^{n-r} = 
\int_{X^\circ} \alpha^\circ \wedge (v^\circ\lrcorner\, {\omega_\textup{FS}}_{|X^\circ}^r) \wedge {\omega_\textup{FS}}_{|X^\circ}^{n-r}=C_2\alpha^\circ(v^\circ),$$
for some complex number $C_2 \neq 0$.
This immediately implies that $\alpha^\circ(v^\circ)$ is non-zero, finishing the proof of the lemma.
\end{proof}

\begin{prop}\label{prop:criterion_regularity_2}
Let $X$ be a normal complex $\mathbb{Q}$-Gorenstein projective variety, and let $\sG$ be a codimension one foliation on $X$ with $K_\sG$ Cartier and $K_\sG\equiv 0$. Suppose that $X$ is smooth in codimension two with rational singularities, and that 
$K_X\cdot H^{\dim X-1}\neq 0$ for some ample Cartier divisor $H$ on $X$. Suppose furthermore that there exists an open 
set $X^\circ\subseteq X_{\textup{reg}}$ with complement of codimension at least three such that
$\sG_{|X^\circ}$ is defined by closed holomorphic $1$-forms with zero set of codimension at least two locally for the analytic topology.
Then $\sG$ is weakly regular, and there exists a decomposition $T_X \cong \sG\oplus \sE$
of $T_X$ into involutive subsheaves.
\end{prop}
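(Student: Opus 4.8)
The plan is to reduce everything to Lemma \ref{lemma:criterion_regularity} and then to verify its hypothesis, namely that the relevant class $c$ is non-zero. Write $n:=\dim X$; since $\sG$ has codimension one, its rank is $r=n-1$. After replacing $H$ by a sufficiently divisible multiple I may assume $H$ is very ample, while $K_X\cdot H^{n-1}\neq 0$ is preserved. Because $X$ has rational singularities, the projection formula gives $R^i\beta_*\beta^*\sO_X(K_\sG)=0$ for $i>0$, so by Leray the natural map $H^{n-1}\big(X,\sO_X(K_\sG)\big)\to H^{n-1}\big(Z,\beta^*\sO_X(K_\sG)\big)$ is an isomorphism. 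Thus it suffices to show that the image $c$ of $c_1(H)^{n-1}$ under $H^{n-1}\big(X,\Omega_X^{n-1}\big)\to H^{n-1}\big(X,\Omega_X^{[n-1]}\big)\xrightarrow{H^{n-1}(\eta)} H^{n-1}\big(X,\sO_X(K_\sG)\big)$ is non-zero, where $\eta$ is the Pfaff field. By Lemma \ref{lemma:non-zero_class} the image $\wb c$ of $c_1(H)^{n-1}$ in $H^{n-1}\big(X,\Omega_X^{[n-1]}\big)$ is already non-zero, so the content is that $H^{n-1}(\eta)$ does not annihilate it.

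The next step is to reinterpret this non-vanishing through Serre duality. For a codimension one foliation the Pfaff field $\eta\colon\Omega_X^{[n-1]}\to\sO_X(K_\sG)$ is, under the isomorphism $\Omega_X^{[n-1]}\cong\big(\wedge^{n-1}T_X\big)^{**}\boxtimes\sO_X(K_X)$, the twist by $\sO_X(K_X)$ of the quotient map $T_X\to\sN_\sG$; since $\sO_X(K_\sG)$ is a genuine line bundle its Serre duality is perfect, and the dual of $H^{n-1}(\eta)$ on $H^1$ is the conormal inclusion $j\colon\sN_\sG^*\into\Omega_X^{[1]}$ (recall $\sN_\sG^*\cong\sO_X(K_X-K_\sG)$). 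A chase of the pairings then shows that $c\neq 0$ if and only if there exists $\theta\in H^1\big(X,\sN_\sG^*\big)$ with $c_1(H)^{n-1}\cdot j_*\theta\neq 0$, the pairing being cup product into $H^n(X,\omega_X)\cong\mathbb{C}$.

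The heart of the proof is to produce such a $\theta$ from the closed $1$-forms, realizing $c_1(\sN_\sG^*)$. On the analytic cover witnessing the hypothesis, $\sG_{|X^\circ}$ is given by closed holomorphic $1$-forms $\omega_i$ with $\omega_i=g_{ij}\omega_j$, where the $g_{ij}$ are transition functions of $\sN_\sG^*$ (generated by the $\omega_i$ inside $\Omega_X^{[1]}$). Closedness gives $0=d\omega_i=dg_{ij}\wedge\omega_j$, so $d\log g_{ij}$ is a holomorphic multiple of $\omega_j$, hence lies in $\sN_\sG^*$. Therefore the \v{C}ech cocycle $\{d\log g_{ij}\}$, which represents $c_1(\sN_\sG^*)$ in $H^1\big(\Omega_X^{[1]}\big)$, already takes values in the subsheaf $\sN_\sG^*$ and defines a class $\theta\in H^1\big(X^\circ,\sN_\sG^*\big)$ with $j_*\theta=c_1(\sN_\sG^*)$. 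As $X\setminus X^\circ$ has codimension at least three and $\sN_\sG^*$, $\Omega_X^{[1]}$ are reflexive, restriction induces isomorphisms on $H^1$, so $\theta$ extends to $H^1\big(X,\sN_\sG^*\big)$ with the same relation. Finally
$$c_1(H)^{n-1}\cdot j_*\theta=c_1(H)^{n-1}\cdot c_1(\sN_\sG^*)=H^{n-1}\cdot(K_X-K_\sG)=H^{n-1}\cdot K_X\neq 0,$$
using $K_\sG\equiv 0$. Hence $c\neq 0$, and Lemma \ref{lemma:criterion_regularity} yields that $\sG$ is weakly regular together with a decomposition $T_X\cong\sG\oplus\sE$ into involutive subsheaves.

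The main obstacle, and the step I would treat most carefully, is the Serre-duality bookkeeping on the singular variety $X$: identifying the dual of $H^{n-1}(\eta)$ with the conormal inclusion $j$ on $H^1$, checking that the relevant pairing is the cup product $c_1(H)^{n-1}\cup(-)$ into $H^n(X,\omega_X)\cong\mathbb{C}$ (which uses that $X$ is Cohen--Macaulay, being rational), and ensuring all identifications remain compatible upon passing to the resolution $Z$ and back, where the intersection number $c_1(H)^{n-1}\cdot c_1(\sN_\sG^*)$ is unchanged since $\beta^*H^{n-1}$ meets every $\beta$-exceptional divisor trivially. By comparison, verifying that $\{d\log g_{ij}\}$ lands in $\sN_\sG^*$ and carrying out the codimension-three extension are routine.
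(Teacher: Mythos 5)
Your proposal follows the same route as the paper's proof: reduce to Lemma \ref{lemma:criterion_regularity} using rational singularities, produce a lift of $c_1(\sN_\sG^*)$ to $H^1$ of the conormal sheaf out of the local closed $1$-forms via the cocycle $\{d\log g_{ij}\}$ (this is exactly Lemma \ref{lemma:atiyah_class}), extend that class from $X^\circ$ to $X$ across the codimension-three complement, and pair against $c_1(H)^{n-1}$ using $K_X\cdot H^{n-1}\neq 0$ and $K_\sG\equiv 0$. Your Serre-duality packaging is, in the direction you actually use, the same thing as the paper's commutative diagram factoring the cup product through $H^{n-1}(\eta)\otimes\textup{Id}$: no perfectness of any pairing is needed for that implication, only the sheaf-level identity that the Pfaff field of a codimension one foliation is wedging against the conormal inclusion, plus bilinearity.

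There is, however, one genuinely flawed justification: the extension step. Reflexivity of $\sN_\sG^*$ on a normal variety only gives depth $\ge 2$ along $A:=X\setminus X^\circ$, hence only $H^0_A=H^1_A=0$, i.e.\ \emph{injectivity} of the restriction $H^1\big(X,\sN_\sG^*\big)\to H^1\big(X^\circ,\sN_\sG^*\big)$. To extend your class $\theta$ you need \emph{surjectivity}, i.e.\ $H^2_A\big(X,\sN_\sG^*\big)=0$, which requires depth $\ge 3$ along $A$ and does not follow from reflexivity. This is precisely where the rational-singularities hypothesis is used in the paper: rational implies Cohen--Macaulay, so $\sO_X(K_X)$, and hence $\sN_\sG^*\cong \sO_X(K_X)\otimes\sO_X(-K_\sG)$ (a line-bundle twist of the dualizing sheaf, as $K_\sG$ is Cartier), are Cohen--Macaulay sheaves; Siu--Trautmann's extension theorem then gives the isomorphism $H^1\big(X,\sN_\sG^*\big)\cong H^1\big(X^\circ,\sN_\sG^*\big)$, while for $\Omega_X^{[1]}$ one only needs, and only gets, injectivity. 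Note also that your cocycle lives in analytic \v{C}ech cohomology (the covering is analytic), so the extension should be performed analytically and compared with algebraic cohomology on the compact $X$; Siu--Trautmann is an analytic statement, which is why the paper invokes it. With these corrections your argument closes and coincides with the paper's.
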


\begin{proof}Set $n:=\dim X$, and let $\eta\colon \Omega_X^{n-1} \to \sO_X(K_\sG)$ be map induced by the Pfaff field associated to $\sG$. Let $\beta\colon Z \to X$ be a resolution of singularities. 
The natural map $H^{n-1}\big(X,\sO_X(K_\sG)\big)\to H^{n-1}\big(Z,\beta^*\sO_X(K_\sG)\big)$ is an isomorphism
since $X$ has rational singularities. Thus, by Lemma \ref{lemma:criterion_regularity}, it suffices to show that the image of 
$c_1(H)^{n-1} \in H^{n-1}(X,\Omega_X^{n-1})$ under the map 
$H^{n-1}(\eta)\colon H^{n-1}\big(X,\Omega_X^{n-1}\big) \to H^{n-1}\big(X,\sO_X(K_\sG)\big)$ is non-zero.

Set $\sL:=\det \sN_\sG^*$.
By Lemma \ref{lemma:atiyah_class} below, 
the cohomology class $c_1(\sL_{|X^\circ})\in H^1(X^\circ,\Omega_{X^\circ}^1)$ lies in the image of the natural map
$H^1\big(X^\circ,\sL_{|X^\circ}\big)\to H^1(X^\circ,\Omega_{X^\circ}^1)$. 
Recall that rational singularities are Cohen-Macaulay. It follows that $\sO_X(K_X)$ and 
$\sO_X(K_X-K_\sG)\cong \sL$ are Cohen-Macaulay sheaves.
Then the restriction map $H^1(X,\sL) \to H^1\big(X^\circ,\sL_{|X^\circ}\big)$ is an isomorphism by
\cite[Theorem 1.14]{siu_trautmann}. Applying \cite[Theorem 1.14]{siu_trautmann} again together with 
\cite[Proposition 1.6]{hartshorne80}, we see that the restriction map
$H^1\big(X,\Omega_{X}^{[1]}\big) \to H^1(X^\circ,\Omega_{X^\circ}^1)$ is injective.
It follows that the image 
of $c_1(\sL)\in H^1\big(X,\Omega_X^{1}\big)$ under the natural map 
$c_1(\sL)\in H^1\big(X,\Omega_X^{1}\big) \to H^1\big(X,\Omega_X^{[1]}\big)$
is the image of a class $c \in H^1(X,\sL)$.

The same argument used in the proof of Lemma \ref{lemma:non-zero_class} shows that
$c_1(H)^{n-1} \otimes c_1(\sL)$ maps to a non-zero class $c_1(H)^{n-1} \cup c_1(\sL)\in H^n\big(X,\Omega_X^{[n]}\big)$ 
under the composed map

\begin{center}
\begin{tikzcd}[cramped]
H^{n-1}(X,\Omega_X^{n-1})\otimes H^1(X,\Omega_X^1)
\ar[r, "\bullet\,\cup\,\bullet"] & 
H^{n}(X,\Omega_X^n)
\ar[r] & 
H^{n}\big(X,\Omega_X^{[n]}\big),
\end{tikzcd}
\end{center}
using the assumptions that $K_X\cdot H^{\dim X-1}\neq 0$ and $K_\sG\equiv 0$. On the other hand, one readily checks that $c_1(H)^{n-1} \cup c_1(\sL)$ is the image of $c_1(H)^{n-1} \otimes c$ under the composed map
\begin{center}
\begin{tikzcd}[row sep=large, column sep=large]
H^{n-1}(X,\Omega_X^{n-1})\otimes H^1(X,\sL) \arrow[rr, "{H^{n-1}(\eta)\otimes\textup{Id}}"] &&  
H^{n-1}\big(X,\sO_X(K_\sG)\big) \otimes H^1(X,\sL)\arrow[d, phantom, ""{coordinate, name=Z}]
\arrow[d,"\bullet\,\cup\,\bullet\quad"',rounded corners,
to path=
{ -- ([xshift=2ex]\tikztostart.east)
|- (Z) [near end]
-| ([xshift=-2ex]\tikztotarget.west)\tikztonodes
-- (\tikztotarget)}
] \\
&& H^n(X,\sO_X(K_\sG)\otimes \sL)\cong H^n\big(X,\Omega_X^{[n]}\big).
\end{tikzcd}
\end{center}
This immediately implies that $H^{n-1}(\eta)\big(c_1(H)^{n-1}\big)\neq 0$, completing the proof of the proposition.
\end{proof}

The following result generalizes \cite[Corollary 3.4]{baum_bott70}.

\begin{lemma}\label{lemma:atiyah_class}
Let $X$ be a complex manifold, let $\sG \subset T_X$ be a codimension one foliation, and set $\sL:=\det\sN_\sG^*$. 
Suppose that $\sG$ is defined by closed holomorphic $1$-forms with zero set of codimension at least two locally for the analytic topology. 
Then the cohomology class $c_1(\sL)\in H^1(X,\Omega_X^1)$ lies in the image of the natural map
$H^1\big(X,\sL\big)\to H^1(X,\Omega_X^1)$.
\end{lemma}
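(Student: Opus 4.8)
The plan is to realize $c_1(\sL)$ by an explicit \v{C}ech cocycle built from the transition functions of $\sL$, and then to observe that the closedness hypothesis forces this cocycle to already take values in the subsheaf $\sL\subseteq\Omega_X^1$. Since $\sG$ has codimension one, $\sL=\det\sN_\sG^*=\sN_\sG^*$ is the conormal line bundle, embedded in $\Omega_X^1$ via the conormal inclusion $\omega\colon\sL\into\Omega_X^1$.

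First I would choose an open cover $\{U_i\}$ of $X$ on which $\sL$ is trivial, with local frames $e_i$, so that the local sections $\omega_i:=\omega(e_i)$ are holomorphic $1$-forms defining $\sG_{|U_i}$ with zero locus of codimension at least two. Using the hypothesis that $\sG$ is locally defined by closed $1$-forms, I would rescale each frame $e_i$ by a nowhere-vanishing holomorphic factor (after refining the cover) so that every $\omega_i$ is closed; this is possible because any two local defining forms differ by a nowhere-vanishing factor. Letting $g_{ij}\in\sO_X^*(U_{ij})$ be the transition functions of $\sL$, we have $\omega_i=g_{ij}\omega_j$ on $U_{ij}$, and these $g_{ij}$ represent $\sL\in H^1(X,\sO_X^*)$. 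Applying the $d\log$ map $\sO_X^*\to\Omega_X^1$, the class $c_1(\sL)\in H^1(X,\Omega_X^1)$ is represented by the \v{C}ech $1$-cocycle $\{d\log g_{ij}\}$.

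The key step is to check that each $d\log g_{ij}=dg_{ij}/g_{ij}$ is a section of $\sL$ over $U_{ij}$. Differentiating $\omega_i=g_{ij}\omega_j$ and using $d\omega_i=d\omega_j=0$ yields $dg_{ij}\wedge\omega_j=0$, hence $d\log g_{ij}\wedge\omega_j=0$. On the open set where $\omega_j\neq 0$ — whose complement has codimension at least two — the form $\omega_j$ is a local frame of $\sL$, so $d\log g_{ij}=h_{ij}\omega_j$ for a holomorphic function $h_{ij}$; that is, $d\log g_{ij}$ is the image of the section $h_{ij}e_j$ of $\sL$. Since $\sL$ is a line bundle (reflexive of rank one on the manifold $X$), this section extends across the codimension-two locus by Hartogs to a section $s_{ij}\in\sL(U_{ij})$ whose image in $\Omega_X^1$ equals $d\log g_{ij}$ on all of $U_{ij}$, the two agreeing on a dense open set and $\Omega_X^1$ being torsion-free.

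Finally, because $\omega\colon\sL\into\Omega_X^1$ is injective and $d\log g_{ij}+d\log g_{jk}=d\log g_{ik}$ on $U_{ijk}$, the sections $\{s_{ij}\}$ satisfy the cocycle condition in $\sL$ and hence define a class $\tilde c\in H^1(X,\sL)$ whose image under $H^1(X,\sL)\to H^1(X,\Omega_X^1)$ is exactly $c_1(\sL)$. The only delicate point is the key step: the closedness of the defining forms is precisely what makes $d\log g_{ij}$ proportional to $\omega_j$, and the reflexivity of $\sL$ is what allows the extension across the singular locus of $\sG$; the remaining verifications are formal.
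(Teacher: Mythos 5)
Your proof is correct and follows essentially the same route as the paper: both represent $c_1(\sL)$ by the \v{C}ech cocycle $(d\log g_{ij})$ attached to closed local defining forms and use the relation $dg_{ij}\wedge\omega_j=0$, forced by closedness, to see that this cocycle takes values in $\sL\subset\Omega_X^1$. The only cosmetic difference is that the paper invokes saturatedness of $\sL$ in $\Omega_X^1$ at the key step, where you argue directly by division by $\omega_j$ and Hartogs extension across the codimension-two zero locus.
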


\begin{proof}
Note that $\sL$ is a line bundle by \cite[Proposition 1.9]{hartshorne80}.
Let $(U_i)_{i\in I}$ be a covering of $X$ by analytically open sets such that 
$\sG_{|U_i}$ is defined a closed $1$-form $\omega_i$ with zero set of codimension at least two. Then, we can write 
$\omega_i = g_{ij} \omega_j$ on 
$U_{ij}:=U_i\cap U_j$, where $g_{ij}$ is a nowhere vanishing holomorphic function on $U_{ij}$. The cocycle
$[(g_{ij})_{i,j}] \in H^1(X,\sO_X^{\times})$ then satisfies $[(g_{ij})_{i,j}]=[\sL]$
since both classes agree away from the singular set of $\sG$ which has codimension at least two in $X$. Now, since $\omega_i$ and 
$\omega_j$ are closed, we must have $0=dg_{ij}\wedge \omega_j$ on $U_{ij}$. It follows that 
$$dg_{ij} \in H^0\big(U_{ij},\sL_{|U_{ij}}\big) \subset 
H^0\big(U_{ij},\Omega^1_{U_{ij}}\big)$$ since $\sL$ is saturated in $\Omega_X^1$ by 
\cite[Lemma 9.7]{fano_fols}. This easily implies that 
the cohomology class $c_1(\sL)=[(d\log\, g_{ij})_{ij}]\in H^1(X,\Omega_X^1)$
lies in the image of the natural map
$H^1\big(X,\sL\big)\to H^1(X,\Omega_X^1)$, proving the lemma.
\end{proof}

\subsection{Local structure in codimension $2$ of weakly regular rank $1$ folations} In the present section, we describe weakly regular rank $1$ foliations on surfaces with klt singularities.

\medskip

We first show that a weakly regular foliation given by a derivation with zero set of codimension at least two is strongly regular in codimension two.

\begin{prop}\label{prop:regular_foliation_klt_varieties}
Let $X$ be a normal complex variety with klt singularities, and let $\sG \subset T_X$ be a weakly regular foliation of rank one. 
Suppose in addition that $K_\sG$ is Cartier. Then there exists a closed subset $Z \subseteq X$ with $\textup{codim}\,Z \ge 3$ such that $\sG_{|X\setminus Z}$ is strongly regular.
\end{prop}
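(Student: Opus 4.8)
The plan is to localize the problem and reduce it to a two‑dimensional computation at the generic points of the codimension‑two stratum of $X_{\textup{sing}}$. First I would record that, since $K_\sG$ is Cartier and $\sG\subset T_X$ is a saturated rank‑one sheaf, $\sG\cong\sO_X(-K_\sG)$ is a \emph{line bundle}, locally generated by a vector field $\partial$; moreover weak regularity (emptiness of the singular scheme of Definition \ref{defn:regular}) forces $\partial$ to have nonzero value at every point. On $X_{\textup{reg}}$ a weakly regular rank‑one foliation is strongly regular by Frobenius' theorem (as recalled before Proposition \ref{prop:generic_smoothness}). The strongly regular locus is open by Remark \ref{rem:strongly_regular_infinitesimal}, so the closed ``bad set'' $\Sigma$ of points where $\sG$ is not strongly regular is contained in $X_{\textup{sing}}$, which has $\codim\ge 2$. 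Hence it suffices to prove that $\sG$ is strongly regular at a \emph{general} point $x$ of each codimension‑two irreducible component $W$ of $X_{\textup{sing}}$: this shows $\Sigma\cap W\subsetneq W$, so $\Sigma$ has $\codim\ge 3$, and we may take $Z:=\Sigma$.

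Next I would fix such a general point $x\in W$ and set up the local model. By \cite[Theorem 5]{seidenberg67} the subvariety $W$ is invariant under $\sG$. At a general point of $W$ the analytic germ $(X,x)$ is a product $\mathbb{D}^{n-2}\times S$ with $W=\mathbb{D}^{n-2}\times\{0\}$, where $S$ is a two–dimensional klt germ; since log terminal surface singularities are quotient singularities, $S\cong\mathbb{C}^2/G$ with $G\subset\textup{GL}(2,\mathbb{C})$ finite, small (no quasi‑reflections) and nontrivial (as $S$ is singular). I would then pass to the associated quotient presentation $q\colon\widetilde X:=\mathbb{D}^{n-2}\times\mathbb{C}^2\to X$, which is Galois with group $G$ acting trivially on $\mathbb{D}^{n-2}$ and branched only along $W$, hence quasi‑étale. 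Let $\widetilde\partial$ be the ($G$‑invariant) lift of $\partial$ and $\widetilde\sG=q^{-1}\sG$. By Corollary \ref{cor:regular_quasi_etale} the foliation $\widetilde\sG$ is again weakly regular, and because $\widetilde X$ is smooth it is strongly regular; in particular $\widetilde\partial$ is nowhere vanishing.

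The crux is then a representation‑theoretic computation at the fixed point. Let $\widetilde x$ be the preimage of $x$, i.e. the origin of the $\mathbb{C}^2$–factor, which is $G$‑fixed. Invariance of $\widetilde\partial$ gives $\widetilde\partial(\widetilde x)\in (T_{\widetilde x}\widetilde X)^G=\mathbb{C}^{n-2}\oplus(\mathbb{C}^2)^G$. I would show $(\mathbb{C}^2)^G=0$: a nonzero common fixed vector $v$ would be a $1$‑eigenvector for every $g\in G$, so each $g$ is either the identity or (being of finite order and fixing the line $\mathbb{C}v$) a quasi‑reflection, contradicting that $G$ is small and nontrivial. Consequently $\widetilde\partial(\widetilde x)$ is a nonzero vector lying in the horizontal factor $\mathbb{C}^{n-2}$. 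Writing $t_1,\dots,t_{n-2}$ for the coordinates on $\mathbb{D}^{n-2}$ (which are $G$‑invariant, hence descend to regular functions on $X$), we get $\partial(t_i)(x)=\widetilde\partial(t_i)(\widetilde x)\neq 0$ for some $i$. Thus the Kähler differential $dt_i\in\Omega^1_{X,x}$ maps to a generator of $\sO_X(K_\sG)_x$ under the Pfaff field, so $\Omega^1_X\to\sO_X(K_\sG)$ is surjective at $x$; together with local freeness of $\sG$, Remark \ref{rem:strongly_regular_infinitesimal} yields strong regularity at $x$, completing the reduction. I expect the main obstacle to be the two pieces of local structure—establishing the analytic product/quotient model $\mathbb{D}^{n-2}\times(\mathbb{C}^2/G)$ at a general point of $W$ and the vanishing $(\mathbb{C}^2)^G=0$—while the passage through the cover and the final surjectivity check are formal once these are in place.
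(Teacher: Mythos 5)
Your argument is correct in substance and follows the same skeleton as the paper's proof: reduce to quotient singularities using \cite[Proposition 9.3]{greb_kebekus_kovacs_peternell10}, pull back weak regularity through the local smooth quasi-\'etale cover (Proposition \ref{prop:regular_quasi_etale}), produce a $G$-invariant holomorphic function $t$ with $\partial(t)(x)\neq 0$, and conclude with the infinitesimal criterion of Remark \ref{rem:strongly_regular_infinitesimal}. Where you differ is in how $t$ is found. The paper works at an \emph{arbitrary} point of the quotient locus $(X,x)\cong (\mathbb{C}^n/G,0)$: weak regularity gives a $G$-invariant holomorphic $1$-form $\alpha_U$ with $\alpha_U(\partial_U)=1$ on the cover; its $0$-th jet is a nonzero $G$-invariant constant form, hence equals $df$ for a $G$-invariant linear function $f$, which descends to $t$ (Zariski's lemma is then invoked to exhibit the product structure). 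You instead work only at a \emph{general} point of each codimension-two component of $X_{\textup{sing}}$, posit the product model $\mathbb{D}^{n-2}\times (\mathbb{C}^2/G)$ there, and argue representation-theoretically that $\widetilde\partial(\widetilde x)$ is a nonzero horizontal vector because $(\mathbb{C}^2)^G=0$ for $G$ small and nontrivial; a horizontal coordinate then serves as $t$. The two mechanisms are equivalent in spirit: horizontality is precisely the statement that some $G$-invariant linear form does not vanish on $\widetilde\partial(\widetilde x)$, which is what the paper's $\alpha_U^0$ supplies. What the paper's route buys is that no product decomposition and no genericity are needed.

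Two caveats. First, the product model is the one genuinely unproved input in your write-up, and your logical order is backwards: it is not a consequence of ``klt surface germs are quotient germs'', but must be deduced from the codimension-two quotient structure by a slice/linearization argument (at a general point of a codimension-two component of the non-free locus upstairs, the stabilizer equals the pointwise stabilizer of that component, acts trivially along it, and acts faithfully and without quasi-reflections on a two-dimensional invariant complement). This is standard and true, so it is a gap of exposition rather than of substance; note, moreover, that your fixed-vector argument runs verbatim at any quotient point without it, since $\widetilde\partial(\widetilde x)\in (\mathbb{C}^n)^G\setminus\{0\}$ and $G$-invariant linear forms separate points of $(\mathbb{C}^n)^G$ (project along a $G$-invariant complement) -- adopting this would let you drop both the product model and the genericity, essentially recovering the paper's argument. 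Second, your opening claim that weak regularity forces $\partial$ to have nonzero value at \emph{every} point of $X$ is unjustified at singular points: reflexive $1$-forms on a singular germ need not be generated by differentials of functions, so $\alpha(\partial)=1$ does not immediately give a function with $\partial(t)(x)\neq 0$; this nonvanishing is in effect what the proposition proves, and only away from a codimension-three set. Fortunately you never use that claim.
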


\begin{proof}Set $n:=\dim X$.
Recall from \cite[Proposition 9.3]{greb_kebekus_kovacs_peternell10} that klt spaces have quotient singularities in codimension two. Thus, we may assume without loss of generality that $X$ has quotient singularities. Given $x \in X$, 
we have $(X,x) \cong (\mathbb{C}^{n}/G,0)$ for some finite subgroup $G$ of $\textup{GL}(n,\mathbb{C})$ that does not contain any quasi-reflections. In particular, the quotient map $\pi\colon \mathbb{C}^{n} \to \mathbb{C}^{n}/G$ is \'etale outside of the singular set. The statement is local on $X$, hence we may shrink $X$ and assume that
there exists $\partial\in H^0(X,T_{X})$ such that $\sG=\sO_X\partial$.
By Proposition \ref{prop:regular_quasi_etale}, $\partial$ induces a nowhere vanishing vector field $\partial_U \in H^0(U,T_U)$ 
on some open $G$-stable neighborhood $U$
of $0 \in \mathbb{C}^{n}$. 
Since $\sG$ is weakly regular, there exists a $G$-invariant holomorphic $1$-form $\alpha_U$ on $U$ such that 
$\alpha_U(\partial_U)=1$. Let $\alpha_U^0$ be the $0$-th jet of $\alpha_U$ at $0$. Then $\alpha_U^0$ is $G$-invariant as well, and $\alpha_U^0(\partial_U)(0)=1$. In particular, we have $\alpha_U^0\neq 0$. On the other hand, $\alpha_U^0=df$ for some holomorphic function $f$ at $0$ such that $f(0)=0$. Observe that $f$ must be $G$-invariant, so that there exists a holomorphic function $t$ on some open analytic neighborhood of $x$ in $X$
such that $\partial(t)(x)\neq 0$. By a result of Zariski (see \cite[Lemma 4]{zariski}), this implies that 
$R=R_1[[t]]$, where $R$ is the formal completion of the local ring $(\sO_{X,x},\mathfrak{m}_x)$ and $R_1\subsetneq R$ is a 
$R_1$ is a Noetherian normal ring with $\dim R_1 = \dim R -1$. Moreover, the extension of $\frac{1}{\partial(t)}\partial$ to $R$ coincides with $\partial_t$. The proposition then follows from \cite[Lemma 1.3.2]{bogomolov_mcquillan01} (see Remark \ref{rem:strongly_regular_infinitesimal}).
\end{proof}

The following is an immediate consequence of Proposition \ref{prop:regular_foliation_klt_varieties}.

\begin{cor}\label{cor:regular_foliation_klt_surface}
Let $X$ be a normal surface, and let $\sG \subsetneq T_X$ be a weakly regular foliation of rank one. Suppose that $X$ has klt singularities and that $K_\sG$ is Cartier. Then $X$ is smooth.
\end{cor}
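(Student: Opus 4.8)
The plan is to deduce the statement directly from Proposition \ref{prop:regular_foliation_klt_varieties}. Since $X$ is a surface, $\dim X = 2$, so any closed subset $Z \subseteq X$ of codimension at least three is necessarily empty. Applying the proposition (whose hypotheses are exactly those of the corollary: $X$ klt, $\sG$ weakly regular of rank one, $K_\sG$ Cartier) therefore produces $Z = \emptyset$, which means that $\sG$ is strongly regular at \emph{every} point of $X$.

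The next step is to unwind the definition of strong regularity for a rank one foliation on a surface. By definition, every point $x \in X$ admits an open analytic neighborhood $U$ biholomorphic to $\mathbb{D} \times M$, where $\mathbb{D}$ is the complex open unit disk and $M$ is a germ of a normal complex analytic variety, with $\sG_{|U}$ induced by the projection $\mathbb{D} \times M \to M$. Comparing dimensions gives $\dim M = \dim X - 1 = 1$.

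The key observation is then that a one-dimensional normal complex analytic variety is smooth, normality in dimension one being equivalent to regularity. Hence $M$ is biholomorphic to a disk, so $U \cong \mathbb{D}^2$ is smooth and $X$ is smooth at $x$. Since $x$ was arbitrary, $X$ is smooth. There is essentially no obstacle in this argument: the entire analytic content has already been packaged into Proposition \ref{prop:regular_foliation_klt_varieties}, and the only additional input is the classical fact that normal curve germs are smooth, so that the corollary follows at once.
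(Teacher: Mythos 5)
Your proposal is correct and follows exactly the route the paper intends: the paper states this corollary as an immediate consequence of Proposition \ref{prop:regular_foliation_klt_varieties}, and your argument simply makes that explicit (a codimension $\ge 3$ subset of a surface is empty, and strong regularity at a point of a surface gives a local model $\mathbb{D}\times M$ with $M$ a normal curve germ, hence smooth). The only input beyond the proposition is the classical fact that normal one-dimensional germs are regular, which is precisely what is needed.
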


We finally describe the structure of weakly regular foliations on klt surfaces.

\begin{prop}\label{prop:regular_foliation_klt_surface}
Let $(X,x)$ be a germ of normal surface with klt singularities, and let $\sG \subset T_X$ be a foliation of rank one. Then $\sG$ is weakly regular if and only if
there exists a positive integer $m$, as well as non-negative integers $a$ and $b$ with $(a,m)=1$ and  
$(b,m)=1$ such that $(X,x) \cong (\mathbb{C}^2/G,0)$ and $\pi^{-1}\sG=\sO_{\mathbb{C}^2}\partial_{y_1}$ where $(y_1,y_2)$ are coordinates on $\mathbb{C}^2$, $G=<\zeta>$ is a cyclic group of order $m$ acting on $\mathbb{C}^2$ by $\zeta\cdot(y_1,y_2)=(\zeta^a y_1,\zeta^b y_2)$, and $\pi \colon \mathbb{C}^2 \to \mathbb{C}^2/G$ denotes the quotient map. 
\end{prop}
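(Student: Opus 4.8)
The plan is to reduce the statement to an equivariant straightening problem on the smooth cover provided by the quotient presentation of the germ. By \cite[Proposition 9.3]{greb_kebekus_kovacs_peternell10} a two-dimensional klt singularity is a quotient singularity; replacing $G$ by its quotient by the (normal) subgroup generated by quasi-reflections --- which by the Chevalley--Shephard--Todd theorem does not change $\mathbb{C}^2/G$ --- we may assume $(X,x)\cong(\mathbb{C}^2/G,0)$ with $G\subset\textup{GL}(2,\mathbb{C})$ finite, acting linearly and faithfully, and without quasi-reflections. The quotient map $\pi\colon\mathbb{C}^2\to X$ is then quasi-\'etale, so by Corollary \ref{cor:regular_quasi_etale} (weak regularity being a local condition, applied at $x$ and $0$) $\sG$ is weakly regular if and only if the $G$-invariant foliation $\sF:=\pi^{-1}\sG$ on $(\mathbb{C}^2,0)$ is. This reduction serves both directions.

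For the ``if'' direction, I would start from the asserted data: $G=\langle\zeta\rangle$ cyclic of order $m$ acting by $\zeta\cdot(y_1,y_2)=(\zeta^a y_1,\zeta^b y_2)$ with $(a,m)=(b,m)=1$, and $\sF=\sO_{\mathbb{C}^2}\partial_{y_1}$. Then every non-trivial element of $G$ has both eigenvalues different from $1$, so $G$ has no quasi-reflections and $\pi$ is quasi-\'etale; the line $\sO\partial_{y_1}$ is $G$-invariant, hence descends to a foliation $\sG$ on $X$ with $\pi^{-1}\sG=\sO\partial_{y_1}$. The latter is strongly, hence weakly, regular, so $\sG$ is weakly regular by Corollary \ref{cor:regular_quasi_etale}.

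For the ``only if'' direction, assume $\sF$ weakly regular. Since $\mathbb{C}^2$ is smooth, $\sF$ is strongly regular by Frobenius, so after shrinking $\sF=\sO_U\partial$ for a nowhere-vanishing $\partial$ on a $G$-stable neighbourhood $U$ of $0$; as the subsheaf $\sF$ is $G$-invariant, the tangent line $\ell:=\mathbb{C}\,\partial(0)\subset T_0\mathbb{C}^2$ is $G$-invariant. A finite group is reductive, so $\ell$ has a $G$-invariant complement and $G$ acts diagonally in suitable linear coordinates $(y_1,y_2)$ with $\ell=\{y_2=0\}$; let $\chi_1,\chi_2\colon G\to\mathbb{C}^\ast$ be the characters giving the action on the two axes. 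The no-quasi-reflection hypothesis forces both $\chi_i$ to be injective: a non-trivial $g$ with $\chi_i(g)=1$ for exactly one $i$ would be a quasi-reflection, while $\chi_1(g)=\chi_2(g)=1$ contradicts faithfulness, so $\chi_i(g)\neq 1$ for all $g\neq 1$. Hence $\chi_1$ embeds $G$ into $\mathbb{C}^\ast$, so $G$ is cyclic of some order $m$, and writing a generator as $\textup{diag}(\zeta^a,\zeta^b)$ for a primitive $m$-th root of unity $\zeta$ gives $(a,m)=(b,m)=1$ by injectivity.

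It remains to straighten $\sF$ while keeping the action diagonal, which I expect to be the main obstacle: plain Frobenius produces straightening coordinates, but not ones compatible with $G$. To fix this I would use the transversal $T:=\{y_1=0\}$, which is $G$-invariant and transverse to $\sF$ at $0$; by the flow-box theorem the retraction $\rho\colon U\to T$ along the leaves of $\sF$ is a holomorphic submersion with $\rho|_T=\textup{id}$, and since $G$ permutes the leaves of $\sF$ and preserves $T$ one checks $\rho(g\cdot u)=g\cdot\rho(u)$, i.e. $\rho$ is $G$-equivariant. Then $u_2:=\rho^\ast(y_2|_T)$ is a $G$-semi-invariant first integral of $\sF$ (of weight $\chi_2$) with $du_2|_0=dy_2|_0$, while $u_1:=y_1$ is semi-invariant of weight $\chi_1$ with $du_1|_0=dy_1|_0$. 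Thus $(u_1,u_2)$ are coordinates in which $G$ still acts as $\textup{diag}(\zeta^a,\zeta^b)$ and $\sF=\ker du_2=\sO\partial_{u_1}$, which is the required normal form. The heart of the argument is precisely that the $G$-equivariant leaf-space retraction upgrades an ordinary first integral to a semi-invariant one, so that the coordinate change linearizing $\sF$ can be chosen equivariant.
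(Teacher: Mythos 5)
Your proof is correct, but it follows a genuinely different route from the paper's, so a comparison is worthwhile. In the forward direction the paper does not start from the quotient presentation of the klt germ: it takes the \emph{index-one cover} of $K_\sG$ (using that $X$ is $\mathbb{Q}$-factorial by \cite[Proposition 4.11]{kollar_mori}), transfers weak regularity upstairs by Proposition \ref{prop:regular_quasi_etale}, invokes Corollary \ref{cor:regular_foliation_klt_surface} (hence Proposition \ref{prop:regular_foliation_klt_varieties}, which rests on Zariski's lemma and Bogomolov--McQuillan) to conclude that this cyclic cover is \emph{smooth}, and then cites \cite[Corollary I.2.2]{mcquillan08} for the equivariant normal form. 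You instead take an arbitrary smooth quotient presentation from \cite[Proposition 9.3]{greb_kebekus_kovacs_peternell10} (which already comes without quasi-reflections, so your Chevalley--Shephard--Todd step is harmless but redundant), and you recover the cyclic structure \emph{a posteriori}: the $\sG$-invariant tangent line diagonalizes $G$, and absence of quasi-reflections plus faithfulness forces both characters to be injective, giving cyclicity and the coprimality conditions $(a,m)=(b,m)=1$. You then prove the normal form by hand via the equivariant flow box, rather than citing McQuillan. What each approach buys: yours is more self-contained and bypasses Corollary \ref{cor:regular_foliation_klt_surface} entirely, and it makes transparent where the arithmetic conditions on $(a,b,m)$ come from; the paper's approach identifies the cover intrinsically (so $m$ is the Cartier index of $K_\sG$ at $x$) and reuses machinery already established, at the cost of an external citation. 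Your backward direction (descent of weak regularity along the quasi-\'etale map $\pi$ via Corollary \ref{cor:regular_quasi_etale}) is also shorter than the paper's, which constructs explicit $G$-invariant sections $\tau$ of $T_X\boxtimes\sM$ and $\alpha$ of $\Omega_X^{[1]}\boxtimes\sM^*$ with $\alpha(\tau)=1$; both are valid.

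One point of rigor you should tighten: the $G$-equivariance of the leaf-space retraction $\rho$ is not automatic from ``$G$ permutes leaves and preserves $T$'' --- it requires a $G$-stable neighborhood small enough that plaques meet $T$ exactly once and are preserved under the comparison of flow boxes. A cleaner way to get the same conclusion is to average: if $w$ is any flow-box first integral with $dw|_0=dy_2|_0$, then $\tilde w:=\frac{1}{|G|}\sum_{g\in G}\chi_2(g)^{-1}(w\circ g)$ is again a first integral of $\sF$ (each $w\circ g$ is, since $g$ preserves $\sF$), is $\chi_2$-semi-invariant, and satisfies $d\tilde w|_0=dy_2|_0$; then $(y_1,\tilde w)$ are the required equivariant coordinates. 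This is the same idea as your retraction, with the domain bookkeeping eliminated.
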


\begin{proof}
Suppose first that $\sG$ is weakly regular.
Recall that $X$ is $\mathbb{Q}$-factorial by \cite[Proposition 4.11]{kollar_mori}. Let $m$ be the smallest positive integer such that 
$mK_\sG$ is Cartier at $x$, and let $\pi\colon Y \to X$ be the associated local cyclic cover, which is a
quasi-\'etale cover of degree $m$ (see \cite[Definition 5.19]{kollar_mori}). By Proposition \ref{prop:regular_quasi_etale}, $\pi^{-1}\sG$ is a weakly regular foliation on $Y$ with $K_{\pi^{-1}\sG}$ Cartier. It follows from 
Corollary \ref{cor:regular_foliation_klt_surface} that $Y$ is smooth. 
The same argument used in the proof of \cite[Corollary I.2.2]{mcquillan08} then shows our claim.

Conversely, let $m$ be a positive integer, let $a$ and $b$ be non-negative integers such that $(a,m)=1$ and $(b,m)=1$, 
and let $G=<\zeta>$ be a cyclic group of order $m$ acting on $\mathbb{C}^2$ with coordinates $(y_1,y_2)$ by $\zeta\cdot(y_1,y_2)=(\zeta^a y_1,\zeta^b y_2)$. Set $X:=\mathbb{C}^2/G$, and denote by $\pi \colon \mathbb{C}^2 \to \mathbb{C}^2/G$ the natural morphism.
Let $\sL$ be the line bundle $\sO_{\mathbb{A}^2}$ equipped with the $G$-linearization given by the character 
$\zeta\mapsto\zeta^a$ of $G$. Note that there exists a reflexive rank $1$ sheaf $\sM$ on $X$ such that $\sL\cong \pi^{[*]}\sM$. 
Then $\partial_{y_1}$ (resp. $dy_1$)
is a global $G$-invariant section of $T_{\mathbb{C}^2}\otimes\sL$ (resp. $\Omega_{\mathbb{C}^2}^1\otimes\sL^*$), and thus yields 
a global section $\tau$ (resp. $\alpha$) of $T_X \boxtimes\sM$ (resp. $\Omega_X^{[1]}\boxtimes\sM^*$). Since $dy_1(\partial_{y_1})=1$, we must have $\alpha(\tau)=1$. This shows that the foliation induced by $\tau$ on $X$ is weakly regular, completing the proof of the proposition. 
\end{proof}

\section{Weakly regular foliations with algebraic leaves}\label{section:regular_foliations_algebraic_leaves}

It is well-known that an algebraically integrable regular foliation on a complex projective manifold is induced by a morphism onto a normal projective variety (see \cite[Proposition 2.5]{hwang_viehweg}). In the present section, we extend this result to weakly regular foliations with canonical singularities on mildly singular varieties.

\begin{thm}\label{thm:regular_foliation_morphism}
Let $X$ be a normal complex projective variety with $\mathbb{Q}$-factorial klt singularities, and let $\sG$ be a weakly regular algebraically integrable foliation on $X$. Suppose in addition that $\sG$ has canonical singularities. Then $\sG$ is induced by a surjective equidimensional morphism $\psi\colon X \to Y$ onto a normal projective variety $Y$. Moreover,
there exists an open subset $Y^\circ$ with complement of codimension at least two in $Y$ such that $\psi^{-1}(y)$ is irreducible for any $y\in Y^\circ$
\end{thm}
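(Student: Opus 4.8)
The plan is to realise $\sG$ through its family of leaves and then show that the resulting birational map down to $X$ is an isomorphism. Let $\psi\colon Z \to Y$ be the family of leaves and $\beta\colon Z \to X$ the natural birational morphism, as in \ref{family_leaves}, and set $r:=\textup{rank}\,\sG$, $q:=\dim Y$. Since $Y$ lies in the normalization of the Chow variety of $X$ and $Z$ is the normalization of the universal cycle, every fibre of $\psi$ is of pure dimension $r$, so $\psi$ is equidimensional. Because $\sG$ is canonical, Lemma \ref{lemma:canonical_foliation_versus_lc_pairs} gives $K_{\beta^{-1}\sG}\sim_\mathbb{Q}\beta^*K_\sG$ and shows that $\beta^{-1}\sG$ is again canonical; in the notation of \ref{family_leaves} this means $B=0$. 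Moreover $\beta^{-1}\sG$ is weakly regular by Lemma \ref{lemma:regular_family_leaves}, and it is the foliation induced by $\psi$; in particular a prime divisor $E$ on $Z$ is $\beta^{-1}\sG$-invariant if and only if $\psi(E)\neq Y$.

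The heart of the argument is to prove that $\beta$ is small, that is, contracts no divisor. Granting this, since $X$ is $\mathbb{Q}$-factorial a small projective birational morphism onto $X$ must be an isomorphism: for an ample divisor $A$ on $Z$ one has $A=\beta^*\beta_*A$, because $\beta_*A$ is $\mathbb{Q}$-Cartier and $A-\beta^*\beta_*A$ is supported on the exceptional locus, which now has no divisorial component; hence $A\cdot C=\beta_*A\cdot\beta_*C=0$ for every $\beta$-contracted curve $C$, contradicting ampleness unless there are no such curves, so $\beta$ is finite birational and therefore an isomorphism. Then $\psi$ descends to the desired surjective equidimensional morphism $X\cong Z\to Y$ inducing $\sG$.

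To rule out exceptional divisors, suppose $E\subseteq Z$ were a $\beta$-exceptional prime divisor. By Proposition \ref{proposition:dicritical_versus_canonical} together with $B=0$ we cannot have $\psi(E)=Y$, so $W:=\psi(E)\subsetneq Y$ and $E$ is invariant. Equidimensionality of $\psi$ forces $\dim W=q-1$, so that $E$ is a component of the divisor $\psi^{-1}(W)$, and weak regularity of $\beta^{-1}\sG$ shows that for general $w\in W$ the divisor $E$ contains a whole $r$-dimensional leaf $P_w$ of $\psi^{-1}(w)$. As $w$ runs over $W$, the $P_w$ sweep out $E$ and map under $\beta$ to a $(q-1)$-dimensional family of pairwise distinct $r$-dimensional leaf closures in $X$, distinct because $Y$ maps finitely to the Chow variety, so the associated cycles differ. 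If $\beta(E)$ had codimension at least two, the incidence correspondence (of dimension $(q-1)+r=n-1$) would force a positive-dimensional family of these leaf closures through a general point $x\in\beta(E)$; slicing by a curve produces an $(r+1)$-dimensional $\sG$-invariant subvariety all of whose $r$-dimensional leaf closures pass through $x$. At a smooth point of $X$ where $\sG$ is regular this contradicts the local product structure $\mathbb{D}^r\times M$ of a regular foliation, in which distinct plaques have no common limit point; hence $x\in\Sing X$, i.e. $\beta(E)\subseteq\Sing X$.

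The main obstacle is exactly this singular case, which I would dispose of by working in codimension two. Since klt spaces have quotient singularities in codimension two (as used in Proposition \ref{prop:regular_foliation_klt_varieties}), at a general point of $\beta(E)$ of codimension two one passes to a local smoothing quasi-\'etale cover, on which $\beta^{-1}\sG$ pulls back to a weakly regular, hence regular, foliation by Corollary \ref{cor:regular_quasi_etale}; the product-structure argument above then applies upstairs to the pulled-back leaf closures through the preimage of $x$, and the higher-codimension case reduces to this by cutting with general hyperplane sections exactly as in the proof of Proposition \ref{proposition:dicritical_versus_canonical}. This contradiction shows $\beta$ is small, hence an isomorphism, so $\sG$ is induced by the equidimensional $\psi\colon X\to Y$. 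For the final assertion, the fibres of $\psi$ are connected ($Y$ being normal and $\psi$ the family of leaves), and over a general point of a divisor $W\subseteq Y$ a reducible fibre would have two leaf-closure components meeting only along $\Sing X$, since distinct leaves are disjoint on $X_{\textup{reg}}$; the same codimension-two quotient analysis rules this out over codimension-one points of $Y$, so the locus of $y$ with $\psi^{-1}(y)$ reducible has codimension at least two, yielding the open set $Y^\circ$.
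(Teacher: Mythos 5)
Your proof follows the paper's skeleton (family of leaves, $B=0$ from canonicity, Proposition \ref{proposition:dicritical_versus_canonical} to exclude divisors dominating $Y$, $\mathbb{Q}$-factoriality to turn ``no exceptional divisor'' into ``$\beta$ is an isomorphism''), but the step where you exclude a vertical exceptional divisor $E$ with $W:=\psi(E)\subsetneq Y$ has a genuine gap. You claim the images $\beta(P_w)$, $w\in W$, are pairwise distinct ``because $Y$ maps finitely to the Chow variety''. However, the point $w\in\textup{Chow}(X)$ is the cycle of the \emph{whole} fiber $\psi^{-1}(w)$, not of the single component $P_w\subseteq E$: if fibers over $W$ are reducible, distinct $w,w'$ can perfectly well satisfy $\beta(P_w)=\beta(P_{w'})$ while their cycles differ through components lying outside $E$ (for instance in a non-exceptional component of $\psi^{-1}(W)$). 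Making this step work requires precisely the irreducibility of fibers of $\psi$ over general points of $W$ --- the content of the paper's Lemma \ref{lemma:irreducible_fibers}, a substantial result whose proof needs a reduced-fiber base change, canonicity of the resulting total space, and the codimension-two quotient structure \emph{on $Z_1$}, not on $X$. Your own irreducibility sketch appears only at the end, is formulated for $\psi\colon X\to Y$ after $\beta$ is already known to be an isomorphism, and so cannot be invoked here without circularity; run on $Z$ it would require controlling the singularities of $Z$, which you never do. Note also that the paper's mechanism is the exact opposite of yours: via the projectability Claim it proves that infinitely many $\beta(P_w)$ \emph{coincide}, and the contradiction comes from Chow-finiteness \emph{combined with} irreducibility; so distinctness is not an innocuous observation but the crux of the matter.

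The second gap is in your treatment of the singular locus. The regular-point argument is sound (two distinct $r$-dimensional invariant subvarieties through a point where $X$ is smooth, hence where $\sG$ is regular by weak regularity and Frobenius, must both equal the Zariski closure of the plaque), and so is its codimension-two quotient-cover version. But the reduction of the case $\textup{codim}_X\,\beta(E)\ge 3$ ``by cutting with general hyperplane sections exactly as in Proposition \ref{proposition:dicritical_versus_canonical}'' does not work: intersecting with general hyperplanes preserves both the codimension of $\beta(E)$ and, by Bertini, the singularity type of $X$ along it, so you never reach the codimension-two/quotient regime. What the cutting in Proposition \ref{proposition:dicritical_versus_canonical} reduces is the \emph{rank} of the foliation, and its rank-one base case is a different, valuation-theoretic argument (Lemma \ref{lemma:prime_invariant}) tailored to the dicritical alternative, not to yours; moreover weak regularity and canonicity of the restricted foliation $\sG_H$ are not established, so even the inductive set-up is unavailable. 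As written, your argument only excludes exceptional divisors whose image meets the smooth locus or the quotient-singular (codimension-two) locus of $X$, and the remaining case is exactly where the paper's projectability-plus-Chow argument, which involves no case analysis on the singularities of $X$ along $\beta(E)$, is doing the real work.
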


Before we give the proof of Theorem \ref{thm:regular_foliation_morphism}, we need the following auxiliary lemma.

\begin{lemma}\label{lemma:irreducible_fibers}
Let $X$ be a normal complex projective variety with klt singularities, and let $\sG$ be a weakly regular algebraically integrable foliation on $X$.
Let $\psi\colon Z \to Y$ be the family of leaves, and let $\beta\colon Z \to X$ be the natural morphism (see \ref{family_leaves}). Suppose in addition that $\sG$ has canonical singularities. Then there exists an open subset $Y^\circ$ with complement of codimension at least two in $Y$ such that $\psi^{-1}(y)$ is irreducible for any $y\in Y^\circ$. 
\end{lemma}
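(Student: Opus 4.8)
The plan is to work throughout on the family of leaves $\psi\colon Z\to Y$ with its birational morphism $\beta\colon Z\to X$, set $\sG':=\beta^{-1}\sG$, and to produce the open set $Y^\circ$ as $Y\setminus\psi(Z_{\textup{sing}})$, proving along the way that $\psi(Z_{\textup{sing}})$ has codimension at least two in $Y$. First I would record three reductions. By Lemma \ref{lemma:regular_family_leaves} the pulled-back foliation $\sG'$ is weakly regular on the normal variety $Z$. Since a general fiber of $\psi$ is the normalization of an irreducible leaf closure it is irreducible, hence connected; as $\psi$ is proper and $Y$ is normal, Stein factorization then forces $\psi_*\sO_Z=\sO_Y$, so \emph{every} fiber of $\psi$ is connected. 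Finally, being the normalization of the universal family of $r$-cycles (with $r=\textup{rank}\,\sG$), the morphism $\psi$ is equidimensional of relative dimension $r$.

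The mechanism I want to exploit is that weak regularity of $\sG'$ means precisely that the inclusion $\sG'\hookrightarrow T_Z$ has fibrewise rank $r$ at \emph{every} point of $Z$ — equivalently, the defining $r$-vector is nowhere zero, i.e. the twisted Pfaff field of Definition \ref{defn:regular} is surjective. Over $Y^\circ:=Y\setminus\psi(Z_{\textup{sing}})$ every fibre is contained in $Z_{\textup{reg}}$, where $\sG'$ is therefore a genuine rank-$r$ subbundle. By the Frobenius theorem each point of such a fibre has an analytic neighbourhood $U\cong\mathbb{D}^r\times M$ on which $\sG'$ is the relative foliation of $\textup{pr}_M$ and $\psi|_U=\phi\circ\textup{pr}_M$ for some holomorphic $\phi\colon M\to Y$. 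Equidimensionality forces $\phi^{-1}(y)$ to be zero-dimensional, so $\psi^{-1}(y)\cap U=\mathbb{D}^r\times\phi^{-1}(y)$ is set-theoretically a disjoint union of smooth $r$-dimensional slices. Hence $\psi^{-1}(y)_{\textup{red}}$ is smooth; being also connected, it is irreducible. This gives irreducibility of $\psi^{-1}(y)$ for every $y\in Y^\circ$.

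It remains to prove that $\psi(Z_{\textup{sing}})$ has codimension at least two, which I regard as the main obstacle. By Seidenberg's theorem (\cite[Theorem 5]{seidenberg67}) the set $Z_{\textup{sing}}$ is invariant under $\sG'$. Suppose, for contradiction, that some irreducible component $S$ of $Z_{\textup{sing}}$ dominated a prime divisor $P_0:=\psi(S)\subseteq Y$. At a general (hence smooth) point $s\in S$, invariance gives $\sG'(s)\subseteq T_sS$, weak regularity gives $\dim\sG'(s)=r$, and the fact that $\sG'$ is $\psi$-vertical (the composite $\sG'\hookrightarrow T_Z\to\psi^*T_Y$ vanishes) gives $\sG'(s)\subseteq\ker d\psi_s$; therefore $\ker d(\psi|_S)_s\supseteq\sG'(s)$ has dimension at least $r$. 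Since $\psi|_S\colon S\to P_0$ is dominant, generic smoothness yields $\dim S\ge r+\dim P_0=r+(\dim Y-1)=\dim Z-1$, contradicting $\dim Z_{\textup{sing}}\le\dim Z-2$. Thus $\psi(Z_{\textup{sing}})$ contains no prime divisor, so the complement of $Y^\circ$ has codimension at least two, completing the argument.

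The delicate ingredient, and the only place where real care is needed, is the fibrewise interpretation of weak regularity at the \emph{singular} points of $Z$ lying on $S$: it is exactly the non-degeneracy of the twisted Pfaff field at the generic point of $S$ that forces $\dim\sG'(s)=r$ there, and this is what makes the invariance–dimension count above go through even though $S\subseteq Z_{\textup{sing}}$. I would therefore spell out this characterization of weak regularity at singular points first, and then the two geometric steps (smoothness of fibres over $Y^\circ$ and the codimension bound) follow as above.
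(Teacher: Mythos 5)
Your strategy hinges on two claims that are both false on singular spaces: (i) that weak regularity of $\sG':=\beta^{-1}\sG$ forces the evaluation map $\sG'\otimes k(z)\to T_zZ$ to have rank $r$ at \emph{every} point of $Z$, and (ii) consequently that $\psi(Z_{\textup{sing}})$ has codimension at least two in $Y$, so that $Y^\circ:=Y\setminus\psi(Z_{\textup{sing}})$ works. Weak regularity is surjectivity of the twisted Pfaff field $\Omega_Z^{[r]}\boxtimes\sO_Z(-K_{\sG'})\to\sO_Z$ (Definition \ref{defn:regular}); at points of $Z_{\textup{sing}}$ this is \emph{not} a subbundle condition and carries no fibrewise rank information. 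Concretely, let $F$ be a projective variety with canonical singularities and $F_{\textup{sing}}\neq\emptyset$, let $Y_0$ be smooth projective, and let $X=Z=Y_0\times F$ with $\sG=\sG'$ the foliation induced by the projection $\psi\colon Y_0\times F\to Y_0$. Then $\sG$ is weakly regular (Lemma \ref{lemma:direct_summand_regular}), algebraically integrable with canonical singularities (Example \ref{example:product}), and $\psi$ is its family of leaves with $\beta=\textup{id}$; yet $Z_{\textup{sing}}=Y_0\times F_{\textup{sing}}$ surjects onto $Y$, so your $Y^\circ$ is empty. Moreover, at a general point $(y,z)$ of a component of $Y_0\times F_{\textup{sing}}$ the sheaf $\sG'$ is not even locally free, and since vertical derivations restrict to derivations of $F$, Seidenberg's theorem forces their values at $z$ to lie in the tangent space of $F_{\textup{sing}}$, so the evaluation rank is at most $\dim F_{\textup{sing}}\le r-2$. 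Thus your invariance--dimension count cannot produce a contradiction, and indeed there is none to produce: the lemma holds here with every fiber isomorphic to $F$. Your step over $Y^\circ$ (fibers contained in $Z_{\textup{reg}}$ are irreducible) is correct but it avoids exactly the fibers that carry the content of the lemma.

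The genuine difficulty is to handle fibers meeting $Z_{\textup{sing}}$, and this is what the paper's proof does. After a finite base change $Y_1\to Y$ (\cite[Lemma 4.2]{druel15}) making fibers reduced over codimension one points, the hypothesis that $\sG$ is canonical is used to show that $Z_1$ has canonical, hence klt, singularities over an open subset of $Y_1$ with small complement; then klt spaces have \emph{quotient} singularities in codimension two (\cite[Proposition 9.3]{greb_kebekus_kovacs_peternell10}), and by Lemma \ref{lemma:regular_family_leaves} together with Proposition \ref{prop:regular_quasi_etale} the pulled-back foliation becomes honestly regular on a local smooth quasi-\'etale chart $W\to U$ around any point in question. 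If a (connected) fiber over a general point of a divisor in $Y_1$ had two irreducible components, they would meet at some point $z$, and their preimages in $W$ would be intersecting unions of leaves of a regular foliation, which is impossible. So the correct substitute for your codimension bound is not that fibers avoid $Z_{\textup{sing}}$, but that near the relevant (possibly singular) points the foliation can be made regular after a local quasi-\'etale cover; this is precisely where the klt hypothesis on $X$ and the canonicity of $\sG$ enter, neither of which plays any role in your argument.
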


\begin{proof}
By \cite[Lemma 4.2]{druel15}, there exists a finite surjective morphism
$g\colon Y_1 \to Y$ with $Y_1$ normal and connected such that the following holds. If $Z_1$ denotes the normalization of $Y_1 \times_Y Z$, then the induced morphism $\psi_1\colon Z_1 \to Y_1$ has reduced fibers over codimension one points in $Y_1$. 
We obtain a commutative diagram

\begin{center}
\begin{tikzcd}[row sep=large]
Z_1  \ar[d, "{\psi_1}"']\ar[r, "f"]  & Z\ar[d, "{\psi}"]\ar[r, "{\beta}"]  & X\\
Y_1 \ar[r, "g"']& Y. &
\end{tikzcd}
\end{center}

\noindent Let $Y^\circ \subseteq Y_{\textup{reg}}$ be an open subset with complement of codimension at least two in $Y$ such that 
$Y_1^\circ:=g^{-1}(Y^\circ)$ is smooth and $R\big({\psi_1}_{|Z_1^\circ}\big)=0$, where 
$Z_1^\circ:=\psi_1^{-1}(Y_1^\circ)$ and 
$R\big({\psi_1}_{|Z_1^\circ}\big)$ denotes the ramification divisor of the restriction ${\psi_1}_{|Z_1^\circ}$ of $\psi_1$ to $Z_1^\circ$.
Set $Z^\circ:=\psi^{-1}(Y^\circ)$, and denote by
$R\big({\psi}_{|Z^\circ}\big)$ the ramification divisor of the restriction ${\psi}_{|Z^\circ}$ of $\psi$ to $Z^\circ$. By \cite[Lemma 5.4]{druel15}, the pair $\big(Z^\circ,-R(\psi_{|Z^\circ})\big)$
has canonical singularities. On the other hand, we have 
$$K_{Z_1^\circ/Y_1^\circ}=K_{Z_1^\circ/Y_1^\circ}-R\big({\psi_1}_{|Z_1^\circ}\big)\sim_\mathbb{Z} 
(f_{|Z_1^\circ})^*\big(K_{Z^\circ/Y^\circ} - R\big({\psi}_{|Z^\circ}\big)\big)$$
by Lemma \ref{lemma:pull_back_fol_and_finite_cover}.
Applying \cite[Proposition 3.16]{kollar97}, we see that
$Z_1^\circ$ has canonical singularities. 

To prove the statement, it suffices to show that $\psi^{-1}(y)$ is irreducible for any $y\in Y_1$ away from a codimension two closed subset.
We argue by contradiction and assume that there exists a prime divisor $D \subset Y_1$ such that
$\psi_1^{-1}(y)$ is reducible for a general point $y\in D$. 

Let $S \subseteq \psi_1^{-1}(D)$ be a subvariety of codimension two in $Z_1$ such that for a general point $z \in S$ there is at least two irreducible components of $\psi_1^{-1}\big(\psi_1(z)\big)$ passing through 
$z$. 
Let $z \in S$ be a general point.
Recall from \cite[Proposition 9.3]{greb_kebekus_kovacs_peternell10} that $z$
has an analytic neighborhood $U$ that is biholomorphic to an analytic neighborhood of the origin in a variety of the form 
$\mathbb{C}^{\dim X}/G$, where G is a finite subgroup of $\textup{GL}(\dim X,\mathbb{C})$
that does not contain any quasi-reflections. In particular, if $W$ denotes the inverse image of $U$ in the affine space 
$\mathbb{C}^{\dim X}$,
then the quotient map $\pi\colon W\to W/G\cong U$ is \'etale outside of the singular set. 

From Lemma \ref{lemma:regular_family_leaves} together with Proposition \ref{prop:regular_quasi_etale}, we see that
$\sG$ induces a regular foliation on $W$.
Let $F_1$ and $F_2$ be irreducible components of 
$\psi_1^{-1}\big(\psi_1(z)\big)$ passing through $z$ with $F_1\neq F_2$. 
Then $\pi^{-1}(F_1 \cap U)$ and $\pi^{-1}(F_2\cap U)$ have no common irreducible components but  
$\pi^{-1}(F_1\cap U)\cap \pi^{-1}(F_2\cap U)\neq\emptyset$. On the other hand, 
by general choice of $z$, $F_1$ and $F_2$ are not contained in the singular locus of $f^{-1}\beta^{-1}\sG$,
and hence both $\pi^{-1}(F_1\cap U)$ and $\pi^{-1}(F_2\cap U)$ are disjoint union of leaves, yielding a contradiction. This finishes the proof of the lemma.
\end{proof}

\begin{proof}[Proof of Theorem \ref{thm:regular_foliation_morphism}]
Set $r:=\textup{rank}\,\sG$. Let $\psi\colon Z \to Y$ be the family of leaves, and let $\beta\colon Z \to X$ be the natural morphism (see \ref{family_leaves}). Set also $\sG_Z:=\beta^{-1}\sG$.
To prove Theorem \ref{thm:regular_foliation_morphism}, we have to show that the $\beta$-exceptional set 
$\textup{Exc}\,\beta$ is empty. We argue by contradiction and assume that $\textup{Exc}\,\beta\neq \emptyset$.
Let $E$ be an irreducible component of $\textup{Exc}\,\beta$. Note that
$E$ has codimension one since $X$ is $\mathbb{Q}$-factorial by assumption. 
By Lemma \ref{lemma:canonical_foliation_versus_lc_pairs}, we have 
$K_{\sG_Z} \sim_\mathbb{Q}\beta^*K_\sG$. 
Applying Proposition \ref{proposition:dicritical_versus_canonical}, we see that $\psi(E)\subsetneq Y$.
On the other hand, there exists an open subset $Y^\circ$ with complement of codimension at least two in $Y$ such that $\psi^{-1}(y)$ is irreducible for any $y\in Y^\circ$ by Lemma \ref{lemma:irreducible_fibers}. 
This implies that $E=\psi^{-1}\big(\psi(E)\big)$. In particular, $E$ is invariant under $\sG_Z$.

Let $m$ be a positive integer and let $X ^\circ \subseteq X$ be a dense open subset such that 
$\sO_{X^\circ}\big(m{K_\sG}_{|X^\circ}\big)\cong \sO_{X^\circ}$. Suppose in addition that 
$\beta(E)\cap X^\circ\neq \emptyset$.
Let $f^\circ \colon X_1^\circ \to X^\circ$ be the associated cyclic cover, which is quasi-\'etale (see \cite[Definition 2.52]{kollar_mori}), and let $Z_1^\circ$ be the normalization of the product $Z^\circ \times_{X^\circ} X_1^\circ$, where 
$Z^\circ:=\beta^{-1}(X^\circ)$. Let also $\beta_1^\circ\colon Z_1^\circ \to X_1^\circ$ 
and $g^\circ\colon Z_1^\circ \to Z^\circ$ denote the natural morphisms. Recall from the proof of Lemma \ref{lemma:regular_family_leaves} 
that the foliations $\sG_{X_1^\circ}$ and $\sG_{Z_1^\circ}$ induced by $\sG$ on $X_1^\circ$ and $Z_1^\circ$ respectively are weakly regular foliations, and that 
$K_{\sG_{Z_1^\circ}} \sim_\mathbb{Z} (\beta_1^\circ)^*K_{\sG_{X_1^\circ}}$.
Let $E_1^\circ$ be a prime divisor on $Z_1^\circ$ such that $g^\circ(E_1^\circ)=E \cap Z^\circ=:E^\circ$.
Notice that $E_1^\circ$ is invariant under $\sG_{Z_1^\circ}$ since $E$ is $\sG_Z$-invariant. Moreover, $E_1^\circ$ is obviously $\beta_1^\circ$-exceptional.
Let $F_1^\circ$ denotes the normalization of $E_1^\circ$, and let $\alpha_1^\circ\colon F_1^\circ \to B_1^\circ$ be 
the Stein factorization of the map $F_1^\circ \to X_1^\circ$.
Shrinking $X^\circ$, if necessary, we may assume wihtout loss of generality that $B_1^\circ$ is smooth.
We obtain a commutative diagram 

\begin{center}
\begin{tikzcd}[row sep=small, column sep=normal]
F_1^\circ \ar[rr, "\alpha_1^\circ"]\ar[dd, "n_1^\circ"'] && B_1^\circ \ar[ddr, bend left=25, "j_1^\circ"] & \\
&&&\\
E_1^\circ \ar[r, hookrightarrow, "i_1^\circ"]\ar[dd] & Z_1^\circ \ar[rr, "\beta_1^\circ"]\ar[dd, "g^\circ"'] && X_1^\circ \ar[dd, "f^\circ"]\\
&&&\\
E^\circ \ar[r, hookrightarrow] \ar[d, hookrightarrow] & Z^\circ \ar[rr, "\beta^\circ"] \ar[d, hookrightarrow] && X^\circ \ar[d, hookrightarrow]\\
E \ar[r, hookrightarrow] & Z \ar[rr, "\beta"]\ar[dd, "\psi"'] && X\\
&&&\\
& Y. &&
\end{tikzcd}
\end{center}

\begin{claim}\label{claim:projectable}
The foliation on $F_1^\circ$ induced by $\sG_{Z_1^\circ}$ is projectable under the map $\alpha_1^\circ\colon F_1^\circ \to B_1^\circ$. 
\end{claim}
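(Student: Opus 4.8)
The plan is to produce the distribution $\sH$ on $B_1^\circ$ explicitly, as the foliation cut out by $\sG_{X_1^\circ}$ on the image $\beta_1^\circ(E_1^\circ)$ and pulled back along the finite map $j_1^\circ\colon B_1^\circ\to X_1^\circ$, and then to check that the foliation on $F_1^\circ$ induced by $\sG_{Z_1^\circ}$, which I will denote $\sG_{F_1^\circ}$, agrees with the saturated pullback $\big((\alpha_1^\circ)^*\sH\big)^{\textup{sat}}$ in the sense of \ref{projectable_foliations}.

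First I would record the local structure. Since $\sG_{Z_1^\circ}$ is weakly regular (recall from the proof of Lemma \ref{lemma:irreducible_fibers} that it even induces a regular foliation on a smooth local cover) and $E_1^\circ$ is invariant under $\sG_{Z_1^\circ}$, after shrinking $X^\circ$ I may assume that $Z_1^\circ$ and $E_1^\circ$ are smooth and that $\sG_{Z_1^\circ}$ is regular in a neighborhood of $E_1^\circ$. By Frobenius' theorem $\sG_{Z_1^\circ}$ then restricts to a regular rank $r$ foliation on $E_1^\circ$, and pulling back along the generically isomorphic normalization $n_1^\circ$ produces the rank $r$ foliation $\sG_{F_1^\circ}$.

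Next I would establish horizontality and construct $\sH$. The leaves of $\sG_{F_1^\circ}$ are leaves of $\sG_{Z_1^\circ}$ contained in $E_1^\circ$, hence contained in fibers of the family of leaves; by \ref{family_leaves} together with the finiteness of the covers $f^\circ$ and $g^\circ$, the composite $\beta_1^\circ\circ n_1^\circ = j_1^\circ\circ\alpha_1^\circ$ is finite on these leaves. Since $j_1^\circ$ is finite, $\alpha_1^\circ$ is therefore finite on the leaves of $\sG_{F_1^\circ}$, so that $d\alpha_1^\circ$ is injective on $\sG_{F_1^\circ}$ at a general point of $F_1^\circ$. Because $E=\psi^{-1}\big(\psi(E)\big)$ is a union of leaf closures, its image $\beta(E)$, and hence $\beta_1^\circ(E_1^\circ)$, is a union of leaf closures of $\sG_{X_1^\circ}$; in particular it is invariant under $\sG_{X_1^\circ}$ and has dimension at least $r$. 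Thus $\sG_{X_1^\circ}$ induces a rank $r$ foliation on the normalization of $\beta_1^\circ(E_1^\circ)$, and I define $\sH$ to be its pullback to $B_1^\circ$ along $j_1^\circ$.

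Finally, since $j_1^\circ\circ\alpha_1^\circ$ maps the leaves of $\sG_{F_1^\circ}$ onto the leaves of $\sG_{X_1^\circ}$ lying in $\beta_1^\circ(E_1^\circ)$, and these are by construction the images of the leaves of $\sH$, the map induced by $d\alpha_1^\circ$ identifies $\sG_{F_1^\circ}$ with $\big((\alpha_1^\circ)^*\sH\big)^{\textup{sat}}$: both are reflexive of rank $r$, and the map is injective by horizontality, hence an isomorphism, which is exactly projectability. I expect the genuine obstacle to be this last identification, namely showing that the leaf directions of $\sG_{F_1^\circ}$ are constant along the fibers of $\alpha_1^\circ$, so that $d\alpha_1^\circ(\sG_{F_1^\circ})$ descends to an honest saturated distribution on $B_1^\circ$ rather than a merely fibrewise one. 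I plan to handle it by combining the fact that the leaves map to the single well-defined foliation $\sG_{X_1^\circ}$ downstairs with the projectable-foliation formalism of \cite[Section 2.7]{druel_bbcd2}, which upgrades the fibrewise statement to the required isomorphism of sheaves.
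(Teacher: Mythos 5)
Your middle steps contain one sound observation (the map $\beta_1^\circ\circ n_1^\circ$ is finite on the leaves lying in $E_1^\circ$, because $E=\psi^{-1}\big(\psi(E)\big)$ and $\beta$ is finite on the fibres of the family of leaves), but the proposal has two genuine problems. First, the opening reduction is not legitimate: shrinking $X^\circ$ only deletes preimages of closed subsets of $X_1^\circ$, whereas the loci you need to avoid --- $\textup{Sing}\,Z_1^\circ\cap E_1^\circ$, the non-smooth locus of $E_1^\circ$, the singular locus of $\sG_{Z_1^\circ}$ --- are closed subsets of $E_1^\circ$ of codimension $\ge 1$ which, because the fibres of $E_1^\circ\to\beta_1^\circ(E_1^\circ)$ are positive-dimensional, may dominate $\beta_1^\circ(E_1^\circ)$. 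Removing their images from $X^\circ$ can then delete $\beta(E)\cap X^\circ$ altogether, destroying the setup. (This is why the paper's own shrinking only ever concerns $B_1^\circ$: that space is \emph{finite} over $X_1^\circ$, so its proper closed subsets do map to proper closed subsets of $\beta_1^\circ(E_1^\circ)$.) What survives of your Step 1 is only that the restricted foliation exists and is regular on some dense open subset of $F_1^\circ$ whose complement may contain divisors.

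Second --- and this is the fatal point, which you yourself flag as ``the genuine obstacle'' --- your mechanism for the final identification does not work. The principle ``injective morphism of reflexive sheaves of the same rank, hence an isomorphism'' is false (multiplication by a non-unit on $\sO$ is a counterexample): the cokernel of a generically bijective map can be supported on a divisor, and because of the first problem your identification of $\sG_{F_1^\circ}$ with $\big((\alpha_1^\circ)^*\sH\big)^{\textup{sat}}$ is only available off a possibly divisorial subset of $F_1^\circ$, so extending it across that locus is exactly what has to be proved. Moreover \cite[Section 2.7]{druel_bbcd2}, which you invoke to upgrade the fibrewise statement, is nothing but the definition of projectability recalled in \ref{projectable_foliations}; it contains no such upgrading lemma. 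The paper settles precisely this point without ever arguing with leaves: the surjective Pfaff field $\eta_{Z_1^\circ}$ (weak regularity of $\sG_{Z_1^\circ}$) restricts to the invariant divisor $E_1^\circ$ by \cite[Lemma 2.7]{fano_fols}, extends to the normalization $F_1^\circ$ by \cite[Proposition 4.5]{adk08}, and descends to a morphism $\eta_{B_1^\circ}\colon\Omega^r_{B_1^\circ}\to (j_1^\circ)^*\sO_{X_1^\circ}\big(K_{\sG_{X_1^\circ}}\big)$ via the projection formula, using $(\alpha_1^\circ)_*\sO_{F_1^\circ}=\sO_{B_1^\circ}$ and the crepancy $K_{\sG_{Z_1^\circ}}\sim_\mathbb{Z}(\beta_1^\circ)^*K_{\sG_{X_1^\circ}}$, which makes the target line bundle a pull-back from $B_1^\circ$; one then shows $\eta_{B_1^\circ}\neq 0$ because $\eta_{B_1^\circ}\circ d_{\textup{refl}}j_1^\circ=(j_1^\circ)^*\eta_{X_1^\circ}$ and $\eta_{X_1^\circ}$ is surjective, $\sG_{X_1^\circ}$ being weakly regular. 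Note that weak regularity of the foliation \emph{downstairs} on $X_1^\circ$, the decisive input here, never enters your argument; without an ingredient of this kind you have no control on the divisorial locus where your descended distribution could degenerate.
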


\begin{proof}[Proof of Claim \ref{claim:projectable}]
Let $\eta_{Z_1^\circ}\colon \Omega_{Z_1^\circ}^{[r]} \twoheadrightarrow \sO_{Z_1^\circ}\big(K_{\sG_{Z_1^\circ}}\big)$ the Pfaff field associated to 
$\sG_{Z_1^\circ}$. Since $E_1^\circ$ is invariant under $\sG_{Z_1^\circ}$, the composed map of sheaves
${\Omega_{Z_1^\circ}^{r}}_{|E_1^\circ} \to {\Omega_{Z_1^\circ}^{[r]}}_{|E_1^\circ} \to \sO_{Z_1^\circ}\big(K_{\sG_{Z_1^\circ}}\big)_{|E_1^\circ}$
factors through the natural morphism
${\Omega_{Z_1^\circ}^{r}}_{|E_1^\circ} \twoheadrightarrow \Omega_{E_1^\circ}^{r}$
and gives a map
$\Omega_{E_1^\circ}^{r} \to \sO_{Z_1^\circ}\big(K_{\sG_{Z_1^\circ}}\big)_{|E_1^\circ}$
(see \cite[Lemma 2.7]{fano_fols}).
By \cite[Proposition 4.5]{adk08},
the latter extends to a morphism
$$\eta_{F_1^\circ}\colon\Omega_{F_1^\circ}^{r} \to (n_1^\circ)^*\big(\sO_{Z_1^\circ}\big(K_{\sG_{Z_1^\circ}}\big)_{|E_1^\circ}\big).$$
By construction, $\eta_{F_1^\circ}$ is the Pfaff field associated to the foliation induced by $\sG_{Z_1^\circ}$ on $F_1^\circ$ on a dense open set.

Notice that 
$$(n_1^\circ)^*\big(\sO_{Z_1^\circ}\big(K_{\sG_{Z_1^\circ}}\big)_{|E_1^\circ}\big)
\cong (\alpha_1^\circ)^* \big((j_1^\circ)^*\sO_{X_1^\circ}\big(K_{\sG_{X_1^\circ}}\big)\big)$$
since $\sO_{Z_1^\circ}\big(K_{\sG_{Z_1^\circ}}\big) \cong (\beta_1^\circ)^*\sO_{X_1^\circ}\big(K_{\sG_{X_1^\circ}}\big)$ par construction.
Therefore, there exists a morphism of sheaves
$$\eta_{B_1^\circ}\colon \Omega_{B_1^\circ}^r \to (j_1^\circ)^*\sO_{X_1^\circ}\big(K_{\sG_{X_1^\circ}}\big)$$
whose pull-back under $\alpha_1^\circ$ is the composition

\begin{center}
\begin{tikzcd}
(\alpha_1^\circ)^*\Omega_{B_1^\circ}^r \ar[rr, "d\alpha_1^\circ"] && 
\Omega_{F_1^\circ}^{r} \ar[r, "\eta_{F_1^\circ}"] & (n_1^\circ)^*\big(\sO_{Z_1^\circ}\big(K_{\sG_{Z_1^\circ}}\big)_{|E_1^\circ}\big)
\cong (\alpha_1^\circ)^* \big((j_1^\circ)^*\sO_{X_1^\circ}\big(K_{\sG_{X_1^\circ}}\big)\big).
\end{tikzcd}
\end{center}

\medskip

Now, to show that the foliation on $F_1^\circ$ induced by $\sG_{Z_1^\circ}$ is projectable under the map $\alpha_1^\circ\colon F_1^\circ \to B_1^\circ$, it suffices to prove that $\eta_{B_1^\circ}$ is non-zero. 

\medskip

Let $\eta_{X_1^\circ}\colon \Omega_{X_1^\circ}^{[r]} \twoheadrightarrow \sO_{X_1^\circ}\big(K_{\sG_{X_1^\circ}}\big)$ the Pfaff field associated to $\sG_{X_1^\circ}$. Let $E_2^\circ \subseteq E_1^\circ$ denotes a smooth dense open set contained
in the smooth locus of $Z_1^\circ$, and let $F_2^\circ \subseteq F_1^\circ$ be its inverse image in $F_1^\circ$.
Notice that $E_2^\circ \cong F_2^\circ$. Set $i_2^\circ:= {i_1^\circ}_{|E_2^\circ} \colon E_2^\circ \into Z_1^\circ$ and 
$\alpha_2^\circ:= {\alpha_1^\circ}_{|F_2^\circ} \colon F_2^\circ \to B_1^\circ$.
By \cite[Proposition 6.1]{kebekus_pull_back} (see also subsection \ref{subsection:pull-back_morphims}),
the composition

\begin{center}
\begin{tikzcd}
\big((\beta_1^\circ)^*\Omega_{X_1^\circ}^{[r]}\big)_{|E_2^\circ} \ar[rr, "{d_{\textup{refl}}\beta_1^\circ}_{|E_2^\circ}"] && {\Omega_{Z_1^\circ}^{[r]}}_{|E_2^\circ} \cong {\Omega_{Z_1^\circ}^{r}}_{|E_2^\circ}
\ar[rr, twoheadrightarrow, "d i_2^\circ"] && \Omega_{E_2^\circ}^{r}
\end{tikzcd}
\end{center}

\noindent agrees with 

\begin{center}
\begin{tikzcd}
\big((\beta_1^\circ)^*\Omega_{X_1^\circ}^{[r]}\big)_{|E_2^\circ} \cong \big((\alpha_1^\circ)^*\big((j_1^\circ)^*\Omega_{X_1^\circ}^{[r]}\big)\big)_{|F_2^\circ} \ar[rrr, "\big((\alpha_1^\circ)^* d_{\textup{refl}}j_1^\circ\big)_{|F_2^\circ}"] &&& \big((\alpha_1^\circ)^*\Omega_{B_1^\circ}^{r}\big)_{|F_2^\circ}
\ar[rr, "d\alpha_2^\circ"] && \Omega_{F_2^\circ}^{r}.
\end{tikzcd}
\end{center}
On the other hand, recall from the proof of Lemma \ref{lemma:regular_bir_crepant_map}, that there is a commutative
diagram

\begin{center}
\begin{tikzcd}[row sep=large]
(\beta_1^\circ)^*\Omega_{X_1^\circ}^{[r]} \ar[r, "{d_{\textup{refl}}\beta_1^\circ}"]\ar[d, twoheadrightarrow, "{(\beta_1^\circ)^*\eta_{X_1^\circ}}"'] & \Omega_{Z_1^\circ}^{[r]}\ar[d, twoheadrightarrow, "{\eta_{Z_1^\circ}}"] \\
(\beta_1^\circ)^*\sO_{X_1^\circ}\big(K_{\sG_{X_1^\circ}}\big) \ar[r, "{\sim}"] & 
\sO_{Z_1^\circ}\big(K_{\sG_{Z_1^\circ}}\big).
\end{tikzcd}
\end{center}
One then readily checks that the diagram 

\begin{center}
\begin{tikzcd}[row sep=large]
(j_1^\circ)^*\Omega_{X_1^\circ}^{[r]} \ar[rr, "d_{\textup{refl}}j_1^\circ"] \ar[d, twoheadrightarrow, "(j_1^\circ)^*\eta_{X_1^\circ}"'] && \Omega_{B_1^\circ}^{r} \ar[dll, bend left=20, "\eta_{B_1^\circ}"] \\
(j_1^\circ)^*\sO_{X_1^\circ}\big(K_{\sG_{X_1^\circ}}\big) &&
\end{tikzcd}
\end{center}
is commutative as well.
This immediately implies that $\eta_{B_1^\circ}$ is non-zero, completing the proof of the claim. 
\end{proof}

Since the foliation induced by $\sG_{Z_1^\circ}$ on $F_1^\circ$ is projectable under $\alpha_1^\circ$,
there are infinitely many leaves of $\sG_{Z_1^\circ}$ contained in $E_1^\circ$ that 
maps to the same leaf of the foliation induced by $\eta_{B_1^\circ}$ on $B_1^\circ$ since $\dim E_1^\circ > \dim B_1^\circ$. 
On the other hand, recall that $\psi^{-1}(y)$ is irreducible for any point $y\in \psi(E) \cap Y^\circ$. Hence, there exists a positive integer $m$ such that the cycle theoretic fiber $\psi^{[-1]}(y)$ is $m[\psi^{-1}(y)]$ for a general point $y$ in 
$\psi(E)$. It follows that the restriction of map $Y \to \textup{Chow}(X)$ (see \ref{family_leaves}) to 
$\psi(E)$ has positive dimensional fibers, yielding a contradiction.
This finishes the proof of the theorem.
\end{proof}

\begin{rem}
In the setup of Theorem \ref{thm:regular_foliation_morphism}, let $\psi\colon Z \to Y$ be the family of leaves, and let
$\beta\colon Z \to X$ be the natural morphism (see \ref{family_leaves}). If $X$ is only assumed to have klt singularities, then the same argument used in the proof of the theorem shows that $\beta$ is a small birational map.
\end{rem}

\begin{cor}
Let $X$ be a normal complex projective surface with klt singularities, and let $\sG$ be an algebraically integrable foliation by curves on $X$ with canonical singularities. If $\sG$ is weakly regular, then it is induced by a surjective equidimensional morphism $\psi\colon X \to Y$ onto a smooth complete curve $Y$.
\end{cor}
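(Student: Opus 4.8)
The plan is to deduce this directly from Theorem \ref{thm:regular_foliation_morphism}. The only apparent gap between the hypotheses there and those of the corollary is the $\mathbb{Q}$-factoriality assumption, which for surfaces is automatic, so the argument is essentially a specialization.

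First I would observe that since $X$ is a normal complex projective surface with klt singularities, it is $\mathbb{Q}$-factorial by \cite[Proposition 4.11]{kollar_mori} (this is precisely the fact invoked in the proof of Proposition \ref{prop:regular_foliation_klt_surface}). Hence $X$ has $\mathbb{Q}$-factorial klt singularities, and $\sG$ is a weakly regular algebraically integrable foliation with canonical singularities. Theorem \ref{thm:regular_foliation_morphism} then applies verbatim and shows that $\sG$ is induced by a surjective equidimensional morphism $\psi\colon X \to Y$ onto a normal complex projective variety $Y$.

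It remains to identify $Y$. Since $\sG$ is a foliation by curves, we have $\textup{rank}\,\sG = 1$, so its codimension equals $\dim X - 1 = 1$, and the space of leaves $Y$ has dimension $\dim X - \textup{rank}\,\sG = 1$. Thus $Y$ is a normal complex projective curve, hence smooth, i.e. a smooth complete curve. The equidimensionality of $\psi$ forces every fiber to be one-dimensional, matching the rank of $\sG$, and $\psi\colon X \to Y$ induces $\sG$, as required.

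There is no genuine obstacle here: all the substance is contained in Theorem \ref{thm:regular_foliation_morphism}, and the corollary is just its surface case, where klt automatically yields $\mathbb{Q}$-factoriality and a normal base curve is automatically smooth. (One may also note that the auxiliary irreducibility statement of the theorem becomes vacuous, since a closed subset of codimension at least two in the curve $Y$ is empty.)
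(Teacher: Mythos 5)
Your proposal is correct and follows exactly the paper's own proof: invoke automatic $\mathbb{Q}$-factoriality of klt surfaces via \cite[Proposition 4.11]{kollar_mori} and then apply Theorem \ref{thm:regular_foliation_morphism}, with the identification of $Y$ as a smooth curve being the routine dimension count and normality-of-curves observation that the paper leaves implicit.
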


\begin{proof}
This is an immediate consequence of Theorem \ref{thm:regular_foliation_morphism} since $X$ is automatically $\mathbb{Q}$-factorial by \cite[Proposition 4.11]{kollar_mori}. 
\end{proof}

The following is an easy consequence of Theorem \ref{thm:regular_foliation_morphism} above together with Lemma \ref{lemma:regular_versus_canonical}.

\begin{cor}\label{cor:regular_foliation_morphism}
Let $X$ be a normal complex projective variety with $\mathbb{Q}$-factorial klt singularities, and let $\sG$ be a weakly regular algebraically integrable foliation on $X$. Suppose in addition that $K_\sG$ is Cartier. Then $\sG$ is induced by a surjective equidimensional morphism $\psi\colon X \to Y$ onto a normal projective variety $Y$. Moreover, there exists an open subset $Y^\circ$ with complement of codimension at least two in $Y$ such that $\psi^{-1}(y)$ is irreducible for any $y\in Y^\circ$
\end{cor}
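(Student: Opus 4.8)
The plan is to reduce directly to Theorem~\ref{thm:regular_foliation_morphism}, whose only hypothesis not already present in the statement of the corollary is that $\sG$ has canonical singularities; the entire content of the argument is to supply this one missing hypothesis. First I would observe that a $\mathbb{Q}$-factorial klt variety is in particular klt, so that $X$ has klt singularities. Together with the assumptions that $\sG$ is weakly regular and that $K_\sG$ is Cartier, this places us exactly in the setting of Lemma~\ref{lemma:regular_versus_canonical}, which I would invoke to conclude that $\sG$ has canonical singularities.

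With canonical singularities now established, the foliation $\sG$ satisfies all the hypotheses of Theorem~\ref{thm:regular_foliation_morphism}: it is a weakly regular, algebraically integrable foliation with canonical singularities on a normal complex projective variety with $\mathbb{Q}$-factorial klt singularities. Applying that theorem verbatim yields a surjective equidimensional morphism $\psi\colon X \to Y$ onto a normal projective variety $Y$ inducing $\sG$, together with an open subset $Y^\circ \subseteq Y$ whose complement has codimension at least two and over which each fiber $\psi^{-1}(y)$ is irreducible. This is precisely the assertion of the corollary.

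I would expect no genuine obstacle here: the argument is a two-step chain, with Lemma~\ref{lemma:regular_versus_canonical} bridging exactly the gap between the Cartier hypothesis assumed in the corollary and the canonical-singularities hypothesis required by the theorem. The only point to check is that the hypotheses of the lemma are met, and this is immediate since $\mathbb{Q}$-factorial klt implies klt. No new ideas beyond the two cited results are needed, which is why the statement is recorded as an easy consequence.
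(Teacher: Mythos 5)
Your proposal is correct and follows exactly the route the paper intends: the paper states this corollary as "an easy consequence of Theorem \ref{thm:regular_foliation_morphism} above together with Lemma \ref{lemma:regular_versus_canonical}," which is precisely your two-step chain of using the lemma to upgrade weak regularity plus Cartier $K_\sG$ on a klt space to canonical singularities, and then applying the theorem verbatim.
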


\section{Quasi-\'etale trivializable reflexive sheaves}\label{section:holonomy}

In this section we provide another technical tool for the proof of the main results.

\begin{prop}\label{prop:finite_versus_etale}
Let $X$ be a normal complex projective variety, and let $\sG$ be a coherent reflexive sheaf of rank $r$. Suppose that there exists a finite cover $f \colon Y \to X$ such that $f^{[*]}\sG\cong \sO_{Y}^{\oplus r}$. Then, there exists a quasi-\'etale cover $g \colon Z \to X$ such that $g^{[*]}\sG\cong \sO_{Z}^{\oplus r}$. 
\end{prop}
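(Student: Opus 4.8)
The plan is to realize the desired quasi-\'etale cover as an intermediate cover of the Galois closure of $f$, cut out by the kernel of a finite-dimensional linear representation attached to the $G$-linearization of $f^{[*]}\sG$. First I would reduce to the Galois case: replacing $Y$ by the normalization $Y'$ of $X$ in a Galois closure of $K(Y)/K(X)$, with group $G$, and writing $p\colon Y'\to Y$ for the induced cover, functoriality of the reflexive pull-back gives $(f\circ p)^{[*]}\sG\cong p^{[*]}(f^{[*]}\sG)\cong \sO_{Y'}^{\oplus r}$, so we may assume $f$ is Galois with group $G$ and $X=Y/G$. Since $f$ is Galois, $f^{[*]}\sG$ carries a canonical $G$-linearization, so $G$ acts $\mathbb{C}$-linearly on $W:=H^0(Y,f^{[*]}\sG)$, which is $r$-dimensional because $f^{[*]}\sG\cong\sO_Y^{\oplus r}$ and $Y$ is connected and projective. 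This yields a representation $\rho\colon G\to \mathrm{GL}(W)\cong \mathrm{GL}_r(\mathbb{C})$. I would then set $H:=\ker\rho\trianglelefteq G$, $Z:=Y/H$, with quotient morphisms $h\colon Y\to Z$ and $g\colon Z\to X$, so that $f=g\circ h$ and $Z\to X$ is Galois with group $G/H$.

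The heart of the argument is to prove that $g$ is quasi-\'etale, which amounts to showing that for every prime divisor $P\subset X$ the inertia group $I_{P'}\subseteq G$ of any prime divisor $P'$ over $P$ is contained in $H$. Here I would exploit that $X$ is normal, hence smooth at the codimension-one point $\eta_P$, and that $\sG$ is reflexive, hence \emph{free} over the discrete valuation ring $\sO_{X,\eta_P}$. Fixing a local frame $t_1,\dots,t_r$ of $\sG$ near $\eta_P$, its pull-back $f^*t_1,\dots,f^*t_r$ is a local frame of $f^{[*]}\sG$ near $\eta_{P'}$ consisting of $G$-invariant sections, because $f^*\sG\to f^{[*]}\sG$ is an isomorphism there. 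Comparing this frame with the global trivializing sections $s_1,\dots,s_r\in W$ through an invertible matrix $A\in \mathrm{GL}_r(\sO_{Y,\eta_{P'}})$ and applying an element $\gamma\in I_{P'}$ produces a relation of the shape $\gamma(A)=\rho(\gamma)\,A$; reducing modulo the maximal ideal and using that inertia acts trivially on the residue field $\kappa(\eta_{P'})$ forces $\rho(\gamma)=\mathrm{Id}$. Hence $I_{P'}\subseteq\ker\rho=H$, so the image of inertia in $G/H$ is trivial and $g$ is \'etale over $\eta_P$ for every $P$, i.e. \'etale in codimension one.

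It remains to check that $g^{[*]}\sG\cong\sO_Z^{\oplus r}$. Since $H$ acts trivially on $W$, the sections $s_1,\dots,s_r$ are $H$-invariant, and therefore descend to sections of $g^{[*]}\sG$ via the identification $f^{[*]}\sG\cong h^{[*]}g^{[*]}\sG$ together with the invariant-pushforward description of reflexive sheaves used in the proof of Proposition \ref{prop:regular_quasi_etale}. These define a morphism $\sO_Z^{\oplus r}\to g^{[*]}\sG$ whose pull-back under the finite surjective morphism $h$ is the given isomorphism $\sO_Y^{\oplus r}\cong f^{[*]}\sG$; as both source and target are reflexive, agreement after such a pull-back forces the morphism itself to be an isomorphism. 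Thus $g\colon Z\to X$ is a quasi-\'etale cover with $g^{[*]}\sG\cong\sO_Z^{\oplus r}$, as required.

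I expect the main obstacle to be the inertia computation in the second paragraph: one must carefully set up the comparison between the canonical $G$-invariant frame coming from $\sG$ and the global trivializing frame at the generic point of each branch divisor, and then correctly track the $G$-action through the passage to the residue field to conclude that $\rho$ annihilates inertia. By contrast, the reduction to the Galois case and the final descent of sections are routine, resting only on the functoriality of the reflexive pull-back and on the invariant-section techniques (\cite[Lemma A.4]{greb_kebekus_kovacs_peternell10}, \cite[Theorem 2]{brion_differential_form}) already deployed earlier in the paper.
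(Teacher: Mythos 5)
Your proof is correct, but it takes a genuinely different route from the paper. The paper's argument stays entirely inside the slope-stability framework of this section: from $f^{[*]}\sG\cong\sO_Y^{\oplus r}$ it deduces via \cite[Lemma 3.2.3]{HuyLehn} that $\sG$ is polystable of slope zero, then invokes Lemma \ref{lemma:stability_versus_strong_stability} (which rests on the Balaji--Koll\'ar holonomy groups, \cite[Lemma 40]{balaji_kollar}, through Lemma \ref{lemma:holonomy_group_versus_strong_stability}) to produce a quasi-\'etale cover $g_1\colon Z_1\to X$ on which $g_1^{[*]}\sG$ splits into strongly stable summands of slope zero; pulling back to the normalization of $Y\times_X Z_1$, each summand becomes a trivial line bundle, so each summand downstairs is a torsion rank-one reflexive sheaf, and a final cyclic index-one quasi-\'etale cover trivializes them. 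You replace all of this by classical Galois theory: pass to the Galois closure, let $G$ act on the $r$-dimensional space $W=H^0\big(Y,f^{[*]}\sG\big)$ through the canonical linearization, and take $Z=Y/\ker\rho$. The key point --- that inertia groups of prime divisors lie in $\ker\rho$ --- is exactly where you said the difficulty sits, and your computation there is sound: since $X$ is normal and $\sG$ reflexive, $\sG$ is free at each codimension-one point of $X$, so the pulled-back frame is $G$-invariant and compares with the global frame through a matrix $A\in\mathrm{GL}_r(\sO_{Y,\eta_{P'}})$; the relation $A=\rho(\gamma)\,\gamma(A)$, reduced modulo the maximal ideal where inertia acts trivially, forces $\rho(\gamma)=\mathrm{Id}$ because $\bar{A}$ is invertible. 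The descent step is also fine, although your citation is loose: what you need is that $\sF\cong\big(h_*h^{[*]}\sF\big)^H$ for any reflexive sheaf $\sF$ on $Z=Y/H$, which holds even though $h$ may ramify in codimension one, by the projection formula over the big open set where $\sF$ is locally free together with reflexivity of both sides. As for what each approach buys: yours is more elementary and self-contained, avoiding the Narasimhan--Seshadri/holonomy input entirely, and it yields extra structure, namely that $Z\to X$ can be taken Galois with group $G/\ker\rho$ embedding into $\mathrm{GL}_r(\mathbb{C})$; the paper's argument is natural given that Section \ref{section:holonomy} sets up the holonomy formalism anyway, and its intermediate statements (polystability and strong stability after quasi-\'etale cover) apply more generally than to sheaves whose pull-back is actually trivial.
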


\subsection{The holonomy group of a stable reflexive sheaf} We briefly recall the definition of algebraic holonomy groups following Balaji and Koll\'ar (\cite{balaji_kollar}).

Let $X$ be a normal complex projective variety, and let $\sG$ be a coherent reflexive sheaf on $X$. 
Suppose that $\sG$ is stable with respect to an ample Cartier divisor $H$ and that $\mu_H(\sG)=0$.
For a sufficiently large positive integer $m$,
let $C \subset X$ be a general complete intersection curve of elements in $|mH|$, and let $x \in C$.
By the restriction theorem of Mehta and Ramanathan,
the locally free sheaf $\sG_{|C}$ is stable with $\deg\sG_{|C}=0$, and hence
it corresponds to a unique unitary representation $\rho\colon\pi_1(C,x)\to\mathbb{U}(\sG_x)$ 
by a result of Narasimhan and Seshadri (\cite{narasimhan_seshadri65}). 

\begin{defn}
The holonomy group $\Hol_x(\sG)$
of $\sG$ is the Zariski closure of $\rho\big(\pi_1(C,x)\big)$ in $\textup{GL}(\sG_x)$.
\end{defn}

\begin{rem}
The holonomy group $\Hol_x(\sG)$ does not depend on $C \ni x$ provided that $m$ is large enough (see \cite{balaji_kollar}).
\end{rem}

\subsection{Strong stability} The following notion is used in the formulation of Lemmas \ref{lemma:holonomy_group_versus_strong_stability} and
\ref{lemma:stability_versus_strong_stability}.

\begin{defn}
Let $X$ be a normal projective variety, let $H$ be a nef and big Cartier divisor on $X$, and let $\sG$ be a coherent reflexive sheaf. We say that $\sG$ is \textit{strongly stable with respect to $H$} if, for any 
normal projective variety $Y$ and any
generically finite surjective morphism $f \colon Y \to X$, the reflexive pull-back sheaf $f^{[*]}\sG$ is $f^{*}H$-stable.
\end{defn}

\begin{lemma}[{\cite[Lemma 6.3]{druel_bbcd2}}]\label{lemma:holonomy_group_versus_strong_stability}
Let $X$ be a normal complex projective variety, let $x\in X$ be a general point, and let $\sG$ be a coherent reflexive sheaf.
Suppose that $\sG$ is stable with respect to an ample divisor $H$ and that $\mu_H(\sG)=0$.
Suppose furthermore that the holonomy group $\Hol_x(\sG)$ is connected. 
Then $\sG$ is strongly stable with respect to $H$.
\end{lemma}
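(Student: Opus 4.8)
The plan is to exploit the Balaji--Koll\'ar dictionary between stability and the holonomy representation, together with the observation that passing to a generically finite cover only replaces the relevant fundamental group by a subgroup of finite index. Recall that since $\sG$ is $H$-stable with $\mu_H(\sG)=0$, its restriction to a general complete intersection curve $C'\subset X$ of large degree $mH$ is stable of degree zero, with Narasimhan--Seshadri representation $\rho\colon\pi_1(C',x)\to\mathbb{U}(\sG_x)$ whose Zariski closure is $\Hol_x(\sG)$; moreover $\sG$ is stable if and only if $\Hol_x(\sG)$ acts \emph{irreducibly} on $\sG_x$, and conversely a polystable slope-zero sheaf whose holonomy group acts irreducibly is stable (a nontrivial invariant subspace would split off a direct summand). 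Thus it suffices to show that $f^{[*]}\sG$ is polystable of slope zero with respect to $f^*H$ and that its holonomy group still acts irreducibly.

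First I would record that $f^{[*]}\sG$ is polystable of slope zero with respect to the nef and big divisor $f^*H$: in characteristic zero, reflexive pullback along a generically finite surjective morphism preserves polystability and multiplies the slope by the degree, so $\mu_{f^*H}(f^{[*]}\sG)=0$ and the holonomy group is defined. The heart of the argument is the comparison of the two holonomy groups. Choose the general complete intersection curve $C'\subset X$ computing $\Hol_x(\sG)$, let $C$ be the normalization of an irreducible component of $f^{-1}(C')$, and let $\tau\colon C\to C'$ be the induced finite morphism of smooth projective curves. Since $C$ avoids the codimension-two loci where reflexive pullback and $f$ misbehave, one has $(f^{[*]}\sG)_{|C}\cong \tau^*(\sG_{|C'})$, so the Narasimhan--Seshadri representation of $(f^{[*]}\sG)_{|C}$ is $\rho\circ\tau_*$, where $\tau_*\colon\pi_1(C)\to\pi_1(C')$ is the induced map. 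A finite, possibly ramified, cover of curves induces a map on $\pi_1$ whose image $\Gamma'$ has finite index (pass to the punctured curves, where $\tau$ becomes an honest finite cover so that $\pi_1$ injects as a finite-index subgroup, then push forward). Writing $\rho(\pi_1(C'))$ as a finite union of cosets of $\rho(\Gamma')$ and taking Zariski closures gives $\Hol_x(\sG)=\bigcup_i g_i\,\overline{\rho(\Gamma')}$, so $\overline{\rho(\Gamma')}$ has finite index in $\Hol_x(\sG)$ and therefore contains its identity component. As $\Hol_x(\sG)$ is connected by hypothesis, $\overline{\rho(\Gamma')}=\Hol_x(\sG)$, which acts irreducibly on $\sG_x\cong(f^{[*]}\sG)_y$.

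It remains to upgrade this to the holonomy of $f^{[*]}\sG$ on a general large-degree complete intersection curve of $Y$. By the Balaji--Koll\'ar independence statement the holonomy group is insensitive to the choice of such curve, and by their functoriality it maps into $\Hol_x(\sG)$; combined with the previous paragraph, which already produces all of $\Hol_x(\sG)$ from the special curve $C$, one concludes $\Hol_y(f^{[*]}\sG)=\Hol_x(\sG)$. This group acts irreducibly, so $f^{[*]}\sG$ is $f^*H$-stable, and since $f$ was an arbitrary generically finite surjective morphism from a normal projective variety, $\sG$ is strongly stable with respect to $H$.

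The main obstacle I expect is precisely this last reconciliation: the curve $C$ arising as a component of $f^{-1}(C')$ is a \emph{special}, not a general, complete intersection curve of $Y$, so equating the holonomy computed on $C$ with $\Hol_y(f^{[*]}\sG)$ requires the precise independence-of-curve and functoriality properties of the Balaji--Koll\'ar construction, together with care that $C'$ (hence $C$) can be chosen general enough to simultaneously compute $\Hol_x(\sG)$, to lie in the locus where $f^{[*]}\sG$ is locally free, and to have irreducible preimage. A secondary technical point is the slope-zero polystability of the reflexive pullback relative to the merely nef and big polarization $f^*H$, which one must check survives the passage from ample to nef and big.
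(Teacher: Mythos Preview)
The paper does not give its own proof of this lemma: it is stated with a direct citation to \cite[Lemma~6.3]{druel_bbcd2} and no argument is supplied. So there is no in-paper proof to compare against, and your proposal is effectively an attempt to reconstruct the argument of the cited reference.

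Your strategy is the natural one and is in the spirit of the Balaji--Koll\'ar theory: pull back a general complete intersection curve, use that a finite map of curves induces a finite-index image on $\pi_1$, and conclude that the Zariski closure of the pulled-back monodromy contains the identity component of $\Hol_x(\sG)$, hence equals it under the connectedness hypothesis. The two obstacles you flag are exactly the ones that need attention. For the first, note that the holonomy of $f^{[*]}\sG$ with respect to $f^*H$ is by definition computed on general complete intersection curves in $|m\,f^*H|$; when $f$ is finite these are precisely the preimages of general complete intersection curves in $|mH|$, so your curve $C$ \emph{is} of the right type. For merely generically finite $f$, one reduces to the finite case via Stein factorization (the birational part does not affect stability or holonomy, since a general curve avoids the exceptional locus). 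For the second, polystability of the reflexive pullback with respect to the nef and big class $f^*H$ follows from \cite[Lemma~3.2.3]{HuyLehn} after restricting to the general curve, exactly as in the ample case; slopes are preserved because $f^*H$ restricts to an ample class on the curve. With these points filled in, your argument is correct and matches the approach of the cited reference.
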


The following observation is needed for the main result of this section.

\begin{lemma}\label{lemma:stability_versus_strong_stability}
Let $X$ be a normal complex projective variety, and let $\sG$ be a coherent reflexive sheaf of $\sO_X$-modules.
Suppose that $\sG$ is polystable with respect to an ample divisor $H$ and that $\mu_H(\sG)=0$.
Then there exists a quasi-\'etale cover $g\colon Z \to X$ as well as coherent reflexive sheaves $(\sG_i)_{i\in I}$ on $Z$
such that the following holds.
\begin{enumerate}
\item There is a decomposition $g^{[*]}\sG\cong \oplus_{i\in I}\sG_i$.
\item The sheaves $\sG_i$ are strongly stable with respect to $g^*H$ with $\mu_{g^*H}(\sG_i)=0$.
\end{enumerate}
\end{lemma}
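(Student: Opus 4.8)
The plan is to realize the possible disconnectedness of the holonomy group by a quasi-\'etale cover, and then to invoke Lemma~\ref{lemma:holonomy_group_versus_strong_stability} on each stable summand. First I would record the structural consequences of the hypotheses. Since $\sG$ is polystable with respect to $H$ and $\mu_H(\sG)=0$, its restriction $\sG_{|C}$ to a general complete intersection curve $C$ of elements of $|mH|$ with $m\gg 0$ is a polystable locally free sheaf of degree $0$; under the Narasimhan--Seshadri correspondence it is given by a unitary representation $\rho\colon \pi_1(C,x)\to \mathbb{U}(\sG_x)$, which is completely reducible. Consequently the holonomy group $G:=\Hol_x(\sG)$, the Zariski closure of $\rho\big(\pi_1(C,x)\big)$ in $\textup{GL}(\sG_x)$, is reductive; in particular its identity component $G^\circ$ is normal of finite index and the component group $G/G^\circ$ is finite.

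Second, and this is the main point, I would produce the cover. The composite $\pi_1(C,x)\to G\to G/G^\circ$ is a surjection onto a finite group with kernel $N$ of finite index. By the holonomy theory of Balaji and Koll\'ar (\cite{balaji_kollar}), this finite quotient is realized by a quasi-\'etale cover $g\colon Z\to X$: the \'etale cover $C'\to C$ attached to $N\subseteq \pi_1(C,x)$ extends, via the Lefschetz-type compatibility for general complete intersection curves, to $g$, and a general complete intersection curve in $Z$ with respect to the ample divisor $g^*H$ may be taken compatible with $C'$. Pulling back, $g^{[*]}\sG$ restricted to $C'$ corresponds to the restricted representation $\rho_{|N}$, which is again unitary and hence polystable of degree $0$; thus $g^{[*]}\sG$ is polystable with respect to $g^*H$ with $\mu_{g^*H}(g^{[*]}\sG)=\deg(g)\,\mu_H(\sG)=0$. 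Moreover $\Hol_{x'}(g^{[*]}\sG)=\overline{\rho(N)}$ is a closed subgroup of $G$ of finite index contained in $G^\circ$, and since a connected algebraic group admits no proper closed subgroup of finite index, this forces $\Hol_{x'}(g^{[*]}\sG)=G^\circ$, which is connected. I expect the extension of the holonomy cover $C'\to C$ to a quasi-\'etale cover of $X$ to be the genuine obstacle, as it is precisely the step passing from the curve to the ambient variety and is where one must appeal to \cite{balaji_kollar}.

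Finally I would decompose and conclude. As $g^{[*]}\sG$ is polystable of slope $0$ with respect to $g^*H$, it splits as $g^{[*]}\sG\cong \bigoplus_{i\in I}\sG_i$ with each $\sG_i$ a $g^*H$-stable reflexive sheaf satisfying $\mu_{g^*H}(\sG_i)=0$, which is assertion~(1). For~(2), I would observe that the holonomy of a summand is the image of the holonomy of the whole sheaf: under the projection $\textup{GL}\big((g^{[*]}\sG)_{x'}\big)\to \textup{GL}(\sG_{i,x'})$, a morphism of algebraic groups, the image of the connected group $G^\circ=\Hol_{x'}(g^{[*]}\sG)$ is closed and connected and coincides with $\Hol_{x'}(\sG_i)$, because the image of a Zariski closure under a homomorphism of algebraic groups is the Zariski closure of the image. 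Hence each $\sG_i$ is $g^*H$-stable of slope $0$ with connected holonomy, so Lemma~\ref{lemma:holonomy_group_versus_strong_stability} applies and shows that $\sG_i$ is strongly stable with respect to $g^*H$, completing the proof.
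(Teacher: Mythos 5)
Your overall architecture is genuinely different from the paper's: you pass to a single cover making the holonomy of the whole sheaf connected, then decompose and observe that each stable summand inherits connected holonomy as the image of a connected group under the block-diagonal projection. That last projection argument is correct, and this order of operations would indeed be cleaner than the paper's --- if the tools you invoke existed in the generality you need. They do not, and the gap sits exactly at the step you flag as the crux. The holonomy group, both in \cite{balaji_kollar} and in this paper's Section 7 (and in \cite[Lemma 6.19]{bobo}, the form of the connectedness result actually cited), is defined and developed only for \emph{stable} reflexive sheaves; the cover-producing lemma you need says that a \emph{stable} sheaf of slope zero acquires connected holonomy after a suitable quasi-\'etale cover. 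You apply it to the polystable sheaf $\sG$ itself, for which $\Hol_x(\sG)$ is not even defined in the framework being used. Worse, your proposed mechanism --- that the \'etale cover $C' \to C$ attached to $N=\ker\big(\pi_1(C,x)\to G/G^\circ\big)$ extends to a quasi-\'etale cover of $X$ by ``Lefschetz-type compatibility'' --- is false as a general principle: a general complete intersection curve $C\subset\mathbb{P}^n$ has many finite \'etale covers, while $\mathbb{P}^n$ has no nontrivial quasi-\'etale cover. The genuine content of \cite[Lemma 40]{balaji_kollar} is precisely that the particular finite quotient coming from the component group of the holonomy of a \emph{stable} sheaf descends to $X$, proved by Tannakian arguments and the restriction theorem, not by Lefschetz; no off-the-shelf polystable version is available, and the lemma you are asked to prove is essentially that missing polystable statement.

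One cannot repair this by simply decomposing $\sG=\oplus_i\sG_i$ into stable summands on $X$ first and applying the stable-case lemma to each piece, because quasi-\'etale pullback of a stable sheaf need not remain stable (only polystable, by \cite[Lemma 3.2.3]{HuyLehn}); after passing to the covers the summands may split further, and the holonomy groups fed into Lemma \ref{lemma:holonomy_group_versus_strong_stability} may cease to be defined. The paper resolves exactly this point with a maximality trick: it first passes to a quasi-\'etale cover on which the number of stable direct summands of the reflexive pullback is maximal, so that every summand remains stable under \emph{all} further quasi-\'etale covers; only then does it apply \cite[Lemma 40]{balaji_kollar} to each (stable) summand, and it concludes on the compositum of the resulting covers, where connectedness of each holonomy persists. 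Grafting this maximality step onto your argument --- so that every holonomy statement is only ever applied to a stable sheaf --- closes the gap, but at that point your proof becomes the paper's.
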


\begin{proof}
Suppose that there exists a quasi-\'etale cover $g_1\colon Z_1 \to X$ such that 
the reflexive pull-back $g_1^{[*]}\sG$ is not stable with respect to $g_1^*H$.
Applying \cite[Lemma 3.2.3]{HuyLehn}, we see that the $g_1^{[*]}\sG$ is polystable, and hence, there exist non-zero reflexive sheaves $(\sG_i)_{i \in I}$, $g_1^*H$-stable with slopes $\mu_{g_1^*H}(\sG_i)=\mu_{g_1^*H}\big(g_1^{[*]}\sG\big)=0$ such that 
$$g_1^{[*]}\sG\cong\bigoplus_{i \in I}\sG_i.$$

Suppose in addition that the number of direct summands is maximal.
Then, for any quasi-\'etale cover $g_2\colon Z_2 \to Z_1$, the reflexive pull-back $g_2^{[*]}\sG_i$ is obviously stable with respect to $(g_1\circ g_2)^*H$.

By \cite[Lemma 40]{balaji_kollar} (see also \cite[Lemma 6.19]{bobo}), there exists a quasi-\'etale cover
$g_i\colon Z_i \to Z_1$ such that $\Hol_{z_i}\big((g_1\circ g_i)^{[*]}\sG_i\big)$ is connected, where $z_i$ is a general point on $Z_i$. Let 
$Z$ be the normalization of $Z_1$ in compositum of the function fields $\mathbb{C}(Z_i)$. Observe that 
the natural morphism $g \colon Z \to Z_1$ is a quasi-\'etale cover, and factors through each $Z_i \to Z_1$. 
The proposition then follows from Lemma \ref{lemma:holonomy_group_versus_strong_stability} above. 
\end{proof}

\begin{rem}\label{remark:one_versus_all_polarizations}
In the setup of Lemma \ref{lemma:stability_versus_strong_stability}, suppose furthermore that $\sG$ is polystable with respect to any polarization $H$ on $X$ and that $\mu_H(\sG) = 0$. Let $H_1$ be any ample divisor on $X$.
Then the sheaves $\sG_i$ are strongly stable with respect to $g^*H_1$ and $\mu_{g^*H_1}(\sG_i)=0$.
\end{rem}

\begin{proof}[Proof of Proposition \ref{prop:finite_versus_etale}]
Let $H$ be an ample Cartier divisor on $X$. Applying \cite[Lemma 3.2.3]{HuyLehn}, we see that $\sG$ is polystable with respect to $H$ with $\mu_H(\sG)=0$. By Lemma \ref{lemma:stability_versus_strong_stability}, there exists a quasi-\'etale cover 
$g_1\colon Z_1 \to X$ as well as coherent reflexive sheaves $(\sG_i)_{i\in I}$ on $Z_1$
such that the following holds.
\begin{enumerate}
\item There is a decomposition $g_1^{[*]}\sG\cong \oplus_{i\in I}\sG_i$.
\item The sheaves $\sG_i$ are strongly stable with respect to $g_1^*H$ with $\mu_{g_1^*H}(\sG_i)=0$.
\end{enumerate}
Let $Y_1$ be the normalization of the product $Y \times_X Z_1$, with natural morphism $f_1 \colon Y_1 \to Z_1$. By construction, we have $(g_1\circ f_1)^{[*]}\sG\cong \sO_{Y_1}^{\oplus r}$. 
It follows that $(g_1\circ f_1)^{[*]}\sG_i\cong \sO_{Y_1}$ for all $i \in I$, and hence
$\sG_i^{[N]} \cong \sO_{Z_1}$, where $N:=\deg\,g_1 \circ f_1$. Replacing $Z_1$ by a further quasi-\'etale cover, we may assume that 
$\sG_i \cong \sO_{Z_1}$, proving the proposition.
\end{proof}

\begin{rem}
In the setup of Proposition \ref{prop:finite_versus_etale}, suppose that $X$ is a (smooth complete) curve. Then there is an alternative argument. Indeed, by \cite[Lemma 3.2.3]{HuyLehn}, $\sG$ is a polystable vector bundle. We may assume without loss of generality that $\sG$ is stable. By \cite{narasimhan_seshadri65}, $\sG$ corresponds to a unique unitary representation $$\rho\colon\pi_1(X,x)\to\mathbb{U}(\sG_x).$$ It follows that $f^*\sG$ corresponds to the induced representation
$$\rho\circ\pi_1(f)\colon\pi_1(Y,y)\to\mathbb{U}(\sG_x)$$
where $y$ is a point on $Y$ such that $f(y)=x$.
Applying \cite[Proposition 4.2]{narasimhan_seshadri64} to $\rho\circ\pi_1(f)$ and the trivial representation of $\pi_1(Y,y)$ in $\mathbb{U}(\sG_x)$, we see that $\rho\circ\pi_1(f)$ must be the trivial representation. The statement then follows since 
the image of $\pi_1(f)$ has finite index in $\pi_1(X,x)$.
\end{rem}

\section{A global Reeb stability theorem}\label{section:reeb}

\subsection{Global Reeb stability theorem}
Let $X$ be a complex manifold, and let $\sG$ be a regular foliation on $X$. Let $L$ be a compact leaf with finite holonomy group $G$, and let $x \in L$. The holomorphic version of the local Reeb stability theorem (see \cite[Theorem 2.4]{hwang_viehweg}) asserts that there exist an invariant open analytic neighborhood $U$ of $L$, a (local) transversal section $S$ at $x$ with a $G$-action, an unramified Galois cover $U_1 \to U$ with group $G$, a smooth proper $G$-equivariant morphism $U_1 \to S$, and a commutative diagram

\begin{center}
\begin{tikzcd}[row sep=large, column sep=huge]
U_1 \ar[r, "{\text{unramified}}"]\ar[d, "{\text{proper submersion}}"'] & U \ar[d] \\
 S \ar[r] & S/G 
\end{tikzcd}
\end{center}
such that the pull-back of $\sG_{|U}$ to $U_1$ is induced by the map $U_1 \to S$.
In this section, we prove a global version of Reeb stability theorem for weakly regular algebraically integrable foliations with trivial canonical class on mildly singular spaces. The following is the main result of this section (see \cite[Proposition 4.2]{druel_guenancia} for a somewhat related result).

\begin{thm}\label{thm:global_reeb_stability}
Let $X$ be a normal complex projective variety with klt singularities, and let $\sG$ be a weakly regular algebraically integrable foliation on $X$. Suppose that $K_\sG\sim_{\mathbb{Q}}0$.
Then there exist complex projective varieties $Y$ and $Z$ with klt singularities and a quasi-\'etale cover 
$f \colon Y \times Z \to X$ such that $f^{-1}\sG$ is induced by the projection $Y \times Z \to Y$.
\end{thm}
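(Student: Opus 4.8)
The plan is to present $\sG$ as the foliation by fibres of an equidimensional fibration carrying a flat transverse connection, and then to trivialize the resulting isotrivial family after a quasi-\'etale cover. I first reduce to $K_\sG\sim_{\mathbb{Z}}0$: since $K_\sG\sim_{\mathbb{Q}}0$ is torsion, the associated cyclic cover is quasi-\'etale, and it preserves klt singularities (Fact \ref{fact:quasi_etale_cover_and_singularities}), weak regularity (Corollary \ref{cor:regular_quasi_etale}) and algebraic integrability, while making $K_\sG$ Cartier and trivial; as quasi-\'etale covers compose, this reduction is harmless. By Lemma \ref{lemma:regular_versus_canonical}, $\sG$ is then canonical. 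I next pass to a $\mathbb{Q}$-factorialization $\beta\colon\widetilde X\to X$ (Fact \ref{fact:existence_factorialization}); by Lemma \ref{lemma:properties:regular}(3) the pulled-back foliation is weakly regular with $K_{\beta^{-1}\sG}\sim_{\mathbb{Z}}0$ ($\beta$ being small), and it stays canonical and algebraically integrable. I will build the product structure on $\widetilde X$ and transport it back along the small map $\beta$ at the very end.

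On the $\mathbb{Q}$-factorial model, Theorem \ref{thm:regular_foliation_morphism} shows that $\sG$ is induced by a surjective equidimensional morphism $\psi\colon X\to Y$ with $Y$ normal projective, and with irreducible fibres over an open set $Y^\circ$ whose complement has codimension $\ge 2$. By Corollary \ref{cor:canonical_versus_regular} there is a splitting $T_X\cong\sG\oplus\sE$ into involutive subsheaves; since $\sG$ is the relative tangent sheaf of $\psi$, the complement $\sE$ maps isomorphically onto $\psi^*T_Y$ and so defines a flat holomorphic connection transverse to the fibres. A general fibre $F$ is klt with $K_F\sim_{\mathbb{Z}}0$ by adjunction. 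Over the locus where $\psi$ is a topological fibre bundle, parallel transport along $\sE$ identifies the fibres by biholomorphisms; by the local Reeb stability theorem the family is locally trivial and is the suspension of a holonomy representation $\rho\colon\pi_1(Y^\circ)\to\textup{Aut}(F)$.

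To control $\rho$ I use a polarization. For an ample line bundle $L$ on $X$, the classes $c_1(L|_{F_y})$ assemble into a flat section of the local system $R^2\psi_*\mathbb{Z}$, so every holonomy automorphism fixes the ample class $c_1(L|_F)$ in $\textup{NS}(F)$. Hence $\rho$ factors through the polarized automorphism group $\textup{Aut}\big(F,c_1(L|_F)\big)$, whose component group is finite, because the stabiliser of an ample class in the lattice automorphisms of $\textup{NS}(F)$ is finite. After a finite \'etale cover of $Y^\circ$ I may therefore assume that $\rho$ is valued in $\textup{Aut}^\circ(F)$, which is an abelian variety $A$ by Lemma \ref{lemma:automorphism_group_zero_can_class}, since $F$ is klt with $K_F\equiv 0$.

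It remains to trivialize an isotrivial family whose holonomy lies in the abelian variety $A=\textup{Aut}^\circ(F)$, and this is the heart of the argument. The holonomy into $A$ need not become trivial on any finite cover of the base --- its image may be Zariski dense --- so one cannot simply kill the monodromy. Instead, using Theorem \ref{thm:kawamata_abelian_factor} I split $F$, after a quasi-\'etale cover, as $A_0\times F_0$ with $A_0$ abelian and $\wt q(F_0)=0$; the factor $F_0$ is then rigid (its connected automorphism group is trivial), so the $A$-valued holonomy only translates the abelian factor. The total space thereby acquires, up to isogeny, the structure of an abelian scheme over $Y$, which a Poincar\'e-reducibility/isogeny argument trivializes after a finite cover of the total space that is \'etale in codimension one. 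Proposition \ref{prop:finite_versus_etale} is then used to replace the auxiliary finite covers by quasi-\'etale ones and to propagate the product decomposition --- visible a priori only in codimension one, both because of the $\mathbb{Q}$-factorialization and because of the codimension-two loci in Theorem \ref{thm:regular_foliation_morphism} --- across the singular sets, yielding a genuine product $Y\times Z$ of normal projective varieties together with a quasi-\'etale cover $f\colon Y\times Z\to X$ under which $f^{-1}\sG$ is the foliation by the $Z$-fibres. The main obstacle is precisely this last trivialization of the abelian-variety holonomy, together with the codimension-two bookkeeping needed to descend the splitting from $\widetilde X$ back to $X$.
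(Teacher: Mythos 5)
Your outline follows the same route as the paper's proof: reduce to $K_\sG\sim_\mathbb{Z}0$ by the index-one cover, pass to a $\mathbb{Q}$-factorialization, obtain an equidimensional fibration $\psi\colon X\to Y$ from Theorem \ref{thm:regular_foliation_morphism}, use the splitting $T_X\cong\sG\oplus\sE$ of Corollary \ref{cor:canonical_versus_regular} as a flat transverse structure, split off the abelian part of the fibre via Theorem \ref{thm:kawamata_abelian_factor}, and finish with Proposition \ref{prop:finite_versus_etale} together with codimension-two extension and descent arguments.

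There is, however, a genuine gap at the isotriviality step, where you pass from $T_X\cong\sG\oplus\sE$ to the claims that $\sE$ maps isomorphically onto $\psi^*T_Y$, that the family is locally trivial over $Y^\circ$, and that it is the suspension of a representation $\rho\colon\pi_1(Y^\circ)\to\textup{Aut}(F)$. These claims fail along multiple (non-reduced) fibres of $\psi$; the fibres over $Y^\circ$ are irreducible but need not be reduced, and your hypotheses do not exclude this, even after the reduction to $K_\sG\sim_\mathbb{Z}0$. Concretely, let $X=(E_1\times E_2)/G$ be a bielliptic surface, where $G=\mathbb{Z}/2$ acts on $E_1$ by translation by a $2$-torsion point and on $E_2$ by inversion, and let $\sG$ be the foliation induced by $\psi\colon X\to E_2/G\cong\mathbb{P}^1$. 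Then $X$ is smooth, $\sG\cong\sO_X$ is a regular, algebraically integrable foliation with $K_\sG\sim_\mathbb{Z}0$, and all fibres of $\psi$ are irreducible, so $Y^\circ=\mathbb{P}^1$; yet $\psi$ has four multiple fibres, the natural map $\sE\to\psi^*T_{\mathbb{P}^1}$ vanishes along them (the reduced multiple fibre $E_1/\langle t\rangle$ is only isogenous, not isomorphic, to the general fibre $E_1$), and $X\to\mathbb{P}^1$ is certainly not a suspension, since $\pi_1(\mathbb{P}^1)=1$ while $X$ is not a product. Since the images of multiple fibres form a divisor in $Y$, this cannot be absorbed into your codimension-two bookkeeping: your construction would only produce a product structure over the complement of a divisor. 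The paper deals with exactly this point before invoking any parallel-transport argument: Claim \ref{claim:base_change_isotriviality} in the proof performs a finite Galois base change $g\colon Y_1\to Y$ (via \cite[Lemma 4.2]{druel15}) so that the pulled-back family has reduced fibres over all codimension-one points; only after this base change does $\sE$ pull back to a genuine flat connection making the family locally trivial, and Step 5 of the proof then verifies that the induced finite cover of the total space is \'etale in codimension one, which is what ultimately makes the final cover $f$ quasi-\'etale. Your argument needs this base-change step inserted before the holonomy discussion; with it, the remainder of your plan is essentially the paper's proof.
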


\medskip

The proof of Theorem \ref{thm:global_reeb_stability} makes use of the following result, which might be of independent interest.

\begin{prop}\label{proposition:trivial_tangent_bundle}
Let $X$ be a normal complex projective variety, and let $\sG$ be an algebraically integrable foliation on $X$. Suppose that $\sG$ is canonical and that $\sG\cong\sO_X^{\oplus \textup{rank} \sG}$. Then there exists an abelian variety $A$ as well as a normal projective variety $X_1$, and an \'etale cover $f\colon X_1 \to X$ such that $f^{-1}\sG=T_{A \times X_1/X_1}$. 
\end{prop}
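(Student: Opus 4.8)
The plan is to use the global vector fields coming from the trivialization $\sG \cong \sO_X^{\oplus r}$ (where $r = \textup{rank}\,\sG$) to manufacture an abelian variety acting on $X$ in the leaf directions. Writing $\partial_1,\ldots,\partial_r$ for the global sections of $\sG \subseteq T_X$ corresponding to a basis of $\sO_X^{\oplus r}$, the first thing I would check is that these commute. Since $\sG$ is closed under the Lie bracket, each $[\partial_i,\partial_j]$ is again a global section of $\sG$; but $\sG \cong \sO_X^{\oplus r}$ has no nonconstant global sections in the sense that $H^0(X,\sG)$ has dimension exactly $r$ and the bracket of two flat sections vanishes whenever the structure constants do. More carefully, I would argue that after passing to a finite \'etale cover (to trivialize the relevant torsor), the Lie algebra they span is abelian: the bracket $[\partial_i,\partial_j] = \sum_k c_{ij}^k \partial_k$ with $c_{ij}^k \in H^0(X,\sO_X) = \mathbb{C}$ constant, so the $\partial_i$ span a finite-dimensional Lie algebra $\mathfrak{g}$, and I would need to see that $\mathfrak{g}$ is in fact abelian, or at least that its connected group integrates to something whose generic orbits are the leaves.

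Next I would integrate these vector fields to a group action. The $\partial_i$ are global holomorphic (algebraic) vector fields, hence lie in $H^0(X,T_X) = \operatorname{Lie}\textup{Aut}^\circ(X)$, so they generate a connected algebraic subgroup $G \subseteq \textup{Aut}^\circ(X)$ of dimension at most $r$ whose orbits are tangent to $\sG$. Because $\sG$ is saturated of rank $r$ and the $\partial_i$ trivialize it, the general orbit of $G$ has dimension exactly $r$ and is (an open subset of) a leaf; since $\sG$ is algebraically integrable and canonical, these leaves/orbits are the algebraic fibers described by the family of leaves $\psi \colon Z \to Y$ of \ref{family_leaves}. The key structural input is that $G$ is commutative and in fact an abelian variety: I would invoke the uniruledness dichotomy behind Lemma \ref{lemma:automorphism_group_zero_can_class}, namely that the affine part of $\textup{Aut}^\circ(X)$ forces uniruledness, to rule out an affine factor of $G$ acting along the leaves — the leaves are fibers of a canonical foliation with $K_\sG \sim_\mathbb{Q} 0$, and by Proposition \ref{prop:abundance_alg_int} abundance holds, so a general leaf $F$ has $K_F \equiv 0$ and cannot be uniruled, which prevents $\mathbb{G}_a$ or $\mathbb{G}_m$ from acting with dense orbits on it.

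I would then produce the splitting. Passing to a finite \'etale cover $f \colon X_1 \to X$ corresponding to the torsion in the Picard group and to the component group of the relevant automorphism group, I would arrange that the abelian variety $A := G$ (or a quotient isogenous to it) acts on $X_1$ with the action being free in the leaf direction, so that $X_1 \to X_1/A =: X_1'$ realizes the leaf space and the action exhibits $X_1$ as a principal $A$-bundle, at least birationally; the weak regularity of $\sG$ (via Theorem \ref{thm:regular_foliation_morphism}, which gives that $\sG$ is induced by an honest equidimensional morphism onto the space of leaves) is what upgrades this to an actual product decomposition $X_1 \cong A \times X_1'$ with $f^{-1}\sG = T_{A\times X_1'/X_1'}$. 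I expect the \textbf{main obstacle} to be precisely this last descent/splitting step: showing that the $A$-action on $X_1$ is free (not merely with stabilizers that are finite subgroups of $A$) and that the bundle $X_1 \to X_1/A$ is analytically, hence algebraically, trivial after the \'etale cover. This is where one typically needs both the triviality of $\det\sG$ to kill the obstruction class in $H^1$ and the regularity of $\sG$ to guarantee that no leaf degenerates, so that the quotient map is a genuine principal homogeneous space which becomes trivial over a suitable \'etale cover.
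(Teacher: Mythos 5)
Your skeleton matches the paper's proof: integrate $H^0(X,\sG)\subseteq H^0(X,T_X)$ to a connected subgroup $H\subseteq \textup{Aut}^\circ(X)$, show that $H$ is an abelian variety by exploiting that the general leaves are not uniruled, and then split $H$ off after a finite \'etale cover. However, two of your steps have genuine gaps. The first is where you assert that the fields $\partial_1,\ldots,\partial_r$ ``generate a connected algebraic subgroup $G\subseteq\textup{Aut}^\circ(X)$ of dimension at most $r$''. A complex Lie subalgebra of $\textup{Lie}\,\textup{Aut}^\circ(X)$ need not integrate to a closed, let alone algebraic, subgroup: a line of irrational slope in the Lie algebra of an abelian surface integrates to a Zariski-dense analytic subgroup (there the foliation is $\sO_X$-trivial but not algebraically integrable, so algebraic integrability must enter exactly at this point). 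If you try to repair this by taking $G$ to be the Zariski closure of the analytic subgroup $H$, then a priori $\textup{Lie}\,G\supsetneq H^0(X,\sG)$, the $G$-orbits are no longer tangent to $\sG$, and your argument that an affine factor would uniruled the leaves no longer applies as stated. The paper settles this point through the leaves themselves: by the family of leaves (\ref{family_leaves}) and Lemma \ref{lemma:canonical_foliation_versus_lc_pairs}, the normalization $F$ of a general leaf closure has canonical singularities with $K_F\sim_\mathbb{Z}0$ and $T_F\cong\sO_F^{\oplus\dim F}$; hence $\textup{Aut}^\circ(F)$ is an abelian variety of dimension exactly $r$ by Lemma \ref{lemma:automorphism_group_zero_can_class}, and the restriction morphism $H\to\textup{Aut}^\circ(F)$ is an isomorphism. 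That single isomorphism is what yields closedness, algebraicity, projectivity and commutativity of $H$ all at once --- precisely the facts you assume or leave open (you concede you ``would need to see that $\mathfrak{g}$ is in fact abelian'').

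The second gap is the splitting step, which you yourself flag as the main obstacle and for which your proposed mechanism would fail. Freeness of the action cannot be proved, because it is false under the hypotheses of the proposition: let $E$ be an elliptic curve, $\tau\in E$ a $2$-torsion point, let $\mathbb{Z}/2$ act on $E\times\mathbb{P}^1$ by $(x,(u:v))\mapsto(x+\tau,(v:u))$, and let $X$ be the (\'etale) quotient. The translation action of $E$ descends to a faithful action on $X$ whose orbit foliation $\sG$ satisfies $\sG\cong\sO_X$ (the invariant vector field of $E$ is invariant under the covering group, hence descends) and is canonical, yet $E$ has stabilizer $\{0,\tau\}$ along the two curves lying over the fixed points of the involution on $\mathbb{P}^1$. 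Moreover, trivializing the quotient after a finite cover is not governed by $\det\sG$ or a class killed in $H^1$; it is exactly the content of Brion's theorem on faithful actions of abelian varieties on normal projective varieties \cite{brion_action}, which directly produces an $H$-equivariant finite \'etale cover $H\times X_1\to X$, and this citation is how the paper concludes. Without it, or an equivalent statement (e.g.\ that $X$ is the associated bundle $(H\times X_1)/F$ for some finite subgroup $F\subset H$), your argument does not close. Note finally that Theorem \ref{thm:regular_foliation_morphism}, which you invoke for the descent, is not available here: it requires $X$ to be $\mathbb{Q}$-factorial with klt singularities and $\sG$ weakly regular, none of which is among the hypotheses of the proposition.
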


\begin{proof}
Let $\psi\colon Z \to Y$ be the family of leaves, and let $\beta\colon Z \to X$ be the natural morphism (see \ref{family_leaves}). Let also $F$ be a general fiber of $\psi$.
Then $K_{\beta^{-1}\sG}\sim_\mathbb{Z}0$ by Lemma \ref{lemma:canonical_foliation_versus_lc_pairs} and Remark \ref{rem:family_leaves_cartier}. Moreover, $F$ has canonical singularities. By Example \ref{example:canonical_class_foliation} and the adjunction formula, we have $K_F\sim_{\mathbb{Z}}{K_{\beta^{-1}\sG}}_{|F}\sim_\mathbb{Z}0$.

The same argument used in the proof of \cite[Lemma 3.2]{fano_fols} (see also \cite[Remark 3.8]{fano_fols}) shows that the dual
map $\Omega_X^{[1]} \to \sG^*$ gives a generically surjective morphism $\Omega_{Z/Y}^{[1]} \to \beta^*\sG^*$ and 
a commutative diagram

\begin{center}
\begin{tikzcd}[row sep=large]
\beta^*\Omega_X^{1} \ar[d]\ar[rr, "{d\beta}"] & & \Omega_Z^{1}\ar[r, twoheadrightarrow] & \Omega_{Z/Y}^{1}\ar[d]\\
\beta^*\sG^*  &  & & \Omega_{Z/Y}^{[1]}\ar[lll]
\end{tikzcd}
\end{center}

\noindent 
This immediately implies that $T_F\cong \sO_F^{\oplus \dim F}$ since $K_F \sim_\mathbb{Z}0$.
By Lemma \ref{lemma:automorphism_group_zero_can_class}, we see that
the neutral component $\textup{Aut}^\circ(F)$ of the automorphism group $\textup{Aut}(F)$ of $F$ is an abelian variety.

Let $\textup{Aut}^\circ(X)$ denote the neutral component of $\textup{Aut}(X)$, and let 
$H \subseteq \textup{Aut}^\circ(X)$ be the connected complex Lie subgroup
with Lie algebra $H^0(X,\sG)\subseteq H^0(X,T_X)$.
Note that $\beta(F)$ is invariant under the action of $H$, and thus $H$ also acts on the normalization $F$ of $\beta(F)$.
The induced morphism of complex Lie groups $H \subseteq \textup{Aut}^\circ(F)$ is then an isomorphism
since the tangent map
$$\textup{Lie}\,H=H^0(X,\sG) \to H^0(F,T_F)=\textup{Lie}\,\textup{Aut}^\circ(F)$$
is surjective.  In particular, $H$ is a closed (projective) algebraic subgroup of $\textup{Aut}^\circ(X)$.

By \cite[Proof of Theorem 1.2, page 10]{brion_action}, there exist 
a normal projective variety $X_1$, and a $H$-equivariant finite \'etale cover 
$f\colon H \times X_1 \to X$, where $H$ acts 
trivially on $X_1$ and
diagonally on $H \times X_1$. 
In particular, we have $f^{-1}\sG=T_{H\times X_1/X_1}$, completing the proof of the proposition.
\end{proof}

Next, we consider the special case where the foliation is induced by a morphism equipped with a flat
connection.

\begin{lemma}\label{lemma:alg_int_zer_can_class}
Let $X$ be a normal complex quasi-projective variety, and let $\phi\colon X \to Y$ be a projective equidimensional morphism 
with connected fibers onto a smooth quasi-projective variety. 
Suppose that $X$ has klt singularities over the generic point of $Y$, and that $\phi$ has reduced fibers over codimension one points in $Y$.
Suppose furthermore that $K_{X/Y}$ is relatively numerically trivial and that there exists a foliation $\sE$ on $X$ such that $T_X = T_{X/Y}\oplus\sE$.
Then there exist complex varieties $B$ and $F$, as well as a 
quasi-\'etale cover $f\colon B \times F \to X$ such that 
$T_{B\times F/B}=f^{-1}T_{X/Y}$. Moreover, $B$ is smooth and quasi-projective, $F$ is projective with canonical singularities, and $K_F\sim_\mathbb{Z}0$.
\end{lemma}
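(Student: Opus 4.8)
The plan is to exploit the flat connection $\sE$ to show that $\phi$ is isotrivial and that it splits after a quasi-\'etale base change, treating the abelian and the rigid directions of the fibres separately. Throughout I use that all the hypotheses---the splitting $T_X=T_{X/Y}\oplus\sE$, relative numerical triviality of $K_{X/Y}$, and the singularity conditions---are stable under quasi-\'etale covers (Fact \ref{fact:quasi_etale_cover_and_singularities} and Corollary \ref{cor:regular_quasi_etale}).

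First I would reduce to the case $K_F\sim_\mathbb{Z}0$ with $F$ canonical, where $F$ denotes a general fibre. Since $X$ is klt over the generic point of $Y$ and $K_{X/Y}$ is relatively numerically trivial, adjunction gives that $F$ is klt with $K_F\equiv 0$, hence $K_F$ is torsion by \cite[Corollary V 4.9]{nakayama04}. Replacing $X$ by the index one cyclic cover associated to $K_{X/Y}$, which is quasi-\'etale (see \cite[Definition 2.52]{kollar_mori}), I may assume $K_F\sim_\mathbb{Z}0$ and $F$ canonical. Because the tangent map identifies $\sE\cong\phi^*T_Y$ and $\sE$ is involutive, local lifts of commuting vector fields on $Y$ commute, their brackets lying in $T_{X/Y}\cap\sE=0$; thus over the open locus $Y^\circ\subseteq Y$ where $\phi$ is smooth, $\sE$ is a complete Ehresmann connection on the proper submersion $\phi$. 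This yields local analytic triviality, hence isotriviality, together with the flat bundle description $X^\circ\cong\widetilde{Y^\circ}\times_\rho F$ for a holonomy representation $\rho\colon\pi_1(Y^\circ)\to\textup{Aut}(F)$.

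Next I would analyse $\rho$ through the structure of $\textup{Aut}(F)$. By Lemma \ref{lemma:automorphism_group_zero_can_class}, $A:=\textup{Aut}^\circ(F)$ is an abelian variety. A relatively ample line bundle on $X$ restricts to an ample $L$ on $F$ whose numerical class, being the restriction of a global class on $X$, is a monodromy invariant section of $R^2\phi_*\mathbb{Z}$; hence $\rho$ lands in the stabiliser of $[L]$ in $\textup{Aut}(F)$, whose identity component lies in $A$ and whose component group is finite. A quasi-\'etale cover $Y_1\to Y$ then brings the holonomy into $A$; here the hypothesis that $\phi$ has reduced fibres over codimension one points is essential, since it kills the local monodromy around boundary divisors and guarantees that the cover is \'etale in codimension one. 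Writing $X'$ for the resulting total space, the fibrewise $A$-action on $\widetilde{Y_1^\circ}\times F$ descends (because $A$ is abelian it commutes with the translation holonomy) to a global action of $A$ on $X'$, giving an inclusion $A\hookrightarrow\textup{Aut}^\circ(X')$ and a subfoliation $\sG_A\cong\sO_{X'}^{\oplus\dim A}$ of $T_{X'/Y_1}$. As $\sG_A$ is not uniruled it is canonical (Proposition \ref{proposition:canonical_versus_uniruled}), and Proposition \ref{proposition:trivial_tangent_bundle} produces an \'etale cover $A_0\times X_1\to X'$ with $f^{-1}\sG_A=T_{A_0\times X_1/X_1}$, splitting off the abelian directions.

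It then remains to trivialise the residual family $X_1\to Y_1$, whose general fibres $W$ now satisfy $\textup{Aut}^\circ(W)=0$. Exactly as above the induced monodromy $\pi_1\to\textup{Aut}(W)$ preserves the numerical class of the induced polarisation, and the stabiliser of an ample class in a group with trivial identity component is finite; so this monodromy has finite image and is killed by a further quasi-\'etale cover $B\to Y_1$, over which $X_1\times_{Y_1}B\cong B\times W$. Reassembling, $F:=A_0\times W$ is projective and canonical with $K_F\sim_\mathbb{Z}0$, the composite $f\colon B\times F\to X$ is quasi-\'etale with $T_{B\times F/B}=f^{-1}T_{X/Y}$, and $B$ is smooth as a quasi-\'etale cover of the smooth variety $Y$. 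The main obstacle is the holonomy analysis of the third paragraph: the abelian (translation) part of the monodromy need not be finite, so it cannot be removed by a naive finite base change, and this is precisely what forces the detour through the fibrewise $A$-action and Proposition \ref{proposition:trivial_tangent_bundle}; keeping every cover quasi-\'etale is what makes the reduced-fibre hypothesis indispensable.
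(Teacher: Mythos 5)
Your strategy runs parallel to the paper's: read $\sE$ as a flat connection, deduce analytic local triviality of $\phi$, split off the abelian automorphism directions of the fibre, and kill the remaining monodromy, which is finite once the fibre has no vector fields. The genuinely different ingredient is the abelian step. You propose to put the holonomy into $A=\textup{Aut}^\circ(F)$ via the stabiliser of an ample class (finite component group) and then descend the constant $A$-action using commutativity of $A$; the paper instead assembles the groups $\textup{Aut}^\circ(X_y)$ into a group scheme $\sA\to Y$ (SGA3/FGA), shows it is an abelian scheme, trivialises it after a finite \'etale base change using the fine moduli space of polarized abelian varieties with level structure, and only then invokes Brion's theorem. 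The paper's detour is not cosmetic: it is what keeps every action and trivialisation \emph{algebraic}. In your route the descended $A$-action and the product decompositions are a priori only holomorphic, and since the base $Y$ is merely quasi-projective, GAGA is unavailable; converting analytic triviality into algebraic triviality is exactly what the fine-moduli argument achieves, and your proposal has no substitute for it.

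Beyond that structural issue, there are concrete gaps. First, the opening reduction is unjustified: "the index one cyclic cover associated to $K_{X/Y}$" need not exist, since relative numerical triviality only makes $K_F$ torsion fibrewise; it does not make $K_{X/Y}$ torsion, nor even $\mathbb{Q}$-Cartier, on $X$, and the cyclic cover construction requires $mK_{X/Y}\sim_\mathbb{Z}0$. The paper takes the index-one cover only at the very end, applied to the single projective fibre $F_2$ once the family is a product; without that rearrangement your conclusions that $F$ is canonical with $K_F\sim_\mathbb{Z}0$ are unproved. Second, your locus "$Y^\circ\subseteq Y$ where $\phi$ is smooth" is empty precisely in the case of interest: if the fibres are singular, every fibre meets $\textup{Sing}(X)$, so $\phi$ is a proper submersion over no point of $Y$ and the Ehresmann argument yields nothing. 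The correct tool is Kaup's theorem \cite{kaup}: the isomorphism $\sE\cong\phi^*T_Y$ (obtained by reflexivity, using that the smooth locus of $\phi$ has complement of codimension at least two in $X$ --- this is where the reduced-fibre hypothesis actually enters, not in killing "boundary monodromy") integrates to a locally trivial analytic fibration over \emph{all} of $Y$, so the monodromy is defined on $\pi_1(Y)$. Third, Propositions \ref{proposition:canonical_versus_uniruled} and \ref{proposition:trivial_tangent_bundle} are stated for projective varieties and cannot be applied to the quasi-projective total space $X'$; one must invoke \cite[Proof of Theorem 1.2]{brion_action} directly, which is valid in the quasi-projective setting and is what the paper does. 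Fourth, after a single splitting the residual fibres $W$ need not satisfy $h^0(W,T_W)=0$: finite \'etale covers can create vector fields (bielliptic surfaces already do), so the splitting must be iterated finitely many times, the fibre dimension dropping at each step, exactly as in the paper; moreover producing the residual family over a new base requires the rigidity lemma together with a purity argument (again using reduced fibres) to see that the new base cover is \'etale, a step your proposal omits.
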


\begin{proof}
Let $X^\circ \subseteq X$ be the open set where 
${\phi}_{|X^\circ}$ is smooth. Notice that $X^\circ$ has complement of codimension at least two since $\phi$ 
is equidimensional with reduced fibers over codimension one points in $Y$ by assumption. 
The restriction of the tangent map 
$$T{\phi}_{|X^\circ}\colon T_{X^\circ}\to ({\phi}_{|X^\circ})^*T_{Y}$$
to $\sE_{|X^\circ} \subseteq T_{X^\circ}$ 
then induces an isomorphism $\sE_{|X^\circ}\cong ({\phi}_{|X^\circ})^*T_{Y}$. Since 
$\sE_{|X^\circ}$ and $({\phi}_{|X^\circ})^*T_{Y}$
are both reflexive sheaves, we must have
$\sE\cong \phi^*T_{Y}.$
Thus $\sE$ yields a flat connection on $\phi$. 
A classical result of complex analysis then says that $\phi$ is a locally trivial analytic fibration for the analytic topology
(see \cite{kaup}).

Let $F$ be any fiber of $\phi$. Note that $F$ has klt singularities by \cite[Corollary 4.9]{kollar97}.
By the adjunction formula, $K_F\equiv 0$. 
Let $y\in Y$, and denote by $X_y\cong F$ the fiber $\phi^{-1}(y)$.
Let $\textup{Aut}^\circ(X_y)$ denotes the neutral component of 
the automorphism group $\textup{Aut}(X_y)$ of $X_y$. By Lemma \ref{lemma:automorphism_group_zero_can_class}, 
$\textup{Aut}^\circ(X_y)$ is an abelian variety. 
Recall that $\dim \textup{Aut}^\circ(X_y) = h^0(F,T_F)$.
By
\cite[Expos\'e VI$_{\textup{B}}$, Proposition 1.6]{sga3}, the group scheme $\textup{Aut}(X/Y)$ is
smooth over $Y$. 
Note that the existence of $\textup{Aut}(X/Y)$ is guarantee by \cite{fga221}.
Now, recall from \cite[Expos\'e VI$_{\textup{B}}$, Th\'eor\`eme 3.10]{sga3} 
that the algebraic groups
$\textup{Aut}^0(X_y)$ fit together to form a group scheme $\sA$
over $Y$. Note that $\sA$ is quasi-projective over $Y$ (see \cite{fga221}). Applying \cite[Corollaire 15.7.11]{ega28}, we see that $\sA$ is proper, and hence projective over $Y$. In particular, $\sA$ is an abelian scheme over $Y$.
Since $\sA$ is locally trivial, there exist 
an abelian variety $A$, and a finite \'etale
cover $Y_1 \to Y$ such that $\sA \times_Y Y_1 \cong A \times Y_1$
as group schemes over $Y_1$. 
This follows from the fact that there is a fine moduli scheme for polarized abelian varieties of dimension $g$, with level $N$ structure and polarization of degree $d$ provided that $N$ is large enough.
In particular, $A$ acts faithfully on $X_1:=X\times_Y Y_1$. 
By \cite[Proof of Theorem 1.2, page 10]{brion_action}, there exist a finite \'etale cover 
$X_2$
of $X_1$ 
equipped with a faithful action of $A$, and an $A$-isomorphism
$X_2 \cong A \times Z_2$ for some quasi-projective variety $Z_2$, where $A$ acts 
trivially on $Z_2$ and
diagonally on $A \times Z_2$. 
Using the rigidity lemma, we see that there exist a projective morphism with connected fibers $\phi_2\colon Z_2 \to Y_2$
onto a normal variety $Y_2$, and a commutative diagram

\begin{center}
\begin{tikzcd}[column sep=large]
X_2 \cong A \times Z_2 \ar[r, "f_2"]\ar[d] & X_1 \ar[dd, "\phi_1"]\ar[r, "f_1"] & X \ar[dd, "\phi"]\\
Z_2\ar[d, "\phi_2"'] & &\\
Y_2 \ar[r] & Y_1 \ar[r] & Y.
\end{tikzcd}
\end{center}

\noindent Since $\phi_1$ has reduced fibers over codimension one points in $Y_1$, the finite morphism $Y_2 \to Y_1$ is \'etale in codimension one, hence \'etale by the Nagata-Zariski purity theorem.
One also readily checks that $\phi_2$ has reduced fibers over codimension one points in $Y_2$, and that $Z_2$ has klt singularities over the generic point of $Y_2$. Moreover, we have $K_{A \times Z_2/Y_2}\equiv_{Y_2} 0$, and thus we also have 
$K_{Z_2/Y_2}\equiv_{Y_2} 0$. 
Finally, $\big((f_1\circ f_2)^{-1}\sE\big)_{|\{0_A \times Z_2\}}$ induces a flat connection on $\phi_2$.

Repeating the process finitely many times, if necessary, we may therefore assume without loss of generality that 
any fiber $F_2$ of $\phi_2$ satisfies $h^0(F_2,T_{F_2})=0$.
Let $H_2$ be an ample divisor on $Z_2$. Since $\phi_2$ admits a flat connection, it is given by 
a representation
$$\pi_1(Y_2,y) \to \textup{Aut}(F_2,{H_2}_{|F_2})$$
where $y:=\phi_2 (F_2)$, and where $\textup{Aut}(F_2,{H_2}_{|F_2})$ denotes the group
$\{u \in \textup{Aut}(F_2)\,|\,u^*{H_2}_{|F_2}\equiv {H_2}_{|F_2}\}$.
Now, recall that $\textup{Aut}(F_2,{H_2}_{|F_2})$ is an open and closed algebraic subgroup of finite type
of $\textup{Aut}(F_2)$, and hence finite since $\dim \textup{Aut}(F_2) = h^0(F_2,T_{F_2})=0$.
Therefore, replacing $Y_2$ with a finite \'etale cover, if necessary, we may assume that 
$Z_2 \cong Y_2 \times F_2$ as varieties over $Y_2$. Note that $K_{F_2} \equiv 0$.
By \cite[Corollary V 4.9]{nakayama04}, $K_{F_2}$ is torsion. Replacing $F_2$ with its canonical index one cover (see \cite[Definition 2.52]{kollar_mori}) we may therefore assume that $K_{F_2} \sim_{\mathbb{Z}} 0$.
This finishes the proof of the lemma.
\end{proof}

The same argument used in the proof of Lemma \ref{lemma:alg_int_zer_can_class} shows that the following holds.

\begin{lemma}\label{lemma:alg_int_zer_can_class2}
Let $X$ be a normal complex quasi-projective variety, and let $\phi\colon X \to Y$ be a projective equidimensional morphism with connected fibers onto a smooth quasi-projective variety. 
Suppose that $X$ has klt singularities over the generic point of $Y$, and that $\phi$ has reduced fibers over codimension one points in $Y$.
Suppose furthermore that $K_{X/Y}$ is relatively numerically trivial and that there exists a foliation $\sE$ on $X$ such that $T_X = T_{X/Y}\oplus\sE$.
Let $F$ be any fiber of $\phi$. Suppose finally that $h^0(F,T_F)=0$.
Then there exists a smooth quasi-projective variety $Y_1$ as well as a finite \'etale cover $Y_1 \to Y$ such that $Y_1 \times_Y X \cong Y_1 \times F$ as varieties over $Y_1$.
\end{lemma}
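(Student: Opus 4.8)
The plan is to reproduce the last part of the proof of Lemma \ref{lemma:alg_int_zer_can_class}, the point being that the extra hypothesis $h^0(F,T_F)=0$ now holds by assumption rather than having to be arranged through an abelian-scheme reduction; consequently the argument is considerably shorter, and in particular no analogue of the iterated construction of $\phi_2\colon Z_2\to Y_2$ is needed.

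First I would argue exactly as in the opening of Lemma \ref{lemma:alg_int_zer_can_class} that $\sE$ induces a flat connection on $\phi$. Let $X^\circ\subseteq X$ be the open set where $\phi_{|X^\circ}$ is smooth; since $\phi$ is equidimensional with reduced fibers over codimension one points in $Y$, the complement of $X^\circ$ has codimension at least two. Restricting the tangent map $T\phi_{|X^\circ}$ to $\sE_{|X^\circ}$ yields an isomorphism $\sE_{|X^\circ}\cong (\phi_{|X^\circ})^*T_Y$, and reflexivity of both sheaves upgrades this to $\sE\cong\phi^*T_Y$ on all of $X$. Thus $\sE$ defines a flat holomorphic connection on $\phi$, and $\phi$ is a locally trivial analytic fibration with fiber $F$ (see \cite{kaup}).

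Next I would encode this local triviality as a monodromy representation $\rho\colon\pi_1(Y,y)\to\textup{Aut}(F)$, where $y:=\phi(F)$, and show that its image is finite. Fixing an ample Cartier divisor $H$ on the quasi-projective variety $X$ and restricting gives an ample class $H_{|F}$ on the projective fiber $F$; since the class of the restriction of $H$ to a varying fiber moves continuously in the discrete N\'eron--Severi lattice, the monodromy preserves its numerical class, so $\rho$ factors through $\textup{Aut}(F,H_{|F})=\{u\in\textup{Aut}(F)\mid u^*H_{|F}\equiv H_{|F}\}$. This is an open and closed algebraic subgroup of finite type of $\textup{Aut}(F)$, and it is finite because $\dim\textup{Aut}(F)=h^0(F,T_F)=0$ by hypothesis. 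Hence $\rho$ has finite image. Taking $Y_1\to Y$ to be the finite \'etale cover corresponding to the kernel of $\rho$ (a finite-index subgroup of $\pi_1(Y,y)$), the variety $Y_1$ is again smooth and quasi-projective, the pulled-back family $Y_1\times_Y X\to Y_1$ has trivial monodromy, and I would conclude that it is the trivial family $Y_1\times F$ over $Y_1$.

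The step I expect to require the most care is the finiteness of the monodromy together with the passage from analytic local triviality to an algebraic isomorphism $Y_1\times_Y X\cong Y_1\times F$. One must verify that the polarization on the fibers behaves well in the quasi-projective setting, so that $\rho$ genuinely lands in $\textup{Aut}(F,H_{|F})$, and that a locally trivial family with trivial (finite) monodromy over the \'etale base $Y_1$ is algebraically, and not merely analytically, a product; both are handled by the same reasoning as in the final paragraph of the proof of Lemma \ref{lemma:alg_int_zer_can_class}.
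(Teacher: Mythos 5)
Your proposal is correct and follows essentially the same route as the paper: the paper proves this lemma by observing that "the same argument" as in Lemma \ref{lemma:alg_int_zer_can_class} applies, and since $h^0(F,T_F)=0$ is now a hypothesis, that argument reduces precisely to the steps you give — the isomorphism $\sE\cong\phi^*T_Y$ giving a flat connection and analytic local triviality via \cite{kaup}, the monodromy representation into $\textup{Aut}(F,H_{|F})$, finiteness of that group because $\dim\textup{Aut}(F)=h^0(F,T_F)=0$, and trivialization after the corresponding finite \'etale cover of $Y$. You correctly identified that the abelian-scheme reduction in the earlier lemma exists only to arrange the vanishing of $h^0(F,T_F)$ and can be skipped here.
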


\begin{rem}
In the setup of Lemma \ref{lemma:alg_int_zer_can_class} or Lemma \ref{lemma:alg_int_zer_can_class2} above, suppose in addition that $X$ is projective with klt singularities, and that $K_\sG$ is Cartier. Then the existence of $\sE$ follows from the assumption $K_{X/Y}\equiv 0$ by Corollary \ref{corollary:compact_leaf_holomorphic_form} together with 
Proposition \ref{prop:generic_smoothness}.
\end{rem}

The proof of Theorem \ref{thm:global_reeb_stability} relies in part on the following descent result for foliations.

\begin{lemma}\label{lemma:infinitesimal_unicity_BB}
Let $X$ be a normal complex variety, and let $\sG$ be a foliation on $X$. 
Let also $Y$ be a normal variety, let $A$ be an abelian variety and let $B$ be a projective variety $B$ with canonical singularities, $K_B\sim_\mathbb{Z}0$, and $\wt q(B)=0$. Suppose that there is a finite cover 
$f \colon Y\times A \times B \to X$
such that $f^{-1}\sG = T_{Y\times A \times B/Y}$.
Suppose in addition that any codimension one irreducible component of the branch locus of $f$ is $\sG$-invariant.
Then there exist foliations $\sG_1 \subseteq \sG$ and $\sG_2 \subseteq \sG$ such that 
$f^{-1}\sG_1=T_{Y\times A \times B/Y\times B}$ and $f^{-1}\sG_2=T_{Y\times A \times B/Y\times A}$.
\end{lemma}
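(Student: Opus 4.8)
The plan is to descend the obvious product decomposition of $f^{-1}\sG$ on the cover $W:=Y\times A\times B$ down to $X$. Writing $p_A\colon W\to A$ and $p_B\colon W\to B$ for the projections, on $W$ we have
$$f^{-1}\sG = T_{W/Y} = T_{W/Y\times B}\oplus T_{W/Y\times A} = p_A^{*}T_A\oplus p_B^{*}T_B,$$
and I set $\sH_1:=T_{W/Y\times B}=p_A^{*}T_A$ (the $A$-direction, with leaves the translates $A\times\{b\}$) and $\sH_2:=T_{W/Y\times A}=p_B^{*}T_B$ (the $B$-direction, with leaves the $\{a\}\times B$). Both are manifestly foliations on $W$ and $\sH_1\oplus\sH_2=f^{-1}\sG$. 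The entire content of the lemma is therefore that these two subsheaves descend to subfoliations $\sG_1,\sG_2\subseteq\sG$ on $X$; granting descent, the identities $f^{-1}\sG_i=\sH_i$ and the involutivity of $\sG_i$ are immediate.

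The descent hinges on two vanishing statements, both consequences of $\wt q(B)=0$. First, since $B$ is canonical with $K_B\sim_\mathbb{Z}0$, Remark \ref{remark:augmented_irregularity_vector_fields} gives $h^0(B,T_B)=q(B)\le\wt q(B)=0$; hence by Künneth $H^0(A\times B,T_{A\times B})=H^0(A,T_A)$, so $\textup{Aut}^\circ(A\times B)=A$ by Lemma \ref{lemma:automorphism_group_zero_can_class}. Consequently $\sH_1$ admits an intrinsic description as the foliation by orbits of the fibrewise neutral automorphism group of the leaves of $f^{-1}\sG$: it is the saturation of the image of $\pi^{*}\pi_{*}(f^{-1}\sG)\to f^{-1}\sG$, where $\pi\colon W\to Y$ is the map to the space of leaves (see \ref{family_leaves}), a datum attached to the foliation $f^{-1}\sG$ alone. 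Second, $q(B)=0$ forces every morphism $B\to A$ to be constant, so the only automorphisms of a leaf $A\times B$ preserving both the $A$-orbits and the quotient $A\times B\to B$ are of the form $(a,b)\mapsto(a+c,\tau(b))$; these preserve $\sH_2=p_B^{*}T_B$ as well. In other words, both $\sH_1$ and $\sH_2$ are canonically determined by $f^{-1}\sG$, independently of the chosen product presentation of $W$.

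With this in hand I would run the standard descent. Let $R$ be the normalization of $W\times_X W$, with projections $p_1,p_2\colon R\to W$; since $\sH_1,\sH_2$ are intrinsic to $f^{-1}\sG$ and $p_1^{-1}f^{-1}\sG=p_2^{-1}f^{-1}\sG$ (both equal the pull-back $h^{-1}\sG$ along $h:=f\circ p_1=f\circ p_2$), the two pull-backs of each $\sH_i$ coincide, which is exactly the descent datum producing $\sG_i\subseteq\sG$ over the locus where $f$ is étale. To extend $\sG_1,\sG_2$ across the branch locus I would invoke the hypothesis that every codimension one component of the branch locus is $\sG$-invariant: along such a component the local covering automorphisms preserve $f^{-1}\sG$, hence preserve the intrinsically defined $\sH_i$, so the descended subsheaves extend as saturated subsheaves of $\sG$ in codimension one as well. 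Finally one verifies $f^{-1}\sG_i=\sH_i$ by comparing saturated reflexive subsheaves of $f^{-1}\sG$ that agree off the $\sG$-invariant ramification divisor, using Lemma \ref{lemma:pull_back_fol_and_finite_cover}.

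I expect the main obstacle to be precisely this last point: controlling the two subfoliations along the ramification divisor. Away from the branch locus everything is étale and descent is formal, but showing that $\sG_1,\sG_2$ and the equalities $f^{-1}\sG_i=\sH_i$ persist across the $\sG$-invariant branch divisors — where $f$ is genuinely ramified — is where the hypothesis on the branch locus and the canonicity of the decomposition (via $h^0(B,T_B)=0$ and the constancy of all maps $B\to A$) must be combined carefully.
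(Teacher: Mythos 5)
Your decomposition $\sH_1\oplus\sH_2=f^{-1}\sG$ and the two vanishing inputs you isolate ($h^0(B,T_B)=0$ and the constancy of morphisms $B\to A$, both consequences of $\wt q(B)=0$) are the right ingredients, and descent via the fiber product is a legitimate variant of the paper's reduction to a Galois closure combined with the descent criterion of \cite[Lemme 2.13]{claudon_bourbaki}. The genuine gap is in the step you declare formal: proving the descent datum $p_1^{-1}\sH_i=p_2^{-1}\sH_i$ on $R$, the normalization of $(Y\times A\times B)\times_X(Y\times A\times B)$. Your ``intrinsic characterization'' of $\sH_1$ and $\sH_2$ is verified only against automorphisms of a leaf $A\times B$ (equivalently, against re-choices of the product presentation of $W$). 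But $f$ is a finite, in general non-Galois and branched, cover: a component $G$ of $R$ lying over a general leaf maps to $A\times B$ by the two projections as finite covers of degree possibly $>1$, and this already happens over the locus where $f$ is \'etale (whenever $f$ has degree $>1$ along leaves, or identifies distinct leaves). What \'etale descent actually compares are the transition maps between sheets, which are local biholomorphisms that can perfectly well mix the $A$- and $B$-directions; the constancy of morphisms $B\to A$ is a global statement and cannot be applied to them directly.

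To close the gap one must control the global correspondences $G$: show that $G$ is normal and klt with $K_G\sim_{\mathbb{Z}}0$ (here the hypothesis on the branch locus enters, guaranteeing that $G\to A\times B$ is quasi-\'etale over a general leaf), apply Theorem \ref{thm:kawamata_abelian_factor} to write $G\cong A_1\times B_1$ with $A_1$ abelian and $\wt q(B_1)=0$, and check that both projections $G\to A\times B$ respect these product structures: the composite $A_1\times B_1\to A$ factors through $A_1$ because $q(B_1)=0$, which gives $p_j^{-1}\sH_2|_G=T_{A_1\times B_1/A_1}$, and the complementary identification $p_j^{-1}\sH_1|_G=T_{A_1\times B_1/B_1}$ then follows because the remaining off-diagonal component is a map $\sO_G^{\oplus\dim A}\to T_{A_1\times B_1/A_1}\cong pr_{B_1}^{[*]}T_{B_1}$, killed by $h^0(B_1,T_{B_1})=0$. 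Only then do the two pull-backs of $\sH_i$ agree. This is exactly the content of the paper's proof: it passes to a Galois closure $Z_1\to X$, reduces descent to invariance of $\sE_i=g^{-1}\sH_i$ under the deck transformations $\gamma$, and obtains this invariance by showing that every map $\gamma^*\sE_1\to\sE_2$ and $\gamma^*\sE_2\to\sE_1$ vanishes after restriction to the components $G\cong A_1\times B_1$ of preimages of general leaves, using $h^0(B_1,T_{B_1})=0$ and $h^0\big(B_1,\Omega^{[1]}_{B_1}\big)=0$. None of this structure theory appears in your proposal, and your closing paragraph mislocates the difficulty: it is not confined to the branch locus, it sits inside the ``formal'' \'etale descent step itself.
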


\begin{proof}
Set $Z:= Y\times A \times B$. Let $g \colon Z_1 \to Z$ be a finite cover such that the induced cover $f_1 \colon Z_1 \to X$ is Galois. We may assume without loss of generality that $g$ is quasi-\'etale away from the branch locus of $f$, so that any codimension one irreducible component of the branch locus of $f_1$ is $\sG$-invariant as well.
By Lemma \ref{lemma:pull_back_fol_and_finite_cover}, we must 
have $f_1^{-1}\sG \cong f_1^{[*]}\sG$. 

Set $\sE_1=g^{-1}T_{Y\times A \times B/Y\times B}$ and $\sE_2=g^{-1}T_{Y\times A \times B/Y\times A}$, and
let $\gamma$ be any automorphism of the covering $f_1 \colon Z_1 \to X$. In order to prove the statement, it suffices to show
that $\sE_i \subseteq T_{Z_1}$ is $\gamma$-invariant (see \cite[Lemme 2.13]{claudon_bourbaki}). 

Notice that $\sE_1\oplus\sE_2 = f_1^{-1}\sG \cong f_1^{[*]}\sG$ is $\gamma$-invariant by construction, and that $\sE_1\cong g^{[*]}T_{Y\times A \times B/Y\times B}\cong \sO_{Z_1}^{\oplus\dim A}$ and $\sE_2\cong g^{[*]}T_{Y\times A \times B/Y\times A}$ by Lemma \ref{lemma:pull_back_fol_and_finite_cover} again.
Let $F\cong A\times B$ be a general fiber of the projection $Y\times A \times B \to Y$. By general choice of $F$, any irreducible component $G$ of $g^{-1}(F)$ is normal (see \cite[Th\'eor\`eme 12.2.4]{ega28}) with
$T_G\cong {\sE_1}_{|G}\oplus{\sE_2}_{|G}$. In particular, we must have $K_G\sim_\mathbb{Z}0$.
Applying Theorem \ref{thm:kawamata_abelian_factor}, one readily checks that there exist an abelian variety $A_1$, a projective variety $B_1$ with canonical singularities, $K_{B_1}\sim_\mathbb{Z}0$, and $\wt q(B_1)=0$, and quasi-\'etale covers  
$A_1 \to A$ and $B_1 \to B$ such that $G \cong A_1\times B_1$ and such that 
the restriction of $g$ to $G$ identifies with $G \cong A_1\times B_1 \to A\times B\cong F$. Moreover,
${\sE_1}_{|G}\cong T_{A_1\times B_1/B_1}$ and ${\sE_2}_{|G}\cong T_{A_1\times B_1/A_1}$.
By Remark \ref{remark:augmented_irregularity_vector_fields}, we have $h^0(B_1,T_{B_1})=0$ and 
$h^1\big(B_1,\Omega_{B_1}^{[1]}\big)=0$. This immediately implies that any map 
$\gamma^*\sE_1 \to \sE_2$ or $\gamma^*\sE_2 \to \sE_1$ vanishes identically, completing the proof of the lemma.
\end{proof}

Before we give the proof of Theorem \ref{thm:global_reeb_stability}, we need the following auxiliary statements.

\begin{lemma}\label{lemma:produit}
Let $X$ be a normal complex projective variety, and let $\psi\colon X \to Y$ be a surjective equidimensional morphism with connected fibers onto a normal projective variety. Suppose that $X$ is $\mathbb{Q}$-factorial.
Let $F$ denotes a general fiber of $\psi$, and assume that $q(F)=0$. If 
$\psi^{-1}(Y^\circ) \cong Y^\circ \times F$ for some open set $Y^\circ \subseteq Y_{\textup{reg}}$ with complement of codimension at least two, then $X \cong Y \times F$.
\end{lemma}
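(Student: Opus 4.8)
The plan is to reconstruct $X$ from $Y$ and $F$ as a relative $\Proj$, the point being that the product structure over $Y^\circ$ propagates across the codimension-two locus by normality and equidimensionality. Fix an ample line bundle $A$ on $F$. Since $\psi$ is equidimensional and $Y\setminus Y^\circ$ has codimension at least two in the normal variety $Y$, the closed set $X\setminus\psi^{-1}(Y^\circ)$ has codimension at least two in $X$. As $X$ is normal and $\mathbb{Q}$-factorial, the line bundle $\textup{pr}_F^*A$ on $\psi^{-1}(Y^\circ)\cong Y^\circ\times F$ extends to a reflexive rank one sheaf $\sA$ on $X$ with $\sA^{[m]}$ $\mathbb{Q}$-Cartier and $\sA_{|\psi^{-1}(Y^\circ)}\cong \textup{pr}_F^*A$.

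First I would compute the relative section ring. For every $m\ge 0$ the sheaf $\sA^{[m]}$ is reflexive, hence satisfies Serre's condition $S_2$, so its sections extend across any closed subset of codimension at least two; combined with equidimensionality of $\psi$ this gives, for every open $U\subseteq Y$ with $U\setminus Y^\circ$ of codimension at least two, an identification $H^0\big(\psi^{-1}(U),\sA^{[m]}\big)=H^0\big(\psi^{-1}(U\cap Y^\circ),\sA^{[m]}\big)$. Over $Y^\circ$ the K\"unneth formula together with $H^0(Y^\circ,\sO_{Y^\circ})=\mathbb{C}$ (normality of $Y$) yields $\psi_*\sA^{[m]}_{|Y^\circ}\cong \sO_{Y^\circ}\otimes_{\mathbb{C}}H^0(F,mA)$. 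Therefore $\psi_*\sA^{[m]}$ is reflexive and agrees on $Y^\circ$ with a trivial bundle, so $\psi_*\sA^{[m]}\cong \sO_Y\otimes_{\mathbb{C}}H^0(F,mA)$. Consequently the relative section algebra satisfies $\bigoplus_{m\ge 0}\psi_*\sA^{[m]}\cong \sO_Y\otimes_{\mathbb{C}}\bigoplus_{m\ge 0}H^0(F,mA)$, whose relative $\Proj$ over $Y$ is $Y\times \Proj\big(\bigoplus_m H^0(F,mA)\big)=Y\times F$, since $A$ is ample on $F$.

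To finish I would show that $\sA$ is relatively ample over $Y$. Once this is known, the natural morphism $X\to \Proj_Y\big(\bigoplus_m\psi_*\sA^{[m]}\big)=Y\times F$ is an isomorphism and, being the identity over $Y^\circ$, it is the desired isomorphism $X\cong Y\times F$. Because relative ampleness of a line bundle for a proper morphism can be checked fibrewise, it suffices to prove that $\sA^{[k]}_{|X_y}$ is ample for every $y\in Y$, where $k$ is chosen so that $\sA^{[k]}$ is a line bundle; over $Y^\circ$ this is clear. This is where the hypothesis $q(F)=0$ enters: the vanishing $q(F)=h^1(F,\sO_F)=0$ forces $\textup{Pic}^0(F)=0$, so the polarization cannot deform continuously, and this rigidity is what should prevent the fibres of $\psi$ over $Y\setminus Y^\circ$ from degenerating and the relative polarization $\sA$ from ceasing to be relatively ample over the codimension-two locus.

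I expect this last step to be the main obstacle. Equivalently, one may form the normalized graph $\Gamma\subseteq X\times F$ of the rational projection $p\colon X\map F$ induced by $\psi^{-1}(Y^\circ)\cong Y^\circ\times F$, with induced morphisms $a\colon \Gamma\to X$ and $\pi_F\colon\Gamma\to F$; here $a$ is projective birational and an isomorphism over $\psi^{-1}(Y^\circ)$. Since $X$ is $\mathbb{Q}$-factorial, a small birational morphism onto $X$ is an isomorphism, so it is enough to prove that $a$ contracts no divisor. Any $a$-exceptional prime divisor $E$ maps into $\psi^{-1}(Y\setminus Y^\circ)$, hence $\psi\big(a(E)\big)$ has codimension at least two in $Y$; ruling out such an $E$ amounts to showing that the relative linear system spanned by $H^0(F,A)$ has no base locus in the fibres over $Y\setminus Y^\circ$ sweeping out all of $F$, which is again precisely the rigidity guaranteed by $q(F)=0$ together with $\mathbb{Q}$-factoriality. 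This fibrewise analysis over the bad locus is the crux; the reflexive pushforward computation above reduces everything to it.
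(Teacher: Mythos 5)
Your first half (the computation $\psi_*\sA^{[m]}\cong H^0\big(F,\sO_F(mA)\big)\otimes\sO_Y$ via reflexivity of the pushforward and triviality on the big open set $Y^\circ$) is sound and coincides with the second half of the paper's own proof. But the step you yourself flag as the crux --- that the reflexive extension $\sA$ of $\textup{pr}_F^*A$ is $\psi$-ample, equivalently that the evaluation maps are surjective on the fibres over $Y\setminus Y^\circ$, equivalently that the graph morphism $a\colon\Gamma\to X$ is small --- is a genuine gap, and the heuristics you offer do not fill it. The fibres of $\psi$ over $Y\setminus Y^\circ$ are merely of the expected dimension; they need not resemble $F$ at all, and $q(F)=0$ (i.e.\ $\Pic^0(F)=0$) says nothing about the restriction of $\sA$ to them. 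Likewise, $\mathbb{Q}$-factoriality does reduce you to proving that $a$ contracts no divisor, but a dimension count cannot do this: an $a$-exceptional prime divisor $E$ has fibres contained in $\{\textup{pt}\}\times F$, so it only satisfies $\dim E\le\dim a(E)+\dim F$ with $a(E)\subseteq\psi^{-1}(Y\setminus Y^\circ)$, and these constraints are perfectly compatible once $\dim F\ge 1$. Note also that your uses of the hypothesis $q(F)=0$ are misplaced: the computation of $\psi_*\sA^{[m]}$ over $Y^\circ$ is flat base change and needs no hypothesis on $F$, whereas the ampleness step, where you invoke $q(F)=0$, does not follow from it.

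The paper's proof sidesteps the problem by never extending a relative polarization across the bad locus: it starts from a divisor that is ample on all of $X$. Choose an ample Cartier divisor $H$ on $X$. Since $q(F)=0$, one has $\Pic\big(Y^\circ\times F\big)\cong\Pic\big(Y^\circ\big)\times\Pic(F)$ --- this is where the hypothesis is really used --- so $H_{|X^\circ}$ splits as $\sO_{Y^\circ}(H_{Y^\circ})\boxtimes\sO_F(H_F)$. Let $H_Y$ be the Weil divisor on $Y$ extending $H_{Y^\circ}$; then $\psi^*H_Y$ is $\mathbb{Q}$-Cartier because $X$ is $\mathbb{Q}$-factorial, hence $H_Y$ is $\mathbb{Q}$-Cartier by Lemma \ref{lemma:pull-back_Q_Cartier}. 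Now $H-\psi^*H_Y$ is automatically $\psi$-ample, since $H$ is ample and relative ampleness is unaffected by twisting by a pull-back from $Y$; therefore $X\cong\Proj_Y\bigoplus_{m\ge 0}\psi_*\sO_X\big(mm_0(H-\psi^*H_Y)\big)$, and your pushforward computation identifies the right-hand side with $Y\times F$. In short, the missing idea is to realize the relative polarization as a global ample divisor minus a pull-back from $Y$, which makes fibrewise ampleness over $Y\setminus Y^\circ$ automatic instead of the unproved crux of the argument.
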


\begin{proof}
Set $X^\circ:=\psi^{-1}(Y^\circ)$.
Let $H$ be an ample divisor on $X$. We have $\Pic\big(Y^\circ \times F\big)\cong \Pic\big(Y^\circ\big) \times \Pic(F)$ since $q(F)=0$ by assumption. Thus there exist Cartier divisors $H_{Y^\circ}$ and $H_F$ on $Y^\circ$ and $F$ respectively
such that $\sO_{X^\circ}\big(H_{|X^\circ}\big)\cong \sO_{Y^\circ}\big(H_{Y^\circ}\big) \boxtimes \sO_F\big(H_F\big)$. 
Let $H_Y$ be the Weil divisor on $Y$ that restricts to $H_{Y^\circ}$ on $Y^\circ$.
By Lemma \ref{lemma:pull-back_Q_Cartier}, $H_Y$ is $\mathbb{Q}$-Cartier. Notice that $m_0\big(H-\psi^*H_Y\big)$ is relatively ample over $Y$ for any positive integer $m_0$ such that $m_0\big(H-\psi^*H_Y\big)$ is Cartier.
It follows that 
$$X \cong \Proj_Y \bigoplus_{m \ge 0}\psi_*\sO_X\big(m m_0\big(H-\psi^*H_Y\big)\big).$$
On the other hand, by \cite[Corollary 1.7]{hartshorne80}, the coherent sheaves 
$\psi_*\sO_X\big(m m_0\big(H-\psi^*H_Y\big)\big)$ are reflexive, and restrict to 
$H^0\big(F,\sO_F(mm_0H_F)\big)\otimes\sO_{Y^\circ}$ by assumption. Therefore, there is an isomorphism of sheaves of graded $\sO_Y$-algebras
$$\bigoplus_{m \ge 0}\psi_*\sO_X\big(m m_0\big(H-\psi^*H_Y\big)\big) \cong 
\Big(\bigoplus_{m \ge 0}H^0\big(F,\sO_F(mm_0H_F)\big) \Big)\otimes\sO_Y.$$
This implies that  
$X \cong Y \times F$, finishing the proof of the lemma.
\end{proof}

\begin{lemma}[{\cite[Lemma 4.6]{bobo}}]\label{lemma:product_versus_contraction}
Let $X_1$, $X_2$ and $Y$ be complex normal projective varieties. Suppose that there exists a surjective morphism with connected fibers $\beta\colon X_1 \times X_2 \to Y$. Suppose furthermore that 
$q(X_1)=0$. Then $Y$ decomposes as a product 
$Y\cong Y_1\times Y_2$ of normal projective varieties,
and there exist surjective morphisms with connected fibers $\beta_1\colon X_1\to Y_1$ and 
$\beta_2\colon X_2\to Y_2$
such that $\beta = \beta_1 \times \beta_2$.
\end{lemma}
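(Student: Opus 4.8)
The plan is to reduce the whole statement to a single line bundle on $X_1\times X_2$ and to exploit the hypothesis $q(X_1)=0$ through the Picard group. Write $p_1,p_2$ for the two projections of $X_1\times X_2$. First I would fix a very ample divisor $L$ on $Y$ and set $M:=\beta^*L$. Since $\beta$ is surjective with connected fibres and $Y$ is normal, Stein factorisation gives $\beta_*\sO_{X_1\times X_2}=\sO_Y$, so that $\beta$ is recovered from the semiample bundle $M$ as $Y=\Proj\bigoplus_{m\ge 0}H^0(X_1\times X_2,M^{\otimes m})$. The key point is to split $M$. Restricting $M$ to the slices $X_1\times\{x_2\}$ defines a morphism $X_2\to\Pic(X_1)$ to the Picard scheme; because $X_2$ is connected and $\dim\Pic^0(X_1)\le h^1(X_1,\sO_{X_1})=q(X_1)=0$, the component $\Pic^0(X_1)$ is trivial, so this morphism is constant and $M|_{X_1\times\{x_2\}}\cong L_1$ is independent of $x_2$. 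The see-saw principle, applied to the proper variety $X_1$, then yields $M\cong p_1^*L_1\otimes p_2^*L_2$ for line bundles $L_i$ on $X_i$.

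Next I would produce the two factor fibrations. As $L$ is very ample, $M=\beta^*L$ is globally generated; since $H^0(X_1\times X_2,p_1^*L_1\otimes p_2^*L_2)=H^0(X_1,L_1)\otimes H^0(X_2,L_2)$ by Künneth, global generation of $M$ forces both $L_1$ and $L_2$ to be globally generated, hence semiample. Let $\beta_i\colon X_i\to Y_i$ be the associated semiample fibration, i.e. the surjective morphism with connected fibres onto a normal projective variety $Y_i$ such that $L_i^{\otimes d}\cong\beta_i^*A_i$ for some ample $A_i$ on $Y_i$ and some $d>0$ (if $L_i$ is trivial, then $Y_i$ is a point). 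These are the candidate factors.

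Finally I would identify $\beta$ with $\beta_1\times\beta_2$ by uniqueness of the semiample fibration attached to $M$ on $X_1\times X_2$. On the one hand $\beta$ is such a fibration, since $M^{\otimes d}=\beta^*L^{\otimes d}$ is the pull-back of an ample bundle under the connected-fibre morphism $\beta$ onto the normal variety $Y$. On the other hand $\beta_1\times\beta_2\colon X_1\times X_2\to Y_1\times Y_2$ is also such a fibration, because $M^{\otimes d}\cong p_1^*L_1^{\otimes d}\otimes p_2^*L_2^{\otimes d}\cong(\beta_1\times\beta_2)^*\big(p_1^*A_1\otimes p_2^*A_2\big)$, the bundle $p_1^*A_1\otimes p_2^*A_2$ is ample on $Y_1\times Y_2$, the morphism has connected fibres, and $Y_1\times Y_2$ is normal. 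Since the semiample fibration of a fixed bundle is unique up to a canonical isomorphism of the base, there is an isomorphism $Y\cong Y_1\times Y_2$ under which $\beta=\beta_1\times\beta_2$, which is exactly the assertion. The main obstacle is the first paragraph: establishing the splitting $\Pic(X_1\times X_2)=p_1^*\Pic(X_1)\oplus p_2^*\Pic(X_2)$ in the singular (normal) setting, which is precisely where $q(X_1)=0$ enters and which relies on the bound $\dim\Pic^0(X_1)\le h^1(X_1,\sO_{X_1})$ together with the see-saw theorem; once $M$ is split, the remainder is formal.
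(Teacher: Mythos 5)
Your proposal is correct and takes essentially the same route as the proof of this lemma in the cited source \cite[Lemma 4.6]{bobo} (the present paper only cites the result rather than reproving it): there too, one pulls back an ample bundle from $Y$, splits it as $L_1\boxtimes L_2$ by the see-saw theorem — using that $q(X_1)=0$ forces $\Pic^0(X_1)$ to be trivial, so the classifying morphism $X_2\to\Pic(X_1)$ is constant — and then identifies $\beta$ with $\beta_1\times\beta_2$ via uniqueness of the fibration attached to a semiample line bundle. All the ingredients you invoke (K\"unneth, see-saw over the proper integral slice $X_1$, smoothness of the Picard scheme in characteristic zero giving $\dim\Pic^0(X_1)=h^1(X_1,\sO_{X_1})=0$, and $\beta_*\sO_{X_1\times X_2}=\sO_Y$ from connectedness of fibers and normality of $Y$) are valid in the normal projective setting, so there is no gap.
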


We are now in position to prove Theorem \ref{thm:global_reeb_stability}.

\begin{proof}[Proof of Theorem \ref{thm:global_reeb_stability}] We maintain notation and assumptions of Theorem \ref{thm:global_reeb_stability}. For the reader's convenience, the proof is subdivided into a number of steps.

\medskip

\noindent\textit{Step 1. Reduction to the case where $K_\sG\sim_\mathbb{Z}0$.} By \cite[Lemma 2.53]{kollar_mori} and Fact \ref{fact:quasi_etale_cover_and_singularities}, there exists a quasi-\'etale cover $f \colon X_1 \to X$ 
with $X_1$ klt 
such that $f^*K_{\sG}\sim_\mathbb{Z} 0$. 
Moreover, $K_{f^{-1}\sG}\sim_\mathbb{Z}f^*K_\sG$ and $f^{-1}\sG$ is weakly regular by Proposition \ref{prop:regular_quasi_etale}.
To prove Theorem \ref{thm:global_reeb_stability}, we can therefore assume without loss of generality that the following holds.

\begin{assumption}
The canonical class $K_\sG$ is trivial, $K_\sG\sim_\mathbb{Z}0$.
\end{assumption}

This implies that $\sG$ is canonical by Lemma \ref{lemma:regular_versus_canonical}.

\medskip

\noindent\textit{Step 2. Reduction to the case where $\sG$ is given by a morphism.} 
Suppose that the conclusion of Theorem \ref{thm:global_reeb_stability} holds under the additional assumption that the 
foliation is given by an equidimensional morphism. Then we show that it holds in general.

\medskip

Let
$\beta\colon X_1\to X$ be a $\mathbb{Q}$-factorialization (see \ref{say:q_factorialization}), and set $\sG_1:=\beta^{-1}\sG$.
By Lemma \ref{lemma:properties:regular} and Remark \ref{rem:bir_small_cartier},
$\sG_1$ is a weakly regular foliation with $K_{\sG_1}\sim_\mathbb{Z} 0$. 
By Lemma \ref{lemma:regular_versus_canonical} again, we see that $\sG_1$ is canonical.
It then follows from Theorem \ref{thm:regular_foliation_morphism} that $\sG_1$ is induced by a surjective equidimensional morphism onto a normal projective variety.
By assumption, there exist complex projective varieties $Y_1$ and $Z_1$ with klt singularities and a quasi-\'etale cover 
$f_1 \colon Y_1 \times Z_1 \to X_1$ such that $f_1^{-1}\sG_1$ is induced by the projection $Y_1 \times Z_1 \to Y_1$. 
In particular, we have $K_{Z_1} \sim_\mathbb{Z}0$. By Theorem \ref{thm:kawamata_abelian_factor}, replacing $Z_1$ by a further quasi-\'etale cover, if necessary, we may assume without loss of generality that $Z_1\cong A_1\times B_1$, where $A_1$ is an abelian variety and $B_1$ is normal projective variety with $\wt q(B_1)=0$.
Let $X_2$ be the normalization of $X$ in the function field of $Y_1\times Z_1$. 
We have a commutative diagram

\begin{center}
\begin{tikzcd}[row sep=large, column sep=huge]
Y_1 \times Z_1  \ar[d, "{f_1,\textup{ quasi-\'etale}}"']\ar[rr, "{\beta_2,\textup{ small and birational}}"] & & X_2 \ar[d, "{f,\textup{ quasi-\'etale}}"] \\
X_1 \ar[rr, "{\beta,\textup{ small and birational}}"']&& X.
\end{tikzcd}
\end{center}

\noindent Note that $T_{Y_1 \times A_1 \times B_1}\cong T_{Y_1}\boxplus T_{A_1} \boxplus T_{B_1}$. The direct summand $T_{A_1}$ of $T_{Y_1 \times A_1 \times B_1}$ induces an algebraically integrable foliation 
$\sE_2 \subseteq f^{-1}\sG $ with $\sE_2 \cong \sO_{X_2}^{\oplus \dim A_1}$. By Lemma \ref{lemma:direct_summand_regular}, $\sE_2$ is a weakly regular foliation, and it has canonical singularities by 
Lemma \ref{lemma:regular_versus_canonical}. Applying Proposition \ref{proposition:trivial_tangent_bundle} to $\sE_2$, we see that there exists an abelian variety $A_3$ as well as a normal projective variety $Z_3$, and a finite \'etale cover
$f_2 \colon X_3:=A_3 \times Z_3 \to X_2$ such that the foliation $\sE_3:=f_2^{-1}\sE_2$ is induced by the projection $A_3 \times Z_3 \to Z_3$. Observe that the direct summand $T_{B_1}$ of $T_{Y_1 \times A_1 \times B_1}$ induces an algebraically integrable foliation $\sE_3^\perp$ on $X_3$ such that $\sE_3\oplus \sE_3^\perp = f_2^{-1}f^{-1}\sG$.
One then readily checks that there exists an algebraically integrable foliation $\sG_3$ on $Z_3$ such that $\sE_3^\perp$ is the pull-back of $\sG_3$ under the projection $A_3 \times Z_3 \to Z_3$. Note that $\sG_3$ is weakly regular with $K_{\sG_3}\sim_\mathbb{Z} 0$
by Lemma \ref{lemma:properties:regular}.
By replacing $X$ by $Z_3$ and $\sG$ by $\sG_3$, and repeating the process finitely many times,
we may therefore assume that $\dim A_1 =0$. But then the conclusion of Theorem \ref{thm:global_reeb_stability} follows from Lemma \ref{lemma:product_versus_contraction} since 
$q(B_1)=0$ by construction.
To prove Theorem \ref{thm:global_reeb_stability}, we can therefore assume without loss of generality that the following holds.

\begin{assumption}
The foliation $\sG$ is induced by a surjective equidimensional morphism with connected fibers $\psi\colon X \to Y$ onto a normal projective variety $Y$.
\end{assumption}

\noindent\textit{Step 3.} Let $F$ be a general fiber of $\psi$. Note that $F$ has klt singularities by \cite[Corollary 4.9]{kollar97}. Applying Corollary \ref{corollary:compact_leaf_holomorphic_form}, we see that there is a decomposition $T_X\cong\sG\oplus\sE$ of $T_X$ into involutive subsheaves.

\begin{claim}\label{claim:base_change_isotriviality}
There exist an open subset $Y^\circ \subseteq Y_{\textup{reg}}$ with complement of codimension at least two in $Y$, and a finite galois cover $g^\circ\colon Y_1^\circ \to Y^\circ$ such that the following holds. Let $X_1^\circ$ be the normalization of $Y_1^\circ \times_{Y^\circ} X$, and denote by $\psi_1^\circ\colon X_1^\circ \to Y_1^\circ$ the natural morphism.
Then $\psi_1^\circ$ is a locally trivial analytic fibration for the analytic topology. In particular, $\psi_1^\circ$ has integral fibers. Moreover, there exists a decomposition $T_{X_1^\circ}=T_{X_1^\circ/Y_1^\circ}\oplus \sE_1^\circ$
of $T_{X_1^\circ}$ into involutive subsheaves.
\end{claim}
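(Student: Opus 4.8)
The plan is to build the Galois cover that makes the fibres reduced in codimension one, transport the splitting $T_X=\sG\oplus\sE$ of Step~3 across it, and then read off local triviality from the resulting flat connection.

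First I would record the data at hand. By the running assumption, $\sG$ is the foliation induced by the equidimensional morphism $\psi\colon X\to Y$, so that $\sG=T_{X/Y}$ as a reflexive sheaf, $K_\sG\sim_\mathbb{Z}0$, and by Corollary~\ref{corollary:compact_leaf_holomorphic_form} there is a splitting $T_X\cong\sG\oplus\sE$ into involutive subsheaves (this is Step~3). By Lemma~\ref{lemma:irreducible_fibers}, a consequence of Theorem~\ref{thm:regular_foliation_morphism}, the fibres of $\psi$ over a big open subset of $Y$ are integral. I would then apply \cite[Lemma~4.2]{druel15} to produce a finite surjective morphism $g\colon Y_1\to Y$ with $Y_1$ normal and connected such that the morphism $\psi_1\colon X_1\to Y_1$ obtained by normalising $Y_1\times_Y X$ has reduced fibres over codimension one points; replacing $Y_1$ by the normalisation of a Galois closure, I may assume $g$ is Galois. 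Shrinking to an open $Y^\circ\subseteq Y_{\textup{reg}}$ with complement of codimension at least two, I arrange that $Y_1^\circ:=g^{-1}(Y^\circ)$ is smooth, that $g^\circ:=g|_{Y_1^\circ}$ is finite Galois, that the induced $\psi_1^\circ\colon X_1^\circ\to Y_1^\circ$ has integral fibres over codimension $\le 1$ points (combining integrality of the general fibre with reducedness in codimension one), and that $X_1^\circ$ is klt over the generic point of $Y_1^\circ$. Write $h\colon X_1^\circ\to X^\circ$ for the induced finite morphism.

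The heart of the argument is to transport the splitting. Every codimension one component of the branch locus of $h$ is contained in the preimage under $\psi$ of the branch locus of $g$, hence in a union of fibres of $\psi$; such a divisor is $\sG$-invariant but, since the leaves of $\sE$ dominate $Y$, it is not $\sE$-invariant. I would apply Lemma~\ref{lemma:pull_back_fol_and_finite_cover} twice, to $h$ together with $\sG$ and with $\sE$: along each component $D$ of the ramification divisor $R(h)$, with ramification index $e_D$, the comparison map $\det h^{[*]}\sN_\sG^*\to\det\sN_{h^{-1}\sG}^*$ vanishes to order $e_D-1$ (invariant branch, part~(1)), whereas $\det h^{[*]}\sN_\sE^*\to\det\sN_{h^{-1}\sE}^*$ is an isomorphism (non-invariant branch, part~(2)). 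Because the Jacobian of $h$ vanishes to the very same order $e_D-1$ along $D$, the nonvanishing twisted top-form $\omega_\sG\wedge\omega_\sE$ witnessing $T_X=\sG\oplus\sE$ pulls back, after dividing out the divisorial zero coming from the $\sG$-factor, to a twisted top-form that is still nonvanishing in codimension one; this is precisely the statement that $T_{X_1^\circ}=h^{-1}\sG\oplus h^{-1}\sE$ in codimension one. Since $h^{-1}\sG=T_{X_1^\circ/Y_1^\circ}$ and all sheaves involved are reflexive, setting $\sE_1^\circ:=h^{-1}\sE$ yields the desired decomposition $T_{X_1^\circ}=T_{X_1^\circ/Y_1^\circ}\oplus\sE_1^\circ$ on all of $X_1^\circ$. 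I expect this bookkeeping of vanishing orders across the ramification — the exact cancellation between the Jacobian and the $\sG$-invariant branch, with the $\sE$-factor left unaffected — to be the main technical obstacle, and the reason the conceptual shortcut ``$h$ is quasi-\'etale'' fails.

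Finally, local triviality follows from reducedness. Over codimension $\le 1$ points the fibres of $\psi_1^\circ$ are integral, hence $\psi_1^\circ$ is smooth in codimension one, so the tangent map restricts to a morphism $\sE_1^\circ\to(\psi_1^\circ)^*T_{Y_1^\circ}$ that is an isomorphism in codimension one and therefore, both sheaves being reflexive, an isomorphism. Thus $\sE_1^\circ$ is a flat holomorphic connection on $\psi_1^\circ$, and by the classical result on flat connections (\cite{kaup}, exactly as in the opening of the proof of Lemma~\ref{lemma:alg_int_zer_can_class}) the morphism $\psi_1^\circ$ is a locally trivial analytic fibration for the analytic topology. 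In particular every fibre is isomorphic to the integral general fibre, so $\psi_1^\circ$ has integral fibres, which completes the proof of the claim.
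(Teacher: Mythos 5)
Your proof is correct and follows the same skeleton as the paper's: the cover produced by \cite[Lemma 4.2]{druel15} (made Galois by a further cover), the transport of the splitting $T_X\cong\sG\oplus\sE$ across the induced finite map, the resulting isomorphism $\sE_1^\circ\cong(\psi_1^\circ)^*T_{Y_1^\circ}$ via reflexivity and smoothness in codimension one, and the appeal to Kaup's theorem. The one place where you genuinely diverge is the transport of the splitting, which you correctly identify as the crux. The paper argues globally: since $K_\sG\sim_\mathbb{Z}0$, the foliation $\sE$ is cut out by an \emph{untwisted} reflexive $q$-form $\omega$, whose reflexive pull-back contracts against $\det f^{-1}\sG$ to give a generically non-zero map $\tau\colon\det f^{-1}\sG\to\sO_{X_1}$; as $\det f^{-1}\sG\cong f^*\sO_X(-K_\sG)\cong\sO_{X_1}$ by Lemma \ref{lemma:pull_back_fol_and_finite_cover} (the branch being $\sG$-invariant) and $X_1$ is projective, $\tau$ is a non-zero constant, hence an isomorphism, which yields the splitting. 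You instead argue locally in codimension one, using \emph{both} parts of Lemma \ref{lemma:pull_back_fol_and_finite_cover}: the comparison map for $\sG$ vanishes to order exactly $e_D-1$ along each ramification divisor $D$, the one for $\sE$ is an isomorphism there, and the Jacobian of $h$ vanishes to order exactly $e_D-1$, so the pulled-back top form witnessing transversality stays non-vanishing after cancelling the $\sG$-factor's zero; reflexivity then extends the splitting. Both mechanisms are sound; yours trades the paper's use of projectivity and of $K_\sG\sim_\mathbb{Z}0$ (in this step) for a more computational but purely local bookkeeping of vanishing orders, so it would also apply verbatim over a quasi-projective base or when $K_\sG$ is merely torsion, whereas the paper's version is shorter once the triviality of $\det f^{-1}\sG$ is available.
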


\begin{proof}
By \cite[Lemma 4.2]{druel15}, there exists a finite surjective morphism
$g\colon Y_1 \to Y$ with $Y_1$ normal and connected such that the following holds. If $X_1$ denotes the normalization of $Y_1 \times_Y X$, then the induced morphism $\psi_1\colon X_1 \to Y_1$ has reduced fibers over codimension one points in $Y_1$. 
By replacing $Y_1$ with a further finite cover, if necessary, we may assume without loss of generality that
the finite cover $Y_1 \to Y$ is Galois. We have a commutative diagram

\begin{center}
\begin{tikzcd}[row sep=large]
X_1  \ar[d, "\psi_1"']\ar[rr, "{f,\textup{ finite}}"] & & X\ar[d, "\psi"] \\
Y_1 \ar[rr, "{g,\textup{ finite}}"]&& Y.
\end{tikzcd}
\end{center}

Next, we show that the tangent sheaf $T_{X_1}$ decomposes as a direct sum 
$T_{X_1}=f^{-1}\sG\oplus f^{-1}\sE$ and that $K_{f^{-1}\sG}\sim_\mathbb{Z} 0$.
Set $q:=\textup{rank}\,\sG$, and let $\omega \in H^0\big(X,\Omega_X^{[q]}\big)$ a reflexive $q$-form defining $\sE$. 
The reflexive pull-back
$\omega_1\in H^0\big(X_1,\Omega_{X_1}^{[q]}\big)$ of $\omega$
then defines $f^{-1}\sE$ on a dense open set. In particular, $\omega_1$ induces an $\sO_{X_1}$-linear map 
$\big(\wedge^{q}T_{X_1}\big)^{**} \to \sO_{X_1}$
such that the composed morphism of reflexive sheaves of rank one
$$\tau\colon \det f^{-1}\sG \to  (\wedge^{q}T_{X_1})^{**} \to \sO_{X_1}$$
is generically non-zero. 
On the other hand, by Lemma \ref{lemma:pull_back_fol_and_finite_cover}, we have
$\det f^{-1}\sG\cong f^*\sO_X(-K_\sG)\cong \sO_{X_1}$
and hence $\tau$ must be an isomorphism. This easily implies that 
$T_{X_1}=f^{-1}\sG\oplus f^{-1}\sE$.

Let $Y_1^\circ$ be the smooth locus of $Y_1$, and set $X_1^\circ:=\psi_1^{-1}(Y_1^\circ)$.
Let $Z_1^\circ \subseteq X_1^\circ$ be the open set where 
${\psi_1}_{|X_1^\circ}$ is smooth. Notice that $Z_1^\circ$ has complement of codimension at least two 
in $X_1^\circ$ since $\psi_1$ has reduced fibers over codimension one points in $Y_1^\circ$. 
The restriction of the tangent map 
$$T{\psi_1}_{|X_1^\circ}\colon T_{X_1^\circ}\to \big({\psi_1}_{|X_1^\circ}\big)^*T_{Y_1^\circ}$$
to $f^{-1}\sE_{|Z_1^\circ} \subseteq T_{Z_1^\circ}$ 
then induces an isomorphism $f^{-1}\sE_{|Z_1^\circ}\cong \big({\psi_1}_{|Z_1^\circ}\big)^*T_{Y_1^\circ}$. Since 
$f^{-1}\sE_{|X_1^\circ}$ and $\big({\psi_1}_{|X_1^\circ}\big)^*T_{Y_1^\circ}$
are both reflexive sheaves, we finally obtain an isomorphism
$$f^{-1}\sE_{|X_1^\circ}\cong \big({\psi_1}_{|X_1^\circ}\big)^*T_{Y_1^\circ}.$$
A classical result of complex analysis then says that ${\psi_1}_{|X_1^\circ}$ is a locally trivial analytic fibration for the analytic topology. 
Set 
$Y^\circ:={Y}_{\textup{reg}}\setminus g\big(Y_1 \setminus {Y_1^\circ}\big)$. 
Then $g_{|g^{-1}(Y^\circ)}\colon g^{-1}(Y^\circ) \to Y^\circ$  
satisfies the conclusions of Claim \ref{claim:base_change_isotriviality}.
\end{proof}

\medskip

\noindent\textit{Step 4. Reduction to the case where $\wt q(F)=0$.} We maintain notation of Claim \ref{claim:base_change_isotriviality}. Set $f^\circ:=f_{|X_1^\circ}\colon X_1^\circ \to X^\circ:=\psi^{-1}(Y^\circ)$.

By Lemma \ref{lemma:alg_int_zer_can_class} and Theorem \ref{thm:kawamata_abelian_factor},  
there exist a quasi-projective variety $Y_2^\circ$, 
an abelian variety $A$ and a projective variety $B$ with canonical singularities, $K_B\sim_\mathbb{Z} 0$ and $\wt q(B)=0$, and 
a quasi-\'etale cover $f_1^\circ\colon X_2^\circ := Y_2^\circ \times A \times B \to X_1^\circ$ such that 
$(f_1^\circ)^{-1}\big(\sG_{|X_1^\circ}\big)$ is the foliation given by the projection $Y_2^\circ \times A \times B \to Y_2^\circ$. Notice that any codimension one irreducible component of the branch locus of $f^\circ \circ f_1^\circ$ is invariant under $\sG$ so that Lemma \ref{lemma:infinitesimal_unicity_BB} applies.
There exist algebraically integrable foliations $\sG_1 \subseteq \sG$ and $\sG_2 \subseteq \sG$ with torsion canonical class such that 
$(f^\circ \circ f_1^\circ)^{-1}\big({\sG_1}_{|X^\circ}\big)=T_{Y_2^\circ\times A \times B/Y_2^\circ \times B}$ and $(f^\circ \circ f_1^\circ)^{-1}\big({\sG_2}_{|X^\circ}\big)=T_{Y_2^\circ\times A \times B/Y_2^\circ \times A}$.
Note that $\sG_1$ and $\sG_2$ are weakly regular foliations by Lemma \ref{lemma:direct_summand_regular} since $\sG_1\oplus\sG_2\oplus\sE=T_X$ by construction.

Let $X_2$ be the normalization of $X$ in the function field of $X_2^\circ$. Applying Proposition \ref{prop:finite_versus_etale} to the cover $X_2\to X$ and to $\sG_1$, we see that, replacing $X$ by a further quasi-\'etale cover, if necessary, we may assume without loss of generality that $\sG_1 \cong \sO_{X}^{\dim A}$.
By Proposition \ref{proposition:trivial_tangent_bundle} together with Lemma \ref{lemma:regular_versus_canonical}, 
there exists an abelian variety $A_1$ as well as a normal projective variety $X_1$, and an \'etale cover 
$f\colon A_1\times X_1 \to X$ such that $f^{-1}\sG_1=T_{A_1 \times X_1/X_1}$. 
Then $f^{-1}\sG_2$ is the pull-back of a weakly regular algebraically integrable foliation $\sH_1$ on $X_1$ with $K_{\sH_1}\sim_\mathbb{Z} 0$ (see 
Lemma \ref{lemma:properties:regular}).
Moreover, by the rigidity lemma, $\sH_1$ is given by a surjective equidimensional morphism $\psi_1\colon X_1 \to Y_1$ onto a normal variety $Y_1$ whose genral fiber $F_1$ satisfies $\wt q(F_1)=0$ since irregularity increases in covers by \cite[Lemma 4.1.14]{lazarsfeld1}. To prove Theorem \ref{thm:global_reeb_stability}, we can therefore assume without loss of generality that the following holds.

\begin{assumption}\label{irregularity}
The augmented irregularity of a general fiber $F$ of $\psi$ is zero, $\wt q(F)=0$.
\end{assumption}

\noindent\textit{Step 5.} Next, we show the following.
\begin{claim}\label{claim:base_change_isotriviality2}
There exist a finite cover $g\colon Y_1 \to Y$ and an open subset $Y^\circ \subset Y_{\textup{reg}}$ with complement of codimension at least two in $Y$ such that the following holds. Let $X_1$ be the normalization of $Y_1 \times_{Y} X$, and denote by $f \colon X_1 \to X$ and $\psi_1\colon X_1 \to Y_1$ the natural morphisms. Then $f_1$ is a quasi-\'etale cover, and 
$\psi_1^{-1}(Y_1^\circ)\cong Y_1^\circ \times F$ as varieties over $Y_1^\circ$, where 
$Y_1^\circ:=g^{-1}(Y^\circ)$.
\end{claim}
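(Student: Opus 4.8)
The plan is to reduce the statement to Lemma \ref{lemma:alg_int_zer_can_class2}, whose rigidity hypothesis $h^0(F,T_F)=0$ is exactly what Assumption \ref{irregularity} buys us, and then to extend the resulting cover of a big open subset of $Y$ to a quasi-\'etale cover of all of $X$.

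First I would pin down the general fiber $F$ of $\psi$. Since $\sG$ is induced by $\psi$ and $K_\sG\sim_\mathbb{Z}0$, Example \ref{example:canonical_class_foliation} together with the adjunction formula give $K_F\sim_\mathbb{Z}0$, and $F$ has klt singularities by \cite[Corollary 4.9]{kollar97}. Assumption \ref{irregularity} gives $\wt q(F)=0$, hence $q(F)=0$, and Remark \ref{remark:augmented_irregularity_vector_fields} then yields $h^0(F,T_F)=q(F)=0$. Next I would invoke Claim \ref{claim:base_change_isotriviality}: it produces an open subset $Y^\circ\subseteq Y_{\textup{reg}}$ with complement of codimension at least two, a finite Galois cover $g_0\colon Y_0^\circ\to Y^\circ$ with $Y_0^\circ$ smooth, and, writing $X^\circ:=\psi^{-1}(Y^\circ)$ and $X_0^\circ$ for the normalization of $Y_0^\circ\times_{Y^\circ}X^\circ$ with induced morphism $\psi_0^\circ\colon X_0^\circ\to Y_0^\circ$, a locally trivial analytic fibration with integral fibers together with a splitting $T_{X_0^\circ}=T_{X_0^\circ/Y_0^\circ}\oplus\sE_0^\circ$. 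In particular $\psi_0^\circ$ has reduced fibers over codimension one points, all its fibers are isomorphic to $F$ so that $K_{X_0^\circ/Y_0^\circ}$ is relatively numerically trivial, and all the hypotheses of Lemma \ref{lemma:alg_int_zer_can_class2} hold. Applying that lemma to $\psi_0^\circ$ furnishes a finite \'etale cover $Y_1^\circ\to Y_0^\circ$ with $Y_1^\circ\times_{Y_0^\circ}X_0^\circ\cong Y_1^\circ\times F$ over $Y_1^\circ$.

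I would then assemble the global cover. Let $g\colon Y_1\to Y$ be the normalization of $Y$ in the function field of $Y_1^\circ$; as $Y_1^\circ$ is normal and finite over $Y^\circ$, normalization commutes with the open restriction and $g^{-1}(Y^\circ)=Y_1^\circ$. Writing $X_1$ for the normalization of $Y_1\times_Y X$, with $\psi_1\colon X_1\to Y_1$ and $f\colon X_1\to X$ the natural morphisms, the open set $\psi_1^{-1}(Y_1^\circ)$ is the normalization of $Y_1^\circ\times_{Y^\circ}X^\circ$, which coincides with $Y_1^\circ\times_{Y_0^\circ}X_0^\circ\cong Y_1^\circ\times F$ because $Y_1^\circ\to Y_0^\circ$ is \'etale and $X_0^\circ$ is normal. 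This is the asserted product structure over $Y_1^\circ$.

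It remains to prove that $f$ is quasi-\'etale, and this is the delicate point. Since $\psi$ is equidimensional and $Y\setminus Y^\circ$ has codimension at least two, its preimage $X\setminus X^\circ$ has codimension at least two in $X$; hence it suffices to show $f$ is \'etale over $X^\circ$. Over $X^\circ$ the morphism $f$ factors as $\psi_1^{-1}(Y_1^\circ)\cong Y_1^\circ\times_{Y_0^\circ}X_0^\circ\to X_0^\circ\to X^\circ$, where the first map is the base change of the \'etale cover $Y_1^\circ\to Y_0^\circ$ and so is \'etale; the whole matter thus reduces to showing that the normalized base change $f_0^\circ\colon X_0^\circ\to X^\circ$ of $g_0$ is \'etale in codimension one. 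Its branch locus is contained in the preimage under $\psi$ of the branch locus of $g_0$, a union of fibers, hence is $\sG$-invariant. The hard part will be to rule out that any codimension one branching survives, and here the local model is decisive: one checks that $g_0$ ramifies along each multiple-fiber divisor $D\subseteq Y^\circ$ precisely to the multiplicity of $\psi^*D$, so that after normalizing the fiber product this ramification exactly cancels the fiber multiplicity and $f_0^\circ$ becomes an isomorphism in codimension one over $\psi^{-1}(D)$, while over reduced fibers $f_0^\circ$ is already unramified. The essential input is therefore that the reduced-fiber cover produced by \cite[Lemma 4.2]{druel15} can be taken to ramify only along the multiple-fiber divisors and exactly to their multiplicities; granting this, the computation above shows $f_0^\circ$, and hence $f$, is quasi-\'etale, which completes the proof.
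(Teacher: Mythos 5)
Your first half matches the paper: you derive $h^0(F,T_F)=0$ from Assumption \ref{irregularity} via Remark \ref{remark:augmented_irregularity_vector_fields}, feed Claim \ref{claim:base_change_isotriviality} into Lemma \ref{lemma:alg_int_zer_can_class2} to get the product structure over a big open set, and globalize by normalizing $Y$ in the function field of the cover; all of this is fine. The genuine gap is in the quasi-\'etaleness step, exactly where you write ``granting this''. The lemma you invoke (\cite[Lemma 4.2]{druel15}) only asserts the existence of \emph{some} finite cover after which the fibers become reduced over codimension-one points; it gives no control whatsoever on where that cover ramifies or with which indices, and the indices are the whole point. A local computation at the generic point of a divisor $D \subset Y^\circ$ with $\psi^*D = m_D D'$ (the fibers of $\psi$ being irreducible over codimension-one points of a big open set) shows that the normalized base change along a cover ramified to order $m$ over $D$ has reduced fibers if and only if $m_D \mid m$, while it is \'etale in codimension one over $D'$ if and only if $m \mid m_D$; so your argument needs $m = m_D$ exactly, along \emph{every} branch divisor of $g_0$, and needs $g_0$ to be unramified elsewhere. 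Kawamata-type covering constructions produce ramification to a large common multiple, not the exact value, and passing to a Galois closure (which Claim \ref{claim:base_change_isotriviality} does) destroys any exact-ramification property. Producing a cover of $Y$ with \emph{exactly} prescribed ramification is a nontrivial global problem (it amounts to uniformizing an orbifold structure on $Y$), so your ``essential input'' is not available from the cited lemma and cannot simply be granted.

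The paper closes this gap by a different, group-theoretic argument rather than by controlling ramification indices. After arranging $X_1^\circ \cong Y_1^\circ \times F$, it replaces $Y_1 \to Y$ by a further cover so that it is Galois with group $G$; then $G$ acts on $X_1$, preserves $X_1^\circ$, and, because $h^0(F,T_F)=0$ forces $\textup{Aut}(F)$ to be discrete, the $G$-action on $Y_1^\circ\times F$ is the diagonal one. The key step, which is precisely the substitute for your uncontrolled ramification, is to quotient by the kernel $G_1$ of $G \to \textup{Aut}(F)$: since $G_1$ acts trivially on $F$, one has $X_1^\circ/G_1 \cong (Y_1^\circ/G_1)\times F$, so the product structure survives, and now the group acts faithfully on \emph{both} factors. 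For any nontrivial element the fixed locus in the product is a product of two proper closed subsets, hence has codimension at least two, so the quotient map $Y_1^\circ \times F \to X^\circ$ is \'etale in codimension one, and $f$ is quasi-\'etale (the locus over $Y\setminus Y^\circ$ having codimension at least two because $f$ is finite and $\psi$ is equidimensional). In other words, the fact that the final cover $Y_1 \to Y$ ramifies along each multiple-fiber divisor to exactly the right order is a \emph{conclusion} of the quotient argument, not an input one can extract from \cite[Lemma 4.2]{druel15}.
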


\begin{proof}
We maintain notation of Claim \ref{claim:base_change_isotriviality}. Notice $h^0(F,T_{F})=0$ since $\wt q(F)=0$ (see Remark \ref{remark:augmented_irregularity_vector_fields}), so that Lemma \ref{lemma:alg_int_zer_can_class2} applies. Replacing $Y_1$ by a quasi-\'etale cover, if necessary, we may assume that $X_1^\circ \cong Y_1^\circ \times F$ as varieties over $Y_1^\circ$. 
We obtain a commutative diagram

\begin{center}
\begin{tikzcd}[row sep=large]
X_1^\circ\cong Y_1^\circ\times F \ar[r, hookrightarrow]\ar[d] & X_1  \ar[d, "{\psi_1}"']\ar[rr, "{f,\textup{ finite}}"] & & X\ar[d, "{\psi}"]\\
Y_1^\circ \ar[r, hookrightarrow] & Y_1 \ar[rr, "{g,\textup{ finite}}"']&& Y.
\end{tikzcd}
\end{center}

By replacing $Y_1$ with a further finite cover, if necessary, we may assume without loss of generality that
the finite cover $Y_1 \to Y$ is Galois. In particular,
there is a finite group $G$ acting on $Y_1$ with quotient $Y$. 
The group $G$ also acts on $X_1$ since $X_1$ identifies with the normalization of
$Y_1\times_{Y} X$. Moreover, $X_1^\circ\cong Y_1^\circ \times F$ is $G$-invariant.
Since $h^0(F,T_{F})=0$, $G$ acts on $F$ and its action on $Y_1^\circ\times F$ is the diagonal action. 
Let $G_1$ denote the kernel of the induced morphism of groups $G \to \textup{Aut}(F)$.
Note that $X_1^\circ/G_1 \cong (Y_1^\circ/G_1) \times F$.
By replacing $Y_1$ by $Y_1/G_1$, $X_1$ by $X_1/G_1$, and $G$ by $G/G_1$, if necessary, we may assume 
that $G \subseteq \textup{Aut}(F)$. Set $X^\circ:=\psi^{-1}(Y^\circ)$. 
We may obviously assume that $\dim Y \ge 1$ and $\dim F \ge 1$.
Then
the quotient map $Y_1^\circ \times F\to (Y_1^\circ \times F)/G\cong X_1^\circ$ is automatically
\'etale in codimension one, and hence quasi-\'etale by the Nagata-Zariski purity theorem.
This immediately implies that $f\colon X_1 \to X$ is a quasi-\'etale cover as well.
\end{proof}

\noindent\textit{Step 6. End of proof.} We maintain notation of Claim \ref{claim:base_change_isotriviality2}.
Let $\beta_1\colon X_2\to X_1$ be a $\mathbb{Q}$-factorialization (see \ref{say:q_factorialization}), and set $\sG_2:=\beta_1^{-1}f^{-1}\sG$. By Lemma \ref{lemma:properties:regular} and Remark \ref{rem:bir_small_cartier},
$\sG_2$ is a weakly regular foliation and $K_{\sG_2}\sim_\mathbb{Z} 0$. It follows that $\sG_2$ is induced by a surjective equidimensional morphism $\psi_2\colon X_2 \to Y_2$ onto a normal projective variety by Theorem \ref{thm:regular_foliation_morphism} again. 

Since $\beta_1$ is a small birational morphism, we may assume without loss of generality that $\psi_1\circ\beta_2$ has integral fibers over $Y_1^\circ$. In particular, the restriction of $\psi_1\circ\beta_2$ to 
$X_2^\circ:=(\psi_1\circ\beta_2)^{-1}(Y_1^\circ)$ is equidimensional. It follows that $X_2^\circ \to Y_1^\circ$ is the family of leaves of ${\sG_2}_{|X_2^\circ}$, and hence there exists an open set $Y_2^\circ \subseteq Y_2$ and an isomorphism
$Y_2^\circ\cong Y_1^\circ$ such that $X_2^\circ = \psi_2^{-1}(Y_2^\circ)$.
Notice that we have a decomposition $T_{X_2}\cong\sG_2\oplus \beta_1^{-1}f^{-1}\sE$ of $T_{X_2}$ into involutive subsheaves. Set $F_2:=\beta_1^{-1}(F)$, and observe that 
$h^0(F_2,T_{F_2})=h^0(F,T_{F})=0$ since ${\beta_1}_{|F_2}$ is a small birational morphism. By Lemma \ref{lemma:alg_int_zer_can_class2}, replacing $Y_1$ and $Y_2$ by further quasi-\'etale covers, if necessary, we may assume that $X_2^\circ \cong Y_2^\circ \times F_2$ as varieties over $Y_2^\circ$.
By Remark \ref{remark:augmented_irregularity_vector_fields}, we also have $q(F_2)=0$. On the other hand, $Y_2^\circ$ has complement of codimension at least $2$ in $Y_2$ since $X_2^\circ$ has complement of codimension at least $2$ in $X_2$ and 
$\psi_2$ is equidimensional. So Lemma \ref{lemma:produit} applies, and show that $X_2 \cong F_2 \times Y_2$.
From Lemma \ref{lemma:product_versus_contraction} again, we conclude that $X_1 \cong F_1 \times Y_1$.
This finishes the proof of Theorem \ref{thm:global_reeb_stability}.
\end{proof}

\begin{proof}[Proof of Theorem \ref{thm_intro:global_reeb_stability}] We maintain notation and assumptions of Theorem \ref{thm_intro:global_reeb_stability}. By Proposition \ref{prop:abundance_alg_int}, $K_\sG$ is torsion.
By \cite[Lemma 2.53]{kollar_mori} and Fact \ref{fact:quasi_etale_cover_and_singularities}, there exists a quasi-\'etale cover $f \colon X_1 \to X$ with $X_1$ klt such that $f^*K_{\sG}\sim_\mathbb{Z} 0$. Moreover, $f^{-1}\sG$ is canonical by Lemma \ref{lemma:canonical_quasi_etale_cover} and $K_{f^{-1}\sG}\sim_\mathbb{Z} 0$. 
Applying Corollary \ref{cor:canonical_versus_regular}, we see that $f^{-1}\sG$ is weakly regular.
Theorem \ref{thm_intro:global_reeb_stability} now follows from Theorem \ref{thm:global_reeb_stability}.
\end{proof}

\subsection{Application} The purpose of this subsection is to prove the following decomposition result (see \cite[Proposition 6.1]{druel15} for a somewhat related result).

\begin{prop}\label{prop:splitting_algebraic_transcendental}
Let $X$ be a normal complex projective variety with klt singularities, and let $\sG$ be a foliation on $X$ with canonical singularities and $K_\sG \equiv 0$. There exist normal projective varieties $Y$ and $Z$ with klt singularities, 
as well as a quasi-\'etale cover $f \colon Y \times Z \to X$ such that the following holds.
The foliation $f^{-1}\sG$ is the pull-back via the projection $Y \times Z \to Y$
of
a foliation $\sH$ on 
$Y$ with no positive-dimensional algebraic subvariety tangent to $\sH$ passing through a general point of
$Y$. In addition, $K_Z\sim_\mathbb{Z}0$, $\sH$ has canonical singularities, and $K_\sH\equiv 0$.
\end{prop}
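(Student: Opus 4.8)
The plan is to separate $\sG$ into its algebraic and transcendental parts, apply the global Reeb stability theorem to the algebraic part, and then descend $\sG$ along the resulting product structure. First I would introduce the \emph{algebraic part} $\sG_1 \subseteq \sG$, namely the unique maximal algebraically integrable subfoliation, whose leaf through a general point of $X$ is the union of all positive-dimensional algebraic subvarieties tangent to $\sG$ through that point. If $\sG_1 = 0$ there is nothing to prove, since then no positive-dimensional algebraic subvariety is tangent to $\sG$ through a general point and one takes $Z$ a point, $Y = X$, $\sH = \sG$; so assume $\sG_1 \neq 0$. Since $K_\sG \equiv 0$ and $\sG$ is canonical, $\sG$ is not uniruled by Proposition \ref{proposition:canonical_versus_uniruled}, and as any covering family of rational curves tangent to $\sG_1$ is also tangent to $\sG$, the foliation $\sG_1$ is not uniruled either. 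Passing to a $\mathbb{Q}$-factorialization if necessary (and transferring the conclusion back by the small-birational arguments already used in the proof of Theorem \ref{thm:global_reeb_stability}), I may assume that $K_{\sG_1}$ is $\mathbb{Q}$-Cartier.

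The key step is to show that $K_{\sG_1} \equiv 0$ and that $\sG_1$ is canonical. Setting $\sQ := \sG/\sG_1$, which is torsion-free and saturated in $T_X/\sG_1$, the multiplicativity of determinants together with $K_\sG \equiv 0$ gives $K_{\sG_1} \equiv c_1(\sQ)$. On the one hand, $K_{\sG_1}$ is pseudo-effective: on a resolution $\rho \colon \wt X \to X$ the foliation $\rho^{-1}\sG_1$ is again non-uniruled, so $K_{\rho^{-1}\sG_1}$ is pseudo-effective by \cite[Theorem 4.7]{campana_paun15}, and pushing forward yields the claim for $K_{\sG_1}$. On the other hand, I would show that $-K_{\sG_1}$ is pseudo-effective as well, by analysing the foliation induced by $\sG$ on the space of leaves of $\sG_1$ and its relative positivity; this is the heart of the matter and the main obstacle, and is where the strategy of \cite{lpt} together with Theorem \ref{thm:uniruled_versus_pseudo-effectivity} enters. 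Since a class whose negative is also pseudo-effective is numerically trivial, this forces $K_{\sG_1} \equiv 0$, and being non-uniruled, $\sG_1$ is then canonical by Proposition \ref{proposition:canonical_versus_uniruled}.

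With $\sG_1$ algebraically integrable, canonical and $K_{\sG_1} \equiv 0$, Theorem \ref{thm_intro:global_reeb_stability} provides klt projective varieties $Y, Z$ and a quasi-\'etale cover $f \colon Y \times Z \to X$ with $f^{-1}\sG_1 = T_{Y \times Z/Y}$; in particular $K_Z \equiv 0$, and after replacing $Y \times Z$ by the further quasi-\'etale cover coming from the index-one cover of $Z$ (using \cite[Lemma 2.53]{kollar_mori}, Fact \ref{fact:quasi_etale_cover_and_singularities} and \cite[Corollary V 4.9]{nakayama04}) I may assume $K_Z \sim_{\mathbb{Z}} 0$. It then remains to descend $f^{-1}\sG$ along $\mathrm{pr}_Y \colon Y \times Z \to Y$. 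Since $f^{-1}\sG_1 = T_{Y\times Z/Y}$ is precisely the algebraic part of $f^{-1}\sG$, the foliation $f^{-1}\sG$ is projectable under $\mathrm{pr}_Y$; to make this rigorous I would split off the abelian factor of $Z$ via Theorem \ref{thm:kawamata_abelian_factor} and argue as in Lemma \ref{lemma:infinitesimal_unicity_BB} and in the proof of Theorem \ref{thm:global_reeb_stability}, producing a foliation $\sH$ on $Y$ with $f^{-1}\sG = \mathrm{pr}_Y^{-1}\sH$.

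Finally I would verify the stated properties of $\sH$. As $f$ is \'etale in codimension one, the branch-invariance hypothesis of Lemma \ref{lemma:canonical_quasi_etale_cover} is vacuous, so $f^{-1}\sG$ is canonical; hence $\sH$ is canonical by Lemma \ref{lemma:pull_back_foliation_singularities}. The relations $K_{f^{-1}\sG} \sim_{\mathbb{Z}} f^*K_\sG \equiv 0$ (Lemma \ref{lemma:pull_back_fol_and_finite_cover}) and $K_{\mathrm{pr}_Y^{-1}\sH} \sim_{\mathbb{Z}} \mathrm{pr}_Y^*K_\sH + K_{Y\times Z/Y}$ with $K_{Y\times Z/Y} \sim_{\mathbb{Z}} 0$ (Lemma \ref{lemma:properties:regular}) give $\mathrm{pr}_Y^*K_\sH \equiv 0$, whence $K_\sH \equiv 0$ by flatness of $\mathrm{pr}_Y$. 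Lastly, the maximality of the algebraic part rules out any positive-dimensional algebraic subvariety tangent to $\sH$ through a general point of $Y$: such a subvariety $W \subsetneq Y$ would make $W \times Z$ tangent to $f^{-1}\sG = \mathrm{pr}_Y^{-1}\sH$ and strictly larger than the leaves $\{y\} \times Z$ of its algebraic part, a contradiction. This completes the proposed argument, with the pseudo-effectivity of $-K_{\sG_1}$ in the second paragraph being the step I expect to require the most work.
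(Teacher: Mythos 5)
You have correctly identified the paper's overall strategy---split off the algebraic part of $\sG$, prove that its canonical class is numerically trivial (hence torsion, by Proposition \ref{prop:abundance_alg_int} and Proposition \ref{proposition:canonical_versus_uniruled}), apply Theorem \ref{thm_intro:global_reeb_stability}, and descend---and the peripheral steps of your proposal (the application of Reeb stability, the index-one cover making $K_Z\sim_\mathbb{Z}0$, canonicity of $\sH$ via Lemmas \ref{lemma:canonical_quasi_etale_cover} and \ref{lemma:pull_back_foliation_singularities}, the computation showing $K_\sH\equiv 0$, and the maximality argument excluding tangent algebraic subvarieties) are essentially correct and match the paper, up to the fact that the paper performs the descent after Reeb stability simply by citing \cite[Lemma 6.7]{fano_fols} rather than re-running the argument of Lemma \ref{lemma:infinitesimal_unicity_BB}. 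But there is a genuine gap, which you flag yourself: the pseudo-effectivity of $-K_{\sG_1}$ is only announced as something you "would show" and is never proved. This is not a routine verification; it is the entire mathematical content of the key claim, and without it nothing forces $K_{\sG_1}\equiv 0$, so Theorem \ref{thm_intro:global_reeb_stability} cannot be invoked and the argument does not get off the ground.

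Here is how the paper closes that gap. It starts from \cite[Paragraph 2.3]{lpt}, which delivers not only the algebraic part but also its transcendental quotient: a dominant rational map $\phi\colon X \map Y$ onto the space of leaves of $\sE=\sG_1$ and a foliation $\sH$ on $Y$, with no positive-dimensional algebraic subvariety tangent to $\sH$ through a general point of $Y$, such that $\sG=\phi^{-1}\sH$. (Note that the very existence of $\sG_1$ with the maximality property you use is part of this same construction, so you are already implicitly relying on half of it.) On the family of leaves $\beta\colon Z \to X$, $\psi\colon Z \to Y$ of $\sE$ one has the exact sequence $0 \to \beta^{-1}\sE = T_{Z/Y} \to \beta^{-1}\sG \to \psi^{[*]}\sH$, which gives $K_\sG - K_\sE \sim_\mathbb{Z} \phi^*K_\sH + C$ with $C$ effective. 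The hypothesis that no algebraic subvariety is tangent to $\sH$ through a general point is then fed into \cite[Corollary 4.8]{campana_paun15} (applied to the induced foliation on a resolution of $Y$) to conclude that $K_\sH$ is pseudo-effective; hence $-K_\sE \equiv K_\sG - K_\sE$ is pseudo-effective. Combined with the opposite inequality coming from semistability of $\sG$ (Lemma \ref{lemma:stability_versus_uniruled}; your Campana--P\u{a}un-on-a-resolution argument gives the same inequality), this yields $K_\sE\equiv 0$. So the missing idea is precisely that the quotient $\sG/\sG_1$ embeds generically into the pullback of the quotient foliation $\sH$, whose canonical class is pseudo-effective exactly because $\sH$ has no algebraic subvarieties tangent to it; this is the step your proposal defers, and it is where the proof actually lives.
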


We will need the following minor generalization of \cite[Corollary 3.9]{lpt}.

\begin{lemma}\label{lemma:stability_versus_uniruled}
Let $X$ be a normal complex projective variety, and let 
$\sG$ be a foliation on $X$ with canonical singularities and $K_\sG \equiv 0$. Then 
$\sG$ is semistable with respect to any polarization $H$ on $X$.
\end{lemma}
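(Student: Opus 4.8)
The plan is to argue by contradiction, following the strategy of \cite[Corollary 3.9]{lpt} but replacing the smooth-case inputs by their singular analogues established above. Fix an ample divisor $H$ on $X$ and set $n:=\dim X$. Since $c_1(\sG)=-K_\sG\equiv 0$, the slope $\mu_H(\sG)=\tfrac{1}{\textup{rank}\,\sG}\,c_1(\sG)\cdot H^{n-1}$ vanishes. Moreover $\sG$ has canonical singularities and $K_\sG\equiv 0$, so by Proposition \ref{proposition:canonical_versus_uniruled} the foliation $\sG$ is \emph{not} uniruled. Assume now, for contradiction, that $\sG$ is not semistable with respect to $H$, and let $\sF\subsetneq\sG$ be the maximal destabilizing subsheaf. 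Then $\sF$ is saturated in $T_X$, semistable with respect to $H$, and satisfies $\mu_H(\sF)>\mu_H(\sG)=0$ as well as $\mu_{\max}(\sG/\sF)<\mu_H(\sF)$; in particular $\textup{rank}\,\sF<\textup{rank}\,\sG$.

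First I would check that $\sF$ is a foliation, i.e. that it is closed under the Lie bracket. Working over $X_{\textup{reg}}$, whose complement has codimension at least two, the bracket induces an $\sO_X$-linear map $\wedge^2\sF\to\sG/\sF$ sending $u\wedge v$ to $[u,v]\bmod\sF$; this is well defined because $\sG$ is involutive (so $[\sF,\sF]\subseteq\sG$) and $\sO_X$-linear because $[fu,v]\equiv f[u,v]\bmod\sF$ for $u\in\sF$, and it extends to the reflexive hulls. Since we are in characteristic zero, semistability of $\sF$ forces $\wedge^2\sF$ to be semistable of slope $2\mu_H(\sF)$, so $\mu_{\min}(\wedge^2\sF)=2\mu_H(\sF)$. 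A nonzero such map would require $2\mu_H(\sF)\le\mu_{\max}(\sG/\sF)<\mu_H(\sF)$, whence $\mu_H(\sF)<0$, contradicting $\mu_H(\sF)>0$. Thus the map vanishes, $[\sF,\sF]\subseteq\sF$, and $\sF$ is a foliation.

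It then remains to derive a contradiction from the existence of the subfoliation $\sF\subseteq\sG$ with $\mu_H(\sF)>0$. From $\mu_H(\sF)>0$ I get $K_\sF\cdot H^{n-1}<0$, so $K_\sF$ is not pseudo-effective. To conclude uniruledness I would pass to a resolution $\beta\colon Z\to X$ and set $\sF_Z:=\beta^{-1}\sF$. Writing $K_{\sF_Z}\sim_\mathbb{Q}\beta^*K_\sF+E$ with $E$ a $\beta$-exceptional $\mathbb{Q}$-divisor and using that $E\cdot(\beta^*H)^{n-1}=0$ by the projection formula, I find $K_{\sF_Z}\cdot(\beta^*H)^{n-1}=K_\sF\cdot H^{n-1}<0$; since $(\beta^*H)^{n-1}$ lies in the closed movable cone of curves, $K_{\sF_Z}$ is not pseudo-effective. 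Applying \cite[Theorem 4.7]{campana_paun15} to $\sF_Z$ on the smooth variety $Z$ shows that $\sF_Z$, and hence $\sF$, is uniruled; as $\sF\subseteq\sG$, the rational curves tangent to $\sF$ are tangent to $\sG$, so $\sG$ is uniruled, contradicting Proposition \ref{proposition:canonical_versus_uniruled}. The main subtlety to handle carefully is exactly this last step: the destabilizing foliation $\sF$ need not be canonical, so Theorem \ref{thm:uniruled_versus_pseudo-effectivity} does not apply to it directly, and one must instead transfer non-pseudo-effectivity to the resolution through the intersection-number computation above, which is insensitive to the (possibly negative) exceptional discrepancies of $\sF$.
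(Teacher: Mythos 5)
Your proposal follows essentially the same route as the paper's proof: pass to a resolution, feed a maximal destabilizing subsheaf into \cite[Theorem 4.7]{campana_paun15}, and contradict Proposition \ref{proposition:canonical_versus_uniruled}. The only structural difference is the order of operations: the paper pulls back $\sG$ to $Z$ first and destabilizes $\beta^{-1}\sG$ with respect to $\beta^*H$ there, whereas you destabilize on $X$ and then pull back; your explicit involutivity check for $\sF$ is the standard argument and is fine (the paper applies Campana--P\u{a}un directly to the destabilizing subsheaf without spelling this out).

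There is, however, one step that does not parse as written: the formula $K_{\sF_Z}\sim_\mathbb{Q}\beta^*K_\sF+E$. Here $\sF$ is an arbitrary destabilizing subsheaf of $\sG$, so $K_\sF$ is merely a Weil divisor class on the singular variety $X$; there is no reason for it to be $\mathbb{Q}$-Cartier, and the paper's pull-back of Weil divisors (Paragraph \ref{definition:pull-back}) is defined only for equidimensional morphisms, not for birational ones. The intersection-number identity you want, $c_1(\sF_Z)\cdot(\beta^*H)^{n-1}=c_1(\sF)\cdot H^{n-1}$, is nevertheless correct, but it should be obtained by pushing forward rather than pulling back: since $X$ is normal and $\beta$ is projective and birational, $\beta$ is an isomorphism over an open subset of $X$ whose complement has codimension at least two, hence $\beta_*c_1(\sF_Z)=c_1(\sF)$ as Weil divisor classes, and the projection formula gives $c_1(\sF_Z)\cdot(\beta^*H)^{n-1}=\beta_*c_1(\sF_Z)\cdot H^{n-1}$. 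Alternatively, order the steps as the paper does and take the maximal destabilizing subsheaf on the smooth variety $Z$, where every determinant is Cartier; the same pushforward computation shows that $\mu_{\beta^*H}\big(\beta^{-1}\sG\big)=0$ and that non-semistability of $\sG$ transfers to $\beta^{-1}\sG$. With either repair your argument is complete.
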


\begin{proof}
Let $\beta\colon Z \to X$ be a resolution of singularities.
Suppose that $\sG$ is not semistable with respect to $H$. Then $\beta^{-1}\sG$ is not semistable with respect to $\beta^*H$.
Applying \cite[Theorem 4.7]{campana_paun15} to the maximally destabilizing subsheaf of $\beta^{-1}\sG$ with respect to 
$\beta^*(H^{\dim X -1})$, we see that $\sG$ is uniruled. But this contradicts Proposition \ref{proposition:canonical_versus_uniruled}, proving the lemma.
\end{proof}

\begin{proof}[Proof of Proposition \ref{prop:splitting_algebraic_transcendental}]
Recall from \cite[Paragraph 2.3]{lpt}
that
there exist a normal projective variety $Y$, a dominant rational map $\phi\colon X \dashrightarrow Y$, and a foliation $\sH$ on $Y$ such that the following holds. 
\begin{enumerate}
\item[(a)] There is no positive-dimensional algebraic subvariety tangent to $\sH$
passing through a general point of $Y$; and
\item[(b)] $\sG$ is the pull-back of $\sH$ via $\phi$. 
\end{enumerate}
Let $\sE$ be the foliation on $X$ induced by $\phi$. We may assume without loss of generality that $Y$ is the space of leaves of $\sE$. Let $\psi\colon Z \to Y$ be the family of leaves, and let $\beta\colon Z \to X$ be the natural morphism (see \ref{family_leaves}). 

\begin{claim}
The canonical class $K_\sE$ of $\sE$ is torsion. In particular, $K_\sE$ is $\mathbb{Q}$-Cartier.
\end{claim}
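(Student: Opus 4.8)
The plan is to prove that $\sE$ is canonical with $K_\sE\equiv 0$ and then to invoke Proposition \ref{prop:abundance_alg_int} to conclude that $K_\sE$ is torsion; the final assertion that $K_\sE$ is $\mathbb{Q}$-Cartier then comes for free, since a torsion Weil divisor class is automatically $\mathbb{Q}$-Cartier. The genuine obstacle throughout is exactly this $\mathbb{Q}$-Cartierness, because $Y$ need not be $\mathbb{Q}$-Gorenstein and $X$ is only assumed klt. I would remove it at the outset by replacing $X$ with a $\mathbb{Q}$-factorialization $\beta\colon X'\to X$ (Fact \ref{fact:existence_factorialization}): then $X'$ is klt, every Weil divisor on $X'$ (in particular $K_{\beta^{-1}\sE}$) is $\mathbb{Q}$-Cartier, and since $\beta$ is small the class group is unchanged, so torsionness of $K_{\beta^{-1}\sE}$ descends to torsionness of $K_\sE$. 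The hypotheses persist on $X'$: $K_{\beta^{-1}\sG}=\beta^*K_\sG\equiv 0$, $\beta^{-1}\sG$ is still canonical, and properties (a) and (b) hold for $\beta^{-1}\sG=(\phi\circ\beta)^{-1}\sH$. Thus I may assume $X$ is $\mathbb{Q}$-factorial.

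Next I would record the basic relation between the three foliations. Over the open locus $U\subseteq X$ where $\phi$ restricts to a smooth morphism and $\sG,\sH$ are subbundles, the differential $d\phi$ yields a short exact sequence $0\to \sE\to \sG\to \phi^*\sH\to 0$, with $\sE=T_{U/Y}=\ker d\phi$; taking determinants and using $c_1(\sG)=-K_\sG\equiv 0$ gives the numerical identity $c_1(\sE)\equiv \phi^*K_\sH$. Restricting the same sequence to a general leaf $F$ (a general fibre of $\phi$), the quotient $\phi^*\sH|_F$ is trivial, so $\det\sG|_F\cong\det T_F$ and hence $K_F\sim_{\mathbb{Z}}K_\sG|_F$; as $K_\sG\equiv 0$ this forces $K_F\equiv 0$. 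In particular the general leaf is not uniruled, and therefore $\sE$ is not uniruled.

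To pin down $c_1(\sE)$ I would combine two opposite estimates. By property (a) no positive-dimensional algebraic subvariety — in particular no rational curve — is tangent to $\sH$ through a general point of $Y$, so $\sH$ is not uniruled; Theorem \ref{thm:uniruled_versus_pseudo-effectivity} then gives that $K_\sH$ is pseudo-effective, whence $c_1(\sE)\equiv\phi^*K_\sH$ is pseudo-effective as well. On the other hand, $\sG$ is semistable of slope zero by Lemma \ref{lemma:stability_versus_uniruled}, and $\sE$ is saturated in $\sG$ (its quotient embeds in the torsion-free sheaf $T_X/\sE$), so $\mu_H(\sE)\le 0$, i.e. $c_1(\sE)\cdot H^{\dim X-1}\le 0$ for every ample $H$. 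Since a pseudo-effective class has non-negative intersection with the movable class $H^{\dim X-1}$, the two estimates force $c_1(\sE)\cdot H^{\dim X-1}=0$ for all ample $H$, and a pseudo-effective class with this property is numerically trivial. Hence $c_1(\sE)\equiv 0$, that is $K_\sE\equiv 0$.

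Finally, $K_\sE$ is $\mathbb{Q}$-Cartier (after the reduction), $K_\sE\equiv 0$, and $\sE$ is not uniruled, so Proposition \ref{proposition:canonical_versus_uniruled} shows that $\sE$ is canonical; as $\sE$ is algebraically integrable, Proposition \ref{prop:abundance_alg_int} yields that $K_\sE$ is torsion, and this descends through the small $\mathbb{Q}$-factorialization to the original $X$. Beyond the $\mathbb{Q}$-Cartier bookkeeping, the step I expect to require the most care is making the rational map $\phi$ into an honest equidimensional morphism, so that $\phi^*K_\sH$, the exact sequence, and the projection-formula manipulations are literally valid and so that $K_\sH$ is itself $\mathbb{Q}$-Cartier before Theorem \ref{thm:uniruled_versus_pseudo-effectivity} is applied; here I would work on the family of leaves $\psi\colon Z\to Y$ of $\sE$ and deduce the $\mathbb{Q}$-Cartierness of $K_\sH$ from that of $K_\sG$ and $K_\sE$ via Lemma \ref{lemma:pull-back_Q_Cartier}.
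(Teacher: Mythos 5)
Your overall architecture is the same as the paper's: reduce to $X$ $\mathbb{Q}$-factorial by a small $\mathbb{Q}$-factorialization, compare $K_\sE$, $K_\sG$ and $\phi^*K_\sH$, force $K_\sE\equiv 0$ by playing pseudo-effectivity against the semistability of $\sG$ (Lemma \ref{lemma:stability_versus_uniruled}), deduce canonicity of $\sE$ from Proposition \ref{proposition:canonical_versus_uniruled}, and conclude with Proposition \ref{prop:abundance_alg_int}. However, there is a genuine gap at the step where you assert that $K_\sH$ is pseudo-effective. Theorem \ref{thm:uniruled_versus_pseudo-effectivity} is stated for foliations with canonical singularities, and by Definition \ref{definition:canonical_singularities} this presupposes that $K_\sH$ is $\mathbb{Q}$-Cartier; at this point of the argument neither property is known for $\sH$. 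Worse, canonicity of $\sH$ is obtained in the paper only at the very end of the proof of Proposition \ref{prop:splitting_algebraic_transcendental}, via Lemma \ref{lemma:pull_back_foliation_singularities}, i.e.\ \emph{after} the Claim and Theorem \ref{thm_intro:global_reeb_stability} have been used, so feeding it in here would be circular. Your proposed repair of the $\mathbb{Q}$-Cartier issue via Lemma \ref{lemma:pull-back_Q_Cartier} does not close the gap either: to apply that lemma to $\psi\colon Z\to Y$ you need $\psi^*K_\sH$ to be $\mathbb{Q}$-Cartier on $Z$, and $\psi^*K_\sH\sim_\mathbb{Z}K_{\beta^{-1}\sG}-K_{\beta^{-1}\sE}-B$, where none of the three terms is known to be $\mathbb{Q}$-Cartier on $Z$ --- the $\mathbb{Q}$-factorialization was performed on $X$, not on the family of leaves $Z$, which need not be $\mathbb{Q}$-factorial. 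The paper sidesteps all of this by applying \cite[Corollary 4.8]{campana_paun15} to the foliation induced by $\sH$ on a resolution of $Y$ (property (a) guarantees that no rational curve is tangent to that foliation through a general point) and by using the notion of pseudo-effectivity for Weil $\mathbb{Q}$-divisors of Remark \ref{rem:pseff_versus_numerical dimension}, which requires no $\mathbb{Q}$-Cartier hypothesis and is preserved under push-forward; this yields pseudo-effectivity of $K_\sH$, hence of $\phi^*K_\sH+C$. (Here $C\ge 0$ is the effective correction divisor you dropped: the correct relation is $K_\sG-K_\sE\sim_\mathbb{Z}\phi^*K_\sH+C$, not $c_1(\sE)\equiv\phi^*K_\sH$; this omission is harmless for your slope argument, but the term should be there.)

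A second, smaller gap: you infer that $\sE$ is not uniruled from ``$K_F\equiv 0$ for a general leaf closure $F$''. That implication fails for singular $F$: a normal projective variety with numerically trivial $\mathbb{Q}$-Cartier canonical class can be uniruled when its singularities are worse than canonical (for instance the projective cone over a canonically embedded curve of genus at least $2$), and at this stage you do not know that general leaf closures have canonical singularities --- that is Lemma \ref{lemma:canonical_foliation_versus_lc_pairs}, whose hypothesis is exactly the canonicity of $\sE$ you are trying to establish. The correct argument, and the one the paper uses, is by tangency: any rational curve tangent to $\sE$ through a general point is tangent to $\sG$, and $\sG$ is not uniruled by Proposition \ref{proposition:canonical_versus_uniruled} since it is canonical with $K_\sG\equiv 0$; hence $\sE$ is not uniruled, and then $K_\sE\equiv 0$ (which is $\mathbb{Q}$-Cartier after your reduction) gives canonicity of $\sE$ by the same proposition.
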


\begin{proof}Replacing $X$ by a $\mathbb{Q}$-factorialization (see Paragraph \ref{say:q_factorialization}), we may assume in addition that $X$ is $\mathbb{Q}$-factorial by Lemma \ref{lemma:singularities_birational_morphism}. 

Since $\beta^{-1}\sG=\psi^{-1}\sH$, there is an exact sequence
$$0 \to \beta^{-1}\sE = T_{Z/Y} \to \beta^{-1}\sG \to \psi^{[*]}\sH.$$
Therefore, there exists an effective Weil divisor $B$ on $Z$
$$ K_{\beta^{-1}\sG}\sim_\mathbb{Z} K_{\beta^{-1}}\sE + \psi^* K_{\sH} + B.$$
It follows that
$$K_{\sG}-K_{\sE} \sim_\mathbb{Z} \phi^*K_\sH+C,$$
where $C:=\beta_* B$ is effective.
Notice that the pull-back $\phi^*K_\sH$ is well-defined (see Paragraph \ref{definition:pull-back}).
Applying \cite[Corollary 4.8]{campana_paun15} to the foliation induced by $\sH$ on a resolution of $Y$, we see that $K_{\sH}$ is pseudo-effective (see Remark \ref{rem:pseff_versus_numerical dimension} for this notion).
This immediately implies that $\phi^*K_\sH+C$ is pseudo-effective as well.
Let $H$ be an ample Cartier divisor on $X$. Then we have $\mu_H(K_\sG) \ge \mu_H(K_\sE)$.
On the other hand, 
by Lemma \ref{lemma:stability_versus_uniruled}, we also have $\mu_H(K_\sG) \le \mu_H(K_\sE)$, and hence $\mu_H(K_\sE) = \mu_H(K_\sG)=0$. This implies that $K_\sE \equiv 0$.
By Proposition \ref{proposition:canonical_versus_uniruled}, $\sE$ has canonical singularities since $\sG$ does.
Proposition \ref{prop:abundance_alg_int} then implies that $K_\sE$ is torsion, proving the claim. 
\end{proof}

By Theorem \ref{thm_intro:global_reeb_stability} applied to $\sE$ and \cite[Lemma 6.7]{fano_fols}, there exist normal projective varieties $Y_1$ and $Z_1$, as well as a foliation $\sH_1$ on 
$Y_1$ and a quasi-\'etale cover $f \colon Y_1 \times Z_1 \to X$ such that $f^{-1}\sE=T_{Y_1 \times Z_1/Y_1}$ and such that
$f^{-1}\sG$ is the pull-back of $\sH_1$ via the projection $Y_1 \times Z_1 \to Y_1$. Moreover, there is no positive-dimensional algebraic subvariety tangent to $\sH_1$
passing through a general point of $Y_1$.  
From Lemma \ref{lemma:pull-back_Q_Cartier}, we see that $K_{\sH_1}$ is $\mathbb{Q}$-Cartier. Since 
$K_{f^{-1}\sG} \equiv 0$ and $K_{f^{-1}\sE}\equiv 0$, we must have $K_{\sH_1} = K_{f^{-1}\sG} - K_{f^{-1}\sE}\equiv 0$.
By Lemma \ref{lemma:pull_back_foliation_singularities}, $\sH_1$ is canonical.
This finishes the proof of Proposition
\ref{prop:splitting_algebraic_transcendental}.
\end{proof}

\section{Algebraic integrability, I}\label{section:algebraic_integrability_1}

In this section we prove algebraicity criteria for leaves of algebraic foliations on uniruled varieties (see Proposition \ref{prop:grothendieck_katz_projective_connection} and Theorem \ref{thm:grothendieck_katz}).

\begin{exmp}\label{example:suspension} Let $n \ge 2$ be an integer.
Let $A=\mathbb{C}^{n-1}/\Lambda$ be a complex abelian variety, and let 
$\rho\colon \pi_1(A) \to \textup{PGL}(2,\mathbb{C})$ be a representation of the fundamental group $\pi_1(A)\cong\Lambda$ of $A$.
Then the group $\pi_1(A)$ acts diagonally on $\mathbb{C}^{n-1}\times \mathbb{P}^1$ by 
$\gamma\cdot(z,p)=\big(\gamma(z),\rho(\gamma)(p)\big)$. Set $X:=(\mathbb{C}^{n-1} \times \mathbb{P}^1)/\pi_1(A)$, 
and denote by $\psi\colon X \to A \cong \mathbb{C}^{n-1}/\pi_1(A)$ the projection morphism, which is $\mathbb{P}^1$-bundle.
The foliation on $\mathbb{C}^{n-1} \times \mathbb{P}^1$ induced by the projection $\mathbb{C}^{n-1} \times \mathbb{P}^1 \to \mathbb{P}^1$ is invariant under the action of $\pi_1(A)$ and yields a flat Ehresmann connection $\sG$ on $\psi$.
Note that $\rho$ is the representation induced by $\sG$. In particular, $\sG$-invariant sections of $\psi$ correspond to 
$\pi_1(A)$-invariant points on $\mathbb{P}^1$. 

Let $H \subseteq \textup{PGL}(2,\mathbb{C})$ be the Zariski closure of the image of $\rho$.
Observe that $H$ is abelian, and that
$\sG$ is algebraically integrable if and only if $H$ is finite.

Suppose that $\dim H >0$. Let $p\colon \textup{SL}(2,\mathbb{C}) \to \textup{PSL}(2,\mathbb{C})\cong \textup{PGL}(2,\mathbb{C})$ denotes the projection morphism, and set 
$$T:=\left\{
\begin{pmatrix}
a & 0 \\
0 & a^{-1}
\end{pmatrix},\, a\in\mathbb{C}^*
\right\}\subset \textup{SL}(2,\mathbb{C})
\quad \textup{and} \quad
U:=\left\{
\begin{pmatrix}
1 & b \\
0 & 1
\end{pmatrix},\, b\in\mathbb{C}
\right\}\subset \textup{SL}(2,\mathbb{C}).$$
Using \cite[Theorem 4.12]{kaplansky} (or \cite[Lemma 6.2]{corlette_simpson}), one readily checks that $H$ is conjugate to $p(T)$ or $p(U)$. In either case, there is a representation 
$$\wt\rho\colon \pi_1(A) \to \textup{SL}(2,\mathbb{C}) \quad \textup{such that} \quad \rho=\wt\rho \circ p.$$ 
Thus, there is a flat rank two vector bundle $(\sE,\nabla)$ on $A$ such that $X \cong \mathbb{P}_A(\sE)$ over $A$. Moreover, if 
$\sG_\nabla$ denotes the linear Ehresmann connection on the total space $E$ of $\sE^*$ induced by $\nabla$, then 
$\sG$ is the projection of ${\sG_\nabla}_{|E^\times}$ under the natural map $\pi\colon E^\times \to X$, where 
$E^\times:=E\setminus 0_X$.

\medskip

Suppose that $H=p(T)$. Then we may assume that $\sE=\sM\oplus \sM^{-1}$ as flat vector bundles for some $\sM\in \textup{Pic}^0(A)$. Since $\dim H >0$ by assumption, $\sM$ is not torsion. The $\sG$-invariant sections $D_1$ and $D_2$ of $\psi$ are the two sections given by the quotients $\sM\oplus \sM^{-1} \twoheadrightarrow \sM$ and $\sE=\sM\oplus \sM^{-1} \twoheadrightarrow \sM^{-1}$. 
The structure of algebraic group on $\mathbb{C}^n\times \mathbb{G}_m$ induces a structure of commutative algebraic group $G$ on $X^\circ:=X \setminus (D_1 \cup D_2)$ fitting into an exact sequence
$$1 \to \mathbb{G}_m \to G \to A \to 1.$$
Moreover, $X^\circ$ is a principal $\mathbb{G}_m$-bundle over $A$, and the action of $G$ on $X^\circ$ extends to an action on $G$. Finally, $\sG_{|X^\circ}$ yields a flat $\mathbb{G}_m$-equivariant connection on $X^\circ \to A$.

\medskip

Suppose that $H=p(U)$. Then there is a non-trivial exact sequence $0 \to \sO_A \to \sE \to \sO_A \to 0$. The 
only $\sG$-invariant section $D$ of $\psi$ corresponds to the quotient $\sE \twoheadrightarrow \sO_A$. 
In this case, there is a structure of commutative algebraic group $G$ on $X^\circ:=X \setminus D$ fitting into an exact sequence
$$1 \to \mathbb{G}_a \to G \to A \to 1,$$
and $X$ is an equivariant compactification of $G$. Moreover, $X^\circ$ is principal $\mathbb{G}_a$-bundle over $A$, and
$\sG_{|X^\circ}$ yields a flat $\mathbb{G}_a$-equivariant connection on $X^\circ \to A$.
\end{exmp}

The proof of Proposition \ref{prop:grothendieck_katz_projective_connection} below relies on an algebraicity criterion for leaves of algebraic foliations proved in \cite[Theorem 2.9]{bost}, 
which we recall now.

\medskip

Let $X$ be an algebraic variety over some field $k$ of positive 
characteristic $p$, and let $\sG \subseteq T_X$ be a subsheaf. We will denote by $\Frobabs \colon X \to X$ the absolute Frobenius morphism of $X$.

The sheaf of derivations $\textup{Der}_{k}(\sO_X)\cong T_X$ is endowed with the $p$-th power operation, which maps any local $k$-derivation $D$ of $\sO_X$ to its $p$-th iterate $D^{[p]}$. 
When $\sG$ is involutive, the map $\Frobabs^*\sG \to T_X/\sG$ which sends 
$D$ to the class in $T_X/\sG$ of $D^{[p]}$ is $\sO_X$-linear. The sheaf $\sG$ is said to be 
\emph{closed under $p$-th powers} if the map $\Frobabs^*\sG \to T_X/\sG$ vanishes.

Let $R \subset \mathbb{C}$ be a finitely generated $\mathbb{Z}$-algebra, with field of fractions $K$, and set $S := \textup{Spec}\, R$. For any closed point $s\in S$ with maximal ideal $\mathfrak{m}$, we let $k(s)$ be the finite field $R/\mathfrak{m}$.
We denote by $\bar{s}\colon \textup{Spec} \, k(\bar{s}) \to S$ a geometric point of $S$ lying over $s$ with
$k(\bar{s})$ an algebraic closure of $k(s)$.
Given a scheme $X$ over $S$, we let 
$X_\mathbb{C}:=X\otimes \mathbb{C}$,
and $X_{\bar{s}}:= X \otimes k(\bar{s})$.
Given a sheaf $\sG$ on $X$, we let $\sG_\mathbb{C}:=\sG\otimes \mathbb{C}$,
and $\sG_{\bar{s}}:= \sG \otimes k(\bar{s})$.

Let $X$ be a normal complex projective variety, and let $\sG$ be a foliation on 
$X$. Let $R \subset \mathbb{C}$ be a finitely generated $\mathbb{Z}$-algebra, with field of fractions $K$, and let $\bX$ be a projective model of $X$ over $\bS :=\textup{Spec} \,R$. Let also $\sbfG$
be a saturated subsheaf of the relative tangent sheaf $T_{\bX/\bS}$ such that $\sbfG_\mathbb{C}$ coincides
with $\sG$. We say that $\sG$ \textit{is closed under $p$-th powers for almost all primes $p$} if
there exists an open dense subset $\bU$ of $\bS$ such that for any closed point $s$ in $\bU$, 
the subsheaf $\sbfG_{\bar{s}}$ of $T_{\bX_{\bar{s}}}$
is closed under $p$-th powers, where $p$ denotes the characteristic of $k(\bar{s})$. 
This condition is independent of the choices of $\bS$ and $\bX$.

\begin{thm}[{\cite[Theorem 2.9]{bost}}]\label{thm:bost}
Let $G$ be an algebraic group over a number field $K$, whose neutral component is solvable. Let $P$ be a principal $G$-bundle over a smooth connected variety $B$ over $K$, and let $\sG$ be a flat $G$-equivariant connection on $P \to B$.
Suppose that $\sG$ is closed under $p$-th powers for almost all primes $p$.
Then $\sG$ is algebraically integrable.
\end{thm}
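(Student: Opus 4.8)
Since the final statement is the algebraicity criterion of Bost that the paper will quote and apply, I describe the strategy behind its proof. The plan is to deduce algebraic integrability of $\sG$ from an Arakelov-theoretic algebraicity criterion for a formal leaf of a foliation through a rational point. First I would fix a rational point $b \in B(K)$ and a point $p$ in the fibre $P_b(K)$, and consider the formal leaf $\widehat{V}$ of $\sG$ through $p$. Because $\sG$ is a flat connection, the structure map $P \to B$ induces a formal isomorphism $\widehat{V} \to \widehat{B}_{b}$, so that algebraicity of every such $\widehat{V}$ is equivalent to algebraic integrability of $\sG$. The whole problem is thereby reduced to proving that $\widehat{V}$ is algebraic.

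Next I would verify that $\widehat{V}$ is \emph{$A$-analytic} in Bost's sense, i.e.\ that it is convergent at every place with its $p$-adic radii of convergence bounded below in a summable fashion. At the archimedean places this is automatic, since a flat connection admits local flat holomorphic trivialisations and hence $\widehat{V}$ has positive radius of convergence. At the finite places the hypothesis enters: because $\sG$ is closed under $p$-th powers for almost all primes $p$, for all but finitely many $p$ the reduction $\sbfG_{\bar s}$ is integrable in the Frobenius sense, which forces the corresponding $p$-adic leaf to have radius of convergence equal to $1$. Thus the failure of maximal radius is concentrated at a finite set of primes, the $A$-analyticity condition holds, and moreover the vanishing of the $p$-curvature makes the reductions $\widehat{V}_{\bar s}$ themselves Frobenius-leaves, so their contributions to the relevant arithmetic degrees vanish.

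I would then invoke the slope inequality. For a very ample $\sO_X(D)$ one evaluates global sections, together with their jets, along $\widehat{V}$; algebraicity of $\widehat{V}$ is equivalent to the ranks of these evaluation maps growing sub-maximally in $\deg D$. Equipping source and target with the archimedean metrics coming from the convergent germ and with the integral structures coming from a projective model over $\bS$, Bost's slope inequality bounds the arithmetic degree of the image of the evaluation map by the $A$-analyticity data: the archimedean radii contribute only a logarithmic term, and by the previous step the $p$-adic radii contribute a convergent sum. Hence, were $\widehat{V}$ transcendental, the dimension of its Zariski closure would force the evaluation ranks to grow linearly in $\deg D$, contradicting the sublinear arithmetic bound; this yields the algebraicity of $\widehat{V}$.

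The decisive role of the solvability of $G^0$, and the main obstacle, lies in making this slope estimate actually close. Here I would d\'evisse $G$ along a composition series whose successive quotients are $\mathbb{G}_a$, $\mathbb{G}_m$, finite groups, or an abelian variety, and treat the resulting tower of flat sub-connections one layer at a time. Solvability guarantees that at each stage the auxiliary line bundles carry nonnegative, canonical-height type contributions, so that the positivity needed to beat the linear growth of the leaf dimension is available; this is exactly the positivity that fails for semisimple structure groups, which is why the Grothendieck-Katz conjecture remains open in general. Carrying out this Arakelov slope computation, with the careful bookkeeping of $p$-adic radii of convergence through the d\'evissage, is the technical heart of the argument.
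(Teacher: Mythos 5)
First, a point of order: the paper does not prove this statement at all --- it is quoted directly from Bost (\cite[Theorem 2.9]{bost}) and used as a black box in the proof of Proposition \ref{prop:grothendieck_katz_projective_connection}. So there is no proof in the paper to compare yours against; what I can assess is whether your sketch is a viable account of Bost's argument. Its overall frame is right (formal leaf through a rational point, $A$-analyticity from vanishing $p$-curvature at almost all finite places, the method of slopes), but the decisive step, your third paragraph, has a genuine gap. As written --- $A$-analyticity plus the slope inequality, with the archimedean places ``contributing only a logarithmic term'' --- your argument uses neither the principal-bundle structure nor the solvability of the neutral component, and would therefore prove the full Ekedahl--Shepherd-Barron--Taylor conjecture, which is open. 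The error is quantitative: a formal germ that merely converges at an archimedean place contributes to the slope inequality a term that grows \emph{linearly} in the jet order, with a constant determined by the radius of convergence (evaluation norms of size roughly $r^{-k}$ on $k$-th jets), not logarithmically. That linear term is of the same order as the positive term one is trying to beat, and the inequality does not close. Bost's algebraicity criterion requires an additional archimedean hypothesis precisely for this reason: the formal leaf must admit, at some archimedean place, a uniformization by a \emph{parabolic} manifold (the Liouville-type property), e.g.\ by an affine space $\mathbb{C}^d$, and it is this exhaustion that makes the archimedean contribution favorable.

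Correspondingly, your last paragraph misidentifies what solvability buys; it is not a matter of ``canonical-height type'' positivity of auxiliary line bundles in the d\'evissage. In Bost's argument, solvability of $G^\circ$ (via Lie--Kolchin, finite \'etale covers to kill component groups and torsion monodromy, and passage to the quasi-Albanese of $B$) allows one to reduce the flat bundle, layer by layer with $\mathbb{G}_a$-, $\mathbb{G}_m$- and abelian-variety quotients, to connections pulled back from \emph{translation-invariant} foliations on commutative algebraic groups --- extensions of (semi-)abelian varieties by $\mathbb{G}_a$ or $\mathbb{G}_m$ --- that is, to Lie subalgebras $\mathfrak{h}\subseteq \textup{Lie}\,G'$ closed under $p$-th powers; this is exactly the situation of \cite[Theorem 2.3]{bost}. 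There the leaf through the identity is the image of the exponential map $\mathfrak{h}\cong\mathbb{C}^d \to G'(\mathbb{C})$, hence parabolic, and the slope method closes. This also explains why the theorem stops at solvable structure groups: for non-solvable monodromy the factorization through homology/Albanese fails, no reduction to invariant foliations on algebraic groups is available, and the leaves carry no parabolic uniformization --- which is exactly where the general Grothendieck--Katz-type conjecture remains stuck.
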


\begin{prop}\label{prop:grothendieck_katz_projective_connection}
Let $A$ be a complex abelian variety, and let $\psi\colon X \to A$ be a $\mathbb{P}^1$-bundle. Suppose that there exists a flat holomorphic connection $\sG \subset T_X$ on $\psi$, and that $\sG$ is closed under $p$-th powers for almost all primes $p$. Then $\sG$ is algebraically integrable.  
\end{prop}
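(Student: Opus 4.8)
The plan is to recognise $\psi\colon X\to A$ together with the flat connection $\sG$ as a suspension in the sense of Example \ref{example:suspension}, and then to feed the resulting principal-bundle structure into Bost's criterion (Theorem \ref{thm:bost}). Since $\sG$ is a flat holomorphic connection on the $\mathbb{P}^1$-bundle $\psi$, its monodromy is a representation $\rho\colon\pi_1(A)\to\textup{PGL}(2,\mathbb{C})$ and $(X,\psi,\sG)$ is exactly the suspension of $\rho$. Write $H\subseteq\textup{PGL}(2,\mathbb{C})$ for the Zariski closure of the image of $\rho$. As $\pi_1(A)$ is abelian, so is $H$, and by Example \ref{example:suspension} the foliation $\sG$ is algebraically integrable if and only if $H$ is finite. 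Hence I may assume $\dim H>0$, for otherwise there is nothing to prove.

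Under the assumption $\dim H>0$, Example \ref{example:suspension} (via Kaplansky's theorem) shows that $H$ is conjugate to either $p(T)$ or $p(U)$, and that in both cases there is a dense open subset $X^\circ\subseteq X$, namely the complement of the one or two $\sG$-invariant sections of $\psi$, carrying the structure of a principal $G$-bundle $X^\circ\to A$ with $G=\mathbb{G}_m$ (resp. $G=\mathbb{G}_a$), in such a way that $\sG_{|X^\circ}$ is a flat $G$-equivariant connection on $X^\circ\to A$. The group $G$ is connected and abelian, so its neutral component is solvable. Moreover $\sG_{|X^\circ}$ inherits from $\sG$ the property of being closed under $p$-th powers for almost all primes $p$, since that condition is local and passes to restrictions to open subsets.

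Finally I would descend the whole picture to a number field. Everything in sight is defined over a finitely generated subfield of $\mathbb{C}$, and by Andr\'e's theorem \cite[Theorem 7.2.2]{andre} I may assume that $A$, the bundle $X^\circ\to A$, the $G$-action, and $\sG$ are all defined over a number field $K$, the arithmetic hypothesis on $p$-th powers being preserved. Theorem \ref{thm:bost}, applied to the principal $G$-bundle $X^\circ\to A$ over the smooth connected $K$-variety $A$ and to the flat $G$-equivariant connection $\sG_{|X^\circ}$, then yields that $\sG_{|X^\circ}$ is algebraically integrable. Since $X^\circ$ is dense in $X$, a general leaf of $\sG$ meets $X^\circ$, and so $\sG$ is algebraically integrable as well, which is the desired conclusion.

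The hard part will be the reduction to a number field: Bost's criterion is genuinely arithmetic and applies only over number fields, whereas $A$ and the extension class defining $G$ (a non-torsion point of $\textup{Pic}^0(A)$ in the $p(T)$ case, resp. a class in $H^1(A,\sO_A)$ in the $p(U)$ case) need not a priori be defined over $\overline{\mathbb{Q}}$. It is precisely Andr\'e's specialization result that makes this reduction legitimate while retaining the $p$-th power hypothesis, and matching the structure produced by Example \ref{example:suspension} to the exact hypotheses of Theorem \ref{thm:bost} is the other point requiring care.
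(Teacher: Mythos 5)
Your skeleton matches the paper's proof — suspension structure, the Zariski closure $H$ of the monodromy, the dichotomy of Example \ref{example:suspension}, the principal $G$-bundle structure on $X^\circ\to A$ with $G=\mathbb{G}_m$ or $\mathbb{G}_a$, and the combination of Theorem \ref{thm:bost} with Andr\'e's theorem — but the pivotal step, ``by Andr\'e's theorem I may assume that $A$, the bundle $X^\circ\to A$, the $G$-action, and $\sG$ are all defined over a number field,'' is a genuine gap. Andr\'e's Theorem 7.2.2 is not a descent-of-field-of-definition statement: a complex abelian variety is in general not definable over $\overline{\mathbb{Q}}$, and no theorem allows you to assume that it is. What Andr\'e's result provides is a \emph{specialization} theorem for flat vector bundles: after spreading the data out over $\textup{Spec}\,R$ with $R$ finitely generated over $\mathbb{Q}$, if the specialized connection at \emph{every} closed point (whose residue field is a number field) has finite monodromy, then the original complex connection has finite monodromy. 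The logic therefore runs in the opposite direction from yours: one does not transport the hypotheses down to a number field and apply Bost once; one applies Bost at every closed-point specialization, and only then uses Andr\'e to lift the \emph{conclusion} back to $\mathbb{C}$.

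Making this run also requires an ingredient your proposal omits entirely: Andr\'e's theorem concerns linear (vector-bundle) connections and their monodromy representations, not foliations on $\mathbb{P}^1$-bundles. This is why the paper first uses the fact that $H$ is conjugate to $p(T)$ or $p(U)$ to lift $\rho$ to a representation $\wt\rho\colon\pi_1(A)\to\textup{SL}(2,\mathbb{C})$, producing a flat rank-two bundle $(\sE,\nabla)$ with $X\cong\mathbb{P}_A(\sE)$ and with the key equivalence that $\sG$ is algebraically integrable if and only if $(\sE,\nabla)$ has finite monodromy. The paper then spreads out $(\sE,\nabla)$ together with the principal-bundle structure over a finitely generated $\mathbb{Q}$-algebra, applies Theorem \ref{thm:bost} at each closed point $t$ to conclude that the specialized foliation is algebraically integrable — equivalently, that the specialized flat bundle has finite monodromy — and finally invokes Andr\'e's theorem to conclude that $(\sE,\nabla)$ itself has finite monodromy, i.e. $\dim H=0$, the desired contradiction. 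As written, your argument cannot be executed because the reduction it rests on is not available; once repaired along these lines, it becomes exactly the paper's proof.
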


\begin{proof}
Set $n:= \dim X$. Since $\psi$ admits a flat holomorphic connection, there is a representation
$$\rho\colon \pi_1(A) \to \textup{PGL}(2,\mathbb{C})$$ of $\pi_1(A)$ such that $X\cong(\mathbb{C}^{n-1} \times \mathbb{P}^1)/\pi_1(A)$, where the group $\pi_1(A)$ acts diagonally on $\mathbb{C}^{n-1}\times \mathbb{P}^1$. 
Moreover, $\sG$ is induced by the projection $\mathbb{C}^{n-1} \times \mathbb{P}^1 \to \mathbb{P}^1$. 

Let $H \subseteq \textup{PGL}(2,\mathbb{C})$ be the Zariski closure of the image of $\rho$.
Note that $\sG$ is algebraically integrable if and only if $H$ is finite.

We argue by contradiction and assume that $\dim H >0$. We use the notation introduced in Example \ref{example:suspension}.

There is a dense open set $X^\circ \subset X$ that is a principal bundle over $A$ with structure group 
$K=\mathbb{G}_m$ or $K=\mathbb{G}_a$, and 
$\sG_{|X^\circ}$ yields a flat invariant connection on $\psi^\circ:=\psi_{|X^\circ}\colon X^\circ \to A$, which is obviously closed under $p$-th powers for almost all primes $p$.

In addition, there is a flat rank two vector bundle $(\sE,\nabla)$ on $A$ such that $X \cong \mathbb{P}_A(\sE)$ over $A$. Moreover, if 
$\sG_\nabla$ denotes the linear Ehresmann connection on the total space $E$ of $\sE^*$ induced by $\nabla$, then 
$\sG$ is the projection of ${\sG_\nabla}_{|E^\times}$ under the natural map $\pi\colon E^\times \to X$, where 
$E^\times:=E\setminus 0_X$ (see Example \ref{example:suspension} below). 
Let $L \subset X$ be a leaf of $\sG$. Since ${\sG_\nabla}_{|E^\times}$ is invariant under the natural $\mathbb{C}^*$-action on $E^\times$, we have $\pi^{-1}(L)\cong L \times \mathbb{C}^*$ and the restriction of $\sG_\nabla$ to $\pi^{-1}(L)$ is given by the projection $L \times \mathbb{C}^* \to \mathbb{C}^*$ . It follows that $\sG$ is algebraically integrable if and only if so is $\sG_\nabla$.

To show Proposition \ref{prop:grothendieck_katz_projective_connection}, let $R$ be a subring of $\mathbb{C}$,
finitely generated over $\mathbb{Q}$, and let $\bA$ (resp. $\bK$, $\bX$ and $\bX^\circ$) be a smooth projective model of $A$ (resp. $K$, $X$ and $X^\circ$)
over $\bT:=\textup{Spec}\, R$. We may assume that there exist a rank two vector bundle $\sbfE$ on $\bX$ and 
a relative flat connection $\bm{\nabla}\colon \sbfE \to \Omega^1_{\bX/\bA}\otimes\sbfE$ on $\sbfE$ such that 
$(\sbfE_{\mathbb{C}},\bm{\nabla}_\mathbb{C})\cong (\sE,\nabla)$ and such that 
$\bX=\mathbb{P}_\bA(\sbfE)$.
Let $\sbfG$ be the subbundle
of $T_{\bX/\bT}$ induced by $\bm{\nabla}$, so that $\sbfG_\mathbb{C}$ coincides with $\sG$. Shrinking $\bT$, if necessary, we may assume without loss of generality, that for any closed point 
$t\in \bT$, $\sbfG_{\bar{t}}$ is closed under $p$-th powers for almost all primes $p$. 
We may finally also assume that $\bX^\circ_{\bar{t}}$ is a principal bundle over $\bA_{\bar{t}}$ with structure group
$\bK_{\bar{t}}$, and that the restriction of 
$\sbfG_{\bar{t}}$ to $\bX_{\bar{t}}^\circ$ is $\bK_{\bar{t}}$-equivariant.

Now,
when $t\in \bT$ is a closed point, its residue field is a number field, and hence $\sbfG_{\bar{t}}$
is algebraically integrable by Theorem \ref{thm:bost} above. Let 
$\sbfG_{\bm{\nabla}}$ denotes the linear Ehresmann connection on the total space $\bE$ of $\sbfE^*$ induced by $\bm{\nabla}$. By construction,
$\sbfG$ is the projection of ${\sbfG_{\bm\nabla}}_{|\bE^\times}$ under the map $\bE^\times \to \bX$, where 
$\bE^\times:=\bE\setminus 0_{\bX}$. Moreover, if $t\in \bT$ is a closed point, then $(\sbfG_{\bm{\nabla}})_{\bar{t}}$ is algebraically integrable since $\sbfG_{\bar{t}}$ does. In other words, the flat connection $(\sbfE_{\bar{t}},{\bm{\nabla}}_{\bar{t}})$ has finite monodromy representation. By \cite[Theorem 7.2.2]{andre}, we conclude that $(\sE,\nabla)$ has finite monodromy representation as well, so that $\dim H = 0$. This yields a contradiction, completing the proof of the proposition.
\end{proof}

The following is the main result of this section.

\begin{thm}\label{thm:grothendieck_katz}
Let $X$ be a normal complex projective variety with terminal singularities, and let $\sG$ be a codimension one foliation on 
$X$. Suppose that $\sG$ is canonical, and that it is closed under $p$-th powers for almost all primes $p$.
Suppose furthermore that $K_X$ is not pseudo-effective, and that $K_\sG\equiv 0$. 
Then $\sG$ is algebraically integrable. 
\end{thm}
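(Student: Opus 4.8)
The plan is to reduce, by running the Minimal Model Program on $X$, to the situation treated in Proposition \ref{prop:grothendieck_katz_projective_connection}: a flat holomorphic connection on a $\mathbb{P}^1$-bundle over an abelian variety. First I would arrange that $X$ is $\mathbb{Q}$-factorial by passing to a $\mathbb{Q}$-factorialization $\beta\colon X_1 \to X$ (Fact \ref{fact:existence_factorialization}). Since $\beta$ is small and crepant, $X_1$ is again terminal, the foliation $\sG_1:=\beta^{-1}\sG$ satisfies $K_{\sG_1}\sim_\mathbb{Q}\beta^*K_\sG\equiv 0$ and is canonical by Lemma \ref{lemma:singularities_birational_morphism}, and, as $\beta$ is an isomorphism in codimension one, $\sG_1$ is still closed under $p$-th powers and is algebraically integrable if and only if $\sG$ is. So I may assume $X$ is $\mathbb{Q}$-factorial. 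As $K_X$ is not pseudo-effective, $X$ is uniruled, and I would run a $K_X$-MMP, which terminates with a Mori fiber space $\pi\colon X'\to B$. All the intermediate maps $X \map X'$ being birational, I replace $X$ by $X'$ and $\sG$ by its transform $\sG'$.

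Next I would analyse the Mori fiber space. The extremal rational curves contracted along the MMP, and in particular those in the fibers of $\pi$, form covering families; were a general such curve tangent to the foliation, then $\sG$ would be uniruled, contradicting Proposition \ref{proposition:canonical_versus_uniruled} (recall $\sG$ is canonical with $K_\sG\equiv 0$). Hence these curves are generically transverse to $\sG'$. This forces $\dim B=\dim X-1$, the general fiber of $\pi$ to be $\mathbb{P}^1$, and $\sG'$ to be a flat (because involutive) Ehresmann connection on $\pi$, with $\sN_{\sG'}\cong T_{X'/B}$. A direct Chern class computation then gives $K_{\sG'}\sim_\mathbb{Q}\pi^*K_B$, so that $K_B\equiv 0$; combined with terminality of $X'$ this yields that $B$ is klt with numerically trivial, hence (by Nakayama) torsion, canonical class.

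By Theorem \ref{thm:kawamata_abelian_factor} there is a quasi-\'etale cover $A\times Y\to B$ with $A$ an abelian variety, $K_Y\sim_\mathbb{Z}0$ and $\wt q(Y)=0$. Pulling back $\pi$ and $\sG'$, the connection is recorded by a monodromy representation $\rho\colon \pi_1\big((A\times Y)_{\textup{reg}}\big)\to \textup{PGL}(2,\mathbb{C})$; restricting to a fiber $\{a\}\times Y$ gives $\rho_Y\colon \pi_1(Y_{\textup{reg}})\to \textup{PGL}(2,\mathbb{C})$. Since $\textup{PGL}(2,\mathbb{C})$ is a linear algebraic group, \cite[Theorem I]{GGK} (applied to $Y$, using $K_Y\sim_\mathbb{Z}0$ and $\wt q(Y)=0$) forces $\rho_Y$ to have finite image. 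Hence, after replacing $Y$ by a further quasi-\'etale cover, the connection is trivial along the $Y$-directions and descends to a flat connection on a $\mathbb{P}^1$-bundle over the abelian variety $A$, exactly as described in Example \ref{example:suspension}. At this point Proposition \ref{prop:grothendieck_katz_projective_connection} applies and shows that the connection is algebraically integrable; unwinding the quasi-\'etale and finite covers (which preserve and reflect algebraic integrability) and the birational reductions, I would conclude that $\sG$ itself is algebraically integrable.

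The main obstacle will be the MMP step: one must verify that, across the divisorial contractions and flips of the $K_X$-MMP, the transform of $\sG$ remains a codimension one canonical foliation with numerically trivial canonical class which is still closed under $p$-th powers. This rests on the transversality of the contracted extremal rays to $\sG$ (guaranteed by non-uniruledness of the foliation via Proposition \ref{proposition:canonical_versus_uniruled}) together with the invariance of the $p$-closedness condition under the reductions. A secondary difficulty is controlling the singularities of the base $B$ of the resulting Mori fiber space and carrying out the monodromy descent over the abelian factor through \cite[Theorem I]{GGK}, both of which require the transverse, flat structure of the connection to be used carefully.
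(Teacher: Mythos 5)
Your overall architecture (pass to a $\mathbb{Q}$-factorialization, run the MMP to a Mori fiber space, reduce to a flat connection on a $\mathbb{P}^1$-bundle over an abelian variety via Theorem \ref{thm:kawamata_abelian_factor} and \cite[Theorem I]{GGK}, then invoke Proposition \ref{prop:grothendieck_katz_projective_connection}) is exactly the paper's. However, there is a genuine gap at the pivot of the argument: the assertion that generic transversality of the contracted extremal curves to $\sG'$ ``forces $\dim B=\dim X-1$'' is a non sequitur. If $\dim X-\dim B\ge 2$, a general fiber $F$ is a positive-dimensional Fano variety with terminal singularities, and since $c_1(\sN_{\sG'})\equiv -K_{X'}$ is $\pi$-ample, the intersection $\sH:=\sG'_{|F}\cap T_F$ is a codimension one foliation \emph{on} $F$; the covering rational curves can be transverse to $\sG'$ while $\sH$ exists perfectly well, so transversality alone rules out nothing. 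The paper's Step 3 excludes this case by a much more delicate argument: first one shows $K_\sH\sim_\mathbb{Z}{K_{\sG'}}_{|F}$ (the correction divisor $B$ in Proposition \ref{prop:bertini} must vanish, by \cite[Theorem 4.7]{campana_paun15} and Proposition \ref{proposition:canonical_versus_uniruled}), so $\sH$ is canonical with $K_\sH\equiv 0$; then, cutting $F$ by a general surface $S$, the Baum--Bott formula gives $c_1(\sN_\sL)^2=\sum_x \textup{BB}(\sL,x)$, and each index is $\le 0$ because canonical singularities of such foliations have the local form $pz_2dz_1+qz_1dz_2$ by \cite[Corollary 7.8]{lpt}; this contradicts $c_1(\sN_\sL)^2=K_F^2\cdot H^{\dim F-2}>0$ for $F$ Fano of dimension $\ge 2$. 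This Baum--Bott computation is the heart of the theorem and is entirely missing from your proposal.

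A second, related gap: even once $\dim B=\dim X-1$ is known, ``$\sG'$ is a flat (because involutive) Ehresmann connection on $\pi$'' requires transversality at \emph{every} point over an open subset of $B$ with complement of codimension at least two, not just at a general point. One must exclude components of the singular locus of $\sG'$ dominating a divisor in $B$, fibers tangent to $\sG'$, and singular conic-bundle fibers over divisors; the paper's Claim \ref{claim:bundle} does this with Camacho--Sad index computations on surfaces $(\psi^\circ)^{-1}(B)$ over general curve germs, again using the local normal form from \cite[Corollary 7.8]{lpt}, together with \cite[Lemma 3.2]{druel_bbcd2} to rule out fibers tangent to $\sG'$. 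Your final descent step (klt base, $K_B$ torsion, Kawamata's abelian factor, finiteness of the monodromy on the $\wt q=0$ factor via \cite[Theorem I]{GGK}, and Proposition \ref{prop:grothendieck_katz_projective_connection}) agrees with the paper and is fine, but as written the proof cannot get to that point.
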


Before proving Theorem \ref{thm:grothendieck_katz} below, we note the following corollary.

\begin{cor}\label{cor:grothendieck_katz}
Let $X$ be a normal complex projective variety with canonical singularities, and let $\sG$ be a codimension one foliation on 
$X$.
Suppose that $\sG$ is canonical with $K_\sG$ Cartier and $K_\sG\equiv 0$ and that $\sG$ is closed under $p$-th powers for almost all primes $p$. Suppose in addition that $K_X$ is not pseudo-effective. Then 
$\sG$ is algebraically integrable. 
\end{cor}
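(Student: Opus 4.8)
The plan is to reduce the statement to the terminal case already established in Theorem \ref{thm:grothendieck_katz} by passing to a $\mathbb{Q}$-factorial terminalization. First I would choose a $\mathbb{Q}$-factorial terminalization $\beta \colon Z \to X$, which exists by Fact \ref{fact:existence_terminalization}; thus $Z$ is $\mathbb{Q}$-factorial with terminal singularities and $K_Z \sim_\mathbb{Q} \beta^* K_X$. Set $\sG_Z := \beta^{-1}\sG$. Since $\sG$ is canonical and $K_\sG$ is Cartier, Proposition \ref{prop:terminalization_canonical_singularities} applies and gives that $\sG_Z$ is canonical with $K_{\sG_Z} \sim_\mathbb{Z} \beta^* K_\sG$; in particular $K_{\sG_Z} \equiv 0$ because $K_\sG \equiv 0$.

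Next I would verify that $(Z,\sG_Z)$ satisfies the remaining hypotheses of Theorem \ref{thm:grothendieck_katz}. For the canonical class of $Z$, the crepancy $K_Z \sim_\mathbb{Q} \beta^* K_X$ together with the fact that the numerical dimension is preserved under pull-back by a birational morphism yields $\nu(K_Z) = \nu(K_X) = -\infty$, so $K_Z$ is not pseudo-effective (see Remark \ref{rem:pseff_versus_numerical dimension}). It then remains to check that $\sG_Z$ is closed under $p$-th powers for almost all primes $p$. The key observation is that this condition depends only on the foliation near the generic point of $X$: since $\beta$ restricts to an isomorphism over a dense open subset $X^\circ \subseteq X$ on which $\sG_{|X^\circ}$ and $(\sG_Z)_{|\beta^{-1}(X^\circ)}$ correspond, and since the $p$-curvature-type map $\Frobabs^*\sbfG_{\bar s} \to T_{\bX_{\bar s}}/\sbfG_{\bar s}$ vanishes identically as soon as it vanishes over a dense open set (the target being torsion-free for almost all $s$, as saturation is an open condition), the property transfers from $\sG$ to $\sG_Z$. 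Concretely, a projective arithmetic model of $(X,\sG)$ over some $\bS = \Spec R$, together with a birational modification realizing $\beta$, provides a compatible model of $(Z,\sG_Z)$ whose reductions inherit closure under $p$-th powers at almost all closed points.

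With all hypotheses in place, I would apply Theorem \ref{thm:grothendieck_katz} to $(Z,\sG_Z)$ and conclude that $\sG_Z$ is algebraically integrable. Finally, since $\beta$ is birational, $\sG$ and $\sG_Z$ induce the same foliation on a common dense open subset, and the closures of the leaves of $\sG_Z$ map under $\beta$ onto the closures of the leaves of $\sG$; hence algebraic integrability of $\sG_Z$ forces algebraic integrability of $\sG$, completing the proof.

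I expect the main obstacle to be the descent of the condition \emph{closed under $p$-th powers for almost all primes $p$} through the birational morphism $\beta$: one must arrange compatible arithmetic models of $X$ and $Z$ over a common finitely generated $\mathbb{Z}$-algebra so that the reductions of $\sG_Z$ inherit $p$-closedness from those of $\sG$. Because the condition is insensitive to modifications away from the generic point and is, by construction, independent of the chosen model, this should be manageable, but it is the one point requiring genuine care rather than formal bookkeeping.
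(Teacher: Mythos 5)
Your proposal is correct and follows essentially the same route as the paper: pass to a $\mathbb{Q}$-factorial terminalization, invoke Proposition \ref{prop:terminalization_canonical_singularities} to transfer canonicity and triviality of the canonical class, check that $K_Z$ is not pseudo-effective and that closure under $p$-th powers persists (the paper deems this last point obvious, while your torsion-freeness argument is a fine justification), and conclude by Theorem \ref{thm:grothendieck_katz}. The only cosmetic difference is that the paper deduces non-pseudo-effectivity of $K_Z$ from $\beta_*K_Z\sim_\mathbb{Z} K_X$ rather than from crepancy and invariance of the numerical dimension under pull-back; both are valid.
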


\begin{proof}
Let $\beta \colon Z \to X$ be a $\mathbb{Q}$-factorial terminalization of $X$. By Proposition \ref{prop:terminalization_canonical_singularities}, $\beta^{-1}\sG$ is canonical with $K_{\beta^{-1}\sG}\sim_\mathbb{Z}\beta^*K_\sG$. Moreover, $\beta^{-1}\sG$ is obviously closed under $p$-th powers for almost all primes $p$.
Finally, $K_Z$ is not pseudo-effective since $\beta_*K_Z\sim_\mathbb{Z} K_X$ and $K_X$ is not pseudo-effective by assumption.
The statement now follows from Theorem \ref{thm:grothendieck_katz} applied to $\beta^{-1}\sG$.
\end{proof}

\begin{proof}[Proof of Theorem \ref{thm:grothendieck_katz}] For the reader's convenience, the proof is subdivided into a number of relatively independent
steps.

\medskip
\noindent\textit{Step 1.} Let $\beta \colon Z \to X$ be a $\mathbb{Q}$-factorialization of $X$. By Lemma 
\ref{lemma:singularities_birational_morphism}, $\beta^{-1}\sG$ is canonical with $K_{\beta^{-1}\sG}\sim_\mathbb{Q}\beta^*K_\sG$. Moreover, $\beta^{-1}\sG$ is obviously closed under $p$-th powers for almost all primes $p$. Finally, $K_Z$ is not pseudo-effective since $\beta_*K_Z\sim_\mathbb{Z} K_X$ and $K_X$ is not pseudo-effective by assumption.
Thus, replacing $X$ by $Z$, if necessary, we may assume without loss of generality that the following holds.

\begin{assumption}
The variety $X$ has $\mathbb{Q}$-factorial terminal singularities.
\end{assumption}

\noindent\textit{Step 2.} Since $K_X$ is not pseudo-effective by assumption, we may run a minimal model program for $X$ and end with a Mori fiber space (see \cite[Corollary 1.3.3]{bchm}). Therefore, there exists a sequence of maps

\begin{center}
\begin{tikzcd}[row sep=large, column sep=large]
X:=X_0 \ar[r, "{\phi_0}", dashrightarrow] & X_1 \ar[r, "{\phi_1}", dashrightarrow] & \cdots \ar[r, "{\phi_{i-1}}", dashrightarrow] & X_i \ar[r, "{\phi_i}", dashrightarrow] & X_{i+1} \ar[r, "{\phi_{i+1}}", dashrightarrow] & \cdots \ar[r, "{\phi_{m-1}}", dashrightarrow] & X_m \ar[d, "{\psi_m}"]\\
&&&&&& Y
\end{tikzcd}
\end{center}

\noindent where the $\phi_i$ are either divisorial contractions or flips, and $\psi_m$ is a Mori fiber space. The spaces $X_i$ are normal, $\mathbb{Q}$-factorial, and $X_i$ has terminal singularities for all $0\le i \le m$. 
Let $\sG_i$ be the foliation on $X_i$ induced by $\sG$. By \cite[Lemma 3.2.5]{kmm}, we have $K_{\sG_i}\equiv 0$. Moreover, by Lemma \ref{lemma:singularities_birational_morphism}, $\sG_i$ has canonical singularities. 
Finally, one readily checks that $\sG_m$ is closed under $p$-th powers for almost all primes $p$.
Hence, we may assume without loss of generality that the following holds.

\begin{assumption}
There exists a Mori fiber space $\psi \colon X \to Y$.
\end{assumption}

\noindent\textit{Step 3.} First, we show that $\dim X - \dim Y = 1$. We argue by contradiction and assume
that $\dim X - \dim Y \ge 2$. Let $F$ be a general fiber of $\psi$. Note that $F$ has terminal singularities, and that $K_F \sim_\mathbb{Z} {K_X}_{|F}$ by the adjunction formula. Moreover, $F$ is a Fano variety by construction. 
Note also that $F$ is smooth in codimension two since it has terminal singularities.
Let $\sH$ be the foliation on $F$ induced by $\sG$. Since $c_1(\sN_\sG)\equiv -K_X$ is relatively ample, we see that $\sH$ has codimension one.
By Proposition \ref{prop:bertini}, we have $K_\sH\sim_\mathbb{Z} {K_\sG}_{|F} - B$ for some effective Weil divisor $B$ on $F$. Suppose that $B\neq 0$.
Applying \cite[Theorem 4.7]{campana_paun15} to the pull-back of $\sH$ on a resolution of $F$, we see that $\sH$ is uniruled. This implies that $\sG$ is uniruled as well since $F$ is general. But this contradicts Proposition \ref{proposition:canonical_versus_uniruled}, and shows that $B=0$. By Proposition \ref{proposition:canonical_versus_uniruled} applied to $\sH$, we see that $\sH$ is canonical. Finally, one readily checks that $\sH$ is closed under $p$-th powers for almost all primes $p$.

Let $S \subseteq F$
be a two dimensional complete intersection of general elements of a very ample linear system $|H|$ on $F$. We may assume wihtout loss of generality that $S$ is smooth and contained in 
$F_{\textup{reg}}$. Let $\sL$ be the foliation by curves on $S$ induced by $\sH$. By Proposition \ref{prop:bertini}, we have $\det\sN_\sL \cong (\det\sN_\sH)_{|S}$. In particular, we have 
\begin{equation}\label{BB_formula0}
c_1(\sN_\sL)^2 = K_F ^2 \cdot H^{\dim F -2} > 0.
\end{equation}
On the other hand, by the Baum-Bott formula (see \cite[Chapter 3, Theorem 1]{brunella}), we have 
\begin{equation}\label{BB_formula}
c_1(\sN_\sL)^2 =\sum_x \textup{BB}(\sL,x),
\end{equation}
where $x$ runs through all singular points of $\sL$, and $\textup{BB}(\sL,x)$ denotes the Baum-Bott index of $\sL$ at $x$ (we refer to \cite[Chapter 3]{brunella} for this notion). 

Let $x$ be a singular point of $\sL$. If $\sH$ is regular at $x$, then there is a holomorphic function $f$ defined in a neighborhood of $x$ such that $df$ vanishes at finitely many points and such that $df$ defines $\sL$ (see Proposition \ref{prop:bertini}). It follows that 
$\textup{BB}(\sL,x)=0$. Suppose now that $x$ is a singular point of $\sH$. 
By \cite[Corollary 7.8]{lpt}, 
$\sH$ is defined at $x$ by the local $1$-form $\omega=pz_2dz_1 + qz_1dz_2$, where $p$ and $q$ are positive integers and $(z_1,\ldots,z_s)$ are analytic coordinates on $F$ centered at $x$.
This implies in particular that the singular locus of $\sH$ is smooth in a neighborhood of $x$. By general choice of $S$,
we may assume without loss of generality that $S$ intersects the singular locus of $\sH$ transversely.
It follows that $\sL$ is defined at $x$ by the local $1$-form $\omega=pvdu + qudv$, where $(u,v)$ are analytic coordinates on $S$ centered at $x$, and hence $\textup{BB}(\sL,x)=-\frac{(p-q)^2}{pq}\le 0$.
In either case, we have $\textup{BB}(\sL,x) \le 0$, and hence $c_1(\sN_\sL)^2 \le 0$ by equation \eqref{BB_formula}.
But this contradicts inequality \eqref{BB_formula0} above, and shows that $\dim X - \dim Y = 1$.

\medskip

Next, we show the following.

\begin{claim}\label{claim:bundle}
There exists an open subset $Y^\circ \subseteq Y_{\textup{reg}}$ 
with complement of codimension at least two such that $\psi^\circ:=\psi_{|X^\circ}$ is a $\mathbb{P}^1$-bundle, and such that 
$\sG_{|X^\circ}$ yields a flat Ehresmann connection on $\psi^\circ$, where $X^\circ:=\psi^{-1}(Y^\circ)$.
\end{claim}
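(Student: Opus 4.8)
The plan is to first pin down the generic transverse structure and then to exclude degenerations in codimension one. Since $\psi\colon X\to Y$ is a Mori fiber space with $\dim X-\dim Y=1$, a general fiber $F$ is a smooth rational curve $\cong\mathbb{P}^1$ with $-K_X\cdot F=2$, and $-K_X$ is $\psi$-ample. Because $K_\sG\equiv 0$ and $\sO_X(K_\sG)\cong\sO_X(K_X)\boxtimes\det\sN_\sG$, we get $c_1(\sN_\sG)\cdot F=2$. First I would show that $\sG$ is transverse to the general fiber: the composite $\sG\hookrightarrow T_X\xrightarrow{d\psi}\psi^*T_Y$ is a map between reflexive sheaves of the same rank $\dim Y$, and if it failed to be generically surjective along the general fiber then $T_{X/Y}\subseteq\sG$ generically, so the fibers would form a covering family of rational curves tangent to $\sG$; this would make $\sG$ uniruled, contradicting Proposition \ref{proposition:canonical_versus_uniruled}. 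Restricting the map to $F$ yields a nonzero morphism $\sO_{\mathbb{P}^1}(2)\to\sO_{\mathbb{P}^1}(2)$, hence an isomorphism, so $\sG$ is a genuine connection along general fibers.

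Next I would construct $Y^\circ$. Since $X$ is terminal, its singular locus $\Sing X$ has codimension at least three in $X$, so $\psi(\Sing X)$ has codimension at least two in $Y$; together with $Y\setminus Y_{\textup{reg}}$ (also codimension at least two, as $Y$ is normal) these may be discarded. Over the complement both $X$ and $Y$ are smooth, and I would consider the degeneracy locus of $\sG\to\psi^*T_Y$, namely the divisor $W$ cut out by the determinant of this map; since $\det\sG=\sO_X(-K_\sG)$ and $\det\psi^*T_Y=\psi^*\sO_Y(-K_Y)$, one has $\sO_X(W)\cong\sO_X(K_\sG-\psi^*K_Y)$, so $W\equiv-\psi^*K_Y$ and $W\cdot F=0$. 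As $F$ moves in a covering family and $W$ is effective, every component of $W$ is $\psi$-vertical. Away from $\psi(W)$ the map $\sG\to\psi^*T_Y$ is an isomorphism, that is, $\sG$ is transverse to every fiber; so the statement reduces to showing that $\psi(W)$ contains no prime divisor, for over such a divisor $P$ the component $\psi^{-1}(P)\subseteq W$ would be an $\sG$-invariant (tangent) fiber.

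The main obstacle is precisely ruling out these $\sG$-invariant fibers, and this is where the arithmetic hypothesis must enter: irrational-residue Riccati examples show that invariant fibers are compatible with $\sG$ being canonical and $K_\sG\equiv 0$, and are excluded only by $p$-closedness. I would restrict to a general surface $S:=\psi^{-1}(C)$, where $C\subseteq Y$ is a general complete intersection of very ample divisors meeting $P$; by Proposition \ref{prop:bertini} the induced foliation $\sG_S$ is a Riccati (generically transverse) foliation on the ruled surface $\psi_S\colon S\to C$ with at least one invariant fiber $F_i$ over each point of $C\cap P$, and $K_{\sG_S}\sim_\mathbb{Z}\psi_S^*K_C+\sum_i F_i$. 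Using that $\sG$ is closed under $p$-th powers for almost all primes $p$, the vanishing of the $p$-curvature modulo $p$ (the Grothendieck--Katz circle of ideas underlying Theorem \ref{thm:bost}) forces the residues of the transverse connection at the $F_i$ to lie in $\mathbb{F}_p$ for almost all $p$, hence to be rational over $\mathbb{C}$, so the local monodromies have finite order. Passing to the cyclic cover $\tilde C\to C$ branched over $C\cap P$ that trivializes these local monodromies, the pulled-back foliation $\tilde\sG$ on $\tilde S\to\tilde C$ has no invariant fibers, so $K_{\tilde\sG}\sim_\mathbb{Z}\tilde\psi^*K_{\tilde C}$; on the other hand Lemma \ref{lemma:pull_back_fol_and_finite_cover} gives $K_{\tilde\sG}\sim_\mathbb{Z}\tilde f^*K_{\sG_S}+(m-1)\sum_i\tilde F_i$, and combining the two identities with Riemann--Hurwitz collapses to $\sum_i\tilde F_i\equiv 0$, which is absurd for a nonempty sum of fibers. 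Hence no invariant fiber can lie over a prime divisor, and $\psi(W)$ has codimension at least two.

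Finally, setting $Y^\circ:=Y_{\textup{reg}}\setminus\big(\psi(\Sing X)\cup\psi(W)\big)$, whose complement then has codimension at least two, the restriction $\sG_{|X^\circ}$ is a subbundle transverse to every (smooth, rational) fiber of $\psi^\circ$, so $T_{X^\circ}\cong T_{X^\circ/Y^\circ}\oplus\sG_{|X^\circ}$ realizes $\sG_{|X^\circ}$ as a flat Ehresmann connection. By Ehresmann's theorem (flatness together with properness of $\psi^\circ$) the fibration $\psi^\circ$ is locally trivial for the analytic topology with fiber $\mathbb{P}^1$, hence a $\mathbb{P}^1$-bundle, which is the assertion of the claim.
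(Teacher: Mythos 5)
Your reduction of the claim to excluding $\sG$-invariant fibers over prime divisors of $Y$ is sound, and the route you take to it (the degeneracy divisor $W$ of $\sG\to\psi^*T_Y$, with $W\cdot F=0$ forcing $W$ to be vertical) is a clean alternative to the paper's use of the conic bundle structure from \cite[Theorem 4.1]{andreatta_wisniewski_view}. You are also right that the $p$-closedness hypothesis is exactly what must rule out invariant fibers: the foliation generated by $\lambda z\partial_z+y\partial_y$ on $\mathbb{P}^1\times\mathbb{P}^1$ with $\lambda$ irrational is canonical with $K_\sG\sim_\mathbb{Z}0$ and has two invariant fibers. However, the mechanism you propose for exploiting $p$-closedness does not work, and this is a genuine gap.

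The fatal step is ``the pulled-back foliation $\tilde\sG$ on $\tilde S\to\tilde C$ has no invariant fibers.'' Finite --- even trivial --- local monodromy of the transverse structure does not preclude an invariant fiber: the foliation defined near $\{y=0\}$ by $y\,dz+pz\,dy$ (local first integral $zy^{p}$, $p\ge 1$) has the fiber $\{y=0\}$ invariant and trivial monodromy around it, and this configuration persists under any cyclic base change $y=\tilde y^{m}$. So after your cover the invariant fibers are still there, the identity $K_{\tilde\sG}\sim_\mathbb{Z}\tilde\psi^*K_{\tilde C}$ fails, and the contradiction $\sum_i\tilde F_i\equiv 0$ evaporates; the argument is circular, since what your computation actually shows is only that the cover cannot remove invariant fibers. (Two further problems: rational residues alone do not give finite local monodromy --- saddle-nodes have integer residue, parabolic monodromy, and are canonical, so they must be excluded by $p$-closedness, not by rationality; and the pull-back formula is misquoted --- by Lemma \ref{lemma:pull_back_fol_and_finite_cover}(1), as used in the proof of Lemma \ref{lemma:canonical_quasi_etale_cover}, a cover branched along \emph{invariant} divisors satisfies $K_{\tilde\sG}\sim_\mathbb{Z}\tilde f^*K_{\sG_S}$ with no $(m-1)$-correction; the correction term occurs for non-invariant branch divisors.) The point is that no local transverse-monodromy argument can succeed here, because invariant fibers are locally unobstructed even for $p$-closed foliations. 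What $p$-closedness really buys --- and what the paper uses, via \cite[Corollary 7.8]{lpt} --- is the local normal form $p\,z_2dz_1+q\,z_1dz_2$ with $p,q$ \emph{positive integers} at every codimension-two singular point. The paper then restricts to the surface $S=\psi^{-1}(B)$ over a germ of curve $B$ and applies the Camacho--Sad formula: along a would-be invariant fiber $C$ every index is $\le 0$, and the indices at singular points of $\sG$ (which must exist on $C$, since an invariant fiber along which $\sG$ is regular would contradict $c_1(\sN_\sG)\cdot C=2$ by \cite[Lemma 3.2]{druel_bbcd2}) equal $-p/q<0$, contradicting $C^2=0$. The obstruction is thus an index-theoretic one, global along the fiber; your proof needs to be rebuilt around such an argument (and should also treat separately the case of reducible fibers $C_1\cup C_2$ over the discriminant of the conic bundle, which your Riccati picture on $S\to C$ does not cover).
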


\begin{proof}[Proof of Claim \ref{claim:bundle}]
Recall that $X$ is smooth in codimension two since it has terminal singularities.
Since $\dim X - \dim Y = 1$, there exists an open subset $Y^\circ \subseteq Y_{\textup{reg}}$ 
with complement of codimension at least two such that 
$X^\circ:=\psi^{-1}(Y^\circ) \subseteq X_{\textup{reg}}$. From \cite[Theorem 4.1]{andreatta_wisniewski_view}, we conclude that
$\psi^\circ:=\psi_{|X^\circ}$ is a conic bundle. 

Suppose that there is a codimension two irreducible component of the singular set of $\sG$ which is mapped onto a divisor $D$ by $\psi$.

Suppose first that $\psi^\circ$ is smooth over the generic point of $D$. Let $B \subset Y$ be a germ of analytic curve passing through a general point $y$ of $D$ and transverse to $D$ at $y$, set $S =: (\psi^\circ)^{-1}(B) \subset X$, and let $\sL$ be the foliation by curves on $S$ induced by $\sG$. Denote by $\pi \colon S \to B$ the restriction of $\psi^\circ$ to $S$, and
set $C:=\pi^{-1}(y) \cong \mathbb{P}^1$. We have $c_1(\sN_\sG)\cdot C =2$ and either $C$ is tangent to $\sG$, or $C$ is transverse to $\sG$ along $C$. In the latter case, $\sG$ must be regular along $C$, yielding a contradiction. This shows 
$C$ is tangent to $\sG$. We may assume without loss of generality that $\sL$ is everywhere transverse to $\pi$ away from $C$, and that $\sG$ intersects $S$ transversely at a general point of $C$ (see Proposition \ref{prop:bertini}).
This implies in particular that $C$ is $\sL$-invariant.
Let $x\in C$ be a singular point of $\sL$. If $\sG$ is regular at $x$, 
then there is a holomorphic function $f$ defined in a neighborhood of $x$ in $S$ such that $df$ vanishes at finitely many points and such that $df$ defines $\sL$ (see Proposition \ref{prop:bertini}). 
Suppose that $C$ is given at $x$ by equation $t=0$, and that $f(x)=0$. Then $f=tg$ for some local holomorphic function $g$ on $S$ at $x$, and
$-\textup{CS}(\sL,C,x)$ is equal to the vanishing order of $g_{|C}$ at $x$, where $\textup{CS}(\sL,C,x)$ denotes the Camacho-Sad index (we refer to \cite[Chapter 3]{brunella} for this notion). In particular, $\textup{CS}(\sL,C,x) \le 0$.
Suppose now that $\sG$ is singular at $x$. 
By \cite[Corollary 7.8]{lpt}, 
$\sG$ is defined at $x$ by the local $1$-form $\omega=pz_2dz_1 + qz_1dz_2$, where $p$ and $q$ are positive integers and $(z_1,\ldots,z_n)$ are analytic coordinates on $X$ centered at $x$.
This implies in particular that the singular locus of $\sG$ is smooth in a neighborhood of $x$. By general choice of $B$, we may also assume without loss of generality that $S$ intersects the singular locus of $\sG$ transversely. It follows that $\sL$ is defined at $x$ by the local $1$-form 
$\omega=pvdu + qudv$, where $(u,v)$ are local coordinates on $S$ centered at $x$. Since $C$ is $\sL$-invariant, we may also assume that $C$ is given at $x$ by equation $u=0$.
But then $\textup{CS}(\sL,C,x)=-\frac{p}{q}<0$. 
On the other hand, by the Camacho-Sad formula, we have
$$C^2 = \sum_x \textup{CS}(\sL,C,x),$$
where $x$ runs through all singular points of $\sL$. This yields a contradiction since $C^2=0$.

Suppose now that $D$ is an irreducible component of the critical set of $\psi^\circ$, and let $y\in D$ be a general point.
Let also $C=C_1\cup C_2$ be the corresponding fiber of $\psi^\circ$. Note that $C_1 \neq C_2$, and that $C_i$ is tangent to $\sG$ since $c_1(\sN_\sG)\cdot C_i =1$. We argue as in the previous case. Consider a general germ of analytic curve passing through $y$, set $S =: (\psi^\circ)^{-1}(B) \subset X$, and let $\sL$ be the foliation by curves on $S$ induced by $\sG$. Denote also by $\pi \colon S \to B$ the restriction of $\psi^\circ$ to $S$. Note that $S$ is smooth by general choice of $B$.
As above, we may assume without loss of generality that $\sL$ is everywhere transverse to $\pi$ away from $C$, and that $\sG$ intersects $S$ transversely at a general point of $C$.
This again implies that $C_i$ is $\sL$-invariant. Let $x\in C$ be a singular point of $\sL$. If $x$ is a regular point of $\sG$, then we have $\textup{CS}(\sL,C,x)\le 0$ as before. If $\sG$ is singular at $x$, then 
$\textup{CS}(\sL,C,x)=-\frac{p}{q}<0$ for some positive integers $p$ and $q$ if $x \neq C_1\cap C_2$, and 
$\textup{CS}(\sL,C,x)=-\frac{(p-q)^2}{pq}$ otherwise. Together with the Camacho-Sad formula, this yields a contradiction since $C^2=0$.

It follows that the singular set of $\sG$ is mapped in codimension at least two in $Y$.
Let $C\cong \mathbb{P}^1$ be a smooth fiber of $\psi^\circ$, and suppose that $\sG$ is regular along $C$. 
By \cite[Lemma 3.2]{druel_bbcd2}, we see that $C$ is not tangent to $\sG$ since $c_1(\sN_\sG)\cdot C = 2$.
This immediately implies that $\sG$ is transverse to $\psi^\circ$ at any point of $C$, 
finishing the proof of Claim \ref{claim:bundle}.
\end{proof}

\noindent\textit{Step 4.} 
By \cite[Corollary 4.5]{fujino99} and Lemma \cite[Lemma 5.1.5]{kmm}, $Y$ has $\mathbb{Q}$-factorial
klt singularities. Since $\textup{codim}\, Y \setminus Y^\circ \ge 2$ and $K_\sG\equiv 0$, we must have $K_Y\equiv 0$. 
Applying \cite[Corollary V 4.9]{nakayama04}, we conclude that $K_Y$ is torsion.
Let $Y_1 \to Y$ be the index one canonical cover, which is quasi-\'etale (\cite[Definition 2.52]{kollar_mori}).
By construction, $K_{Y_1}\sim_\mathbb{Z}0$. In particular, $Y_1$ has canonical singularities.
By Theorem \ref{thm:kawamata_abelian_factor} applied to $Y_1$, we see that there exists an abelian variety as well as
a projective variety $Z$ with $K_{Z}\sim_\mathbb{Z}0$ and 
$\wt q(Z) = 0$, and a quasi-\'etale cover 
$f\colon  A \times Z\to Y$. 

Recall that $f$ branches only on the singular set of $Y$, so that $f^{-1}(Y^\circ)$ is smooth. On the other hand, 
since $f^{-1}(Y^\circ)$ has complement of codimension at least two in $A\times Z_\textup{reg}$, we have 
$\pi_1\big(A\times Z_\textup{reg}\big) \cong \pi_1\big(f^{-1}(Y^\circ)\big)$.
Now, consider the representation
$$\rho\colon \pi_1\big(A\times Z_\textup{reg}\big) \cong \pi_1\big(f^{-1}(Y^\circ)\big) \to 
\pi_1\big(Y^\circ\big) \to \textup{PGL}(2,\mathbb{C})$$
induced by $\sG_{|X^\circ}$. By \cite[Theorem I]{GGK}, the induced representation 
$$ \pi_1\big(Z_\textup{reg}\big) \to \pi_1\big(A\big)\times\pi_1\big(Z_\textup{reg}\big)\cong
\pi_1\big(A\times Z_\textup{reg}\big) \to \textup{PGL}(2,\mathbb{C})$$
has finite image.
Thus, replacing $Z$ with a quasi-\'etale cover, if necessary, we may assume without loss of generality that 
$\rho$ factors through the projection $\pi_1\big(A\times Z_\textup{reg}\big) \to \pi_1(A)$. 
Let $P$ be the corresponding $\mathbb{P}^1$-bundle over $A$. The natural projection $P \to A$ comes with a flat connection
$\sG_P \subset T_P$. By the GAGA theorem, $P$ is a projective variety. By assumption, its pull-back to 
$A\times Z_\textup{reg}$ agrees with $f^{-1}(Y^\circ)\times_{Y^\circ} X^\circ$ over $f^{-1}(Y^\circ)$. Moreover, the pull-backs 
on $A\times Z_\textup{reg}$ of the foliations $\sG$ and $\sG_P$ agree as well, wherever this makes sense. In particular, $\sG$ is algebraically integrable if and only if so is $\sG_P$.
Now, one readily checks that $\sG_P$ is closed under $p$-th powers for almost all primes $p$.
Theorem \ref{thm:grothendieck_katz} then follows from Proposition \ref{prop:grothendieck_katz_projective_connection}.\end{proof}

\section{Algebraic integrability, II}\label{section:algebraic_integrability_2}

In this section, we address codimension one foliations with numerically trivial canonical class on mildly singular varieties $X$ with 
pseudo-effective canonical divisor. An analogue of the Bogomolov vanishing theorem then says that $X$ has numerical dimension 
$\nu(X) \le 1$ (see Lemma \ref{lemma:bogomolov}).  
We first describe codimension one foliations with numerically trivial canonical class on varieties with $\nu(X)=0$
(see Lemma \ref{lemma:K_torsion} and Proposition \ref{prop:nu_zero_versus_torsion}). We then give two algebraicity criteria for leaves of algebraic foliations on varieties with $\nu(X)=1$ (see Theorem
\ref{thm:algebraic_integrability_nu_un_regular} and Theorem \ref{thm:algebraic_integrability_nu_un}).

\begin{lemma}\label{lemma:K_torsion}
Let $X$ be a normal complex projective variety with klt singularities, and let $\sG$ be a codimension one foliation on $X$ with $K_\sG$ $\mathbb{Q}$-Cartier and $K_\sG\equiv 0$. Suppose in addition that $K_X \equiv 0$. There exist an abelian variety $A$, a normal projective variety $Z$ with $K_Z\sim_\mathbb{Z}0$ and $\wt q(Z)=0$, and a quasi-\'etale cover $f \colon A \times Z \to X$ such that $f^{-1}\sG$ is the pull-back of a codimension one linear foliation on $A$ via the projection $A \times Z \to A$.
\end{lemma}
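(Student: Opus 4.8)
The plan is to apply the structure theorem for klt varieties with numerically trivial canonical class (Theorem \ref{thm:kawamata_abelian_factor}) to reduce to the case of a product, and then analyze the induced foliation on each factor. First I would invoke Theorem \ref{thm:kawamata_abelian_factor}: since $X$ has klt singularities and $K_X\equiv 0$, by \cite[Corollary V 4.9]{nakayama04} $K_X$ is torsion, so passing to the index one canonical cover (which is quasi-\'etale by \cite[Definition 2.52]{kollar_mori} and Fact \ref{fact:quasi_etale_cover_and_singularities}) I may assume $K_X\sim_\mathbb{Z}0$ and $X$ canonical. Then Theorem \ref{thm:kawamata_abelian_factor} produces an abelian variety $A$, a normal projective variety $Z$ with $K_Z\sim_\mathbb{Z}0$ and $\wt q(Z)=0$, and a quasi-\'etale cover $A\times Z\to X$. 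Replacing $\sG$ by its pull-back (still codimension one with $K\equiv 0$ by Lemma \ref{lemma:pull_back_fol_and_finite_cover}), I reduce to the situation where $X=A\times Z$ with this splitting.

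The heart of the argument is then to understand the codimension one foliation $\sG$ on $A\times Z$. The conormal sheaf $\sN_\sG^*$ is a saturated rank one subsheaf of $\Omega_{A\times Z}^{[1]}$, and $\det\sN_\sG = -c_1(\sN_\sG)\sim_\mathbb{Q} K_\sG - K_X \equiv 0$, so $\sN_\sG^*$ is a numerically trivial reflexive sheaf of rank one. By the K\"unneth-type decomposition of reflexive one-forms on a product (using $H^0(A\times Z,\Omega^{[1]}_{A\times Z})\cong H^0(A,\Omega^1_A)\oplus H^0(Z,\Omega^{[1]}_Z)$ together with the Hodge-theoretic computation in Remark \ref{remark:augmented_irregularity_vector_fields}, which gives $h^0(Z,\Omega_Z^{[1]}) = q(Z)=0$), I expect $\sG$ to be defined by a global reflexive $1$-form coming from $H^0(A,\Omega^1_A)$. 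More precisely, since $\wt q(Z)=0$ forces $Z$ to contribute no nonzero closed reflexive $1$-forms even after passing to further quasi-\'etale covers, the defining twisted form of $\sG$ must be pulled back from $A$; here I would exploit that $\sN_\sG^*$ is numerically trivial to twist the defining form into an honest holomorphic $1$-form after a further quasi-\'etale cover trivializing the numerically trivial line bundle.

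The key step I would carry out is to show $\sG$ is the pull-back of a foliation on $A$ under the projection $A\times Z\to A$. Granting that the defining $1$-form $\omega$ lies in (the image of) $H^0(A,\Omega^1_A)\otimes\mathcal{O}$, it is a translation-invariant holomorphic $1$-form on $A$, hence closed and with empty zero locus, so it defines a linear (codimension one) foliation $\sH$ on $A$, and $\sG$ is projectable with $\sG=\psi^{-1}\sH$ where $\psi$ is the first projection. After a final quasi-\'etale cover I arrange $K_Z\sim_\mathbb{Z}0$ and $\wt q(Z)=0$ exactly as in the statement. The main obstacle will be the reflexive Hodge decomposition argument on the singular product: I must ensure that every nonzero closed reflexive $1$-form defining a foliation with numerically trivial conormal class genuinely descends to the abelian factor, ruling out a "mixed" form with a $Z$-component. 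This is where the hypotheses $\wt q(Z)=0$ and $K_Z\sim_\mathbb{Z}0$ are essential, together with Hodge symmetry for klt spaces (\cite[Proposition 6.9]{gkp_bo_bo}) and the fact that numerically trivial reflexive rank one sheaves on $Z$ are torsion; controlling the twist by $\det\sN_\sG$ and verifying that the resulting linear foliation is genuinely codimension one (i.e. the form is nonzero) are the delicate points requiring care.
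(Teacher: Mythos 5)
Your overall strategy coincides with the paper's proof: reduce to $K_X\sim_\mathbb{Z}0$ via Nakayama and the index-one cover, split $X$ as $A\times Z$ by Theorem \ref{thm:kawamata_abelian_factor}, use Hodge symmetry to kill reflexive $1$-forms on $Z$, trivialize the twist $\det\sN_\sG$ after a quasi-\'etale cover, and conclude by the K\"unneth decomposition of $\Omega^{[1]}_{A\times Z}$. However, there is a genuine gap in the pivotal step as you state it: you propose to pass to ``a further quasi-\'etale cover trivializing the numerically trivial line bundle'' $\det\sN_\sG$. A finite cover can only trivialize a \emph{torsion} line bundle: if $f\colon Y\to X$ is a finite surjective morphism of normal projective varieties and $f^*\sM\cong\sO_Y$, then the norm map gives $\sM^{\otimes \deg f}\cong\sO_X$. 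On $A\times Z$ numerical triviality does not imply torsion, since $\det\sN_\sG$ could a priori contain a non-torsion factor pulled back from $\textup{Pic}^0(A)$; for such a bundle the cover you invoke simply does not exist.

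The paper resolves exactly this point in two stages. First it restricts $\det\sN_\sG$ to a fiber $\{a\}\times Z$: there, numerical triviality together with $\wt q(Z)=0$ (so that a resolution $Z_1$ of $Z$ has $h^0(Z_1,\Omega^1_{Z_1})=0$, hence $q(Z_1)=0$ and $\textup{Pic}^0(Z_1)$ is trivial) does give torsion, and a quasi-\'etale cover of the factor $Z$ makes this restriction trivial. What then remains is precisely a possible twist by some $\sM\in\textup{Pic}^0(A)$, i.e. $\det\sN_\sG\cong p_A^*\sM$ by a seesaw-type argument, and this is excluded not by any cover but by the existence of the nonzero defining twisted $1$-form: since $\Omega^{[1]}_{A\times Z}\cong p_A^*\Omega^1_A\oplus p_Z^*\Omega^{[1]}_Z$, $\Omega^1_A\otimes\sM\cong\sM^{\oplus\dim A}$ has no sections for nontrivial $\sM$, and $h^0\big(Z,\Omega_Z^{[1]}\big)=0$, one gets $H^0\big(A\times Z,\Omega^{[1]}_{A\times Z}\otimes p_A^*\sM\big)=0$ unless $\sM\cong\sO_A$. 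You flag ``controlling the twist'' as a delicate point, but this vanishing argument (or an equivalent one) is the missing ingredient; once it is supplied, the rest of your plan goes through exactly as in the paper.
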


\begin{proof}
By \cite[Corollary V 4.9]{nakayama04}, $K_X$ is torsion. Let $f \colon X_1 \to X$ be the associated cyclic cover, which is quasi-\'etale (see \cite[Definition 2.52]{kollar_mori}). Recall from Fact \ref{fact:quasi_etale_cover_and_singularities} that $X_1$ is klt.
Notice that $K_{f^{-1}\sG}\sim_\mathbb{Z}f^*K_\sG$. In particular, $K_{f^{-1}\sG}$ is $\mathbb{Q}$-Cartier, and $K_{f^{-1}\sG}\equiv 0$.
To prove the statement, we can therefore assume without loss of generality that 
$K_X \sim_\mathbb{Z} 0$. Then $X$ has canonical singularities, so that Theorem \ref{thm:kawamata_abelian_factor} applies. Replacing $X$ by a further quasi-\'etale cover, if necessary, we may therefore assume that there exist an abelian variety $A$, a normal projective variety $Z$ with $K_Z\sim_\mathbb{Z}0$ and $\wt q(Z)=0$ such that $X= A \times Z$.

Let $\beta\colon Z_1 \to Z$ be a resolution of singularities, and let $m$ be a positive integer such that  
$\sL:=\big(\det\sN_{\sG}\big)^{[\otimes m]}$ is a line bundle. Note that $c_1(\sL)\equiv 0$ by assumption.
Since $\wt q(Z)=0$, we have $h^0\big(Z,\Omega_Z^{[1]}\big)=0$ by Hodge symmetry for klt spaces (see \cite[Proposition 6.9]{gkp_bo_bo}), and hence $h^0\big(Z_1,\Omega_{Z_1}^{1}\big)=0$. It follows that 
$\beta^*(\sL_{|Z})$ is torsion, and hence, $\sL_{|Z_{\textup{reg}}}$ is torsion as well.
Replacing $Z$ by a further quasi-\'etale cover, if necessary, we may therefore assume that 
$\big(\det\sN_{\sG}\big)_{|Z} \cong \sO_Z$. In particular, we see that 
$\det\sN_{\sG}$ is a line bundle. One then readily checks that 
$\det\sN_{\sG} \cong \sO_{A \times Z}$. This implies that $\sG$ is defined by a global $1$-form on $X$.
Since $h^0\big(Z,\Omega_Z^{[1]}\big)=0$, $\sG$ is the pull-back of a codimension one linear foliation on $A$ via the projection $A\times Z \to A$. This finishes the proof the lemma.
\end{proof}

We will use Lemma \ref{lemma:K_torsion}
together with Proposition \ref{prop:nu_zero_versus_torsion} below.

\begin{prop}\label{prop:nu_zero_versus_torsion}
Let $X$ be a normal complex projective variety with klt singularities, and let $\sG$ be a codimension one foliation on $X$ with 
canonical singularities and $K_\sG\equiv 0$. If $\nu(X)=0$, then $K_X$ is torsion.
\end{prop}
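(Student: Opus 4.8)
The plan is to reduce the statement to the numerical triviality of $K_X$ and then to invoke \cite[Corollary V 4.9]{nakayama04}. Since $K_\sG \sim_\mathbb{Q} K_X + c_1(\sN_\sG)$ and $K_\sG \equiv 0$ by assumption, we have $-c_1(\sN_\sG) \equiv K_X$; in particular the conormal class $-c_1(\sN_\sG)$ is pseudo-effective with numerical dimension $\nu\big(-c_1(\sN_\sG)\big) = \nu(K_X) = \nu(X) = 0$. Thus it suffices to prove that $K_X \equiv 0$: indeed, a pseudo-effective class with vanishing numerical dimension is numerically equivalent to an effective divisor (its negative part), which is numerically trivial exactly when its intersection with $H^{\dim X - 1}$ vanishes for one — equivalently every — ample divisor $H$; and once $K_X \equiv 0$, then \cite[Corollary V 4.9]{nakayama04} applied to the klt variety $X$ gives that $K_X$ is torsion.

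To establish $K_X \equiv 0$ I would first split off the algebraic part of $\sG$ using Proposition \ref{prop:splitting_algebraic_transcendental}. This produces normal projective klt varieties $Y$ and $Z$, a quasi-\'etale cover $f\colon Y \times Z \to X$, and a foliation $\sH$ on $Y$ such that $f^{-1}\sG$ is the pull-back of $\sH$ under the projection $Y \times Z \to Y$, with $K_Z \sim_\mathbb{Z} 0$, with $\sH$ canonical and $K_\sH \equiv 0$, and with $\sH$ admitting no positive-dimensional algebraic subvariety tangent to it through a general point of $Y$. Since $f$ is quasi-\'etale, $K_{Y\times Z} \sim_\mathbb{Z} f^*K_X$ by Fact \ref{fact:quasi_etale_cover_and_singularities}, and numerical dimension is preserved by finite pull-back, so $\nu(K_{Y\times Z}) = \nu(K_X) = 0$. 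As $K_{Y\times Z}$ is the sum of the pull-backs of $K_Y$ and of $K_Z \sim_\mathbb{Z} 0$, we get $K_{Y\times Z} \equiv \mathrm{pr}_Y^* K_Y$ and hence $\nu(K_Y) = 0$. Moreover $K_X \equiv 0$ will follow from $K_Y \equiv 0$: the latter gives $K_{Y\times Z} \equiv 0$, hence $f^*K_X \equiv 0$, and numerical triviality descends along the finite surjective morphism $f$. It therefore remains to treat a \emph{purely transcendental} codimension one foliation $\sH$ on a klt variety $Y$ with $K_\sH \equiv 0$ and $\nu(K_Y) = 0$, and to show $K_Y \equiv 0$.

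For this last point the conormal sheaf $\sN_\sH^*$ satisfies $c_1(\sN_\sH^*) = -c_1(\sN_\sH) \equiv K_Y$, pseudo-effective with $\nu\big(-c_1(\sN_\sH)\big) = 0$. I would pass to a resolution and apply Touzet's structure theorem for codimension one foliations with pseudo-effective conormal bundle \cite{touzet_conpsef}: in the regime where the numerical dimension of the conormal is zero and the foliation is not algebraically integrable, the foliation is defined by a closed rational (equivalently logarithmic) $1$-form with values in a flat — that is, numerically trivial — line bundle. Transporting the resulting numerical triviality of $\det \sN_\sH$ back to $Y$, using Proposition \ref{prop:numerical_dimension} to relate the invariants downstairs and upstairs, yields $c_1(\sN_\sH) \equiv 0$, i.e. $K_Y \equiv 0$. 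Combined with the previous paragraph this gives $K_X \equiv 0$, and \cite[Corollary V 4.9]{nakayama04} finishes the proof.

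I expect the genuine difficulty to lie entirely in this final transcendental case: upgrading the \emph{vanishing numerical dimension} of the pseudo-effective conormal class to its actual \emph{numerical triviality}. Vanishing numerical dimension only forces numerical equivalence to an effective negative divisor, and it is the foliated structure — through Touzet's dichotomy for pseudo-effective conormal bundles together with the absence of algebraic leaves of $\sH$ — that rules out a nonzero negative part and pins down the conormal as flat. By comparison, the bookkeeping relating the conormal of $\sH$ on a resolution to that on $Y$, handled by Proposition \ref{prop:numerical_dimension}, and the descent of numerical triviality along the quasi-\'etale cover $f$, are routine.
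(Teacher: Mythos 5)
Your opening reductions (reducing to $K_X\equiv 0$ via Nakayama, splitting off the algebraic part with Proposition \ref{prop:splitting_algebraic_transcendental}, and descending numerical triviality along the quasi-\'etale cover) are correct and agree with the paper's first step, but after that you take a genuinely different route: the paper never invokes Touzet here. It argues by contradiction, $\mathbb{Q}$-factorializes, runs an MMP ending in a minimal model with torsion canonical class, isolates the last divisorial contraction $\phi\colon X\to Y$ (so $K_Y$ is torsion), uses Lemma \ref{lemma:K_torsion} to realize the induced foliation on $Y$ as a linear foliation on an abelian variety after a quasi-\'etale cover $A\to Y$, and then gets a contradiction: invariance of $\Exc(\phi)$ forces the pull-back of $\sG$ to the normalization of $X\times_Y A$ to have numerically trivial canonical class, whence it maps isomorphically onto the pull-back of the linear foliation, which is incompatible with $\phi$ contracting a divisor.

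Your treatment of the transcendental case has a genuine gap, and it sits exactly at the step you call the crux. First, ``$\sH$ is defined by a closed rational (or logarithmic) $1$-form with values in a flat line bundle'' does \emph{not} imply that $\det\sN_\sH$ is numerically trivial: if the form has divisorial zeroes $Z_0$ and poles $P$, then $c_1(\sN_\sH)\equiv P-Z_0$ up to the flat twist. This is precisely why Proposition \ref{prop:closed__rational_1_form_K_pseff} assumes the defining form has zero set of codimension at least two; only under that hypothesis does $-K_X\equiv c_1(\sN_\sG)\equiv P\geq 0$ combine with pseudo-effectivity of $K_X$ to give $K_X\equiv 0$. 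Second, your proposed mechanism for ruling out a nonzero negative part --- the absence of positive-dimensional algebraic subvarieties tangent to $\sH$ through a \emph{general} point --- cannot do this job: the negative part, equivalently the zero and polar divisors of any defining closed form, is supported on $\sH$-invariant hypersurfaces, and invariant hypersurfaces are entirely compatible with transcendence of the foliation (compare Examples \ref{example:suspension} and \ref{example:suspension_2}, where foliations with no algebraic leaves through general points carry invariant divisors along which the defining closed twisted forms have zeroes and poles). What kills the polar part in the paper's closed-form results is pseudo-effectivity of $K_X$ together with the absence of divisorial \emph{zeroes}, and obtaining that absence is the hard point: in the paper it comes from the non-$p$-closed alternative of Proposition \ref{prop:p_closed_or_not}, not from transcendence, and it is not available here, since Proposition \ref{prop:nu_zero_versus_torsion} is needed by the paper precisely when $\sG$ \emph{is} closed under $p$-th powers (see the proofs of Theorems \ref{thm_intro:main} and \ref{thm_intro:abundance}). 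Finally, even the application of Touzet's structure theorem is delicate in your setting: on a resolution $\beta\colon Z\to Y$ of a klt variety, what is pseudo-effective is not $-c_1(\sN_{\beta^{-1}\sH})$ but the corrected class $-c_1(\sN_{\beta^{-1}\sH})+\varepsilon E_1$ of Proposition \ref{prop:numerical_dimension} (the conormal upstairs can acquire strictly negative exceptional contributions), and the paper invokes Touzet only in the $\nu=1$ case and under extra hypotheses (weak regularity, smoothness in codimension two). So the closed-form description, with the precise control on divisorial zeroes your argument needs, is not justified, and without it the key step ``$c_1(\sN_\sH)\equiv 0$'' does not follow.
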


\begin{proof}
Applying Proposition \ref{prop:splitting_algebraic_transcendental}, we may assume without loss of generality that 
there is no positive-dimensional algebraic subvariety tangent to $\sG$ passing through a general point of
$X$.

We argue by contradiction and assume that $K_X$ is not torsion. By \cite[Corollary V 4.9]{nakayama04}, we have $\kappa(X)=0$. Replacing $X$ by a $\mathbb{Q}$-factorialization (see Paragraph \ref{say:q_factorialization}), we may assume that $X$ is $\mathbb{Q}$-factorial by Lemma \ref{lemma:singularities_birational_morphism}. 
By \cite[Th\'eor\`eme 1.2]{deczar}, we may run a minimal model program for $X$ and end with a minimal model.
Therefore, there exists a sequence of maps

\begin{center}
\begin{tikzcd}[row sep=large, column sep=large]
X:=X_0 \ar[r, "{\phi_0}", dashrightarrow] & X_1 \ar[r, "{\phi_1}", dashrightarrow] & \cdots \ar[r, "{\phi_{i-1}}", dashrightarrow] & X_i \ar[r, "{\phi_i}", dashrightarrow] & X_{i+1} \ar[r, "{\phi_{i+1}}", dashrightarrow] & \cdots \ar[r, "{\phi_{m-1}}", dashrightarrow] & X_m
\end{tikzcd}
\end{center}

\noindent where the $\phi_i$ are either divisorial contractions or flips, and $K_{X_m}$ is torsion. The spaces $X_i$ are normal, $\mathbb{Q}$-factorial, and $X_i$ has klt singularities for all $0\le i \le m$. 
Let $\sG_i$ be the foliation on $X_i$ induced by $\sG$. By \cite[Lemma 3.2.5]{kmm}, we have $K_{\sG_i}\equiv 0$. 
Moreover, by Lemma \ref{lemma:singularities_birational_morphism}, $\sG_i$ has canonical singularities. 
Note that since $K_X$ is not torsion, there is some $i$ such that $\phi_i$ is a divisorial contraction.
Let $i_0$ be the largest integer $i$ such that $\psi_i$ is a divisorial contraction. 
By replacing $X$ by $X_{i_0}$, if necessary, we may assume that there exist a divisorial 
extremal contraction $ \phi\colon X \to Y$ and that 
$K_Y$ is torsion. 

Let $\sE$ denotes the foliation on $Y$ induced by $\sG$.
By Lemma \ref{lemma:K_torsion}, there exist an abelian variety $A$ and a quasi-\'etale cover $f \colon A \to Y$ such that $f^{-1}\sE$ is a codimension one linear foliation on $A$. 
Let $X_1$ be the normalization of the fiber product $X\times_Y A$, and denote by $\phi_1 \colon X_1 \to A$ and
$f_1\colon X_1 \to X$ the natural morphisms. Let $E$ be the exceptional locus of $\phi$. Observe that any irreducible component of $f_1^{-1}(E)$ is invariant under $f_1^{-1}\sG$ since $f^{-1}\sE$ is a regular foliation. This implies that
$E$ is invariant under $\sG$. Applying Lemma \ref{lemma:pull_back_fol_and_finite_cover}, we see that
$K_{f_1^{-1}\sG}\equiv 0$. Then the map
$f_1^{-1}\sG \to \phi_1^*\big(f^{-1}\sE\big)$ induced by the tangent map $T\phi_1\colon T_{X_1} \to \phi_1^*T_A$
is an isomorphism. In particular, $T\phi_1$ has rank at least $\dim A-1=\dim X_1 -1$ at any point in $f_1^{-1}(E)$, yielding a contradiction.
This shows that $K_X$ is torsion, completing the proof of the proposition.
\end{proof}

Next, we address weakly regular codimension one foliations with trivial canonical class on varieties $X$ with $\nu(X)=1$.

\begin{thm}\label{thm:algebraic_integrability_nu_un_regular}
Let $X$ be a normal complex projective variety with klt singularities, and let $\sG$ be a weakly regular codimension one foliation on $X$. Suppose that $\sG$ is canonical with $K_\sG\equiv 0$. Suppose in addition that $X$ is smooth in codimension two, and that $\nu(X)=1$. Then $\sG$ is algebraically integrable. 
\end{thm}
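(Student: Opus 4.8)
The plan is to reduce the theorem to Touzet's structure theorem for codimension one foliations with pseudo-effective conormal sheaf (\cite{touzet_conpsef}), and then to eliminate the one transcendental possibility it leaves open by exploiting weak regularity. First I would record the numerical input. Since $\sG$ has codimension one, $\sN_\sG$ is a reflexive sheaf of rank one and $c_1(\sN_\sG)\sim_\mathbb{Q}K_\sG-K_X\equiv -K_X$; here $c_1(\sN_\sG)$ is $\mathbb{Q}$-Cartier because $K_X$ and $K_\sG$ are. Hence $-c_1(\sN_\sG)\equiv K_X$ is pseudo-effective and $\nu\big(-c_1(\sN_\sG)\big)=\nu(X)=1$ (see Remark \ref{rem:pseff_versus_numerical dimension}).

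Next I would pass to a resolution $\beta\colon Z\to X$ whose exceptional set is a simple normal crossings divisor, and let $E_1$ be the reduced union of those exceptional components that are invariant under $\beta^{-1}\sG$. By Proposition \ref{prop:numerical_dimension} there is a rational number $0\le\varepsilon<1$ with
\[\kappa\big(-c_1(\sN_\sG)\big)=\kappa\big(-c_1(\sN_{\beta^{-1}\sG})+\varepsilon E_1\big),\qquad \nu\big(-c_1(\sN_\sG)\big)=\nu\big(-c_1(\sN_{\beta^{-1}\sG})+\varepsilon E_1\big)=1.\]
These identities transport the Kodaira and numerical dimensions of the conormal class between $X$ and the smooth model $Z$, which is what makes Touzet's theorem, stated for projective manifolds, applicable to $\beta^{-1}\sG$. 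The hypothesis that $X$ is smooth in codimension two enters here, guaranteeing that the Chern-class and transverse-structure data underlying the classification are faithfully reflected on the resolution.

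Applying \cite{touzet_conpsef} then produces a dichotomy: either (a) $\kappa\big(-c_1(\sN_\sG)\big)=\nu\big(-c_1(\sN_\sG)\big)=1$ and $\sG$ is algebraically integrable; or (b) $\kappa\big(-c_1(\sN_\sG)\big)=-\infty$ while $\nu\big(-c_1(\sN_\sG)\big)=1$, and $\sG$ is the pull-back, via a dominant rational map $\phi\colon X\dashrightarrow \mathbb{D}^N/\Gamma$, of a codimension one tautological foliation on a quotient of a polydisc $\mathbb{D}^N$ (with $N\ge 2$) by an irreducible arithmetic lattice $\Gamma\subset\textup{PSL}(2,\mathbb{R})^N$. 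In case (a) there is nothing left to prove, so the entire difficulty is to show that (b) cannot happen.

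Ruling out (b) is the main obstacle, and this is where weak regularity is decisive. The tautological foliation on $\mathbb{D}^N$ defining $\sG$ is smooth, with conormal line spanned by $dz_1$, so through $\phi$ the twisted Pfaff field of $\sG$ factors as the pull-back of this transverse differential. Weak regularity forces the twisted Pfaff field to be surjective at every point of $X$; reading this through $d\phi$, it says that $\phi$ stays submersive in the transverse direction wherever it is defined, so that $\phi$ cannot acquire the critical behaviour that a dominant map onto a genuinely $N$-dimensional ($N\ge 2$), irreducible polydisc quotient must exhibit. Combining this with the irreducibility of $\Gamma$ and the monodromy of the transversely projective structure, I would conclude that $\phi(X)$ is a curve $C$. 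But then $\sG$ is simply the foliation by fibres of the induced rational map $X\dashrightarrow C$, hence algebraically integrable, which forces $\kappa\big(-c_1(\sN_\sG)\big)=\nu\big(-c_1(\sN_\sG)\big)=1$ and contradicts $\kappa\big(-c_1(\sN_\sG)\big)=-\infty$. This contradiction excludes case (b), so $\sG$ is algebraically integrable, as desired.
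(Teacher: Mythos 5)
Your reduction to Touzet's theorem (resolution, Proposition \ref{prop:numerical_dimension}, the dichotomy, and the observation that everything hinges on excluding the polydisc-quotient case) matches the paper's proof. The gap is in the exclusion step, and it rests on a false implication. For a codimension one foliation, weak regularity says exactly that the defining twisted $1$-form $\omega \in H^0\big(X,\Omega_X^{[1]}\boxtimes\det\sN_\sG\big)$ has empty zero locus, i.e. that $\sG_{|X_{\textup{reg}}}$ is a regular foliation. It does \emph{not} say that $\phi$ is ``submersive in the transverse direction'': the form defining $\sG=\phi^{-1}\sH$ is the \emph{saturation} of the pull-back of the transverse form of $\sH$, so all divisorial zeros created by critical behaviour of $\phi$ are divided out (already $\phi(x,y)=x^2$ pulls the point-foliation on $\mathbb{C}$ back to the regular foliation $dx=0$, although $\phi$ is critical along $x=0$). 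Moreover $\phi$ lives on the resolution $Z$, and weak regularity does not transfer to $\beta^{-1}\sG$ (Lemma \ref{lemma:regular_bir_crepant_map} requires crepancy). Finally, the statement you aim for --- that $\phi(X)$ is a curve --- is not what holds: in the paper, after a preliminary reduction that your proposal omits, $\phi$ is \emph{generically finite}, so its image has dimension $\dim X$.

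Concretely, three ingredients are missing. (i) The reduction via Proposition \ref{prop:splitting_algebraic_transcendental} (with Corollary \ref{cor:regular_quasi_etale} and Lemma \ref{lemma:properties:regular}) to the case where no positive-dimensional algebraic subvariety is tangent to $\sG$ through a general point; this is what makes $\phi$ generically finite. (ii) The correct use of weak regularity: if a non-$\beta$-exceptional prime divisor $F\subset Z$ were contracted by $\phi$ to a point, then $G:=\beta(F)$ would be $\sG$-invariant; since $\sG_{|X_{\textup{reg}}}$ is regular and $X$ is smooth in codimension two --- this, not Proposition \ref{prop:numerical_dimension}, is where that hypothesis enters, as it lets a general complete intersection surface $S$ lie inside $X_{\textup{reg}}$ --- the curve $C=S\cap G$ satisfies $G\cdot C=0$ by \cite[Lemma 3.2]{druel_bbcd2}, contradicting $G\cdot C<0$ for a curve contracted by the generically finite map $(\phi\circ\beta^{-1})_{|S}$. (iii) The actual source of the contradiction: by Selberg one passes to a torsion-free finite-index subgroup $\Gamma_1\subseteq\Gamma$; step (ii) guarantees that the induced cover $X_1\to X$ is quasi-\'etale; pulling back the $N$ tautological foliations, whose conormal bundles are hermitian semipositive, one gets $\kappa=\nu=\dim$ on a resolution of $Z\times_{\mathbb{D}^N/\Gamma}(\mathbb{D}^N/\Gamma_1)$, hence $\nu(X)=\nu(X_1)=\dim X$, which is incompatible with $\nu(X)=1$ unless $\dim X=1$. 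Your sketch contains no counterpart of (i) or (iii) --- in particular it never uses $\nu(X)=1$ again after invoking Touzet --- so no concrete contradiction is ever produced, and the key step of the theorem remains unproved.
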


\begin{proof}
For the reader's convenience, the proof is subdivided into a number of relatively independent
steps.

\medskip
\noindent\textit{Step 1.} 
Applying Proposition \ref{prop:splitting_algebraic_transcendental} together with Corollary \ref{cor:regular_quasi_etale} and Lemma \ref{lemma:properties:regular}, we may assume without loss of generality that 
there is no positive-dimensional algebraic subvariety tangent to $\sG$ passing through a general point of
$X$. 

\medskip
\noindent\textit{Step 2.} 
Let $\beta \colon Z \to X$ be a resolution of singularities with exceptional set $E$, and suppose that $E$ is a divisor with simple normal crossings. Suppose in addition that the restriction of $\beta$ to $\beta^{-1}(X_\textup{reg})$ is an isomorphism. 
Let $E_1$ be the reduced divisor on $Z$ whose support is
the union of all irreducible components of $E$ that are invariant under $\beta^{-1}\sG$.
Note that $-c_1(\sN_\sG)\equiv K_X$ by assumption. By Proposition \ref{prop:numerical_dimension} and Remark \ref{rem:exceptional_set}, there exists a rational number $0 \le \varepsilon <1$ such that 
$$\nu\big(-c_1(\sN_{\beta^{-1}\sG})+\varepsilon E_1\big)=\nu\big(-c_1(\sN_\sG)\big)=1.$$ 
By \cite[Theorem 6]{touzet_conpsef} applied to $\beta^{-1}\sG$, we may assume that there exists an arithmetic irreducible lattice $\Gamma$ of $\textup{PSL}(2,\mathbb{R})^N$ for some integer $N \ge 2$, as well as a morphism $\phi\colon Z \to \mathfrak{H}:=\mathbb{D}^N/\Gamma$ of quasi-projective varieties such that $\sG=\phi^{-1}\sH$, where $\sH$ is a weakly regular codimension one foliation on  
$\mathfrak{H}$ induced by one of tautological foliations on the polydisc $\mathbb{D}^N$. Note that $\phi$ is generically finite by Step 1.

By a result of Selberg, there exists a torsion-free subgroup $\Gamma_1$ of $\Gamma$ of finite index.
Set $\mathfrak{H}_1:=\mathbb{D}^N/\Gamma_1$, and denote by $\pi \colon \mathfrak{H}_1 \to \mathfrak{H}$ the natural finite morphism. Recall that $\mathfrak{H}$ has isolated quotient singularities. It follows that $\pi$ is a quasi-\'etale cover since $N \ge 2$.

Let $F \subset Z$ be a prime divisor, and assume that $F$ is not $\beta$-exceptional.
Set $G:=\beta(F)$. We show that $\dim \phi(F) \ge 1$. We argue by contradiction, and assume that 
$\dim \phi(F) = 0$. One readily checks that there is only one separatrix
for $\sH$ at any (singular) point.
This implies that $F$ must be invariant under $\beta^{-1}\sG$ since otherwise, $\phi(Z)$ would be contained in a leaf of $\sH$. 
But this is impossible since $\beta^{-1}\sG$ has codimension one.
Set $X^\circ:=X_{\textup{reg}}$.
By assumption, $\sG_{|X^\circ}$ is regular, and hence $G^\circ:=G \cap X^\circ$ is a smooth hypersurface with  
normal bundle $\sN_{G^\circ/X^\circ}\cong {\sN_\sG}_{|G^\circ}$.
Let $S \subseteq X$ be a two dimensional complete intersection of general elements of a very ample linear system $|H|$ on $X$. We may assume without loss of generality that $S$ is smooth and contained in 
$X^\circ$, so that $\phi \circ \beta^{-1}$ is regular in a neighborhood of $S$. 
Set $C:= S \cap G$. Then $G\cdot C<0$ since $C$ is contracted by the generically finite morphism ${\phi \circ \beta^{-1}}_{|S}$. On the other hand, we must have $G \cdot C = 0$ by \cite[Lemma 3.2]{druel_bbcd2}, yielding a contradiction.
This proves that $\dim \phi(F) \ge 1$.

\medskip
\noindent\textit{Step 3.} 
By Step 2, the natural map $Z\times_\mathfrak{H} \mathfrak{H_1} \to Z$ is a quasi-\'etale cover away from the exceptional locus of $\beta$. In particular, it induces a quasi-\'etale cover $f_1\colon X_1 \to X$. Let $Z_1\to Z\times_\mathfrak{H} \mathfrak{H_1}$ be a resolution of singularities. 
We obtain a diagram

\begin{center}
\begin{tikzcd}[row sep=large, column sep=large]
X_1\ar[d, "{f_1, \textup{ quasi-\'etale}}"'] && Z_1 \ar[ll, "{\beta_1,\textup{ birational}}"'] \ar[d, "{g_1}"]\ar[rrr, "{\phi_1,\textup{ generically finite}}"] & & & \mathfrak{H}_1\ar[d, "{\pi,\textup{ quasi-\'etale}}"]\\
X && Z \ar[ll, "{\beta,\textup{ birational}}"]\ar[rrr, "{\phi,\textup{ generically finite}}"']&&& \mathfrak{H}.
\end{tikzcd}
\end{center}
Let $\sE_i$ the codimension one regular foliations on $\mathfrak{H}_1$ induced by the tautological foliations 
on $\mathbb{D}^N$ so that $\Omega^1_{\mathfrak{H}_1}\cong \bigoplus_{1\le i \le N} \sE^*_i$. 
Set $n:=\dim X$. We may assume without loss of generality that the natural map 
$\bigoplus_{1\le i \le n} \phi_1^*\sE^*_i \to \Omega_Z^1$ is generically injective.
Now observe that the line bundle $\sN_{\sE_i}^*$ is hermitian semipositive, so that 
$\phi_1^*\sN^*_{\sE_i}$ is nef. On the other hand,
we have $c_1(\phi_1^*\sN^*_{\sE_1})\cdot\cdots\cdot c_1(\phi_1^*\sN^*_{\sE_n})>0$.
This immediately implies that $\kappa(Z_1)=\nu(Z_1)=\dim Z_1$. It follows that $\kappa(X_1)=\nu(X_1)=\dim X_1$ since $\beta_1$ is a birational morphism. Applying \cite[Proposition 2.7]{nakayama04}, we see that 
$\nu(X)=\nu(X_1)=\dim X_1=\dim X$ since $K_{X_1}\sim_\mathbb{Q}f_1^* K_X$. It follows that $\dim X =1$ since $\nu(X)=1$ by assumption. This finishes the proof of the theorem.
\end{proof}

The following is the main result of this section.

\begin{thm}\label{thm:algebraic_integrability_nu_un}
Let $X$ be a normal complex projective variety with terminal singularities, and let $\sG$ be a codimension one foliation on $X$. Suppose that $\sG$ is canonical with $K_\sG\equiv 0$ and that $\nu(X)=1$. Suppose in addition that $\sG$ is closed under $p$-th powers for almost all primes $p$. Then $\sG$ is algebraically integrable. 
\end{thm}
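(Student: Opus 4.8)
The plan is to reduce the statement to the weakly regular case already treated in Theorem \ref{thm:algebraic_integrability_nu_un_regular}. Since $X$ has terminal singularities, it is klt and smooth in codimension two, its singularities are rational (\cite{elkik}, Remark \ref{rem:irregularity_bir_invariant}), and $K_X$ is $\mathbb{Q}$-Cartier. From $K_\sG \equiv 0$ and the relation $c_1(\sN_\sG) \sim_\mathbb{Z} K_\sG - K_X$ one gets $-c_1(\sN_\sG) \equiv K_X$, so the conormal class is pseudo-effective with $\nu\big(-c_1(\sN_\sG)\big) = \nu(X) = 1$. Consequently, once I know that $\sG$ is weakly regular, all the hypotheses of Theorem \ref{thm:algebraic_integrability_nu_un_regular} (klt, smooth in codimension two, $\sG$ canonical, $K_\sG \equiv 0$, $\nu(X) = 1$) are met, and that theorem yields algebraic integrability. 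So the entire argument hinges on a single point: that a codimension one foliation with $K_\sG \equiv 0$ which is closed under $p$-th powers for almost all primes $p$ is weakly regular.

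To establish weak regularity I would use the cohomological criterion of Proposition \ref{prop:criterion_regularity_2}. Its global hypotheses are already available here: $X$ is $\mathbb{Q}$-Gorenstein, smooth in codimension two with rational singularities, and, because $\nu(X) = 1 > 0$, the class $K_X$ is not numerically trivial, so $K_X \cdot H^{\dim X - 1} \neq 0$ for a suitable ample Cartier divisor $H$ (if this intersection number vanished for every ample $H$, multilinearity of the intersection product over the full-dimensional ample cone would force $K_X \equiv 0$). The remaining, and decisive, hypothesis is that $\sG$ be locally defined, on an open set with complement of codimension at least three, by closed holomorphic $1$-forms whose zero loci have codimension at least two. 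This is exactly where $p$-closedness enters: a codimension one foliation that is closed under $p$-th powers for almost all primes admits, analytically locally on its regular locus, closed holomorphic defining $1$-forms, this being the characteristic-zero shadow of the vanishing of the $p$-curvature. Feeding these forms into Lemma \ref{lemma:atiyah_class} and Proposition \ref{prop:criterion_regularity_2} then produces both the weak regularity of $\sG$ and a splitting $T_X \cong \sG \oplus \sE$.

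Two reductions will be needed to run this cleanly. First, Proposition \ref{prop:criterion_regularity_2} is stated for $K_\sG$ Cartier; I would arrange this by a preliminary reduction (passing to an index-one type cover, using that weak regularity and $p$-closedness are insensitive to quasi-\'etale covers by Corollary \ref{cor:regular_quasi_etale}, and that $\nu$ is preserved), so that it suffices to treat the case $K_\sG$ Cartier. Second, in order to guarantee that the singular set of $\sG$ does not acquire a codimension two component — which would obstruct the closed holomorphic $1$-form hypothesis — I would first peel off the algebraic part of $\sG$ via Proposition \ref{prop:splitting_algebraic_transcendental}, reducing to the transcendental foliation $\sH$, and then use Lemma \ref{lemma:properties:regular} together with Corollary \ref{cor:regular_quasi_etale} to transport weak regularity between $\sH$ and $\sG$.

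The hard part will be precisely the implication ``closed under $p$-th powers for almost all primes $\Rightarrow$ locally defined by closed holomorphic $1$-forms with small singular set'', together with the verification that the relevant singularities are genuinely of codimension at least three in the transcendental model. The subtlety is that the resonant local model $p\,z_2\,dz_1 + q\,z_1\,dz_2$ of a canonical codimension one foliation is itself closed under $p$-th powers, so $p$-closedness alone does not eliminate codimension two singularities; it is only after combining it with Touzet's structure theorem \cite{touzet_conpsef} for foliations with pseudo-effective conormal bundle — which, for $\nu\big(-c_1(\sN_\sG)\big) = 1$, either delivers algebraic integrability outright or exhibits $\sG$ as pulled back from a tautological foliation on a quotient $\mathbb{D}^N/\Gamma$ of a polydisc — that the required control over the singular locus, and hence weak regularity, can be extracted. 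This interplay is the technical heart of the argument.
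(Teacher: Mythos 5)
Your overall target---prove that $\sG$ is weakly regular via Proposition \ref{prop:criterion_regularity_2} and then invoke Theorem \ref{thm:algebraic_integrability_nu_un_regular}---is indeed one branch of the paper's proof, but the two points you leave open are exactly where the substance lies, and your proposed fixes do not work. The decisive gap is the elimination of resonant codimension-two singularities. You correctly observe that the local model $p\,z_2dz_1+q\,z_1dz_2$ with $p\neq q$ is closed under $p$-th powers, yet it is only defined by the closed form $d(z_1^pz_2^q)$, whose zero set has codimension one; so $p$-closedness alone cannot supply the hypothesis of Proposition \ref{prop:criterion_regularity_2}. But your remedy---``combine with Touzet's structure theorem''---is circular: \cite[Theorem 6]{touzet_conpsef} is precisely the engine inside the proof of Theorem \ref{thm:algebraic_integrability_nu_un_regular}, and there weak regularity is the input that makes Touzet's structure usable (it guarantees that $\sG$-invariant divisors meeting $X_{\textup{reg}}$ are smooth with flat normal bundle, hence cannot be contracted by the map to $\mathbb{D}^N/\Gamma$); Touzet's theorem is a global structure result and gives no local control at singular points, so it cannot rule out the resonant models. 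The paper's actual mechanism is entirely different and is absent from your proposal: first run an MMP (Steps 1--2 of its proof) to replace $X$ by a model on which $K_X$ is \emph{movable}; then on a general complete-intersection surface $S$ the Baum--Bott formula gives $\sum_x \textup{BB}(\sL,x)=c_1(\sN_\sL)^2=K_X^2\cdot H^{n-2}\ge 0$ (movability is needed here---pseudo-effectivity of $K_X$ on the original $X$ does \emph{not} give this inequality), while \cite[Corollary 7.8]{lpt} (which is where $p$-closedness enters) forces every singular point to contribute $-\tfrac{(p-q)^2}{pq}\le 0$; hence all indices vanish, $p=q$ everywhere in codimension two, and the foliation is locally defined by $d(z_1z_2)$ away from a codimension-three set. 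Without this MMP plus Baum--Bott step your argument cannot get started.

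The second gap is your preliminary reduction to $K_\sG$ Cartier. An index-one (cyclic) cover exists only if $K_\sG$ is \emph{torsion}, whereas the hypothesis is merely $K_\sG\equiv 0$; torsionness is exactly abundance (Theorem \ref{thm_intro:abundance}), which in the paper is deduced \emph{from} the present theorem and so cannot be assumed. The paper obtains torsion only in the case $q(X)=0$, where the Picard group is discrete, and this is why its Proposition \ref{prop:criterion_regularity_2} branch is confined to Step 4 of the proof. When $q(X)\neq 0$ it argues completely differently (Step 5): the Albanese morphism, induction on dimension applied to the restriction of $\sG$ to general fibers, and, in the residual case $\sG\cong\sO_X^{\oplus(\dim X-1)}$, the equivariant-compactification analysis ending in a contradiction with $\nu(X)=1$. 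Your proposal has no counterpart for this case. Finally, a minor point: your justification of $K_X\cdot H^{\dim X-1}\neq 0$ by multilinearity over the ample cone is shaky (complete-intersection curve classes need not span $\textup{N}_1(X)$); the correct reason is that $K_X$ is pseudo-effective (as $\nu(X)\ge 0$) and not numerically trivial, so $K_X\cdot H^{\dim X-1}>0$ for every ample $H$.
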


Before proving Theorem \ref{thm:algebraic_integrability_nu_un} below, we note the following corollary.

\begin{cor}\label{cor:algebraic_integrability_nu_un}
Let $X$ be a normal complex projective variety with canonical singularities, and let $\sG$ be a codimension one foliation on 
$X$.
Suppose that $\sG$ is canonical with $K_\sG$ is Cartier and $K_\sG\equiv 0$ and that $\sG$ is closed under $p$-th powers for almost all primes $p$. Suppose in addition that $\nu(X)=1$. Then 
$\sG$ is algebraically integrable. 
\end{cor}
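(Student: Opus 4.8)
The plan is to reduce the statement to Theorem~\ref{thm:algebraic_integrability_nu_un} by passing to a $\mathbb{Q}$-factorial terminalization, exactly as in the proof of Corollary~\ref{cor:grothendieck_katz}. First I would choose a $\mathbb{Q}$-factorial terminalization $\beta\colon Z \to X$ (Fact~\ref{fact:existence_terminalization}), so that $Z$ is $\mathbb{Q}$-factorial with terminal singularities and $K_Z \sim_\mathbb{Q} \beta^* K_X$. Since $\sG$ is canonical with $K_\sG$ Cartier, Proposition~\ref{prop:terminalization_canonical_singularities} applies and shows that $\sG_Z := \beta^{-1}\sG$ is canonical with $K_{\sG_Z}\sim_\mathbb{Z}\beta^*K_\sG$; in particular $K_{\sG_Z}\equiv 0$.

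Next I would verify that $\sG_Z$ inherits the remaining hypotheses of Theorem~\ref{thm:algebraic_integrability_nu_un}. The property of being closed under $p$-th powers for almost all primes $p$ is a generic condition that is preserved by passing to the birational model $Z$, since the two foliations agree over the dense open locus where $\beta$ is an isomorphism. The numerical dimension is likewise unchanged: because $\beta$ is crepant we have $K_Z \sim_\mathbb{Q} \beta^*K_X$, and therefore $\nu(Z)=\nu(K_Z)=\nu(\beta^*K_X)=\nu(K_X)=\nu(X)=1$, the middle equality being precisely the definition of the numerical dimension on singular spaces (computed on a common resolution).

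With all hypotheses in place, Theorem~\ref{thm:algebraic_integrability_nu_un} applied to $\sG_Z$ on the terminal variety $Z$ yields that $\sG_Z$ is algebraically integrable. I would then conclude that $\sG$ itself is algebraically integrable: since $\beta$ restricts to an isomorphism over a dense open subset of $X$, a general leaf of $\sG$ is the $\beta$-image of a general leaf of $\sG_Z$, so algebraicity of the leaves descends along the birational morphism $\beta$.

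I do not expect a genuine obstacle here, as this corollary is simply the canonical-singularities counterpart of the terminal statement, and all the needed compatibilities—crepancy of $\beta$, preservation of $p$-closedness, and invariance of $\nu$—are already recorded in the earlier sections. The only point demanding a little care will be confirming that the numerical dimension is genuinely preserved under the terminalization, which follows formally from crepancy together with the definition of $\nu$ via resolution.
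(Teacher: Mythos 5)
Your proposal is correct and follows essentially the same route as the paper: pass to a $\mathbb{Q}$-factorial terminalization, invoke Proposition~\ref{prop:terminalization_canonical_singularities} to get that $\beta^{-1}\sG$ is canonical with $K_{\beta^{-1}\sG}\sim_\mathbb{Z}\beta^*K_\sG$, note that $p$-closedness and (via crepancy, $K_Z\sim_\mathbb{Q}\beta^*K_X$, and \cite[Proposition V 2.7]{nakayama04}) the numerical dimension are preserved, and apply Theorem~\ref{thm:algebraic_integrability_nu_un}. Your explicit remark that algebraicity of leaves descends along the birational map $\beta$ is a detail the paper leaves implicit, but it is accurate.
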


\begin{proof}
Let $\beta \colon Z \to X$ be a $\mathbb{Q}$-factorial terminalization of $X$. By Proposition \ref{prop:terminalization_canonical_singularities}, $\beta^{-1}\sG$ is canonical with $K_{\beta^{-1}\sG}\sim_\mathbb{Z}\beta^*K_\sG$. Moreover, $\beta^{-1}\sG$ is obviously closed under $p$-th powers for almost all primes $p$.
Finally, $\nu(Z)=1$ since $K_Z\sim_\mathbb{Q} \beta^*K_X$ and $\nu(X)=1$ by assumption (see \cite[Proposition V 2.7]{nakayama04}). The statement now follows from Theorem \ref{thm:algebraic_integrability_nu_un} applied to $\beta^{-1}\sG$.
\end{proof}

\begin{proof}[Proof of Theorem \ref{thm:algebraic_integrability_nu_un}]
For the reader's convenience, the proof is subdivided into a number of relatively independent
steps. Set $n=\dim X$.

\medskip
\noindent\textit{Step 1.} Let $\beta \colon Z \to X$ be a $\mathbb{Q}$-factorialization of $X$. By Lemma 
\ref{lemma:singularities_birational_morphism}, $\beta^{-1}\sG$ is canonical with $K_{\beta^{-1}\sG}\sim_\mathbb{Q}\beta^*K_\sG$. Moreover, $\beta^{-1}\sG$ is obviously closed under $p$-th powers for almost all primes $p$. 
Finally, $\nu(Z)=1$ since $K_Z\sim_\mathbb{Q} \beta^*K_X$ and $\nu(X)=1$ by assumption (see \cite[Proposition V 2.7]{nakayama04}). Thus, replacing $X$ by $Z$, if necessary, we may assume that
$X$ is $\mathbb{Q}$-factorial.
 
\medskip
\noindent\textit{Step 2.} By \cite[Th\'eor\`eme 3.3]{deczar}, we may run a minimal model program for $X$ and end with a minimal model in codimension one.
Therefore, there exists a sequence of maps 

\begin{center}
\begin{tikzcd}[row sep=large, column sep=large]
X:=X_0 \ar[r, "{\phi_0}", dashrightarrow] & X_1 \ar[r, "{\phi_1}", dashrightarrow] & \cdots \ar[r, "{\phi_{i-1}}", dashrightarrow] & X_i \ar[r, "{\phi_i}", dashrightarrow] & X_{i+1} \ar[r, "{\phi_{i+1}}", dashrightarrow] & \cdots \ar[r, "{\phi_{m-1}}", dashrightarrow] & X_m
\end{tikzcd}
\end{center}

\noindent where the $\phi_i$ are either divisorial contractions or flips, and $K_{X_m}$ is movable. The spaces $X_i$ are normal, $\mathbb{Q}$-factorial, $X_i$ has terminal singularities for all $0\le i \le m$, and $\nu(X_i)=1$. 
Let $\sG_i$ be the foliation on $X_i$ induced by $\sG$. By \cite[Lemma 3.2.5]{kmm}, we have $K_{\sG_i}\equiv 0$. 
Moreover, by Lemma \ref{lemma:singularities_birational_morphism}, $\sG_i$ has canonical singularities. 
Replacing $X$ by $X_m$, we may therefore assume without loss of generality that 
$X$ is $\mathbb{Q}$-factorial and $K_X$ is movable.

\medskip
\noindent\textit{Step 3.} Let $S \subseteq X$
be a two dimensional complete intersection of general elements of a very ample linear system $|H|$ on $X$. We may assume wihtout loss of generality that $S$ is smooth and contained in 
$X_{\textup{reg}}$. Let $\sL$ be the foliation by curves on $S$ induced by $\sG$. By Proposition \ref{prop:bertini}, we have 
$\det\sN_\sL \cong (\det\sN_\sG)_{|S}$. In particular, we have 
\begin{equation}\label{BB_formula1}
c_1(\sN_\sL)^2 = K_X ^2 \cdot H^{n -2} \ge 0 
\end{equation}
since $K_X$ is movable by Step 2.

On the other hand, by the Baum-Bott formula (see \cite[Chapter 3, Theorem 1]{brunella}), we have 
\begin{equation}\label{BB_formula2}
c_1(\sN_\sL)^2 =\sum_x \textup{BB}(\sL,x),
\end{equation}
where $x$ runs through all singular points of $\sL$, and $\textup{BB}(\sL,x)$ denotes the Baum-Bott index of $\sL$ at $x$. 
Let $x$ be a singular point of $\sL$. If $\sG$ is regular at $x$, then there is a holomorphic function $f$ defined in a neighborhood of $x$ such that $df$ vanishes at finitely many points and such that $df$ defines $\sL$ (see Proposition \ref{prop:bertini}). It follows that 
$\textup{BB}(\sL,x)=0$. Suppose now that $x$ is a singular point of $\sG$. 
By \cite[Corollary 7.8]{lpt}, 
$\sG$ is defined at $x$ by the local $1$-form $\omega=pz_2dz_1 + qz_1dz_2$, where $p$ and $q$ are positive integers and $(z_1,\ldots,z_n)$ are analytic coordinates on $X$ centered at $x$.
This implies in particular that the singular locus of $\sG$ is smooth in a neighborhood of $x$. By general choice of $S$,
we may assume without loss of generality that $S$ intersects the singular locus of $\sG$ transversely.
It follows that $\sL$ is defined at $x$ by the local $1$-form $\omega=pvdu + qudv$, where $(u,v)$ are analytic coordinates on $S$ centered at $x$, and hence $\textup{BB}(\sL,x)=-\frac{(p-q)^2}{pq}\le 0$.
In either case, we have $\textup{BB}(\sL,x) \le 0$. Equations \eqref{BB_formula1} and
\eqref{BB_formula2} above then show that $\textup{BB}(\sL,x)=0$ for any singular point $x$ of $\sL$. It follows that there exists an open set $X^\circ \subseteq X$ with complement of codimension at least three such that 
$\sG_{|X^\circ}$ is defined by closed holomorphic $1$-forms with zero set of codimension at least two locally for the analytic topology.

\medskip
\noindent\textit{Step 4.} Suppose first that $q(X)=0$. Then the Picard group of $X$ is discrete, and thus $K_\sG$ is torsion.
Replacing $X$ by the associated cyclic cover, which is quasi-\'etale (see \cite[Definition 2.52]{kollar_mori}), and using Lemma \ref{lemma:canonical_quasi_etale_cover} and Remark \ref{rem:quasi_etale_smooth_locus}, we may assume without loss of generality that $K_\sG\sim_\mathbb{Z} 0$. By Proposition \ref{prop:criterion_regularity_2}, we see that $\sG$ is weakly regular. Theorem \ref{thm:algebraic_integrability_nu_un_regular} above then says that $\sG$ is algebraically integrable.

\medskip
\noindent\textit{Step 5.} To prove the statement, we argue by induction on $\dim X$. 

If $\dim X=1$, then $\sG$ is obviously algebraically integrable.

Suppose from now on that $\dim X \ge 2$. By Step 4, we may assume wihtout loss of generality that $q(X)\neq 0$. Let 
$$a_X\colon X \to \textup{A}$$
be the Albanese morphism, that is, the universal morphism to an abelian variety (see \cite{serre}). Since $X$ is a projective variety with rational singularities, $\Pic^\circ(X)$ is an abelian variety and $\textup{A}\cong\big(\Pic^\circ(X)\big)^\vee$. 
Moreover, the Albanese morphism is induced by the universal line bundle (see \cite[Lemma 8.1]{kawamata85}).
In particular, $\dim \textup{A} = q(X)>0$ by assumption. 
Let $F$ be a general fiber of the Stein factorization of $X \to a_X(X)$, and let 
$\sH$ be the foliation on $F$ induced by $\sG$.

Suppose first that $\dim F>0$.
Then $F$ has terminal singularities, and that $K_F \sim_\mathbb{Z} {K_X}_{|F}$ by the adjunction formula. In particular, $K_F$ is nef and $\nu(F)\in\{0,1\}$. 
If $\sH=T_F$, then $\sH$ is obviously algebraically integrable.
Suppose that $\sH$ has codimension one. By Proposition \ref{prop:bertini}, we have $K_\sH\sim_\mathbb{Z} {K_\sG}_{|F} - B$ for some effective Weil divisor $B$ on $F$. Suppose that $B\neq 0$.
Applying \cite[Theorem 4.7]{campana_paun15} to the pull-back of $\sH$ on a resolution of $F$, we see that $\sH$ is uniruled. This implies that $\sG$ is uniruled as well since $F$ is general. But this contradicts Proposition \ref{proposition:canonical_versus_uniruled}, and shows that $B=0$. By Proposition \ref{proposition:canonical_versus_uniruled} applied to $\sH$, we see that $\sH$ is canonical. Notice that
$\sH$ is closed under $p$-th powers for almost all primes $p$. 
If $\nu(F)=0$, then $\sH$ is algebraically integrable by Lemma \ref{lemma:K_torsion} and Proposition \ref{prop:ESBT_av} below.
If $\nu(F)=1$, then $\sH$ is algebraically integrable by induction.
So, if $\dim F \ge 2$, then the statement follows from Proposition \ref{prop:splitting_algebraic_transcendental} using the induction hypothesis again.

Suppose from now on that $\dim F \le 1$ and that $\sH$ is the foliation by points. Then the tangent map to $a_X$ yields an 
inclusion map $\sG \subseteq a_X^*T_{\textup{A}}$.
On the other hand, by Lemma \ref{lemma:stability_versus_uniruled}, $\sG$ is semistable with respect to any polarization on $X$.
This immediately implies that $\sG\cong \sO_X^{n-1}$ since $\sG$ is reflexive and $K_\sG\equiv 0$.
Moreover, $\sG$ is a sheaf of abelian Lie algebras. Let $\textup{Aut}^\circ(X)$ denote the neutral component of the automorphism group $\textup{Aut}(X)$ of $X$, and let $H \subset \textup{Aut}^\circ(X)$ be the 
connected commutative Lie subgroup with Lie algebra $H^0(X,\sG) \subset H^0(X,T_X)$. Finally, let $G \subseteq \textup{Aut}^\circ(X)$ be its Zariski closure. Note that $\sG$ is induced by $H$ by construction. Moreover, $G$ is a commutative algebraic group. If $H=G$, then $\sG$ is algebraically integrable. Suppose from now on that $\dim G \ge \dim X$.
Arguing as in the proof of \cite[Lemma 9.3]{lpt}, one shows that $\dim G = \dim X$ and that 
$X$ is an equivariant compactification of $G$. 
Since $K_X$ is pseudo-effective and $X$ has terminal singularities, $X$ is not uniruled. It follows that $G=X$ 
is an abelian variety by a theorem of Chevalley. But this contradicts the assumption that $\nu(X)=1$, completing the proof of  the theorem.
\end{proof}

\begin{prop}\label{prop:ESBT_av} Let $A$ be a complex abelian variety, and let $\sG$ be a linear foliation on $A$. Suppose that $\sG$ is closed under $p$-th powers for almost all primes $p$. Then $\sG$ is algebraically integrable.
\end{prop}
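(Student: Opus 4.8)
The plan is to recognize $\sG$ as a translation-invariant subbundle and then deduce algebraicity from Bost's arithmetic algebraization theorem, using the Ekedahl--Shepherd-Barron--Taylor formalism to descend to a number field. After fixing a uniformization $A\cong\mathbb{C}^g/\Lambda$, a linear foliation is translation-invariant, so $\sG\cong V\otimes_{\mathbb{C}}\sO_A$ for some subspace $V\subseteq\textup{Lie}(A)=T_0A$. The leaf of $\sG$ through the origin is the connected analytic subgroup $\exp(V)\subseteq A$, and the remaining leaves are its translates; hence $\sG$ is algebraically integrable if and only if $\exp(V)$ is closed, i.e. an abelian subvariety $B$ with $\textup{Lie}(B)=V$. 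So it suffices to prove that the formal leaf $\widehat{V}\subseteq\widehat{A}$ at $0$ is algebraic.

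First I would spread out: choose a finitely generated $\mathbb{Z}$-subalgebra $R\subseteq\mathbb{C}$ with fraction field $K$, together with a model $(\bA,\sbfG)$ over $\bS:=\Spec R$ in which $\sbfG\subseteq T_{\bA/\bS}$ is translation-invariant and $\sbfG_{\mathbb{C}}=\sG$. For a translation-invariant subbundle, the $\sO$-linear map $\Frobabs^*\sbfG\to T_{\bA/\bS}/\sbfG$ computing the $p$-th power closure is determined by its values on invariant derivations, whose $p$-th iterates are again invariant; consequently, closure under $p$-th powers over a closed fibre $\bA_{\bar{s}}$ is equivalent to $V_{\bar{s}}\subseteq\textup{Lie}(\bA_{\bar{s}})$ being stable under the $p$-operation of the restricted Lie algebra $\textup{Lie}(\bA_{\bar{s}})$. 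Thus the hypothesis says exactly that $V_{\bar{s}}$ is a sub-$p$-Lie algebra for every closed point $s$ in a dense open subset of $\bS$, with $p=\textup{char}\,k(\bar{s})$.

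Next I would descend and conclude. Applying \cite[Proposition 3.6]{esbt} converts this geometric $p$-curvature hypothesis over $\mathbb{C}$ into the corresponding statement over a number field, reducing us to the situation where $A$, the origin, and the formal leaf $\widehat{V}$ are defined over a number field and where $\widehat{V}$ reduces modulo almost every prime to a formal subgroup stable under $p$-th powers. In this number-field setting, \cite[Theorem 2.3]{bost} asserts that $\widehat{V}$ is algebraic, namely the formal completion at $0$ of an abelian subvariety $B\subseteq A$ with $\textup{Lie}(B)=V$. Therefore $\exp(V)=B$ is algebraic, and $\sG$ is algebraically integrable.

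The hard part will be the descent step together with the precise bookkeeping of hypotheses: one must verify that ``closed under $p$-th powers for almost all $p$'' for the invariant subbundle really is the fibrewise $p$-Lie-algebra condition, and that \cite[Proposition 3.6]{esbt} indeed lets one replace a complex abelian variety (which need not be defined over $\overline{\mathbb{Q}}$) by one over a number field carrying a rational point through which the leaf passes, so that Bost's theorem becomes applicable.
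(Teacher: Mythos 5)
Your proposal is correct and follows essentially the same route as the paper, whose entire proof consists of invoking \cite[Proposition 3.6]{esbt} to reduce to the case where $A$ and $\sG$ are defined over a number field, and then concluding with \cite[Theorem 2.3]{bost}. The additional material you supply (the identification $\sG\cong V\otimes\sO_A$, the restricted-Lie-algebra reading of $p$-closure, and the formal-leaf formulation) is sound elaboration of what those two citations require, not a different argument.
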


\begin{proof}
By \cite[Proposition 3.6]{esbt}, we may assume without loss of generality that $X$ and $\sG$ are defined over a number field.
The statement then follows from \cite[Theorem 2.3]{bost}.
\end{proof}

\section{Foliations defined by closed rational 1-forms}\label{section:closed}

In this section, we address codimension one foliations with numerically trivial canonical class defined by closed rational $1$-forms with values in flat line bundles whose zero sets have codimension at least two. Recall that a \textit{flat vector bundle} on a normal complex variety $X$ is a vector bundle of rank $r$ induced by a representation $\pi_1(X) \to \textup{GL}(r,\mathbb{C})$. 
The following is the main result of the present section.

\begin{thm}\label{thm:closed_rational_1_form_2}
Let $X$ be a normal complex projective variety with canonical singularities, and let $\sG$ be a codimension one foliation on 
$X$.
Suppose that $\sG$ is canonical with $K_\sG$ Cartier and $K_\sG\equiv 0$.
Suppose in addition that $\sG$ is given by a closed rational $1$-form 
$\omega$ with values in a flat line bundle $\sL$, whose zero set has codimension at least two. 
Then there exists a normal projective equivariant compactification $Z$ of a commutative algebraic group $G$ of dimension at least $2$ as well as a codimension one foliation $\sH\cong \sO_Z^{\,\dim Z-1}$ on $Z$ induced by a codimension one Lie subgroup of $G$, a normal projective variety $Y$ with $K_Y\sim_\mathbb{Z}0$, and a quasi-\'etale cover $f \colon Y \times Z \to X$ such that $f^{-1}\sG$ is the pull-back of $\sH$ via the projection $Y \times Z \to Z$. Moreover, there is no positive-dimensional algebraic subvariety  tangent to $\sH$ passing through a general point of $Z$.
\end{thm}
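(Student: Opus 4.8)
The plan is to reduce first to the case where $\sG$ admits no positive-dimensional algebraic subvariety tangent to it through a general point, and then to realize $X$, up to a quasi-\'etale cover, as an equivariant compactification of a commutative algebraic group by means of the infinitesimal symmetries carried by the closed $1$-form $\omega$. This is exactly the strategy of \cite{lpt} in the smooth case, and the point is to run it using the reflexive/klt machinery of the previous sections.

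First I would apply Proposition \ref{prop:splitting_algebraic_transcendental} to $\sG$. After replacing $X$ by a quasi-\'etale cover it writes $X \cong Y \times Z$ with $K_Y \sim_\mathbb{Z} 0$, with $f^{-1}\sG$ the pull-back under the projection $Y \times Z \to Z$ of a foliation $\sH$ on $Z$ that is canonical, satisfies $K_\sH \equiv 0$, and has no positive-dimensional algebraic subvariety tangent to it through a general point. I would then check that $\sH$ again satisfies the hypotheses of the theorem: since $\sG$ is defined by the closed rational $1$-form $\omega$ with values in the flat line bundle $\sL$ and the algebraic part $\sE$ is transverse to $\sG$ on a big open set, $f^*\omega$ lies in the image of $\textup{pr}_Z^*\big(\Omega_Z^{[1]}\boxtimes\sN_\sH\big)$ and hence descends to a closed rational $1$-form on $Z$ with values in a flat line bundle (the $Y$-factor being absorbed because $K_Y\sim_\mathbb{Z}0$), with zero locus of codimension at least two. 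This reduces the statement to the situation $Y=\textup{pt}$, so that it remains to treat a foliation $\sG$ as in the statement with no positive-dimensional tangent algebraic subvariety through a general point, and to show that $X$ is then an equivariant compactification of a commutative algebraic group $G$ of dimension at least two, with $\sG\cong\sO_X^{\,\dim X-1}$ induced by a codimension one Lie subgroup. Here $\dim X\ge 2$: a foliation of positive rank with no algebraic leaf forces $\dim X\ge 2$, the degenerate case $\dim Z=1$ corresponding to an algebraically integrable $\sG$, i.e. to case (1) rather than to the present statement.

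Next I would produce global vector fields tangent to $\sG$ and show $\sG\cong\sO_X^{\,\dim X-1}$. By Lemma \ref{lemma:stability_versus_uniruled} the reflexive sheaf $\sG$ is semistable with $c_1(\sG)=-K_\sG\equiv 0$, and by Proposition \ref{proposition:canonical_versus_uniruled} it is not uniruled; the closed $1$-form $\omega$ endows $\sG$ with a transversely affine structure. Following \cite{lpt}, integrating $\omega$ on the universal cover of a smooth big open set $X^\circ\subseteq X_{\textup{reg}}$ gives a developing map equivariant for a representation $\pi_1(X^\circ)\to\mathbb{C}^\ast\ltimes\mathbb{C}$ into the affine group, whose leaves are the fibres of the resulting (multivalued) first integral; the absence of algebraic leaves makes this structure rigid and, after a further quasi-\'etale cover, forces $\sG$ to be generated by its global holomorphic sections, so $\sG\cong\sO_X^{\,\dim X-1}$, and $H^0(X,\sG)$ is an abelian Lie algebra. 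The transverse direction of $\omega$ supplies in addition a one-parameter subgroup ($\mathbb{G}_a$ in the holomorphic case, $\mathbb{G}_m$ in the logarithmic case, as in Example \ref{example:suspension}) normalizing $\sG$.

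I would then build the group exactly as in the proof of Proposition \ref{proposition:trivial_tangent_bundle}: let $H\subseteq\textup{Aut}^\circ(X)$ be the connected subgroup with Lie algebra $H^0(X,\sG)\subseteq H^0(X,T_X)$ and let $G$ be its Zariski closure, a connected commutative algebraic group with $\sG$ induced by $H$; the abelian part is controlled through Theorem \ref{thm:kawamata_abelian_factor}. Because no positive-dimensional algebraic subvariety is tangent to $\sG$ through a general point, the $H$-orbits are not contained in proper algebraic subvarieties, and together with the transverse one-parameter subgroup this forces $G$ to act with a dense orbit, i.e. $\dim G=\dim X\ge 2$ and $X$ is an equivariant compactification of $G$, with $H\subset G$ of codimension one and $\sG$ induced by it. The main obstacle is precisely this middle-to-final passage: turning the analytically defined transverse affine structure into genuine algebraic automorphisms, proving $\sG\cong\sO_X^{\,\dim X-1}$ and that the symmetries integrate to an algebraic group acting almost transitively with $X$ as an equivariant compactification. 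Controlling the structure across the polar and singular loci and descending the symmetries to $\textup{Aut}^\circ(X)$ is where the hypotheses $K_\sG\equiv 0$, canonicity, and the vanishing of the algebraic part (Step 1) are essential, the automorphism-group formalism of Proposition \ref{proposition:trivial_tangent_bundle} being the tool that converts the homogeneity into the desired commutative group compactification.
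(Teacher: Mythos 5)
Your first step (Proposition \ref{prop:splitting_algebraic_transcendental}, reduction to the case with no positive-dimensional algebraic subvariety tangent to $\sG$ through a general point) and your endgame (injecting $\sG$ into a trivial sheaf of abelian Lie algebras, then passing to the subgroup $H\subseteq \textup{Aut}^\circ(X)$ with Lie algebra $H^0(X,\sG)$ and its Zariski closure $G$) both match the paper, which concludes via Theorem \ref{thm:closed_rational_1_form}. The genuine gap is in the middle: you never deal with the possibility that the flat line bundle $\sL$ is \emph{not} torsion, and your claim that ``the absence of algebraic leaves makes this structure rigid and, after a further quasi-\'etale cover, forces $\sG$ to be generated by its global holomorphic sections'' is exactly the point that fails. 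The monodromy of the transversely affine structure splits into a linear part (the monodromy of $\sL$, landing in $\mathbb{C}^*$) and a translation part; when the linear part has infinite image no finite (in particular no quasi-\'etale) cover can kill it, so the multivalued primitive of $\omega$ does not become single-valued and the integration argument (Lemma \ref{lemma:structure_closed_rational}, which requires an honest, untwisted closed rational $1$-form) does not apply. This situation really occurs: in Example \ref{example:suspension_2} with $H=p(T)$ and $\sM$ non-torsion, the flat connection $\sG$ on the $\mathbb{P}^1$-bundle over an abelian variety satisfies all hypotheses of the theorem with the twisted form $\frac{dz}{z^2}$ and non-torsion $\sL=\psi^*\sM^{\otimes -2}$. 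What saves the day there is that one can \emph{choose a different} defining form ($\frac{dz}{z}$, untwisted), and the content of Propositions \ref{prop:flatness_torsion} and \ref{prop:flatness_torsion_2} is that such a choice is always possible: the proof runs an MMP, shows the resulting Mori fiber space is a $\mathbb{P}^1$-bundle (or conic bundle) over a variety with torsion canonical class, invokes \cite[Theorem I]{GGK} and the Albanese morphism, and uses the uniqueness statement of Lemma \ref{lemma:unicity} to compare defining forms. Your proposal has no substitute for this step, and without it the reduction to a trivialized $\sG\cong\sO_X^{\,\dim X-1}$ cannot be carried out.

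A secondary omission: the paper's proof splits according to whether $K_X$ is pseudo-effective. When it is, the twisted form immediately forces $K_X\equiv 0$ (Proposition \ref{prop:closed__rational_1_form_K_pseff}), and Lemma \ref{lemma:K_torsion} yields the abelian-variety case of the conclusion directly, with no integration of $\omega$ at all; the MMP machinery above is only needed in the non-pseudo-effective (uniruled) case. Relatedly, your construction of $G$ by adjoining a ``transverse one-parameter subgroup'' obtained from the flow dual to $\omega$ is not needed and is itself unjustified (that flow need not be algebraic); in the paper $\dim G\ge\dim X$ follows simply because $H=G$ would make $\sG$ algebraically integrable, contradicting the Step 1 reduction, and the equality $\dim G=\dim X$ then follows as in \cite[Lemma 9.3]{lpt}.
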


\begin{exmp}\label{example:suspension_2} Let $n \ge 2$ be an integer.
Let $A=\mathbb{C}^{n-1}/\Lambda$ be a complex abelian variety, and let 
$\rho\colon \pi_1(A) \to \textup{PGL}(2,\mathbb{C})$ be a representation of the fundamental group $\pi_1(A)\cong\Lambda$ of $A$.
Then the group $\pi_1(A)$ acts diagonally on $\mathbb{C}^{n-1}\times \mathbb{P}^1$ by 
$\gamma\cdot(z,p)=\big(\gamma(z),\rho(\gamma)(p)\big)$. Set $X:=(\mathbb{C}^{n-1} \times \mathbb{P}^1)/\pi_1(A)$, 
and denote by $\psi\colon X \to A \cong \mathbb{C}^{n-1}/\pi_1(A)$ the projection morphism, which is $\mathbb{P}^1$-bundle.
The foliation on $\mathbb{C}^{n-1} \times \mathbb{P}^1$ induced by the projection $\mathbb{C}^{n-1} \times \mathbb{P}^1 \to \mathbb{P}^1$ is invariant under the action of $\pi_1(A)$ and yields a flat Ehresmann connection $\sG$ on $\psi$.
Let $H \subseteq \textup{PGL}(2,\mathbb{C})$ be the Zariski closure of the image of $\rho$.
Suppose that $\dim H >0$.

We use notation of Example \ref{example:suspension}.

\medskip

Suppose that $H=p(T)$. Let $z$ be a coordinate on $\mathbb{C} \subset \mathbb{P}^1$ such that the inverse images of
$D_1$ and $D_2$ on the universal covering space $\mathbb{C}^{n-1} \times \mathbb{P}^1$ of $X$ are given by equations 
$z=0$ and $z=\infty$ respectively. Then $\frac{dz}{z}$ induces a closed logarithmic $1$-form with poles along $D_1$ and $D_2$ and empty zero set defining $\sG$. The $1$-form $\frac{dz}{z^2}$ induces a closed rational $1$-form on $X$ with values in the flat line bundle $\sL=\psi^*\sM^{\otimes -2}$ whose divisor of zeroes and poles is $-2D_1$. 

\medskip

Suppose that $H=p(U)$. Let $z$ be a coordinate on $\mathbb{C} \subset \mathbb{P}^1$ such that the inverse image of
$D$ on the universal covering space $\mathbb{C}^{n-1} \times \mathbb{P}^1$ of $X$ is given by equation
$z=0$. Then the $1$-form $\frac{dz}{z^2}$ induces a closed rational $1$-form on $X$ defining $\sG$ 
with divisor of zeroes and poles poles $-2D$. 
\end{exmp}

The proof of Theorem \ref{thm:closed_rational_1_form_2} makes use of the following result, which might be of independent interest.

\begin{thm}\label{thm:closed_rational_1_form}
Let $X$ be a normal complex projective variety with klt singularities. Let $\omega$ be a closed rational $1$-form, 
and let $\sG$ be the foliation on $X$ defined by $\omega$.
Suppose that $\sG$ has canonical singularities and $K_\sG\equiv 0$. 
Then one of the following holds.
\begin{enumerate}
\item There exist a complete smooth curve $C$, a complex projective variety $Y$ with canonical singularities and 
$K_Y \sim_\mathbb{Y}0$, as well as a quasi-\'etale cover $f \colon Y \times C \to X$ such that $f^{-1}\sG$ is induced by the projection $Y \times C \to C$.
\item Then there exists a normal projective equivariant compactification $Z$ of a commutative algebraic group $G$ of dimension at least $2$ as well as a codimension one foliation $\sH\cong \sO_Z^{\,\dim Z-1}$ on $Z$ induced by a codimension one Lie subgroup of $G$, a normal projective variety $Y$ with $K_Y\sim_\mathbb{Z}0$, and a quasi-\'etale cover $f \colon Y \times Z \to X$ such that $f^{-1}\sG$ is the pull-back of $\sH$ via the projection $Y \times Z \to Z$. Moreover, there is no positive-dimensional algebraic subvariety  tangent to $\sH$ passing through a general point of $Z$.
\end{enumerate}
\end{thm}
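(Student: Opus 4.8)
The plan is to run the dichotomy \emph{algebraically integrable} versus \emph{not}: the former produces conclusion (1), the latter conclusion (2), and in the second case one builds a commutative algebraic group directly from the translation structure carried by the closed form $\omega$. Suppose first that $\sG$ is algebraically integrable. Since $\sG$ is canonical with $K_\sG\equiv 0$, Proposition~\ref{prop:abundance_alg_int} shows $K_\sG$ is torsion; after the associated cyclic quasi-étale cover I may assume $K_\sG\sim_{\mathbb{Z}}0$, which is Cartier, and then Corollary~\ref{cor:canonical_versus_regular} shows $\sG$ is weakly regular. Theorem~\ref{thm:global_reeb_stability} now yields a quasi-étale cover $f\colon Y'\times Z'\to X$ with $f^{-1}\sG$ induced by the projection to $Y'$; as $\sG$ has codimension one the space of leaves $Y'$ has dimension one, so $Y'=C$ is a smooth complete curve while $Z'$ is klt with $K_{Z'}\sim_{\mathbb{Z}}0$, hence canonical. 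Relabelling gives exactly conclusion (1).

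Assume from now on that $\sG$ is not algebraically integrable. Applying Proposition~\ref{prop:splitting_algebraic_transcendental} I obtain, after a quasi-étale cover, a product $Y\times Z$ on which the pulled-back foliation is the pull-back of a codimension one foliation $\sH$ on $Y$ having no positive-dimensional algebraic subvariety tangent to it through a general point, with $K_Z\sim_{\mathbb{Z}}0$, and with $\sH$ canonical and $K_\sH\equiv 0$. Since this construction descends the defining form, $\sH$ is again defined by a closed rational $1$-form, so it suffices to prove that the transcendental core $(Y,\sH)$ is an equivariant compactification of a commutative algebraic group $G$ with $\dim G=\dim Y\ge 2$, with $\sH$ induced by a codimension one Lie subgroup and $\sH\cong\sO_Y^{\,\dim Y-1}$; conclusion (2) then follows by relabelling $Z\leftrightarrow Y$ and reassembling the two factors. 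On this core I would first record, crucially using that the defining form is closed, a Bogomolov-type vanishing giving $\nu(Y)\le 0$ (cf.\ Lemma~\ref{lemma:bogomolov}, which gives the weaker bound $\le 1$ in general); this rules out $\nu(Y)=1$ and organizes the argument into the two subcases $\nu(Y)=0$ and $\nu(Y)=-\infty$.

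If $\nu(Y)=0$, then $K_Y$ is torsion by Proposition~\ref{prop:nu_zero_versus_torsion}, and Lemma~\ref{lemma:K_torsion} produces an abelian variety $A$, a complementary factor with trivial canonical class, and a quasi-étale cover realizing $\sH$ as a linear foliation; the no-algebraic-leaves hypothesis forces the complementary factor to vanish and the linear foliation to be irrational, so $G=A$, $Y=A$, and $\sH\cong\sO_A^{\,\dim A-1}$ is cut out by a codimension one Lie subgroup. If $\nu(Y)=-\infty$, then $Y$ is uniruled and I would run a minimal model program ending in a Mori fibre space, exactly as in the proof of Theorem~\ref{thm:grothendieck_katz}, reducing to a $\mathbb{P}^1$-bundle over an abelian variety $A$ carrying a flat connection; analyzing the residues and pole orders of $\omega$ along its polar divisor then distinguishes the multiplicative and additive transverse structures and produces, as in Example~\ref{example:suspension}, a commutative algebraic group $G$ fitting into an extension $1\to\mathbb{G}_m\to G\to A\to 1$ or $1\to\mathbb{G}_a\to G\to A\to 1$, of which $Y$ is an equivariant compactification. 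In either subcase the action of $G$ on $Y$ embeds $\textup{Lie}\,G$ into $H^0(Y,T_Y)$ (the mechanism of Proposition~\ref{proposition:trivial_tangent_bundle}), the codimension one subalgebra both generates and trivializes $\sH$, and $\dim G=\dim Y$ because a complementary invariant vector field would otherwise sweep out positive-dimensional algebraic leaves.

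The delicate point is the $\nu(Y)=-\infty$ step: passing from the merely \emph{transverse} flat/affine structure of the closed form $\omega$ to a genuine, globally defined action of a commutative algebraic group on the projective variety $Y$, while proving simultaneously that $\sH\cong\sO_Y^{\,\dim Y-1}$ and that $Y$ is literally an equivariant compactification with $\dim G=\dim Y$. This demands careful residue and pole-order bookkeeping for $\omega$, together with the decomposition Theorem~\ref{thm:kawamata_abelian_factor} to pin down the abelian base $A$, and finally the descent Lemma~\ref{lemma:infinitesimal_unicity_BB} and the product Lemmas~\ref{lemma:product_versus_contraction} and~\ref{lemma:produit} to recover the global product $Y\times Z$ required by the statement.
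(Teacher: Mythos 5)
Your dichotomy ``algebraically integrable versus not'' is sound, and your treatment of the integrable case does recover conclusion (1) (the paper gets it more directly: after Proposition~\ref{prop:splitting_algebraic_transcendental} this is exactly the case where the transcendental core is a curve). The gap is in the non-integrable case, and it sits precisely where you place the weight of the argument: the asserted Bogomolov-type bound $\nu(Y)\le 0$ for the core. You never prove it, and the natural argument --- $-K_Y\equiv c_1(\sN_\sH)\equiv(\textup{polar divisor})$, which is pseudo-effective, so $K_Y$ pseudo-effective forces $K_Y\equiv 0$ --- requires the defining form to have zero set of codimension at least two. For an \emph{untwisted} closed rational $1$-form this cannot be arranged: dividing $\omega$ by a rational function destroys closedness, so $\omega$ may genuinely have a divisorial zero locus $Z_0$, and then $-K_Y\equiv P-Z_0$ gives nothing. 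Indeed the bound is false in the setting of the theorem: for $X=Y\times C$ with $K_Y\sim_\mathbb{Z}0$ and $C$ of genus at least $2$, the foliation induced by the projection to $C$ is canonical, has $K_\sG\sim_\mathbb{Z}0$, and is defined by the pullback of a regular $1$-form on $C$ (closed, with divisorial zeros), yet $\nu(X)=1$. This example is algebraically integrable, so it does not contradict the restricted statement you actually need ($\nu\le 0$ \emph{on the core}), but that restricted statement is exactly the nontrivial content, and no argument is offered; Lemma~\ref{lemma:bogomolov} only gives $\nu\le 1$.

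Even granting $\nu(Y)\le 0$, your $\nu(Y)=-\infty$ branch imports the MMP-plus-residue analysis of Theorem~\ref{thm:grothendieck_katz} and Proposition~\ref{prop:flatness_torsion}, whose key steps are carried out under hypotheses the core need not satisfy: terminal (or $\mathbb{Q}$-factorial terminal) singularities --- and passing to a terminalization via Proposition~\ref{prop:terminalization_canonical_singularities} needs $K_\sH$ Cartier, which is not known at this stage --- and, again, a defining form with zero set of codimension at least two, which is used essentially in the residue bookkeeping and in showing that the Mori fibration is a smooth $\mathbb{P}^1$-bundle on which $\sG$ is a flat connection. The missing idea that makes all of this unnecessary is Lemma~\ref{lemma:structure_closed_rational}: on a resolution, the quasi-Albanese map of the complement of the polar divisor, combined with \cite[Th\'eor\`eme III. 2.1]{cerveau_mattei}, yields a morphism of sheaves of Lie algebras from the foliation to $V\otimes\sO$ ($V$ a finite-dimensional abelian Lie algebra) whose kernel is algebraically integrable, hence zero on the core. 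Thus $\sH$ embeds in $V\otimes\sO_Z$, so $\sH\cong\sO_Z^{\,\dim Z-1}$ is a sheaf of abelian Lie algebras, and the argument of \cite[Lemma 9.3]{lpt} (Zariski closure of the corresponding connected subgroup of $\textup{Aut}^\circ(Z)$) gives the equivariant compactification in one stroke --- with no case distinction on $\nu$, no MMP, and no restriction on the zeros of $\omega$.
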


To prove Theorem \ref{thm:closed_rational_1_form} above, we will need the following auxiliary result.

\begin{lemma}\label{lemma:structure_closed_rational}
Let $X$ be a normal complex projective variety.
Let $\omega$ be a closed rational $1$-form, 
and let $\sG$ be the foliation on $X$ defined by $\omega$. Then there is a finite dimensional complex abelian Lie algebra $V$ and a morphism of sheaves of Lie algebras $\sG \to V\otimes\sO_X$ whose kernel $\sE$
is an algebraically integrable foliation on $X$.
\end{lemma}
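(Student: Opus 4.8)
\emph{Strategy.} The plan is to realize $\omega$ as the pull-back of a translation-invariant $1$-form on a commutative algebraic group, and to read off $V$ and $\sE$ from the geometry of that group; this is also exactly the shape of the answer demanded by the target Theorem, in which $Z$ is an equivariant compactification of a commutative group. Since $\omega$ is a closed rational $1$-form, the classical structure theory of such forms (decompose $\omega$ into its closed holomorphic part, its logarithmic part, and an exact part, and reassemble these through a generalized Albanese map) provides a commutative algebraic group $G$, a dominant rational map $a\colon X \map G$ whose image generates $G$, and a translation-invariant rational $1$-form $\omega_G$ on $G$ with $\omega = a^*\omega_G$. I set $V := \operatorname{Lie}(G)$, which is a finite-dimensional \emph{abelian} Lie algebra precisely because $G$ is commutative.

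\emph{The morphism.} Trivializing $T_G \cong V \otimes \sO_G$ by the pairwise commuting translation-invariant vector fields, the differential of $a$ gives an $\sO_X$-linear map $da\colon T_X \to a^*T_G \cong V \otimes \sO_X$, and I will take the morphism of the statement to be its restriction $da_{|\sG}\colon \sG \to V \otimes \sO_X$, where $V\otimes\sO_X$ is given the induced (transformation) sheaf-of-Lie-algebras structure. That $da_{|\sG}$ is a morphism of sheaves of Lie algebras is a direct computation: writing $\theta_i$ for a basis of invariant $1$-forms on $G$ and $\alpha_i := a^*\theta_i$, one has $da(u) = \sum_i \alpha_i(u)\,X_i$, and for $u,v \in \sG$ the closedness of each $\alpha_i$ gives $\alpha_i([u,v]) = u\,\alpha_i(v) - v\,\alpha_i(u)$, which is exactly the $i$-th component of the bracket on the target. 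The vanishing of the structure constants of $V$ (commutativity of $G$) is what makes the identity hold.

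\emph{The kernel.} The kernel $\sE := \ker(da_{|\sG})$ coincides with $\ker(da)$: indeed $da(v)=0$ forces $\omega(v) = \omega_G(da(v)) = 0$, so $\ker(da) \subseteq \sG$ automatically. Hence $\sE$ is the saturated tangent sheaf to the general fibers of the rational map $a$, and it is involutive (the relative tangent sheaf of a morphism is involutive where $a$ is a submersion, and one saturates). Because the fibers of $a$ are algebraic subvarieties of $X$, the foliation $\sE$ is algebraically integrable, as required.

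\emph{Main obstacle.} Everything after the first step is formal, so the analytic content is concentrated entirely in producing $(G,a,\omega_G)$ with $V = \operatorname{Lie}(G)$ \emph{finite-dimensional}. The three types of contributions to $\omega$ must be handled separately — the closed holomorphic part is captured by the abelian-variety quotient of $G$ through the Albanese map, the logarithmic poles with their residues (which are constant along each polar component, by closedness of $\omega$) produce the $\mathbb{G}_m$-factors, and the exact/higher-order part produces the $\mathbb{G}_a$-factors — and one must verify that only finitely many generators occur. This finiteness, which is what bounds $\dim V$, follows from the finiteness of the polar divisor of $\omega$ together with the finite generation of the relevant residue and period data; this is the step I expect to require the most care.
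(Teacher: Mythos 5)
Your construction stands or falls with its opening claim: that a closed rational $1$-form on a projective variety decomposes globally into a closed holomorphic part, a logarithmic part, and an exact part, and can thereby be realized as $a^*\omega_G$ for a rational map $a\colon X \map G$ to a commutative algebraic group $G$ and an invariant form $\omega_G$. That decomposition is false. On an elliptic curve $E$ take the form of the second kind $\omega = \wp(z)\,dz$ (in Weierstrass coordinates, $x\,dx/y$): it has a double pole at the origin $O$ and no residue. If one could write $\omega = \alpha + \sum_i \lambda_i\, df_i/f_i + dg$ with $\alpha$ holomorphic and $f_i, g$ rational, then $g$ could have poles only at $O$ (a pole of $g$ of order $k$ produces a pole of $dg$ of order $k+1$, which the other terms, having at most simple poles, cannot cancel), and the pole order of $dg$ at $O$ forces $k \le 1$; since $E$ carries no non-constant function with a single simple pole, $g$ is constant, leaving a double pole on the left against at most simple poles on the right. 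The point is that such forms are pull-backs of invariant forms only on \emph{non-split} $\mathbb{G}_a$-extensions of $E$ (the generalized Jacobian $J_{2O}$ of Rosenlicht--Serre), so the $\mathbb{G}_a$-factors of your $G$ cannot be split off as ``exact parts''. The realization statement you invoke is Rosenlicht--Serre theory in dimension one, and in higher dimension it is essentially the theory of Albanese maps with modulus (Kato--Russell): one must build the analytic $\mathbb{G}_a$-extension out of the periods of $\omega$, prove that this extension is algebraic, and prove that the resulting holomorphic map is rational. None of this is supplied, and the sketch you give rests on the false decomposition above; since everything after this step is, as you say, formal, the analytic content of the lemma is unproven in your proposal. (Your two formal steps — the Lie-algebra morphism identity via closedness of the $\alpha_i$, and the identification of the kernel of $da_{|\sG}$ with the fiber foliation of $a$ — are correct.)

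The paper's proof is designed to avoid exactly this difficulty: it never organizes the higher-order-pole part of $\omega$ into a group. It passes to a resolution $\beta\colon Z \to X$, lets $D$ be the reduced polar divisor, and uses only the quasi-Albanese map of $Z^\circ := Z\setminus \textup{Supp}\,D$ to a \emph{semi-abelian} variety, whose invariant forms are the logarithmic forms; the lemma's morphism is contraction of $\sG$ against $H^0\big(Z,\Omega^1_Z(\textup{log}\,D)\big)$, so $V = H^0\big(Z,\Omega^1_Z(\textup{log}\,D)\big)^*$, and it is well defined because every component of $D$ is invariant under $\beta^{-1}\sG$ (\cite[Proposition III.1.1]{cerveau_mattei}). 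Algebraic integrability of the kernel is then obtained not from algebraicity of fibers of a map realizing $\omega$, but from a monodromy argument: the period representation $\pi_1(Z^\circ)\to\mathbb{G}_a$ of the multivalued primitive $\int\omega$ is trivial on fibers of the quasi-Albanese map, because the kernel of $\pi_1(Z^\circ) \to \pi_1$ of the semi-abelian variety is generated by commutators and torsion elements, which die in $\mathbb{G}_a$; then \cite[Th\'eor\`eme III.2.1]{cerveau_mattei} produces a meromorphic first integral on those fibers, hence algebraic leaves. To salvage your route you would have to actually prove the realization theorem (in effect, redevelop Albanese varieties with modulus in arbitrary dimension); the paper's choice of a coarser $V$ and a possibly larger kernel is what makes the lemma provable by classical means.
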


\begin{proof}
Let $\beta\colon Z \to X$ be a resolution of singularities, and let $D$ be the divisor on $Z$ of codimension one poles of 
$\omega$. We may assume without loss of generality that $D$ has simple normal crossings.
Set $Z^\circ:= Z\setminus \textup{Supp}\,D$, and let $z_0 \in Z^\circ$.
Consider the multivalued holomorphic function
$h(z):=\int_\gamma \omega$ on $Z^\circ$, where $\gamma$ is a path connecting $z_0$ and $z$ contained
in $Z^\circ$. Then $h$ yields local first integrals for $\beta^{-1}\sG$. 
Let 
$$\rho\colon \pi_1(Z^\circ) \to \mathbb{G}_a$$
be the representation of $\pi_1(Z^\circ)$ induced by analytic continuation of the multivalued holomorphic function
$h$ along loops with base point $z_0$. 
Now, let
$$\textup{qa}_{Z^\circ} \colon Z^\circ \to \textup{G}\big(Z^\circ\big):= H^0\big(Z,\Omega^{1}_{Z}(\textup{log }D)\big)^*/H_1(Z^\circ,\mathbb{Z})$$ be the quasi-Albanese morphism, that is the universal morphism to a semi-abelian variety (see \cite{iitaka}). 
Let also $F^\circ$ be a general fiber of $\textup{qa}_{Z^\circ}$. The kernel of the natural morphism 
$\pi_1\big(Z^\circ\big) \to \pi_1\big(\textup{G}(Z^\circ)\big)$ is generated by $\bigl\lbrack\pi_1\big(Z^\circ\big),\pi_1\big(Z^\circ\big)\bigr\rbrack$ together with finitely many torsion elements. It follows that 
the representation $\pi_1(F^\circ) \to \mathbb{G}_a$ given by the restriction of $\rho$ to $\pi_1(F^\circ)$ is trivial. This in turn implies that the foliation induced by $\sG$ on $F^\circ$ has algebraic leaves by
\cite[Th\'eor\`eme III. 2.1]{cerveau_mattei}.

Let $\sE_Z\subseteq \beta^{-1}\sG$ be the foliation on $Z$ such that ${\sE_Z}_{|Z^\circ}$ is the kernel of 
the morphism 
$${\beta^{-1}\sG}_{|Z^\circ} \to (\textup{qa}_{Z^\circ})^*T_{\textup{G}(Z^\circ)}\cong H^0\big(Z,\Omega^1_{Z}(\textup{log }D)\big)^*\otimes \sO_{Z^\circ},$$ and let $\sE$ be the induced foliation on $X$. 
We have shown that $\sE$ is algebraically integrable.
On the other hand, any irreducible component of $D$ is invariant under $\beta^{-1}\sG$
by \cite[Proposition III. 1.1]{cerveau_mattei}. This implies that the contraction map 
$$\beta^{-1}\sG \to H^0\big(Z,\Omega^{1}_{Z}(\textup{log }D)\big)^*\otimes\sO_Z$$
is well-defined. Therefore, we must have exact sequences
$$0 \to \sE_Z \to \beta^{-1}\sG \to H^0\big(Z,\Omega^{1}_{Z}(\textup{log }D)\big)^*\otimes\sO_Z$$
and
$$0 \to \sE \to \sG \to H^0\big(Z,\Omega^{1}_{Z}(\textup{log }D)\big)^*\otimes\sO_X.$$
This finishes the proof of the lemma.
\end{proof}

\begin{proof}[Proof of Theorem \ref{thm:closed_rational_1_form}]
We maintain notation and assumptions of Theorem \ref{thm:closed_rational_1_form}.
By Proposition \ref{prop:splitting_algebraic_transcendental}, there exist normal projective varieties $Y$ and $Z$, a foliation $\sH$ on $Z$ such that there is no positive-dimensional algebraic subvariety  tangent to $\sH$
passing through a general point of $Z$, and a quasi-\'etale cover $f \colon Y \times Z \to X$ such that $f^{-1}\sG$ is the pull-back of $\sH$ via the projection $Y \times Z \to Z$. Moreover, $\sH$ has canonical singularities and $K_\sH\equiv 0$.

If $\dim Z=1$, then we are in case (1) of Theorem \ref{thm:closed_rational_1_form}.

Suppose from now on that $\dim Z \ge 2$. Let $F \cong Z $ be a general fiber of the projection $Y \times Z \to Y$. Then $(f^{-1}\sG)_{|F} \cap T_F \cong \sH$, and hence the restriction of $df (\omega)$ to $F$ is a closed rational $1$-form defining $\sH$. 
Applying Lemma \ref{lemma:structure_closed_rational} above to $\sH$, we see that
there is a finite dimensional complex abelian Lie algebra $V$ and an injective morphism of sheaves of Lie algebras 
$\sH \to V\otimes\sO_Z$.
This immediately implies that $\sG\cong \sO_Z^{\, \dim Z -1}$ since $\sH$ is reflexive and $K_\sH \equiv 0$. Moreover, $\sH$ is a sheaf of abelian Lie algebras. Let $\textup{Aut}^\circ(Z)$ denote the neutral component of the automorphism group $\textup{Aut}(Z)$ of $Z$, and let $H \subset \textup{Aut}^\circ(Z)$ be the 
connected commutative Lie subgroup with Lie algebra $H^0(Z,\sH) \subset H^0(Z,T_Z)$. Finally, let $G \subseteq \textup{Aut}^\circ(Z)$ be its Zariski closure. Note that $\sH$ is induced by $H$ by construction, and that $\dim G \ge \dim Z$ since $\sH$ is not algebraically integrable. Moreover, $G$ is a commutative algebraic group.
Arguing as in the proof of \cite[Lemma 9.3]{lpt}, one shows that $\dim G = \dim Z$ and that 
$Z$ is an equivariant compactification of $G$. Thus, we are in case (2) of Theorem \ref{thm:closed_rational_1_form}.
\end{proof}

In the setting of Theorem \ref{thm:closed_rational_1_form_2}, the twisted rational $1$-form $\omega$ is not 
determined by $\sG$ (see Example \ref{example:suspension_2}). The following result addresses this issue.

\begin{lemma}\label{lemma:unicity}
Let $X$ be a normal complex projective variety, and let $\alpha$ and $\beta$ be closed rational $1$-forms with values in flat line bundles $\sL$ and $\sM$. Suppose that the divisors of zeroes and poles of $\alpha$ and $\beta$ coincide. Suppose in addition that $\alpha\wedge\beta=0$. Then either $\sL\cong\sM$ as flat line bundles and
$\alpha=c\beta$ for some $c \in \mathbb{C}^*$, or
there is a non-zero $\gamma \in H^0(X,\Omega_X^{[1]})$ such that $\alpha \wedge \gamma=0$.  
\end{lemma}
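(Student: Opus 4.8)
The statement concerns two closed rational $1$-forms $\alpha$ and $\beta$ with values in flat line bundles $\sL$ and $\sM$, sharing the same divisor of zeroes and poles, and satisfying $\alpha\wedge\beta=0$. The hypothesis $\alpha\wedge\beta=0$ says that, as untwisted rational $1$-forms on a suitable open set, $\alpha$ and $\beta$ are proportional at every point, so the rational function $\phi:=\beta/\alpha$ is well-defined (it is the ratio of the two sections, a priori a rational section of $\sM\otimes\sL^{-1}$). The plan is to show that $\phi$ is either constant---which forces $\sL\cong\sM$ and $\alpha=c\beta$---or that $d\phi$ provides the desired reflexive global $1$-form $\gamma$ with $\alpha\wedge\gamma=0$.

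\textbf{Key steps.} First I would pass to a resolution of singularities $\beta_Z\colon Z\to X$ (abusing notation against the form name, I will call it $\pi$) and work with the pulled-back forms, since $H^0(X,\Omega_X^{[1]})$ is computed by reflexive pull-back and a nonzero reflexive form on $X$ is the same as a nonzero form on $Z$ descending appropriately; by the klt/canonical hypotheses and the birational invariance of global reflexive $1$-forms on such spaces, it suffices to produce a nonzero closed holomorphic $1$-form upstairs that is proportional to $\alpha$. Next, on the open locus $U$ where $\alpha$ is a nowhere-vanishing local generator, write $\beta=\phi\,\alpha$ with $\phi$ a rational function. Since the divisors of zeroes and poles of $\alpha$ and $\beta$ coincide by hypothesis, $\phi$ has neither zeroes nor poles in codimension one, hence $\phi$ extends to a nowhere-vanishing regular function on the complement of a codimension-two set; because $X$ is normal and projective, such a $\phi$ is constant \emph{unless} $d\phi\not\equiv0$. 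I would then compute $d\phi$: from $\beta=\phi\,\alpha$ and the closedness $d\beta=0$, $d\alpha=0$ together with the fact that $\alpha,\beta$ are twisted by \emph{flat} connections, one gets $0=d\beta=d\phi\wedge\alpha+\phi\,d\alpha$, and using $d_\nabla\alpha=0$, $d_\nabla\beta=0$ one obtains $d\phi\wedge\alpha=(\text{flat connection terms})$. The precise identity is that $\gamma:=d\phi/\phi$ (the logarithmic derivative, using that $\phi$ is invertible) is a \emph{closed holomorphic} $1$-form with $\gamma\wedge\alpha=0$, because $\alpha$ and $\beta$ being proportional via an invertible $\phi$ forces $d\log\phi$ to lie in the line spanned by $\alpha$.

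\textbf{The dichotomy and the main obstacle.} Having produced $\gamma=d\log\phi$, I distinguish two cases. If $\gamma=0$, then $\phi$ is a nonzero constant $c$, so $\beta=c\,\alpha$; comparing the flat structures, the transition cocycles of $\sL$ and $\sM$ must agree (since $\alpha$ and $c\alpha$ are sections of $\sL$ but $\beta$ is a section of $\sM$, equality of the $1$-forms forces $\sL\cong\sM$ as flat line bundles), giving the first alternative. If $\gamma\neq0$, then $\gamma$ is a nonzero closed holomorphic $1$-form on $Z$ with $\gamma\wedge\alpha=0$; it descends to a nonzero element of $H^0(X,\Omega_X^{[1]})$ by the extension theorem for reflexive forms on klt spaces (Kebekus's pull-back, as recalled in Subsection \ref{subsection:pull-back_morphims}), yielding the second alternative. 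The main obstacle I anticipate is the bookkeeping around the \emph{flat twists}: one must verify that the logarithmic-derivative construction genuinely produces an \emph{untwisted} closed holomorphic $1$-form $\gamma$ (the flat factors in $\sM\otimes\sL^{-1}$ must cancel, which is exactly where the hypothesis that both line bundles are flat and the divisors coincide is used), and that $\gamma$ extends reflexively across the codimension-two indeterminacy locus of $\phi$. Controlling the codimension-two behaviour of $\phi$ and confirming that $\gamma$ has at worst the correct logarithmic poles---which must in fact be absent because $\phi$ is invertible in codimension one---is the technical heart; I expect to invoke normality of $X$ and the codimension-two purity of zero/pole divisors to conclude $\phi$ is a nowhere-vanishing regular function off a small set, whence $\gamma=d\log\phi$ is genuinely holomorphic.
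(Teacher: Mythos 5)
Your proposal is correct and is essentially the paper's own argument: locally write $\alpha=\phi\,\beta$ using $\alpha\wedge\beta=0$, observe that the coincidence of the zero/pole divisors together with normality makes $\phi$ an invertible holomorphic function, use flatness of $\sM\otimes\sL^{-1}$ (constant transition factors) so that the local forms $d\log\phi$ glue to a global holomorphic $1$-form $\gamma$ with $\gamma\wedge\alpha=0$ (closedness of $\alpha$ and $\beta$ gives the wedge identity), and conclude via the dichotomy $\gamma=0$ (whence $\sL\cong\sM$ as flat bundles and $\alpha=c\beta$) versus $\gamma\neq 0$. The only blemish is your appeal to \emph{klt/canonical hypotheses} and Kebekus's pull-back theorem: the lemma assumes only that $X$ is normal, so no such hypotheses are available; fortunately that step is also unnecessary, since the direction you actually need (a holomorphic $1$-form on $X_{\textup{reg}}$, or on the locus where a resolution is an isomorphism, extends to a section of $\Omega_X^{[1]}$) holds on any normal variety by reflexivity of $\Omega_X^{[1]}$ --- the paper skips the resolution entirely, works on $X_{\textup{reg}}$ with an analytic cover, and invokes GAGA to obtain algebraicity of $\gamma$.
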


\begin{proof}Let $D$ be the divisor of zeroes and poles of $\alpha$, and set $X^\circ:=X_{\textup{reg}}$.
There exist a covering $(U_i)_{i\in I}$ of $X^\circ$ by analytically open sets  
and closed meromorphic $1$-forms $\alpha_i$ (resp. $\beta_i$) on $U_i$ with divisor of zeroes and poles 
$D_{|U_i}$ satisfying $\alpha_i=a_{ij}\alpha_j$ (resp. $\beta_{i}=b_{ij}\beta_{j}$) over $U_i\cap U_j$ with
$a_{ij}\in\mathbb{C}^*$ (resp. $b_{ij}\in\mathbb{C}^*$) and such that $(\alpha_i)_{i\in I}$ (resp. $(\beta_i)_{i\in I}$) represents $\alpha_{|X^\circ}$ (resp. $\beta_{|X^\circ}$).

Since $\alpha_i\wedge \beta_i=0$ by assumption, there exists a meromorphic function $f_i$ on $U_i$ such that 
$\alpha_i = f_i \beta_i$. Note that we must have $df_i\wedge \beta_i=0$ since $d\alpha_i=d\beta_i=0$.
Now, observe that $f_i$ is a nowhere vanishing holomorphic function since the 
the divisors of zeroes and poles of $\alpha_i$ and $\beta_i$ coincide by assumption. On the other hand, we obviously have
$f_i=\frac{a_{ij}}{b_{ij}}f_j$ over $U_i\cap U_j$. If the functions $f_i$ are constant, then $\sL\cong \sM$ as flat line bundles, and we can suppose without loss of generality that $a_{ij}=b_{ij}$ for all indices $i$ and $j$.
Then $\alpha=c\beta$ with $c=f_i=f_j$.
Suppose that some $f_i$ is not a constant function.
Since $\frac{df_i}{f_i}=\frac{df_j}{f_j}$, there exists a non-zero holomorphic $1$-form $\gamma^\circ$ on $X^\circ$ that restricts to $\frac{df_i}{f_i}$ over $U_i$.
By construction we have $\alpha_{|X^\circ} \wedge \gamma^\circ=0$. The claim now follows easily from the GAGA theorem. 
\end{proof}

To prove Theorem \ref{thm:closed_rational_1_form_2} in the case where $X$ is uniruled, we
will reduce to foliations defined by closed rational $1$-form using Proposition \ref{prop:flatness_torsion_2} below. We first consider the special case where $X$ has terminal singularities.

\begin{prop}\label{prop:flatness_torsion}
Let $X$ be a normal complex projective variety with terminal singularities, and let $\sG\subset T_X$ be a codimension one foliation with canonical singularities. Suppose that $\sG$ is given by a closed rational $1$-form 
$\omega$ with values in a flat line bundle $\sL$ whose zero set has codimension at least two. 
Suppose furthermore that $K_X$ is not pseudo-effective, and that $K_\sG\equiv 0$. 
Then we can choose $\omega$ with $\sL$ a torsion flat line bundle.
\end{prop}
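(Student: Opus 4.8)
The plan is to run the same reduction as in the proof of Theorem~\ref{thm:grothendieck_katz}, stopping just before the final appeal to Bost's theorem, and then to read off a torsion defining form from Example~\ref{example:suspension_2}. First I would take a $\mathbb{Q}$-factorialization $\beta\colon Z\to X$ (Fact~\ref{fact:existence_factorialization}); since $\beta$ is small, $\beta^{-1}\sG$ is again canonical with $K_{\beta^{-1}\sG}\equiv 0$ by Lemma~\ref{lemma:singularities_birational_morphism}, it is defined by the closed twisted rational $1$-form $\beta^{[*]}\omega$ with values in the flat line bundle $\beta^*\sL$ (whose zero set still has codimension at least two), and $K_Z$ is not pseudo-effective because $\beta_*K_Z\sim_\mathbb{Z}K_X$; moreover $\sL$ is torsion if and only if $\beta^*\sL$ is. Thus I may assume $X$ is $\mathbb{Q}$-factorial terminal. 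Running a $K_X$-minimal model program (possible since $K_X$ is not pseudo-effective) I may then replace $X$ by the resulting Mori fiber space $\psi\colon X\to Y$, the foliation $\sG$ staying canonical with $K_\sG\equiv 0$ along the way (\cite[Lemma 3.2.5]{kmm} and Lemma~\ref{lemma:singularities_birational_morphism}). Exactly as in Steps~2 and~3 of the proof of Theorem~\ref{thm:grothendieck_katz} --- using \cite[Corollary 7.8]{lpt} together with the Baum--Bott and Camacho--Sad index formulas --- I obtain $\dim X-\dim Y=1$ and an open set $Y^\circ\subseteq Y_{\mathrm{reg}}$ with complement of codimension at least two over which $\psi$ is a $\mathbb{P}^1$-bundle carrying $\sG$ as a flat Ehresmann connection, with $K_Y$ torsion.

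Next I would reduce the base to an abelian variety, again following Step~4 of Theorem~\ref{thm:grothendieck_katz}: since $K_Y$ is torsion, Theorem~\ref{thm:kawamata_abelian_factor} together with \cite[Theorem I]{GGK} provide, after a quasi-\'etale cover, a $\mathbb{P}^1$-bundle $X_P\to A$ over an abelian variety $A$ equipped with a flat connection $\sG_P$ that agrees with the pullback of $\sG$ on a dense open set with complement of codimension two. The associated monodromy representation $\rho\colon\pi_1(A)\to\mathrm{PGL}(2,\mathbb{C})$ has abelian Zariski closure $H$, because $\pi_1(A)$ is abelian. If $H$ is finite then $\sG_P$ is algebraically integrable, hence defined by a rational first integral and so by a closed rational $1$-form with trivial twist; if $\dim H>0$ then $H$ is conjugate to $p(T)$ or to $p(U)$, and Example~\ref{example:suspension_2} exhibits in each case a closed rational $1$-form on $X_P$ with values in $\sO_{X_P}$ defining $\sG_P$ (the logarithmic form $\tfrac{dz}{z}$, respectively $\tfrac{dz}{z^2}$). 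In all cases $\sG_P$ is defined by a closed rational $1$-form with trivial, hence torsion, flat twist.

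Finally I would descend this form back to $X$. Over the quasi-\'etale cover the foliation is defined by an honest closed rational $1$-form $\hat\omega$; taking the cover to be Galois with group $G$, each translate $g^*\hat\omega$ defines the same foliation and has the same polar divisor, so by Lemma~\ref{lemma:unicity} either $\sG$ is already defined by a global reflexive holomorphic $1$-form (trivial twist, and there is nothing to prove) or $g\mapsto g^*\hat\omega/\hat\omega$ is a character $G\to\mathbb{C}^*$; since $G$ is finite, $\hat\omega$ then descends to a closed twisted rational $1$-form on the base with values in the torsion flat line bundle attached to that character. Transporting through the birational minimal model program steps (isomorphisms in codimension one) and the $\mathbb{Q}$-factorialization, and using that a flat line bundle is torsion precisely when its pullback under a quasi-\'etale cover is, I conclude that $\sG$ on the original $X$ is defined by a closed rational $1$-form whose flat twist is torsion. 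The main obstacle is the bookkeeping in this last step: keeping track of the closed-form structure and of the flat twist simultaneously through the quasi-\'etale covers, the Mori fiber space, and the birational transport, and checking at each stage that the twist remains flat and that ``torsion after a finite cover'' forces torsion on the base. The reduction to a $\mathbb{P}^1$-bundle with flat connection over an abelian variety is, by contrast, a direct transcription of the already-established argument of Theorem~\ref{thm:grothendieck_katz}.
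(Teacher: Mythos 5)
Your reduction to a Mori fiber space (the analogue of the paper's Steps 2--3) is fine, but the claim that the passage to a $\mathbb{P}^1$-bundle with flat connection over an abelian variety is ``a direct transcription of the already-established argument of Theorem \ref{thm:grothendieck_katz}'' is where the proposal breaks down. Steps 2--3 of that proof rest on \cite[Corollary 7.8]{lpt}, which gives the local normal form $pz_2dz_1+qz_1dz_2$ at singular points only for foliations that are \emph{closed under $p$-th powers for almost all primes $p$}; that hypothesis is part of Theorem \ref{thm:grothendieck_katz} (and is explicitly re-verified there for the restricted foliation before Corollary 7.8 is invoked), but it is not available in Proposition \ref{prop:flatness_torsion}. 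Indeed, by the dichotomy of Proposition \ref{prop:p_closed_or_not}, the twisted-closed-form hypothesis you are working under is precisely the complementary branch, so the Baum--Bott and Camacho--Sad arguments cannot be imported. The paper's own proof replaces them by arguments exploiting $\omega$ itself: if some pole of $\omega$ dominating the base $Y$ has non-zero residue, one argues as in \cite[Section 8.2.1]{lpt} that $\sL$ is torsion (a case your proposal never sees); if all such residues vanish, finiteness of $\pi_1$ of the Fano fiber together with Lemma \ref{lemma:no_residue} forces the fibers to be $\mathbb{P}^1$ and pins down the polar divisor ($r=1$, $m_1=2$, $D_1\cdot F=1$), and the flat-connection statement is proved by a local computation with the closed form together with \cite[Observation I.2.6]{mcquillan08}, not by index formulas.

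There are two further gaps. First, you skip the paper's Step 1, the reduction via Proposition \ref{prop:splitting_algebraic_transcendental} to the case where no positive-dimensional algebraic subvariety is tangent to $\sG$ through a general point; without it you cannot rule out the finite-monodromy case, nor conclude that the GGK reduction yields a $\mathbb{P}^1$-bundle over $A$ with no extra factor ($\dim Z=0$ in the paper's Step 7), nor even guarantee that the fibers of the Mori fiber space are generically transverse to $\sG$. Second, your descent is under-justified: Lemma \ref{lemma:unicity} requires the divisors of zeroes and poles of $\hat\omega$ and $g^*\hat\omega$ to \emph{coincide}, which is not automatic since the Galois group may permute the $\sG$-invariant polar components (compare Example \ref{example:ruled_surface}); and in the finite-monodromy case a rational first integral produces a closed $1$-form that may have codimension-one zeroes, so it does not by itself give defining data of the required type. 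Note finally that even the infinite-monodromy endgame is more delicate than reading off Example \ref{example:suspension_2}: in the $p(T)$ case the paper needs the Albanese morphism of $Y$ and \cite[Theorem 13]{kawamata_av} to identify $X$ with the suspension $P$, and in the $p(U)$ case it uses Lemma \ref{lemma:unicity} to show that the original $\sL$ itself is torsion, via the finite-index image of $\pi_1(P^\circ)$ in $\pi_1(X^\circ)$.
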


\begin{proof} For the reader's convenience, the proof is subdivided into a number of steps.

\medskip
\noindent\textit{Step 1.} 
By Proposition \ref{prop:splitting_algebraic_transcendental}, there exist normal projective varieties $Y$ and $Z$, a foliation $\sH$ on $Y$ such that there is no positive-dimensional algebraic subvariety  tangent to $\sH$
passing through a general point of $Y$, and a quasi-\'etale cover $f \colon Y \times Z \to X$ such that $f^{-1}\sG$ is the pull-back of $\sH$ via the projection $Y \times Z \to Y$. Moreover, $\sH$ has canonical singularities and $K_\sH\equiv 0$.
Let $F \cong Y $ be a general fiber of the projection $Y \times Z \to Z$. Then $(f^{-1}\sG)_{|F} \cap T_F \cong \sH$, and hence the restriction of $df (\omega)$ to $F$ is a closed rational $1$-form with values in $\sL_{|F}$
defining $\sH$ whose zero set has codimension at least two. Moreover, its pull-back to $Y \times Z$ is a closed rational $1$-form with values in the pull-back of $\sL_{|F}$ whose zero set has codimension at least two defining $\sG$.
Finally, $K_Y$ is obviously not pseudo-effective, and one readily checks $Y$ has terminal singularities using Fact \ref{fact:quasi_etale_cover_and_singularities}.
Therefore, replacing $\sG$ by $\sH$, if necessary, we may assume that there is no positive-dimensional algebraic subvariety  tangent to $\sG$ passing through a general point of $X$.

\medskip
\noindent\textit{Step 2.} Let $\beta \colon Z \to X$ be a $\mathbb{Q}$-factorialization of $X$. By Lemma 
\ref{lemma:singularities_birational_morphism}, $\beta^{-1}\sG$ is canonical with $K_{\beta^{-1}\sG}\sim_\mathbb{Q}\beta^*K_\sG$. Moreover, $\beta^{-1}\sG$ is obviously given by a closed rational $1$-form
with values in the flat line bundle 
$\beta^*\sL$ whose zero set has codimension at least two.
Finally, $K_Z$ is not pseudo-effective since $\beta_*K_Z\sim_\mathbb{Z} K_X$ and $K_X$ is not pseudo-effective by assumption.
Thus, replacing $X$ by $Z$, if necessary, we may assume without loss of generality that $X$ has $\mathbb{Q}$-factorial terminal singularities.

\medskip
\noindent\textit{Step 3.} Since $K_X$ is not pseudo-effective by assumption, we may run a minimal model program for $X$ and end with a Mori fiber space (see \cite[Corollary 1.3.3]{bchm}). Therefore, there exists a sequence of maps

\begin{center}
\begin{tikzcd}[row sep=large, column sep=large]
X:=X_0 \ar[r, "{\phi_0}", dashrightarrow] & X_1 \ar[r, "{\phi_1}", dashrightarrow] & \cdots \ar[r, "{\phi_{i-1}}", dashrightarrow] & X_i \ar[r, "{\phi_i}", dashrightarrow] & X_{i+1} \ar[r, "{\phi_{i+1}}", dashrightarrow] & \cdots \ar[r, "{\phi_{m-1}}", dashrightarrow] & X_m \ar[d, "{\psi_m}"]\\
&&&&&& Y
\end{tikzcd}
\end{center}

\noindent where the $\phi_i$ are either divisorial contractions or flips, and $\psi_m$ is a Mori fiber space. The spaces $X_i$ are normal, $\mathbb{Q}$-factorial, and $X_i$ has terminal singularities for all $0\le i \le m$. 
Let $\sG_i$ be the foliation on $X_i$ induced by $\sG$. By \cite[Lemma 3.2.5]{kmm}, we have $K_{\sG_i}\equiv 0$. Moreover, by Lemma \ref{lemma:singularities_birational_morphism}, $\sG_i$ has canonical singularities. 
Finally, one readily checks that $\sG_m$ is given by a closed rational $1$-form $\omega_m$
with values in a flat line bundle 
$\sL_m$ whose zero set has codimension at least two, using \cite[Lemma 3.2.5]{kmm} again.
Thus, replacing $X$ by $X_m$, if necessary, we may assume without loss of generality that 
there is a Mori fiber space $\psi \colon X \to Y$.

\medskip
\noindent\textit{Step 4.} By assumption, there exist prime divisors $(D_i)_{1\le i \le r}$ on $X$ and positive integers $(m_i)_{1\le i \le r}$ such that $\sN_\sG \cong \sO_X\big(\sum_{1\le i \le r} m_i D_i\big)\otimes\sL$.
Let $I \subseteq \{1,\ldots,r\}$ be the set of indices $i\in \{1,\ldots,r\}$ such that $\psi(D_i)=Y$. Note that $I\neq \emptyset$ since 
$\sum_{1\le i \le r}m_i D_i \equiv -K_X$ is relatively ample.

Suppose that there exists $i \in I$ such that the residue of $\omega$ at a general point of $D_i$ is non-zero. Then arguing as in \cite[Section 8.2.1]{lpt}, one shows that 
$\sL$ is torsion.

\medskip

Suppose from now on that the residue of $\omega$ at a general point of $D_i$ is zero for
any $i\in I$.

\medskip
\noindent\textit{Step 5.} Let $F$ be a general fiber of $\psi$. Note that $F$ has terminal singularities, and that $K_F \sim_\mathbb{Z} {K_X}_{|F}$ by the adjunction formula. Moreover, $F$ is a Fano variety by construction. 
Let $\sH$ be the foliation on $F$ induced by $\sG$. 
Note that $\sH$ has codimension one by Step 1.
By Proposition \ref{prop:bertini}, we have $K_\sH\sim_\mathbb{Z} {K_\sG}_{|F} - B$ for some effective Weil divisor $B$ on $F$. Suppose that $B\neq 0$.
Applying \cite[Theorem 4.7]{campana_paun15} to the pull-back of $\sH$ on a resolution of $F$, we see that $\sH$ is uniruled. This implies that $\sG$ is uniruled as well since $F$ is general. But this contradicts Proposition \ref{proposition:canonical_versus_uniruled}, and shows that $B=0$. By Proposition \ref{proposition:canonical_versus_uniruled} applied to $\sH$, we see that $\sH$ is canonical.
Since $\pi_1(F)$ is finite (see \cite{hacon_mckernan}), there is an \'etale cover $f \colon F_1 \to F$ such that 
$f^{-1}\sH$ is given by a closed rational $1$-form (possibly with codimension one zeroes) with zero residues at general points of its codimension one poles. 
Lemma \ref{lemma:no_residue} then implies that $F\cong\mathbb{P}^1$.
Now, we have $\sum_{1\le i \le r}m_iD_i\cdot F=2$. Suppose that $r=2$ and $D_1\cdot F=D_2\cdot F=1$ or that $r=1$ and $D_1\cdot F=2$. Let $y\in Y_{\textup{reg}}$ be a general point, and let $U\subseteq Y_{\textup{reg}}$ be an analytically open neighborhood of $y$ such that $\psi^{-1}(U)\cong U \times \mathbb{P}^1$. We may assume that there exists a coordinate $z$ on $\mathbb{C} \subset \mathbb{P}^1$ such that the poles of $\omega_{|\psi^{-1}(U)}$ are given by equations $z=0$ and $z=\infty$.
Then $$\omega_{|U \times \mathbb{C}}=a\frac{dz}{z}+\frac{1}{z}(\alpha+z\beta+z^2\gamma)$$ 
where $a$ is a holomophic function on $U$, and $\alpha$, $\beta$ and $\gamma$ are holomorphic $1$-forms on $U$. Observe that $a(y)\neq 0$ since $\sG$ is generically transverse to $F$. This implies that
$\omega$ has non-zero residue along $D_i$ for any $i\in I$, yielding a contradiction. 
Therefore, we must have $r=1$, $m_1=2$, and $D_1\cdot F=1$.

\medskip
\noindent\textit{Step 6.} Recall that $X$ is smooth in codimension two since it has terminal singularities.
Since $\dim X - \dim Y = 1$, there exists an open subset $Y^\circ \subseteq Y_{\textup{reg}}$ 
with complement of codimension at least two such that 
$X^\circ:=\psi^{-1}(Y^\circ) \subseteq X_{\textup{reg}}$. From \cite[Theorem 4.1]{andreatta_wisniewski_view}, we conclude that
$\psi^\circ:=\psi_{|X^\circ}$ is a conic bundle. It follows that $\psi^\circ$ is smooth since $\psi$ is an elementary Mori contraction and $D_1\cdot F=1$.

Next, we show that $\sG_{|X^\circ}$ yields a flat Ehresmann connection on $\psi^\circ$.
Let $C\cong \mathbb{P}^1$ be any fiber of $\psi^\circ$. We have $c_1(\sN_\sG)\cdot C =2$ and either $C$ is tangent to $\sG$, or $C$ is transverse to $\sG$ along $C$. Thus, we have to show that $C$ is not tangent to $\sG$.

Set $y:=\psi(C) \in Y^\circ$. By assumption, there is an analytically open neighborhood $U$ of $y$ in $Y^\circ$ such that $\sG$ is defined over $\psi^{-1}(U)$ by a closed rational $1$-form $\omega_U$ with poles along $D_1\cap \psi^{-1}(U)$. Shrinking $U$, if necessary, we may assume that $\psi^{-1}(U)\cong U \times \mathbb{P}^1$ and that there exists a coordinate $z$ on $\mathbb{C} \subset \mathbb{P}^1$ such that the pole of $\omega_U$ is given by equation $z=\infty$. Then 
$${\omega_U}_{|U \times \mathbb{C}}=adz+\alpha+z\beta+z^2\gamma$$ 
where $a$ is a holomophic function on $U$, and $\alpha$, $\beta$ and $\gamma$ are holomorphic $1$-forms on $U$. Since 
$d\omega_U=0$ by assumption, we must have $d\alpha=0$, $\beta=da$, and $\gamma=0$. Shrinking $U$ further, we may assume that 
$\alpha = db$ for some holomorphic function $b$ on $U$, so that 
${\omega_U}_{|U \times \mathbb{A}^1}=d(az+b)$. Set $u:=\frac{1}{z}$. Then $\sG$ is given by $d(\frac{a}{u}+b)$ in a neighborhood of $z=\infty$. If $a(y)= 0$, then one readily checks that $\sG$ is not canonical in a neighborhood of
$C \cap \{u=0\}$ (see \cite[Observation I.2.6]{mcquillan08}), yielding a contradiction. This shows that 
$a(y) \neq 0$, and hence $C$ is not tangent to $\sG$. This proves that $\sG_{|X^\circ}$ defines a flat Ehresmann connection on $\psi^\circ$.

\medskip
\noindent\textit{Step 7.}
By \cite[Corollary 4.5]{fujino99} and Lemma \cite[Lemma 5.1.5]{kmm}, $Y$ has $\mathbb{Q}$-factorial
klt singularities. Since $\textup{codim}\, Y \setminus Y^\circ \ge 2$ and $K_\sG\equiv 0$, we must have $K_Y\equiv 0$. 
Applying \cite[Corollary V 4.9]{nakayama04}, we conclude that $K_Y$ is torsion. 
Let $Y_1 \to Y$ be the index one canonical cover, which is quasi-\'etale (\cite[Definition 2.52]{kollar_mori}).
By construction, $K_{Y_1}\sim_\mathbb{Z}0$. In particular, $Y_1$ has canonical singularities.
By Theorem \ref{thm:kawamata_abelian_factor} applied to $Y_1$, we see that there exists an abelian variety as well as
a projective variety $Z$ with $K_{Z}\sim_\mathbb{Z}0$ and 
$\wt q(Z) = 0$, and a quasi-\'etale cover 
$f\colon  A \times Z\to Y$. 
Recall that $f$ branches only on the singular set of $Y$, so that $f^{-1}(Y^\circ)$ is smooth. On the other hand, 
since $f^{-1}(Y^\circ)$ has complement of codimension at least two in $A\times Z_\textup{reg}$, we have 
$\pi_1\big(A\times Z_\textup{reg}\big) \cong \pi_1\big(f^{-1}(Y^\circ)\big)$.
Now, consider the representation
$$\rho\colon \pi_1\big(A\times Z_\textup{reg}\big) \cong \pi_1\big(f^{-1}(Y^\circ)\big) \to 
\pi_1\big(Y^\circ\big) \to \textup{PGL}(2,\mathbb{C})$$
induced by $\sG_{|X^\circ}$. By \cite[Theorem I]{GGK}, the induced representation 
$$ \pi_1\big(Z_\textup{reg}\big) \to \pi_1\big(A\big)\times\pi_1\big(Z_\textup{reg}\big)\cong
\pi_1\big(A\times Z_\textup{reg}\big) \to \textup{PGL}(2,\mathbb{C})$$
has finite image.
Thus, replacing $Z$ with a quasi-\'etale cover, if necessary, we may assume without loss of generality that 
$\rho$ factors through the projection $\pi_1\big(A\times Z_\textup{reg}\big) \to \pi_1(A)$. 
Let $P$ be the corresponding $\mathbb{P}^1$-bundle over $A$. The natural projection $\pi\colon P \to A$ comes with a flat Ehresmann connection
$\sG_P \subset T_P$. By the GAGA theorem, $P$ is a projective variety. By assumption, its pull-back to 
$A\times Z_\textup{reg}$ agrees with $f^{-1}(Y^\circ)\times_{Y^\circ} X^\circ$ over $f^{-1}(Y^\circ)$. Moreover, the pull-backs 
on $A\times Z_\textup{reg}$ of the foliations $\sG$ and $\sG_P$ agree as well, wherever this makes sense. 
Since there is no positive-dimensional algebraic subvariety tangent to $\sG$ passing through a general point of $X$ by assumption, we must have $\dim Z=0$.

Set $A^\circ:=f^{-1}(Y^\circ)$, $P^\circ:=\pi^{-1}(A^\circ)$, and $\sG_{P^\circ}:={\sG_P}_{|P^\circ}$. Let $g^\circ\colon P^\circ \to X^\circ$ denote the natural morphism, which is an \'etale cover. Set $D_P^\circ:=(g^\circ)^{-1}(D_1 \cap X^\circ)$. 
Then $\sG_{P^\circ}$ is given by the closed rational $1$-form $\omega_{P^\circ}:= dg^\circ(\omega_{|X^\circ})$ with values in the flat line bundle $\sL_{P^\circ}:=(g^\circ)^*(\sL_{|X^\circ})$. Moreover, the zero set of $\omega_{P^\circ}$ has codimension at least two. Since $P$ is smooth and $P^\circ$ has codimension at least two in $P^\circ$, $\sL_{P^\circ}$ is the restriction to $P^\circ$ of a flat line bundle $\sL_P$ on $P$, and $\omega_P^\circ$ extends to a closed rational $1$-form
$\omega_P$ with values in $\sL_P$ whose zero set has codimension at least two. Note that the divisor of zeroes and poles of $\omega_P$ is $-2D_P$, where $D_P$ denotes the Zariski closure of $D_P^\circ$. By \cite[Proposition III. 1.1]{cerveau_mattei},
$D_P$ is a leaf of $\sG_P$. It follows that $D_P$ is a section of $\pi$.

Let $H \subseteq \textup{PGL}(2,\mathbb{C})$ be the Zariski closure of the image of $\rho$.
We use the notation introduced in Examples \ref{example:suspension} and \ref{example:suspension_2}. Recall from Step 1 that there is no positive-dimensional algebraic subvariety  tangent to $\sG_P$ passing through a general point of $P$. Therefore, $H$ is conjugate to $p(T)$ or 
$p(U)$. Observe that $\sG_P$ is not given by a (closed) holomorphic form since it is transverse to $\pi$ by Step 6.

If $H=p(U)$, then $\sL_P\cong \sO_P$ as flat line bundles by Lemma \ref{lemma:unicity} together with Example \ref{example:suspension_2}. This in turn implies that $\sL$ is torsion since the image of $\pi_1(P^\circ)$ in $\pi_1(X^\circ)$ has finite index.

Suppose from now on that $H=p(T)$. Let 
$$a_Y\colon Y \to \textup{A}(Y)$$
be the Albanese morphism. Since $X$ is a projective variety with rational singularities, 
$\Pic^\circ(Y)$ is an abelian variety, and $\textup{A}(Y)\cong\big(\Pic^\circ(Y)\big)^\vee$. 
Moreover, the Albanese morphism is induced by the universal line bundle (see \cite[Lemma 8.1]{kawamata85}).
In particular, there is a flat line bundle $\sL_{\textup{A}(Y)}$ on $\textup{A}(Y)$  such that 
$\sL\cong \psi^*\big({a_Y}^*\sL_{\textup{A}(Y)}\big)$.
Set $P_{\textup{A}(Y)}:=\mathbb{P}_{\textup{A}(Y)}\big(\sO_{\textup{A}(Y)}\oplus \sL_{\textup{A}(Y)}\big)$.
Lemma \ref{lemma:unicity} together with Example \ref{example:suspension_2} then imply that $P\cong A \times_{\textup{A}(Y)} P_{\textup{A}(Y)}$
and that $\sG_P$ is the pull-back of a foliation on $P_{\textup{A}(Y)}$.
We conclude that $a_Y\colon Y \to \textup{A}(Y)$ is generically finite since there no positive-dimensional algebraic subvariety  tangent to $\sG_P$ passing through a general point of $P$.
Recall from \cite[Corollary V 4.9]{nakayama04} that $\kappa(Y)=0$. Applying \cite[Theorem 13]{kawamata_av} to the Stein factorization of $a_Y\colon Y \to \textup{A}(Y)$, we see that $a_Y$ is an isomorphism. In particular, we can choose $A=Y$ above. Then the restriction of the tangent map $T\psi \colon T_X \to \psi^*T_A$ to $\sG$ gives an isomorphism
$\sG \cong \psi^*T_A$, so that $\sG$ induces a flat connection on $\psi$.
A classical result of complex analysis then says that $\phi$ is a locally trivial analytic fibration for the analytic topology (see \cite{kaup}). This shows that $X \cong P$ and that $\sG$ corresponds to $\sG_P$.
In particular, $\sG$ is defined by a closed logarithmic $1$-form, completing the proof of the proposition.
\end{proof}

\begin{lemma}\label{lemma:no_residue}
Let $X$ be a normal complex projective variety with klt singularities. Let $\omega$ be a closed rational $1$-form, 
and let $\sG$ be the foliation on $X$ defined by $\omega$.
Suppose that $\sG$ has canonical singularities and $K_\sG\equiv 0$. 
Suppose in addition that the residues of $\omega$ at general points of its codimension one poles are zero.
If $X$ is Fano, then $X\cong \mathbb{P}^1$.
\end{lemma}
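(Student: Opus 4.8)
The plan is to invoke the structure theorem for foliations defined by closed rational $1$-forms, Theorem \ref{thm:closed_rational_1_form}, and then to use the Fano hypothesis together with the vanishing of residues to eliminate everything but the desired conclusion. Since $X$ is klt, $\sG$ is canonical with $K_\sG\equiv 0$, and $\sG$ is defined by the closed rational $1$-form $\omega$, Theorem \ref{thm:closed_rational_1_form} applies and produces one of two cases. Throughout I use that $X$ Fano means $-K_X$ is an ample $\mathbb{Q}$-Cartier divisor, so that its pull-back under any finite surjective morphism is again ample, and that $X$ (and any Fano cover of it) is rationally connected (see \cite{hacon_mckernan}).

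In case (1) there is a quasi-\'etale cover $f\colon Y\times C\to X$ with $C$ a smooth complete curve, $K_Y\sim_{\mathbb{Z}}0$, and $f^{-1}\sG$ induced by the projection $Y\times C\to C$. By Fact \ref{fact:quasi_etale_cover_and_singularities} we have $K_{Y\times C}\sim_{\mathbb{Q}}f^*K_X$, so $-K_{Y\times C}$ is ample. Since $K_Y\sim_{\mathbb{Z}}0$ and $C$ is a curve, $K_{Y\times C}\sim_{\mathbb{Z}}p_C^*K_C$, where $p_C$ is the projection to $C$. If $\dim Y\ge 1$, a curve contained in a fibre $Y\times\{c\}$ would meet the ample class $-p_C^*K_C$ in degree zero, which is absurd; hence $\dim Y=0$ and $\dim X=1$. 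As $X$ is then a normal (hence smooth) projective curve with $-K_X$ ample, $X\cong\mathbb{P}^1$, as desired.

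It remains to exclude case (2), in which there is an equivariant compactification $Z$ of a commutative algebraic group $G$ with $\dim G\ge 2$, a foliation $\sH\cong\sO_Z^{\,\dim Z-1}$ induced by a codimension one Lie subgroup $H\subseteq G$, a variety $Y$ with $K_Y\sim_{\mathbb{Z}}0$, and a quasi-\'etale cover $f\colon Y\times Z\to X$ with $f^{-1}\sG$ the pull-back of $\sH$ under $Y\times Z\to Z$; moreover no positive-dimensional algebraic subvariety is tangent to $\sH$ through a general point of $Z$. Exactly as in case (1), ampleness of $-K_{Y\times Z}\sim -p_Z^*K_Z$ forces $\dim Y=0$, so $f\colon Z\to X$ is quasi-\'etale and $-K_Z$ is ample, i.e. $Z$ is Fano. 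Being Fano, $Z$ is rationally connected; since an abelian variety contains no rational curves, the projection of $G$ onto its abelian quotient is trivial, and $G\cong(\mathbb{G}_m)^a\times(\mathbb{G}_a)^b$ with $a+b=\dim Z\ge 2$. Let $\eta$ be the translation-invariant $1$-form on $G$ with $\ker\eta=\textup{Lie}(H)$ defining $\sH$; it is closed and, in multiplicative coordinates $t_1,\dots,t_a$ on $(\mathbb{G}_m)^a$ and additive coordinates $s_1,\dots,s_b$ on $(\mathbb{G}_a)^b$, reads $\eta=\sum_i\lambda_i\frac{dt_i}{t_i}+\sum_j\mu_j\,ds_j$. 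Since $f$ is \'etale in codimension one, $f^*\omega$ is a closed rational $1$-form defining $\sH$ with vanishing residues along its codimension one poles; writing $f^*\omega=g\,\eta$ for a rational function $g$, closedness gives $dg\wedge\eta=0$, so $g$ is a first integral of $\sH$ and hence constant by the no-algebraic-leaves property. Thus $f^*\omega$ and $\eta$ are proportional, and $\eta$ too has vanishing residues.

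The heart of the matter is to show $\lambda_1=\cdots=\lambda_a=0$. For a prime divisor $B$ of the boundary $Z\setminus G$, each character $t_i$ is a rational function on the projective variety $Z$, and a local computation gives $\textup{Res}_B(\eta)=\sum_i\lambda_i\,\textup{ord}_B(t_i)$, because $\textup{Res}_B\big(\frac{dt_i}{t_i}\big)=\textup{ord}_B(t_i)$ while $\textup{Res}_B(ds_j)=0$ for the exact forms $ds_j$. The vanishing of these residues means that $\lambda=(\lambda_i)$ pairs to zero with every vector $\big(\textup{ord}_B(t_i)\big)_i$. To deduce $\lambda=0$, I would restrict $\eta$ to the closure of a general $(\mathbb{G}_m)^a$-orbit, a complete toric variety whose boundary rays positively span the cocharacter space; the residues of $\sum_i\lambda_i\frac{dt_i}{t_i}$ along its toric divisors are the pairings of $\lambda$ with the primitive ray generators, and for a complete fan these cannot all vanish unless $\lambda=0$. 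This orbit-closure residue computation, carried out on a possibly singular and non-toric $Z$, is the main technical obstacle. Granting $\lambda=0$, the form $\eta=\sum_j\mu_j\,ds_j=d\phi$ is the differential of the algebraic homomorphism $\phi=\sum_j\mu_j s_j\colon G\to\mathbb{G}_a$, so the leaves of $\sH$ are the cosets of the algebraic subgroup $\ker\phi$, of dimension $\dim Z-1\ge 1$. These are positive-dimensional algebraic subvarieties tangent to $\sH$ through every point of the dense open set $G$, contradicting the no-algebraic-leaves property. Hence case (2) cannot occur, and we conclude $X\cong\mathbb{P}^1$.
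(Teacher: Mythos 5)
Your handling of case (1) and the reduction inside case (2) are sound: $\dim Y=0$ follows from ampleness of $-K_{Y\times Z}$, the cover $Z$ is Fano hence rationally connected, so $G\cong(\mathbb{G}_m)^a\times(\mathbb{G}_a)^b$, and the invariant form $\eta$ is indeed a constant multiple of $f^*\omega$, hence has vanishing residues. The final step (exactness of $\eta$ once $\lambda=0$, producing algebraic leaves) is also fine. But the heart of your exclusion of case (2) --- the claim $\lambda_1=\cdots=\lambda_a=0$ --- is precisely the step you yourself flag as ``the main technical obstacle,'' and it is a genuine gap, not a routine verification. The orbit-closure strategy does not work as stated, because a general $(\mathbb{G}_m)^a$-orbit closure $W$ need not meet the polar divisors of $\eta$ at \emph{general} points, so the hypothesis gives no direct control on the residues of $\eta_{|W}$ along the boundary of $W$. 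Concretely, let $Z=\mathbb{P}^2$ be the equivariant compactification of $G=\mathbb{G}_m\times\mathbb{G}_a$ acting on the chart $z=1$ by $(t,s)\cdot(x,y)=(tx,y+s)$: the $\mathbb{G}_m$-orbit closures are the lines $\{y=cz\}$, and \emph{every one of them} meets the boundary line $\{z=0\}$ at the same torus-fixed point $[1:0:0]$. To deduce that $\eta_{|W}$ has zero residue at such a special point from the vanishing of the residues of $\eta$ along the divisors of $Z$, you need $\eta$ to be locally exact as a meromorphic form near that point; this is automatic near a general point of a single smooth polar component, but at points where $Z$ is singular or several polar components meet it requires controlling the local fundamental group of the complement of the polar locus, which your sketch does not address (and $W$ itself may be a non-normal compactification of $(\mathbb{G}_m)^a$, so even the toric residue formula needs normalization). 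Since excluding case (2) rests entirely on this unproved claim, the proof is incomplete.

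For comparison, the paper's proof avoids any residue computation on subvarieties. It first rules out $b\ge 2$: two $\mathbb{G}_a$-factors would force $\dim\big(\textup{Lie}\,H\cap\textup{Lie}\,(\mathbb{G}_a)^b\big)\ge 1$, so $\sG$ would be uniruled, contradicting Proposition \ref{proposition:canonical_versus_uniruled}. Then \cite[Theorem 2]{arzhantsev_kotenkova} shows $Z$ is toric, so $\pi_1(Z_{\textup{reg}})$ is finite by \cite[Proposition 1.9]{oda}. Closedness and the vanishing of all residues then yield, by \cite[Th\'eor\`eme III. 2.1]{cerveau_mattei}, a single-valued meromorphic primitive of the form on $Z_{\textup{reg}}$ (the period representation lands in $(\mathbb{C},+)$, which has no nontrivial finite subgroups), and this primitive extends to a rational function by the Levi extension theorem. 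Hence the form is exact and the foliation is algebraically integrable, contradicting the absence of algebraic leaves unless $\dim Z=1$. If you want to salvage your approach, this global argument via finiteness of $\pi_1$ of the smooth locus is the natural replacement for your orbit-closure computation.
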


\begin{proof}
By Theorem \ref{thm:closed_rational_1_form}, we may assume that there exists a normal projective equivariant compactification $Z$ of a commutative linear algebraic group $G$ as well as a quasi-\'etale cover $f \colon Z \to X$ such that 
$f^{-1}\sG$ is induced by a codimension one Lie subgroup $H$ of $G$. Moreover,
there is no positive-dimensional algebraic subvariety tangent to $\sG$ passing through a general point of $X$. 

Recall that $G \cong (\mathbb{G}_m)^{r}\times (\mathbb{G}_a)^{s}$ for some non-negative integers $r$ and $s$. If $s \ge 2$, then $\dim \textup{Lie}\, H \cap \textup{Lie}\, (\mathbb{G}_a)^{s} \ge 1$, and hence $\sG$ is uniruled. But this contradicts Proposition \ref{proposition:canonical_versus_uniruled}, and shows that $s\le 1$. By \cite[Theorem 2]{arzhantsev_kotenkova}, $X$ is a toric variety. It follows that $X_\textup{reg}$ has finite fundamental group by \cite[Proposition 1.9]{oda}.

Since $\omega$ is closed and since the residues of $\omega$ at general points of its codimension one poles are zero, we conclude that there is a meromorphic function $f$ on $X_\textup{reg}$ such that $\omega=df$ using \cite[Th\'eor\`eme III. 2.1]{cerveau_mattei}. By the Levi extension theorem, $f$ extends to a meromorphic function on $X$, and thus, $f$ is a rational function. This in turn implies that $\sG$ is algebraically integrable. It follows that $\dim X =1$, since there is no positive-dimensional algebraic subvariety tangent to $\sG$ passing through a general point of $X$. This completes the proof of the lemma.
\end{proof}

\begin{prop}\label{prop:flatness_torsion_2}
Let $X$ be a normal complex projective variety with canonical singularities, and let $\sG$ be a codimension one foliation on 
$X$.
Suppose that $\sG$ is canonical with $K_\sG$ Cartier and $K_\sG\equiv 0$ and that $K_X$ is not pseudo-effective.
Suppose in addition that $\sG$ is given by a closed rational $1$-form 
$\omega$ with values in a flat line bundle $\sL$ whose zero set has codimension at least two. 
Then we can choose $\omega$ with $\sL$ a torsion flat line bundle.
\end{prop}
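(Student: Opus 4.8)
The plan is to reduce to the terminal case already settled in Proposition \ref{prop:flatness_torsion} by passing to a $\mathbb{Q}$-factorial terminalization. Let $\beta\colon Z \to X$ be a $\mathbb{Q}$-factorial terminalization of $X$ (Fact \ref{fact:existence_terminalization}) and set $\sG_Z := \beta^{-1}\sG$. Since $\sG$ is canonical and $K_\sG$ is Cartier, Proposition \ref{prop:terminalization_canonical_singularities} gives that $\sG_Z$ is canonical with $K_{\sG_Z}\sim_\mathbb{Z}\beta^* K_\sG$, so in particular $K_{\sG_Z}\equiv 0$. Moreover $K_Z\sim_\mathbb{Q}\beta^*K_X$ is not pseudo-effective, because $\beta_* K_Z \sim_\mathbb{Z} K_X$ and $K_X$ is not pseudo-effective by hypothesis. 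Thus $Z$ and $\sG_Z$ satisfy all the numerical and singularity hypotheses of Proposition \ref{prop:flatness_torsion}, and it remains only to check that $\sG_Z$ is still defined by a closed rational $1$-form with values in a flat line bundle whose zero set has codimension at least two.

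For this I would take $\beta^*\omega$, which is a closed rational $1$-form on $Z$ with values in the flat line bundle $\beta^*\sL$ and which defines $\sG_Z$ away from $\Exc\,\beta$. The only point to verify is that $\beta^*\omega$ has no divisorial zeroes. Writing $\det\sN_\sG\cong\sL\otimes\sO_X\big(\sum_i m_i D_i\big)$ with the $D_i$ the polar divisors of $\omega$ and $m_i>0$, the relations $K_{\sG_Z}\sim_\mathbb{Z}\beta^*K_\sG$ and $K_Z\sim_\mathbb{Q}\beta^*K_X$ yield $c_1(\sN_{\sG_Z})\sim_\mathbb{Q}\beta^*c_1(\sN_\sG)$. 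On the other hand, the divisorial part of $\textup{div}(\beta^*\omega)$ exhibits $\det\sN_{\sG_Z}$ as $\beta^*\sL$ twisted by the polar part $\sum_i m_i\beta^*D_i$ minus the effective $\beta$-exceptional divisor $Z_Z$ of divisorial zeroes of $\beta^*\omega$. Comparing the two expressions shows that $Z_Z$ is $\mathbb{Q}$-linearly trivial, hence $Z_Z=0$ by the negativity lemma. Therefore the zero set of $\beta^*\omega$ has codimension at least two, and Proposition \ref{prop:flatness_torsion} applies to $\sG_Z$: there is a closed rational $1$-form $\omega'$ defining $\sG_Z$, with values in a torsion flat line bundle $\sL'$ and with zero set of codimension at least two.

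Finally I would descend $\omega'$ and $\sL'$ from $Z$ to $X$. Since $\beta$ is crepant and $Z$ is terminal, no $\beta$-exceptional divisor is needed over $X_{\textup{reg}}$ and $\beta$ is an isomorphism over $X_{\textup{reg}}$; hence $\beta(\Exc\,\beta)$ lies in the singular locus of $X$, and the open set $X^\circ := X\setminus \beta(\Exc\,\beta)\supseteq X_{\textup{reg}}$ has complement of codimension at least two, with $\beta$ inducing an isomorphism $\beta^{-1}(X^\circ)\xrightarrow{\ \sim\ }X^\circ$. Transporting $\omega'$ and $\sL'$ through this isomorphism produces on $X^\circ$ a closed rational $1$-form with values in a torsion flat line bundle, zero set of codimension at least two, defining $\sG_{|X^\circ}$; as $X$ is normal and $X\setminus X^\circ$ has codimension at least two, the form extends to a closed rational $1$-form on $X$, and its zero set remains of codimension at least two. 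The main obstacle, and the only genuinely delicate point, is to check that the torsion flat line bundle extends across $X\setminus X^\circ$ as a torsion flat line bundle on $X$: here I would argue that the order-$N$ character of $\pi_1(X_{\textup{reg}})$ it determines corresponds to a cyclic cover of $X_{\textup{reg}}$ that is \'etale, which extends by the Nagata--Zariski purity theorem (cf.\ Remark \ref{rem:quasi_etale_smooth_locus}) to a cyclic quasi-\'etale cover of $X$; the associated torsion reflexive rank one sheaf is then the desired torsion flat line bundle, and $\omega$ with this new choice has all the required properties.
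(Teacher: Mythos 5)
Your reduction to the terminal case is sound, and it is actually more economical than the paper's own route: the paper does not pull back $\omega$, but instead first reduces (via Proposition \ref{prop:splitting_algebraic_transcendental}) to the case where no positive-dimensional algebraic subvariety is tangent to $\sG$ through a general point, and then re-produces a defining form on the terminalization through the $p$-closedness dichotomy (Proposition \ref{prop:p_closed_or_not}, Lemma \ref{lemma:stability_versus_uniruled}, and a contradiction with Corollary \ref{cor:grothendieck_katz}). Your direct route works: writing $\textup{div}(\beta^*\omega)=N-P$ (zeroes minus poles), one has $\det\sN_{\beta^{-1}\sG}\cong\beta^*\sL\otimes\sO_Z(P-N)$, and comparing with $\det\sN_{\beta^{-1}\sG}\sim_\mathbb{Q}\beta^*\det\sN_\sG$ shows that $P-N-\beta^*\big(\sum_i m_iD_i\big)$ is a $\beta$-exceptional $\mathbb{Q}$-linearly trivial divisor, hence zero by the negativity lemma, which forces $N=0$. (This single comparison is the clean formulation; as written you assert beforehand that the polar part of $\beta^*\omega$ equals $\sum_i m_i\beta^*D_i$, which is itself part of what must be proved.) Your claim that $\beta$ is an isomorphism over $X_{\textup{reg}}$ is also correct.

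The genuine gap is the final descent step. In this paper a flat line bundle on $X$ means a line bundle induced by a representation of $\pi_1(X)$, and this flat structure is exactly what makes the word ``closed'' meaningful for an $\sL$-valued form. Your construction produces strictly less: the cyclic \'etale cover of $X_{\textup{reg}}$ determined by the order-$N$ character of $\pi_1(X_{\textup{reg}})$ extends, by normalizing $X$ in its function field, only to a \emph{quasi}-\'etale cover of $X$, which in general is branched over the singular locus; correspondingly the character need not factor through $\pi_1(X)$ (the surjection $\pi_1(X_{\textup{reg}})\to\pi_1(X)$ can have large kernel, as quotient singularities already show), and the associated reflexive rank-one sheaf need not be locally free, let alone carry locally constant transition functions. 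So nothing in your argument produces a torsion \emph{flat line bundle} on $X$, which is what the statement asserts. The missing ingredient---and the one the paper invokes at precisely this point (see also Remark \ref{rem:p_closed_or_not})---is Takayama's theorem \cite{takayama_fundamental_group}: since $X$ and $Z$ are klt, the fundamental group of a common resolution maps isomorphically onto both $\pi_1(Z)$ and $\pi_1(X)$, hence $\pi_1(Z)\cong\pi_1(X)$; therefore the torsion character of $\pi_1(Z)$ defining $\sL'$ is pulled back from a torsion character of $\pi_1(X)$, i.e.\ $\sL'\cong\beta^*\sL_X$ for some torsion flat line bundle $\sL_X$ on $X$. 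With this in hand, the rest of your final paragraph goes through verbatim: $\omega'$ descends over $X^\circ$ and extends to a closed rational $1$-form on $X$ with values in $\sL_X$ whose zero set has codimension at least two.
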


\begin{proof}By Proposition \ref{prop:splitting_algebraic_transcendental}, there exist normal projective varieties $Y$ and $Z$, a foliation $\sH$ on $Y$ such that there is no positive-dimensional algebraic subvariety  tangent to $\sH$
passing through a general point of $Y$, and a quasi-\'etale cover $f \colon Y \times Z \to X$ such that $f^{-1}\sG$ is the pull-back of $\sH$ via the projection $Y \times Z \to Y$. Moreover, $\sH$ has canonical singularities and $K_\sH\equiv 0$.
Let $F \cong Y $ be a general fiber of the projection $Y \times Z \to Z$. Then $(f^{-1}\sG)_{|F} \cap T_F \cong \sH$, and hence 
$K_\sH$ is Cartier and the restriction of $df (\omega)$ to $F$ is a closed rational $1$-form with values in $\sL_{|F}$
defining $\sH$ whose zero set has codimension at least two. Its pull-back to $Y \times Z$ is a closed rational $1$-form with values in the pull-back of $\sL_{|F}$  defining $\sG$ whose zero set has codimension at least two.
Finally, $K_Y$ is obviously not pseudo-effective, and one readily checks $Y$ has canonical singularities using Fact \ref{fact:quasi_etale_cover_and_singularities}.
Therefore, replacing $\sG$ by $\sH$, if necessary, we may assume that there is no positive-dimensional algebraic subvariety  tangent to $\sG$ passing through a general point of $X$.

Let $\beta \colon Z \to X$ be a $\mathbb{Q}$-factorial terminalization of $X$. By Proposition \ref{prop:terminalization_canonical_singularities}, $\beta^{-1}\sG$ is canonical with $K_{\beta^{-1}\sG}\sim_\mathbb{Z}\beta^*K_\sG$. Suppose that $\sG$ is closed under $p$-th powers for almost all primes $p$. Then $\sG$ is algebraically integrable by Corollary \ref{cor:grothendieck_katz}, yielding a contradiction. Thus, 
by Proposition \ref{prop:p_closed_or_not} and Lemma \ref{lemma:stability_versus_uniruled}, $\beta^{-1}\sG$ is given by a closed rational $1$-form $\omega_Z$ with values in a flat line bundle $\sL_Z$ whose zero set has codimension at least two.
Finally, $K_Z$ is not pseudo-effective since $\beta_*K_Z\sim_\mathbb{Z} K_X$ and $K_X$ is not pseudo-effective by assumption.
By Proposition \ref{prop:flatness_torsion}, we may therefore assume without loss of generality that $\sL_Z$ is torsion.
Applying \cite{takayama_fundamental_group}, we see that there exists a (torsion) flat line bundle $\sL_X$ on $X$ such that 
$\sL_Z\cong \beta^*\sL_X$. Then $\omega_Z$ induces a closed rational $1$-form $\omega_X$ on $X$ with values in $\sL_X$ whose zero set has codimension at least two. Moreover, $\omega_X$ defines $\sG$ by construction, completing the proof of the proposition. 
\end{proof}

Before proving Theorem \ref{thm:closed_rational_1_form_2} below, we address foliations defined by closed rational $1$-forms with values in flat line bundles on projective varieties with pseudo-effective canonical class.

\begin{prop}\label{prop:closed__rational_1_form_K_pseff}
Let $X$ be a normal complex projective variety with klt singularities, and let $\sG$ be a codimension one foliation on $X$ with 
canonical singularities and $K_\sG\equiv 0$. Suppose that $K_X$ is pseudo-effective, and that $\sG$ is given by a closed rational $1$-form $\omega$ with values in a flat line bundle $\sL$ whose zero set has codimension at least two. 
Then there exist an abelian variety $A$ and a quasi-\'etale cover $f \colon A \to X$ such that $f^{-1}\sG$ is a codimension one linear foliation on $A$. 
\end{prop}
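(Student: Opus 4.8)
The plan is to reduce the statement, by a short numerical argument, to the situation already resolved by Lemma \ref{lemma:K_torsion}; the only genuinely new input is to exploit the pseudo-effectivity of $K_X$ to force $K_X\equiv 0$.

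First I would make the numerical class of the normal sheaf explicit. From the defining sequence $0\to\sG\to T_X\to\sN_\sG\to 0$ one gets $c_1(\sN_\sG)\sim_\mathbb{Z}K_\sG-K_X$, so the hypothesis $K_\sG\equiv 0$ yields $c_1(\sN_\sG)\equiv -K_X$. Since $\sG$ is defined by the closed rational $1$-form $\omega$ with values in the flat line bundle $\sL$ and with zero set of codimension at least two, the standard description of the conormal sheaf (as in Step 4 of the proof of Proposition \ref{prop:flatness_torsion}) provides prime divisors $D_1,\dots,D_r$, namely the codimension one poles of $\omega$, and positive integers $m_1,\dots,m_r$ with $\sN_\sG\cong\sO_X\big(\sum_i m_iD_i\big)\otimes\sL$. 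As $\sL$ is flat we have $c_1(\sL)\equiv 0$, and hence $D:=\sum_i m_iD_i\equiv c_1(\sN_\sG)\equiv -K_X$.

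Next I would use the salience of the pseudo-effective cone. The divisor $D$ is effective, hence pseudo-effective, while $-D\equiv K_X$ is pseudo-effective by assumption; since the pseudo-effective cone of $X$ contains no line, this forces $D\equiv 0$, and therefore $K_X\equiv 0$ and $c_1(\sN_\sG)\equiv 0$. By \cite[Corollary V 4.9]{nakayama04}, $K_X$ is then torsion.

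Finally, with $K_X\equiv 0$ established (and $K_\sG$ $\mathbb{Q}$-Cartier with $K_\sG\equiv 0$ by hypothesis, $\sG$ being canonical), the conclusion is exactly the output of Lemma \ref{lemma:K_torsion}: after a quasi-\'etale cover, $X$ splits as $A\times Z$ with $A$ an abelian variety, $K_Z\sim_\mathbb{Z}0$ and $\wt q(Z)=0$, and $f^{-1}\sG$ is the pull-back, under the projection $A\times Z\to A$, of a codimension one linear foliation on $A$; applying Proposition \ref{prop:splitting_algebraic_transcendental} beforehand to remove the algebraic part, so that no positive-dimensional algebraic subvariety is tangent to $\sG$ through a general point, forces the complementary factor $Z$ to be a point, since $T_{A\times Z/A}\subseteq f^{-1}\sG$ and the transcendence property is preserved under quasi-\'etale covers. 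I expect the main (and essentially only) obstacle to be the first step: confirming that the polar divisor of $\omega$ represents the class $-K_X$ and that, being effective with pseudo-effective negative, it must be numerically trivial; once $K_X\equiv 0$ is in hand the structural conclusion follows immediately from Lemma \ref{lemma:K_torsion} together with the transcendence reduction.
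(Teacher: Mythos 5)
Your core argument is exactly the paper's proof: the paper likewise writes $\sN_\sG\cong \sO_X(D)\otimes\sL$ with $D$ the effective polar divisor of $\omega$, notes $c_1(\sN_\sG)\equiv -K_X$ from $K_\sG\equiv 0$, concludes $K_X\equiv 0$ from pseudo-effectivity of both $K_X$ and $D\equiv -K_X$, and then cites Lemma \ref{lemma:K_torsion}. (Your detour through \cite[Corollary V 4.9]{nakayama04} is harmless but unnecessary, since Lemma \ref{lemma:K_torsion} only requires $K_X\equiv 0$.)

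Your final step, however, has a genuine gap --- one you share with the paper itself. Lemma \ref{lemma:K_torsion} produces a quasi-\'etale cover $A\times Z\to X$, not $A\to X$, and your proposed upgrade does not close this gap: applying Proposition \ref{prop:splitting_algebraic_transcendental} first replaces $X$ by the factor $Y$ of a quasi-\'etale cover $Y\times Z''\to X$, and proving the statement for the foliation on $Y$ only yields, back on the original $X$, a quasi-\'etale cover of the form $A\times Z''\to X$ --- i.e.\ the same product-type conclusion. The ``without loss of generality'' is circular: it assumes away precisely the case where the literal conclusion fails. And it does fail in general: take $X=A_0\times Z_0$ with $A_0$ an abelian surface, $Z_0$ a K3 surface, and $\sG$ the pull-back of a linear codimension one foliation on $A_0$; then $\sG$ is regular with $K_\sG\sim_\mathbb{Z}0$, $K_X\sim_\mathbb{Z}0$, and $\sG$ is defined by a closed holomorphic $1$-form without zeros, so all hypotheses hold, yet $\wt q(X)=\dim A_0<\dim X$, so $X$ admits no quasi-\'etale cover by an abelian variety. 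In other words, the statement as printed is too strong; what your argument (and the paper's one-line citation of Lemma \ref{lemma:K_torsion}) actually proves is the $A\times Z$ form of the conclusion. That weaker form is also all that the paper's later application needs: in the proof of Theorem \ref{thm_intro:abundance} one only uses $f^*K_\sG\sim_\mathbb{Z}K_{f^{-1}\sG}\sim_\mathbb{Z}0$ to conclude that $K_\sG$ is torsion, and this holds for the cover $A\times Z\to X$ because $K_Z\sim_\mathbb{Z}0$. Your observation that $T_{A\times Z/A}\subseteq f^{-1}\sG$ forces $\dim Z=0$ in the transcendental case is correct, but it proves the literal statement only under that extra hypothesis, not in the generality claimed.
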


\begin{proof}
There exists an effective divisor $D$ on $X$ such that $\sN_\sG\cong \sO_X(D)\otimes \sL$. On the other hand, 
$c_1(\sN_\sG)\equiv -K_X$ since $K_\sG\equiv 0$, and hence $-K_X \equiv D$. It follows that $K_X \equiv 0$ since $K_X$ is pseudo-effective by assumption. Proposition \ref{prop:closed__rational_1_form_K_pseff} then follows from 
Lemma \ref{lemma:K_torsion}. 
\end{proof}

\begin{proof}[Proof of Theorem \ref{thm:closed_rational_1_form_2}]
We maintain notation and assumptions of Theorem \ref{thm:closed_rational_1_form_2}. By Proposition \ref{prop:splitting_algebraic_transcendental}, we may assume without loss of generality that 
there is no positive-dimensional algebraic subvariety tangent to $\sG$ passing through a general point of
$X$ (see Step 1 of proof of Proposition \ref{prop:flatness_torsion}).

If $K_X$ is pseudo-effective, then the statement follows easily from Proposition \ref{prop:closed__rational_1_form_K_pseff}.

Suppose that $K_X$ is not pseudo-effective. By Proposition \ref{prop:flatness_torsion_2}, we may assume that $\sL$ is torsion.
Therefore, there exists a quasi-\'etale cover $f \colon X_1 \to X$ such that $f^*\sL\cong \sO_{X_1}$ as flat line bundles.
It follows that $f^{-1}\sG$ is given by a closed rational $1$-form. Note that $X_1$ canonical by
Fact \ref{fact:quasi_etale_cover_and_singularities}.
By Lemma \ref{lemma:canonical_quasi_etale_cover}, $f^{-1}\sG$ is canonical, and we obviously have $K_{f^{-1}\sG}\sim_\mathbb{Z}f^*K_\sG \equiv 0$. 
Theorem \ref{thm:closed_rational_1_form_2} then follows from Theorem \ref{thm:closed_rational_1_form}.
\end{proof}

Finally, we prove abundance in the setting of Proposition \ref{prop:flatness_torsion}.

\begin{prop}\label{prop:closed_form_abundance}
Let $X$ be a normal complex projective variety with terminal singularities, and let $\sG\subset T_X$ be a codimension one foliation with canonical singularities. Suppose that $\sG$ is given by a closed rational $1$-form 
$\omega$ with values in a flat line bundle $\sL$ whose zero set has codimension at least two. 
Suppose furthermore that $K_X$ is not pseudo-effective, and that $K_\sG\equiv 0$. 
Then $K_\sG$ is torsion. 
\end{prop}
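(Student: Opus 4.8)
The plan is to reduce, by passing to suitable quasi-\'etale covers, to a situation where the structure theorem for foliations defined by closed rational $1$-forms (Theorem \ref{thm:closed_rational_1_form}) applies and exhibits the foliated canonical class as trivial, and then to descend torsionness back to $X$. Note first that since $\sG$ has canonical singularities, $K_\sG$ is $\mathbb{Q}$-Cartier.

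The first step is the reduction to torsion coefficients, which is exactly the content of Proposition \ref{prop:flatness_torsion}: under our hypotheses (terminal $X$, canonical $\sG$ defined by a closed twisted $1$-form with small zero set, $K_X$ not pseudo-effective, $K_\sG\equiv 0$) we may choose $\omega$ so that the flat line bundle $\sL$ is torsion. Hence there is a quasi-\'etale cover $f\colon X_1\to X$ with $f^*\sL\cong\sO_{X_1}$ as flat line bundles, so that $f^{-1}\sG$ is defined by a genuine closed rational $1$-form. By Fact \ref{fact:quasi_etale_cover_and_singularities} the variety $X_1$ is again terminal, in particular klt; by Lemma \ref{lemma:canonical_quasi_etale_cover} (the branch locus of $f$ has codimension at least two, so the invariance hypothesis is vacuous) the foliation $f^{-1}\sG$ is canonical; and by Lemma \ref{lemma:pull_back_fol_and_finite_cover} we have $K_{f^{-1}\sG}\sim_\mathbb{Z}f^*K_\sG\equiv 0$.

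Next I would apply Theorem \ref{thm:closed_rational_1_form} to $(X_1,f^{-1}\sG)$. This produces a further quasi-\'etale cover $g\colon W\to X_1$, where either $W=Y\times C$ with $K_Y\sim_\mathbb{Z}0$ and $g^{-1}f^{-1}\sG$ induced by the projection $\psi\colon Y\times C\to C$ (case (1)), or $W=Y\times Z$ with $K_Y\sim_\mathbb{Z}0$ and $g^{-1}f^{-1}\sG$ the pull-back of $\sH\cong\sO_Z^{\,\dim Z-1}$ under the projection $q\colon Y\times Z\to Z$ (case (2)). In case (1), Example \ref{example:canonical_class_foliation} gives $K_{g^{-1}f^{-1}\sG}\sim_\mathbb{Z}K_{(Y\times C)/C}-R(\psi)$; here $R(\psi)=0$ since $\psi$ is a product projection with reduced fibers, and $K_{(Y\times C)/C}\sim_\mathbb{Z}p_Y^*K_Y\sim_\mathbb{Z}0$, so $K_{g^{-1}f^{-1}\sG}\sim_\mathbb{Z}0$. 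In case (2), $\sH\cong\sO_Z^{\,\dim Z-1}$ forces $K_\sH\sim_\mathbb{Z}0$, and Lemma \ref{lemma:properties:regular} applied to $q$ gives $K_{g^{-1}f^{-1}\sG}\sim_\mathbb{Z}q^*K_\sH+K_{(Y\times Z)/Z}\sim_\mathbb{Z}0$, again using $K_{(Y\times Z)/Z}\sim_\mathbb{Z}p_Y^*K_Y\sim_\mathbb{Z}0$. In either case $K_{g^{-1}f^{-1}\sG}\sim_\mathbb{Z}0$.

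Finally I would descend. The composite $h:=f\circ g\colon W\to X$ is again a quasi-\'etale cover, and $h^{-1}\sG=g^{-1}f^{-1}\sG$, so Lemma \ref{lemma:pull_back_fol_and_finite_cover} yields $h^*K_\sG\sim_\mathbb{Z}K_{h^{-1}\sG}\sim_\mathbb{Z}0$. Choosing $m$ with $mK_\sG$ Cartier, the line bundle $\sO_X(mK_\sG)$ becomes trivial after pulling back along the finite surjective morphism $h$, whence $\sO_X\big((\deg h)\,mK_\sG\big)\cong\sO_X$ by the norm construction; thus $K_\sG$ is torsion. The only genuine input beyond Theorem \ref{thm:closed_rational_1_form} is the reduction to torsion $\sL$, which is Proposition \ref{prop:flatness_torsion}; the remaining steps are a routine computation of foliated canonical classes in products together with descent of torsionness along finite covers, and I expect no real obstacle there.
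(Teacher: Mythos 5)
Your proposal is correct and follows the paper's own argument: reduce to torsion $\sL$ via Proposition \ref{prop:flatness_torsion}, pass to a quasi-\'etale cover where the foliation is defined by a genuine closed rational $1$-form (using Fact \ref{fact:quasi_etale_cover_and_singularities} and Lemma \ref{lemma:canonical_quasi_etale_cover}), and conclude via Theorem \ref{thm:closed_rational_1_form}. The paper leaves the last step as "follows easily"; your computation of $K_{g^{-1}f^{-1}\sG}\sim_\mathbb{Z}0$ in both cases of that theorem and the descent of torsionness by the norm argument is exactly the intended filling-in and is correct.
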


\begin{proof}
By Proposition \ref{prop:flatness_torsion}, we may assume that $\sL$ is torsion.
Therefore, there exists a quasi-\'etale cover $f \colon X_1 \to X$ such that $f^*\sL\cong \sO_{X_1}$ as flat line bundles. It follows that $f^{-1}\sG$ is given by a closed rational $1$-form. Note that $X_1$ terminal by
Fact \ref{fact:quasi_etale_cover_and_singularities}.
By Lemma \ref{lemma:canonical_quasi_etale_cover}, $f^{-1}\sG$ is canonical, and  we obviously have $K_{f^{-1}\sG}\sim_\mathbb{Z}f^*K_\sG \equiv 0$. 
Proposition \ref{prop:closed_form_abundance} then follows easily from Theorem \ref{thm:closed_rational_1_form}.
\end{proof}

\section{Proofs of Theorems \ref{thm_intro:main} and \ref{thm_intro:abundance} and proof of Corollary \ref{cor_intro}}\label{section:proofs}

The present section is devoted to the proof of Theorems \ref{thm_intro:main} and \ref{thm_intro:abundance} and to the proof of Corollary \ref{cor_intro}.
Lemma \ref{lemma:not_p_closed} and Proposition \ref{prop:p_closed_or_not} below extend \cite[Theorem 7.5]{lpt} to the singular setting. 

\begin{lemma}\label{lemma:not_p_closed}
Let $X$ be a normal projective variety over some algebraically closed field $k$ of positive 
characteristic $p$ with $\dim X \ge 2$, and let $\sG \subset T_X$ be a codimension one foliation on $X$. 
Let $\omega$ be a rational $1$-form $\omega$ defining $\sG$, and let $B$ denotes the divisor whose support is the union of
codimension one zeroes and poles of $\omega$. If $\sG$ is not closed under $p$-th powers, then the following holds.
\begin{enumerate}
\item There exist a reduced effective Weil divisor $D$ on $X$ that does not contain any irreducible component of $B$ is its support, and $\alpha \in H^0\big(X,\Omega_X^{[1]}(\textup{log}\,(B+D))\big)$ with $d\alpha = 0$ such that $d\omega=\alpha\wedge\omega$. If $C$ is any irreducible component of $\textup{Supp}(B+D)$, then the residue of $\alpha$ at a general point of $C$ is a constant function with values in the prime field
$\mathbb{F}_p\subset k$.
\item Suppose that $X$ is smooth, and let $A$ be a nef divisor such that $T_X(A)$ is generated by global sections. 
Suppose furthermore that $\sG$ is semistable with respect to a nef and big divisor $H$ on $X$ and that $\mu_H(\sG) \ge 0$. Then we have
$$D \cdot H^{\dim X -1} \le c_1(\sN_\sG)\cdot H^{\dim X -1} + (\dim X-2) A \cdot H^{\dim X -1}:=M.$$
Let $C$ be any irreducible component of $\textup{Supp}(B+D)$ and let $m\in \mathbb{Z}$ be the vanishing order of $\omega$ along $C$. Then we have $\textup{res}_C\,\alpha \in \{m,\ldots,m+M\}\subseteq \mathbb{F}_p$.
\end{enumerate}
\end{lemma}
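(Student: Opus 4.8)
The plan is to follow the strategy of \cite[Theorem 7.5]{lpt}, adapting the arguments to the normal setting using the reflexive logarithmic differentials introduced in Subsection \ref{subsection:pull-back_morphims}. I would work throughout on $X_{\textup{reg}}$ (or more precisely on the open set of codimension at least two where $X$ is smooth and $B+D$ is a simple normal crossings divisor), produce the forms there, and then extend them by reflexivity.

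\medskip

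\textbf{Item (1).} First I would recall the characteristic-$p$ mechanism: since $\sG$ is codimension one and defined by $\omega$, integrability gives some $\alpha_0$ with $d\omega = \alpha_0 \wedge \omega$ locally. The failure of $\sG$ to be closed under $p$-th powers is precisely measured by the Cartier operator applied to $\omega$; the key point is that $\omega^{(p)}$ (the Cartier image, a rational $1$-form) also defines $\sG$, so $\omega^{(p)} = f\omega$ for some rational function $f$, and one computes $d\omega = \alpha\wedge\omega$ with $\alpha = -\tfrac{1}{p}\,d\log f$ up to the standard correction, forcing the residues of $\alpha$ to be integers read modulo $p$, i.e. elements of $\mathbb{F}_p$. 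I would first establish $d\omega = \alpha\wedge\omega$ with $\alpha$ a logarithmic $1$-form whose poles lie along $B$ together with an auxiliary reduced divisor $D$ (the extra poles forced by the logarithmic derivative of $f$, which by construction shares no component with $B$); then $d\alpha = 0$ follows by differentiating $d\omega = \alpha\wedge\omega$ and using $d^2\omega = 0$ together with local decomposability of $\omega$. The residue computation is local along a general point of each component $C$ of $\textup{Supp}(B+D)$, where $\omega$ and $\alpha$ take a normal form $\omega = t^m(\tfrac{c\,dt}{t} + \cdots)$; the residue of $\alpha$ there equals the relevant exponent, which the $p$-closure obstruction pins down in $\mathbb{F}_p$. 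Extension from $X_{\textup{reg}}$ to a reflexive logarithmic form on all of $X$ uses that the complement has codimension at least two and the Saito criterion recalled in the excerpt.

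\medskip

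\textbf{Item (2).} Here I would bound $D$ cohomologically. The relation $d\omega = \alpha\wedge\omega$ exhibits $\alpha$ as a logarithmic form, and the construction of $D$ produces an inclusion of the line bundle $\sO_X(D)$ (twisted appropriately by $\sN_\sG$ and by the bundle governing $T_X(A)$) into a sheaf built from $\Omega_X^{[1]}(\log B)\otimes\det\sN_\sG$ and symmetric/wedge powers of $T_X(A)$. Concretely, I expect $D$ to appear as a subdivisor controlled by a section of $\sN_\sG \boxtimes \big(\wedge^{\dim X-2}T_X(A)\big)^{**}$-type sheaf obtained by contracting $d\alpha$-data against the globally generating sections of $T_X(A)$; intersecting the resulting effective-divisor inequality with $H^{\dim X-1}$ and using semistability of $\sG$ with $\mu_H(\sG)\ge 0$ (to control $\det\sG = -K_\sG = c_1(\sN_\sG)-K_X$ against $H$) yields $D\cdot H^{\dim X-1}\le M$ with $M$ as stated. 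The final residue bound $\textup{res}_C\,\alpha\in\{m,\dots,m+M\}$ then combines Item (1) (residues lie in $\mathbb{F}_p$ and are congruent to the vanishing order $m$) with this numerical bound on the total size of $D$: the residue along $C$ differs from $m$ by a nonnegative integer no larger than the contribution of $D$, which is at most $M$.

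\medskip

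The main obstacle I anticipate is Item (2): producing the precise sheaf-theoretic inclusion that bounds $D$, and correctly accounting for the twist by $A$ coming from $T_X(A)$ being globally generated rather than $T_X$ itself. Getting the bound to come out exactly as $c_1(\sN_\sG)\cdot H^{\dim X-1} + (\dim X - 2)A\cdot H^{\dim X-1}$—rather than with spurious extra terms—will require care in how many factors of $T_X(A)$ enter (namely $\dim X - 2$, matching the codimension-one-inside-$X$ and the one derivative already used by $\omega$), and in tracking that $\alpha$ contributes a $c_1(\sN_\sG)$ worth of poles through the relation $d\omega = \alpha\wedge\omega$. The semistability hypothesis is what prevents $D$ from being large, so I would be careful to invoke it against the maximal destabilizing data exactly as in the smooth case of \cite{lpt}.
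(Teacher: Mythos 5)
Your proposal reproduces the skeleton of the paper's argument (work on a big open set and extend by reflexivity, produce a closed logarithmic $\alpha$ with residues in $\mathbb{F}_p$, bound $D$ via semistability), but the engine you propose for constructing $\alpha$ and $D$ is incorrect, and the gap propagates to both items. In Item (1): the Cartier operator is defined only on \emph{closed} forms, and nothing makes $\omega$ closed here, so ``$\omega^{(p)}$, the Cartier image of $\omega$'' does not exist; the formula $\alpha=-\frac{1}{p}\,d\log f$ is meaningless in characteristic $p$; and the proportionality you postulate runs the wrong way. For instance, for the closed form $\omega=\frac{dx}{x}+c\frac{dy}{y}$ on $\mathbb{A}^2$ with $c\notin\mathbb{F}_p$, the tangent field $\partial=cx\partial_x-y\partial_y$ satisfies $\omega(\partial^p)=c^p-c\neq 0$, so the foliation is \emph{not} closed under $p$-th powers, while $C(\omega)=\frac{dx}{x}+c^{1/p}\frac{dy}{y}$ is \emph{not} proportional to $\omega$: proportionality of the Cartier image is the signature of $p$-closedness, not of its failure. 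Moreover $d\alpha=0$ is not formal: differentiating $d\omega=\alpha\wedge\omega$ only yields $d\alpha\wedge\omega=0$, and $\alpha$ is determined only modulo $h\omega$, so closedness must come from the construction. The correct engine, absent from your proposal, is the $\sO_X$-linear $p$-th power map $\Frobabs^*\sG\to T_X/\sG=\sN_\sG$, $\partial\mapsto\partial^p \bmod \sG$, which is non-zero precisely because $\sG$ is not $p$-closed; $D$ is \emph{defined} as its degeneracy divisor, i.e.\ its image is $\sN_\sG(-D)$ on a big open set. Locally $f_i:=\omega_i(\partial_i^p)$ is an equation for $D$, the form $\omega_i/f_i$ is closed by \cite[Proposition 7.3]{lpt}, and gluing via \cite[Corollary 7.4]{lpt} yields $\alpha=\frac{df_i}{f_i}-\frac{dg_i}{g_i}$, a difference of logarithmic derivatives --- hence closed, with log poles along $B+D$ and residues in $\mathbb{F}_p$.

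The identification of $D$ as this degeneracy divisor is exactly what your Item (2) lacks, and without it the bound cannot be obtained: semistability must be applied to the Frobenius pull-back $\Frobabs^*\sG$, not to a sheaf built by contracting ``$d\alpha$-data'' against sections of $T_X(A)$. The actual chain is: the generic surjection $\Frobabs^*\sG\twoheadrightarrow\sN_\sG(-D)$ gives $\mu_{\min}(\Frobabs^*\sG)\le\mu_H(\sN_\sG)-D\cdot H^{\dim X-1}$; Langer's theorem (\cite[proof of Corollary 2.5]{langer_ss_sheaves}), which is where global generation of $T_X(A)$ enters, gives $\mu_{\max}(\Frobabs^*\sG)-\mu_{\min}(\Frobabs^*\sG)\le(\textup{rank}\,\sG-1)\,A\cdot H^{\dim X-1}$, the factor $\textup{rank}\,\sG-1=\dim X-2$ coming from the length of the Harder--Narasimhan filtration of $\Frobabs^*\sG$ rather than from $\wedge^{\dim X-2}T_X(A)$; and semistability with $\mu_H(\sG)\ge0$ gives $\mu_{\max}(\Frobabs^*\sG)\ge p\,\mu_H(\sG)\ge0$. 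Chaining these yields $D\cdot H^{\dim X-1}\le M$, and the residue bound follows since $\textup{res}_C\,\alpha=m+\textup{ord}_C(f_i)$, where $\textup{ord}_C(f_i)$ is the multiplicity of $C$ in the (a priori non-reduced) degeneracy divisor, bounded by the same inequality. So the missing idea is not a refinement of your sheaf-theoretic inclusion but the Frobenius pull-back map itself; once it is in place, the existence of $D$, the closedness of $\alpha$, and the slope inequality all follow the smooth case of \cite{lpt} essentially verbatim.
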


\begin{proof}
Let $X^\circ$ denotes an open set of $X$ with complement of codimension at least two, contained in the regular loci of $X$ and $\sG$.
Let $(U_i)_{i\in I}$ be a finite covering of $X^\circ$ by open affine subsets and let $\omega_i$ 
be a regular $1$-form on $U_i$ with zero set of codimension at least two such that $\omega_i \wedge \omega =0$.
Write $\omega_i= g_i\omega$ for some rational function $g_i$ on $X$, and set $g_{ij}:=\frac{g_i}{g_j}$.
We have $\omega_i = g_{ij} \omega_j$, and hence $g_{ij}$ is a nowhere vanishing regular function on $U_i\cap U_j$.

Set $\sG^\circ:=\sG_{|X^\circ}$, and consider the non-zero map 
$\Frobabs^*\sG^\circ \to T_{X^\circ}/\sG^\circ={\sN_\sG}_{|X^\circ} =:\sN_\sG^\circ$ induced by the $p$-th power operation.
Note that $\sN_{\sG}^\circ$ is a line bundle. Shrinking $X^\circ$, if necessary, we may assume that there is an effective divisor $D^\circ$ on $X^\circ$ such that the above map induces a surjective morphism 
$$\Frobabs^*\sG^\circ \twoheadrightarrow \sN_\sG^\circ(-D^\circ).$$
We may also assume without loss of generality, that there exits a regular vector field $\partial_i$ on $U_i$ such that
$f_i:=\omega_i(\partial_i^p)$ is a defining equation of $D^\circ$ on $U_i$. By \cite[Proposition 7.3]{lpt}, $\frac{1}{f_i}\omega_i$ is a closed rational $1$-form, and hence $d\omega_i=\frac{df_i}{f_i}\wedge \omega_i$. On the other hand, by 
\cite[Corollary 7.4]{lpt}, we must have $$\frac{df_i}{f_i}-\frac{df_j}{f_j}=\frac{dg_{ij}}{dg_{ij}},$$
and hence
$$\frac{df_i}{f_i}-\frac{dg_i}{g_i}=\frac{df_j}{f_j}-\frac{dg_j}{g_j}.$$
This immediately implies that there exists $\alpha \in H^0\big(X,\Omega_X^{[1]}(\textup{log}\,(B+D_{\textup{red}})\big)$ with $d\alpha = 0$ such that $\alpha$ restricts to $-\frac{dg_i}{g_i}+\frac{df_i}{f_i}$ on $U_i$, where $D$ denotes the Weil divisor on $X$ such that $D_{|X^\circ}=D^\circ$. 
A straightforward computation then shows that
$$d\omega=\alpha\wedge \omega.$$
This proves (1).

To prove (2), observe that we must have 
$$\mu_{\textup{min}}\big(\Frobabs^*\sG\big) \le 
\mu_H\big(\sN_\sG(-D)\big)=\mu_H(\sN_\sG)-\mu_H\big(\sO_X(D)\big)\le \mu_H(\sN_\sG)-\mu_H\big(\sO_X(D_\textup{red})\big).$$ On the other hand, by the proof of 
\cite[Corollary 2.5]{langer_ss_sheaves}), we have 
$$\mu_{\textup{max}}\big(\Frobabs^*\sG\big)-\mu_{\textup{min}}\big(\Frobabs^*\sG\big)\le (\textup{rank}\,\sG-1) A \cdot H^{\dim X-1}.$$
Now, we must have $\mu_{\textup{max}}\big(\Frobabs^*\sG\big)\ge 0$ since $\mu_H(\sG) \ge 0$ by assumption. The claim then follows easily.
\end{proof}

\begin{rem}Notation as in the proof of Lemma \ref{lemma:not_p_closed}. By \cite[Corollary 7.4]{lpt}, there is a rational function $h_{ij}$ on $X$ such that $f_i=g_{ij}f_jh_{ij}^p$.
Note that $h_{ij}$ is regular on $U_i\cap U_j$ since both $f_i$ and $f_j$ are local equations of $D^\circ$ on $U_i\cap U_j$, and that the $h_{ij}$ automatically satisfy the cocycle condition.
Therefore, there exists a rank one reflexive sheaf $\sL$ on $X$ as well as an effective Weil divisor such that $\sN \cong \sO_X(D)\boxtimes \sL^{\otimes p}$.
\end{rem}

\begin{prop}\label{prop:p_closed_or_not}
Let $X$ be a normal complex projective variety, and let $\sG$ be a codimension one foliation on $X$. 
Let $\beta\colon Z \to X$ be a resolution of singularities.
Suppose that $\sG$ is semistable with respect to some ample divisor $H$ on $X$ with $\mu_H(\sG)\ge 0$.
Then either $\sG$ is closed under $p$-th powers for almost all primes $p$, or $\beta^{-1}\sG$ is given by
a closed rational $1$-form with values in a flat line bundle, whose zero set has codimension at least two.
\end{prop}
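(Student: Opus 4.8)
The plan is to argue contrapositively, following the strategy of \cite[Theorem 7.5]{lpt}: assuming that $\sG$ is \emph{not} closed under $p$-th powers for almost all primes $p$, I will produce on $Z$ a closed rational $1$-form, with values in a flat line bundle and with zero locus of codimension at least two, that defines $\beta^{-1}\sG$. The first move is to reduce to the smooth variety $Z$. Since $\beta$ is birational and $\beta^*H$ is nef and big, the projection formula gives $c_1(\sN_{\beta^{-1}\sG})\cdot(\beta^*H)^{\dim X-1}=c_1(\sN_{\sG})\cdot H^{\dim X-1}$, so $\mu_{\beta^*H}(\beta^{-1}\sG)=\mu_H(\sG)\ge 0$; and pushing forward a hypothetical destabilizing subsheaf of $\beta^{-1}\sG$ and saturating contradicts the semistability of $\sG$, so $\beta^{-1}\sG$ is semistable with respect to $\beta^*H$. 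Fixing once and for all a nef divisor $A$ on $Z$ with $T_Z(A)$ globally generated makes the full input of Lemma \ref{lemma:not_p_closed} available on $Z$ with the nef and big polarization $\beta^*H$.

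Next I would spread out: choose a finitely generated $\mathbb{Z}$-subalgebra $R\subset\mathbb{C}$ over which $Z$, $\beta^{-1}\sG$, a rational $1$-form $\omega$ defining it (with polar-plus-zero divisor $B$), $\beta^*H$ and $A$ all admit models. Because $\sG$ fails to be closed under $p$-th powers for almost all primes, the constructible locus of closed points $s\in\Spec R$ at which the reduction is not closed under $p_s$-th powers is dense, so I may select an infinite set of such $s$ with the residue characteristics $p_s\to\infty$. For each of them Lemma \ref{lemma:not_p_closed} furnishes a closed logarithmic form $\alpha_s$ with $d\omega=\alpha_s\wedge\omega$, whose extra reduced polar divisor $D_s$ satisfies $D_s\cdot(\beta^*H)^{\dim X-1}\le M$ for the integer $M$ of that lemma (crucially independent of $s$), and whose residue along every component $C$ of $\Supp(B+D_s)$ is an element of the prime field $\mathbb{F}_{p_s}$ lying in the bounded range $\{m_C,\dots,m_C+M\}$, where $m_C$ is the fixed vanishing order of $\omega$ along $C$.

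The heart of the proof, and the step I expect to be the main obstacle, is to convert this uniform family of characteristic-$p$ data into a single closed rational $1$-form in characteristic zero. Here the uniformity is what makes the argument work: the residues $\textup{res}_C\alpha_s$ range over a fixed finite set of integers, and the divisors $D_s$ have bounded $\beta^*H$-degree, both independently of $s$; moreover the remark following Lemma \ref{lemma:not_p_closed} gives $\sN\cong\sO_Z(D_s)\boxtimes\sL_s^{\otimes p_s}$, which bounds the numerical type of $D_s$ as well. Passing to an infinite subset of primes along which all of this discrete data is constant, a specialization/constancy argument—identical in substance to the one carried out in the smooth case in \cite[Theorem 7.5]{lpt}—forces the residue datum attached to $\omega$ to be integral (the point being that a residue whose reductions remain in a fixed bounded set of integers for infinitely many $p_s\to\infty$ must already be that integer in characteristic zero) and produces a closed rational $1$-form $\alpha$ on $Z$ with $d\omega=\alpha\wedge\omega$ and integral polar residues. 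This is precisely where the hypothesis ``for almost all $p$'' is spent; I anticipate that organizing this constancy step, rather than any individual computation, is the delicate part, since the reduction to the smooth $Z$ together with the uniform bound $M$ is exactly the additional uniformity needed beyond the smooth case of \cite{lpt}.

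Finally I would integrate. Since $\alpha$ is closed, its multivalued primitive has additive periods, so $h:=\exp\!\big(\int\alpha\big)$ has multiplicative periods and thereby defines a flat line bundle $\sL$ together with a multivalued flat generating section. A direct computation gives $d(h^{-1}\omega)=h^{-1}\big(-\tfrac{dh}{h}+\alpha\big)\wedge\omega=0$, so $h^{-1}\omega$ is a closed rational $1$-form with values in $\sL$ defining $\beta^{-1}\sG$. Because the residues of $\alpha$ are integers, the divisorial zeros and poles of $h^{-1}\omega$ can be absorbed into the twist, modifying $\sL$ by $\sO_Z$ of the corresponding divisor and keeping it flat; after this normalization the resulting closed rational $1$-form has zero locus of codimension at least two, in agreement with the identity $\sN\cong\sO_Z(D_s)\boxtimes\sL_s^{\otimes p_s}$ reduced mod $p_s$. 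This exhibits $\beta^{-1}\sG$ as claimed and completes the dichotomy of Proposition \ref{prop:p_closed_or_not}.
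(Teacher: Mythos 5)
Your overall strategy is the same as the paper's: reduce semistability to the resolution $Z$, spread out over a finitely generated $\mathbb{Z}$-algebra, apply Lemma \ref{lemma:not_p_closed} at the dense set of closed points where the reduction fails to be closed under $p$-th powers, produce from this a single characteristic-zero closed logarithmic $1$-form $\alpha$ with $d\omega=\alpha\wedge\omega$ and bounded integral residues, and integrate it locally (writing $\alpha=d\log f_i+dg_i$) to get the twisted closed rational $1$-form. The reduction to $Z$ and the final integration step in your proposal are fine.

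The gap is precisely at the step you flag as the heart of the argument, and it is not a routine verification that can be outsourced wholesale to \cite[Theorem 7.5]{lpt}. First, "passing to an infinite subset of primes along which the discrete data is constant" produces nothing in characteristic zero by itself: one must realize the pairs $\big(D(\bar{s}),\alpha(\bar{s})\big)$ as points of a single scheme of finite type over $\Spec R$ and then invoke Chevalley's theorem (a constructible image of such a scheme containing a dense set of closed points contains the generic point). The paper does this by constructing the quasi-projective relative scheme $\textup{Div}^{\le M}_{\bZ/\bS}$ of reduced divisors of bounded \emph{ample} degree, pushing forward the sheaf of logarithmic $1$-forms along the universal divisor, and cutting out the closedness, the equation $d\boldsymbol{\omega}=\alpha\wedge\boldsymbol{\omega}$, and the residue conditions as closed conditions using sections through each component of the universal divisors. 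Second — and this is where the singular case genuinely differs from \cite{lpt} — the finite-typeness needed for this parameter space fails with your inputs: Lemma \ref{lemma:not_p_closed} bounds $D(\bar{s})\cdot(\beta^*H)^{\dim X-1}$, but $\beta^*H$ is only nef and big on $Z$, and bounded $(\beta^*H)$-degree does \emph{not} bound a family of divisors, since $\beta$-exceptional divisors have $(\beta^*H)$-degree zero. Your substitute claim, that the isomorphism $\sN\cong\sO_Z(D_s)\boxtimes\sL_s^{\otimes p_s}$ of the remark "bounds the numerical type of $D_s$", is false: $\sL_s$ is unknown and varies with $s$, so this relation constrains nothing. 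The paper's fix is a specific device you are missing: take $B$ to contain \emph{all} $\beta$-exceptional prime divisors, so the new polar divisor $D(\bar{s})$ (which by Lemma \ref{lemma:not_p_closed} shares no component with $B$) has no $\beta$-exceptional component; then, writing $A=m_0\beta^*H-E$ ample with $E$ effective and exceptional, one gets
$$D(\bar{s})\cdot A^{\dim X-1}\;\le\; D(\bar{s})\cdot(\beta^*H)^{\dim X-1}\;\le\; M,$$
which is the bound with respect to an ample class that actually permits the parametrization. Without this (or an equivalent) boundedness argument, your constancy step cannot be carried out.
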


\begin{rem}\label{rem:p_closed_or_not}
In the setup of Proposition \ref{prop:p_closed_or_not}, suppose in addition that $X$ has klt singularities.
Suppose that $\beta^{-1}\sG$ is given by a closed rational $1$-form $\omega_Z$ with values in a flat line bundle $\sL_Z$ whose zero set has codimension at least two.
Applying \cite{takayama_fundamental_group}, we see that there exists a flat line bundle $\sL_X$ on $X$ such that 
$\sL_Z\cong \beta^*\sL_X$. Then $\omega_Z$ induces a closed rational $1$-form $\omega_X$ on $X$ with values in $\sL_X$ defining $\sG$. Moreover, its zero set has codimension at least two.
\end{rem}

\begin{proof}[Proof of Proposition \ref{prop:p_closed_or_not}]
Proposition \ref{prop:p_closed_or_not} follows from Lemma \ref{lemma:not_p_closed} using the spreading out technique, which we recall now. Assume that $\dim X \ge 2$.

Let $\omega$ be a rational $1$-form defining $\beta^{-1}\sG$.
Let $E$ be an effective $\beta$-exceptional divisor such that 
$A:=m_0\beta^*H - E$ is ample for some positive integer $m_0$, and let $B$ be a reduced effective divisor that contains codimension one zeroes and poles of $\omega$ and all $\beta$-exceptional divisors in its support.
Replacing $H$ by $m_0H$, we may assume that $m_0=1$.
Let $m$ be a positive integer such that $T_Z(mA)$ is generated by global sections, and 
set $$M:=c_1\big(\sN_{\beta^{-1}\sG}\big)\cdot (\beta^* H)^{\dim X -1} + m(\dim X-2) A \cdot (\beta^*  H)^{\dim X -1}.$$

Let $R \subset \mathbb{C}$ be a finitely generated $\mathbb{Z}$-algebra, and let $\bX$ (resp. $\bZ$) be a projective 
(resp. smooth projective) model of $X$ (resp. $\bZ$) over $\bS :=\textup{Spec} \,R$.
Let $\boldsymbol{\beta}\colon \bZ \to \bX$ be a projective birational morphism such that $\boldsymbol{\beta}_\mathbb{C}$ coincides with $\beta$. Let $\sbfG$ (resp. $\boldsymbol{\beta}^{-1}\sbfG$) 
be a saturated subsheaf of the relative tangent sheaf $T_{\bX/\bS}$ (resp. $T_{\bZ/\bS}$), flat over $\bS$, such that $\sbfG_\mathbb{C}$ (resp. $\boldsymbol{\beta}^{-1}\sbfG_\mathbb{C}$) coincides
with $\sG$ (resp. $\beta^{-1}\sG$). We may assume that for any closed point $s \in \bS$, $\bX_{\bar{s}}$ is normal, and that $\sbfG_{\bar{s}}$ (resp. $\boldsymbol{\beta}^{-1}\sbfG_{\bar{s}}$) is a foliation on $\bX_{\bar{s}}$ (resp. $\bZ_{\bar{s}}$). Let $\bH$ (resp. $\bA$) be an ample Cartier divisor on $\bX$ (resp. $\bZ$) such that $\bH_\mathbb{C} \sim_\mathbb{Z} H$ (resp. $\bA_\mathbb{C} \sim_\mathbb{Z} A$). Suppose that 
$\bA=\boldsymbol{\beta}^*H-\bE$ for some effective $\boldsymbol{\beta}$-exceptional relative divisor $\bE$ over $\bS$, and that 
$E_{\bar{s}}$ is $\boldsymbol{\beta}_{\bar{s}}$-exceptional for any geometric point $s \in \bS$.
Finally, let $\bB$ be a reduced effective relative divisor on $\bZ$ over $\bS$ and let
$\boldsymbol{\omega}$ be rational section of $\Omega^1_{\bZ/\bT}$ such that $\bB_{\mathbb{C}}=B$ and 
$\boldsymbol{\omega}_{\mathbb{C}}=\omega$. Shrinking $\bS$, if necessary, we may assume that $\boldsymbol{\omega}_{\bar{s}}$ defines 
$\boldsymbol{\beta}^{-1}\sbfG_{\bar{s}}$ for any geometric point $s \in \bS$, and that $\bB_{\bar{s}}$ contains the codimension one zeroes and poles of $\boldsymbol{\omega}_{\bar{s}}$. We can also choose $\bB$ so that 
$\bB$ contains all $\boldsymbol{\beta}$-exceptional prime divisors in its support.

Since semistability with respect to an ample divisor is an open condition in flat families of sheaves (see proof of 
\cite[Proposition 2.3.1]{HuyLehn}), we may assume that the sheaves $\sG_{\bar{s}}$ are semistable with respect to $\bH_{\bar{s}}$, with slopes $\mu_{\bH_{\bar{s}}}\big(\sG_{\bar{s}}\big)=\mu_H(\sG) \ge 0$.
It follows that $\boldsymbol{\beta}_{\bar{s}}^{-1}\sbfG_{\bar{s}}$ is semistable with respect to $\boldsymbol{\beta}_{\bar{s}}^*\bH_{\bar{s}}$ with slope $\mu_{\boldsymbol{\beta}_{\bar{s}}^*\bH_{\bar{s}}}\big(\boldsymbol{\beta}^{-1}\sbfG_{\bar{s}}\big)=0$.

By \cite[Lemme 2.4]{fga221} and \cite[Proposition 4.1]{fga232}, there exists a quasi-projective $\bS$-scheme 
$\textup{Div}_{\bZ/\bS}^{\le M}$ parameterizing effective relative Cartier divisor on $\bZ$ over $\bS$ with $\bA$-degree at most $M$.
Using \cite[Th\'eor\`eme 12.2.1]{ega28}, we see that there is an open set $\bT \subset \textup{Div}_{\bZ/\bS}^{\le M}$ that parametrizes geometrically reduced divisors. Replacing $\bT$ by $\bT_{\textup{red}}$, if necessary, we may assume that 
$\bT$ is reduced. By generic flatness, we can suppose that $\bT$ is flat over $\bS$.
Let $\bD \subset \bT \times_\bS \bZ$ be the universal effective relative Cartier divisor,
and denote by $\boldsymbol{\pi}\colon \bT\times_{\bS} \bZ\to \bZ$ and
$\boldsymbol{\upsilon}\colon \bT\times_{\bS} \bZ \to \bT$ the projections.
Set $\bC:=\boldsymbol{\pi}^*\bB$. Observe that $\bC$ is a reduced relative effective Cartier divisor on 
$\bT\times_{\bS} \bZ$ over $\bT$. Write $\Omega^{[1]}_{\bT\times_{\bS} \bZ/\bT}\big(\textup{log}\,(\bC+\bD)\big)$
for the reflexive sheaf on $\bT\times_{\bS} \bZ$ whose restriction to the open set $\bZ_\bT^\circ$ where $\bC+\bD$ has relative simple normal crossings over $\bT$ is $\Omega^{1}_{\bZ_\bT^\circ/\bT}\big(\textup{log}\,(\bC_{|\bZ_\bT^\circ}+\bD_{|\bZ_\bT^\circ})\big)$, and set
$$\bU:=\boldsymbol{\upsilon}_* \Omega^{[1]}_{\bT\times_{\bS} \bZ/\bT}\big(\textup{log}\,(\bC+\bD)\big).$$ 
By generic flatness and the base change theorem, we see that, replacing $\bT$ with a finite disjoint union of locally closed subsets, we may assume without loss of generality that 
$\bU$ is flat over $\bT$, and that the formation of
$\boldsymbol{\upsilon}_* \Omega^{[1]}_{\bT\times_{\bS} \bZ/\bT}\big(\textup{log}\,(\bC+\bD)\big)$
commutes with arbitrary base change.
We will also assume that, for any geometric point $\bar{t} \in \bT$, 
the restriction of $\Omega^{[1]}_{\bT\times_{\bS} \bZ/\bT}\big(\textup{log}\,(\bC+\bD)\big)$ to the fiber of the projection
$\bT\times_{\bS} \bZ \to \bT$ over $\bar{t}$ is reflexive, so that 
$${\Omega^{[1]}_{\bT\times_{\bS} \bZ/\bT}\big(\textup{log}\,(\bC+\bD)\big)}_{|\bZ_{\bar{s}}}\cong 
\Omega^{[1]}_{\bZ_{\bar{s}}}\big(\textup{log}\,(\bB_{\bar{s}}+\bD_{\bar{t}})\big),$$
where $\bar{s} \in \bS$ is the image of $\bar{t}$ in $\bS$.
By \cite[Corollaire 9.7.9]{ega28}, there exist a finite set $I \subset \mathbb{N}$ and a decomposition 
$$\bT=\bigsqcup_{i \in I} \bT_i$$
of $T$ into locally closed subsets such that
any geometric fiber of $\bD_i:=\bD\times_{\bT}\bT_i \to \bT_i$ has $i$ irreducible components, and such that 
any geometric fiber of $\bC_i:=\bC\times_{\bT}\bT_i \to \bT_i$ has $n(i)$ irreducible components.
Let $\bD_i^\circ$ (resp. $\bC_i^\circ$) denotes the open set where $\bD_i \to \bT_i$ 
(resp. $\bC_i \to \bT_i$) is smooth. 
We can choose $\bT_i$ so that there exist sections $a_{i,1},\ldots,a_{i,i}$ 
(resp. $b_{i,1},\ldots,b_{i,n(i)}$)
of $\bD_i^\circ \to \bT_i$ (resp. $\bC_i^\circ\to \bT_i$)
with $a_{i,j}(\bar{t})\in \bD^\circ_{\bar{t},i,j}$ (resp $b_{i,j}(\bar{t})\in \bC^\circ_{\bar{t},i,j}$), where the
$\bD^\circ_{\bar{t},i,1},\ldots, \bD^\circ_{\bar{t},i,i}$ (resp. 
$\bC^\circ_{\bar{t},i,1},\ldots,\bC^\circ_{\bar{t},i,n(i)}$) 
are the irreducible components of the corresponding fiber of $\bD_i^\circ \to \bT_i$ (resp. $\bC_i^\circ \to \bT_i$).

Let $\bar{t}\in T$ be a geometric point, and denote by $\bar{s}$ the image of $\bar{t}$ in $\bS$.
Let $l_{\bar{t},i,j}$ (resp. $m_{\bar{t},i,j}$) be the vanishing order of $\boldsymbol{\omega}_{\bar{s}}$ along
$\bD^\circ_{\bar{t},i,j}$ (resp. $\bC^\circ_{\bar{t},i,j}$).

Let $\bP\subseteq \bU$ be the closed subset defined by the conditions 
\begin{enumerate}
\item $d\alpha(\bar{s})=0$,
\item $d\boldsymbol{\omega}_{\bar{s}}= \alpha(\bar{s})\wedge\boldsymbol{\omega}_{\bar{s}}$,
\item $\textup{res}_{\bD_{\bar{t},i,j}^\circ}\alpha(\bar{s})\big(a_{i,j}(\bar{t})\big) \in \{l_{\bar{t},i,j},\ldots,l_{\bar{t},i,j}+M\}\subset k(\bar{t})$ for all indices $1 \le j\le i$, and
\item $\textup{res}_{\bC_{\bar{t},i,j}^\circ}\alpha(\bar{s})\big(b_{i,j}(\bar{t})\big) \in \{m_{\bar{t},i,j},\ldots,m_{\bar{t},i,j}+M\}\subset k(\bar{t})$ for all indices $1 \le j\le n(i)$,
\end{enumerate}
where $\bar{t}\in\bT_i$ 
and $\alpha(\bar{s}) \in H^0\Big(\bZ_{\bar{s}},\Omega^{[1]}_{\bZ_{\bar{s}}}\big(\textup{log}\,(\bB_{\bar{s}}+\bD_{\bar{t}})\big)\Big)$.

Fix a closed point $s$ in $\bS$, and denote by $p>0$ the characteristic of $k(\bar{s})$. Suppose that $\sbfG_{\bar{s}}$ is not closed under $p$-th powers. This immediately implies that $\boldsymbol{\beta}^{-1}\sbfG_{\bar{s}}$ is not closed under $p$-th powers as well. Applying Lemma \ref{lemma:not_p_closed} to $\boldsymbol{\beta}^{-1}\sbfG_{\bar{s}}$ and 
$\boldsymbol{\beta}_{\bar{s}}^*\bH_{\bar{s}}$, we conclude that 
there exist a reduced effective Cartier divisor $\bD(\bar{s})$ on $\bZ_{\bar{s}}$ 
that does not contain any irreducible component of $\bB_{\bar{s}}$ in its support,
and $$\alpha(\bar{s}) \in H^0\big(\bZ_{\bar{s}},\Omega_{\bZ_{\bar{s}}}^{[1]}(\textup{log}\,(\bB_{\bar{s}}+\bD(\bar{s})))\big)$$ with $d\alpha(\bar{s}) = 0$ such that $d\boldsymbol{\omega}_{\bar{s}}=\alpha(\bar{s})\wedge\boldsymbol{\omega}_{\bar{s}}$.
Moreover, the functions $\textup{res}_{\bD_{\bar{t},i,j}^\circ}\alpha(\bar{s})$ 
and $\textup{res}_{\bC_{\bar{t},i,j}^\circ}\alpha(\bar{s})$
are constant
with values in $\{l_{\bar{t},i,j},\ldots,l_{\bar{t},i,j}+M\}\subset k(\bar{s})$
and 
$\{m_{\bar{t},i,j},\ldots,m_{\bar{t},i,j}+M\}\subset k(\bar{s})$ respectively, and
$\bD(\bar{s}) \cdot (\boldsymbol{\beta}_{\bar{s}}^*\bH_{\bar{s}})^{\dim \bZ_{\bar{s}} -1} \le M$.
Notice that there is no $\boldsymbol{\beta}_{\bar{s}}$-exceptional prime divisor contained in $\textup{Supp}\,\bD(\bar{s})$ by construction. It follows that
\begin{multline*}
\bD(\bar{s}) \cdot \bA_{\bar{s}}^{\dim \bZ_{\bar{s}}-1} =
\bD(\bar{s}) \cdot (\boldsymbol{\beta}_{\bar{s}}^*\bH_{\bar{s}})^{\dim \bZ_{\bar{s}} -1}
-\sum_{0\le i \le \dim \bZ_{\bar{s}}-2}\bD(\bar{s}) \cdot\bE_{\bar{s}}\cdot\bA_{\bar{s}}^i\cdot 
(\boldsymbol{\beta}_{\bar{s}}^*\bH_{\bar{s}})^{\dim \bZ_{\bar{s}}-2-i} \\
\le  
 \bD(\bar{s}) \cdot (\boldsymbol{\beta}_{\bar{s}}^*\bH_{\bar{s}})^{\dim \bZ_{\bar{s}} -1} \le M, 
\end{multline*}
and hence, $\alpha(\bar{s})$ yields a closed point in $\bP$ over $\bar{t}:=[\bD(\bar{s})] \in \bT$. 

If the set of closed points $\bar{s}$ in $\bS$ such that $\sbfG_{\bar{s}}$ is not closed under $p$-th powers
is Zarsiki dense, then the image of $\bP \to \bS$ contains the generic point of $\bS$ by a theorem of Chevalley.
It follows that there exists a closed logarithmic $1$-form $\alpha$ on $Z$ such that $d\omega=\alpha\wedge\omega$. 
Let $C$ be any prime divisor $C$ on $X$.
Since the residue $\textup{res}_C\,\alpha$ of $\alpha$ at a general point of $C$ is a constant function, we must
have $\textup{res}_C\, \alpha\in \{m,\ldots,m+M\} \subset \mathbb{Z}$, where $m$ denotes the order of vanishing of $\omega$ along $C$. 

Let $(U_i)_{i\in I}$ be a covering of $X$ by analytically open sets such that $\alpha_{|U_i}=d\ln f_i + dg_i$ where $f_i$ (resp. $g_i$) is a meromorphic (resp. holomorphic) function on $U_i$. Then
$$\omega_i:=\frac{1}{f_i\exp(g_i)}\omega_{|U_i}$$ is a closed rational $1$-form with zero set of codimension at least two
and $\omega_i = c_{ij} \omega_j$ on $U_i \cap U_i$ for some $c_{ij}\in \mathbb{C}$.
This completes the proof of the proposition.
\end{proof}

We end the preparation for the proof of our main results with the following observation.

\begin{lemma}\label{lemma:bogomolov}
Let $X$ be a normal complex projective variety with klt singularities, and let $\sG$ be a codimension one foliation on $X$ with $K_\sG$ $\mathbb{Q}$-Cartier and $K_\sG\equiv 0$. Then $\nu(X)\le 1$.
\end{lemma}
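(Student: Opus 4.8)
The statement to prove is Lemma \ref{lemma:bogomolov}: for a codimension one foliation $\sG$ with $K_\sG$ $\mathbb{Q}$-Cartier and $K_\sG\equiv 0$ on a klt projective variety $X$, one has $\nu(X)\le 1$. This is the singular analogue of the Bogomolov--Castelnuovo--de Franchis type vanishing, and the stated reference in the introduction is an ``analogue of the Bogomolov vanishing theorem.''

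The plan is to first reduce to a resolution and then exploit the conormal sheaf of $\sG$. Since $K_\sG\equiv 0$ and $K_\sG\sim_\mathbb{Q}K_X+c_1(\sN_\sG)$, we have $c_1(\sN_\sG)\equiv -K_X$; in particular $-K_X$ is $\mathbb{Q}$-Cartier and numerically equivalent to the first Chern class of the conormal sheaf $\sN_\sG^*=(T_X/\sG)^{*\,**}$, which embeds as a saturated rank one subsheaf of $\Omega_X^{[1]}$. First I would pass to a resolution $\beta\colon Z\to X$ with $\beta$-exceptional divisor $E$ of simple normal crossings, and apply Lemma \ref{lemma:extension_reflexive_1_forms} (with $p=1$, $\sA=\sN_\sG^*$) to relate $\beta^*c_1(\sN_\sG)$ to $c_1(\sB)+\varepsilon E_1$ modulo an effective exceptional divisor, where $\sB\subseteq\Omega_Z^1$ is the saturation of $\beta^{[*]}\sN_\sG^*$. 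Equivalently, Proposition \ref{prop:numerical_dimension} already packages this: there is $0\le\varepsilon<1$ with $\nu(-c_1(\sN_\sG))=\nu(-c_1(\sN_{\beta^{-1}\sG})+\varepsilon E_1)$. Since $-c_1(\sN_\sG)\equiv K_X$, this reduces the computation of $\nu(X)=\nu(K_X)=\nu(-c_1(\sN_\sG))$ to a numerical-dimension computation on the smooth variety $Z$ for the foliation $\beta^{-1}\sG$.

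The core of the argument is then the smooth case: on $Z$ we have a saturated line bundle $\sB=\sN_{\beta^{-1}\sG}^*\subseteq\Omega_Z^1$ (twisted by the logarithmic correction $\varepsilon E_1$), giving a nonzero twisted holomorphic $1$-form valued in $\sN_{\beta^{-1}\sG}^*$. The classical Bogomolov vanishing theorem (and its logarithmic refinements) asserts precisely that a line subbundle $\sB\subseteq\Omega_Z^1$ defining a codimension one foliation satisfies $\kappa(\sB)\le 1$ when $\sB$ is not ``of general type along the foliation''; more to the point, the relevant input is that the numerical dimension of a pseudo-effective line bundle sitting inside $\Omega_Z^1(\log E_1)$ and defining a codimension one foliation is at most one, because two independent sections of sufficiently high powers would produce, via wedge, a nonzero holomorphic $2$-form trivializing against the integrability of the foliation. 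So the key step is to show $\nu(-c_1(\sN_{\beta^{-1}\sG})+\varepsilon E_1)\le 1$; I expect this to follow from the Bogomolov--de Franchis inequality for the codimension one (logarithmic) foliation $\beta^{-1}\sG$ on the smooth projective $Z$, exactly as in the manifold case treated in the literature (this is the ``analogue of the Bogomolov vanishing theorem'' alluded to in the outline).

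The main obstacle is making the reduction to the smooth case fully rigorous: one must check that the $\varepsilon E_1$ correction does not inflate the numerical dimension beyond what the Bogomolov-type bound controls, i.e.\ that the logarithmic form still defines the same codimension one foliation and that the inequality $\nu\le 1$ survives the twist by $\varepsilon E_1$ with $0\le\varepsilon<1$. Concretely, I would argue that $\sB\subseteq\Omega_Z^1(\log E_1)$ remains a rank one saturated subsheaf defining a codimension one (possibly logarithmic) foliation, invoke the logarithmic Bogomolov vanishing to bound its numerical dimension by one, and then transport this bound back through Proposition \ref{prop:numerical_dimension} to conclude $\nu(X)=\nu(-c_1(\sN_\sG))\le 1$. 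The remaining verifications (that $-K_X\equiv c_1(\sN_\sG)$ is $\mathbb{Q}$-Cartier, that $\nu(X)=\nu(K_X)$ by definition of numerical dimension on singular varieties via a resolution, and that all numerical equivalences are preserved) are routine given the machinery assembled in Section \ref{section:singularities}.
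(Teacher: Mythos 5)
Your reduction to the smooth case is exactly the paper's: resolve by $\beta\colon Z\to X$, let $E_1$ be the reduced divisor of $\beta^{-1}\sG$-invariant exceptional components, note $-c_1(\sN_\sG)\equiv K_X$, and use Proposition \ref{prop:numerical_dimension} together with Remark \ref{rem:exceptional_set} to get $\nu(X)=\nu\big(-c_1(\sN_\sG)\big)=\nu\big(-c_1(\sN_{\beta^{-1}\sG})+\varepsilon E_1\big)$ for some $0\le\varepsilon<1$. The gap is in the step you yourself identify as the core. You bound this last quantity by invoking the (logarithmic) Bogomolov--Castelnuovo--de Franchis inequality, justified by the wedge-of-two-sections argument. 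That argument bounds the \emph{Kodaira} dimension $\kappa$ of a saturated rank one subsheaf of $\Omega_Z^1$ (or of $\Omega_Z^1(\textup{log}\,E_1)$) by $1$; it says nothing about the \emph{numerical} dimension $\nu$ in Nakayama's sense, which is what the lemma asserts. Indeed, $\nu$ is computed from sections of $\sO_Z(\lfloor mD\rfloor+A)$ with $A$ ample, and such twisted sections are not sections of powers of the line subsheaf, so the wedge trick does not apply to them; in general $\nu$ strictly exceeds $\kappa$. The paper itself records the relevant counterexample to your implication: for foliations coming from quotients of polydiscs one has $\kappa\big(-c_1(\sN_\sG)\big)=-\infty$ while $\nu\big(-c_1(\sN_\sG)\big)=1$ (see the outline of the proof of Theorem \ref{thm_intro:main}), so Kodaira-dimension-type vanishing cannot by itself control $\nu$.

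What the paper actually uses at this point is \cite[Proposition 9.3]{touzet_conpsef}: Touzet's result on codimension one foliations with pseudo-effective twisted conormal class, which bounds precisely the numerical dimension of classes of the form $-c_1(\sN_{\beta^{-1}\sG})+\varepsilon E_1$, with $E_1$ invariant and $0\le\varepsilon<1$, by $1$. (If the class is not pseudo-effective the bound is trivial, so all the content is in the pseudo-effective case.) This is a genuinely deeper theorem, resting on Touzet's analytic structure theory for foliations with pseudo-effective conormal bundle, not on the elementary wedge argument. As written, your proof does not close; it becomes correct, and in fact coincides with the paper's proof, once the appeal to Bogomolov vanishing is replaced by the numerical-dimension statement of \cite[Proposition 9.3]{touzet_conpsef}.
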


\begin{proof}
Let $\beta \colon Z \to X$ be a resolution of singularities with exceptional set $E$, and suppose that $E$ is a divisor with simple normal crossings.
Let $E_1$ be the reduced divisor on $Z$ whose support is
the union of all irreducible components of $E$ that are invariant under $\beta^{-1}\sG$.
Note that $-c_1(\sN_\sG)\equiv K_X$ by assumption. By Proposition \ref{prop:numerical_dimension} and Remark \ref{rem:exceptional_set}, there exists a rational number $0 \le \varepsilon <1$ such that 
$$\nu(X)=\nu\big(-c_1(\sN_\sG)\big) =  \nu\big(-c_1(\sN_{\beta^{-1}\sG})+\varepsilon E_1\big).$$ 
The lemma then follows from \cite[Proposition 9.3]{touzet_conpsef}.
\end{proof}

\begin{proof}[Proof of Theorem \ref{thm_intro:main}] We maintain notation and assumptions of Theorem \ref{thm_intro:main}.
By \cite[Lemma 2.53]{kollar_mori} and Fact \ref{fact:quasi_etale_cover_and_singularities}, there exists a quasi-\'etale cover $f \colon X_1 \to X$ 
with $X_1$ canonical 
such that $f^*K_{\sG}\sim_\mathbb{Z} 0$. 
By Lemma \ref{lemma:canonical_quasi_etale_cover}, $f^{-1}\sG$ is canonical and $K_{f^{-1}\sG}\sim_\mathbb{Z}0$. 
Replacing $X$ by $X_1$, if necessary, we may assume that $K_\sG\sim_\mathbb{Z}0$.

By Proposition \ref{prop:p_closed_or_not} and Lemma \ref{lemma:stability_versus_uniruled},
either $\sG$ is closed under $p$-th powers for almost all primes $p$, or it is given by
a closed rational $1$-form $\omega$ with values in a flat line bundle whose
zero set has codimension at least two (see also Remark \ref{rem:p_closed_or_not}). In the latter case, the statement follows from Theorem \ref{thm:closed_rational_1_form_2}.

Suppose that $\sG$ is closed under $p$-th powers for almost all primes $p$.
By Lemma \ref{lemma:bogomolov}, we have $\nu(X)\le 1$.
If $\nu(X)=-\infty$, then Corollary \ref{cor:grothendieck_katz} says that 
$\sG$ is algebraically integrable. If $\nu(X)=1$, then $\sG$ has algebraic leaves as well by 
Corollary \ref{cor:algebraic_integrability_nu_un}. In either case, the statement follows from 
Theorem \ref{thm_intro:global_reeb_stability}. If $\nu(X)=0$, then Theorem \ref{thm_intro:main} follows from
Lemma \ref{lemma:K_torsion} and Proposition \ref{prop:nu_zero_versus_torsion}.
\end{proof}

\begin{proof}[Proof of Theorem \ref{thm_intro:abundance}] We maintain notation and assumptions of Theorem \ref{thm_intro:abundance}.
By Proposition \ref{prop:p_closed_or_not} and Lemma \ref{lemma:stability_versus_uniruled},
either $\sG$ is closed under $p$-th powers for almost all primes $p$, or it is given by
a closed rational $1$-form $\omega$ with values in a flat line bundle whose
zero set has codimension at least two (see also Remark \ref{rem:p_closed_or_not}). 

Suppose first that $\sG$ is closed under $p$-th powers for almost all primes $p$.
By Lemma \ref{lemma:bogomolov}, we have $\nu(X)\le 1$.
If $\nu(X)=-\infty$, Theorem \ref{thm:grothendieck_katz} and Corollary \ref{cor:grothendieck_katz} then imply that 
$\sG$ is algebraically integrable. If $\nu(X)=1$, then $\sG$ has algebraic leaves by Theorem \ref{thm:algebraic_integrability_nu_un} and Corollary \ref{cor:algebraic_integrability_nu_un}. In either case, 
$K_\sG$ is torsion by Proposition \ref{prop:abundance_alg_int}.
If $\nu(X)=0$, then $K_\sG$ is torsion by Lemma \ref{lemma:K_torsion} and Proposition \ref{prop:nu_zero_versus_torsion}.

Suppose now that $\sG$ is given by a closed rational $1$-form $\omega$ with values in a flat line bundle whose
zero set has codimension at least two. If $\nu(X) \ge 0$, then the statement follows from Proposition \ref{prop:closed__rational_1_form_K_pseff}. If $\nu(X)=-\infty$, then Theorem \ref{thm_intro:abundance} follows from 
Proposition \ref{prop:closed_form_abundance}.
\end{proof}

\begin{proof}[Proof of Corollary \ref{cor_intro}] We maintain notation and assumptions of Corollary \ref{cor_intro}.
Arguing as in the proof of Theorem \ref{thm_intro:main}, we see that we may assume without loss of generality that 
$K_\sG$ is Cartier. Lemma \ref{lemma:regular_versus_canonical} then implies that $\sG$ is canonical, so that Theorem \ref{thm_intro:main} applies. In particular, to prove Corollary \ref{cor_intro}, it suffices to consider the case where  
$X$ is an equivariant compactification of a commutative algebraic group $G$ of dimension at least $2$ and 
$\sG\cong \sO_X^{\, \dim X -1}$ is induced by a codimension one Lie subgroup $H \subset G$. If $X$ is not uniruled, then 
$G$ must be an abelian variety by a theorem of Chevalley, and $\sG$ is a linear foliation on $X$, so that we are in case (2) of Corollary \ref{cor_intro}. 

Suppose from now on that $X$ is uniruled. 
Let $\beta\colon Z \to X$ be an equivariant resolution of $X$ with exceptional set $E$, and assume that $E$ is a divisor with simple normal crossings and that $\beta$ induces an isomorphism over $X_\textup{reg}$. By Corollary \ref{cor:equivariant_resolution}, there is an inclusion $\beta^*\sG \subset T_Z(-\textup{log}\,E)$. In particular, we must have 
$\beta^*\sG \subset \beta^{-1}\sG$. Since $\sG$ is canonical, we conclude that $K_{\beta^{-1}\sG}\sim_\mathbb{Z}\beta^*K_\sG$ and that $\beta^*\sG \cong \beta^{-1}\sG$. 
This implies that any irreducible component of $E$ is invariant under $\beta^{-1}\sG$.
This also implies that
$\beta^{-1}\sG$ is regular by Lemma \ref{lemma:regular_bir_crepant_map}.
Since $Z$ is uniruled by assumption, there exists a $\mathbb{P}^1$-bundle structure $\phi\colon Z \to Y$ onto a complex projective manifold $Y$ with $K_Y\sim_\mathbb{Z}0$ such that $\beta^{-1}\sG$ induces a flat connection on $\phi$. This follows either from \cite{touzet} or from the proof of \cite[Proposition 5.1] {druel_bbcd2}.
Suppose that $E\neq\emptyset$, and let $E_1$ be an irreducible component of $E$. Then 
$E_1$ is smooth and $\sN_{E_1/Z}$ is flat since $E_1$ is invariant under $\beta^{-1}\sG$. But this contradicts the fact that $E_1$ is $\beta$-exceptional. It follows that $X$ is as in case (1) of Corollary \ref{cor_intro}.
This finishes the proof of the corollary.
\end{proof}


\providecommand{\bysame}{\leavevmode\hbox to3em{\hrulefill}\thinspace}
\providecommand{\MR}{\relax\ifhmode\unskip\space\fi MR }
\providecommand{\MRhref}[2]{%
  \href{http://www.ams.org/mathscinet-getitem?mr=#1}{#2}
}
\providecommand{\href}[2]{#2}

\end{document}